\title{On the constituents of the mod $p$ cohomology of Shimura curves}
\definecolor{olive}{rgb}{0.5, 0.5, 0.0}
\newcounter{mar-counter}
\patchcmd\maketitle{\def\@makefnmark{\rlap{\@textsuperscript{\normalfont\@thefnmark}}}}{}{}{}
\def\thanksAAffil#1{%
  \footnotemarkAAffil\protected@xdef\@thanks{\@thanks%
        \protect\footnotetextAAffil[\the \c@footnoteAAffil]{#1}}%
}
\def\thanksANote#1{%
  \footnotemarkANote%
  \protected@xdef\@thanks{\@thanks%
        \protect\footnotetextANote[\the \c@footnoteANote]{#1}}%
}
\author{Christophe Breuil\thanksAAffil{CNRS, B\^atiment 307, Facult\'e d'Orsay, Universit\'e Paris-Saclay, 91405 Orsay Cedex, France}\\
\and
Florian Herzig\thanksAAffil{Dept.\ of Math., Univ.\ of Toronto, 40 St.\ George St., BA6290, Toronto, ON M5S 2E4, Canada;\newline\indent\hspace{1.5mm} Korea Institute for Advanced Study, 85 Hoegi-ro, Dongdaemun-gu, Seoul 02455, Republic of Korea}\\
\and
Yongquan Hu\thanksAAffil{Morningside Center of Mathematics, Academy of Mathematics and Systems Science, Chinese Academy  of \newline
\indent\hspace{1.5mm} Sciences, Beijing 100190, China; University of the Chinese Academy of Sciences, Beijing 100049, China}\\
\and
Stefano Morra\thanksAAffil{Lab.\ d'Analyse, G\'eom\'etrie, Alg\`ebre, 99
  Av.\ Jean Baptiste Cl\'ement, 93430 Villetaneuse, France }$^{,}$\footnotemarkAAffil[6]\\
\and
Benjamin Schraen\thanksAAffil{Institut Camille Jordan, Universit\'e Claude Bernard Lyon I, 69622 Villeurbanne, France}$^{,}$\thanksAAffil{Institut
  Universitaire de France (IUF)}}
\date{ }
\begin{document}

\maketitle

\begin{abstract}
  Let $p$ be a prime number and $K$ a finite unramified extension of $\Qp$.
  When $p$ is large enough with respect to $[K:\Qp]$ and under mild genericity assumptions, we proved in our previous work that the admissible smooth representations $\pi$ of $\GL_2(K)$ that occur in Hecke eigenspaces of the mod $p$ cohomology of Shimura curves are of finite length.
  In this paper we obtain various refined results about the structure of subquotients of $\pi$, such as their Iwahori-socle filtrations and $K_1$-invariants, where $K_1$ is the principal congruence subgroup of $\GL_2(\cO_K)$.
  We also determine the Hilbert series of $\pi$ as Iwahori-representation under these conditions.
\end{abstract}

\tableofcontents

\section{Introduction}
\label{sec:introduction}

\subsection{The main results}
\label{sec:results}

Let $p$ be a prime number, $F$ a totally real number field and $D$ a quaternion algebra of center $F$ which is split at all $p$-adic places and at exactly one infinite place.
In order to simplify this introduction we assume that $p$ is inert in $F$ (in the text we only need $p$ unramified in $F$) and denote by $v$ the unique $p$-adic place of $F$.
Let ${\mathbb A}_F^{\infty,v}$ denote the ring of finite prime-to-$v$ ad\`eles of $F$ and $\F$ a sufficiently large finite extension of $\Fp$.
To any absolutely irreducible continuous representation $\rbar : {\rm Gal}(\overline F/F)\rightarrow \GL_2(\F)$ and $V^v$ a compact open subgroup of $(D\otimes_F{\mathbb A}_F^{\infty,v})^\times$, we associate the admissible smooth representation of $\GL_2(F_v)$ over $\F$: 
\begin{equation}\label{eq:goal}
\pi\defeq \varinjlim_{V_v}\Hom_{{\rm Gal}(\overline F/F)}\!\big(\rbar,H^1_{{\rm \acute et}}(X_{V^vV_v} \times_F \overline F, \F)\big),
\end{equation}
where the inductive limit runs over compact open subgroups $V_v$ of $(D\otimes_FF_v)^\times\cong \GL_2(F_v)$ and $X_{V^vV_v}$ is the smooth projective Shimura curve over $F$ associated to $D$ and $V^vV_v$.
Throughout this introduction we fix $\pi$ as in (\ref{eq:goal}) such that $\pi\ne 0$.
We assume moreover that $\rbar$ is sufficiently generic (\emph{strongly generic} in the terminology of this paper, see below) and that a standard multiplicity one assumption, defined below, holds (this multiplicity one assumption is commonly referred to as ``the minimal case'').

In our previous work we established that $\pi$ is of finite length.
More precisely we showed that $\pi$ is irreducible if $\rbar\vert_{\Gal(\o F_v/F_v)}$ is irreducible \cite[Thm.~3.105(i)]{BHHMS2}
and that $\pi$ is of length at least 3 (if $F \ne \Qp$) and at most $[F_v:\Qp]+1$ if $\pi$ is reducible \cite[Thm.\ 1.1.1]{BHHMS4}.
We moreover showed in \emph{loc.~cit.}~that $\pi$ is uniserial with distinct irreducible constituents if $\rbar\vert_{\Gal(\o F_v/F_v)}$ is nonsplit reducible.
The goal of this paper, which is a continuation of our previous aforementioned paper, is to investigate the irreducible constituents of $\pi$ when $\rbar\vert_{\Gal(\o F_v/F_v)}$ is reducible, especially in the more difficult case when $\rbar\vert_{\Gal(\o F_v/F_v)}$ is nonsplit.
(We remark that all but two of the irreducible constituents are supersingular.)
In particular, for any subquotient $\pi'$ of $\pi$ we determine its Iwahori-socle filtration, its invariants under the first principal congruence subgroup, and the dimension of its associated cyclotomic $(\vp,\Gamma)$-module, showing in each case that the answer is \emph{local}, i.e.\ only depends on $\rbar\vert_{\Gal(\o F_v/F_v)}$ (as expected).

Let us describe our most important results in more detail. 

We set $K\defeq F_v$, $f\defeq [K:\Qp]$ and $q\defeq p^f$. We denote by $\omega$ the mod $p$ cyclotomic character of $\Gal(\o K/K)$ (that we consider as a character of $K^\times$ via local class field theory, where uniformizers correspond to geometric Frobenius elements), and by $\omega_{f}$, $\omega_{2f}$ Serre's fundamental characters of the inertia subgroup $I_K$ of $\Gal(\o K/K)$ of level $f$, $2f$ respectively. In this introduction, we say that $\rbar$ is \emph{generic} if the following conditions are satisfied for $N \defeq \max\{12,2f+7\}$:
\begin{enumerate}
\item $\rbar\vert_{\Gal(\overline F/F(\sqrt[p]{1}))}$ is absolutely irreducible;
\item\label{it:wramifies} for $w\!\nmid\! p$ such that either $D$ or $\rbar$ ramifies at $w$, the framed deformation ring of $\rbar\vert_{{\rm Gal}(\overline F_w/F_w)}$ over the Witt vectors $W(\F)$ is formally smooth;
\item\label{it:wisv}$\rbar\vert_{I_{K}}$ is up to twist of form
\begin{equation*}
\begin{pmatrix}\omega_f^{\sum_{j=0}^{f-1} (r_j+1) p^j}&*\\0&1\end{pmatrix}\text{\ with\ }N \le r_j \le p-3-N
\end{equation*}
or
\begin{equation*}
\left(\begin{matrix}\omega_{2f}^{\sum_{j=0}^{f-1} (r_j+1) p^j} & \\ & \omega_{2f}^{q(\text{same})}\end{matrix}\right) \text{\ with\ }
  \begin{cases}
    N \le r_j \le p-3-N, \!\!&\!\! j>0 \\ N+1 \le r_0 \le p-2-N.&
  \end{cases}
\end{equation*}
\end{enumerate}
Note that \ref{it:wisv} implies $p\geq \max\{27,4f+9\}$ and that \ref{it:wramifies} can be made explicit (\cite{Shotton}, \cite[Rk.\ 8.1.1]{BHHMS1}).
{We say that $\rbar$ is \emph{strongly generic} if the above conditions are satisfied with $N \defeq \max\{12,4f+1\}$.}
By \cite[Thm.\ 1.9]{BHHMS1} (for $\rbar\vert_{{\rm Gal}(\o K/K)}$ semisimple) and \cite[Thm.\ 6.3(ii)]{YitongWangGKD} (for $\rbar\vert_{{\rm Gal}(\o K/K)}$ non-semisimple) for $\rbar$ generic there is a unique integer $r\geq 1$ (the ``multiplicity'') such that, for any (absolutely) irreducible representation $\sigma$ of $\GL_2(\cO_K)$ over $\F$, we have $\dim_{\F}\Hom_{\GL_2(\cO_K)}(\sigma, \pi)\in \{0, r\}$ (the notation $\o r$ and $r$ is somewhat unfortunate but is consistent with \cite[\S~8]{BHHMS1}).

In the sequel we let $\rhobar\defeq \rbar^\vee\vert_{{\rm Gal}(\o K/K)}$, where $\rbar^\vee$ is the dual of $\rbar$. 

Let $I$ (resp.~$I_1$) be the subgroup of $\GL_2(\cO_K)$ of matrices which are upper triangular modulo $p$ (resp.~upper unipotent modulo $p$) and $K_1\defeq 1+p{\rm M}_2(\cO_K) \subset I_1$. %
Let $Z_1\cong 1+p\cO_K$ be the center of $I_1$ (or $K_1$).
For any admissible smooth representation $\pi'$ of $\GL_2(K)$, we consider $\pi'^{I_1}$ (resp.\ $\pi'^{K_1}$) as finite-dimensional representation of $I/I_1 \cong \F_q\s \times \F_q\s$ (resp.\ $\GL_2(\cO_K)/K_1 \cong \GL_2(\F_q)$).

Suppose from now on that $\rbar$ is generic, that $r=1$ and that $\rhobar$ is nonsplit reducible.
{(Even though not explicitly written in the literature, representations $\rbar$ satisfying these conditions always exist, using suitable globalizations arguments (e.g.~\cite[Cor.\ A.3]{gee-kisin} and \cite[Appendix B]{EGS}) and refining them to minimal level using \cite[\S~3.3]{BD}.)}
Then, we recall that $\pi^{K_1} \cong D_0(\brho)$ \cite{LMS, HuWang, DanWild}, where the $\GL_2(\cO_K)$-representation $D_0(\brho)$ was defined in \cite[Thm.~13.8]{BP}.
As we recalled above, $\pi$ is uniserial with distinct irreducible constituents, so any subquotient $\pi'$ is uniquely a quotient $\pi_1'/\pi_1$ for some subrepresentations $\pi_1 \subset \pi_1' \subset \pi$.
There is a strictly increasing filtration $D_0(\brho)_{\le i}$ ($-1 \le i \le f$) of $D_0(\brho) = \pi^{K_1}$ defined in \cite[Prop.\ 5.2]{Hu-SMF}, and by \cite[Thm.\ \ref{bhhms4:thm:conj2}]{BHHMS4} there exist \emph{unique} integers $-1 \le i_0 \le i_0' \le f$ such that $\pi_1^{K_1} = D_0(\brho)_{\le i_0}$ and $\pi_1'^{K_1} = D_0(\brho)_{\le i_0'}$.
Let $D_0(\brho)_i \defeq D_0(\brho)_{\leq i}/D_0(\brho)_{\leq i-1}$ for $0 \le i \le f$.
If $D_0(\brho^\ss)$ denotes the analog of $D_0(\brho)$ for $\brho^\ss$, then there exists a decomposition $D_0(\brho^\ss) = \bigoplus_{i=0}^f D_0(\brho^\ss)_i$ \cite[Thm.\ 15.4]{BP} such that $D_0(\brho)_i \subset D_0(\brho^\ss)_i$ for all $i$.
The following is one of our main results.

\begin{thm}[Corollary~\ref{cor:K1-subquot}]\label{thm:intro-pi'-K1}
  Assume that $\rbar$ is generic, that $r=1$ and that $\rhobar$ is nonsplit reducible.
  Then for any nonzero subquotient $\pi'$ of $\pi$ we have
  $$\pi'^{K_1} \cong D_0(\brho^\ss)_{i_0+1} \oplus_{D_0(\brho)_{i_0+1}} (D_0(\brho)_{\le i_0'}/D_0(\brho)_{\le i_0})$$
  as $\GL_2(\cO_K)$-representations.
\end{thm}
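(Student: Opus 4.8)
The plan is to deduce the corollary from the precise structure of $\pi$ and of the $I_1$-invariants of its subquotients established earlier in the paper, together with the description of $D_0(\brho^\ss)$ in \cite[Thm.~15.4]{BP}. Since $\pi$ is uniserial with pairwise non-isomorphic constituents, a nonzero subquotient is uniquely $\pi'=\pi_1'/\pi_1$ with $\pi_1\subsetneq\pi_1'\subseteq\pi$, and $\pi_1^{K_1}=D_0(\brho)_{\le i_0}$, $\pi_1'^{K_1}=D_0(\brho)_{\le i_0'}$ by \cite[Thm.~\ref{bhhms4:thm:conj2}]{BHHMS4}. Applying the functor of $K_1$-invariants to $0\to\pi_1\to\pi_1'\to\pi'\to 0$ (all terms carrying compatible $\GL_2(\cO_K)/K_1\cong\GL_2(\F_q)$-actions), the long exact sequence of $K_1$-cohomology gives a short exact sequence of $\GL_2(\F_q)$-representations
\[0\longrightarrow D_0(\brho)_{\le i_0'}/D_0(\brho)_{\le i_0}\longrightarrow \pi'^{K_1}\longrightarrow Q\longrightarrow 0,\qquad Q\defeq\ker\!\big(H^1(K_1,\pi_1)\to H^1(K_1,\pi_1')\big).\]
Since for an inclusion $Z\hookrightarrow X$ of $\GL_2(\F_q)$-representations and a map $Z\to Y$ the amalgam $X\oplus_Z Y$ is exactly the extension $0\to Y\to X\oplus_Z Y\to X/Z\to 0$ whose class is the image under $Z\to Y$ of that of $0\to Z\to X\to X/Z\to 0$, the corollary is equivalent to: (i) $Q\cong D_0(\brho^\ss)_{i_0+1}/D_0(\brho)_{i_0+1}$ as $\GL_2(\F_q)$-representations; and (ii) the displayed extension is the pushout of $0\to D_0(\brho)_{i_0+1}\to D_0(\brho^\ss)_{i_0+1}\to D_0(\brho^\ss)_{i_0+1}/D_0(\brho)_{i_0+1}\to 0$ along the inclusion of $D_0(\brho)_{i_0+1}=D_0(\brho)_{\le i_0+1}/D_0(\brho)_{\le i_0}$ as the bottom graded piece of $D_0(\brho)_{\le i_0'}/D_0(\brho)_{\le i_0}$.

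Next I would reduce to the case $i_0'=f$ of a quotient $\pi'=\pi/\pi_1$. From $0\to\pi'\to\pi/\pi_1\to\pi/\pi_1'\to 0$ one gets $\pi'^{K_1}=\ker\big((\pi/\pi_1)^{K_1}\to(\pi/\pi_1')^{K_1}\big)$, so, granting the quotient formula below for \emph{every} quotient of $\pi$, the general case follows by a short diagram chase: when $i_0<i_0'$ the weights of $D_0(\brho^\ss)_{i_0+1}$ are disjoint from all $\GL_2(\F_q)$-constituents of $(\pi/\pi_1')^{K_1}$, so the transition map (induced by $\pi/\pi_1\twoheadrightarrow\pi/\pi_1'$) kills the summand $D_0(\brho^\ss)_{i_0+1}$ and restricts on $D_0(\brho)/D_0(\brho)_{\le i_0}$ to the quotient onto $D_0(\brho)/D_0(\brho)_{\le i_0'}$, whence the kernel is the asserted amalgam; when $i_0=i_0'$ (then $i_0\ge 0$) the amalgam is $D_0(\brho^\ss)_{i_0+1}/D_0(\brho)_{i_0+1}$, handled directly. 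So the heart of the matter is the quotient formula
\[(\pi/\pi_1)^{K_1}\cong D_0(\brho^\ss)_{i_0+1}\oplus_{D_0(\brho)_{i_0+1}}\big(D_0(\brho)/D_0(\brho)_{\le i_0}\big),\]
equivalently the computation of the image of $(\pi/\pi_1)^{K_1}\to H^1(K_1,\pi_1)$ and of the resulting extension class.

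For this I would combine: (a) the description, established earlier in the paper, of $\pi'^{I_1}$ and of the Iwahori-socle filtration of every subquotient of $\pi$ — via $\pi'^{I_1}=(\pi'^{K_1})^{I_1/K_1}$, with $I_1/K_1$ the upper-unipotent subgroup of $\GL_2(\F_q)$, this pins down the $\GL_2(\F_q)$-socle of $(\pi/\pi_1)^{K_1}$ and, together with the known $\GL_2(\F_q)$-constituents, leaves only finitely many candidates; (b) the structure of $D_0(\brho^\ss)=\bigoplus_i D_0(\brho^\ss)_i$ from \cite[Thm.~15.4]{BP} and \cite[Prop.~5.2]{Hu-SMF} — each $D_0(\brho^\ss)_i$ is multiplicity free with prescribed socle and cosocle, the $D_0(\brho^\ss)_i$ have pairwise disjoint weights, $D_0(\brho)_i\subseteq D_0(\brho^\ss)_i$, and $D_0(\brho)_0=D_0(\brho^\ss)_0$ for $\brho$ nonsplit reducible; and (c) the rigidity that results: among $\GL_2(\F_q)$-representations with multiplicity-free socle and the constituents from (a)--(b) that extend $D_0(\brho)/D_0(\brho)_{\le i_0}$ by the new weights precisely along $D_0(\brho)_{i_0+1}$, there is exactly one, namely the amalgam. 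Ingredient (b) also makes the formula consistent with \cite{BHHMS4} in the extreme cases: for $i_0=-1$ the equality $D_0(\brho)_0=D_0(\brho^\ss)_0$ collapses the amalgam to $D_0(\brho)_{\le i_0'}=\pi_1'^{K_1}$, and $i_0'=f$ is the quotient case itself.

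The main obstacle is step (c) together with (ii): one must show that the new $K_1$-invariants of $\pi/\pi_1$ fill out a whole copy of $D_0(\brho^\ss)_{i_0+1}/D_0(\brho)_{i_0+1}$ — not a proper subrepresentation — and glue onto $D_0(\brho)/D_0(\brho)_{\le i_0}$ along the non-split extension dictated by $D_0(\brho^\ss)_{i_0+1}$, rather than splitting off or gluing otherwise. This calls for a fine analysis of the cosocle of $\pi_1$ and of how $D_0(\brho)_{i_0+1}$ sits in $D_0(\brho)/D_0(\brho)_{\le i_0}$, using the previously established $I_1$-structure of $\pi/\pi_1$ to rule out any further gluing compatible with the $I_1/K_1$-invariants, and — to pass between the ``subobject'' and ``quotient'' computations — a duality argument based on Poincar\'e duality on the Shimura curves.
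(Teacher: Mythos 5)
Your reduction from the general subquotient $\pi'=\pi_1'/\pi_1$ to the quotient case $\pi_2=\pi/\pi_1$ is essentially the same as the paper's (Corollary~\ref{cor:K1-subquot} is deduced from Theorem~\ref{thm:pi2-K1} by identifying $\pi'^{K_1}$ with $\ker\!\big((\pi/\pi_1)^{K_1}\to(\pi/\pi_1')^{K_1}\big)$ and observing that $D_0(\brho^\ss)_{i_0+1}$ is killed because its constituents cannot meet the socle of $\pi/\pi_1'$). That part is fine. But the core of the theorem — the quotient formula $\pi_2^{K_1}\cong D_{i_0}\defeq D_0(\brho^\ss)_{i_0+1}\oplus_{D_0(\brho)_{i_0+1}}(D_0(\brho)/D_0(\brho)_{\le i_0})$ — is not established by your ingredients (a)--(c), and the ``rigidity'' claim in (c) is in fact false as stated.

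The gap is the following. One easily sees that $D_{i_0}\hookrightarrow\pi_2^{K_1}$, that both sides have the same $I_1$-invariants, and that they have the same $\GL_2(\cO_K)$-socle (these are precisely parts (ii)--(iii) of Lemma~\ref{lem:Ext-Di0}). Your step (c) asserts that these two facts, together with knowledge of the constituents, pin down $\pi_2^{K_1}$ uniquely as the amalgam. This is not so: a strict inclusion $D_{i_0}\subsetneq\pi_2^{K_1}$ is a priori compatible with equality of both socles and $I_1$-invariants, because the extra constituent $\tau'$ of $\pi_2^{K_1}/D_{i_0}$ need not contribute either a new socle constituent or a new $I_1$-invariant character (its $I_1$-invariant $\chi_{\tau'}$ may already be accounted for, and the corresponding $I_1$-fixed vector of $\tau'$ need not lift to $\pi_2^{I_1}$). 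So knowing $\pi_2^{I_1}$ and $\soc_{\GL_2(\cO_K)}(\pi_2)$ cannot close the argument. What the paper actually uses is much stronger: if $D_{i_0}\subsetneq\pi_2^{K_1}$, then, after a careful analysis of the radical of the offending extension $V'$ (split into the cases $[V':\tau']=1$ and $[V':\tau']=2$, using Lemmas~\ref{lem:soc-cosoc-sigma}, \ref{lem:mu-pm}, \ref{lem:Ext1-W2}, and the $\wt\Gamma$-modules $Q_\cJ$, $\Theta_\cJ$, $\Psi_i$ of \S~\ref{sec:some-wtgamma-repr}), one constructs \emph{two} linearly independent $I$-maps $W_\cJ\to\pi_2[\m^2]$ (resp.\ $\pi_2[\m^3]$) with cosocle $\chi_{\tau'}$, contradicting the multiplicity freeness of $\pi_2[\m^n]$ for $n\le 3$ established in Corollary~\ref{cor:pi2-mn-mult-free} — which itself depends on the full computation of $\gr_\m(\pi_2^\vee)$ in Theorem~\ref{thm:gr-pi2}. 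None of this is in your proposal; the multiplicity-freeness of $\pi_2[\m^3]$ as $I$-representation is the actual rigidity input, and it is a fine structural fact that goes well beyond the $K_1$-invariants or $I_1$-invariants alone. Finally, the duality you invoke plays no role in the paper's proof (the argument is carried out under the abstract axioms \ref{it:assum-i}--\ref{it:assum-v}, not via Poincar\'e duality on the curve), so it would have to be developed from scratch and does not obviously substitute for the missing multiplicity-freeness.
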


Note that if $\brho$ is split reducible, we prove a stronger result in Proposition \ref{prop:K1-square-invariants}.

Using the theorem it is not hard to determine the $I_1$-invariants and the $\GL_2(\cO_K)$-socle of any subquotient $\pi'$, as described in Theorem~\ref{thm:intro-structure-pi'} below.
In fact we do not know how to prove Theorem~\ref{thm:intro-pi'-K1} directly but rather deduce it with the help of Theorem~\ref{thm:intro-structure-pi'}.

To state Theorem~\ref{thm:intro-structure-pi'}, we recall some more standard notation (for more details, see \S~\ref{sec:notation}).
The set $\P$ parametrizes $\JH(D_0(\brho)^{I_1})$ and likewise $\P^\ss \supset \P$ parametrizes $\JH(D_0(\brho^\ss)^{I_1})$, where $\JH(\cdot)$ denotes the set of Jordan--H\"older factors (which are $1$-dimensional here since $I/I_1$ is commutative).
Given $\lambda \in \P^\ss$ let $\chi_\lambda : I/I_1 \to \F\s$ denote the corresponding character and let $\ell(\lambda) \in \{0,1,\dots,f\}$ be the unique integer $i$ such that $\chi_\lambda \in \JH(D_0(\brho^\ss)_i)$.
Let $W(\brho)$ denote the set of Serre weights of $\brho$ (cf.~\cite{BDJ}), i.e.\ the irreducible subrepresentations of $\pi|_{\GL_2(\cO_K)}$ or equivalently of $D_0(\brho)$ by its construction (\cite[\S~12]{BP}), and similarly define $W(\brho^\ss)$ using $D_0(\brho^\ss)$ (so that $W(\brho^\ss)$ contains $W(\brho)$). 
In other words, $W(\brho) = \JH(\soc_{\GL_2(\cO_K)}(D_0(\brho)))$, where $\soc_{\GL_2(\cO_K)}(\cdot)$ denotes the $\GL_2(\cO_K)$-socle.
For $\sigma \in W(\brho^\ss)$ we let $\ell(\sigma) \defeq \ell(\lambda)$, where $\lambda \in \P^\ss$ parametrizes $\sigma^{I_1} \subset D_0(\brho^\ss)^{I_1}$.

\begin{thm}[Corollaries~\ref{cor:subquot-I1-invt} and~\ref{cor:subquot-K-soc}]\label{thm:intro-structure-pi'}
  Assume that $\rbar$ is generic, that $r=1$ and that $\rhobar$ is nonsplit reducible.
  Then for any nonzero subquotient $\pi'$ of $\pi$ we have:
  \begin{enumerate}
  \item $\JH(\pi'^{I_1}) = \big\{ \chi_\lambda : \text{$\lambda \in \P,\; i_0 < \ell(\lambda) \le i'_0$ or $\lambda \in \P^\ss \setminus \P,\; \ell(\lambda) = i_0+1$} \big\}$;
  \item $\soc_{\GL_2(\cO_K)}(\pi') \cong \Big(\bigoplus_{\sigma \in W(\brho), i_0 < \ell(\sigma) \le i'_0} \sigma\Big) \oplus \Big(\bigoplus_{\sigma \in W(\brho^\ss) \setminus W(\brho), \ell(\sigma) = i_0+1} \sigma\Big)$.
  \end{enumerate}
\end{thm}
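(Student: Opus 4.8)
The plan is to reduce both assertions to a question about $\GL_2(\F_q)$-representations by taking $K_1$- and $I_1$-invariants. Write the (unique) composition series of $\pi$ as $0 = \Pi_0 \subsetneq \Pi_1 \subsetneq \cdots \subsetneq \Pi_n = \pi$; since $\pi$ is uniserial with pairwise non-isomorphic constituents, a nonzero subquotient equals $\pi' = \Pi_b/\Pi_a$ for some $0 \le a < b \le n$, with $\pi_1 = \Pi_a$, $\pi_1' = \Pi_b$, and by \cite[Thm.~\ref{bhhms4:thm:conj2}]{BHHMS4} there are strictly increasing integers $i_0 = i_0(a)$, $i_0' = i_0(b)$ with $\Pi_a^{K_1} = D_0(\brho)_{\le i_0}$ and $\Pi_b^{K_1} = D_0(\brho)_{\le i_0'}$. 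As $K_1$ is normal in $I_1$ and every Serre weight is trivial on $K_1$, one has $\pi'^{I_1} = (\pi'^{K_1})^{\overline{I_1}}$, where $\overline{I_1} = I_1/K_1 \hookrightarrow \GL_2(\F_q)$, and (using that every subquotient of $\pi$ has $\GL_2(\cO_K)$-socle made of Serre weights, part of the known structure of $\pi$) $\soc_{\GL_2(\cO_K)}(\pi') = \soc_{\GL_2(\F_q)}(\pi'^{K_1})$. So it is enough to locate $\pi'^{K_1}$ among $\GL_2(\F_q)$-representations precisely enough to read off its $\GL_2(\F_q)$-socle and its $\overline{I_1}$-invariants.

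I would treat subrepresentations and quotients of $\pi$ first, then combine. For a subrepresentation $\Pi_b$ one has $\Pi_b^{K_1} = D_0(\brho)_{\le i_0'}$, and the construction of the filtration in \cite[Prop.~5.2]{Hu-SMF}, the inclusions $D_0(\brho)_i \subseteq D_0(\brho^\ss)_i$ and the known socles of the $D_0(\brho^\ss)_i$ give $\soc_{\GL_2(\F_q)}(\Pi_b^{K_1}) = \bigoplus_{\sigma \in W(\brho),\ \ell(\sigma) \le i_0'} \sigma$ and $\JH\big((\Pi_b^{K_1})^{\overline{I_1}}\big) = \{\chi_\lambda : \lambda \in \P,\ \ell(\lambda) \le i_0'\}$ — the easy half. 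For a quotient $\pi/\Pi_a$, applying $(-)^{K_1}$ to $0 \to \Pi_a \to \pi \to \pi/\Pi_a \to 0$ and using $\pi^{K_1} = D_0(\brho)$, $\Pi_a^{K_1} = D_0(\brho)_{\le i_0}$ gives
\[ 0 \to D_0(\brho)/D_0(\brho)_{\le i_0} \to (\pi/\Pi_a)^{K_1} \to \ker\big(H^1(K_1,\Pi_a)\to H^1(K_1,\pi)\big) \to 0, \]
and the core of the proof is to identify the right-hand term: I would show it contributes exactly the Serre weights of $W(\brho^\ss)\setminus W(\brho)$ of level $i_0+1$, giving $(\pi/\Pi_a)^{K_1} \cong D_0(\brho^\ss)_{i_0+1} \oplus_{D_0(\brho)_{i_0+1}} (D_0(\brho)/D_0(\brho)_{\le i_0})$. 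These are precisely the weights sitting, inside $D_0(\brho)$, one step above a level-$i_0$ weight of $W(\brho)$ through the gluing forced by $\brho$ being nonsplit, so they become exposed in the socle exactly once $D_0(\brho)_{\le i_0}$ is killed; I would prove this using the explicit embedding $D_0(\brho) \hookrightarrow D_0(\brho^\ss)$ of \cite{BP}, \cite{Hu-SMF} together with the structure of $\pi$ as an iterated extension of its constituents from \cite{BHHMS4} (alternatively, the $\overline{I_1}$-invariant half can be read off from the $T(\F_q)$-equivariant Iwahori Hilbert series of $\Pi_a^\vee$, via $\pi'^\vee = \ker(\Pi_b^\vee \to \Pi_a^\vee)$ and the Hilbert-series computation of this paper). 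Finally, for a general subquotient $\pi' = \Pi_b/\Pi_a$ one has $\pi'^{K_1} = \ker\big((\pi/\Pi_a)^{K_1} \to (\pi/\Pi_b)^{K_1}\big)$, and the two-term descriptions above give its $\GL_2(\F_q)$-socle and $\overline{I_1}$-invariants; taking $\overline{I_1}$-invariants yields (i), and taking $\GL_2(\F_q)$-socles (via $\soc_{\GL_2(\cO_K)}(\pi') = \soc_{\GL_2(\F_q)}(\pi'^{K_1})$) yields (ii), the dependence only on $\rhobar$ being inherited from that of $D_0$, of $W(\brho)$, and of the constituents of $\pi$.

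The main obstacle is the control of $\ker\big(H^1(K_1,\Pi_a) \to H^1(K_1,\pi)\big)$ — equivalently, the determination of the second layer of the $\GL_2(\cO_K)$-socle filtration of $\pi$ above $\Pi_a$ — in the nonsplit reducible case that the introduction flags as the difficult one. The genericity and $r=1$ hypotheses are used throughout so that all the relevant $\GL_2(\F_q)$-representations are multiplicity-free, making the bookkeeping (via the explicit $D_0(\brho) \hookrightarrow D_0(\brho^\ss)$ and the extensions of \cite{BHHMS4}, or via equivariant Hilbert series) exact and forcing the new Serre weights to be precisely those of level $i_0+1$, with no further contributions.
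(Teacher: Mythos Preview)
Your strategy is inverted relative to the paper's, and the inversion hides exactly the hard step. You propose to determine $\pi'^{K_1}$ first (showing $(\pi/\Pi_a)^{K_1}\cong D_0(\brho^\ss)_{i_0+1}\oplus_{D_0(\brho)_{i_0+1}}(D_0(\brho)/D_0(\brho)_{\le i_0})$ via control of $\ker(H^1(K_1,\Pi_a)\to H^1(K_1,\pi))$), and then read off (i) and (ii). But the identification of $\pi'^{K_1}$ is Theorem~\ref{thm:intro-pi'-K1}, and the paper explicitly states that it does \emph{not} know how to prove that result directly; instead it deduces Theorem~\ref{thm:intro-pi'-K1} \emph{from} Theorem~\ref{thm:intro-structure-pi'}(i),(ii). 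Your ``core of the proof'' --- bounding the $H^1$-kernel --- is precisely what \S\ref{sec:k1-quotient} spends many pages on, and the argument there essentially uses (i) and (ii) (through Lemma~\ref{lem:Ext-Di0}(iii) and Corollary~\ref{cor:subquot-K-soc}) together with the multiplicity-freeness of $\pi_2[\m^3]$ coming from Theorem~\ref{thm:intro-gr-pi'}. The heuristic you offer (``weights sitting one step above a level-$i_0$ weight become exposed'') does not by itself rule out further Serre weights appearing; ruling them out is the substance of the proof of Theorem~\ref{thm:pi2-K1}.

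The paper's actual route is: part (i) is obtained by evaluating $\gr_\m(\pi_2^\vee)$ in degree~$0$, using the explicit isomorphism of Theorem~\ref{thm:gr-pi2} (this is Proposition~\ref{prop:pi2-I1-invt}), and then a short diagram chase with $H^1(I_1/Z_1,\pi_1)$ passes from quotients to general subquotients (Corollary~\ref{cor:subquot-I1-invt}). Part (ii) is then proved in Corollary~\ref{cor:subquot-K-soc} \emph{without} knowing $\pi'^{K_1}$: the inclusion ``$\supseteq$'' comes from the embedding $D_0(\brho)_{\le i_0'}/D_0(\brho)_{\le i_0}\hookrightarrow\pi'^{K_1}$, while ``$\subseteq$'' uses (i) to say any $\tau\hookrightarrow\pi'$ has $\tau^{I_1}=\chi_\lambda$ with $\lambda$ in the prescribed set, and then a case analysis (lifting to $\pi_1'$ when $\lambda\in\P$, or tracing through an explicit induced representation when $\lambda\in\P^\ss\setminus\P$) forces $\tau\in W(\brho^\ss)$ with the correct $\ell(\tau)$. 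Your parenthetical alternative for (i) --- reading it from the $H$-equivariant graded module of $\pi'^\vee$ --- is in fact the correct idea and is exactly what the paper does via Theorem~\ref{thm:gr-pi2}; you should promote that to the main argument and then prove (ii) directly from (i), rather than via $\pi'^{K_1}$.
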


(Here part (i) is proved in Corollary~\ref{cor:subquot-I1-invt} and part (ii) in Corollary~\ref{cor:subquot-K-soc}.)

By Theorem~\ref{thm:intro-pi'-K1} we can relate the rank of the $(\varphi,\Gamma)$-module of a subquotient $\pi'$ to the $K_1$-invariants $\pi'^{K_1}$  (in a way which is compatible with \cite[\S\ 2.4.3\ Example\ 1, Conj.~1.4]{BHHMS2}), generalizing a result of Yitong Wang \cite[Thm.\ 1.2]{YitongWang} from subrepresentations to subquotients:
\begin{cor}\label{thm:intro-yitong-subquot}
  Assume that $\rbar$ is generic and that $r=1$.
  Then for any subquotient $\pi'$ of $\pi$ we have
  \begin{equation*}
    \dim_{\F\ppar{X}} D_{\xi}^{\vee}(\pi')=|\JH(\pi'^{K_1}) \cap W(\rhobar^\ss)|, %
  \end{equation*}
  where $D_\xi^\vee({\pi'})$ is the cyclotomic $(\varphi,\Gamma)$-module associated to ${\pi'}$ in \cite[\S~\ref{bhhms2:covariant}]{BHHMS2}.
\end{cor}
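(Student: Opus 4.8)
The strategy is to reduce the statement about a general subquotient $\pi'$ to what we already know about subrepresentations of $\pi$, using the fact that $\pi$ is uniserial. Write $\pi' = \pi_1'/\pi_1$ with $\pi_1 \subset \pi_1' \subset \pi$. By Yitong Wang's result \cite[Thm.\ 1.2]{YitongWang} (valid for subrepresentations), we have $\dim_{\F\ppar{X}} D_\xi^\vee(\pi_1) = |\JH(\pi_1^{K_1}) \cap W(\rhobar^\ss)|$ and likewise for $\pi_1'$. Since $D_\xi^\vee(-)$ is exact (a property of the functor recalled in \cite[\S~\ref{bhhms2:covariant}]{BHHMS2}; alternatively one can bypass exactness and only use additivity on the relevant short exact sequence $0 \to \pi_1 \to \pi_1' \to \pi' \to 0$), we get
\[
\dim_{\F\ppar{X}} D_\xi^\vee(\pi') = \dim_{\F\ppar{X}} D_\xi^\vee(\pi_1') - \dim_{\F\ppar{X}} D_\xi^\vee(\pi_1) = |\JH(\pi_1'^{K_1}) \cap W(\rhobar^\ss)| - |\JH(\pi_1^{K_1}) \cap W(\rhobar^\ss)|.
\]
So the content is to show that this difference equals $|\JH(\pi'^{K_1}) \cap W(\rhobar^\ss)|$.

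Now I would invoke Theorem~\ref{thm:intro-pi'-K1} (Corollary~\ref{cor:K1-subquot}) to compute all three $K_1$-invariant spaces. For $\pi_1$, $\pi_1'$ we use their descriptions via $D_0(\brho)_{\le i_0}$, $D_0(\brho)_{\le i_0'}$ — when $\rhobar$ is nonsplit reducible these are honest $D_0(\brho)_{\le i}$'s; when $\rhobar$ is split reducible or irreducible the subrepresentation structure is simpler (irreducible in the irreducible case, and handled by Proposition~\ref{prop:K1-square-invariants} in the split case), so those cases are either trivial or reduce to a direct computation. In the nonsplit reducible case, Theorem~\ref{thm:intro-pi'-K1} gives
\[
\pi'^{K_1} \cong D_0(\brho^\ss)_{i_0+1} \oplus_{D_0(\brho)_{i_0+1}} \big(D_0(\brho)_{\le i_0'}/D_0(\brho)_{\le i_0}\big),
\]
and one checks that $\JH$ of a fibre product/amalgam $A \oplus_C B$ over a common quotient (here $D_0(\brho)_{i_0+1}$, a subquotient of both sides) satisfies $\JH(A \oplus_C B) = \JH(A) \sqcup \JH(B) \setminus \JH(C)$ as multisets, so intersecting with $W(\rhobar^\ss)$ turns the identity we need into a purely combinatorial one about the filtration pieces $D_0(\brho)_i$ and the graded pieces of $D_0(\brho^\ss)$.

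**The combinatorial core.** It then remains to verify, for all $-1 \le i_0 \le i_0' \le f$, the identity
\[
|\JH(D_0(\brho^\ss)_{i_0+1}) \cap W(\rhobar^\ss)| + |\JH(D_0(\brho)_{\le i_0'}/D_0(\brho)_{\le i_0}) \cap W(\rhobar^\ss)| - |\JH(D_0(\brho)_{i_0+1}) \cap W(\rhobar^\ss)|
\]
equals $|\JH(\pi_1'^{K_1}) \cap W(\rhobar^\ss)| - |\JH(\pi_1^{K_1}) \cap W(\rhobar^\ss)|$, where the right side telescopes. This is where I would use the known combinatorics of $W(\brho)$, $W(\brho^\ss)$ and the parametrization by $\P \subset \P^\ss$ with the length function $\ell$: each Serre weight in $W(\rhobar^\ss)$ contributes to exactly one graded piece $D_0(\brho^\ss)_i$, and $W(\rhobar^\ss) \cap \JH(D_0(\brho)_{\le i_0}) $ consists precisely of the weights of length $\le i_0$ that already lie in the relevant part of $D_0(\brho)$. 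The main obstacle — the only place real care is needed — is bookkeeping the difference between $D_0(\brho)_i$ (a graded piece of the smaller, nonsplit object) and $D_0(\brho^\ss)_i$ (the corresponding graded piece of the semisimplification), i.e.\ controlling which elements of $W(\rhobar^\ss) \setminus W(\rhobar)$ appear; this is governed by the inclusion $D_0(\brho)_i \subset D_0(\brho^\ss)_i$ from \cite[Thm.\ 15.4]{BP} together with Theorem~\ref{thm:intro-structure-pi'}(i)–(ii), which already pin down $\JH(\pi'^{I_1})$ and $\soc_{\GL_2(\cO_K)}(\pi')$. Granting those, the identity reduces to the observation that $|\JH(-) \cap W(\rhobar^\ss)|$ is additive along the amalgam and matches the telescoping count, which is then a finite check using $\ell$.
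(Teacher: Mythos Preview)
Your approach is correct and essentially the same as the paper's: both use exactness of $D_\xi^\vee$, the known result for subrepresentations (Wang's theorem, equivalently \cite[Cor.~\ref{bhhms4:cor:yitong}]{BHHMS4}), and Corollary~\ref{cor:K1-subquot} to identify $\pi'^{K_1}$, with the paper's version slightly more streamlined in that it computes both sides directly as $\sum_{i_0 < i \le i_0'}\binom fi$ rather than via your difference formula. One small slip: the amalgam $A \oplus_C B$ here is a pushout over a common \emph{subobject}, not a common quotient, since $D_0(\brho)_{i_0+1}$ injects into both $D_0(\brho^\ss)_{i_0+1}$ and $D_0(\brho)_{\le i_0'}/D_0(\brho)_{\le i_0}$; your multiset formula is nonetheless correct for that situation.
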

This is proved in Corollary \ref{cor:yitong-subquot-ss} if $\brho$ is semisimple and Corollary \ref{cor:yitong-subquot} otherwise.

The key in proving Theorem~\ref{thm:intro-structure-pi'} (and hence Theorem~\ref{thm:intro-pi'-K1}) is the following result which determines the (dual of the) socle filtration of $\pi'$ as an $I$-representation.
To explain, let $\Lambda\defeq \F\bbra{I_1/Z_1}$ denote the Iwasawa algebra of $I_1/Z_1$, which is a (noncommutative) noetherian local ring of Krull dimension $3f$.
We denote by $\m$ its maximal ideal.
Since $\pi$ has a central character, any subquotient $\pi'$ of $\pi$ is an admissible smooth representation of $\GL_2(K)/Z_1$ and hence its linear dual $\pi'^\vee \defeq \Hom_\F(\pi',\F)$ is a finitely generated $\Lambda$-module.
For any $\lambda \in \P$ there are explicit graded ideals
\begin{equation*}
  \fa(\lambda) = \fa_1^{f}(\lambda) \subset \fa_1^{f-1}(\lambda) \subset \cdots \subset \fa_1^{0}(\lambda) \subset \fa_1^{-1}(\lambda) = \gr_\m(\Lambda)
\end{equation*}
of $\gr_\m(\Lambda)$ (with commutative quotient rings of dimension $f$), cf.\ \cite[eq.~\eqref{bhhms4:eq:fa-1}]{BHHMS4}.
If $M$ is a graded module and $k\in\Z$, we let $M(k)$ denote $M$ with shifted grading $M(k)_n\defeq M_{n+k}$ for all $n\in \Z$.
(With our conventions, note further that $\gr_\m(\Lambda)$ and $\gr_\m(\pi'^\vee)$ are supported in \emph{non-positive} degrees, i.e.\ the degree $d$ part of $\gr_\m(\pi'^\vee)$ equals $\m^{-d} \pi'^\vee/\m^{-d+1}\pi'^\vee$.)

\begin{thm}[Corollary \ref{cor:gr-pi'}]\label{thm:intro-gr-pi'}
  Assume that $\rbar$ is %
  \emph{strongly} generic, that $r=1$ and that $\rhobar$ is nonsplit reducible.
  Then for any subquotient $\pi'$ of $\pi$ we have an isomorphism of graded $\gr_\m(\Lambda)$-modules with compatible $I/I_1$-actions,
  \begin{equation}
  \label{eq:intro:2}
    \gr_\m(\pi'^\vee) \cong \bigoplus_{\lambda\in\mathscr{P}}\chi_{\lambda}^{-1}\otimes \frac{\fa_1^{i_0}(\lambda)}{\fa_1^{i_0'}(\lambda)}(-d_\lambda),
  \end{equation}
  where $d_\lambda \defeq  \max\{i_0+1-\ell(\lambda),0\}$.
\end{thm}

For $\brho$ semisimple the analogous result is \cite[Cor.~\ref{bhhms4:cor:finite-length}(ii)]{BHHMS4}.
{We remark that $\rbar$ generic, rather than strongly generic, is sufficient in case $\pi'$ is a quotient of $\pi$.}

Theorem~\ref{thm:intro-gr-pi'} should be compared with \cite[Cor.\ \ref{bhhms4:cor:subquot}]{BHHMS4}, which shows that
\begin{equation}\label{eq:gr-subquot}
  \gr_F(\pi'^\vee) \cong \bigoplus_{\lambda\in\mathscr{P}}\chi_{\lambda}^{-1}\otimes \frac{\mathfrak{a}_1^{i_0}(\lambda)}{\mathfrak{a}_1^{i_0'}(\lambda)},
\end{equation}
where $F$ denotes the subquotient filtration induced by the $\m$-adic filtration on $\pi^\vee$.
It also generalizes \cite[Thm.~\ref{bhhms4:thm:CMC}]{BHHMS4} (when $\pi' = \pi$) and \cite[Cor.~\ref{bhhms4:cor:gr-pi1}]{BHHMS4} (when $\pi' \subset \pi$), though under stronger genericity assumptions.

{We point out the following interesting consequence of Theorem~\ref{thm:intro-gr-pi'} when $f = 2$.
Let $\pi_s$ be defined analogously to $\pi$ by using a global Galois representation $\rbar_s$ such that $\rhobar^\ss \cong \rbar_s^\vee\vert_{{\rm Gal}(\o K/K)}$.
As $f=2$ we know that $\pi$ is uniserial of the form $\pi_0 \-- \pi_1 \-- \pi_2$ by \cite[Thm.~10.37]{HuWang2} and that $\pi_s \cong \pi_0 \oplus \pi_1' \oplus \pi_2$ by \cite[Thm.~3.105(ii)]{BHHMS2} for explicit principal series $\pi_0$, $\pi_2$ and some irreducible supersingular representations $\pi_1$, $\pi_1'$.
Optimistically one may hope that $\pi_1 \cong \pi_1'$.
By comparing Theorem~\ref{thm:intro-gr-pi'} and \cite[Cor.~\ref{bhhms4:cor:finite-length}(ii)]{BHHMS4} we can provide the nontrivial evidence that $\gr_\m(\pi_1^\vee) \cong \gr_\m(\pi_1'^\vee)$, cf.\ Remark~\ref{rem:gr-m-semisimple-nonsplit} (which also gives a weaker result for $f > 2$).}
Moreover, $\pi_1^{K_1} \cong \pi_1'^{K_1}$ as $\GL_2(\cO_K)$-representations by comparing Theorem~\ref{thm:intro-pi'-K1} with (the $K_1$-invariants in) Proposition~\ref{prop:K1-square-invariants}.

In another direction, we determine the $\m_{K_1}^2$-invariants of subquotients in case $\brho$ is split reducible, where $\fm_{K_1}$ denotes the maximal ideal of the local ring $\F\bbra{K_1/Z_1}$.
We find, in particular, some weak evidence for the hope that $\pi$ is semisimple in this case:
\begin{prop}[Proposition \ref{prop:K1-square-invariants}]\label{prop:intro-K1-square-invariants}
  Assume that $\rbar$ is generic, that $r=1$ and that $\rhobar$ is split reducible.
  For any subrepresentations $\pi_1\subset\pi_2$ of $\pi$ the induced sequence of $\GL_2(K)$-representations
  \[0\ra \pi_1[\m_{K_1}^2]\ra \pi_2[\m_{K_1}^2]\ra (\pi_2/\pi_1)[\m_{K_1}^2]\ra0\]
  is split exact.
\end{prop}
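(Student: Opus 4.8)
The plan is to dualize. Write $\fm$ for $\fm_{K_1}$, the maximal ideal of $\Lambda_{K_1}\defeq\F\bbra{K_1/Z_1}$, so that $\pi'[\fm^2]^\vee\cong\pi'^\vee/\fm^2\pi'^\vee$ for any admissible smooth representation $\pi'$ of $\GL_2(K)/Z_1$. Since $\pi'\mapsto\pi'[\fm^2]$ is left exact, the asserted split exactness is equivalent, after applying $\Hom_\F(-,\F)$, to the split exactness of
\[0\ra (\pi_2/\pi_1)^\vee/\fm^2(\pi_2/\pi_1)^\vee\ra \pi_2^\vee/\fm^2\pi_2^\vee\ra \pi_1^\vee/\fm^2\pi_1^\vee\ra 0\]
as a sequence of $\F[\GL_2(\cO_K)/K_n]$-modules for $n\gg 0$. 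Right exactness is automatic because $\pi_2^\vee\twoheadrightarrow\pi_1^\vee$ is $\Lambda_{K_1}$-linear, so the content is (a) left injectivity, i.e.\ $\fm^2\pi_2^\vee\cap(\pi_2/\pi_1)^\vee=\fm^2(\pi_2/\pi_1)^\vee$ inside $\pi_2^\vee$ (equivalently, that $\pi'\mapsto\pi'[\fm^2]$ is \emph{exact} on subrepresentations of $\pi$), and (b) a $\GL_2(\cO_K)$-equivariant splitting.

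Both (a) and (b) will follow from the structural assertion that $\pi'[\fm_{K_1}^2]$ is \emph{local} for every subquotient $\pi'$ of $\pi$: namely that there are finite-dimensional $\GL_2(\cO_K)$-representations $C_0,\dots,C_f$, depending only on $\brho$ ($=\brho^\ss$ in the split reducible case), with $C_i^{K_1}=D_0(\brho)_i$, such that the inclusion $\pi'[\fm_{K_1}^2]\hookrightarrow\pi[\fm_{K_1}^2]=\bigoplus_{i=0}^fC_i$ identifies $\pi'[\fm_{K_1}^2]$ with the subsum $\bigoplus_{i\in S(\pi')}C_i$, where $S(\pi')\subseteq\{0,\dots,f\}$ is determined by $\pi'^{K_1}=\bigoplus_{i\in S(\pi')}D_0(\brho)_i$. (The existence of such $S(\pi')$, and its additivity $S(\pi_2)=S(\pi_1)\sqcup S(\pi_2/\pi_1)$ for $\pi_1\subseteq\pi_2$ — equivalently, exactness of $K_1$-invariants on subquotients of $\pi$ — is the split reducible case of the results on $\pi'^{K_1}$ in \cite{BHHMS4}, cf.\ \cite[Thm.~\ref{bhhms4:thm:conj2}]{BHHMS4} and \eqref{eq:gr-subquot}.) To prove the structural assertion I would begin from the decomposition of $\gr_\m(\pi'^\vee)$ over $\gr_\m(\Lambda)$ established in \cite{BHHMS4} ($\m$ the maximal ideal of $\Lambda=\F\bbra{I_1/Z_1}$), which is already a direct sum over $\lambda\in\P$, cf.\ \cite[Cor.~\ref{bhhms4:cor:finite-length}]{BHHMS4} and \eqref{eq:gr-subquot}; transport it to the $\Lambda_{K_1}$-adic filtration on $\pi'^\vee$ by comparing the $K_1$- and $I_1$-level filtrations; and use the $I/I_1$-grading on the $\m$-adic graded pieces to control which $\GL_2(\cO_K)$-extensions can occur in the second $K_1$-layer, forcing them to respect $D_0(\brho)=\bigoplus_iD_0(\brho)_i$. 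Alternatively the second $K_1$-layer can be computed directly via ``$K_1$-diagrams'' in the style of Breuil--Pa\v{s}k\={u}nas and Hu, genericity keeping the Serre weights of $\brho$ and their Frobenius twists far enough apart that no extension links two distinct indices~$i$.

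Granting the structural assertion, the proposition follows formally. Since the $C_i$ depend only on $\brho$, the inclusion $\pi_1[\fm_{K_1}^2]=\bigoplus_{i\in S(\pi_1)}C_i\hookrightarrow\bigoplus_{i\in S(\pi_2)}C_i=\pi_2[\fm_{K_1}^2]$ is the inclusion of a subsum of a direct sum of $\GL_2(\cO_K)$-representations, hence splits, with complement $\bigoplus_{i\in S(\pi_2)\setminus S(\pi_1)}C_i$. The map $\pi_2[\fm_{K_1}^2]\to(\pi_2/\pi_1)[\fm_{K_1}^2]$ kills $\pi_1[\fm_{K_1}^2]$, so it restricts to an injection of this complement into $(\pi_2/\pi_1)[\fm_{K_1}^2]=\bigoplus_{i\in S(\pi_2/\pi_1)}C_i$; since $S(\pi_2/\pi_1)=S(\pi_2)\setminus S(\pi_1)$ this injection is an isomorphism on dimension grounds. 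Hence $\pi_2[\fm_{K_1}^2]\to(\pi_2/\pi_1)[\fm_{K_1}^2]$ is surjective, the a priori left-exact sequence $0\ra\pi_1[\fm_{K_1}^2]\ra\pi_2[\fm_{K_1}^2]\ra(\pi_2/\pi_1)[\fm_{K_1}^2]\ra0$ is exact, and the chosen complement provides a $\GL_2(\cO_K)$-equivariant splitting. Dualizing back through $\Hom_\F(-,\F)$ yields the claim.

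The one genuine difficulty is the structural assertion of the second paragraph. The graded-module computations available from \cite{BHHMS4} are for the Iwahori-Iwasawa algebra $\F\bbra{I_1/Z_1}$, not for $\Lambda_{K_1}=\F\bbra{K_1/Z_1}$, and they retain only associated graded information; upgrading this to an honest $\GL_2(\cO_K)$-equivariant direct-sum decomposition of the finite-dimensional module $\pi'[\fm_{K_1}^2]$ — in particular ruling out nonsplit extensions among the $f+1$ local pieces in the second $K_1$-layer, and verifying that each $C_i$ is independent of the ambient subquotient $\pi'$ — is the substantive part, and is where the genericity hypothesis does its work.
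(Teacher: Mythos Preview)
Your reduction to a structural assertion --- that $\pi'[\fm_{K_1}^2]\cong\bigoplus_{i\in S(\pi')}C_i$ for fixed pieces $C_i$ depending only on $\brho$ (the paper takes $C_i=\wt D_0(\brho)_i$) --- matches the paper's shape exactly, and your formal deduction of split exactness from it is correct. The gap is in how the structural assertion is established.

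The paper does \emph{not} go through the Iwahori-graded decomposition of $\gr_\m(\pi'^\vee)$, and it is unclear that route works: the $I_1$-graded data controls $I$-structure, not the $\GL_2(\cO_K)$-extensions that could link distinct blocks in the second $K_1$-layer. Instead, after first proving $\pi[\fm_{K_1}^2]\cong\wt D_0(\brho)$ directly (Proposition~\ref{prop:ii'}), the paper handles subrepresentations $\pi_1$ by combining multiplicity-freeness of $\wt D_0(\brho)$ with the fact that $\JH(\soc_{\GL_2(\cO_K)}(\pi/\pi_1))\subset W(\brho)$ from \cite[Lemma~\ref{bhhms4:lem:soc-exact}]{BHHMS4}. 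The heart is part~(ii) with $\pi_2=\pi$, proved by contradiction: a hypothetical extra piece of $(\pi/\pi_1)[\fm_{K_1}^2]$ gives a nonsplit $\wt\Gamma$-extension by some $\tau\in W(\brho)$ (Lemma~\ref{lem:ex-wtD}(i)); via Lemma~\ref{lem:4.18} and the $\Ext^1$-vanishing of Corollary~\ref{cor:coker-ext1} this is realised as the image of a map $\Ind_I^{\GL_2(\cO_K)}\overline W_{\chi_\tau,3}\to\pi/\pi_1$; the Iwahori-level splitting \cite[Cor.~\ref{bhhms4:cor:split-In}]{BHHMS4} at $n=3$ then lifts this to $\pi$, where assumption~\ref{it:assum-ii} (via \cite[Prop.~6.4.6]{BHHMS1}) forces the underlying $I$-map $\overline W_{\chi_\tau,3}\to\pi$ to factor through its cosocle $\chi_\tau$, contradicting nonsplitness. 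The structural assertion for general subquotients then follows from these two pieces. The specific device --- encoding the putative extension as the image of an induced $I$-representation of $\m$-length~$3$ and invoking the Iwahori $\m^3$-multiplicity condition to kill it --- is the idea missing from your sketch.
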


Finally, we determine the Hilbert series of the associated graded module $\gr_\m(\pi^\vee)$, namely the series $h_\pi(t) \defeq \sum_{n\geq 0} \dim_{\F}(\m^n\pi^\vee/\m^{n+1}\pi^\vee)t^n\in \Z\bbra{t}$.
If $\brho$ is nonsplit reducible let $d_{\brho} \in \{0,1,\dots,f-1\}$, so that $2^{d_{\brho}} = |W(\brho)|$.

\begin{thm}[Theorem \ref{thm:Hilbert}]\label{thm:intro-Hilbert}
  Assume that $\rbar$ is generic and that $r=1$.
  \begin{enumerate}
  \item If $\brho$ is irreducible, then $\displaystyle h_{{\pi}}(t)=\frac{(3+t)^f}{(1-t)^f}-1$. %
  \item If $\brho$ is split reducible, then $\displaystyle h_{{\pi}}(t)=\frac{(3+t)^f}{(1-t)^f}+1$.
  \item If $\brho$ is nonsplit reducible, then $\displaystyle h_{{\pi}}(t)=2^{f-d_{\brho}}\cdot \frac{(1+t)^{f-d_{\brho}}(3+t)^{d_{\brho}}}{(1-t)^f}$.
  \end{enumerate}
\end{thm}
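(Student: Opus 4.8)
The quantity $h_\pi(t)$ is, by definition, the Hilbert series of the graded $\gr_\m(\Lambda)$-module $\gr_\m(\pi^\vee)$, so the plan is to feed in the explicit description of that module already available and reduce to a combinatorial computation; throughout I use that Hilbert series are additive over direct sums and insensitive to tensoring by a character of the finite group $I/I_1$ (of order prime to $p$). First I would reduce to a sum over a combinatorial index set. If $\brho$ is irreducible or split reducible, then $\brho=\brho^\ss$ is semisimple and \cite[Cor.~\ref{bhhms4:cor:finite-length}(ii)]{BHHMS4} describes $\gr_\m(\pi^\vee)$ as $\bigoplus_{\lambda\in\mathscr{P}^{\ss}}\chi_\lambda^{-1}\otimes\gr_\m(\Lambda)/\fa(\lambda)$, with $\mathscr{P}^{\ss}=\mathscr{P}$ in the irreducible case (combined there with \cite[Thm.~\ref{bhhms2:cor:pi-irredglob}(i)]{BHHMS2}); note this does not presuppose that $\pi$ is semisimple as a representation. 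If $\brho$ is nonsplit reducible, I would apply Theorem~\ref{thm:intro-gr-pi'} to the subquotient $\pi'=\pi$, for which $i_0=-1$, $i_0'=f$, whence $d_\lambda=\max\{-\ell(\lambda),0\}=0$ for every $\lambda$; since $\pi$ is a quotient of itself, ordinary genericity suffices (as remarked after Theorem~\ref{thm:intro-gr-pi'}), giving $\gr_\m(\pi^\vee)\cong\bigoplus_{\lambda\in\mathscr{P}}\chi_\lambda^{-1}\otimes\gr_\m(\Lambda)/\fa(\lambda)$. In all three cases this reduces the problem to
\[
h_\pi(t)=\sum_{\lambda}\mathrm{HS}\!\left(\gr_\m(\Lambda)/\fa(\lambda)\right),
\]
the sum over $\mathscr{P}^{\ss}$ (semisimple cases) or $\mathscr{P}$ (nonsplit reducible case).

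Next I would compute the Hilbert series of each standard quotient $\gr_\m(\Lambda)/\fa(\lambda)$. Using the chain of graded ideals $\gr_\m(\Lambda)=\fa_1^{-1}(\lambda)\supset\fa_1^0(\lambda)\supset\cdots\supset\fa_1^f(\lambda)=\fa(\lambda)$ from \cite[eq.~\eqref{bhhms4:eq:fa-1}]{BHHMS4}, the Hilbert series of $\gr_\m(\Lambda)/\fa(\lambda)$ is the sum of those of its $f+1$ graded subquotients $\fa_1^{j-1}(\lambda)/\fa_1^j(\lambda)$ ($0\le j\le f$), each Cohen--Macaulay of dimension $f$ and with Hilbert series made explicit in \emph{loc.\ cit.} This yields $\mathrm{HS}(\gr_\m(\Lambda)/\fa(\lambda))=P_\lambda(t)/(1-t)^f$ with $P_\lambda(0)=1$, where the numerator $P_\lambda$ (and in particular the local multiplicity $P_\lambda(1)$) depends on $\lambda$ only through combinatorial invariants attached to it inside $D_0(\brho^\ss)$, most notably $\ell(\lambda)$.

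Finally I would carry out the sum $\sum_\lambda P_\lambda(t)$. For this I would use the parametrization of $\mathscr{P}^{\ss}$ coming from the decomposition $D_0(\brho^\ss)=\bigoplus_{i=0}^{f}D_0(\brho^\ss)_i$ of \cite[Thm.\ 15.4]{BP}, and the description of $\mathscr{P}\subseteq\mathscr{P}^{\ss}$ (equivalently of $D_0(\brho)\subseteq D_0(\brho^\ss)$) recalled in \S~\ref{sec:notation}. The resulting sum factors, up to a single global ``orientation'' condition linking the $f$ embeddings, into a product over the embeddings. In the semisimple cases this produces the numerator $(3+t)^f$ up to that correction, which contributes one extra term of sign $-$ when $\brho$ is irreducible ($|\mathscr{P}|=3^f-1$) and of sign $+$ when $\brho$ is split reducible ($|\mathscr{P}^{\ss}|=3^f+1$), so that $h_\pi(t)=\frac{(3+t)^f}{(1-t)^f}\mp1$. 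When $\brho$ is nonsplit reducible the orientation condition is automatically satisfied, and the per-embedding contribution is $3+t$ at the $d_\brho$ embeddings where $D_0(\brho)$ coincides with $D_0(\brho^\ss)$ and $2(1+t)$ at the other $f-d_\brho$ embeddings, yielding $h_\pi(t)=2^{f-d_\brho}\frac{(1+t)^{f-d_\brho}(3+t)^{d_\brho}}{(1-t)^f}$. In all three cases the numerator evaluates to $4^f$ at $t=1$, the multiplicity of $\gr_\m(\pi^\vee)$, which is a useful consistency check.

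The substantive work, and the main obstacle, is in the last two steps: extracting the precise Hilbert series of each $\gr_\m(\Lambda)/\fa(\lambda)$ from the rather intricate ideals of \cite{BHHMS4}, and then performing the combinatorial sum over the $\brho$-dependent index sets $\mathscr{P}$ and $\mathscr{P}^{\ss}$. The delicate point throughout is the correct accounting of the global orientation — it is exactly this that produces the $\mp1$ in cases (i) and (ii) and that governs the exponent $d_\brho$ appearing in case (iii).
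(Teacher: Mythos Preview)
Your overall strategy matches the paper's: reduce to $\gr_\m(\pi^\vee)\cong\bigoplus_{\lambda\in\P}\chi_\lambda^{-1}\otimes R/\fa(\lambda)$, compute each summand's Hilbert series, and sum. However, your execution of the last two steps diverges from the paper's, and your Step~2 is needlessly indirect. The paper observes immediately that $R/\fa(\lambda)\cong\bigotimes_{j=0}^{f-1}\F[y_j,z_j]/(t_j)$ with each tensor factor having Hilbert series $\frac{1}{1-t}$ or $\frac{1+t}{1-t}$ according as $t_j\in\{y_j,z_j\}$ or $t_j=y_jz_j$; hence $h_{R/\fa(\lambda)}(t)=(1+t)^{|\mathcal A(\lambda)|}/(1-t)^f$ where $\mathcal A(\lambda)=\{j:t_j=y_jz_j\}$. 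There is no need for the filtration $\fa_1^j(\lambda)$ (which in any case degenerates when $J_{\brho}^c=\emptyset$, i.e.\ in the semisimple cases, so it would not help you there). For the sum over $\lambda$, the paper does not argue by your ``factorization up to orientation''; instead it builds, case by case, an explicit reduction map from $\P$ to a smaller index set ($\D$ in the irreducible and split cases, a set $\overline{\P}$ bijecting with $\D^{\ss}$ in the nonsplit case) preserving $\mathcal A(\lambda)$ and with fibers of size $2^{f-|\mathcal A(\lambda)|}$, then counts elements of the smaller set stratified by $|\mathcal A(\lambda)|$ (this is a separate lemma), and closes with the identities $\frac12[(2+x)^f\pm(2-x)^f]=\sum_{i\ \text{odd/even}}\binom{f}{i}2^{f-i}x^i$ at $x=1+t$. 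Your heuristic can in fact be made rigorous as a transfer-matrix computation: the adjacency conditions defining $\P^{\ss}$ yield a $2\times2$ transfer matrix $T_j$ at each embedding with eigenvectors $(1,\pm1)^T$, and $h_\pi(t)=\mathrm{Tr}\prod_jT_j$; the second eigenvalue is $1$ at embeddings in $J_{\brho}$, $0$ at those outside (nonsplit case), and picks up a sign at the seam in the irreducible case---this accounts precisely for the $\mp1$ and the disappearance of the correction when $J_{\brho}\ne\{0,\dots,f-1\}$. That argument is cleaner than the paper's once set up, but you have only sketched it; the justification you give for the $\pm1$ (the cardinality of $\P$) controls only the constant term, not the full series. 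Finally, a small point: the correct input in all three cases is \cite[Thm.~\ref{bhhms4:thm:CMC}]{BHHMS4} directly (as the paper cites); invoking Theorem~\ref{thm:intro-gr-pi'} is not needed, and \cite[Cor.~\ref{bhhms4:cor:finite-length}(ii)]{BHHMS4} concerns subquotients rather than $\pi$ itself.
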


This follows from the special case of Theorem~\ref{thm:intro-gr-pi'} when $\pi' = \pi$, which we established earlier \cite[Thm.~\ref{bhhms4:thm:CMC}]{BHHMS4}.
We also determine the Hilbert series of $h_{\pi'}(t)$ for subquotients $\pi'$ of $\pi$, in case $\brho$ is split reducible.
(It is possible to determine $h_{\pi'}(t)$ for nonsplit $\brho$, but we did not find nice formulas in general.)

In fact, all of our results do not just apply to the global representation $\pi$ defined in~\eqref{eq:goal}, but to an arbitrary smooth representation of $\GL_2(K)$ that satisfies axioms \ref{it:assum-i}--\ref{it:assum-v} in section~\ref{subsec:assumptions}.
In section~\ref{sec:global-arg} we verify that a globally defined representation $\pi(\brho)$ satisfies all of these axioms (and actually relaxing condition~\ref{it:wramifies} at the beginning of the introduction, see section~\ref{sec:global:setting} for a detailed explanation). 

\subsection{Sketch proof of Theorem~\ref{thm:intro-gr-pi'}}\label{sec:sketchintro}

In order to sketch the proof of the key Theorem~\ref{thm:intro-gr-pi'} we assume for simplicity that $\pi' \defeq \pi_2 = \pi/\pi_1$ is a quotient of $\pi$, which is where the main difficulty lies.
Let $N_2'$ denote the graded $\gr_\m(\Lambda)$-module on the right-hand side of the theorem, i.e.\ $N_2' \defeq \bigoplus_{\lambda\in\mathscr{P}}\chi_{\lambda}^{-1}\otimes \frac{\fa_1^{i_0}(\lambda)}{\fa(\lambda)}(-d_\lambda)$ (as $i_0' = f$).
Let $\o\m$ denote the unique maximal graded ideal of $\gr_\m(\Lambda)$.
The proof of Theorem~\ref{thm:intro-gr-pi'} breaks into three steps:
\begin{enumerate}[label=(\alph*)]
\item Show that there exists a surjection $\gr_\m(\pi_2^\vee) \onto N_2'/\o\m^3$.
\item Show that $\gr_\m(\pi_2^\vee)/\o\m^3 \cong N_2'/\o\m^3$.
\item Lift the isomorphism in (b) to an isomorphism $\gr_\m(\pi_2^\vee) \cong N_2'$.
\end{enumerate}

For part (a), we let $\Theta_n \defeq \pi[\m^n]/\pi_1[\m^n] \subset \pi_2[\m^n] \subset \pi_2$ for some integer $n \ge 1$.
Hence $\pi_2^\vee \onto \Theta_n^\vee$ as $\Lambda$-modules and so $\gr_\m(\pi_2^\vee) \onto \gr_\m(\Theta_n^\vee)$.
By using our previous work \cite[Lemma~\ref{bhhms4:lem:isom-modcI}]{BHHMS4} we can determine $\pi[\m^n]$, and hence $\Theta_n$, completely explicitly as an $I$-representation, provided $\brho$ is sufficiently generic relative to $n$.
For $n$ sufficiently large (in fact, $n = i_0+4 \le f+4$ suffices) a computation shows that $\gr_\m(\Theta_n^\vee)/\o\m^3 \cong N_2'/\o\m^3$ and (a) follows.

For part (b) we use some filtered and graded techniques.
We first have an exact sequence of filtered $\Lambda$-modules,
\begin{equation}\label{eq:1st-seq}
  0 \to C \to \pi_2^\vee/\m^3 \to \pi^\vee/\m^3 \to \pi_1^\vee/\m^3 \to 0,
\end{equation}
where $C = \coker(\Tor_1^\Lambda(\Lambda/\m^3,\pi^\vee) \to \Tor_1^\Lambda(\Lambda/\m^3,\pi_1^\vee))$.
On the other hand, the exact sequence $0 \to \gr_{F}(\pi_2^\vee) \to \gr_{\m}(\pi^\vee) \to \gr_{\m}(\pi_1^\vee) \to 0$ of graded $\gr_\m(\Lambda)$-modules, where $F$ denotes again the induced filtration on $\pi_2^\vee$, %
gives rise to the following exact sequence:
\begin{equation}\label{eq:2nd-seq}
  0 \to C' \to \gr_{F}(\pi_2^\vee)/\o\m^3 \to \gr_{\m}(\pi^\vee)/\o\m^3 \to \gr_{\m}(\pi_1^\vee)/\o\m^3 \to 0,
\end{equation}
where $C'$ is an analogous cokernel of graded modules, cf.\ \eqref{eq:coker-Tor1-gr}.

We now compare dimensions of corresponding terms in the two exact sequences.
By a subtle spectral sequence argument we see that $\gr(C)$ is a subquotient of $C'$ (for a suitable filtration on $C$), so
\begin{equation}\label{eq:1st-term}
  \dim_\F(C) \le \dim_\F(C').
\end{equation}
On the other hand,
\begin{equation}\label{eq:2nd-term}
  \dim_\F(\pi_2^\vee/\m^3) = \dim_\F(\gr_\m(\pi_2^\vee)/\o\m^3) \ge \dim_\F(N_2'/\o\m^3) = \dim_\F(\gr_{F}(\pi_2^\vee)/\o\m^3),
\end{equation}
where the inequality results from (a) and the last equality from~\eqref{eq:gr-subquot}.
As the third (resp.\ fourth) nonzero terms in~\eqref{eq:1st-seq} and \eqref{eq:2nd-seq} evidently have the same dimensions, we deduce that $\dim_\F(C) - \dim_\F(\pi_2^\vee/\m^3) = \dim_\F(C') - \dim_\F(\gr_{F}(\pi_2^\vee)/\o\m^3)$, hence equality holds in \eqref{eq:1st-term} and \eqref{eq:2nd-term}, so (b) follows (using (a)).

For part (c), we start with the map of graded modules $f : N_2' \onto \gr_\m(\pi_2^\vee)/\o\m^3$ from (b).
By showing that $N_2'$ admits a presentation of the form $\gr(\Lambda)(1)^{\oplus i} \oplus \gr(\Lambda)(2)^{\oplus j} \to \gr(\Lambda)^{\oplus k} \to N_2' \to 0$ for some integers $i,j,k \ge 0$ we can lift $f$ to $\wt f : N_2' \to \gr_\m(\pi_2^\vee)$.
The map $\wt f$ is surjective by Nakayama's lemma and injective by a computation of cycles, using that $N_2'$ is Cohen--Macaulay (and computing cycles using~\eqref{eq:gr-subquot}).

\subsection{Sketch proof of Theorem~\ref{thm:intro-pi'-K1} and Theorem~\ref{thm:intro-structure-pi'}}\label{sec:sketchintro2}

Part (i) of Theorem~\ref{thm:intro-structure-pi'} follows directly from Theorem~\ref{thm:intro-gr-pi'}, by evaluating both sides in degree 0.
Part (ii) follows relatively easily by building on part (i), using that every irreducible $\GL_2(\cO_K)$-representation has nonzero $I_1$-invariants.

The proof of Theorem~\ref{thm:intro-pi'-K1} in subsection~\ref{sec:k1-quotient} is technically the most involved argument of this paper.
Again the essential difficulty is when $\pi' \defeq \pi_2$ is a quotient of $\pi$, which we assume from now on.

It is relatively straightforward to understand the right-hand side of Theorem~\ref{thm:intro-pi'-K1} as $\GL_2(\cO_K)$-representation, namely $D_{i_0} \defeq D_0(\brho^\ss)_{i_0+1} \oplus_{D_0(\brho)_{i_0+1}} (D_0(\brho)/D_0(\brho)_{\le i_0})$ (as $i_0' = f$), and to relate it to $\pi_2^{K_1}$:
\begin{enumerate}[label=(\alph*)]
\item $D_{i_0}$ is multiplicity free;
\item $D_{i_0} \into \pi_2^{K_1}$ as $\GL_2(\cO_K)$-representations;
\item the embedding in (b) induces isomorphisms on $I_1$-invariants and $\GL_2(\cO_K)$-socles.
\end{enumerate}
(Here we use Theorem~\ref{thm:intro-structure-pi'}(i) and (ii) for (c).)

The main thrust for showing that the embedding in (b) is an isomorphism is the fact that $\pi_2[\m^3]$ is multiplicity free (which follows from Theorem~\ref{thm:intro-gr-pi'}).

Suppose that the embedding in (b) is not an isomorphism.
Take a minimal subrepresentation $V'$ of $\pi_2^{K_1}$ not contained in $D_{i_0}$.
Then $\tau'\defeq V'/(V'\cap D_{i_0})$ is the irreducible cosocle of $V'$, and one can show that $\tau' \in W(\brho^\ss)$. 
Note that $[V':\tau'] \in \{1,2\}$ by (a) and that $V' \not\cong \tau'$ by (c).

Suppose first that $[V':\tau'] = 1$, and to simplify notation we also assume that $f = 1$ (the argument easily generalizes to $f>1$).
In a first step we show that the radical $\rad(V')$ of $V'$ is semisimple (Lemma~\ref{lem:D=A+B}(iii)).
By known $\Ext^1$ results there are $3$ possibilities: (i) $\rad(V') \cong \mu_0^-(\tau')$ or (ii) $\rad(V') \cong \mu_0^+(\tau')$ or (iii) $\rad(V') \cong \mu_0^-(\tau') \oplus \mu_0^+(\tau')$ for certain Serre weights $\mu_0^{\pm}(\tau')$ associated to the Serre weight $\tau'$ as defined in \S~\ref{sec:more-gamma-repr}.
Case (i) is ruled out by the equality of $I_1$-invariants in (c) above.
In cases (ii) and (iii) we enlarge $V'$ slightly to $\wt{V'}$, where $V' \subset \wt{V'} \subset \pi_2^{K_1}$.
In either case $\wt{V'}$ (but not $V'$!)\ is a quotient of $\Ind_I^{\GL_2(\cO_K)} \cW$, where $\cW$ is the $I$-representation
\begin{equation*}
  \cW \ \cong\ \left( \vcenter{\xymatrix@R-2pc@C-0.85pc{
        \chi_{\mu_0^-(\tau')} \ar@{-}[rd]\\
        & \chi_{\tau'} \\
        \chi_{\mu_0^+(\tau')} \ar@{-}[ru]}}
  \right),
\end{equation*}
where we write $\chi_\sigma \defeq \sigma^{I_1}$ for any Serre weight $\sigma$.
By Frobenius reciprocity we get a nonzero map $\cW \to \pi_2[\m^2]$ whose image is not contained in $D_{i_0}$.
On the other hand, we show that $\tau' \in \JH(D_{i_0})$ and by following the same reasoning as above we obtain another nonzero map $\cW \to \pi_2[\m^2]$ whose image is contained in $D_{i_0}$.
This shows that $[\pi_2[\m^2]:\chi_{\tau'}] \ge 2$, contradicting the multiplicity freeness of $\pi_2[\m^2]$.

The case where $[V':\tau'] = 2$ is similar but more involved, using multiplicity freeness of $\pi_2[\m^3]$.

\color{black}

\textbf{Acknowledgements}: %
We thank the referee for a very careful reading, which in particular led us to correct a number of typos and small mistakes, as well as improve the exposition. 
Y.\;H.\ is \ partially  supported by CAS Project for Young Scientists in Basic Research, Grant No.~YSBR-033, National Natural Science Foundation of China Grants 12288201 and 12425103, National Center for Mathematics and Interdisciplinary Sciences and Hua Loo-Keng Key Laboratory of Mathematics, Chinese Academy of Sciences. F.\;H.\ is partially supported by an NSERC grant, a grant from Simons Foundation International [SFI-MPS-SFM-00006210, FH], and a Visiting Professorship at Korea Institute for Advanced Study. S.\;M.\ and B.\;S.\ are partially supported by the Institut Universitaire de France.
F.\;H.\ thanks the Korea Institute for Advanced Study for the excellent working conditions provided.

\subsection{Notation and preliminaries}
\label{sec:notation}
\label{sec:preliminaries}

We normalize local class field theory so that uniformizers correspond to geometric Frobenius elements. We fix an embedding $\kappa_0:\F_q\into \F$ and let $\kappa_j\defeq \kappa_0\circ\varphi^j$, where $\varphi$ is the arithmetic Frobenius on $\F_q$. Given $J\subset\{0,\dots,f-1\}$ we define $J^c\defeq \{0,1,\dots,f-1\} \setminus J$. We let $I\defeq \begin{pmatrix}\cO_K^\times&\cO_K\\p\cO_K&\cO_K^\times\end{pmatrix} \subset \GL_2(\cO_K)$ denote the (upper) Iwahori subgroup of $\GL_2(K)$, $I_1$ the pro-$p$ radical of $I$, $Z_1$ the center of $I_1$, and $K_1\defeq 1+p{\rm M}_2(\cO_K)\subset I_1$. We let $\Gamma\defeq \GL_2(\F_q)\cong \GL_2(\cO_K)/K_1$.

Let $\brho:\Gal(\o K/K)\ra \GL_2(\F)$ be a continuous representation. We will say that $\rhobar$ is \emph{$n$-generic} for some integer $n \ge 0$ if, up to twist, $\rhobar|_{I_K}^{\ss}\not\cong \omega^{\pm 1}\oplus 1$ and either (using the notation of \S~\ref{sec:results})
\begin{equation}\label{eq:rhobar-generic-red}
  \rhobar|_{I_K} \cong \left(\begin{matrix}\omega_f^{\sum_{j=0}^{f-1} (r_j+1) p^j} & * \\ & 1\end{matrix}\right) \qquad \text{with $n \leq r_j \leq p-3-n$ for all $0 \le j \le f-1$}
\end{equation}
or 
\begin{equation}\label{eq:rhobar-generic-irred}
  \rhobar|_{I_K} \cong \left(\begin{matrix}\omega_{2f}^{\sum_{j=0}^{f-1} (r_j+1) p^j} & \\ & \omega_{2f}^{p^f(\text{same})}\end{matrix}\right) \qquad \text{with\ }
  \begin{cases}
    n \leq r_j \leq p-3-n &\text{for $0 < j \le f-1$,} \\ n+1 \leq r_0 \leq p-2-n & \text{for $j = 0$.}
  \end{cases}
\end{equation}
In particular, if $\rhobar$ is $n$-generic then it is  $n$-generic in the sense of \cite[Def.\ 2.3.4]{BHHMS1}, and $\rhobar$ is $0$-generic precisely when $\rhobar$ is generic in the sense of \cite[Def.~11.7]{BP} (note that the condition $\rhobar|_{I_K}^{\ss}\not\cong \omega^{\pm 1}\oplus 1$ precisely rules out that $(r_0,\dots,r_{f-1})\in\{(0,\dots,0),(p-3,\dots,p-3)\}$ when $\rhobar$ is reducible).

Attached to a $0$-generic $\rhobar$ we have a set $W(\rhobar)$ of Serre weights, i.e.~irreducible representations of $\Gamma$ over $\F$, defined in \cite[\S~3]{BDJ}, and a  multiplicity-free finite length $\Gamma$-representation $D_0(\rhobar)$ over $\F$, defined in \cite[\S~13]{BP}, which is of the form $D_0(\rhobar)=\bigoplus_{\tau\in W(\rhobar)}D_{0,\tau}(\rhobar)$, where each $D_{0,\tau}(\rhobar)$ is indecomposable (and multiplicity free) with socle the Serre weight $\tau$ (\cite[\S~15]{BP}).

Suppose that $\rhobar$ is 0-generic. Recall the set $\P$ parametrizing $\JH(D_0(\rhobar)^{I_1})$, see \cite[\S~4]{breuil-buzzati} (and denoted there by $\mathscr{PD}$, resp.\ $\mathscr{PID}$, if $\rhobar$ is reducible, resp.\ irreducible). Recall also the subset $\D \subset \P$ parametrizing (the $I_1$-invariants of) the set of Serre weights in $W(\brho)$ (denoted in \emph{loc.~cit.} by $\D$ or $\mathscr{ID}$ if $\rhobar$ is reducible or irreducible respectively).
We let $\D^\ss\subset \P^\ss$ denote the corresponding sets for the semisimplification $\rhobar^\ss$ of $\rhobar$, so $\P \subset \P^\ss$ and $\D \subset \D^\ss$.

Since we will be using this many times, we recall more precisely that if $\brho$ is reducible, $\P^\ss$ denotes the set of $f$-tuples $(\lambda_0(x_0),\dots,\lambda_{f-1}(x_{f-1}))$ such that:
\begin{enumerate}
\item $\lambda_j(x_j) \in \{x_j,x_j+1,x_j+2,p-3-x_j,p-2-x_j,p-1-x_j\}$;
\item if $\lambda_j(x_j) \in \{x_j,x_j+1,x_j+2\}$, then $\lambda_{j+1}(x_{j+1}) \in \{x_{j+1},x_{j+1}+2,p-2-x_{j+1}\}$;
\item if $\lambda_j(x_j) \in \{p-3-x_j,p-2-x_j,p-1-x_j\}$, then $\lambda_{j+1}(x_{j+1}) \in \{x_{j+1}+1,p-3-x_{j+1},p-1-x_{j+1}\}$
\end{enumerate}
and $\D^\ss$ is the subset such that $\lambda_j(x_j) \in \{x_j,x_j+1,p-3-x_j,p-2-x_j\}$.
Moreover, there exists a unique subset $J_{\rhobar} \subset \{0,\dots,f-1\}$ such that
\begin{align}
  \D &= \Big\{\lambda \in \D^\ss : \lambda_j(x_j) \in \{x_j+1,p-3-x_j\} \Rightarrow j \in J_{\rhobar}\Big\},\notag \\
  \P &= \Big\{\lambda \in \P^\ss : \lambda_j(x_j) \in \{x_j+2,p-3-x_j\} \Rightarrow j \in J_{\rhobar}\Big\}.\label{eq:P}
\end{align}
In particular, $|W(\brho)| = 2^{|J_{\brho}|}$.

For $\lambda\in\mathscr{P}$ we denote by $\chi_\lambda$ the character of $H$ corresponding to $\lambda$.
(More precisely, in \cite[\S~4]{breuil-buzzati} a Serre weight $\sigma_\lambda$ is associated to $\lambda \in \P$ and $\chi_\lambda$ is the action of $H=I/I_1$ on the $1$-dimensional subspace $\sigma_\lambda^{I_1}$.)
Set
\begin{equation}\label{eq:J-lambda}J_{\lambda}\defeq \{j\in\{0,\dots,f-1\}: \lambda_j(x_j)\in \{x_j+1,x_j+2,p-3-x_j\}\}\end{equation}
and let $\ell(\lambda) \defeq  |J_\lambda|.$
By \cite[\S~11]{BP} the map $\lambda \mapsto J_\lambda$ induces a bijection between $\mathscr{D}^\ss$ and the set of subsets of $\{0,\dots,f-1\}$.
Sometimes we will abuse notation and write $J_\tau \defeq  J_\lambda$ and $\ell(\tau) \defeq  \ell(\lambda)$ if $\tau \in W(\brho^\ss)$ is parametrized by $\lambda \in \D^\ss$.
Given $\lambda\in \D^\ss$ with corresponding subset $J=J_\lambda\subset\{0,\dots,f-1\}$ we write $\delta(\lambda)\in \D^\ss$ for the $f$-tuple defined by $\delta(\lambda)_j\defeq \lambda_{j+1}$ for all $j\in\{0,\dots,f-1\}$, and $\delta(J)\subset\{0,\dots,f-1\}$ for the subset corresponding to $\delta(\lambda)$.

As in \cite[\S~1]{BP}, given $f$ integers $r_0,\dots,r_{f-1}\in\{0,\dots,p-1\}$ we denote by $(r_0,\dots,r_{f-1})$ the Serre weight
\begin{equation*}
\mathrm{Sym}^{r_0}\F^2\otimes_{\F}(\mathrm{Sym}^{r_1}\F^2)^{\rm Fr}\otimes\cdots\otimes_{\F}(\mathrm{Sym}^{r_{f-1}}\F^2)^{{\rm Fr}^{f-1}},
\end{equation*}
where $\GL_2(\Fq)$ acts on $(\mathrm{Sym}^{r_j}\F^2)^{{\rm Fr}^j}$ via $\kappa_j:\Fq\hookrightarrow \F$.
Following \cite[\S~2]{HuWang2}, we say that a Serre weight is \emph{$m$-generic} for some integer $m\geq 0$ if, up to twist, $\sigma\cong (r_0,\dots,r_{f-1})$, where $m\leq r_j\leq p-2-m$ for all $j\in \{0,\dots,f-1\}$.
We say that an $\F$-valued character $\chi$ of $I$ is \emph{$m$-generic} if $\chi=\sigma^{I_1}$ for some $m$-generic Serre weight $\sigma$. 
For any smooth character $\chi : I \to \F\s$ we define $\chi^s\defeq \chi(\Pi (\cdot) \Pi^{-1})$ with $\Pi\defeq \begin{pmatrix}0&1\\ p&0\end{pmatrix}$. 
If $\sigma$ is a Serre weight, we write $\chi_\sigma$ for the character of $I/I_1$ on $\sigma^{I_1}$ and $\sigma^{[s]}$ for the unique Serre weight distinct from $\sigma$ such that $\chi_{\sigma^{[s]}}=\chi_{\sigma}^s$.
Finally, if $\chi,\chi': I\ra\F^\times$ are smooth characters such that $\Ext^1_{I/Z_1}(\chi',\chi)\neq 0$ we let $E_{\chi,\chi'}$ denote the unique nonsplit extension of $\chi'$ by $\chi$, i.e.\
$0\ra\chi\ra E_{\chi,\chi'}\ra\chi'\ra 0$.
(The uniqueness follows from \cite[Lemme~2.4]{yongquan-algebra}.)

Let $\fm_{K_1}$ denote the maximal ideal of {the Iwasawa algebra} $\F\bbra{K_1/Z_1}$ and let \[\wt{\Gamma}\defeq \F\bbra{\GL_2(\cO_K)/Z_1}/\fm_{K_1}^2\] (a finite-dimensional $\F$-algebra). 
We use the terminology ``$\wt{\Gamma}$-representations'' and ``$\wt{\Gamma}$-modules'' interchangeably.

We write $\wt{D}_{0}(\brho)$ for the finite-dimensional $\wt{\Gamma}$-representation over $\F$ constructed in  \cite[Prop.~4.3]{HuWang2}. 
It is the unique (up to isomorphism) $\wt{\Gamma}$-representation which is maximal with respect to the following two properties:
\begin{enumerate}
\item $\soc_{\wt{\Gamma}}\wt{D}_0(\brho)\cong\bigoplus_{\sigma\in W(\brho)}\sigma$;
\item any Serre weight of $W(\brho)$ occurs in $\wt{D}_0(\brho)$ with multiplicity one.
\end{enumerate}

Let $\Lambda \defeq  \F\bbra{I_1/Z_1}$, a complete noetherian local ring with maximal ideal $\m \defeq \m_{I_1/Z_1}$, and let $\gr(\Lambda) \defeq \gr_\m(\Lambda) = \bigoplus_{n\geq 0}\m^n/\m^{n+1}$ be the associated graded ring of $\Lambda$ with respect to the $\fm$-adic filtration on $\Lambda$. The rings $\Lambda$ and $\gr(\Lambda)$ are Auslander regular (see \cite[Thm.~5.3.4]{BHHMS1} with \cite[Thm.~III.2.2.5]{LiOy}). Recall (\cite[\S~\ref{bhhms2:phigamma}]{BHHMS2}) that we have an isomorphism of (noncommutative) algebras
\begin{equation}\label{eq:grlambda}
\gr(\Lambda)\cong \bigotimes_{j\in\{0,\dots,f-1\}}\F\langle y_j,z_j,h_j\rangle
\end{equation}
with relations $[y_j,z_j]=h_j$, $[h_j,z_i]=[y_i,h_j]=0$ for all $i,j\in\{0,\dots,f-1\}$.
We use increasing filtrations throughout, i.e.\ $F_n \Lambda = \m^{-n}$ for $n \le 0$, and the degrees of $y_j$ and $z_j$ (resp.\ $h_j$) are $-1$ (resp.\ $-2$).
Define the graded ideal $J \defeq  (h_j,y_jz_j : 0 \le j \le f-1)$ of $\gr(\Lambda)$.
As in \cite[\S~\ref{bhhms2:gl2}]{BHHMS2} we define
\[R \defeq  \gr(\Lambda)/(h_j : 0 \le j \le f-1) \cong \F[y_j,z_j : 0 \le j \le f-1]\]
which is the largest commutative quotient of $\gr(\Lambda)$. We also define the following quotient of $R$:
\[\o R \defeq  \gr(\Lambda)/J \cong R/(y_jz_j : 0 \le j \le f-1).\]

We recall from \cite[Def.~3.57]{BHHMS2} that given $\lambda\in\mathscr{P}$ we have an associated ideal $\mathfrak{a}(\lambda)=(t_0,\dots,t_{f-1})$ of $R$, where the $t_j = t_j(\lambda)$ are defined as follows:
\begin{equation}
\label{eq:id:al}
t_j\defeq \left\{\begin{array}{llll}z_j& {\rm if} &\lambda_j(x_j)\in\{x_j,p-3-x_j\} \ \mathrm{and}\ j\in J_{\brho}\\
y_j& {\rm if} &\lambda_j(x_j)\in\{x_j+2,p-1-x_j\} \ \mathrm{and}\ j\in J_{\brho}\\
y_jz_j & {\rm if} &\lambda_j(x_j)\in \{x_j,p-1-x_j\} \ \mathrm{and}\ j\notin J_{\brho}\\
y_jz_j& {\rm if} &\lambda_j(x_j)\in \{x_j+1,p-2-x_j\}.\end{array}\right.
\end{equation}
Note that $(y_jz_j : 0 \le j \le f-1) \subset \fa(\lambda)$, so we often think of $\fa(\lambda)$ as ideal of $\o R$.

Let $H\defeq \begin{pmatrix}\F_q^\times&0\\0&\F_q^\times\end{pmatrix}\cong I/I_1$.
We write $\alpha_j:H\ra\F^\times$ for the character defined by $\begin{pmatrix}a&0\\0&d\end{pmatrix}\mapsto \kappa_j(ad^{-1})$.
We recall that for any $j\in\{0,\dots,f-1\}$ the element $y_j$ (resp.~$z_j$, resp.~$h_j$) is an $H$-eigenvector with associated eigencharacter $\alpha_j$ (resp.~$\alpha_j^{-1}$, resp.~the trivial character).

Suppose that $H'$ is a compact $p$-adic analytic group and that $\pi_1$, $\pi_2$ are smooth representations of $H'$ over $\F$.
We write $\Ext^i_{H'}(\pi_1,\pi_2)$ for the $i$-th Ext group computed in the category of smooth representations of $H'$ over $\F$.
Dually, the functors $\Tor_i^{\F\bbra{H'}}(\pi_1^\vee,\pi_2^\vee)$ and $\Ext^i_{\F\bbra{H'}}(\pi_1^\vee,\pi_2^\vee)$ are computed in the abelian category of pseudocompact $\F\bbra{H'}$-modules.
(See for example \cite[\S~2]{emerton-ordII}.)
If $\sigma$ has finite length, we write $\JH(\sigma)$ for its set of irreducible constituents up to isomorphism.

Throughout this paper, if $R$ is a filtered (resp.\ graded) ring, a morphism of filtered (resp.\ graded) $R$-modules $f:M\ra N$ will always be a \emph{filtered (resp.\ graded) morphism of degree zero}, i.e.~satisfying $f(M_i)\subset N_i$ for all $i\in \Z$. 
If $R$ is any ring and $M$ any left $R$-module, we recall that $\Ext^{i}_R(M,R)$ for $i\in \Z_{\ge 0}$ is a right $R$-module (for $i=0$ the right $R$-action is given by $(f r)(m)\defeq f(m)r$ for $r\in R$, $f\in \Hom_R(M,R)$ and $m\in M$) and we use the notation $\EE^i_R(M)\defeq \Ext^{i}_R(M,R)$. %

\section{Preliminaries}
\label{sec:prelims}
We establish structural results on finite-dimensional smooth mod $p$ representations of $\GL_2(\cO_K)$ which will extensively be used in section \S~\ref{sec:subquot-non-ss}.

If $\sigma$ is a Serre weight, we write $\Proj_\Gamma\sigma$ (resp.~$\Inj_\Gamma\sigma$) for a projective cover (resp.~injective envelope) of $\sigma$ in the category of $\F[\Gamma]$-modules.
The objects $\Proj_{\wt{\Gamma}}\sigma$ and $\Inj_{\wt{\Gamma}}\sigma$ are defined similarly.
 We remark that $\Proj_\Gamma\sigma \cong \Inj_\Gamma\sigma$, as $\Gamma$ is a finite group, but $\Proj_{\wt{\Gamma}}\sigma \not\cong \Inj_{\wt{\Gamma}}\sigma$ by \cite[(2.6), Prop.~2.12]{HuWang2}.

 \subsection{Some \texorpdfstring{$\Gamma$}{Gamma}-representations}
\label{sec:EG-Gamma-rep}

 We collect a number of results on the combinatorics of Serre weights and injective envelopes.

Recall from \cite[\S~3]{BP} that given a Serre weight $\sigma$, there is an injective parametrization  $\JH(\Inj_\Gamma\sigma)\into \cI$ (where $\cI\defeq \cI(x_0,\dots,x_{f-1})$ is defined in \cite[\S~4]{BP}), which is bijective if $\sigma$ is $1$-generic.   For $\mu\in\cI$, we let $\mu(\sigma)$ be the Serre weight parametrized by $\mu$ as defined in \cite[(2.2)]{HuWang2} (recall that $\mu(\sigma)$ remains undefined if the formula in \emph{loc.~cit.}~is not well defined).
 We say $\tau_1,\tau_2\in \JH(\Inj_\Gamma\sigma)$ are \emph{compatible} (relative to $\sigma$) if the corresponding elements $\mu_1,\mu_2\in \cI$ are compatible  in the sense of \cite[Def.~4.10]{BP}.  We write $\mu_2\preceq \mu_1$ if $\cS(\mu_2)\subset \cS(\mu_1)$ and $\mu_2$ is compatible with $\mu_1$, where $\cS(-)\subset \{0,\dots,f-1\}$ is the subset defined in \cite[\S~4]{BP}. 
{It is easy to see that the set $\{\mu_2\in\cI: \mu_2\preceq \mu_1\}$ is in bijection with all subsets of $\cS(\mu_1)$,  via $\mu_2 \mapsto \cS(\mu_2)$, hence has cardinality $2^{|\cS(\mu_1)|}$.}

Recall from \cite[Cor.~3.12]{BP} that given $\tau\in \JH(\Inj_{\Gamma}\sigma)$ there exists a unique finite-dimensional $\Gamma$-representation $I(\sigma,\tau)$ such that $\soc_{\Gamma}I(\sigma,\tau)=\sigma$, $\cosoc_{\Gamma}I(\sigma,\tau)=\tau$ and $[I(\sigma,\tau):\sigma]=1$. 
By \cite[Cor.~4.11]{BP}, $\tau'\in \JH(I(\sigma,\tau))$ if and only if  $\tau' = \mu'(\sigma)$ with $\mu'\preceq \mu$, where $\tau = \mu(\sigma)$.

\begin{lem}\label{lem:I-sigma-tau-modular}
Assume that $\rhobar$ is $0$-generic. 
Let $\sigma,\tau\in W(\brho^{\ss})$.  Then $\tau\in\JH(\Inj_{\Gamma}\sigma)$. Moreover, $|\JH(I(\sigma,\tau))|=2^{|J_{\sigma}\mathbin\Delta J_{\tau}|}$ and 
\begin{align}
\JH(I(\sigma,\tau))&=\{\tau'\in W(\brho^{\ss}): J_{\sigma}\cap J_{\tau}\subset J_{\tau'}\subset J_{\sigma}\cup J_{\tau}\} \label{eq:JH-I-sigma-tau}\\
  &= \{\tau'\in W(\brho^{\ss}): J_{\sigma}\mathbin\Delta J_{\tau'}\subset J_{\sigma}\mathbin\Delta J_{\tau}\}.\label{eq:JH-I-sigma-tau2}
\end{align}
In particular, $\Ext^1_\Gamma(\tau,\sigma) \ne 0$ if and only if $|J_{\sigma}\mathbin\Delta J_{\tau}| = 1$.
\end{lem}

\begin{proof}
First, from \cite[Prop.~2.24]{HuWang} we deduce that $\tau\in \JH(\Inj_{\Gamma}\sigma)$ and that any irreducible constituent of $I(\sigma,\tau)$ is also an element of $W(\brho^{\rm ss})$. %

Let $\lambda_{\sigma}, \lambda_{\tau}\in \D^\ss$ correspond to $\sigma,\tau$, respectively. Then there exists a unique element  $\mu\in \mathcal{I}$ such that $\lambda_{\tau}=\mu\circ \lambda_{\sigma}$.  
By above we know that 
\begin{equation}\label{eq:JH-I-sigma-tau3}
  \JH(I(\sigma,\tau)) = \{ \mu'(\sigma) : \mu' \preceq \mu \text{ in }\cI \}.
\end{equation}
We claim that $\mu'(\sigma)$ is well defined for any $\mu'\preceq \mu$; this will imply that $\JH(I(\sigma,\tau))$  has cardinality $2^{|\cS(\mu)|}$. Given $\mu'\preceq \mu$ in $\cI$, one checks using the table in the proof of \cite[Lemma 2.6]{HuWang2} that $\mu'\circ\lambda_{\sigma}\in \D^{\rm ss}$, so that $\mu'(\sigma)\in W(\brho^{\rm ss})$ is the Serre weight  corresponding to $\mu'\circ\lambda_{\sigma}$ (which is well defined, as $\brho$ is $0$-generic), proving the claim.

From \cite[Lemma 2.6(i)]{HuWang2} we deduce that $\cS(\lambda_\tau) = \cS(\mu) \mathbin\Delta \cS(\lambda_\sigma)$, or equivalently $\cS(\mu) = J_\sigma \mathbin\Delta J_\tau$ (as $J_{\lambda} = \cS(\lambda)$ for all $\lambda \in \D^\ss$), which implies $|\JH(I(\sigma,\tau))|=2^{|J_{\sigma}\mathbin\Delta J_{\tau}|}$.
For the same reason, if $\tau'\in \JH(I(\sigma,\tau))$ corresponds to $\mu'\preceq\mu$ in $\cI$ (i.e. $\tau'=\mu'(\sigma)$), then $ \cS(\mu') = J_\sigma\mathbin\Delta J_{\tau'}$, so that~\eqref{eq:JH-I-sigma-tau2} (equivalently~\eqref{eq:JH-I-sigma-tau}) follows from~\eqref{eq:JH-I-sigma-tau3}.
The last assertion follows from $\Ext^1_\Gamma(\tau,\sigma) \ne 0$ if and only if $I(\sigma,\tau)$ has length 2.

\end{proof}

Recall from \cite[\S~\ref{bhhms4:sec:preliminaries-fin-len}]{BHHMS4} and \cite[Rk.~\ref{bhhms4:rmk:JH:conventions}]{BHHMS4} that given a character $\chi:I\ra\F^{\times}$ {with $\chi\ne\chi^s$},
there is an injective parametrization $\JH(\Ind_I^{\GL_2(\cO_K)}\chi) \into \mathcal{P}$ (where $\mathcal P \defeq \mathcal{P}(x_0,\dots,x_{f-1})$ is defined in \cite[\S~2]{BP}), which is bijective if $\chi$ is $1$-generic.
Moreover, $\mathcal{P}$ is in bijection with the subsets of $\{0,\dots,f-1\}$ via the map $\xi \mapsto \mathcal{S}(\xi)$ of \cite[eq.~\eqref{bhhms4:eq:S-xi1}]{BHHMS4}.
With this parametrization, the socle of $\Ind_I^{\GL_2(\cO_K)}\chi$ corresponds to the empty subset.
For $J \subset \{0,1,\dots,f-1\}$ let $\sigma_J$ denote the constituent of $\Ind_I^{\GL_2(\cO_K)} \chi$ parametrized by $\xi \in \mathcal P$ such that $\cS(\xi) = J$, if such a $\xi$ exists.
Finally, if $\sigma\in \JH(\Ind_I^{\GL_2(\cO_K)}\chi)$ is parametrized by $\xi \in \mathcal P$ we write $\cS(\sigma)\defeq\cS(\xi)$.

\begin{lem}\label{lem:I-sigma-tau-princ-series}
  Assume that $\chi$ is $1$-generic. 
  Let $J, J' \subset \{0,1,\dots,f-1\}$.  
  Then $\sigma_{J'}\in\JH(\Inj_{\Gamma}\sigma_J)$. 
  Moreover, $|\JH(I(\sigma_J,\sigma_{J'}))|=2^{|J\mathbin\Delta J'|}$ and 
  \begin{align*}
    \JH(I(\sigma_J,\sigma_{J'}))&=\{\sigma_{J''}: J\cap J'\subset J''\subset J\cup J'\} \\
    &= \{\sigma_{J''}: J\mathbin\Delta J''\subset J\mathbin\Delta J'\}.\notag
  \end{align*}
  In particular, $\Ext^1_\Gamma(\sigma_{J'},\sigma_J) \ne 0$ if and only if $|J\mathbin\Delta J'| = 1$.
\end{lem}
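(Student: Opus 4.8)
The statement is the principal-series analogue of Lemma~\ref{lem:I-sigma-tau-modular}, and the natural plan is to mimic that proof verbatim, translating every input from the $W(\brho^\ss)$-setting to the $\JH(\Ind_I^{\GL_2(\cO_K)}\chi)$-setting. First I would invoke Lemma~\ref{lem:sigma_J-ext-graph2}: under the $2$-genericity hypothesis (which is more than enough to run that lemma and to ensure $\chi\ne\chi^s$ and that the parametrization $\JH(\Ind_I^{\GL_2(\cO_K)}\chi)\into\mathcal P$ is bijective), we may choose $\mu\in X^*(\un T)$ with $\sigma_\emptyset\cong F(\mu)$ and then $\sigma_J\cong F(\t_\mu(-\o\eta_J))$, $\sigma_{J'}\cong F(\t_\mu(-\o\eta_{J'}))$ for all $J,J'$. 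Thus both relevant weights lie on the single extension-graph line $\{F(\t_\mu(-\o\eta_{J''})): J''\subset\{0,\dots,f-1\}\}$ through $F(\mu)$, exactly as in the proof of Lemma~\ref{lem:I-sigma-tau-modular}.

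The next step is to extract $\tau\in\JH(\Inj_\Gamma\sigma_J)$ and the description of $\JH(I(\sigma_J,\sigma_{J'}))$ from the extension-graph machinery of \cite{BHHMS1}. Since $|(-\o\eta_J)_j-(-\o\eta_{J'})_j|\le 1$ for all $j$, \cite[Rk.~2.4.7]{BHHMS1} (applied with $\omega=-\o\eta_J$) together with \cite[Lemma~6.2.1(ii)]{BHHMS1} gives $\sigma_{J'}\in\JH(\Inj_\Gamma\sigma_J)$; here I would check that $-\o\eta_J$ satisfies the hypothesis of \emph{loc.~cit.}, which follows from $\chi$ being $2$-generic (so $\mu$, equivalently $\sigma_\emptyset$, is sufficiently generic and the whole line stays in the relevant alcove). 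Then \cite[Lemma~6.2.1(iii)]{BHHMS1} describes $\JH(I(\sigma_J,\sigma_{J'}))$ as the set of $F(\t_\mu(\omega))$ with $\min\{(-\o\eta_J)_j,(-\o\eta_{J'})_j\}\le\omega_j\le\max\{(-\o\eta_J)_j,(-\o\eta_{J'})_j\}$ for all $j$; since $-\o\eta_J$ and $-\o\eta_{J'}$ have entries in $\{0,-1\}$, this forces $\omega=-\o\eta_{J''}$ with $J\cap J'\subset J''\subset J\cup J'$, which by Lemma~\ref{lem:sigma_J-ext-graph2} is exactly $\{\sigma_{J''}: J\cap J'\subset J''\subset J\cup J'\}$. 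The reformulation in terms of $J\mathbin\Delta J''\subset J\mathbin\Delta J'$ is the elementary set-theoretic identity already used in Lemma~\ref{lem:I-sigma-tau-modular}, and the cardinality $2^{|J\mathbin\Delta J'|}$ and the $\Ext^1$ criterion ($|J\mathbin\Delta J'|=1$, using that $I(\sigma_J,\sigma_{J'})$ has length $2$ iff it realizes a nonsplit extension) follow formally, exactly as there.

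I do not expect any serious obstacle: the argument is structurally identical to the proof of Lemma~\ref{lem:I-sigma-tau-modular}, with $\o\eta_J$ replaced throughout by $-\o\eta_J$ (reflecting that we are on the line through $F(\mu)=\sigma_\emptyset$ moving in the negative $\o\eta$-direction, which is why $2$-genericity of $\chi$ — slightly stronger than the $1$-genericity needed in Lemma~\ref{lem:I-sigma-tau-modular} — is imposed). The only point that requires a line of care is verifying the genericity hypotheses of \cite[Lemma~6.2.1]{BHHMS1} for the weight $F(\mu)$ and for all the weights on the segment between $\sigma_J$ and $\sigma_{J'}$; this is where the ``$2$-generic'' assumption is used rather than ``$1$-generic'', and I would phrase this check exactly as in the parenthetical remark in the proof of Lemma~\ref{lem:I-sigma-tau-modular}. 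Everything else is bookkeeping with the extension graph.
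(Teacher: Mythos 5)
Your proposal matches the paper's proof: the authors likewise choose $\mu$ with $\sigma_\emptyset\cong F(\mu)$, apply Lemma~\ref{lem:sigma_J-ext-graph2} to get $\sigma_J\cong F(\t_\mu(-\o\eta_J))$, and then rerun the argument of Lemma~\ref{lem:I-sigma-tau-modular} verbatim with $-\o\eta_J$ in place of $\o\eta_{J_\sigma}$. Nothing to change.
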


\begin{proof}
From \cite[Cor.~2.22]{HuWang} applied to the principal series tame type $\Ind_I^{\GL_2(\cO_K)}[\chi]$ we deduce that  $I(\sigma_J,\sigma_{J'})$ exists and that $\JH(I(\sigma_J,\sigma_{J'}))\subseteq \JH(\Ind_{I}^{\GL_2(\cO_K)}\chi)$.
As $\chi$ is $1$-generic, $|\JH(\Ind_I^{\GL_2(\cO_K)}\chi)|$ has cardinality $2^f$ by above, namely $\sigma_J$ is well defined for any subset $J\subseteq \{0,1,\dots,f-1\}$. 
The proof is then essentially identical to the proof of Lemma \ref{lem:I-sigma-tau-modular}, except that the bijection of $W(\brho^\ss)$ with $\{0,1,\dots,f-1\}$, $\sigma \mapsto J_\sigma$ is replaced by the bijection of $\JH(\Ind_I^{\GL_2(\cO_K)}\chi)$ with $\{0,1,\dots,f-1\}$, $\sigma_J \mapsto J$.

\end{proof}

\begin{lem}\label{lem:comp-JH}
Let $\sigma$ be a $1$-generic Serre weight and $\tau_1,\tau_2\in \JH(\Inj_{\Gamma}\sigma)$. Assume that $\tau_1,\tau_2$ are compatible (relative to $\sigma$). Then $\tau_2\in\JH(\Inj_{\Gamma}\tau_1)$ and $\JH(I(\tau_1,\tau_2))\subset \JH(\Inj_{\Gamma}\sigma)$.   
\end{lem}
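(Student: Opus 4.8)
The plan is to reduce everything to the extension-graph combinatorics developed in Lemmas~\ref{lem:sigma_J-ext-graph}--\ref{lem:I-sigma-tau-princ-series}. First I would choose $\mu \in X^*(\un T)$ so that $\sigma \cong F(\mu)$ (using that $\sigma$ is $1$-generic, hence we may normalize $\mu$ to lie in the relevant alcove/box). By \cite[Lemma~6.2.1, Rk.~2.4.7]{BHHMS1} the parametrization $\JH(\Inj_\Gamma\sigma) \into \cI$ is, after transporting through $\t_\mu$, identified with the parametrization by $\omega \in \Lambda_W^\mu$ with $0 \le \omega_j \le 1$ (equivalently, subsets of $\{0,\dots,f-1\}$); write $\tau_1 \cong F(\t_\mu(\o\eta_{J_1}))$ and $\tau_2 \cong F(\t_\mu(\o\eta_{J_2}))$ for subsets $J_1, J_2 \subset \{0,\dots,f-1\}$. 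The key point to extract from \cite[Def.~4.10]{BP} is that \emph{compatibility of $\tau_1,\tau_2$ relative to $\sigma$ translates into a condition on $J_1,J_2$ that is preserved under recentering at $\tau_1$}: concretely, one expects compatibility to say exactly that, coordinate by coordinate, $(\o\eta_{J_1})_j$ and $(\o\eta_{J_2})_j$ lie on the same side of / are comparable within the box cut out around $\mu$ in a way that remains a genuine box condition after shifting the base point from $\o\eta_\emptyset$ to $\o\eta_{J_1}$.

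Granting that translation, the two assertions follow formally. For $\tau_2 \in \JH(\Inj_\Gamma \tau_1)$: recenter the extension graph at $\tau_1$, i.e.\ pick $\mu'$ with $\tau_1 \cong F(\mu')$ and use the identity $\t_\mu(\o\eta_{J_2}) = \t_{\mu'}(\o\eta_{J_2} - \o\eta_{J_1})$ up to the identifications in \cite[(14)]{BHHMS1} and \cite[Rk.~2.4.7]{BHHMS1}; compatibility guarantees that $\o\eta_{J_2} - \o\eta_{J_1} \in \Lambda_W^{\mu'}$, so $\tau_2 \in \JH(\Inj_\Gamma\tau_1)$ by the same citation used in the proof of Lemma~\ref{lem:I-sigma-tau-modular} (namely \cite[Lemma~6.2.1(ii), Rk.~2.4.7]{BHHMS1}), provided $\tau_1$ is still $1$-generic — which it is, being a Jordan--Hölder factor of $\Inj_\Gamma\sigma$ for $\sigma$ $1$-generic, or more carefully one shifts the genericity bookkeeping exactly as in Lemmas~\ref{lem:I-sigma-tau-modular}--\ref{lem:I-sigma-tau-princ-series}. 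For $\JH(I(\tau_1,\tau_2)) \subset \JH(\Inj_\Gamma\sigma)$: by \cite[Lemma~6.2.1(iii), Rk.~2.4.7]{BHHMS1} the constituents of $I(\tau_1,\tau_2)$ are the $F(\t_{\mu'}(\omega))$ with $\omega$ between $\o\eta_\emptyset$ and $\o\eta_{J_2}-\o\eta_{J_1}$ coordinatewise, equivalently the $F(\t_\mu(\o\eta_{J''}))$ with $J_1 \cap J_2 \subset J'' \subset J_1 \cup J_2$; one then checks that each such $\o\eta_{J''}$ still lies in $\Lambda_W^\mu$, i.e.\ within the box around $\mu$, which is precisely what the compatibility hypothesis delivers. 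Hence each such $\tau''$ is a Jordan--Hölder factor of $\Inj_\Gamma\sigma$.

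The main obstacle I anticipate is purely bookkeeping: making the dictionary between the combinatorial notion of ``compatible'' in \cite[Def.~4.10]{BP} (phrased in terms of the index set $\cI$) and the ``same box'' condition in the extension-graph coordinates $\omega \in \Lambda_W^\mu$ completely precise, and then verifying that this condition is genuinely symmetric in $\sigma$ vs.\ $\tau_1$ as a base point — i.e.\ that the box around $\mu$ restricted to the interval $[\o\eta_\emptyset, \o\eta_{J_2}-\o\eta_{J_1}]$ (after recentering) is the \emph{same} set of Serre weights as the box around $\mu$ restricted to $[\o\eta_{J_1}, \o\eta_{J_2}]$. This is where the genericity of $\sigma$ is used essentially (to ensure no weights fall outside the alcove and the graph $\Lambda_W^\mu$ really is a full sub-box), and it is the step I would write out with care; everything else is a transcription of the arguments already given for Lemmas~\ref{lem:I-sigma-tau-modular} and~\ref{lem:I-sigma-tau-princ-series}. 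A cleaner alternative, if the box translation turns out to be awkward, is to avoid recentering and instead prove both statements directly from \cite[Def.~4.10]{BP} and \cite[Cor.~3.12, Lemma~4.11 or similar]{BP}, but I would expect the extension-graph route to be shorter.
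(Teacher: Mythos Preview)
Your setup contains a concrete error: the constituents of $\Inj_\Gamma\sigma$ are \emph{not} parametrized by subsets $J\subset\{0,\dots,f-1\}$ (equivalently $\omega\in\{0,1\}^f$). For $\sigma$ $1$-generic one has $|\JH(\Inj_\Gamma\sigma)|=3^f$, and in extension-graph coordinates the correct range is $\omega\in\{-1,0,1\}^f$ (this is what \cite[Lemma~6.2.1(ii), Rk.~2.4.7]{BHHMS1} give). You have confused $\JH(\Inj_\Gamma\sigma)$ with $W(\brho^\ss)$ or $\JH(\Ind_I^{\GL_2(\cO_K)}\chi)$, which are the $2^f$-sized sets treated in Lemmas~\ref{lem:sigma_J-ext-graph}--\ref{lem:I-sigma-tau-princ-series}. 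Once corrected, compatibility of $\tau_1,\tau_2$ relative to $\sigma$ translates to the coordinatewise condition $\omega_{1,j}\omega_{2,j}\ge 0$ (no opposite signs), and your box argument can be salvaged: then $|\omega_{1,j}-\omega_{2,j}|\le 1$ gives $\tau_2\in\JH(\Inj_\Gamma\tau_1)$, and $\min(\omega_{1,j},\omega_{2,j})\le\omega_j\le\max(\omega_{1,j},\omega_{2,j})$ forces $\omega_j\in\{-1,0,1\}$, giving $\JH(I(\tau_1,\tau_2))\subset\JH(\Inj_\Gamma\sigma)$. However, your side claim that ``$\tau_1$ is still $1$-generic'' is false in general (constituents of $\Inj_\Gamma\sigma$ for $\sigma$ $1$-generic can be merely $0$-generic), so the recentering step needs more care than you indicate.

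For comparison, the paper avoids the extension graph entirely and works directly in the $\cI$-combinatorics of \cite{BP}: it defines $\mu_0=\mu_1\cap\mu_2$ and $\mu_3=\mu_1\cup\mu_2$ in $\cI$, first settles the case $\cS(\mu_1)\cap\cS(\mu_2)=\emptyset$ via \cite[Cor.~4.11]{BP} and \cite[Lemma~2.20(iii)]{HuWang} (which identifies $\JH(I(\sigma,\tau_3))=\JH(I(\tau_1,\tau_2))$), then reduces the general case to this one by recentering at $\tau_0$ using the composition operation $\lambda\mapsto\lambda\circ\mu_0$ from \cite[Lemmas~2.1, 2.6, 2.7]{HuWang2}. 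This is essentially your ``cleaner alternative'', and it sidesteps the dictionary-translation you flagged as the main obstacle.
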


\begin{proof}
Let $\mu_1,\mu_2\in \cI$  correspond to $\tau_1,\tau_2$ respectively. Since $\mu_1,\mu_2$ are compatible, one checks that there exists a unique element in $\cI$, denoted by $\mu_1\cap \mu_2$ (resp.~$\mu_1\cup \mu_2$), which is compatible with $\mu_1$ and $\mu_2$ such that $\cS(\mu_1\cap \mu_2)=\cS(\mu_1)\cap \cS(\mu_2)$ (resp.~$\cS(\mu_1\cup \mu_2)=\cS(\mu_1)\cup \cS(\mu_2)$). (See also \cite[\S~12]{BP} for the explicit construction of $\mu_1\cap \mu_2$.)

Let $\tau_{0}, \tau_{3}\in \JH(\Inj_\Gamma\sigma)$ correspond to $\mu_0\defeq\mu_1\cap \mu_2$, $\mu_3\defeq\mu_1\cup\mu_2\in\cI$ respectively.   
We first assume $\tau_0=\sigma$, equivalently $\cS(\mu_1)\cap \cS(\mu_2)=\emptyset$.  We have $\tau_1,\tau_2\in \JH(I(\sigma,\tau_3))$ by  \cite[Cor.~4.11]{BP}, and the genericity assumption on $\sigma$ implies that $I(\sigma,\tau_3)$ has length $2^{|\cS(\mu_3)|}$. We deduce from \cite[Lemma~2.20(iii)]{HuWang} that $\JH(I(\sigma,\tau_3))=\JH(I(\tau_1,\tau_2))$ ($\tau_2=\tau_1^c$ with the notation used there).

To treat the general case, we note that $I(\tau_0,\tau_3)$ exists and $\tau_1,\tau_2\in \JH(I(\tau_0,\tau_3))$ by \cite[Cor.~4.11]{BP},  so we may view $\tau_1,\tau_2$ as Jordan--H\"older factors of $\Inj_\Gamma\tau_0$.  
Let \[\lambda_1,\lambda_2,\lambda_3\in\cI(y_0,\dots,y_{f-1})\] be the element corresponding to $\tau_1,\tau_2,\tau_3\in\JH(\Inj_\Gamma\tau_0)$. 
 Using \cite[Lemmas~2.1, 2.7]{HuWang2} we get $\lambda_i\circ\mu_0=\mu_i$ for $i=1,2,3$.
By \cite[Lemma~2.6(i)]{HuWang2} we have $\cS(\mu_i)=\cS(\lambda_i)\mathbin\Delta\cS(\mu_0)$ or equivalently $\cS(\lambda_i)=\cS(\mu_i)\mathbin\Delta\cS(\mu_0)=\cS(\mu_i)\setminus \cS(\mu_0)$, 
so $\cS(\lambda_1)\cap \cS(\lambda_2)=\emptyset$ and $\cS(\lambda_1)\cup \cS(\lambda_2)=\cS(\lambda_3)$. 
Moreover, since $\mu_1,\mu_2,\mu_3$ are compatible,  $\lambda_1,\lambda_2,\lambda_3$ are also compatible by the table in the proof of \cite[Lemma~2.6]{HuWang2} (writing $\lambda_i = \mu_i \circ \mu_0^{-1}$, where $\mu_0^{-1}\in\cI$ is the unique element defined by demanding $\mu_0^{-1}\circ \mu_0=(x_0,\dots,x_{f-1})$), so that $\lambda_3=\lambda_1\cup \lambda_2$. 
Hence, by the previous paragraph we get $\JH(I(\tau_1,\tau_2))=\JH(I(\tau_0,\tau_3))$,  in particular $\JH(I(\tau_1,\tau_2))\subset \JH(\Inj_\Gamma\sigma)$ as $I(\tau_0,\tau_3)$ is a quotient of $I(\sigma,\tau_3)$.
\end{proof}

\subsection{More \texorpdfstring{$\Gamma$}{Gamma}-representations}
\label{sec:more-gamma-repr}

Recall from \cite[Def.~2.9]{HuWang2} that given $j\in\{0,\dots,f-1\}$ and $*\in \{+,-\}$ we define an $f$-tuple $\mu_j^*\in \bigoplus_{i=0}^{f-1}(\Z\pm x_i)$ as follows: if $f>1$ then $(\mu_j^*)_{j-1}(x_{j-1})\defeq p-2-x_{j-1}$, $(\mu_j^*)_{j}(x_{j})\defeq x_{j}\ast1$ and $(\mu_j^*)_{i}(x_i)\defeq x_i$ for $i\notin\{j-1,j\}$, while if $f = 1$ then $\mu_0^*(x_{0})\defeq p-2-(\ast 1)-x_0$.
If $\sigma$ is a $0$-generic Serre weight corresponding to a tuple $(s_0,\dots,s_{f-1})\in\{0,\dots,p-1\}^f$ we write $\mu^*_j(\sigma)$ for the Serre weight $\mu_j^*\big((s_0,\dots,s_{f-1})\big)\otimes\det^{e(\mu_j^*)(s_0,\dots,s_{f-1})}$, where $e(\mu_j^*)\in \Z\oplus \bigoplus_{i=0}^{f-1}\Z x_i$ is defined in \cite[\S~3]{BP}.
(Note that $\mu_j^-(\sigma)$ is undefined if $f \ge 2$ and $s_j = 0$ and $\mu_j^+(\sigma)$ is undefined if $f = 1$ and $s_j = p-2$.)

The following lemma is well known, but we state it for lack of convenient reference.

\begin{lem}\label{lem:mu-i+}
  Suppose that $\sigma = (r_0,\dots,r_{f-1}) \otimes \eta$ is any Serre weight such that $\mu_i^-(\sigma)$ is defined.
  If $f = 1$ we moreover suppose that $0 < r_0 < p-1$.
  Then the (unique up to isomorphism) nonsplit $\GL_2(\cO_K)/Z_1$-extension $0 \to \mu_i^-(\sigma) \to V \to \sigma \to 0$ is a quotient of $\Ind_I^{\GL_2(\cO_K)} \chi_{\sigma}$ {(hence is a $\Gamma$-representation)}, equivalently $\chi_{\sigma} \into V|_I$.
\end{lem}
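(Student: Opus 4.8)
The plan is to reduce the claim to a statement about the $\GL_2(\cO_K)$-socle of $\Ind_I^{\GL_2(\cO_K)} \chi_{\sigma}$ and its relation to $\mu_i^-(\sigma)$. First I would recall the two equivalent formulations: since $V|_I$ contains $\chi_\sigma = \sigma^{I_1}$ as the $I_1$-invariants coming from the $\sigma$-quotient, and $\dim_\F V^{I_1}$ is controlled by Frobenius reciprocity, the embedding $\chi_\sigma \hookrightarrow V|_I$ is equivalent by adjunction to the existence of a nonzero (hence, by looking at cosocles, surjective) map $\Ind_I^{\GL_2(\cO_K)} \chi_\sigma \twoheadrightarrow V$. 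So it suffices to produce such a surjection, i.e.\ to show that $V$ is a quotient of $\Ind_I^{\GL_2(\cO_K)} \chi_\sigma$.

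The key input is the structure of $\Ind_I^{\GL_2(\cO_K)} \chi_\sigma$ as a $\Gamma$-representation, which is classical (Breuil--Pa\v{s}k\=unas): under the genericity hypothesis its cosocle is $\sigma$ (the constituent $\sigma_\emptyset$ in the parametrization recalled before Lemma~\ref{lem:sigma_J-ext-graph2}, with $\chi_\sigma = \chi_{\mu}^s$ after identifying $\sigma \cong F(\mu)$), and the Jordan--H\"older constituents adjacent to $\sigma$ in the cosocle layer, i.e.\ those $\tau$ with $\Ext^1_\Gamma(\tau,\sigma) \ne 0$ appearing as a subrepresentation of the second radical layer, are exactly the $\sigma_{\{j\}}$ for $0 \le j \le f-1$. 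I would identify $\mu_i^-(\sigma)$ with one of these: comparing the explicit tuple for $\mu_i^-$ from \cite[Def.~2.9]{HuWang2} with the tuple $\xi_{\{i\}}$ from~\eqref{eq:xi-from-J} (equivalently, using $\sigma_{\{i\}} \cong F(\t_\mu(-\o\eta_{\{i\}}))$ from Lemma~\ref{lem:sigma_J-ext-graph2} and matching it against the highest weight of $\mu_i^-(\sigma)$), one sees $\mu_i^-(\sigma) \cong \sigma_{\{i\}}$. Since $\sigma_{\{i\}}$ occurs with multiplicity one in $\Ind_I^{\GL_2(\cO_K)} \chi_\sigma$ and $\Ext^1_\Gamma(\sigma, \mu_i^-(\sigma)) \ne 0$ (by Lemma~\ref{lem:I-sigma-tau-princ-series}, since $|\emptyset \mathbin\Delta \{i\}| = 1$), the quotient $I(\mu_i^-(\sigma),\sigma)$ of $\Ind_I^{\GL_2(\cO_K)}\chi_\sigma$ exists and is a length-two representation with socle $\mu_i^-(\sigma)$ and cosocle $\sigma$; by uniqueness of the nonsplit extension ($\dim \Ext^1 = 1$), $I(\mu_i^-(\sigma),\sigma) \cong V$. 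This exhibits $V$ as a quotient of $\Ind_I^{\GL_2(\cO_K)}\chi_\sigma$ and finishes the proof; the equivalence with $\chi_\sigma \hookrightarrow V|_I$ is then Frobenius reciprocity.

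The case $f = 1$ needs separate bookkeeping, which is why the extra hypothesis $0 < r_0 < p-1$ is imposed: here $\Ind_I^{\GL_2(\cO_K)}\chi_\sigma$ has only two Jordan--H\"older factors, $\sigma$ and $\sigma^{[s]}$, and one must check directly that $\sigma^{[s]} \cong \mu_0^-(\sigma)$ under these constraints (using the $f=1$ clause $\mu_0^*(x_0) = p-2-(\ast 1)-x_0$, so $\mu_0^-(x_0) = p-1-x_0$), so that $V \cong I(\mu_0^-(\sigma),\sigma)$ is exactly the nonsplit self-extension-type quotient $\Ind_I^{\GL_2(\cO_K)}\chi_\sigma$ itself (which is uniserial of length two in this regime). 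One should double-check that the determinant twists match on the nose, i.e.\ that $e(\mu_i^-)(r_0,\dots,r_{f-1})$ agrees modulo $p^f - 1$ with the twist produced by $\t_\mu(-\o\eta_{\{i\}})$; this is the same style of routine verification as at the end of the proofs of Lemma~\ref{lem:sigma_J-ext-graph} and Lemma~\ref{lem:sigma_J-ext-graph2}.

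The main obstacle I expect is precisely this matching of normalizations: relating the combinatorial datum $\mu_i^-(\sigma)$ of \cite{HuWang2} (an element of $\bigoplus_i (\Z \pm x_i)$ together with its determinant correction $e(\mu_i^-)$) to the extension-graph description $F(\t_\mu(-\o\eta_{\{i\}}))$ and to the principal-series constituent $\sigma_{\{i\}}$, including getting the determinant twist exactly right and handling the boundary/edge cases ($s_j = 0$ for $f \ge 2$, and the $f = 1$ range restriction) where $\mu_i^-(\sigma)$ can fail to be defined. Once that dictionary is in place, everything else is a direct appeal to the already-recalled structure theory of $I(\sigma,\tau)$ and Frobenius reciprocity.
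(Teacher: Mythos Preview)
Your approach is the paper's: its entire proof is ``This follows from \cite[Thm.\ 2.4(iii) and Cor.\ 5.6(ii)]{BP}'', and you are unpacking precisely those citations, namely that $\mu_i^-(\sigma)$ lies directly below the cosocle $\sigma$ in the radical filtration of $\Ind_I^{\GL_2(\cO_K)}\chi_\sigma$, so the length-two quotient $I(\mu_i^-(\sigma),\sigma)\cong V$ exists.

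There is, however, a parametrization slip. In the convention recalled before Lemma~\ref{lem:sigma_J-ext-graph2}, $\sigma_\emptyset$ is always the \emph{socle} of $\Ind_I^{\GL_2(\cO_K)}\chi$, not the cosocle. Your computation $\mu_i^-(\sigma)\cong\sigma_{\{i\}}$ via~\eqref{eq:xi-from-J} is correct for the induction of $\chi=\chi_\sigma^s$ (where $\sigma=\sigma_\emptyset$ is the socle), but then you only get the extension with socle $\sigma$ and cosocle $\mu_i^-(\sigma)$ as a \emph{sub}representation, which is the wrong one. For $\Ind_I^{\GL_2(\cO_K)}\chi_\sigma$ one has instead $\sigma=\sigma_\cS$ with $\cS=\{0,\dots,f-1\}$, and the identification needed is $\mu_i^-(\sigma)\cong\sigma_{\cS\setminus\{i\}}$ (cf.\ the last line of the proof of Lemma~\ref{lem:reps-Qi*}(ii)); with that correction your argument goes through. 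Note also that invoking Lemma~\ref{lem:sigma_J-ext-graph2} and Lemma~\ref{lem:I-sigma-tau-princ-series} imposes $1$- or $2$-genericity on $\chi_\sigma$ which the statement does not assume; citing \cite[Thm.\ 2.4(iii)]{BP} directly avoids this.
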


\begin{proof}
  This follows from \cite[Thm.\ 2.4(iii) and Cor.\ 5.6(ii)]{BP}. 
\end{proof}

\begin{lem}\label{lem:soc-cosoc-sigma}
  Let $\sigma$ be a {1}-generic Serre weight.
  Let $Q$ be a quotient of $\Proj_\Gamma \sigma$ such that
  \begin{enumerate}
  \item $\soc_\Gamma(Q) \cong \sigma^{\oplus r}$ for some $r \ge 1$;
  \item $\rad_\Gamma(Q)/\soc_\Gamma(Q)$ is nonzero and does not admit $\sigma$ as a subquotient.
  \end{enumerate}
  Then $\rad_\Gamma(Q)/\soc_\Gamma(Q)$ is semisimple and there exists a subset $\cJ \subset \{0,1,\dots,f-1\}$ such that
  \begin{equation*}
    \rad_\Gamma(Q)/\soc_\Gamma(Q) \cong \bigoplus_{i \in \cJ} (\mu_i^+(\sigma) \oplus \mu_i^-(\sigma)).
  \end{equation*}
\end{lem}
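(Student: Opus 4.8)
The idea is to exploit that $Q$ is a quotient of $\Proj_\Gamma\sigma$, so that $\rad_\Gamma(Q)/\soc_\Gamma(Q)$ is a subquotient of $\rad_\Gamma(\Proj_\Gamma\sigma)$, and to combine the $\Ext^1$ computations of \S\ref{sec:EG-Gamma-rep} with the known shape of the first two radical layers of $\Proj_\Gamma\sigma$. First I would observe that since $\sigma$ is $1$-generic, $\Inj_\Gamma\sigma$ is multiplicity free with $\JH(\Inj_\Gamma\sigma)$ parametrized by $\cI$, and dually the same holds for $\Proj_\Gamma\sigma$; in particular $[\Proj_\Gamma\sigma:\sigma]$ is bounded and $\Ext^1_\Gamma(\sigma,\tau)\ne 0$ forces $\tau$ (for $\tau\in W(\brho^\ss)$ or more generally in $\JH(\Proj_\Gamma\sigma)$) to differ from $\sigma$ by a single ``step'', so by Lemma~\ref{lem:I-sigma-tau-modular} (and its principal-series analog Lemma~\ref{lem:I-sigma-tau-princ-series}) the only Serre weights $\tau$ with $\Ext^1_\Gamma(\tau,\sigma)\ne 0$ are exactly the $\mu_i^{\pm}(\sigma)$ for $0\le i\le f-1$ (those for which $\mu_i^\pm(\sigma)$ is defined). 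This already pins down $\JH(\rad_\Gamma(Q)/\soc_\Gamma(Q))\subset\{\mu_i^\pm(\sigma)\}$, because any constituent of the second radical layer of $Q$ must admit a nonsplit self-extension-free extension by $\soc_\Gamma(Q)\cong\sigma^{\oplus r}$, hence a nonzero $\Ext^1_\Gamma(-,\sigma)$, and hypothesis (ii) rules out $\sigma$ itself.

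The next step is to upgrade this to the direct sum decomposition and to semisimplicity. For semisimplicity: since $Q$ is a quotient of $\Proj_\Gamma\sigma$ with semisimple socle $\sigma^{\oplus r}$, the module $Q^\vee$ is a submodule of $(\Proj_\Gamma\sigma)^\vee\cong\Inj_\Gamma(\sigma^\vee)$-type object with cosocle $\sigma^{\oplus r}$, so it suffices to show that $\rad_\Gamma(Q)/\soc_\Gamma(Q)$ has no nonsplit self-extensions and no extensions among the distinct $\mu_i^\pm(\sigma)$ that could survive — equivalently, that for $i\ne i'$ (or opposite signs) $\mu_i^{*}(\sigma)$ and $\mu_{i'}^{*'}(\sigma)$ cannot both lie as a length-$2$ uniserial piece above $\sigma$ inside a quotient of $\Proj_\Gamma\sigma$. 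Concretely I would argue layer by layer: the third radical layer of $\Proj_\Gamma\sigma$ is controlled by $\JH(\Inj_\Gamma\sigma)$ and the compatibility combinatorics of \cite[\S~4]{BP}, and a constituent $\mu_i^*(\sigma)$ appearing in $\rad_\Gamma(Q)/\soc_\Gamma(Q)$ that is \emph{not} in the socle of $\rad_\Gamma(Q)/\soc_\Gamma(Q)$ would force, by pushing out along $\soc_\Gamma(Q)$, a Jordan--Hölder factor of $I(\sigma,\tau)$ for some $\tau$ with $|J_\sigma\,\Delta\,J_\tau|=2$ to appear below $\mu_i^*(\sigma)$; Lemma~\ref{lem:comp-JH} and the multiplicity-one property then show this factor is again some $\mu_{i'}^{*'}(\sigma)$, and one checks using the explicit description of $\mu_j^\pm$ in \cite[Def.~2.9]{HuWang2} that $\Ext^1_\Gamma(\mu_i^*(\sigma),\mu_{i'}^{*'}(\sigma))=0$ whenever $\{i,*\}\ne\{i',*'\}$ — contradiction. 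Hence $\rad_\Gamma(Q)/\soc_\Gamma(Q)$ is semisimple and a direct sum of distinct $\mu_i^\pm(\sigma)$'s, and the remaining point is that the $+$ and $-$ versions come in pairs: if $\mu_i^+(\sigma)$ occurs but $\mu_i^-(\sigma)$ does not (say), I would derive a contradiction with the projectivity of $\Proj_\Gamma\sigma$ and the self-duality of its radical filtration (the socle and cosocle filtrations of $\Proj_\Gamma\sigma$ are reverse to each other), because $\mu_i^+(\sigma)$ and $\mu_i^-(\sigma)$ are ``symmetric'' with respect to $\sigma$ in $\Proj_\Gamma\sigma$ — more precisely, $[\Proj_\Gamma\sigma : \mu_i^+(\sigma)] = [\Proj_\Gamma\sigma:\mu_i^-(\sigma)]$ and both equal their multiplicity in the second radical layer — so a quotient containing one in its second layer but not the other would have to kill a sub of $\Proj_\Gamma\sigma$ meeting $\soc_\Gamma(\Proj_\Gamma\sigma)$ nontrivially outside the $\sigma$-isotypic part, contradicting $\soc_\Gamma(Q)\cong\sigma^{\oplus r}$. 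Setting $\cJ$ to be the set of $i$ for which $\mu_i^\pm(\sigma)$ occurs then gives the stated formula.

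I expect the main obstacle to be the bookkeeping in the pairing step and in ruling out cross-extensions $\Ext^1_\Gamma(\mu_i^*(\sigma),\mu_{i'}^{*'}(\sigma))$: this requires either a careful use of the extension-graph description (translating $\mu_i^\pm(\sigma)$ into $F(\t_\mu(\o\eta_{J}))$-type data and invoking Lemma~\ref{lem:I-sigma-tau-modular}/\ref{lem:I-sigma-tau-princ-series}) when $\sigma\in W(\brho^\ss)$, together with a direct appeal to the structure of $\Proj_\Gamma\sigma$ via \cite[\S~3--4, \S~12]{BP} in general. A clean way to organize this is to prove the statement first for $\sigma$ $2$-generic (so that all principal-series and injective-envelope parametrizations are bijective and the combinatorics of \cite[\S~4, \S~12]{BP} apply without caveats), and then note that the hypotheses of the lemma only involve $Q$, $\sigma$ and the finitely many weights $\mu_i^\pm(\sigma)$, all of which lie in a range where $1$-genericity of $\sigma$ already suffices for the relevant $\Ext^1$ vanishing and multiplicity-one facts. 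I would also double-check the degenerate case $f=1$ separately, where $\mu_0^\pm(\sigma)$ are the two weights with $\Ext^1_\Gamma(\mu_0^\pm(\sigma),\sigma)\ne0$ by \cite[Thm.~2.4(iii)]{BP}, and semisimplicity of $\rad_\Gamma(Q)/\soc_\Gamma(Q)$ is immediate since there are no further weights to extend.
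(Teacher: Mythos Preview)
Your proposal has a genuine gap in both main steps.

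\textbf{Semisimplicity and constituents of $\rad_\Gamma(Q)/\soc_\Gamma(Q)$.} Write $M := \rad_\Gamma(Q)/\soc_\Gamma(Q)$. Your argument correctly shows that $\soc_\Gamma(M)$ and $\cosoc_\Gamma(M)$ consist only of weights $\mu_i^\pm(\sigma)$, and that $\Ext^1_\Gamma(\mu_i^*(\sigma),\mu_{i'}^{*'}(\sigma))=0$ for $(i,*)\ne(i',*')$. But these two facts together do \emph{not} force $M$ to be semisimple: nothing you have written rules out a uniserial subquotient of $M$ of the form $\mu_i^*(\sigma)\--\tau\--\mu_j^{*'}(\sigma)$ with $\tau$ a weight at distance $2$ from $\sigma$ in the extension graph (hence adjacent to both endpoints, with $\tau\ne\sigma$). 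Your claim that $\JH(M)\subset\{\mu_i^\pm(\sigma)\}$ is argued via ``second radical layer'' reasoning, but that only controls one Loewy layer of $M$, not all of them; the inclusion $\JH(M)\subset\{\mu_i^\pm(\sigma)\}$ is in fact equivalent to the semisimplicity you are trying to prove, so the argument is circular. The paper sidesteps this by directly producing an injection $M\hookrightarrow\bigoplus_{i=0}^{f-1}(\mu_i^+(\sigma)\oplus\mu_i^-(\sigma))$, adapting the argument of \cite[Cor.~2.32]{HuWang2} (with \cite[Cor.~2.3]{HuWang2} in place of \cite[Cor.~2.26]{HuWang2}), which gives semisimplicity and the Jordan--H\"older factors simultaneously.

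\textbf{Pairing $\mu_i^+(\sigma)\leftrightarrow\mu_i^-(\sigma)$.} Your argument invokes ``self-duality of the radical filtration of $\Proj_\Gamma\sigma$'' and claims that a quotient containing $\mu_i^+(\sigma)$ but not $\mu_i^-(\sigma)$ would have to kill part of $\soc_\Gamma(\Proj_\Gamma\sigma)$ outside the $\sigma$-isotypic part. But $\soc_\Gamma(\Proj_\Gamma\sigma)=\sigma$ is already irreducible, so there is no such part, and in any case self-duality of $\Proj_\Gamma\sigma$ does not transfer to an arbitrary quotient $Q$. The paper's argument is substantially different: once $M$ is known to be semisimple, $Q$ has Loewy length $\le 3$, hence is a quotient of the largest quotient $Q_\sigma$ of $\Proj_\Gamma\sigma/\rad_\Gamma^3(\Proj_\Gamma\sigma)$ with $\sigma$-isotypic socle. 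The paper then identifies $Q_\sigma$ explicitly with the fiber product $A'_\sigma$ of the representations $A'_{\sigma,i}$ of \cite[Def.~2.5]{HuWang} over their common cosocle $\sigma$ (using \cite[Prop.~2.11]{HuWang} to count socle multiplicities). Since each $\rad_\Gamma(A'_{\sigma,i})$ has irreducible socle $\sigma$ and middle layer $\mu_i^+(\sigma)\oplus\mu_i^-(\sigma)$, killing $\mu_i^*(\sigma)$ in $Q$ forces the induced map $\rad_\Gamma(A'_{\sigma,i})\to Q$ to vanish on its socle, hence to vanish (by hypothesis~(i)), whence $\mu_i^{-*}(\sigma)$ is killed as well.
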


\begin{proof}
{By the same argument as in the proof of \cite[Cor.\ 2.32]{HuWang2} (using \cite[Cor.\ 2.3]{HuWang2} for $\Gamma$-representations instead of \cite[Cor.\ 2.26]{HuWang2} for $\wt\Gamma$-representations), we prove that \[\rad_\Gamma(Q)/\soc_\Gamma(Q) \into \bigoplus_{i=0}^{f-1} (\mu_i^+(\sigma) \oplus \mu_i^-(\sigma))\] (in particular, it is semisimple). }
  It thus suffices to show that $\mu_i^+(\sigma) \in \JH(Q)$ if and only if $\mu_i^-(\sigma) \in \JH(Q)$.
  Note that $Q_\sigma$ surjects onto $Q$, where $Q_\sigma$ is the largest quotient of $\Proj_\Gamma \sigma/\rad_\Gamma^3(\Proj_\Gamma \sigma)$ whose socle is $\sigma$-isotypic.

  We now determine $Q_\sigma$ more explicitly.
  Let $A'_{\sigma,i}$ ($0 \le i \le f-1$) denote the $\Gamma$-representation of \cite[Def.\ 2.5]{HuWang}, which has $\soc_\Gamma(A'_{\sigma,i}) \cong \cosoc_\Gamma(A'_{\sigma,i}) \cong \sigma$ and $\rad_\Gamma(A'_{\sigma,i})/\soc_\Gamma(A'_{\sigma,i}) \cong \mu_i^+(\sigma) \oplus \mu_i^-(\sigma)$.
  Let $A'_\sigma$ denote the fiber product of all $A'_{\sigma,i}$ over their common cosocle $\sigma$. 
  (Up to twist this is dual to the notation $A'_\sigma$ in \cite{HuWang}.)
  Note that the natural injection $\rad_\Gamma(A'_\sigma) \into \bigoplus_i \rad_\Gamma(A'_{\sigma,i})$ is an isomorphism.
  (It surjects onto every factor, as $\rad_\Gamma(\cdot)$ preserves surjections, hence surjects onto the cosocle of the direct sum, which is multiplicity free.)
  Hence the cosocle of $A'_\sigma$ is still $\sigma$.
  Also we obtain a surjection $\psi : Q_\sigma \onto A'_\sigma$, and its kernel is $\sigma$-isotypic, because $\psi$ induces an isomorphism after applying the functor $\rad_\Gamma(\cdot)/\rad^2_\Gamma(\cdot)$, e.g.\ by \cite[Cor.\ 5.6(i)]{BP}. 
  As $\Ext^1_\Gamma(\sigma,\sigma) = 0$ we have a surjection $\soc_\Gamma(\psi) : \soc_\Gamma(Q_\sigma) \onto \soc_\Gamma(A'_\sigma) \cong \sigma^{\oplus f}$.
  On the other hand, $\soc_\Gamma(Q_\sigma) \cong \sigma^{\oplus f}$ by the dual version of \cite[Prop.\ 2.11]{HuWang} (alternatively, see \cite[Thm.\ 4.3]{AJL}),
  hence $Q_\sigma \cong A'_\sigma$.

  Write $0 \to L \to A'_\sigma \to Q \to 0$, with $L$ being the corresponding kernel.
  If $\mu_i^*(\sigma) \in \JH(L)$, then $L$ has to contain the unique subrepresentation of $\rad_\Gamma(A'_{\sigma,i}) \subset A'_\sigma$ with cosocle $\mu_i^*(\sigma)$.
  In particular, the natural map $\rad_\Gamma(A'_{\sigma,i}) \into A'_\sigma \to Q$ has to vanish on the socle, and hence is zero (by condition (i)).
  This proves that $\mu_i^{-*}(\sigma) \in \JH(L)$, as desired.
\end{proof}

Recall again from \cite[Def.~2.9]{HuWang2} that given $j\in\{0,\dots,f-1\}$ and $*\in \{+,-\}$ we define an $f$-tuple $\delta_j^*\in \bigoplus_{i=0}^{f-1}(\Z\pm x_i)$ by $(\delta_j^*)_{j}(x_{j})\defeq x_{j}\ast2$ and $(\delta_j^*)_{i}(x_i)\defeq x_i$ for $i\neq j$.
If $\sigma$ is a Serre weight corresponding to a tuple $(s_0,\dots,s_{f-1})\in\{0,\dots,p-1\}^f$ we write $\delta^*_j(\sigma)$ for the Serre weight $\delta_j^*\big((s_0,\dots,s_{f-1})\big)\otimes\det^{e(\delta_j^*)(s_0,\dots,s_{f-1})}$ (which is defined only if $s_j \ast 2 \in\{0,\dots,p-1\}$).
It follows from the definition that $\chi_{\delta_j^*(\sigma)}=\chi_\sigma\alpha_j^{*1}$.

\begin{lem}\label{lem:mu-pm}
Assume that $\brho$ is $1$-generic and let $\sigma \in W(\brho^\ss)$.
  For any $0 \le j \le f-1$,  there exists $* \in \{\pm\}$ such that
  \begin{equation}\label{eq:mu-pm-delta-pm}
\{\mu_j^+(\sigma), \mu_j^-(\sigma), \delta_j^+(\sigma), \delta_j^-(\sigma)\} \cap W(\brho^\ss) = \{\mu_j^*(\sigma)\}.
  \end{equation}
  Moreover, $J_{\mu_j^*(\sigma)} = J_\sigma \mathbin\Delta \{j\}$.
\end{lem}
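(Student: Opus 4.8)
The plan is to reduce everything to the extension-graph combinatorics developed in Lemma~\ref{lem:sigma_J-ext-graph} and to the explicit description of the tuples $\mu_j^\pm$ and $\delta_j^\pm$. Write $\brho^\ss|_{I_K} \cong \o\tau(1,\mu+\eta)$ with $1 \le \mu_{j,1}-\mu_{j,2} \le p-4$, and after a twist assume $\mu = (r_j,0)_j$ with $1 \le r_j \le p-4$. By Lemma~\ref{lem:sigma_J-ext-graph} we have $\sigma \cong F(\t_\mu(\o\eta_{J_\sigma}))$. The four Serre weights $\mu_j^+(\sigma)$, $\mu_j^-(\sigma)$, $\delta_j^+(\sigma)$, $\delta_j^-(\sigma)$ are obtained by perturbing the tuple $(s_0,\dots,s_{f-1})$ describing $\sigma$ in the $j$-th (and, for $\mu_j^\pm$, also $(j-1)$-th) coordinate; the point is to compute, for each of them, whether it lies in $W(\brho^\ss)$, and if so, what its associated subset $J$ is.

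First I would record, using \cite[(14)]{BHHMS1} (exactly as in the proofs of Lemmas~\ref{lem:sigma_J-ext-graph} and \ref{lem:sigma_J-ext-graph2}), that each of the four weights, \emph{when defined}, is of the form $F(\t_\mu(\o\eta_{J_\sigma} + v))$ for an explicit $v \in \Z^f$: one expects $v = \pm\o\eta_j$ for the $\delta_j^\pm(\sigma)$ and $v = \mp\o\eta_j$ (combined with the sign flip in coordinate $j-1$ that is already baked into $\o\eta_{J_\sigma}$) for the $\mu_j^\pm(\sigma)$, so that the candidate that survives corresponds to the tuple $\o\eta_{J_\sigma \mathbin\Delta \{j\}}$. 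The key numerical fact to extract is: $\o\eta_{J_\sigma} \pm \o\eta_j \in \Lambda_W^\mu$ and lands in $\{0,1\}^f$ (so that the resulting weight is in $W(\brho^\ss)$ by Lemma~\ref{lem:sigma_J-ext-graph}) precisely for one choice of sign, namely $-\o\eta_j$ if $j \in J_\sigma$ and $+\o\eta_j$ if $j \notin J_\sigma$; the other sign pushes the $j$-th coordinate to $-1$ or $2$, putting the weight outside $W(\brho^\ss)$ by genericity. Then one checks that exactly one of the two ``$\mu$-type'' and one of the two ``$\delta$-type'' weights coincides with this surviving weight $F(\t_\mu(\o\eta_{J_\sigma \mathbin\Delta \{j\}}))$, and the remaining candidates are either undefined or have $j$-th exponent out of range, hence not modular. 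This gives both assertions: the intersection in \eqref{eq:mu-pm-delta-pm} is the singleton $\{\mu_j^*(\sigma)\}$, and $J_{\mu_j^*(\sigma)} = J_\sigma \mathbin\Delta \{j\}$.

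For the bookkeeping it is cleanest to split into the two cases $j \in J_\sigma$ and $j \notin J_\sigma$ and, within each, to use \eqref{eq:lambda-from-J} to read off the $j$-th entry $s_j = \lambda_j(x_j)$ of the tuple for $\sigma$ (it is $r_j + \delta_j$ if $j+1 \notin J_\sigma$, and $p-2-r_j-\delta_j$ if $j+1 \in J_\sigma$, with $\delta_j \in \{0,1\}$ the indicator of $j \in J_\sigma$). Then $\delta_j^\pm(\sigma)$ shifts $s_j$ by $\pm 2$ and $\mu_j^\pm(\sigma)$ shifts it by $\pm 1$ while also replacing $s_{j-1}$ by $p-2-s_{j-1}$; matching these against the tuple for $F(\t_\mu(\o\eta_{J_\sigma\mathbin\Delta\{j\}}))$ (again via \eqref{eq:lambda-from-J}, now with $J_\sigma$ replaced by $J_\sigma \mathbin\Delta \{j\}$, which flips $\delta_j$ and, where relevant, the conditions ``$j+1 \in J$'' and ``$j \in J$'' governing the $j$-th and $(j+1)$-th coordinates) pins down which $*$ works. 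The determinant twists $e(\mu_j^*)$, $e(\delta_j^*)$ versus $e(\lambda)$ should agree modulo $p^f-1$ by a direct computation of the same flavor as the last line of the proof of Lemma~\ref{lem:sigma_J-ext-graph}; alternatively, once the highest weights match up to twist and both weights lie in $W(\brho^\ss)$, the central characters force the twists to coincide.

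The main obstacle is purely organizational rather than conceptual: one must carefully handle the $(j-1)$-th coordinate interaction in the definition of $\mu_j^\pm$, the $f=1$ special case (where $\mu_0^*(x_0) = p-2-(\ast 1)-x_0$ has a different shape and $\mu_0^-(\sigma)$, $\mu_0^+(\sigma)$ may be undefined at the boundary), and the ``wrap-around'' indices $j-1, j, j+1$ taken modulo $f$, all while keeping track of the genericity bounds that guarantee exactly one of the four candidates is both defined and modular. I do not expect any genuinely hard input beyond what is already in Lemma~\ref{lem:sigma_J-ext-graph} and \cite[(14)]{BHHMS1}; the work is to carry out the case analysis cleanly.
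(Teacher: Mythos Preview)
Your extension-graph approach is viable and genuinely different from the paper's, but you have the roles of the $\mu$-type and $\delta$-type weights reversed, and this leads to a false conclusion. The weights $\mu_j^\pm(\sigma)$ are precisely the $\Ext^1$-neighbors of $\sigma$ (this is \cite[Cor.~5.6(ii)]{BP}), hence \emph{they} are the ones sitting at $\o\eta_{J_\sigma}\pm\o\eta_j$ in the extension graph; the substitution $s_{j-1}\mapsto p-2-s_{j-1}$ in the definition of $\mu_j^\pm$ is exactly the mechanism by which flipping bit $j$ of $J$ in \eqref{eq:lambda-from-J} propagates to coordinate $j-1$ of the tuple, not something ``already baked into $\o\eta_{J_\sigma}$''. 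By contrast, $\delta_j^\pm(\sigma)$ shifts $s_j$ by $\pm 2$ with no compensating change in $s_{j-1}$; this is not graph-adjacent to $\sigma$ and in fact does not lie in $\JH(\Inj_\Gamma\sigma)$ at all. In particular your claim that ``one of the two $\delta$-type weights coincides with the surviving weight $F(\t_\mu(\o\eta_{J_\sigma\mathbin\Delta\{j\}}))$'' is false: one checks directly (e.g.\ for $J_\sigma=\emptyset$, $j+1\notin J_\sigma$) that $F(\t_\mu(\o\eta_{\{j\}}))$ has $j$-th entry $r_j+1$, matching $\mu_j^+(\sigma)$ and not $\delta_j^+(\sigma)$ (whose $j$-th entry is $r_j+2$). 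Once the swap is corrected, your argument for the $\mu_j^\pm$ part goes through: exactly one of $\o\eta_{J_\sigma}\pm\o\eta_j$ lies in $\{0,1\}^f$, namely $\o\eta_{J_\sigma\mathbin\Delta\{j\}}$, giving the sign $*$ and the formula for $J_{\mu_j^*(\sigma)}$. But you still owe a separate argument that neither $\delta_j^\pm(\sigma)$ lies in $W(\brho^\ss)$, and your proposal does not supply one.

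The paper's proof handles both points more directly via the $\cI$-parametrization of $\JH(\Inj_\Gamma\sigma)$ from \cite[\S~4]{BP}. Since $\delta_j^\pm\notin\cI$, one gets $\delta_j^\pm(\sigma)\notin\JH(\Inj_\Gamma\sigma)\supset W(\brho^\ss)$ immediately. For the $\mu_j^\pm$ part, the paper identifies the unique sign $*$ as the one for which $\mu_j^*$ is compatible (in the sense of \cite[Def.~4.10]{BP}) with the element $\lambda\in\cI$ parametrizing $\sigma^c$, using \cite[Cor.~4.11]{BP} in place of the extension-graph bookkeeping.
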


\begin{proof}
Let $\sigma^c \in W(\brho^\ss)$ be determined by $J_{\sigma^c}=J_{\sigma}^c$.
Then $\JH(I(\sigma,\sigma^c)) = W(\brho^\ss)$ by Lemma~\ref{lem:I-sigma-tau-modular}.
{Recall from \S~\ref{sec:EG-Gamma-rep} that $\JH(\Inj_{\Gamma}\sigma)$ is parametrized by the set $\mathcal{I}$.}
Since $\delta_j^{\pm} \notin \cI$ we  deduce by \cite[Lemmas 2.1, 2.7]{HuWang2} that   $\delta_j^{\pm}(\sigma)$ does not occur in $\Inj_{\Gamma}\sigma$  for any $0\leq j\leq f-1$, hence $\delta_j^{\pm}(\sigma)\notin W(\brho^\ss)$.

Viewing $\sigma^c$ as a constituent in $\Inj_{\Gamma}\sigma$, it is parametrized by an element  $\lambda\in \cI$. %
Since $|\JH(I(\sigma,\sigma^c))|=2^f$, \cite[Cor.~4.11]{BP} implies that $\cS(\lambda)$ (defined above \cite[Lemma~\ref{bhhms4:lem:Dss-in-D0}]{BHHMS4}) equals $\{0,\dots,f-1\}$. For each $0\leq j\leq f-1$, there is a unique $*\in\{\pm\}$ such that $\mu_j^*$ is compatible (in the sense of \cite[Def.~4.10]{BP}) with $\lambda$. By \cite[Cor.~4.11]{BP} again and \cite[Lemmas 2.1, 2.7]{HuWang2}, we deduce that exactly one of $\mu_j^{\pm}(\sigma)$ occurs in  $\JH(I(\sigma,\sigma^c))$.  The final claim is a direct check.
\end{proof}

\subsection{Some \texorpdfstring{$\wt\Gamma$}{\textbackslash tilde Gamma}-representations}
\label{sec:some-wtgamma-repr}

We start by recalling some results from \cite{HuWang2}.
Let $\tau$ be a Serre weight and $\chi\defeq \tau^{I_1}$. 
For $n \ge 1$ let  $W_{\chi,n}\defeq (\Proj_{I/Z_1}\chi)/\m^n$, {where $\Proj_{I/Z_1}\chi$ is the linear dual of the injective envelope $\Inj_{I/Z_1}(\chi^{-1})$ (a projective cover of $\chi$ in the dual category). It is} a finite-dimensional representation of $I/Z_1$ over $\F$. 
We let $\overline{W}_{\chi,3}$ be the smallest quotient of $W_{\chi,3}$ such that $[W_{\chi,3}:\chi]=[\overline{W}_{\chi,3}:\chi]$. 
It is shown in \cite[Lemma~3.2]{HuWang2} that $\overline{W}_{\chi,3}$ fits into a short exact sequence
\begin{equation}
\label{eq:Wbar3}
0\ra \bigoplus_{\chi'} E_{\chi,\chi'}\ra \overline{W}_{\chi,3}\ra \chi\ra 0,
\end{equation}
where the direct sum is taken over the characters $\chi'$ such that $\Ext^1_{I/Z_1}(\chi',\chi)\neq 0$. 
Then $\overline{W}_{\chi,3}$, and hence also $\Ind_I^{\GL_2(\cO_K)}\overline{W}_{\chi,3}$, is annihilated by $\m_{K_1}^2$ \cite[Cor.\ 3.3]{HuWang2}. 

Recall from \cite[Thm.~2.23]{HuWang2} that given a 2-generic Serre weight $\sigma$ and $\tau\in \JH(\Inj_{\wt{\Gamma}}\sigma)$, there exists a unique finite-dimensional $\wt{\Gamma}$-module $I(\sigma,\tau)$ such that $\soc_{\wt{\Gamma}}I(\sigma,\tau)=\sigma$, $\cosoc_{\wt{\Gamma}}I(\sigma,\tau)=\tau$ and $[I(\sigma,\tau):\sigma]=1$.
(Note that this agrees with the definition of $I(\sigma,\tau)$ in \S~\ref{sec:EG-Gamma-rep} if $\tau \in \JH(\Inj_\Gamma\sigma)$.)
We also recall that $I(\sigma,\tau)$ is multiplicity free by \cite[Cor.~2.25]{HuWang2}.

Assume now that $\tau$ is 2-generic (so $\chi$ is 2-generic).
For $0 \le i \le f-1$ and a sign $* \in \{\pm\}$ let $W_i^* = W_i^*(\chi)$ denote the unique uniserial $I/Z_1$-representation of the form $\chi \-- \chi \alpha_i^{*1} \-- \chi$.
(It is a quotient of the $I/Z_1$-representation $\o W_{\chi,3}$ in \cite[\S~3.1]{HuWang2}, see also \S~\ref{sec:subspace-pi-k1-2} below.)
Let $Q_i^* = Q_i^*(\tau)$ denote the largest quotient of $\Ind_I^{\GL_2(\cO_K)} W_i^*$ with $\tau$-isotypic socle.
Then $Q_i^*$ is a $\wt\Gamma$-representation by \cite[Cor.\ 3.3]{HuWang2}.

\begin{lem}\label{lem:reps-Qi*}
  Suppose that $i \in \{0,\dots,f-1\}$.
  \begin{enumerate}
  \item The $\wt\Gamma$-representation $Q_i^-$ is uniserial of the form $\tau \-- \mu_i^-(\tau) \-- \tau$.
  \item The $\wt\Gamma$-representation $Q_i^+$ has the form 
    \begin{equation*}
      \xymatrix@R-1.5pc{
        & \mu_i^-(\tau) \ar@{-}[rd]\\
        \tau \ar@{-}[rd]\ar@{-}[ru] && \tau \\
        & \mu_i^+(\tau) \ar@{-}[rd]\ar@{-}[ru]\\
        && \delta_i^+(\tau)}
    \end{equation*}
  \end{enumerate}
 In particular, $\soc_{\wt\Gamma}(Q_i^*)=\tau$ for each $* \in \{\pm\}$.
\end{lem}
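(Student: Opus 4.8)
\emph{Proof plan.}

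The plan is to realise $Q_i^*$ as an explicit image inside $\Inj_{\wt\Gamma}\tau$ and read off its Loewy filtration. Since $Q_i^*$ is a $\wt\Gamma$-representation with $\soc_{\wt\Gamma}(Q_i^*)=\tau$ (recalled just before the statement), the embedding $Q_i^*\into\Inj_{\wt\Gamma}\tau$ exhibits $Q_i^*$ as the maximal image of a nonzero $\GL_2(\cO_K)$-equivariant map $\Ind_I^{\GL_2(\cO_K)}W_i^*\to\Inj_{\wt\Gamma}\tau$, and by Frobenius reciprocity such maps are classified by $\Hom_I\big(W_i^*,(\Inj_{\wt\Gamma}\tau)|_I\big)$. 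To bound the possible Jordan--H\"older factors I would apply exactness of $\Ind_I^{\GL_2(\cO_K)}(-)$ to the uniserial $I/Z_1$-representation $W_i^*=\chi_\tau\-- \chi_{\delta_i^*(\tau)}\-- \chi_\tau$ (using $\chi_{\delta_i^*(\tau)}=\chi_\tau\alpha_i^{*1}$), which gives $\JH(\Ind_I^{\GL_2(\cO_K)}W_i^*)=2\cdot\JH(\Ind_I^{\GL_2(\cO_K)}\chi_\tau)+\JH(\Ind_I^{\GL_2(\cO_K)}\chi_{\delta_i^*(\tau)})$. By the parametrisation recalled in \S~\ref{sec:EG-Gamma-rep} together with Lemmas~\ref{lem:sigma_J-ext-graph2}, \ref{lem:I-sigma-tau-princ-series} and \ref{lem:mu-i+} (and $2$-genericity of $\chi$), the principal series $\Ind_I^{\GL_2(\cO_K)}\chi_\tau$ is multiplicity free with cosocle $\tau=\sigma_{\{0,\dots,f-1\}}$, socle $\tau^{[s]}=\sigma_\emptyset$, and the only constituent linked to $\tau$ in the $i$-th direction is $\mu_i^-(\tau)=\sigma_{\{0,\dots,f-1\}\setminus\{i\}}$; likewise $\Ind_I^{\GL_2(\cO_K)}\chi_{\delta_i^*(\tau)}$ has cosocle $\delta_i^*(\tau)$ and, again by genericity, contributes no constituent linked to $\tau$ when $*=-$ but contributes $\delta_i^+(\tau)$ when $*=+$. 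Hence $\JH(Q_i^-)\subseteq\{\tau,\mu_i^-(\tau)\}$ and $\JH(Q_i^+)\subseteq\{\tau,\mu_i^-(\tau),\mu_i^+(\tau),\delta_i^+(\tau)\}$, with $\tau$ occurring at most twice and each other weight at most once.

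For the lower bounds I would construct the stated representations as $\tau$-isotypic-socle quotients of $\Ind_I^{\GL_2(\cO_K)}W_i^*$. For (i), Lemma~\ref{lem:mu-i+} gives the nonsplit extension $0\to\mu_i^-(\tau)\to V\to\tau\to0$ as a quotient of $\Ind_I^{\GL_2(\cO_K)}\chi_\tau$, hence of $\Ind_I^{\GL_2(\cO_K)}W_i^-$; using the uniserial structure of $W_i^-$ and the standard fact that $\Ext^1_\Gamma(\sigma,\sigma')$ is one-dimensional exactly when $\sigma'$ is a $\Gamma$-neighbour of $\sigma$ (\cite[Cor.~5.6]{BP}) one glues a third layer $\tau$ on top to obtain the quotient $\tau\-- \mu_i^-(\tau)\-- \tau$ with socle $\tau$. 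For (ii), the sub $\chi_\tau\-- \chi_{\delta_i^+(\tau)}\into W_i^+$ produces, inside $\Ind_I^{\GL_2(\cO_K)}W_i^+$, the weight $\mu_i^+(\tau)$ directly above the socle $\tau$ and the weight $\delta_i^+(\tau)$ directly above $\mu_i^+(\tau)$ --- here one uses that $\tau$ and $\delta_i^+(\tau)$ are precisely the two $\Gamma$-neighbours of $\mu_i^+(\tau)$ obtained by modifying the $i$-th place, so that $\Ext^1_\Gamma(\mu_i^+(\tau),\delta_i^+(\tau))\ne0$ and $\Ext^1_\Gamma(\mu_i^+(\tau),\tau)\ne0$, with Lemma~\ref{lem:mu-i+} realising the needed extensions as quotients of appropriate induced representations --- while the quotient $\Ind_I^{\GL_2(\cO_K)}\chi_\tau$ of $\Ind_I^{\GL_2(\cO_K)}W_i^+$ contributes a second copy of $\tau$ sitting above $\mu_i^-(\tau)$. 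The uniqueness of nonsplit extensions of characters recalled in \S~\ref{sec:notation} should then force this second $\tau$ to extend both $\mu_i^-(\tau)$ and $\mu_i^+(\tau)$, yielding a $\tau$-isotypic-socle quotient of $\Ind_I^{\GL_2(\cO_K)}W_i^+$ of the stated shape.

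For the matching upper bound I would show that no further constituents occur: the known radical filtration of $\Proj_{\wt\Gamma}\tau$ (equivalently $\Inj_{\wt\Gamma}\tau$) from \cite[\S~3]{HuWang2} shows that, within $\Inj_{\wt\Gamma}\tau$, only the weights $\tau,\mu_i^\pm(\tau),\delta_i^\pm(\tau)$ can be reached from $\tau$ in two Loewy steps, and multiplicity-freeness of $\Ind_I^{\GL_2(\cO_K)}\chi$ for a generic character $\chi$ caps the multiplicities at the values found in the first paragraph; combining this with the lower bounds and the one-dimensional $\Ext^1_\Gamma$ and $\Ext^1_{\wt\Gamma}$ groups of \S\S~\ref{sec:EG-Gamma-rep}--\ref{sec:more-gamma-repr} pins down the Loewy filtrations in (i) and (ii). I expect the main obstacle to be the fine structure of $Q_i^+$: proving that the second copy of $\tau$ is a nonsplit extension of \emph{both} $\mu_i^-(\tau)$ and $\mu_i^+(\tau)$ (not of just one), that $\delta_i^+(\tau)$ sits \emph{only} above $\mu_i^+(\tau)$ and is not part of any further extension, and that neither $\mu_i^+(\tau)$ nor $\delta_i^+(\tau)$ appears in $Q_i^-$. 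This requires carefully tracking, via Frobenius reciprocity and the uniserial structure of $W_i^*$, exactly which maps out of $\Ind_I^{\GL_2(\cO_K)}W_i^*$ are nonzero on which subquotients of $\Inj_{\wt\Gamma}\tau$, and using one-dimensionality of the relevant $\Ext^1$-groups to exclude spurious gluings. An alternative, possibly cleaner, route is to deduce the structure of $Q_i^*$ from that of the largest $\tau$-isotypic-socle quotient of $\Ind_I^{\GL_2(\cO_K)}(\o W_{\chi,3})$ via the surjection $\o W_{\chi,3}\onto W_i^*$, reading it off from the analysis of \cite[\S~3.1]{HuWang2}.
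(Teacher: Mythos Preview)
The crucial gap is in your lower bound for (i). You obtain the quotient $\mu_i^-(\tau)\--\tau$ of $\Ind_I^{\GL_2(\cO_K)}\chi_\tau$, hence of $\Ind_I^{\GL_2(\cO_K)}W_i^-$, and then assert that ``one glues a third layer $\tau$ on top'' using the uniserial structure of $W_i^-$. But this is exactly the content of the lemma: you must show that the surjection $\Ind_I^{\GL_2(\cO_K)}W_i^-\twoheadrightarrow (\mu_i^-(\tau)\--\tau)$ lifts to a surjection onto the uniserial $\wt\Gamma$-representation $Y_i^-=(\tau\--\mu_i^-(\tau)\--\tau)$. A priori the extension class of $Y_i^-$ in $\Ext^1_{\wt\Gamma}(\mu_i^-(\tau)\--\tau,\,\tau)$ need not be hit by $\Ind_I^{\GL_2(\cO_K)}W_i^-$; knowing merely that $W_i^-$ is uniserial does not settle this, since what matters is how the specific extension of $\chi\alpha_i^{-1}$ by $\chi$ at the bottom of $W_i^-$ interacts with the $I$-structure of $Y_i^-$. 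The paper resolves this by an explicit computation: it first takes $Y_i^-$ (existence coming from $\Theta_\tau$ in \cite[Prop.~3.12]{HuWang2}) and then constructs by hand an embedding $W_i^-\hookrightarrow Y_i^-|_I$, choosing $H$-eigenvectors $u,v\in\tau$ and a lift $w\in Y_i^-$ of an $I_1$-invariant in $Y_i^-/\tau$, and verifying that $\F u\oplus\F v\oplus\F w$ is $I_1$-stable via the operators $X_j,Y_j$ together with the constraint that $Y_i^-$ is killed by $\fm_{K_1}^2$. Once $W_i^-\hookrightarrow Y_i^-|_I$, Frobenius reciprocity gives $\Ind_I^{\GL_2(\cO_K)}W_i^-\twoheadrightarrow Y_i^-$; since $[Y_i^-:\tau]=2=[\Ind_I^{\GL_2(\cO_K)}W_i^-:\tau]$ and $\soc_{\wt\Gamma}Q_i^-=\tau$, this forces $Q_i^-=Y_i^-$ with no separate upper-bound argument needed.

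For (ii) the paper's route is quite different from yours, and it \emph{uses} part (i). Rather than working inside $\Inj_{\wt\Gamma}\tau$, the paper determines the full submodule lattice of $\Ind_I^{\GL_2(\cO_K)}W_i^+$: labelling by $V_J^s$ ($s\in\{0,1,2\}$) the subrepresentations with irreducible cosocle $\sigma_J^s$, it pins down all minimal containments among them. The containments among the $V_J^0,V_J^1$ and between $V_J^1,V_{J'}^2$ come from \cite{BP} and \cite{HuWang2}, but the key containments $V_{J\sqcup\{i\}}^0\subsetneq V_J^2$ --- which encode precisely the nonsplit gluing of the two copies of $\tau$ through $\mu_i^+(\tau)$ --- are obtained by \emph{dualising part (i)}: replacing $\chi$ by $\chi^{-1}$ yields a uniserial bottom piece $\sigma_\emptyset^0\--\sigma_{\{i\}}^0\--\sigma_\emptyset^2$ of $\Ind_I^{\GL_2(\cO_K)}W_i^+$, and an induction on $|J|$ propagates this to all $J$. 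Once the lattice is known, $Q_i^+$ is read off directly. Your proposed approach via the radical filtration of $\Proj_{\wt\Gamma}\tau$ would still face the obstacle you flag at the end --- deciding that the top $\tau$ extends $\mu_i^+(\tau)$ nonsplitly, not just $\mu_i^-(\tau)$ --- and that is exactly what the submodule-lattice computation (hence ultimately the eigenvector calculation in (i)) provides.
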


We remark that Lemma~\ref{lem:reps-Qi*} does not determine $\ker(Q_i^+ \onto \delta_i^+(\tau))$ uniquely up to isomorphism (this kernel is a suitable amalgam of the uniserial representations $\tau \-- \mu_i^-(\tau) \-- \tau$, $\tau \-- \mu_i^+(\tau) \-- \tau$, and this amalgam depends on a parameter in $\F^\times$), but this will not matter for us.

\begin{proof}
  (i)  
  Let $Y_i^-$ %
denote a uniserial $\wt\Gamma$-representation of the form $\tau \-- \mu_i^-(\tau) \-- \tau$, which exists by taking a suitable quotient of the representation $\Theta_\tau$ in \cite[Prop.\ 3.12]{HuWang2} (see also \cite[Cor.\ 3.16]{HuWang2}). 
  By \cite[Lemma 2.10]{HuWang2} it is easy to see that $Y_i^-$ is unique up to isomorphism (alternatively it follows once this lemma is proved).

 We now show that part (i) holds, \emph{assuming that} $W_i^- \into Y_i^-|_I$.
    Under this assumption, we show that the corresponding map $\Ind_I^{\GL_2(\cO_K)} W_i^- \onto Y_i^-$ is surjective.
    First note that $\JH(\cosoc_{\wt\Gamma} (\Ind_I^{\GL_2(\cO_K)} W_i^-)) \subset \{ \tau, \delta_i^-(\tau)\}$, as $\Ind_I^{\GL_2(\cO_K)} \chi\alpha_i^{-1}$ (resp.\ $\Ind_I^{\GL_2(\cO_K)} \chi$) has cosocle $\delta_i^-(\tau)$ (resp.\ $\tau$).
    Hence if the map $\Ind_I^{\GL_2(\cO_K)} W_i^- \onto Y_i^-$ is not surjective, then it has image $\tau$, which implies that it is trivial on $\Ind_I^{\GL_2(\cO_K)} \chi \subset \Ind_I^{\GL_2(\cO_K)} W_i^-$, contradicting the injectivity of $W_i^- \into Y_i^-|_I$.
    The surjection $\Ind_I^{\GL_2(\cO_K)} W_i^- \onto Y_i^-$ shows that $Q_i^- = Y_i^-$ by definition of $Q_i^-$, as $[\Ind_I^{\GL_2(\cO_K)} W_i^-:\tau]=2$, which completes part (i).

 It remains to check that $W_i^- \into Y_i^-|_I$. 
  Let $\chi'\defeq \chi\alpha_i^{-1}$
  and let $E_{\chi,\chi'}$ be the $I/Z_1$-representation which is the unique nonsplit extension of $\chi'$ by $\chi$ (see \S~\ref{sec:notation}). By \cite[Lemma~3.42(ii)]{BHHMS2} (applied with $\sigma=\tau$ and $\underline{Y}^{-\underline{i}}v=Y_i^{-1}v$) 
  there is 
  an injection $E_{\chi,\chi'} \into \tau|_I \into Y_i^-|_I$.
  As $\dim_\F(\tau) < q$, we know that $\tau|_I$ is multiplicity free by \cite[Lemma 2.7]{BP}.
  Let $u \in \tau^{I_1}$ (resp.\ $v\in \tau$) be an $H$-eigenvector with eigencharacter $\chi$ (resp.\ $\chi'=\chi\alpha_i^{-1}$), so $E_{\chi,\chi'} = \F u \oplus \F v$.
  On the other hand, let $w\in Y_i^-$ be an $H$-eigenvector with eigencharacter $\chi$, such that its image in $Y_i^-/\soc_{\wt\Gamma}(Y_i^-)$ is $I_1$-invariant.
  This is possible by Lemma~\ref{lem:mu-i+}. %

  We will prove that $\F u \oplus \F v\oplus \F w$ is $I$-stable (equivalently $I_1$-stable) and isomorphic to $W_i^-$. 
  Note that $(g-1)w \in \soc_{\wt\Gamma}(Y_i^-) = \tau$ for all $g \in I_1$ (by the choice of $w$), and that $w$ itself is not fixed by $I_1$, since otherwise there would be a surjection $\Ind_I^{\GL_2(\cO_K)}\chi\onto Y_i^-$, which is impossible as $\tau$ has
multiplicity $1$ in $\Ind_I^{\GL_2(\cO_K)}\chi$. %
As $\F u \oplus \F v$ is $I_1$-stable it is enough to prove that $(g-1)w\in \F u \oplus \F v$ for
$g\in I_1$.
  It then suffices to show that (note that $Z_1$ acts trivially on $Y_i^-)$:
  \begin{enumerate}[label=(\alph*)]
  \item $(g-1)w = 0$ for all $g \in \smatr{1}{\cO_K}01$;
  \item $(g-1)w \in \tau^{H = \chi} = \F u$ for all $g \in \smatr{1+p\cO_K}001$;
  \item $(g-1)w \in \tau^{H = \chi'} = \F v$ for all $g \in \smatr{1}0{p\cO_K}1$.
  \end{enumerate}

  To prove (a), let $Y_j \defeq  \sum_{a\in\F_q^\times}a^{-p^j}\smatr{1}{[a]}0{1}\in\F\bbra{\smatr1{\cO_K}01}$ for $0 \le j \le f-1$, so that $\F\bbra{\smatr{1}{\cO_K}01} = \F\bbra{Y_0,\dots,Y_{f-1}}$.
It is direct to check that $Y_jw$ is an $H$-eigenvector with eigencharacter $\chi\alpha_j$. However, we see from \cite[Lemma 2.7]{BP} that $\chi \alpha_j \notin \JH(\tau|_I)$ for all $0 \le j \le f-1$.
  Thus $Y_j w = 0$ for all $j$, so (a) holds.

  Part (b) is obvious.

  To prove (c), let $X_j \defeq  \sum_{a\in\F_q^\times}a^{-p^j}\smatr{1}0{[a]}{1}\in\F\bbra{\smatr{1}0{\cO_K}1}$ for $0 \le j \le f-1$. %
Write $\tau=(r_0,\dots,r_{f-1})$ up to twist.  By another application of  \cite[Lemma 2.7]{BP} we see that $\chi \alpha_j^{-(r_j+1)} \notin \JH(Y_i^-|_I)$ for all $j \ne i-1$, so using the $\GL_2(\cO_K)$-action on $Y_i^-$ we conclude that $X_j^{r_j+1} w = 0$ and hence $X_j^p w = 0$ for all $j \ne i-1$. 
  On the other hand, $X_j^p X_{j'}^p w = 0$ for all $j$, $j'$, as $Y_i^-$ is a $\wt\Gamma$-representation.
  As $\F\bbra{\smatr{1}0{p\cO_K}1} = \F\bbra{X_0^p,\dots,X_{f-1}^p}$, we deduce that $(g-1)w \in \F X_{i-1}^p w$, on which $H$ acts by $\chi \alpha_{i-1}^{-p} = \chi'$.
  
  (ii) 
  Using (i) we determine the submodule structure of $\Ind_I^{\GL_2(\cO_K)} W_i^+$ completely.
  This is done in Step 1 to Step 3 below.
  Write $\cS \defeq  \{0,1,\dots,f-1\}$ in what follows.
  For $J \subset \cS$ let $\sigma_J^0$ (resp.\ $\sigma_J^1$, resp.\ $\sigma_J^2$) denote the constituent parametrized by $J$
  in the bottom $\Ind_I^{\GL_2(\cO_K)} \chi$ (resp.\ $\Ind_I^{\GL_2(\cO_K)} \chi\alpha_i$, resp.\ the top $\Ind_I^{\GL_2(\cO_K)} \chi$), 
  (see \S~\ref{sec:EG-Gamma-rep} for this parametrization).
  In particular, we write $\sigma_J \defeq  \sigma_J^0 \cong \sigma_J^2$ and note that $\sigma_\cS \cong \tau$.
  Note that the constituents $\sigma_J$ occur with multiplicity 2, and that the $\sigma_J^1$ occur with multiplicity 1, cf.\ \cite[Lemma~\ref{bhhms4:lem:multfree}]{BHHMS4}. %
 
 For \ $s \in \{0,1\}$ \ write \ $V_J^s$ \ for \ the \ unique \ subrepresentation \ of \ $\Ind_I^{\GL_2(\cO_K)} E_{\chi,\chi\alpha_i} \subset \Ind_I^{\GL_2(\cO_K)} W_i^+$ with cosocle $\sigma_J^s$ {(not to be confused with the Serre weight $\sigma_J^{[s]}$ of \S~\ref{sec:notation}!)}, or equivalently for the image of \emph{any} nonzero map $\Proj_{\wt\Gamma}\sigma_{J}^s\ra \Ind_I^{\GL_2(\cO_K)} E_{\chi,\chi\alpha_i}$.
  Write $V_J^2$ for the image of \emph{some} map $\iota:\Proj_{\wt\Gamma}\sigma_{J}\ra\Ind_I^{\GL_2(\cO_K)} W_i^+$ such that the composite $\Proj_{\wt\Gamma}\sigma_{J}\xrightarrow{\iota}\Ind_I^{\GL_2(\cO_K)} W_i^+\onto \Ind_I^{\GL_2(\cO_K)}\chi$ is nonzero.
  We claim that  the $V_J^2$ are independent of the choice of $\iota$ (equivalently, $V_J^0 \subset V_J^2$). Indeed, as we recall at the beginning of \S~\ref{sec:some-wtgamma-repr}, $W_i^+$ is a quotient of $\overline{W}_{\chi,3}$. Using \cite[Cor.\ 3.3]{HuWang2} we see that $\Ind_I^{\GL_2(\cO_K)}\overline{W}_{\chi,3}$ is a $\wt\Gamma$-representation, so we can lift $\iota$ to $\phi:\Proj_{\wt\Gamma}\sigma_J\ra \Ind_I^{\GL_2(\cO_K)}\overline{W}_{\chi,3}$ such that the composite with $\Ind_I^{\GL_2(\cO_K)}\overline{W}_{\chi,3}\ra \Ind_I^{\GL_2(\cO_K)}\chi$ is nonzero.
By \cite[Prop.\ 3.10(i)]{HuWang2} (and its proof) we get $[\coker(\phi):\sigma_J]=0$, hence by \cite[Prop.\ 3.10(ii)]{HuWang2} the image of $\phi$ is independent of any choices, and consequently the image $V_J^2$ of $\iota$ is well defined. 
  Thus, to determine the submodule structure, it suffices to determine all minimal (proper) containments between the submodules of the form $V_J^0$, $V_J^1$, $V_J^2$.
  Here we say that a containment of two such modules is minimal if no other $V_{J'}^s$ lies strictly in between.

  \textbf{Step 1.} 
  By \cite[Thm.\ 2.4]{BP} and \cite[Lemma 3.7]{HuWang2} the minimal containments among the $V_J^0$ and $V_J^1$ are given by
  \begin{align}
    V_J^0 \subsetneq &V^0_{J\sqcup\{k\}},\quad   V_J^1 \subsetneq V^1_{J\sqcup\{k\}} && \text{for any $k \notin J$}; \label{eq:VJ0,VJ1} \\
    &\ \ V_{J \sqcup \{i\}}^0 \subsetneq V^1_{J} && \text{if $i \notin J$}. \label{eq:VJ0-VJ1}
  \end{align}
  Likewise, \cite[Lemma 3.8]{HuWang2} shows that the minimal containments between submodules of the form $V_J^1$ and $V_{J'}^2$ are given by
  \begin{equation}
    V_{J}^1 \subsetneq V^2_{J \sqcup \{i\}} \qquad\qquad \text{if $i \notin J$}.\label{eq:VJ1-VJ2}
  \end{equation}
  Likewise, by \cite[Thm.\ 2.4]{BP}, the minimal containments among the $V_J^2$ are given by
  \begin{equation}
    V_J^2 \subsetneq V^2_{J\sqcup\{k\}} \qquad\qquad \text{for any $k \notin J$}.\label{eq:VJ2}
  \end{equation}

  \textbf{Step 2.}
  We show that %
  the minimal containments between submodules of the form $V_J^0$ and $V_{J'}^2$ are given by
  \begin{equation}
    V_{J \sqcup \{i\}}^0 \subsetneq V^2_{J} \qquad\qquad \text{if $i \notin J$}.\label{eq:VJ0-VJ2}
  \end{equation}

  By dualizing (i) and replacing $\chi$ by $\chi^{-1}$ we deduce that the largest subrepresentation of $\Ind_I^{\GL_2(\cO_K)} W_i^+$ with cosocle $\sigma_\emptyset$ (and socle $\sigma_\emptyset$) is uniserial of the form 
  \begin{equation}
    \sigma_\emptyset^0 \-- \sigma_{\{i\}}^0 \-- \sigma_\emptyset^2.\label{eq:sigma-empty-002}
  \end{equation}
  (Note that the middle constituent cannot be $\sigma_{\{i\}}^2$ by~\eqref{eq:VJ2}.)

  Consider the statement
  \begin{equation*}
    A(J_1,J_2): \qquad V_{J_1}^0 \subsetneq V_{J_2}^2 \text{\ is a minimal containment}. %
  \end{equation*}
  Note by \eqref{eq:sigma-empty-002} that 
  \begin{equation}\label{eq:Ak-empty}
    \text{$A(\{k\},\emptyset)$ holds if and only if $k = i$.}
  \end{equation}
  Also note that
  \begin{equation}\label{eq:J1-J2-diff}
    A(J_1,J_2) \quad \Rightarrow\quad |J_1 \mathbin\Delta J_2| = 1 \quad \Rightarrow\quad (J_1 \subset J_2) \text{\ or\ } (J_2 \subset J_1),
  \end{equation}
  because if $A(J_1,J_2)$ holds, then $0\neq \Ext^1_{\wt\Gamma}(\sigma_{J_2},\sigma_{J_1})=\Ext^1_{\Gamma}(\sigma_{J_2},\sigma_{J_1})$, where the equality follows from \cite[Cor.~5.6(ii)]{BP}, so that $|J_1 \mathbin\Delta J_2| = 1$ by Lemma~\ref{lem:I-sigma-tau-princ-series}.  

  We now show that $A(J_1,J_2) \Rightarrow A(J_1 \sqcup \{k\},J_2 \sqcup \{k\})$ if $J_1  \supset J_2$ and $k \notin J_1$.
  By $A(J_1,J_2)$ and~\eqref{eq:VJ2} we deduce that $V_{J_1}^0 \subset V_{J_2}^2 \subset V_{J_2 \sqcup \{k\}}^2$. %
  In particular, $V_{J_2 \sqcup \{k\}}^2$ admits a (unique) quotient $Q$ {whose socle is the cosocle of $V_{J_1}^0$ (which is isomorphic to $\sigma_{J_1}$).}

 We prove that $Q$ is a $\Gamma$-representation isomorphic to $I(\sigma_{J_1},\sigma_{J_2 \sqcup \{k\}})$, by first showing that (a) $\JH(Q) \subset \JH(\Ind_I^{\GL_2(\cO_K)} \chi)$ and (b) $Q$ is multiplicity free.
    For (a), suppose by contradiction that $\sigma_J^1 \in \JH(Q)$ for some $J$. Then $Q$ admits a subrepresentation with socle $\sigma_{J_1}^0$ and cosocle $\sigma_J^1$, so $V_{J_1}^0 \subset V_J^1 \subset V_{J_2 \sqcup \{k\}}^2$.
    By Step 1 we deduce that $J_1 \subset J \sqcup \{i\} \subset J_2 \sqcup \{k\}$ (in particular, $i \notin J$).
    But $J_1 \subset J_2 \sqcup \{k\}$ implies equality by~\eqref{eq:J1-J2-diff}, contradicting $k\notin J_1$.
    For (b), suppose by contradiction that $V_{J_1}^0 \subset V_J^0 \subset V_J^2 \subset V_{J_2 \sqcup \{k\}}^2$ for some $J$.
    By Step 1 this gives $J_1 \subset J \subset J_2 \sqcup \{k\}$, leading to the same contradiction as before.
    Using (a) and (b) we deduce that $Q$ is a $\Gamma$-representation by \cite[Cor.~5.7]{BP}, so that $Q \cong I(\sigma_{J_1},\sigma_{J_2 \sqcup \{k\}})$, as claimed.

 As $Q \cong I(\sigma_{J_1},\sigma_{J_2 \sqcup \{k\}})$ as $\Gamma$-representation we deduce by Lemma~\ref{lem:I-sigma-tau-princ-series} and~\eqref{eq:J1-J2-diff} that $Q$ has length 4 and surjects onto the nonsplit extension $\sigma_{J_1 \sqcup \{k\}} \-- \sigma_{J_2 \sqcup \{k\}}$, i.e.\ there is a minimal containment $V_{J_1 \sqcup \{k\}}^s \subsetneq V_{J_2 \sqcup \{k\}}^2$ for some $s \in \{0,2\}$.
    As $J_1 \supset J_2$ we deduce by~\eqref{eq:VJ2} that $s = 0$.
    This establishes $A(J_1 \sqcup \{k\},J_2 \sqcup \{k\})$.

 Conversely, if $A(J_1 \sqcup \{k\},J_2 \sqcup \{k\})$, then $V_{J_1}^0 \subset V_{J_1 \sqcup \{k\}}^0 \subset V_{J_2 \sqcup \{k\}}^2$ by~\eqref{eq:VJ0,VJ1}.
    Using the same quotient $Q$ as above, we again have $Q \cong I(\sigma_{J_1},\sigma_{J_2 \sqcup \{k\}})$ as $\Gamma$-representation, which contains the nonsplit extension $\sigma_{J_1} \-- \sigma_{J_2}$, i.e.\ there is a minimal containment $V_{J_1}^0 \subsetneq V_{J_2}^s$ for some $s \in \{0,2\}$.
    By~\eqref{eq:VJ0,VJ1} we deduce that $s = 2$, so $A(J_1,J_2)$ holds.
 In particular, from~\eqref{eq:Ak-empty} we deduce that $A(J\sqcup \{k\},J)$ holds (for $k \notin J$) if and only if $k = i$.

  In the preceding paragraph we dealt with all cases when $J_1 \supsetneq J_2$.
  If $J_1 \subsetneq J_2$, then we have $V_{J_1}^0 \subsetneq V_{J_2}^0 \subsetneq V_{J_2}^2$, so that $V_{J_1}^0 \subsetneq V_{J_2}^2$ is not a minimal containment.
  We have thus confirmed the list of minimal containments between submodules of the form $V_J^0$ and $V_{J'}^2$ in~\eqref{eq:VJ0-VJ2}.

  \textbf{Step 3.}
  Recall that $Q_i^+$ is the largest quotient of $\Ind_I^{\GL_2(\cO_K)} W_i^+$ with socle $\tau \cong \sigma_\cS$.
  As $V_\cS^0 \subset V_\cS^2$, we have $\soc_{\wt\Gamma}(Q_i^+) = \tau$ and the submodules of $Q_i^+$ having irreducible cosocle are the images of the submodules $V_J^s$ of $\Ind_I^{\GL_2(\cO_K)} W_i^+$ that contain $V_\cS^0$.
  From Steps 1 and 2 we obtain precisely the following such submodules and containments:
  \begin{equation*}
    \xymatrix@R-1.5pc{
      & V_{\cS \setminus \{i\}}^2 \ar@{^{(}->}[rd]\\
      V_\cS^0 \ar@{^{(}->}[rd]\ar@{^{(}->}[ru] && V_\cS^2 \\
      & V_{\cS \setminus \{i\}}^1 \ar@{^{(}->}[rd]\ar@{^{(}->}[ru]\\
      && V_\cS^1}
  \end{equation*}
  This determines the submodule structure of $Q_i^+$ by Lemma~\ref{lem:submodules} (taking $M = \Ind_I^{\GL_2(\cO_K)} W_i^+$, $\o M = Q_i^+$, and all possible $\sigma$) below.
  It remains to observe that $\sigma_{\cS \setminus \{i\}}^2 \cong \mu_i^-(\tau)$, $\sigma_{\cS \setminus \{i\}}^1 \cong \mu_i^+(\tau)$, $\sigma_{\cS}^1 \cong \delta_i^+(\tau)$.
\end{proof}

\begin{lem}\label{lem:submodules}
  Suppose that $M$ is a finite length module over an artinian ring $A$, and that $\pi : M \onto \o M$ is a quotient morphism.
  Suppose that $\sigma$ and $\tau$ are simple $A$-modules and that $M_\sigma$ (resp.\ $M_\tau$) is a submodule of $M$ having cosocle $\sigma$ (resp.\ $\tau$).
  If the set of submodules of $M$ having cosocle $\sigma$ is totally ordered and $\pi(M_\sigma) \ne 0$, then
  \begin{equation*}
    M_\sigma \subset M_\tau \iff \pi(M_\sigma) \subset \pi(M_\tau).
  \end{equation*}
\end{lem}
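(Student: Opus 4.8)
The forward implication is immediate, so the plan is to prove that $\pi(M_\sigma)\subset\pi(M_\tau)$ forces $M_\sigma\subset M_\tau$, using the hypotheses that $\pi(M_\sigma)\neq 0$ and that the set $\mathcal{C}_\sigma$ of submodules of $M$ with cosocle $\sigma$ is totally ordered. The first observation is that $\pi(M_\sigma)$, being a nonzero quotient of the module $M_\sigma$ with simple cosocle $\sigma$, is itself a local module with cosocle $\cong\sigma$.

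Next I would form a ``pullback along $M_\tau$'': setting $K\defeq\ker\pi$ and $R\defeq M_\tau\cap\pi^{-1}(\pi(M_\sigma))$, one checks that $\pi(R)=\pi(M_\sigma)$ (the inclusion $\subset$ is clear, and any $\o x\in\pi(M_\sigma)\subset\pi(M_\tau)$ admits a preimage $y\in M_\tau$, which then lies in $R$). Composing the surjection $R\onto\pi(R)=\pi(M_\sigma)$ with the projection onto $\cosoc(\pi(M_\sigma))\cong\sigma$ yields a surjection $\phi\colon R\onto\sigma$.

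The technical heart is an extraction lemma, which I would establish first by induction on length: \emph{for any finite length $A$-module $X$ and any surjection $\phi\colon X\onto\sigma$ there is a submodule $Y\subset X$ with $\cosoc(Y)\cong\sigma$ and $\phi(Y)=\sigma$.} After replacing $X$ by a cyclic submodule on which $\phi$ is still nonzero, $\phi$ factors through $\cosoc(X)\cong\sigma^{\oplus k}\oplus T$ with $k\geq 1$ and $T$ having no constituent $\cong\sigma$; if $(k,T)\neq(1,0)$ one passes to the kernel of $X\onto T$ (when $T\neq 0$) or to the kernel of a suitable surjection $X\onto\sigma^{\oplus(k-1)}$ (when $k\geq 2$), obtaining a proper submodule of $X$ on which $\phi$ is still surjective, and iterates. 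I expect this bookkeeping — descending to a genuinely \emph{local} submodule while keeping $\phi$ nonzero, despite $\sigma$ possibly occurring with multiplicity in the various cosocles encountered — to be the main obstacle; the remainder is formal, relying only on the fact that a local module has a unique maximal submodule, together with the modular law.

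Applying the extraction lemma to $X=R$ produces $Y\subset R\subset M_\tau$ with $\cosoc(Y)\cong\sigma$ and $\phi(Y)=\sigma$, i.e.\ $\pi(Y)\not\subset\rad(\pi(M_\sigma))$; since $\pi(Y)\subset\pi(R)=\pi(M_\sigma)$ and $\pi(M_\sigma)$ is local, this forces $\pi(Y)=\pi(M_\sigma)$. Now $Y$ and $M_\sigma$ both belong to $\mathcal{C}_\sigma$, hence are comparable. If $M_\sigma\subset Y$, then $M_\sigma\subset Y\subset M_\tau$ and we are done. If $Y\subsetneq M_\sigma$, then $\pi(Y)=\pi(M_\sigma)$ gives $M_\sigma\subset Y+K$, so $M_\sigma=Y+(M_\sigma\cap K)$ by the modular law; but $Y\subsetneq M_\sigma$ and $M_\sigma\cap K=\ker(\pi|_{M_\sigma})\subsetneq M_\sigma$ (as $\pi(M_\sigma)\neq 0$), so both lie in the unique maximal submodule $\rad(M_\sigma)$, whence $M_\sigma\subset\rad(M_\sigma)$, a contradiction. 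Therefore $M_\sigma\subset M_\tau$.
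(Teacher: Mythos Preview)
Your proof is correct, but it takes a genuinely different route from the paper's.

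The paper argues via the projective cover $P=\Proj_A\sigma$. Choosing $f\colon P\to M$ with image $M_\sigma$, the inclusion $M_\sigma\subset M_\tau+N$ (with $N=\ker\pi$) and projectivity of $P$ give a decomposition $f=f_1+f_2$ with $f_1\colon P\to M_\tau$ and $f_2\colon P\to N$. Each $\im(f_i)$ is either zero or lies in $\mathcal C_\sigma$, so the total-ordering hypothesis applies directly to $\im(f_1)$ and $\im(f_2)$: either $\im(f_1)\subset\im(f_2)$, forcing $M_\sigma=\im(f)\subset\im(f_2)\subset N$ (contradicting $\pi(M_\sigma)\neq 0$), or $\im(f_2)\subset\im(f_1)$, giving $M_\sigma=\im(f)\subset\im(f_1)\subset M_\tau$.

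Your argument instead builds the local submodule $Y\subset M_\tau$ by hand via the ``extraction lemma'' (which is precisely the image of a lift $P\to R$, so you are reproving the essential property of projective covers without invoking them), and then compares $Y$ with $M_\sigma$ rather than with a submodule of $N$. This forces you to dispose of the case $Y\subsetneq M_\sigma$ separately, via the modular-law step $M_\sigma=Y+(M_\sigma\cap K)\subset\rad(M_\sigma)$. The paper's approach is shorter and sidesteps this case distinction entirely; your approach has the minor advantage of being self-contained (no appeal to existence of projective covers over artinian rings), at the cost of the extra inductive lemma and the final modular-law contradiction.
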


\begin{proof}
  Let $N \defeq  \ker(\pi)$.
  For the nontrivial direction, we need to show that $M_\sigma \subset M_\tau+N$ implies $M_\sigma \subset M_\tau$. {Let $\Proj_A \sigma$ be the projective cover of $\sigma$ in the category of $A$-modules (which exists as $A$ is artinian).} 
  Pick $f : \Proj_A \sigma \to M$ that has image $M_\sigma$, and consider the commutative diagram
  \begin{equation*}
    \xymatrix{
      \Hom_A(\Proj_A \sigma, M_\tau \oplus N) \ar@{->>}[r] & \Hom_A(\Proj_A \sigma, M_\tau + N)\ar@{^{(}->}[d] \\
      \Hom_A(\Proj_A \sigma, M_\tau) \oplus \Hom_A(\Proj_A \sigma, N) \ar^{\cong}[u]\ar^-+[r] & \Hom_A(\Proj_A \sigma, M)
    }
  \end{equation*}
  As $M_\sigma \subset M_\tau+N$ and $\Proj_A \sigma$ is projective there exist $f_1 : \Proj_A \sigma \to M_\tau$ and $f_2 : \Proj_A \sigma \to N$ such that $f = f_1+f_2$.
  By the condition on the submodules of $M$, we know that $\im(f_1) \subset \im(f_2)$ or $\im(f_2) \subset \im(f_1)$.
  In the first case, $\im(f) \subset \im(f_2) \subset N$, contradiction.
  Hence $M_\sigma = \im(f) \subset \im(f_1) \subset M_\tau$, as desired.
\end{proof}

Let $W_i = W_i(\chi)$ 
denote the fiber product of $W_i^+$ and $W_i^-$ over their common cosocle $\chi$.
Let $Q_i = Q_i(\tau)$ denote the fiber product of $Q_i^+$ and $Q_i^-$ over their common quotient $I(\mu_i^-(\tau),\tau)$ (cf.\ Lemma~\ref{lem:reps-Qi*}).
We draw a diagram for $W_i$ and $Q_i$, but keep in mind that the submodule structure is more complicated since the socle has multiplicities in each case:
\begin{equation*}
  \xymatrix@R-1.5pc{
    &\chi\ar@{-}[r] & \chi\alpha_i^{-1}\ar@{-}[rd] &&&& \tau \ar@{-}[r] & \mu_i^-(\tau) \ar@{-}[rd]\\
    W_i: &&&\chi &&Q_i:& && \tau \\
    &\chi\ar@{-}[r] & \chi\alpha_i\ar@{-}[ru] &&&& \tau \ar@{-}[r] & \mu_i^+(\tau) \ar@{-}[rd]\ar@{-}[ru]\\
    &&&&& &&& \delta_i^+(\tau)}
\end{equation*}

\begin{lem}\label{lem:rep-Qi}
  The representation $\Ind_I^{\GL_2(\cO_K)} W_i$ has a unique quotient $Q$ with socle $\tau^{\oplus 2}$ and such that $[Q:\tau] = 3$, and this quotient is isomorphic to $Q_i$.
  Moreover, \[\JH(Q_i) = \{\tau, \mu_i^-(\tau), \mu_i^+(\tau), \delta_i^+(\tau)\}\] and $Q_i/\soc_{\wt\Gamma}(Q_i)$ is multiplicity free.
\end{lem}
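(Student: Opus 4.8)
The plan is to realize $Q_i$ as a quotient of $\Ind_I^{\GL_2(\cO_K)} W_i$ and read off its structure from that of $Q_i^+$ and $Q_i^-$ (Lemma~\ref{lem:reps-Qi*}). First I would note that $W_i \onto W_i^\pm$ by construction, hence $\Ind_I^{\GL_2(\cO_K)} W_i \onto \Ind_I^{\GL_2(\cO_K)} W_i^\pm \onto Q_i^\pm$, so the fiber product $Q_i$ of $Q_i^+$ and $Q_i^-$ over their common quotient $I(\mu_i^-(\tau),\tau)$ receives a map from $\Ind_I^{\GL_2(\cO_K)} W_i$ (the two maps to $I(\mu_i^-(\tau),\tau)$ agree after possibly rescaling, since $\Hom_{\wt\Gamma}(\Ind_I^{\GL_2(\cO_K)} W_i, I(\mu_i^-(\tau),\tau))$ is $1$-dimensional — the socle of the target is $\tau$ with multiplicity one in $W_i|_{H}$-generated part, and Frobenius reciprocity applies). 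I would then check this map is surjective: its image surjects onto both $Q_i^\pm$ (since those are generated by their $\tau$-socles, which lift), and a representation surjecting onto both legs of a fiber product and onto the base surjects onto the fiber product itself, provided the kernels of the two legs intersect appropriately — here one uses that $\ker(Q_i^+ \to I(\mu_i^-(\tau),\tau))$ and $\ker(Q_i^- \to I(\mu_i^-(\tau),\tau))$ have no common Jordan--Hölder factor in the relevant spot, which follows from the explicit diagrams in Lemma~\ref{lem:reps-Qi*}.

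Next I would pin down the socle and the multiplicity $[Q:\tau]=3$. From Lemma~\ref{lem:reps-Qi*} we have $\JH(Q_i^-) = \{\tau,\mu_i^-(\tau),\tau\}$ and $\JH(Q_i^+) = \{\tau,\mu_i^-(\tau),\mu_i^+(\tau),\tau,\delta_i^+(\tau)\}$, and both surject onto $I(\mu_i^-(\tau),\tau)$ which has $\JH = \{\tau,\mu_i^-(\tau)\}$ (length $2$ by genericity and Lemma~\ref{lem:I-sigma-tau-modular}/\ref{lem:I-sigma-tau-princ-series}). Computing $\JH$ of the fiber product over $I(\mu_i^-(\tau),\tau)$ gives $\JH(Q_i) = \{\tau,\mu_i^-(\tau)\} \sqcup \JH\big(\ker(Q_i^-\!\to\! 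I(\mu_i^-(\tau),\tau))\big) \sqcup \JH\big(\ker(Q_i^+\!\to\! I(\mu_i^-(\tau),\tau))\big)$. The kernel of $Q_i^- \onto I(\mu_i^-(\tau),\tau)$ is $\tau$ (the top constituent), and the kernel of $Q_i^+ \onto I(\mu_i^-(\tau),\tau)$ consists of $\mu_i^+(\tau)$, $\tau$ and $\delta_i^+(\tau)$ (matching the picture drawn just before the lemma). Hence $\JH(Q_i) = \{\tau,\mu_i^-(\tau),\mu_i^+(\tau),\delta_i^+(\tau)\}$ with $\tau$ occurring with multiplicity $3$ and the others with multiplicity $1$; in particular $\soc_{\wt\Gamma}(Q_i) \supset \tau^{\oplus 2}$ by the two $\tau$-sources in $W_i$, and equality holds because $\mu_i^\pm(\tau), \delta_i^+(\tau)$ cannot be in the socle (they sit above $\tau$ in each leg, e.g.\ by Lemma~\ref{lem:reps-Qi*} and since $\soc$ is preserved under pullback within a fiber product). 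This also shows $Q_i/\soc_{\wt\Gamma}(Q_i)$ is multiplicity free: it has $\JH$ equal to $\{\tau,\mu_i^-(\tau),\mu_i^+(\tau),\delta_i^+(\tau)\}$ each once.

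Finally, for uniqueness of $Q$: any quotient $Q$ of $\Ind_I^{\GL_2(\cO_K)} W_i$ with $\soc_{\wt\Gamma}(Q) \cong \tau^{\oplus 2}$ and $[Q:\tau]=3$ is generated by a $\tau^{\oplus 2}$ in its socle, so factors through the largest such quotient; projecting to the $W_i^+$ and $W_i^-$ quotients and using that $Q_i^\pm$ are by definition the largest quotients of $\Ind_I^{\GL_2(\cO_K)} W_i^\pm$ with $\tau$-isotypic socle, one sees $Q$ maps to $Q_i^+\times_{(\ast)} Q_i^-$ compatibly; a dimension/$\JH$ count (both have $[{-}:\tau]=3$ and the same other constituents) forces $Q \cong Q_i$. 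I expect the main obstacle to be the surjectivity of $\Ind_I^{\GL_2(\cO_K)} W_i \to Q_i$ onto the fiber product — more precisely, verifying that the two legs' kernels are ``transverse'' enough that the image is not a proper subrepresentation with cosocle $\tau$; this requires genuinely using the submodule structure worked out in Lemma~\ref{lem:reps-Qi*}(ii) (the amalgam ambiguity in $\ker(Q_i^+ \onto \delta_i^+(\tau))$ noted in the remark should not interfere, as it concerns only the internal structure of one leg, not the $\JH$ multiplicities or the fiber-product computation).
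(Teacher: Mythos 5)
You take essentially the same route as the paper --- realizing $Q_i$ as a quotient of $\Ind_I^{\GL_2(\cO_K)} W_i$ via the fiber product structure and reading the constituents off Lemma~\ref{lem:reps-Qi*} --- but the surjectivity step has a genuine gap. You claim that surjectivity of $\Ind_I^{\GL_2(\cO_K)} W_i \to Q_i$ would follow from ``$\ker(Q_i^+ \to I(\mu_i^-(\tau),\tau))$ and $\ker(Q_i^- \to I(\mu_i^-(\tau),\tau))$ having no common Jordan--H\"older factor in the relevant spot.'' That premise is false: both kernels contain a copy of $\tau$ (the first is $\soc_{\wt{\Gamma}}(Q_i^-) \cong \tau$, and the second contains $\soc_{\wt{\Gamma}}(Q_i^+) \cong \tau$ together with $\mu_i^+(\tau)$ and $\delta_i^+(\tau)$). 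More fundamentally, a module mapping onto both legs and the base of a fiber product need not map onto the fiber product; one needs more than a Jordan--H\"older condition. What closes the gap, and what the paper uses, is that $\Ind_I^{\GL_2(\cO_K)} W_i$ is \emph{itself} the fiber product of the two $\Ind_I^{\GL_2(\cO_K)} W_i^{\pm}$ over $\Ind_I^{\GL_2(\cO_K)} \chi$, by exactness of induction. This upgrades your single map to a morphism of short exact sequences (after checking the right square commutes --- the paper does this via $K_1$-coinvariants; your $\dim\Hom = 1$ sketch is a plausible alternative for that sub-step). Then the snake lemma identifies $\coker\big(\Ind_I^{\GL_2(\cO_K)} W_i \to Q_i\big)$ with a quotient of $\ker\big(\Ind_I^{\GL_2(\cO_K)}\chi \onto I(\mu_i^-(\tau),\tau)\big)$, and by Lemma~\ref{lem:reps-Qi*} no constituent of the latter kernel occurs in $Q_i^{\pm}$, hence not in $Q_i \subset Q_i^+ \times Q_i^-$, so the cokernel vanishes.

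A smaller issue: your socle argument establishes $\soc_{\wt{\Gamma}}(Q_i) \supseteq \tau^{\oplus 2}$ and rules out the constituents $\mu_i^{\pm}(\tau), \delta_i^+(\tau)$, but since $[Q_i:\tau]=3$ this still leaves $\soc_{\wt{\Gamma}}(Q_i) \cong \tau^{\oplus 3}$ as an \emph{a priori} possibility. The clean way to exclude it (as in the paper) is to observe that the map $Q_i^+ \times Q_i^- \to I(\mu_i^-(\tau),\tau)$ kills $\soc_{\wt{\Gamma}}(Q_i^+ \times Q_i^-) \cong \tau^{\oplus 2}$, so this semisimple submodule already lies in $Q_i$, giving $\soc_{\wt{\Gamma}}(Q_i) = \soc_{\wt{\Gamma}}(Q_i^+ \times Q_i^-) \cong \tau^{\oplus 2}$.
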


\begin{proof}
  By exactness of induction, $\Ind_I^{\GL_2(\cO_K)} W_i$ is the fiber product of $\Ind_I^{\GL_2(\cO_K)} W_i^{\pm}$ over $\Ind_I^{\GL_2(\cO_K)} \chi$.
  We have a commutative diagram with exact rows:
  \begin{equation*}
    \xymatrix{
      0 \ar[r] & \Ind_I^{\GL_2(\cO_K)} W_i \ar[r]\ar[d] & \Ind_I^{\GL_2(\cO_K)} W_i^{+} \times \Ind_I^{\GL_2(\cO_K)} W_i^{-} \ar[r]\ar@{->>}[d] & \Ind_I^{\GL_2(\cO_K)} \chi\ar@{->>}^f[d] \ar[r] & 0 \\
      0 \ar[r] & Q_i \ar[r] & Q_i^+ \times Q_i^- \ar^{\alpha}[r] & I(\mu_i^-(\tau),\tau) \ar[r] & 0
      }
  \end{equation*}
  (For the right square, note that the natural map $\Ind_I^{\GL_2(\cO_K)} W_i^* \onto Q_i^* \onto I(\mu_i^-(\tau),\tau)$ factors through the $K_1$-coinvariants $\Ind_I^{\GL_2(\cO_K)} ((W_i^*)_{K_1}) = \Ind_I^{\GL_2(\cO_K)} (W_i^*/\chi)$,
  and hence through $\Ind_I^{\GL_2(\cO_K)} \chi$ because $\Ind_I^{\GL_2(\cO_K)} (W_i^*/\chi)$ is multiplicity free and $\mu_i^-(\tau) \in \JH(\Ind_I^{\GL_2(\cO_K)} \chi)$.)
  By the snake lemma, since no constituent of $\ker(f)$ occurs in $Q_i^\pm$ (by Lemma~\ref{lem:reps-Qi*}), the left vertical map is surjective.
Note that the map $\alpha$ sends $\soc_{\wt\Gamma}(Q_i^+ \times Q_i^-)$ to $0$, so $\soc_{\wt\Gamma}(Q_i) = \soc_{\wt\Gamma}(Q_i^+) \times \soc_{\wt\Gamma}(Q_i^-) = \tau^{\oplus 2}$.
As $[Q_i:\tau] = 3$, we deduce the existence of $Q$.
  Uniqueness of $Q$ is clear, since $[Q_i:\tau] = [\Ind_I^{\GL_2(\cO_K)} W_i:\tau] = 3$.
  The last statement follows from Lemma~\ref{lem:reps-Qi*}.
\end{proof}

Note that each $Q_i$ surjects onto $\tau$.
For a nonempty subset $\cJ \subset \{0,1,\dots,f-1\}$ let $Q_\cJ = Q_\cJ(\tau)$ denote the fiber product of all $Q_i$ ($i \in \cJ$) over $\tau$.
Let $\chi \defeq  \chi_\tau$ and let $W_\cJ = W_\cJ(\chi)$  
denote the fiber product of all $W_i$ (equivalently of all $W_i^\pm$) for $i \in \cJ$ over their common cosocle $\chi$.
\begin{lem}\label{lem:rep-WJ}\ 
\begin{enumerate}
\item The radical filtration of $W_{\cJ}$ is given by $\chi^{\oplus 2|\cJ|} \-- \bigoplus_{i\in\cJ}(\chi\alpha_i\oplus\chi\alpha_i^{-1}) \-- \chi$. Moreover, $\soc_{I}(W_{\cJ})\cong \rad_{I}^2(W_{\cJ})\cong\chi^{\oplus 2|\cJ|}$. 
\item The $K_1$-coinvariants of $W_{\cJ}$ fit in a short exact sequence $0\ra \bigoplus_{i\in\cJ}\chi\alpha_i\ra (W_{\cJ})_{K_1}\ra \chi\ra0$ with cosocle $\chi$.
\end{enumerate}
\end{lem}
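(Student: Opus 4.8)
\textbf{Plan for Lemma~\ref{lem:rep-WJ}.}
The plan is to reduce everything to the extension structure of $W_\cJ$ coming from its description as an iterated fibre product, so fix an $H$-eigenvector $v\in W_\cJ$ of weight $\chi$ spanning the cosocle, and write $\fm_{K_1}$ for the maximal ideal of $\F\bbra{K_1/Z_1}$, so that $(W_\cJ)_{K_1}=W_\cJ/\fm_{K_1}W_\cJ$. By construction $W_\cJ$ is the fibre product over the common cosocle $\chi$ of the uniserial modules $W_i^*$ ($i\in\cJ$, $*\in\{\pm\}$), each with $\rad W_i^*\cong E_{\chi,\chi\alpha_i^{*1}}$ (by $1$-genericity of $\chi$) and $\rad^2 W_i^*=\soc W_i^*=\chi$. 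Hence there is a short exact sequence of $I/Z_1$-representations
\[0\ra K\ra W_\cJ\ra\chi\ra 0,\qquad K\defeq\bigoplus_{i\in\cJ}\big(\rad W_i^+\oplus\rad W_i^-\big),\]
and under $\Ext^1_{I/Z_1}(\chi,K)=\bigoplus_{i,*}\Ext^1_{I/Z_1}(\chi,\rad W_i^*)$ the class of $W_\cJ$ is the tuple whose $(i,*)$-component is the class of $W_i^*$. Since $W_\cJ\subseteq\bigoplus_{i,*}W_i^*$ and $\rad$ commutes with finite direct sums, $\rad^2 W_\cJ\subseteq\bigoplus_{i,*}\rad^2 W_i^*=\chi^{\oplus 2|\cJ|}$ and $\soc_I W_\cJ=W_\cJ\cap\bigoplus_{i,*}\soc W_i^*=\chi^{\oplus 2|\cJ|}$ (the latter already lying in $K\subseteq W_\cJ$); in particular $\rad^3 W_\cJ=0$. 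Thus for part (i) it remains only to prove $\cosoc_I W_\cJ=\chi$, i.e.\ $\rad_I W_\cJ=K$; granting this, $\rad^2 W_\cJ=\rad K=\bigoplus_{i,*}\rad^2 W_i^*=\chi^{\oplus 2|\cJ|}=\soc_I W_\cJ$, which yields the claimed radical filtration.

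For the cosocle I would compute $\Hom_{I/Z_1}(W_\cJ,S)$ for every simple $I/Z_1$-representation $S$ using the long exact sequence attached to $0\ra K\ra W_\cJ\ra\chi\ra 0$. When $S=\chi$ one gets $\Hom_{I/Z_1}(W_\cJ,\chi)\cong\Hom_{I/Z_1}(\chi,\chi)=\F$, because $\cosoc(\rad W_i^*)=\chi\alpha_i^{*1}\ne\chi$ forces $\Hom_{I/Z_1}(K,\chi)=0$. When $S=\chi\alpha_{i_0}^{*_0 1}$ one has $\Hom_{I/Z_1}(K,S)=\F$ (using genericity of $\chi$, so the characters $\chi\alpha_i^{*1}$ are pairwise distinct), and the obstruction to extending the resulting map $K\to S$ (the cosocle projection on the $(i_0,*_0)$-factor, zero elsewhere) to $W_\cJ$ is the pushout of the class of $W_{i_0}^{*_0}$ along $\rad W_{i_0}^{*_0}\onto\cosoc(\rad W_{i_0}^{*_0})=S$, which is the class of the nonsplit extension $W_{i_0}^{*_0}/\soc W_{i_0}^{*_0}$ and hence nonzero; so $\Hom_{I/Z_1}(W_\cJ,S)=0$. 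For all remaining $S$ one already has $\Hom_{I/Z_1}(K,S)=0$. This gives $\cosoc_I W_\cJ=\chi$ and finishes (i).

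For part (ii) I would compute $\fm_{K_1}W_\cJ$ directly. Since every constituent of $W_\cJ$ is a character of $I$, $K_1$ acts unipotently and $\fm_{K_1}W_\cJ\subseteq\rad_I W_\cJ=K$. Comparing $H$-weights of the layers with the weights $\alpha_j$ (resp.\ $\alpha_j^{-1}$) of $y_j$ (resp.\ $z_j$), and using nonsplitness, the uniserial structure of $W_i^+$ shows that inside $W_\cJ$ the operator $y_i$ carries $v$ onto the $\chi\alpha_i$-line and $z_i$ carries that line onto a socle copy $s_i^+$, while $W_i^-$ shows $z_i$ carries $v$ onto the $\chi\alpha_i^{-1}$-line and $y_i$ carries that line onto a socle copy $s_i^-$; all other $y_j,z_j$ annihilate $v$ and these lines. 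Now I would use that $z_j$ (from $\smatr{1}{0}{p\cO_K}{1}$) and a suitable lift of $h_j$ (from the diagonal $1+p\cO_K$) lie in $\fm_{K_1}$, whereas $y_j$ does not (only $y_j^p$ does, which acts by zero on $W_\cJ$ because $\rad^3 W_\cJ=0$): modulo deeper filtration the elements of $\fm_{K_1}$ that act nontrivially on $W_\cJ$ are spanned by the $z_j$ and $h_j$, none of which has $H$-weight $\alpha_i$, so the $\chi\alpha_i$-lines $y_i v$ do \emph{not} lie in $\fm_{K_1}W_\cJ$. On the other hand $z_i v$ (the $\chi\alpha_i^{-1}$-line) and $z_i(y_i v)=s_i^+$ do lie in $\fm_{K_1}W_\cJ$, and --- this is the crucial point --- the relation $h_i=y_iz_i-z_iy_i$ in $\gr(\Lambda)$ gives, after lifting to $\Lambda$ and using $\fm^3W_\cJ=0$, that $h_i v=y_i(z_i v)-z_i(y_i v)=s_i^--s_i^+\in\fm_{K_1}W_\cJ$, whence $s_i^-\in\fm_{K_1}W_\cJ$ as well. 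Therefore $\fm_{K_1}W_\cJ=\big(\bigoplus_{i\in\cJ}(\chi\alpha_i^{-1}\text{-line})\big)\oplus\soc_I W_\cJ$ has dimension $3|\cJ|$, so $(W_\cJ)_{K_1}$ has dimension $|\cJ|+1$, the image of $K$ in it is $\bigoplus_{i\in\cJ}\F\,(y_i v)\cong\bigoplus_{i\in\cJ}\chi\alpha_i$, and $0\ra\bigoplus_{i\in\cJ}\chi\alpha_i\ra(W_\cJ)_{K_1}\ra\chi\ra0$; its cosocle is $\chi$ since $\cosoc_I W_\cJ=\chi$ by (i).

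The delicate step is part (ii): it hinges on knowing precisely which of the standard generators $y_j,z_j,h_j$ of $\gr(\Lambda)$ lie in $\fm_{K_1}$ --- i.e.\ on the relation between $\F\bbra{K_1/Z_1}$ and $\F\bbra{I_1/Z_1}$ --- together with the commutator identity $h_i=y_iz_i-z_iy_i$, which is what drags both socle copies $s_i^{\pm}$ into $\fm_{K_1}W_\cJ$ even though the ``$y_i$-direction'' is invisible to $K_1$. Part (i) is comparatively routine fibre-product and $\Ext$ bookkeeping. If one prefers, (ii) could instead be obtained by first computing the $K_1$-coinvariants of each $W_i^{\pm}$ from the structure of $\overline W_{\chi,3}$ in \cite[\S~3.1]{HuWang2} and then reassembling $(W_\cJ)_{K_1}$ from the fibre-product description, but the extension-class computation seems the safest way to control all the gluing.
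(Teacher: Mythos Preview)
Your proof is correct. Part (i) follows essentially the same route as the paper's: both identify $\rad_I(W_\cJ)$ with $\bigoplus_{i,*}\rad_I(W_i^*)$ via the fibre-product description; you compute the cosocle by testing $\Hom_{I/Z_1}(W_\cJ,S)$ for each simple $S$, while the paper argues directly that each $\rad_I(W_i^*)\subset W_\cJ$ is the unique subrepresentation with cosocle $\chi\alpha_i^{*1}$ and then concludes by length. Either way is routine.

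Part (ii) is where the approaches genuinely diverge. The paper argues structurally: it first observes (citing \cite[Lemma~2.4(ii)]{yongquan-algebra}) that the target extension $\mathcal E$ is killed by $\fm_{K_1}$, giving $(W_\cJ)_{K_1}\onto\mathcal E$; then, to bound $(W_\cJ)_{K_1}$ from above, it uses that $\Proj_{I/K_1}\chi/\soc$ is multiplicity free to force $(W_\cJ)_{K_1}$ to be multiplicity free, and finally excludes $\chi\alpha_i^{-1}$ by noting that $E_{\chi\alpha_i^{-1},\chi}$ is \emph{not} killed by $\fm_{K_1}$ (same reference). Your route is an explicit computation of $\fm_{K_1}W_\cJ$ via the generators $y_j,z_j,h_j$ and their relation to $K_1$: the lower-unipotent and diagonal directions lift to $\fm_{K_1}$ while the upper-unipotent direction of $K_1$ lies in $\m^p\subset\m^3$ and hence acts trivially. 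This is self-contained and avoids the external lemma, at the cost of having to justify which lifts lie in $\fm_{K_1}$ (standard, but worth one sentence). One simplification: your commutator trick for $s_i^-$ is unnecessary --- since $K_1$ is normal in $I_1$, the subspace $\fm_{K_1}W_\cJ$ is already $I_1$-stable, so $s_i^-=\tilde y_i(\tilde z_i v)\in\fm_{K_1}W_\cJ$ follows immediately from $\tilde z_i v\in\fm_{K_1}W_\cJ$, without invoking $h_i$. Conversely, the exclusion of the $\chi\alpha_i$-line can be phrased more cleanly as: the quotient $W_\cJ\onto E_{\chi\alpha_i,\chi}$ is killed by $\fm_{K_1}$ (only $z_j$'s survive in $\m/\m^2$ from $K_1$, and $\m^2$ kills this length-two quotient), hence factors through $(W_\cJ)_{K_1}$.
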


\begin{proof}
(i) By construction of $W_\cJ$ as a fiber product we have an inclusion $\iota:\bigoplus_{i\in\cJ,*} \rad_I(W_i^{*}) \into W_\cJ$.
Its image is contained in $\rad_I(W_\cJ)$ because $\rad_I(W_i^{*}) \subset W_\cJ$ is the unique subrepresentation with cosocle $\chi \alpha_i^{*1}$ and $\rad_I(W_\cJ) \onto \rad_I(W_i^{*})$.
For length reasons, $\iota$ has to be an isomorphism onto $\rad_I(W_\cJ)$, which shows that $\cosoc_I(W_\cJ) \cong \chi$.
We also deduce the claims about $\rad_I(W_{\cJ})/\rad^2_I(W_{\cJ})$ and $\rad_I^2(W_{\cJ})$, as $\rad_I(W_i^{*})\cong (\chi \-- \chi\alpha_i^{*1})$.
The last assertion easily follows as $\rad^2_I(W_{\cJ})\subset \soc_I(W_{\cJ})\subset \soc_I(\bigoplus_{i\in\cJ}W_i)\cong \chi^{\oplus 2|\cJ|}$ (note that $\rad_I^3(W_{\cJ})=0$).

(ii) {By (i)} it is clear that $W_{\cJ}$ has a unique quotient, say $\mathcal{E}$, which fits in a short exact sequence as in the statement. By \cite[Lemma 2.4(ii)]{yongquan-algebra} $\mathcal{E}$ is annihilated by $\m_{K_1}$, so that $(W_{\cJ})_{K_1}\onto \mathcal{E}$. We prove that this is an isomorphism. Since $(W_{\cJ})_{K_1}$ has cosocle $\chi$ by (i), we have a surjection $\Proj_{I/K_1} \chi \onto (W_\cJ)_{K_1}$, which kills $\soc_I(\Proj_{I/K_1} \chi) = \chi$ because $\dim_\F(W_\cJ)_{K_1} \le 4|\cJ|+1 < p^f = \dim_\F(\Proj_{I/K_1} \chi)$.
Since $\Proj_{I/K_1} \chi/\soc_I(\Proj_{I/K_1} \chi)$ is multiplicity free~\cite[Lemma 6.1.3]{BHHMS1}, it follows that $(W_\cJ)_{K_1}$ is multiplicity free, hence $(W_\cJ)_{K_1}$ is a quotient of $W_{\cJ}/\rad^2_I(W_{\cJ})$ by (i). To conclude it suffices to prove that $\chi\alpha_i^{-1}$ does not occur in $(W_{\cJ})_{K_1}$.  Otherwise, $(W_{\cJ})_{K_1}$ would surject onto $E_{\chi\alpha_i^{-1},\chi}$ which is not annihilated by $\m_{K_1}$ by \cite[Lemma 2.4(ii)]{yongquan-algebra} again, contradiction.
\end{proof}

\begin{lem}\label{lem:rep-QJ}
  The representation $\Ind_I^{\GL_2(\cO_K)} W_{\cJ}$ has a unique quotient $Q$ with socle $\tau^{\oplus 2\vert\cJ\vert}$ and such that $[Q:\tau] = 2\vert\cJ\vert+1$, and this quotient is isomorphic to $Q_{\cJ}$.
  Moreover, $\JH(Q_\cJ) = \{\tau, \mu_i^\pm(\tau), \delta_i^+(\tau) : i \in \cJ \}$ and  $Q_\cJ/\soc_{\wt\Gamma}(Q_\cJ)$ is multiplicity free.
\end{lem}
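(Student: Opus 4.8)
The plan is to mimic the proof of Lemma~\ref{lem:rep-Qi} (the case $|\cJ| = 1$), replacing the single fiber product by a multiple one. First I would use exactness of $\Ind_I^{\GL_2(\cO_K)}(-)$ to identify $\Ind_I^{\GL_2(\cO_K)} W_\cJ$ with the fiber product of the $\Ind_I^{\GL_2(\cO_K)} W_i^{\pm}$ ($i \in \cJ$) over $\Ind_I^{\GL_2(\cO_K)} \chi$, and likewise realize $Q_\cJ$ as the fiber product of the $Q_i$ over $\tau$. The key point is then a commutative diagram with exact rows, analogous to the one in the proof of Lemma~\ref{lem:rep-Qi}:
\begin{equation*}
  \xymatrix{
    0 \ar[r] & \Ind_I^{\GL_2(\cO_K)} W_\cJ \ar[r]\ar[d] & \prod_{i\in\cJ}\Ind_I^{\GL_2(\cO_K)} W_i \ar[r]\ar@{->>}[d] & \bigl(\Ind_I^{\GL_2(\cO_K)} \chi\bigr)^{\oplus(|\cJ|-1)} \ar@{->>}[d] \ar[r] & 0 \\
    0 \ar[r] & Q_\cJ \ar[r] & \prod_{i\in\cJ} Q_i \ar[r] & \tau^{\oplus(|\cJ|-1)} \ar[r] & 0
  }
\end{equation*}
where the rows express the iterated fiber products and the rightmost vertical map is a direct sum of copies of the surjection $\Ind_I^{\GL_2(\cO_K)}\chi \onto \tau$ from Lemma~\ref{lem:rep-Qi}. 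The snake lemma then forces the left vertical map to be surjective, provided no Jordan--Hölder constituent of the kernel of $\Ind_I^{\GL_2(\cO_K)}\chi \onto \tau$ occurs in $\prod_{i\in\cJ} Q_i$; this last point follows because $\JH(Q_i) = \{\tau, \mu_i^\pm(\tau), \delta_i^+(\tau)\}$ by Lemma~\ref{lem:rep-Qi}, while by Lemma~\ref{lem:reps-Qi*} (and the parametrization of $\JH(\Ind_I^{\GL_2(\cO_K)}\chi)$ via subsets) none of these weights, other than $\tau$ itself, lies in $\JH(\ker(\Ind_I^{\GL_2(\cO_K)}\chi \onto \tau))$.

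Having the surjection $\Ind_I^{\GL_2(\cO_K)} W_\cJ \onto Q_\cJ$, I would next read off the socle and constituent count of $Q_\cJ$. As in Lemma~\ref{lem:rep-Qi}, the structure map $\prod_{i\in\cJ}Q_i \onto \tau^{\oplus(|\cJ|-1)}$ kills each $\soc_{\wt\Gamma}(Q_i) = \tau$ (using Lemma~\ref{lem:reps-Qi*}, which gives $\soc_{\wt\Gamma}(Q_i^\pm) = \tau$ and hence $\soc_{\wt\Gamma}(Q_i) = \tau^{\oplus 2}$—wait, more precisely $\soc_{\wt\Gamma}(Q_i) = \tau^{\oplus 2}$ from the displayed diagram for $Q_i$), so that $\soc_{\wt\Gamma}(Q_\cJ) = \prod_{i \in \cJ}\soc_{\wt\Gamma}(Q_i)$ restricted appropriately; a short count using $[Q_i : \tau] = 3$ and the fiber product over a single $\tau$ gives $[Q_\cJ : \tau] = 2|\cJ|+1$ and $\soc_{\wt\Gamma}(Q_\cJ) \cong \tau^{\oplus 2|\cJ|}$. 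The existence of a quotient $Q$ of $\Ind_I^{\GL_2(\cO_K)} W_\cJ$ with socle $\tau^{\oplus 2|\cJ|}$ and $[Q:\tau] = 2|\cJ|+1$ is then immediate, and uniqueness follows exactly as in Lemma~\ref{lem:rep-Qi}: since $[\Ind_I^{\GL_2(\cO_K)} W_\cJ : \tau] = [\bigl(\bigoplus_{i\in\cJ} W_i\bigr)\text{-induced}:\tau]$ is itself $2|\cJ|+1$ (fiber product over one copy of $\chi$, each $W_i$ contributing $[W_i : \chi] = 3$ after inducing), any such quotient must be all of $Q_\cJ$; in particular $Q \cong Q_\cJ$.

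For the final assertions, $\JH(Q_\cJ) = \{\tau, \mu_i^\pm(\tau), \delta_i^+(\tau) : i \in \cJ\}$ follows from $\JH(Q_i) = \{\tau, \mu_i^\pm(\tau), \delta_i^+(\tau)\}$ (Lemma~\ref{lem:rep-Qi}) together with the fiber product description, since gluing over the common quotient $\tau$ contributes no new constituents. For multiplicity freeness of $Q_\cJ/\soc_{\wt\Gamma}(Q_\cJ)$, I would argue that the natural map $\rad_{\wt\Gamma}(Q_\cJ) \into \bigoplus_{i\in\cJ}\rad_{\wt\Gamma}(Q_i)$ (or rather the analogue relative to the socle) is injective—mirroring the reasoning in the proof of Lemma~\ref{lem:rep-WJ}(i)—so that $Q_\cJ/\soc_{\wt\Gamma}(Q_\cJ)$ embeds into $\bigoplus_{i\in\cJ}(Q_i/\soc_{\wt\Gamma}(Q_i))$, which is multiplicity free across distinct $i$ because the nonzero constituents $\mu_i^\pm(\tau), \delta_i^+(\tau)$ have pairwise distinct $I_1$-characters $\chi\alpha_i^{\mp 1}$, $\chi\alpha_i$ for distinct $i$, and each summand $Q_i/\soc_{\wt\Gamma}(Q_i)$ is multiplicity free by Lemma~\ref{lem:rep-Qi}. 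The main obstacle I anticipate is bookkeeping the iterated fiber product carefully enough to pin down the exact rows of the big diagram and the snake-lemma argument—in particular checking that the cokernel computation is compatible with all $|\cJ|-1$ gluings simultaneously rather than one at a time—but this is a routine, if slightly tedious, diagram chase rather than a conceptual difficulty.
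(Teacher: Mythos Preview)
Your overall strategy matches the paper's, but there is a real gap in the surjectivity step. You claim that no Jordan--H\"older constituent of $\ker(\Ind_I^{\GL_2(\cO_K)}\chi \onto \tau)$ other than $\tau$ occurs in $\prod_{i\in\cJ} Q_i$. This is false: $\mu_i^-(\tau) \in \JH(\Ind_I^{\GL_2(\cO_K)}\chi)$ for every $i$ (it is the constituent $\sigma_{\cS\setminus\{i\}}$ in the subset parametrization; equivalently, Lemma~\ref{lem:mu-i+} exhibits the nonsplit extension $\mu_i^-(\tau)\--\tau$ as a quotient of $\Ind_I^{\GL_2(\cO_K)}\chi$). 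So $\mu_i^-(\tau)$ lies in both $\JH(\ker f)$ and $\JH(Q_i)$, and the snake lemma alone does not force the left vertical map to be surjective. Note that this issue does not arise in Lemma~\ref{lem:rep-Qi}, because there the rightmost vertical map is $\Ind_I^{\GL_2(\cO_K)}\chi \onto I(\mu_i^-(\tau),\tau)$ rather than $\onto\tau$, so $\mu_i^-(\tau)$ is already absorbed into the target.

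The paper closes the gap by a cosocle argument: the cokernel of $g$ can only have constituents of the form $\mu_i^-(\tau)$, so it suffices to show that $Q_\cJ$ does not surject onto any $\mu_i^-(\tau)$. Since $\mu_i^-(\tau)$ occurs with multiplicity one in $\prod_{i'\in\cJ}Q_{i'}$ (namely in the factor $Q_i$), and since the composite $Q_\cJ \onto Q_i \onto Q_i^- \onto I(\mu_i^-(\tau),\tau)$ already hits that unique copy in a way that does not stop at $\mu_i^-(\tau)$, the surjection $Q_\cJ \onto \mu_i^-(\tau)$ is impossible. Once surjectivity is established, the rest of your argument (socle, multiplicity count, uniqueness, multiplicity freeness of the quotient by the socle) is fine and matches the paper.
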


\begin{proof}
  We have a commutative diagram with exact rows:
  \begin{equation*}
    \xymatrix{
      0 \ar[r] & \Ind_I^{\GL_2(\cO_K)} W_\cJ \ar[r]\ar^g[d] & \prod_{i \in \cJ} \Ind_I^{\GL_2(\cO_K)} W_i \ar[r]\ar@{->>}[d] & (\Ind_I^{\GL_2(\cO_K)} \chi)^{\oplus (|J|-1)}\ar@{->>}^f[d] \ar[r] & 0 \\
      0 \ar[r] & Q_\cJ \ar[r] & \prod_{i \in \cJ} Q_i \ar^\alpha[r] & \tau^{\oplus (|J|-1)} \ar[r] & 0
      }
  \end{equation*}
  We claim that the left vertical map $g$ is surjective.
  As $\coker(g)$ is a quotient of $\ker(f)$ and $\JH(\ker(f)) \cap \JH(\prod_{i \in \cJ} Q_i) = \{\mu_i^-(\tau) : i \in \cJ\}$ it follows that all the constituents of $\coker(g)$ are of the form $\mu_i^-(\tau)$ for some $i\in \cJ$, hence it suffices to show that $Q_\cJ$ cannot surject onto any $\mu_i^-(\tau)$, $i \in \cJ$.
  This is true, as $\mu_i^-(\tau)$ occurs with multiplicity one in $\prod_{i \in \cJ} Q_i$ (by Lemma~\ref{lem:rep-Qi}) and $Q_\cJ \onto Q_i \onto Q_i^- \onto I(\mu_i^-(\tau),\tau)$.
  By construction, the components of the map $\alpha$ are obtained as composition $Q_i \onto Q_i^- \onto \tau$ (or are zero), so the map sends $\prod_{i \in \cJ} \soc_{\wt\Gamma}(Q_i)$ to $0$ (by Lemma~\ref{lem:rep-Qi} for $Q_i^-$).
  Hence $\soc_{\wt\Gamma}(Q_\cJ) = \prod_{i \in \cJ} \soc_{\wt\Gamma}(Q_i) = \tau^{\oplus 2|\cJ|}$ and together with $[\prod_{i \in \cJ} Q_i : \tau]=3\vert\cJ\vert$ we deduce $[Q_{\cJ}:\tau]=3\vert\cJ\vert-(\vert\cJ\vert-1)=2\vert\cJ\vert+1$, hence we obtain the existence of $Q$ (by taking $Q=Q_{\cJ}$).
  Uniqueness and the last statement again follow easily.  
\end{proof}

Recall that $\cJ$ is a fixed subset of $\cS = \{0,\dots,f-1\}$.
Let $\Theta_\cJ=\Theta_\cJ(\tau) \subset Q_\cJ$ denote the largest subrepresentation such that $\cosoc_{\wt\Gamma}(\Theta_\cJ) \cong \tau$. 
(This exists and $[\Theta_\cJ:\tau] = [Q_\cJ:\tau] = 2|\cJ|+1$: 
by \cite[Prop.\ 3.10]{HuWang2}, noting that $W_\cS=\ovl{W}_{\chi,3}$, the representation $\Ind_I^{\GL_2(\cO_K)} W_\cS$ has a largest subrepresentation with cosocle $\tau$; the same is then true for any quotient representation, in particular for $\Ind_I^{\GL_2(\cO_K)} W_\cJ \onto Q_{\cJ}$.)
We note that $\JH(\Theta_\cJ) = \{\tau, \mu_i^{\pm}(\tau) : i \in \cJ \}$, with $\Theta_\cJ/\soc_{\wt\Gamma}(\Theta_\cJ)$ multiplicity free.
For $i \in \cJ$ let $\Psi_i=\Psi_i(\tau)\subset Q_\cJ$ be the unique subrepresentation such that $\cosoc_{\wt\Gamma}(\Psi_i) \cong \delta_i^+(\tau)$.
Then $\Psi_i \cong I(\tau,\delta_i^+(\tau))$, which is uniserial of shape $\tau \-- \mu_i^+(\tau) \-- \delta_i^+(\tau)$, %
as $$I(\tau,\delta_i^+(\tau)) \into \ker(Q_i^+ \onto I(\mu_i^-(\tau),\tau)) \into \ker(Q_i\onto \tau) \into Q_\cJ$$ (the first inclusion coming from Lemma \ref{lem:reps-Qi*}(ii)).
In particular, $\rad_{\wt\Gamma}(\Psi_i) \subset \Theta_\cJ$ for all $i \in \cJ$.

\begin{lem}\label{lem:Q-vs-Theta}
  The representation $Q_\cJ$ is the colimit of the diagram $(\Theta_\cJ \hookleftarrow \rad_{\wt\Gamma}(\Psi_i) \into \Psi_i)_{i \in \cJ}$ (with $2|\cJ|+1$ objects and $2|\cJ|$ morphisms).
\end{lem}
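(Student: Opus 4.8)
The plan is to construct the tautological comparison map from the colimit $C$ to $Q_\cJ$ and show it is an isomorphism, with surjectivity as the essential point and injectivity following from a dimension count. First I would note that the inclusions $\Theta_\cJ\into Q_\cJ$ and $\Psi_i\into Q_\cJ$ restrict on $\rad_{\wt\Gamma}(\Psi_i)$ to the same inclusion $\rad_{\wt\Gamma}(\Psi_i)\into Q_\cJ$ (here I use the fact, recalled just above the statement, that $\rad_{\wt\Gamma}(\Psi_i)\subset\Theta_\cJ$), so they are compatible with the diagram and the universal property of the colimit yields a $\wt\Gamma$-morphism $\phi\colon C\to Q_\cJ$ whose image is $\Theta_\cJ+\sum_{i\in\cJ}\Psi_i$.

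For the dimension count I would use that the diagram is star-shaped with centre $\Theta_\cJ$, so $C$ is the cokernel of the difference map $\psi\colon\bigoplus_{i\in\cJ}\rad_{\wt\Gamma}(\Psi_i)\to\Theta_\cJ\oplus\bigoplus_{i\in\cJ}\Psi_i$; since the component of $\psi$ with values in $\bigoplus_{i\in\cJ}\Psi_i$ is already injective, $\psi$ itself is injective, whence $\dim_\F C=\dim_\F\Theta_\cJ+\sum_{i\in\cJ}\dim_\F(\Psi_i/\rad_{\wt\Gamma}(\Psi_i))=\dim_\F\Theta_\cJ+\sum_{i\in\cJ}\dim_\F\delta_i^+(\tau)$, using $\cosoc_{\wt\Gamma}(\Psi_i)\cong\delta_i^+(\tau)$. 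Combining this with the Jordan--Hölder data of Lemma~\ref{lem:rep-QJ} and the paragraph preceding the statement — namely $\JH(Q_\cJ)=\{\tau,\mu_i^{\pm}(\tau),\delta_i^+(\tau):i\in\cJ\}$ and $\JH(\Theta_\cJ)=\{\tau,\mu_i^{\pm}(\tau):i\in\cJ\}$, with $[Q_\cJ:\tau]=[\Theta_\cJ:\tau]=2|\cJ|+1$ and all remaining constituents of multiplicity one — one gets $\dim_\F C=\dim_\F Q_\cJ$. So it suffices to prove $\phi$ is surjective, i.e.\ that $X\defeq\Theta_\cJ+\sum_{i\in\cJ}\Psi_i$ equals $Q_\cJ$.

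For surjectivity I would argue by contradiction: if $X\subsetneq Q_\cJ$, then (as $Q_\cJ$ has finite length) $Q_\cJ/X$ has an irreducible quotient $\sigma$, giving a surjection $Q_\cJ\onto\sigma$ vanishing on $X$, with $\sigma\in\JH(Q_\cJ)$. From the multiplicities above one computes $\JH(Q_\cJ/\Theta_\cJ)=\{\delta_i^+(\tau):i\in\cJ\}$. If $\sigma\in\{\tau\}\cup\{\mu_i^{\pm}(\tau):i\in\cJ\}$, the surjection kills $\Theta_\cJ\subset X$ and hence factors through $Q_\cJ/\Theta_\cJ$, which has no quotient isomorphic to such a $\sigma$ — contradiction. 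If $\sigma=\delta_i^+(\tau)$ for some $i\in\cJ$, the surjection kills $\Psi_i\subset X$ and factors through $Q_\cJ/\Psi_i$; but $[Q_\cJ:\delta_i^+(\tau)]=1=[\Psi_i:\delta_i^+(\tau)]$ forces $[Q_\cJ/\Psi_i:\delta_i^+(\tau)]=0$, so $Q_\cJ/\Psi_i$ has no such quotient either — contradiction. Hence $X=Q_\cJ$, $\phi$ is surjective, and by the previous paragraph it is an isomorphism.

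The main obstacle is precisely this surjectivity step: the whole argument reduces to having enough control of the Jordan--Hölder multiplicities of $Q_\cJ$, $\Theta_\cJ$, the $\Psi_i$ and the quotients $Q_\cJ/\Theta_\cJ$, $Q_\cJ/\Psi_i$ to eliminate every candidate irreducible quotient of $Q_\cJ/X$. Once that bookkeeping is in place — all of it provided by Lemma~\ref{lem:rep-QJ} together with the discussion of $\Theta_\cJ$ and $\Psi_i$ preceding the statement — injectivity is automatic from the dimension comparison, and no further input is needed.
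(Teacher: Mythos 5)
Your proof is correct, and it takes a genuinely different route from the paper's. The paper's argument proceeds in two steps: it first computes $\cosoc_{\wt\Gamma}(Q_\cJ)\cong\tau\oplus\bigoplus_{i\in\cJ}\delta_i^+(\tau)$ via Frobenius reciprocity (through the $K_1$-coinvariants of $W_\cJ$), from which the equality $Q_\cJ=\Theta_\cJ+\sum_{i\in\cJ}\Psi_i$ follows by Nakayama; it then writes $\cJ=\{i_1<\cdots<i_n\}$, sets $R_k=\Theta_\cJ+\sum_{j\le k}\Psi_{i_j}$, and proves by induction that $R_{k-1}\cap\Psi_{i_k}=\rad_{\wt\Gamma}(\Psi_{i_k})$, using that $\mu_{i_k}^+(\tau)\in\JH(\Theta_\cJ)$ and that $\delta_{i_k}^+(\tau)$ occurs with multiplicity one in $Q_\cJ$. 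You avoid both the explicit cosocle computation and the iterated intersection check: you establish $Q_\cJ=\Theta_\cJ+\sum_i\Psi_i$ by observing that any hypothetical irreducible quotient of $Q_\cJ/X$ is ruled out by the Jordan--H\"older multiplicities of $Q_\cJ/\Theta_\cJ$ and $Q_\cJ/\Psi_i$, and then replace the inductive amalgam argument by a single abstract dimension count of the colimit (with injectivity of the colimit-defining map $\psi$ immediate from the structure of the star-shaped diagram). Both proofs ultimately rest on the same Jordan--H\"older bookkeeping from Lemma~\ref{lem:rep-QJ} and the discussion preceding the statement; what your version buys is a more formal argument with fewer moving parts, at the price of making the isomorphism slightly less explicit than the paper's step-by-step pushout construction.
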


\begin{proof}
  We claim that $\cosoc_{\wt\Gamma}(Q_\cJ) \cong \tau \oplus \bigoplus_{i \in \cJ} \delta_i^+(\tau)$. 
  Suppose first that $Q_\cJ \onto \sigma$ for some irreducible $\sigma$.
  Then
  \begin{equation*}
    0 \ne \Hom_{\GL_2(\cO_K)}(\Ind_I^{\GL_2(\cO_K)} W_\cJ,\sigma) = \Hom_I(W_\cJ,\sigma) = \Hom_I((W_\cJ)_{K_1},\sigma).
  \end{equation*}
But $(W_\cJ)_{K_1}$ is an extension of $\chi$ by $\bigoplus_{i\in \cJ} \chi\alpha_i^{-1}$ by the last assertion in \cite[Lemma 2.4(ii)]{yongquan-algebra}.
  Hence $\sigma \in \{\tau, \delta_i^+(\tau) : i \in \cJ\}$.
  Conversely, it is enough to note that $Q_\cJ \onto Q_i \onto Q_i^+ \onto \delta_i^+(\tau) \oplus \tau$ for all $i \in \cJ$ by Lemma \ref{lem:reps-Qi*}(ii).

  Hence $Q_\cJ = \Theta_\cJ + \sum_{i \in \cJ} \Psi_i$.
  Write $\cJ = \{0 \le i_1 < \dots < i_n \le f-1\}$.
  Let $R_k \defeq  \Theta_\cJ + \sum_{j=1}^k \Psi_{i_j}$, with the convention $R_0=\Theta_\cJ$.  
  We will prove by induction that $R_{k} \cong R_{k-1} \oplus_{\rad_{\wt\Gamma}(\Psi_{i_k})} \Psi_{i_k}$ for $1 \le k \le n$, which will complete the proof.
  It suffices to show that $R_{k-1} \cap \Psi_{i_k} = \rad_{\wt\Gamma}(\Psi_{i_k})$.
  This is clear, as $\mu^+_{i_k}(\tau) \in \JH(\Theta_\cJ)$, which gives the inclusion $\supseteq$, and as $\delta^+_{i_k}(\tau) \notin \JH(R_{k-1})$.
  (Recall that these constituents occur with multiplicity one in $Q_\cJ$.)
\end{proof}

\section{Abstract setting}
\label{sec:abstract-setting}

Let $\rhobar:\Gal(\o K/K)\ra\GL_2(\F)$ be a continuous $0$-generic representation as in \S~\ref{sec:notation} and let $\pi$ denote a smooth representation of $\GL_2(K)$ over $\F$.
In this section we introduce and study certain assumptions on $\pi$ (relative to $\brho$) that play a key role in our work.

\subsection{Assumptions}
\label{subsec:assumptions}

\emph{From now until the end of this paper}, we assume that $\pi$ satisfies assumptions (i) (with $r = 1$) and (ii) in \cite[\S~\ref{bhhms2:grstr}]{BHHMS2} and assumption \ref{bhhms4:it:assum-iv} (with $r = 1$) in \cite[\S~\ref{bhhms4:sec:theorem}]{BHHMS4}, i.e.~ 
\begin{enumerate}
\item\label{it:assum-i} we have $\pi^{K_1}\cong D_0(\rhobar)$ as $\GL_2(\cO_K)$-representations (in particular, $\pi$ is admissible) and $\pi$ has central character $\det(\rhobar)\omega^{-1}$;
\item\label{it:assum-ii} for any $\lambda\in\P$ we have $[\pi[\fm^3]:\chi_\lambda]=[\pi[\fm]:\chi_\lambda]$;
\setcounter{enumi}{3}
\item\label{it:assum-iv} for any smooth character $\chi:I\ra \F^{\times}$ and any $i \ge 0$, $\Ext^i_{I/Z_1}(\chi,\pi)\neq0 $ only if
  $[\pi[\m]:\chi]\neq0$, in which case
  \[\dim_{\F}\Ext^i_{I/Z_1}(\chi,\pi)=\binom{2f}{i}.\]
\end{enumerate}

For later reference we also recall assumption (iii) of \cite[\S~\ref{bhhms2:sec:length-of-pi}]{BHHMS2}, \emph{though we will not assume it until \S~\ref{sec:subquot-ss}}:
\begin{enumerate}\setcounter{enumi}{2}
\item\label{it:assum-iii} there is a $\GL_2(K)$-equivariant isomorphism of $\Lambda$-modules
\[
\EE^{2f}_\Lambda(\pi^\vee) \cong \pi^\vee\otimes (\det(\rhobar)\omega^{-1}),
\]
where $\EE^{2f}_\Lambda(\pi^\vee)$ is endowed with the $\GL_2(K)$-action defined in \cite[Prop.~3.2]{Ko}.%
\end{enumerate}
Finally, we introduce a further assumption which will be used only in \S~\ref{sec:subquot-non-ss} (namely to verify equation~\eqref{eq:1st-term} in the introduction).
\begin{enumerate}{\setcounter{enumi}{4}}
\item 
\label{it:assum-v}
  We have
  \[ \dim_\F\Tor_1^{\gr(\Lambda)}(\gr(\Lambda)/\o\m^3,\gr_\m(\pi^{\vee})) = \dim_\F \Tor_1^{\Lambda}(\Lambda/\m^3,\pi^{\vee}), \]
  where $\o\m = (y_j,z_j : 0 \le j \le f-1)$ denotes the unique maximal graded ideal of $\gr(\Lambda)$ (see (\ref{eq:grlambda})).
\end{enumerate}

We first note the following consequence: %

\begin{lem}\label{lem:Ext1-Serre}
{Suppose assumptions \ref{it:assum-i} and \ref{it:assum-iv} hold.}
Let $\chi:I\ra \F^{\times}$ be a character such that $\chi\notin \JH(\pi^{I_1})$ and $Q$ be a quotient of $\Ind_I^{\GL_2(\cO_K)}\chi$. Then $\Ext^i_{\GL_2(\cO_K)/Z_1}(Q,\pi)=0$ for $i\in\{0,1\}$. In particular, this result holds when $Q=\tau$ is a Serre weight such that  $\chi_{\tau}\notin \JH(\pi^{I_1})$ by taking $\chi=\chi_\tau$.
\end{lem}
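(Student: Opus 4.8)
The strategy is to reduce the statement about $Q$ to a statement about the full induced representation $\Ind_I^{\GL_2(\cO_K)}\chi$, and then to compute $\Ext$-groups over $\GL_2(\cO_K)/Z_1$ by Frobenius reciprocity in terms of $\Ext$-groups over $I/Z_1$, where assumption~\ref{it:assum-iv} applies directly. First I would handle $i=0$: since $Q$ is a quotient of $\Ind_I^{\GL_2(\cO_K)}\chi$, any nonzero map $Q\to\pi$ pulls back to a nonzero map $\Ind_I^{\GL_2(\cO_K)}\chi\to\pi$, which by Frobenius reciprocity gives a nonzero $I$-equivariant map $\chi\to\pi|_I$, i.e.\ $\chi\in\JH(\pi^{I_1})$ (indeed $\chi$ embeds in $\pi^{I_1}$), contradicting the hypothesis; hence $\Hom_{\GL_2(\cO_K)/Z_1}(Q,\pi)=0$. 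For $i=1$, I would first establish $\Ext^1_{\GL_2(\cO_K)/Z_1}(\Ind_I^{\GL_2(\cO_K)}\chi,\pi)=0$. Writing the short exact sequence $0\to R\to\Ind_I^{\GL_2(\cO_K)}\chi\to Q\to 0$ and taking the long exact sequence of $\Ext_{\GL_2(\cO_K)/Z_1}(-,\pi)$, the term $\Ext^1_{\GL_2(\cO_K)/Z_1}(Q,\pi)$ is sandwiched between $\Hom_{\GL_2(\cO_K)/Z_1}(R,\pi)$ and $\Ext^1_{\GL_2(\cO_K)/Z_1}(\Ind_I^{\GL_2(\cO_K)}\chi,\pi)$, so it suffices to show both of these vanish.

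For the vanishing of $\Ext^*_{\GL_2(\cO_K)/Z_1}(\Ind_I^{\GL_2(\cO_K)}\chi,\pi)$ for $*\in\{0,1\}$, the key point is that induction from $I$ to $\GL_2(\cO_K)$ is exact and has an exact left adjoint (restriction), so it sends projectives to projectives and one gets a Frobenius-reciprocity isomorphism on all $\Ext$-groups: $\Ext^i_{\GL_2(\cO_K)/Z_1}(\Ind_I^{\GL_2(\cO_K)}\chi,\pi)\cong\Ext^i_{I/Z_1}(\chi,\pi|_I)$. (Concretely, one can resolve $\pi$ by injectives in smooth $\GL_2(\cO_K)/Z_1$-representations and use that restriction to $I$ preserves injectives, or dualize and use that coinduction/induction of the compact-type modules behaves well; this is the standard argument, cf.\ the references to \cite{emerton-ordII} in \S\ref{sec:notation}.) Now assumption~\ref{it:assum-iv} says $\Ext^i_{I/Z_1}(\chi,\pi)\neq 0$ only if $[\pi[\m]:\chi]\neq 0$, i.e.\ only if $\chi\in\JH(\pi^{I_1})$ (note $\pi[\m]=\pi^{I_1}$ as $Z_1$ acts trivially). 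By hypothesis $\chi\notin\JH(\pi^{I_1})$, so $\Ext^i_{I/Z_1}(\chi,\pi)=0$ for all $i\ge 0$, and in particular for $i=0,1$, giving $\Ext^i_{\GL_2(\cO_K)/Z_1}(\Ind_I^{\GL_2(\cO_K)}\chi,\pi)=0$ for $i\in\{0,1\}$.

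It remains to dispose of $\Hom_{\GL_2(\cO_K)/Z_1}(R,\pi)$, where $R=\ker(\Ind_I^{\GL_2(\cO_K)}\chi\onto Q)$. The cleanest approach is to note that every Jordan--Hölder factor $\sigma$ of $\Ind_I^{\GL_2(\cO_K)}\chi$ satisfies $\chi_\sigma=\sigma^{I_1}\in\JH((\Ind_I^{\GL_2(\cO_K)}\chi)^{I_1})$, and by Frobenius reciprocity / Mackey every character occurring in $(\Ind_I^{\GL_2(\cO_K)}\chi)^{I_1}$ lies in $\{\chi,\chi^s\}$ up to the relevant combinatorics — more precisely, using that $\pi^{I_1}\cong D_0(\brho)^{I_1}$ and $\chi\notin\JH(\pi^{I_1})$, one checks that no irreducible constituent $\sigma$ of $\Ind_I^{\GL_2(\cO_K)}\chi$ can embed in $\pi$, since $\sigma\into\pi$ would force $\chi_\sigma\in\JH(\pi^{I_1})$ and hence (by the structure of $\JH(\Ind_I^{\GL_2(\cO_K)}\chi)$ recalled in \S\ref{sec:EG-Gamma-rep} together with the fact that $\soc_{\GL_2(\cO_K)}$ of $\Ind_I^{\GL_2(\cO_K)}\chi$ has $I_1$-invariants parametrized by the empty subset, equal to $\chi$ up to $\chi\leftrightarrow\chi^s$) would contradict $\chi\notin\JH(\pi^{I_1})$. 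A slicker uniform argument: apply the already-proven case $i=0$ not just to $Q$ but to every quotient of $\Ind_I^{\GL_2(\cO_K)}\chi$; since $R$ is a submodule, $\Hom_{\GL_2(\cO_K)/Z_1}(R,\pi)\neq 0$ would produce a nonzero irreducible $\sigma\in\JH(R)\subset\JH(\Ind_I^{\GL_2(\cO_K)}\chi)$ with $\sigma\into\pi$, hence $\chi_\sigma\in\JH(\pi^{I_1})$; but $\pi^{I_1}=D_0(\brho)^{I_1}$ and, since $\chi\notin\JH(\pi^{I_1})$ while $\chi_\sigma$ and $\chi$ differ by a monomial in the $\alpha_j$ dictated by membership in $\JH(\Ind_I^{\GL_2(\cO_K)}\chi)$, one concludes $\chi_\sigma\notin\JH(D_0(\brho)^{I_1})$ as well, a contradiction. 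The main obstacle is precisely this last step — ensuring that no constituent of $\Ind_I^{\GL_2(\cO_K)}\chi$ sneaks into $\pi^{I_1}$ even though $\chi$ itself does not — which ultimately rests on the explicit description of $\JH(D_0(\brho)^{I_1})=\{\chi_\lambda:\lambda\in\P\}$ versus $\JH((\Ind_I^{\GL_2(\cO_K)}\chi)^{I_1})$, both recalled in \S\ref{sec:EG-Gamma-rep}; once that combinatorial compatibility is pinned down, the rest of the proof is formal. Finally, the "in particular" clause is immediate: a Serre weight $\tau$ with $\chi_\tau\notin\JH(\pi^{I_1})$ is a quotient (indeed the cosocle) of $\Ind_I^{\GL_2(\cO_K)}\chi_\tau$, so the main statement applies with $\chi=\chi_\tau$, $Q=\tau$.
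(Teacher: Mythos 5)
Your proof is structurally the same as the paper's: reduce $\Ext^1_{\GL_2(\cO_K)/Z_1}(Q,\pi)$ via the long exact sequence for $0\to R\to\Ind_I^{\GL_2(\cO_K)}\chi\to Q\to 0$, kill the $\Ind$ term by Shapiro's lemma together with assumption~\ref{it:assum-iv}, and kill the $\Hom(R,\pi)$ term by showing no constituent of $\Ind_I^{\GL_2(\cO_K)}\chi$ can be a $\GL_2(\cO_K)$-subrepresentation of $\pi$. Your direct Frobenius-reciprocity argument for $i=0$ is a clean shortcut (the paper instead derives it from the same vanishing statement it uses for $\Hom(R,\pi)$), but it doesn't buy you much since you still need the combinatorial input for the $\Hom(R,\pi)=0$ step.

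The step you flag as the ``main obstacle'' is a real gap in your write-up. Your heuristic --- that since $\chi_\sigma$ differs from $\chi$ by an explicit monomial in the $\alpha_j$, the hypothesis $\chi\notin\JH(\pi^{I_1})$ should force $\chi_\sigma\notin\JH(\pi^{I_1})$ for all $\sigma\in\JH(\Ind_I^{\GL_2(\cO_K)}\chi)$ --- is not self-evident and is in fact the content of a nontrivial combinatorial statement: one needs to know that $\JH(\Ind_I^{\GL_2(\cO_K)}\chi)\cap W(\brho)\neq\emptyset$ \emph{implies} $\chi\in\JH(D_0(\brho)^{I_1})$, which requires a compatibility between the parametrization of $\JH(\Ind_I^{\GL_2(\cO_K)}\chi)$ by subsets of $\{0,\dots,f-1\}$ and the parametrization of $W(\brho)$ and $\P$. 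The paper resolves this by citing \cite[Prop.~4.2]{breuil-buzzati}, which asserts precisely this biconditional: $\JH(\Ind_I^{\GL_2(\cO_K)}\chi)\cap W(\brho)\neq\emptyset$ if and only if $\chi\in\JH(D_0(\brho)^{I_1})$. Once you invoke that result (rather than ``one checks''), the argument closes up and matches the paper's, with $\Hom(R,\pi)=0$ following since any $\sigma\into\pi$ must lie in $W(\brho)$ by assumption~\ref{it:assum-i}.
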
 
\begin{proof}
Using \cite[Prop.~4.2]{breuil-buzzati}, the assumption on $\chi$ implies that 
 $\JH(\Ind_I^{\GL_2(\cO_K)}\chi)\cap W(\brho)=\emptyset$. 
Thus for any \emph{subquotient} $Q$ of $\Ind_I^{\GL_2(\cO_K)}\chi$ we have $\Hom_{\GL_2(\cO_K)}(Q,\pi)=0$, as \[\Hom_{\GL_2(\cO_K)}(\sigma,\pi)\neq 0\] if and only if $\sigma\in W(\brho)$ by assumption \ref{it:assum-i}.
 
Consider the short exact sequence $0\ra V\ra \Ind_I^{\GL_2(\cO_K)}\chi\ra Q\ra0$, where $V$ is the corresponding kernel. It induces a short exact sequence
\[\Hom_{\GL_2(\cO_K)}(V,\pi)\ra \Ext^1_{\GL_2(\cO_K)/Z_1}(Q,\pi)\ra \Ext^1_{\GL_2(\cO_K)/Z_1}(\Ind_I^{\GL_2(\cO_K)}\chi,\pi)=0,\]
where the first term vanishes by the last paragraph and the last term vanishes by assumption \ref{it:assum-iv} using Shapiro's lemma.  The result follows.
\end{proof}
\begin{rem}  
Even if it is not needed for this paper, it is natural to ask if $\Ext^1_{\GL_2(\cO_K)/Z_1}(\tau,\pi)=0$ for any Serre weight $\tau\notin W(\brho)$. It is possible to prove this for a globally defined representation $\pi(\brho)$ as in \cite[\S~\ref{bhhms4:sec:verify-assumpt-iv}]{BHHMS4}, in a similar way to \cite[Prop.~\ref{bhhms4:prop:dim-Ext}]{BHHMS4}, but we don't know how to deduce this property using only assumptions \ref{it:assum-i}--\ref{it:assum-iv}.
\end{rem} 

\subsection{Consequences of the assumptions}
\label{sec:subspace-pi-k1-2}

Recall from \S~\ref{sec:preliminaries} that $\fm_{K_1}$ denotes the maximal ideal of {the Iwasawa algebra} $\F\bbra{K_1/Z_1}$ and $\wt{\Gamma}= \F\bbra{\GL_2(\cO_K)/Z_1}/\fm_{K_1}^2$ (which is a finite-dimensional $\F$-algebra). 
Let $\pi$ be an admissible smooth $\GL_2(K)$-representation satisfying assumptions \ref{it:assum-i}, \ref{it:assum-ii} and \ref{it:assum-iv} above with $2$-generic underlying $\rhobar$.
In this subsection we explicitly determine the finite-dimensional $\wt{\Gamma}$-module $\pi[\fm_{K_1}^2]$.

Consider the $\wt{\Gamma}$-representation $\wt{D}_0(\brho)$ from  \S~\ref{sec:preliminaries}.
 As $\brho$ is 2-generic (which is precisely strongly generic in the sense of \cite[Def.~4.4]{HuWang2}) one can deduce from  \cite[Prop.~4.1, Thm.~4.6]{HuWang2} that $\wt{D}_0(\brho)$ is multiplicity free 
and has a direct  sum decomposition 
\[\wt{D}_0(\brho)= \bigoplus_{\sigma\in W(\brho)}\wt{D}_{0,\sigma}(\brho),\]
where $\wt{D}_{0,\sigma}(\brho)$ is the largest subrepresentation of $\Inj_{\wt{\Gamma}}\sigma$ containing $\sigma$ with multiplicity one and no other Serre weights of $W(\brho)$ (see also \cite[Thm.\ 8.4.2]{BHHMS1}). In particular, $\soc_{\wt{\Gamma}}(\wt{D}_{0,\sigma}(\brho))=\sigma$.

\begin{lem}\label{lem:ex-wtD}  
Let $\tau'$ be a Serre weight. 
\begin{enumerate}
\item If $\tau'\notin W(\brho)$, then $\Ext^1_{\wt{\Gamma}}(\tau',\wt{D}_0(\brho))=0$.
\item If  $\tau'\in\JH(D_0(\brho))\setminus W(\brho)$, then $\Ext^1_{\GL_2(\cO_K)/Z_1}(\tau',D_0(\brho))\cong\Ext^1_{\wt{\Gamma}}(\tau',D_0(\brho))=0$.
\end{enumerate}
\end{lem}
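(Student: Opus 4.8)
The plan is to prove both parts together by exploiting the direct sum decomposition $\wt D_0(\brho) = \bigoplus_{\sigma \in W(\brho)} \wt D_{0,\sigma}(\brho)$, so that it suffices to compute $\Ext^1_{\wt\Gamma}(\tau', \wt D_{0,\sigma}(\brho))$ for a single Serre weight $\sigma \in W(\brho)$ and show it vanishes. First I would recall that $\wt D_{0,\sigma}(\brho)$ is by construction the largest subrepresentation of $\Inj_{\wt\Gamma}\sigma$ containing $\sigma$ with multiplicity one and containing no other Serre weight of $W(\brho)$. The strategy for (i) is then: a nonzero class in $\Ext^1_{\wt\Gamma}(\tau', \wt D_{0,\sigma}(\brho))$ would produce a nonsplit extension $0 \to \wt D_{0,\sigma}(\brho) \to E \to \tau' \to 0$ with $\soc_{\wt\Gamma}(E) = \soc_{\wt\Gamma}(\wt D_{0,\sigma}(\brho)) = \sigma$ (using $\tau' \notin W(\brho)$, hence $\tau' \ne \sigma$, so $\tau'$ cannot inject into $E$); thus $E$ embeds into $\Inj_{\wt\Gamma}\sigma$. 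Since $\tau' \notin W(\brho)$, the representation $E$ still contains no Serre weight of $W(\brho)$ other than $\sigma$, and contains $\sigma$ with multiplicity one — contradicting the maximality defining $\wt D_{0,\sigma}(\brho)$ unless $E = \wt D_{0,\sigma}(\brho)$, i.e.\ unless the extension splits. This proves (i).

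For part (ii), the extra input needed is that $\Ext^1_{\GL_2(\cO_K)/Z_1}(\tau', D_0(\brho)) \cong \Ext^1_{\wt\Gamma}(\tau', D_0(\brho))$ when $\tau' \in \JH(D_0(\brho))$; this is where I would cite \cite[Cor.~5.6(ii)]{BP} (or its $\wt\Gamma$-analogue, e.g.\ via the discussion around \cite[Thm.\ 8.4.2]{BHHMS1} / \cite[\S~4.1]{HuWang2}), the point being that for Serre weights occurring in $D_0(\brho)$ any extension is automatically killed by $\fm_{K_1}^2$, so computing $\Ext^1$ over $\GL_2(\cO_K)/Z_1$ is the same as over $\wt\Gamma$. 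It then remains to show $\Ext^1_{\wt\Gamma}(\tau', D_0(\brho)) = 0$. Now $D_0(\brho) = \bigoplus_{\sigma \in W(\brho)} D_{0,\sigma}(\brho)$ with $D_{0,\sigma}(\brho) \subset \wt D_{0,\sigma}(\brho)$, so I would run the same maximality argument as in (i): a nonsplit extension of $\tau'$ by $D_{0,\sigma}(\brho)$ has socle $\sigma$ (as $\tau' \ne \sigma$ since $\tau' \notin W(\brho)$), embeds into $\Inj_{\wt\Gamma}\sigma$, contains $\sigma$ once and no other weight of $W(\brho)$ (using $\tau' \notin W(\brho)$), hence by maximality lies inside $\wt D_{0,\sigma}(\brho)$ — but $D_{0,\sigma}(\brho)$ is characterized inside $\wt D_{0,\sigma}(\brho)$ by its Jordan--Hölder factors (it is the sub with $\JH$ exactly the $\tau'' \in \JH(\Inj_\Gamma\sigma)$ with $\tau'' \in \JH(D_0(\brho))$); since $\tau' \in \JH(D_0(\brho))$ by hypothesis, such an extension is actually forced to lie in $D_{0,\sigma}(\brho)$ itself, i.e.\ it splits.

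The main obstacle I anticipate is getting the last characterization exactly right: one must be careful that $D_{0,\sigma}(\brho)$, as a submodule of $\wt D_{0,\sigma}(\brho)$ (equivalently of $\Inj_\Gamma\sigma$), is uniquely determined by the property that its constituents are precisely those $\tau'' \in \JH(\Inj_\Gamma\sigma)$ lying in $\JH(D_0(\brho))$, and that this set is "closed downward" in the submodule lattice in the appropriate sense — so that adding a new top constituent $\tau' \in \JH(D_0(\brho))$ to $D_{0,\sigma}(\brho)$ inside $\Inj_\Gamma\sigma$ lands inside $D_{0,\sigma}(\brho)$. This should follow from the explicit description of $D_{0,\sigma}(\brho)$ via the extension graph / the combinatorics of \cite[\S~13--15]{BP} (cf.\ Lemma~\ref{lem:sigma_J-ext-graph}, Lemma~\ref{lem:I-sigma-tau-modular}), but it requires invoking that $D_0(\brho)$ is multiplicity free and that its constituents in $\Inj_\Gamma\sigma$ form a "sub-poset" of the right shape. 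Everything else is a routine socle/maximality argument. I would also double-check the edge case $f=1$, where $D_{0,\sigma}(\brho) = \wt D_{0,\sigma}(\brho) = \sigma$ and (ii) is vacuous unless $\JH(D_0(\brho)) \supsetneq W(\brho)$, which does not happen for $f = 1$ reducible $\brho$ in the generic range — so the statement is only substantive for $f \ge 2$.
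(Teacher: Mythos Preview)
Your argument for (i) is exactly the paper's: it is a one-line consequence of the maximality characterizing $\wt D_{0,\sigma}(\brho)$.

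For (ii) your proposal is correct but more laborious than necessary, and the ``main obstacle'' you flag is an artifact of the route you chose. Two points:

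\emph{The isomorphism $\Ext^1_{\GL_2(\cO_K)/Z_1}\cong\Ext^1_{\wt\Gamma}$.} No citation to \cite{BP} or \cite{HuWang2} is needed here; your parenthetical is already the whole argument. Both $\tau'$ and $D_0(\brho)$ are annihilated by $\m_{K_1}$, so any $\GL_2(\cO_K)/Z_1$-extension between them is annihilated by $\m_{K_1}^2$, i.e.\ is a $\wt\Gamma$-module. This is a general fact and does not use that $\tau'\in\JH(D_0(\brho))$.

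\emph{The vanishing of $\Ext^1_{\wt\Gamma}(\tau',D_0(\brho))$.} Rather than running the maximality argument again for $D_0(\brho)$ and then trying to characterize $D_{0,\sigma}(\brho)$ inside $\wt D_{0,\sigma}(\brho)$ by its constituents, the paper simply applies the long exact sequence to $0\to D_0(\brho)\to\wt D_0(\brho)\to\wt D_0(\brho)/D_0(\brho)\to 0$. The term $\Ext^1_{\wt\Gamma}(\tau',\wt D_0(\brho))$ vanishes by (i), and $\Hom_{\wt\Gamma}(\tau',\wt D_0(\brho)/D_0(\brho))=0$ because $\wt D_0(\brho)$ is multiplicity free and $\tau'\in\JH(D_0(\brho))$. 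This is where the hypothesis $\tau'\in\JH(D_0(\brho))$ is actually used, and it eliminates your combinatorial ``closed downward'' verification entirely. (Your direct argument can also be rescued more simply than you suggest: once $E\subset\wt D_{0,\sigma}(\brho)$, multiplicity freeness of $\wt D_0(\brho)$ alone gives the contradiction, since $\tau'$ already occurs in $D_0(\brho)\subset\wt D_0(\brho)$; no finer characterization of $D_{0,\sigma}(\brho)$ is required.)
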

\begin{proof}
(i) This follows from the maximality of $\wt{D}_{0}(\brho)$ recalled above.

(ii) The first isomorphism is a general fact, because both $\tau'$ and $D_0(\brho)$ are annihilated by $\m_{K_1}$ (so any extension between them is automatically annihilated by $\m_{K_1}^2$). The second one  follows from (i) and the fact that $\Hom_{\wt{\Gamma}}(\tau',\wt{D}_0(\brho)/D_0(\brho))=0$ (as $\wt{D}_0(\brho)$ is multiplicity free).
\end{proof}

 Let $\tau$ be a Serre weight and $\chi\defeq \tau^{I_1}$ and recall the representation  $\overline{W}_{\chi,3}$ from \S~\ref{sec:some-wtgamma-repr}. {
If $\tau$ (hence $\chi$) is $2$-generic then}
by \cite[Prop.\ 3.10(i)]{HuWang2} for any 
Jordan--H\"older factor $\tau'$ of $\Ind_I^{\GL_2(\cO_K)}\overline{W}_{\chi,3}$ there exists a $\GL_2(\cO_K)$-equivariant morphism 
\begin{equation}\label{eq:phitau}
\phi_{\tau'}: \Proj_{\wt{\Gamma}}\tau'\ra \Ind_I^{\GL_2(\cO_K)}\overline{W}_{\chi,3}\end{equation}
such that $[\coker(\phi_{\tau'}):\tau']=0$. 
Note that by \cite[Prop.\ 3.10(ii)]{HuWang2}, the image $\im(\phi_{\tau'})$ is unique (even though $\phi_{\tau'}$ need not be unique up to scalar). %

For the following, we emphasize that $\phi_{\tau'}$ depends on $\tau$ (we always take $\chi \defeq  \tau^{I_1}$).

\begin{lem}\label{lem:coker}
Assume that $\tau\in W(\brho)$, {so $\tau$ is 2-generic}. Then
$\JH(\coker(\phi_{\tau}))\cap W(\brho)=\emptyset$, and \[\Ext^1_{\GL_2(\cO_K)/Z_1}(\tau',\tau)=0\] for any $\tau'\in \JH(\coker(\phi_{\tau}))$.
\end{lem}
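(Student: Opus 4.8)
The plan is to determine $\coker(\phi_\tau)$ via the structural results of \S~\ref{sec:some-wtgamma-repr} together with \cite[Prop.~3.10]{HuWang2}, and then to conclude using Lemma~\ref{lem:mu-pm} and a standard vanishing of $\Ext^1_\Gamma$.

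First I would identify $\overline{W}_{\chi,3} \cong W_\cS(\chi)$, the fiber product of all the $W_i^\pm(\chi)$ over their common cosocle $\chi$: both have cosocle $\chi$, and since each $W_i^\pm(\chi)$ is a quotient of $\overline{W}_{\chi,3}$ (see the beginning of \S~\ref{sec:some-wtgamma-repr}), one gets a surjection $\overline{W}_{\chi,3} \onto W_\cS(\chi)$, which is an isomorphism by a length count (both have length $4f+1$, with $\chi$ of multiplicity $2f+1$ by \eqref{eq:Wbar3}, resp.\ Lemma~\ref{lem:rep-WJ}(i)). Hence $\Ind_I^{\GL_2(\cO_K)}\overline{W}_{\chi,3} = \Ind_I^{\GL_2(\cO_K)} W_\cS(\chi)$, and Lemma~\ref{lem:rep-QJ} (with $\cJ = \cS$) produces the quotient $Q_\cS = Q_\cS(\tau)$ with $[Q_\cS : \tau] = 2f+1 = [\Ind_I^{\GL_2(\cO_K)}\overline{W}_{\chi,3} : \tau]$, with $\JH(Q_\cS) = \{\tau,\mu_i^\pm(\tau),\delta_i^+(\tau) : 0 \le i \le f-1\}$ and $Q_\cS/\soc_{\wt\Gamma}(Q_\cS)$ multiplicity free. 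Letting $\Theta_\cS \subset Q_\cS$ be the largest subrepresentation of cosocle $\tau$ (so $[\Theta_\cS:\tau] = 2f+1$ and $\JH(\Theta_\cS) = \{\tau,\mu_i^\pm(\tau)\}$), Lemma~\ref{lem:Q-vs-Theta} gives $\cosoc_{\wt\Gamma}(Q_\cS) = \tau \oplus \bigoplus_i \delta_i^+(\tau)$, whence $Q_\cS/\Theta_\cS \cong \bigoplus_{i=0}^{f-1} \delta_i^+(\tau)$.

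The main step is to show $\coker(\phi_\tau)$ is a quotient of $Q_\cS/\Theta_\cS$, so that $\JH(\coker(\phi_\tau)) \subseteq \{\delta_i^+(\tau) : 0 \le i \le f-1\}$. The composite $\Proj_{\wt\Gamma}\tau \xrightarrow{\phi_\tau} \Ind_I^{\GL_2(\cO_K)}\overline{W}_{\chi,3} \onto Q_\cS$ has image $\Theta_\cS$, since its image is a subrepresentation of cosocle $\tau$ and $[\Theta_\cS:\tau] = 2f+1$ exhausts all copies of $\tau$; hence $\coker(\phi_\tau) \onto Q_\cS/\Theta_\cS$, and writing $N \defeq \ker(\Ind_I^{\GL_2(\cO_K)}\overline{W}_{\chi,3} \onto Q_\cS)$ (so $[N:\tau] = 0$) the kernel of $\coker(\phi_\tau) \onto Q_\cS/\Theta_\cS$ is $N/(N \cap \im(\phi_\tau))$. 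It thus remains to show this last module vanishes, equivalently $N \subseteq \im(\phi_\tau)$. This is the main obstacle: it forces one to use the precise submodule structure of $\Ind_I^{\GL_2(\cO_K)}\overline{W}_{\chi,3}$ established in \cite[Prop.~3.10]{HuWang2} — in particular the uniqueness of the canonical image $\im(\phi_\tau)$ (\cite[Prop.~3.10(ii)]{HuWang2}) together with the computations in the proofs of Lemmas~\ref{lem:reps-Qi*} and \ref{lem:rep-QJ} — to see that $\im(\phi_\tau)$, once normalized so that $[\coker(\phi_\tau):\tau] = 0$, absorbs every constituent of $\Ind_I^{\GL_2(\cO_K)}\overline{W}_{\chi,3}$ apart from the cosocle copies of the $\delta_i^+(\tau)$ (note that $N$ itself contains many copies of the $\mu_i^\pm(\tau)$, so this is not formal).

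Granting $\JH(\coker(\phi_\tau)) \subseteq \{\delta_i^+(\tau)\}$, the lemma follows: for each $i$ we have $\delta_i^+(\tau) \notin W(\brho^\ss) \supseteq W(\brho)$ by Lemma~\ref{lem:mu-pm} (which in particular gives $\delta_j^\pm(\sigma) \notin W(\brho^\ss)$), so $\JH(\coker(\phi_\tau)) \cap W(\brho) = \emptyset$; and $\Ext^1_{\GL_2(\cO_K)/Z_1}(\delta_i^+(\tau),\tau) \cong \Ext^1_\Gamma(\delta_i^+(\tau),\tau) = 0$, the isomorphism by \cite[Cor.~5.6(ii)]{BP} (both representations being killed by $\m_{K_1}$) and the vanishing because $\delta_i^+(\tau) \notin \JH(\Inj_\Gamma\tau)$ — as $\delta_j^\pm \notin \cI$, cf.\ the proof of Lemma~\ref{lem:mu-pm} — so a nonsplit extension $0 \to \tau \to E \to \delta_i^+(\tau) \to 0$ would force $\delta_i^+(\tau) \in \JH(\Inj_\Gamma\tau)$, a contradiction. (Equivalently, $\delta_i^+(\tau)$ lies at distance $2$ from $\tau$ in the extension graph: $I(\tau,\delta_i^+(\tau))$ is the uniserial representation $\tau \-- \mu_i^+(\tau) \-- \delta_i^+(\tau)$ of Lemma~\ref{lem:reps-Qi*}(ii).)
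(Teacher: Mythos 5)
The identification $\overline{W}_{\chi,3} \cong W_\cS(\chi)$ at the start is fine, but the central step of your argument — showing $N \subseteq \im(\phi_\tau)$, so that $\coker(\phi_\tau)$ is a quotient of $Q_\cS/\Theta_\cS \cong \bigoplus_{i=0}^{f-1}\delta_i^+(\tau)$ — is false whenever $f \ge 2$. You flag this step as "the main obstacle" and gesture at the submodule structure results, but no such argument can succeed: the remark immediately following Lemma~\ref{lem:coker-j} records that $\coker(\phi_\tau)_j$ is the quotient of $\Ind_I^{\GL_2(\cO_K)}\chi\alpha_j$ whose constituents are parametrized by the subsets of $\{0,\dots,f-1\}$ containing $j$, so $\coker(\phi_\tau)$ has $f\cdot 2^{f-1}$ Jordan--H\"older factors, not $f$ of them. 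A direct count confirms the gap: for $f=2$, $\Ind_I^{\GL_2(\cO_K)}\overline{W}_{\chi,3}$ has $36$ constituents, $Q_\cS$ has $11$, so $N$ has $25$; and $\coker(\phi_\tau)$ has $4$ constituents while $Q_\cS/\Theta_\cS$ has only $2$, so $N/(N\cap\im(\phi_\tau))$ is nonzero. The constituents $\sigma_J^1$ of $\Ind_I^{\GL_2(\cO_K)}\chi\alpha_j$ with $j\in J\subsetneq\cS$ sit in $\coker(\phi_\tau)$ but are \emph{not} of the form $\delta_i^+(\tau)$, so your concluding paragraph (which is correct for $\delta_i^+(\tau)$) simply does not address them.

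The paper does not attempt to compute $\coker(\phi_\tau)$ from scratch. The first assertion is cited directly from \cite[Cor.\ 4.14]{HuWang2}. For the second, the paper argues by contraposition: if $\Ext^1_{\GL_2(\cO_K)/Z_1}(\tau',\tau) \ne 0$, then $\tau'$ lies in $\JH(\Ind_I^{\GL_2(\cO_K)}\overline{W}_{\chi,3})$ by \cite[Prop.\ 3.12(ii)]{HuWang2}, so $\phi_{\tau'}$ is defined, and one shows $\im(\phi_{\tau'}) \subset \im(\phi_\tau)$ using \cite[Cor.\ 3.11(a),(c)]{HuWang2}; since $[\coker(\phi_{\tau'}):\tau'] = 0$, this forces $\tau' \notin \JH(\coker(\phi_\tau))$. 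If you want a self-contained argument along your lines, you would at minimum have to work with the full list of constituents of $\coker(\phi_\tau)$ given in the remark (which includes non-$\delta_i^+(\tau)$ weights at distance $\ge 2$ from $\tau$ in the extension graph), and verify the two conclusions for each of them; the $\Ext^1$ vanishing would then follow from Lemma~\ref{lem:I-sigma-tau-princ-series} applied to these constituents inside $\Ind_I^{\GL_2(\cO_K)}\chi\alpha_j$. But this is a substantially longer route than the paper's.
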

\begin{proof}
  The first assertion is proved in  \cite[Cor.\ 4.14]{HuWang2} {(taking $\chi = \tau^{I_1}$ there)}.  The second is essentially a consequence of \cite[Cor.\ 3.11]{HuWang2}. To see this, let $\tau'$ be a %
  Serre weight such that $\Ext^1_{\GL_2(\cO_K)/Z_1}(\tau',\tau)\neq 0$ (this is equivalent to $\Ext^1_{\Gamma}(\tau',\tau)\neq 0$ by \cite[Lemma~\ref{bhhms4:lem:ext1}]{BHHMS4}, {noting that $\tau'$ is automatically 0-generic by \cite[Cor.\ 5.6(ii)]{BP}}).
We need to prove that $\tau'\notin \JH(\coker(\phi_{\tau}))$. By \cite[Prop.\ 3.12(ii)]{HuWang2} (where $\mathscr{E}(\tau)$ in \emph{loc.~cit.} is the set of Serre weights $\tau''$ such that $\Ext^1_{\Gamma}(\tau'',\tau)\neq0$), we know that $\tau'$ is a Jordan--H\"older factor of $\Ind_I^{\GL_2(\cO_K)}\overline{W}_{\chi,3}$, so we have a morphism $\phi_{\tau'}$ as in \eqref{eq:phitau}.  Since $[\coker(\phi_{\tau'}):\tau']=0$, it suffices to prove that $\im(\phi_{\tau'})\subset \im(\phi_{\tau})$.  But this follows from  \cite[Cor.~3.11(a), (c)]{HuWang2}. Indeed, if $\tau'\in \JH(\Ind_I^{\GL_2(\cO_K)}\chi)$, then we conclude by \cite[Cor.~3.11(a)]{HuWang2}, as $J(\tau)=\{0,\dots,f-1\}$ in the notation of \emph{loc.~cit.}; if $\tau'\in \JH(\Ind_I^{\GL_2(\cO_K)}\chi')$ for some $\chi'\in \JH(\overline{W}_{\chi,3})$ with $\chi'\neq \chi$, then we conclude by \cite[Cor.~3.11(c)]{HuWang2}.
\end{proof}

\begin{cor}\label{cor:coker-ext1}
  If $\tau\in W(\brho)$, then
  \begin{equation*}
    \Ext^1_{\wt{\Gamma}}(\coker(\phi_{\tau}),\wt{D}_0(\brho))=\Ext^1_{\wt{\Gamma}}(\coker(\phi_{\tau}),\tau)=0.
  \end{equation*}
\end{cor}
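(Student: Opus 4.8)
The plan is to deduce both vanishing statements from Lemma~\ref{lem:coker} together with Lemma~\ref{lem:ex-wtD}. First I would observe that $\coker(\phi_\tau)$ is a $\wt\Gamma$-module all of whose Jordan--Hölder constituents $\tau'$ satisfy $\tau' \notin W(\brho)$ (by the first assertion of Lemma~\ref{lem:coker}) and $\Ext^1_{\GL_2(\cO_K)/Z_1}(\tau',\tau) = 0$, equivalently $\Ext^1_{\wt\Gamma}(\tau',\tau) = 0$ since $\tau'$ and $\tau$ are both annihilated by $\m_{K_1}$ (by the second assertion of Lemma~\ref{lem:coker} and the argument recalled in the proof of Lemma~\ref{lem:ex-wtD}(ii)).

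For the first equality, $\Ext^1_{\wt\Gamma}(\coker(\phi_\tau),\wt D_0(\brho)) = 0$: I would induct on the length of $\coker(\phi_\tau)$. Using the long exact sequence for $\Ext^\bullet_{\wt\Gamma}(-,\wt D_0(\brho))$ applied to a short exact sequence $0 \to \tau' \to \coker(\phi_\tau) \to Q \to 0$ with $\tau'$ irreducible (hence $\tau' \notin W(\brho)$) and $Q$ of smaller length, it suffices to treat the case $\coker(\phi_\tau) = \tau'$ irreducible with $\tau' \notin W(\brho)$, which is exactly Lemma~\ref{lem:ex-wtD}(i). The inductive step is immediate because the outer terms vanish, so the middle does too.

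For the second equality, $\Ext^1_{\wt\Gamma}(\coker(\phi_\tau),\tau) = 0$: again induct on length via $0 \to \tau' \to \coker(\phi_\tau) \to Q \to 0$. For an irreducible constituent $\tau'$ of $\coker(\phi_\tau)$ we have $\Ext^1_{\wt\Gamma}(\tau',\tau) = 0$ by the second assertion of Lemma~\ref{lem:coker} (transported to $\wt\Gamma$ as above), so the long exact sequence again forces the vanishing to propagate from the constituents to all of $\coker(\phi_\tau)$.

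I do not expect any genuine obstacle here: the corollary is a routine dévissage from the two assertions of Lemma~\ref{lem:coker} and from Lemma~\ref{lem:ex-wtD}(i), the only mild point being the standard observation that for modules killed by $\m_{K_1}$ the $\Ext^1$ groups over $\GL_2(\cO_K)/Z_1$, over $\wt\Gamma$, and over $\Gamma$ all coincide (cf.\ \cite[Lemma~\ref{bhhms4:lem:ext1}]{BHHMS4} and the proof of Lemma~\ref{lem:ex-wtD}(ii)), which lets us freely pass between the ``$\GL_2(\cO_K)/Z_1$'' formulation of Lemma~\ref{lem:coker} and the ``$\wt\Gamma$'' formulation in the statement.
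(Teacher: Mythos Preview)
Your proposal is correct and follows essentially the same approach as the paper: the paper's proof is a terse one-sentence d\'evissage invoking Lemma~\ref{lem:ex-wtD}(i) with the first assertion of Lemma~\ref{lem:coker} for the first vanishing, and the second assertion of Lemma~\ref{lem:coker} for the second, after noting that $\Ext^1_{\wt\Gamma}(\tau',\tau)=\Ext^1_{\GL_2(\cO_K)/Z_1}(\tau',\tau)$ for Serre weights $\tau'$. Your write-up is just a slightly more explicit version of the same argument.
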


\begin{proof}
Using $\Ext^1_{\wt{\Gamma}}(\tau',\tau)=\Ext^1_{\GL_2(\cO_K)/Z_1}(\tau',\tau)$ for any Serre weight $\tau'$, the first term is $0$ by d\'evissage from Lemma~\ref{lem:ex-wtD}(i) and the first assertion in Lemma \ref{lem:coker}, and the second term is $0$ by d\'evissage from the second assertion in Lemma \ref{lem:coker}.
\end{proof}

\begin{lem}\label{lem:coker-j}
Assume that $\tau\in W(\brho)$.
Then $\coker(\phi_{\tau})$ has a direct sum decomposition 
\begin{equation}\label{eq:coker-decomp}
\coker(\phi_{\tau})\cong \bigoplus_{j=0}^{f-1}\coker(\phi_{\tau})_j,\end{equation}
where $\coker(\phi_{\tau})_j$ is a quotient of $\Ind_I^{\GL_2(\cO_K)}\chi\alpha_j$ for $0\leq j\leq f-1$. Moreover, 
\begin{enumerate}
\item if $\chi\alpha_j\in\JH(\pi^{I_1})$, then $\Ext^1_{\GL_2(\cO_K)/Z_1}\big(\coker(\phi_\tau)_j,D_0(\brho)\big)=0$;
\item if $\chi\alpha_j\notin \JH(\pi^{I_1})$, then $\Ext^1_{\GL_2(\cO_K)/Z_1}\big(\coker(\phi_\tau)_j,\pi\big)=0$. 
\end{enumerate}
\end{lem}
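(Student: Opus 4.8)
The plan is to first establish the direct sum decomposition~\eqref{eq:coker-decomp}, and then to analyze each summand separately, using the $\Ext^1$-vanishing inputs already collected in Lemma~\ref{lem:coker} and Corollary~\ref{cor:coker-ext1}.

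First I would construct the decomposition. Recall from~\eqref{eq:Wbar3} that $\overline{W}_{\chi,3}$ is an extension of $\chi$ by $\bigoplus_{\chi'} E_{\chi,\chi'}$, where $\chi'$ runs over the characters with $\Ext^1_{I/Z_1}(\chi',\chi)\neq0$; by \cite[Lemma~2.7]{BP} and the genericity of $\tau$ these are precisely the characters $\chi\alpha_j^{\pm1}$ for $0\le j\le f-1$. Applying $\Ind_I^{\GL_2(\cO_K)}(-)$ (which is exact) and using that $\phi_\tau$ has image the largest subrepresentation of $\Ind_I^{\GL_2(\cO_K)}\overline{W}_{\chi,3}$ with $\tau$-isotypic cosocle and $[\coker(\phi_\tau):\tau]=0$, the cokernel $\coker(\phi_\tau)$ is built only out of constituents coming from the $\Ind_I^{\GL_2(\cO_K)}\chi'$ with $\chi'\neq\chi$. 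I expect that, exactly as in the analysis of \cite[Cor.~4.14]{HuWang2} / \cite[Cor.~3.11]{HuWang2}, the image of $\phi_\tau$ already contains the full contribution of the ``lowering'' characters $\chi\alpha_j^{-1}$ (i.e.\ of $\Ind_I^{\GL_2(\cO_K)}\chi\alpha_j^{-1}$), so that $\coker(\phi_\tau)$ is supported on the $f$ ``raising'' blocks $\Ind_I^{\GL_2(\cO_K)}\chi\alpha_j$, which pairwise share no Jordan--Hölder factors (the relevant constituents $\delta_j^+(\tau)$, etc., are distinct for distinct $j$, being $m$-generic for suitable $m$). Since $\Ext^1_{\GL_2(\cO_K)/Z_1}$ vanishes between Serre weights lying in different such blocks (no common adjacency in the extension graph, cf.\ Lemma~\ref{lem:I-sigma-tau-modular} and Lemma~\ref{lem:I-sigma-tau-princ-series}), the cokernel splits as the claimed direct sum, with $\coker(\phi_\tau)_j$ a quotient of $\Ind_I^{\GL_2(\cO_K)}\chi\alpha_j$.

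Next, for parts (i) and (ii) I would argue as follows. By Lemma~\ref{lem:coker} every $\tau'\in\JH(\coker(\phi_\tau))$ satisfies $\tau'\notin W(\brho)$ and $\Ext^1_{\GL_2(\cO_K)/Z_1}(\tau',\tau)=0$; in particular $\tau'\notin W(\brho^\ss)$ as well — I would double-check this using Lemma~\ref{lem:mu-pm}, since the relevant raising constituents $\delta_j^+(\tau)$ are ruled out of $W(\brho^\ss)$ there, and the other constituents of $\Ind_I^{\GL_2(\cO_K)}\chi\alpha_j$ are either in $W(\brho^\ss)$ only via $\mu_j^*$'s already absorbed into $\im(\phi_\tau)$, or excluded. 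Thus $\chi_{\tau'}\notin\JH(D_0(\brho)^{I_1})=\JH(\pi^{I_1})$ (using assumption~\ref{it:assum-i}) for all constituents $\tau'$ of $\coker(\phi_\tau)_j$ — except possibly for $\tau'$ with $\chi_{\tau'}=\chi\alpha_j$, namely the socle constituent $\sigma_j$ of $\Ind_I^{\GL_2(\cO_K)}\chi\alpha_j$. In case (ii), where $\chi\alpha_j\notin\JH(\pi^{I_1})$, \emph{every} constituent $\tau'$ of $\coker(\phi_\tau)_j$ has $\chi_{\tau'}\notin\JH(\pi^{I_1})$, and since $\coker(\phi_\tau)_j$ is a quotient of $\Ind_I^{\GL_2(\cO_K)}\chi\alpha_j$, Lemma~\ref{lem:Ext1-Serre} directly gives $\Ext^1_{\GL_2(\cO_K)/Z_1}(\coker(\phi_\tau)_j,\pi)=0$. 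This settles (ii).

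For case (i), where $\chi\alpha_j\in\JH(\pi^{I_1})$, I would instead work with $D_0(\brho)$ directly. The point is that $\soc_{\GL_2(\cO_K)} D_0(\brho)=\bigoplus_{\sigma\in W(\brho)}\sigma$ and all constituents $\tau'$ of $\coker(\phi_\tau)_j$ lie outside $W(\brho)$, so $\Hom_{\GL_2(\cO_K)}(\coker(\phi_\tau)_j,D_0(\brho))=0$, and more relevantly $\Ext^1_{\GL_2(\cO_K)/Z_1}(\tau',D_0(\brho))=0$ for each such $\tau'$ by Lemma~\ref{lem:ex-wtD}(ii) provided $\tau'\in\JH(D_0(\brho))$, combined with the maximality property of $\wt D_0(\brho)$ (Lemma~\ref{lem:ex-wtD}(i)) for the remaining $\tau'$. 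A dévissage up the radical filtration of $\coker(\phi_\tau)_j$ then gives $\Ext^1_{\GL_2(\cO_K)/Z_1}(\coker(\phi_\tau)_j,D_0(\brho))=0$, which is exactly (i). Here I would be careful that the extensions needed in the dévissage are computed over $\wt\Gamma$ and agree with those over $\GL_2(\cO_K)/Z_1$ because $D_0(\brho)$ is killed by $\m_{K_1}$, as in Lemma~\ref{lem:ex-wtD}(ii) and Corollary~\ref{cor:coker-ext1}.

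The main obstacle I anticipate is the precise identification of which constituents actually appear in $\coker(\phi_\tau)_j$ and the verification that the ``lowering'' blocks $\Ind_I^{\GL_2(\cO_K)}\chi\alpha_j^{-1}$ contribute nothing to the cokernel — i.e.\ pinning down the decomposition~\eqref{eq:coker-decomp} itself. This is a combinatorial computation in the extension graph, parallel to \cite[Cor.~3.11, Cor.~4.14]{HuWang2}, and once it is in hand, parts (i) and (ii) follow quickly by the dévissage arguments above together with Lemma~\ref{lem:Ext1-Serre} and Lemma~\ref{lem:ex-wtD}.
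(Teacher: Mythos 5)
Your decomposition argument and your treatment of part~(i) both have gaps; part~(ii) is correct and identical to the paper's.

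For the decomposition~\eqref{eq:coker-decomp}, you only ``expect'' that $\im(\phi_\tau)$ absorbs the contribution from the lowering characters $\chi\alpha_j^{-1}$ and that the raising blocks split off. This is the crux, and it has to be proved. The paper's argument is concrete: $\im(\phi_\tau)$ contains $(\Ind_I^{\GL_2(\cO_K)}\chi)^{\oplus 2f}\subset \Ind_I^{\GL_2(\cO_K)}\overline{W}_{\chi,3}$, so the quotient map to $\coker(\phi_\tau)$ factors through $\Ind_I^{\GL_2(\cO_K)}W_{\chi,2}$; a snake-lemma diagram then produces a surjection $\bigoplus_j\Ind_I^{\GL_2(\cO_K)}\chi\alpha_j^{\pm1}\onto \coker(\phi_\tau)$, and since the source is multiplicity free this automatically yields $\coker(\phi_\tau)\cong\bigoplus_j\coker(\phi_\tau)^{\pm}_j$ without any $\Ext$-vanishing between blocks; finally $\coker(\phi_\tau)^-_j=0$ because otherwise $\Ind_I^{\GL_2(\cO_K)}W_{\chi,2}$ would surject onto $\delta_j^-(\tau)$, but $\Hom_I(W_{\chi,2},\delta_j^-(\tau))=0$ by \cite[Lemma~\ref{bhhms2:lem:eigenvct}(ii)]{BHHMS2}.

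The more serious gap is in part~(i). Your plan is to apply Lemma~\ref{lem:ex-wtD}(ii) for those $\tau'\in\JH(\coker(\phi_\tau)_j)$ that lie in $\JH(D_0(\brho))$, and to ``use Lemma~\ref{lem:ex-wtD}(i) for the remaining $\tau'$.'' But Lemma~\ref{lem:ex-wtD}(i) gives $\Ext^1_{\wt\Gamma}(\tau',\wt D_0(\brho))=0$, not $\Ext^1_{\wt\Gamma}(\tau',D_0(\brho))=0$: from the exact sequence $\Hom_{\wt\Gamma}(\tau',\wt D_0(\brho)/D_0(\brho))\to\Ext^1_{\wt\Gamma}(\tau',D_0(\brho))\to\Ext^1_{\wt\Gamma}(\tau',\wt D_0(\brho))$, the first term need not vanish when $\tau'$ occurs in $\wt D_0(\brho)/D_0(\brho)$, which is exactly what you cannot rule out for your ``remaining $\tau'$.'' What actually makes part~(i) work is the assertion that, when $\chi\alpha_j\in\JH(\pi^{I_1})$, \emph{all} constituents of $\coker(\phi_\tau)_j$ already lie in $\JH(D_0(\brho))$, so no ``remaining $\tau'$'' exist. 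The paper deduces this from the stronger statement that for any $\chi'\in\JH(\pi^{I_1})$ and any $\sigma'\in\JH(\Ind_I^{\GL_2(\cO_K)}\chi')$ one has $\JH(\Ind_I^{\GL_2(\cO_K)}\chi')\subset\JH(\Proj_\Gamma\sigma')$, proved by lifting to $W(\F)[\Gamma]$ and applying Brauer--Nesbitt (using the irreducibility of $\Ind_I^{\GL_2(\cO_K)}[\chi']$ over $W(\F)[1/p]$). That characteristic-zero input is the missing idea in your proposal, and without it the d\'evissage for~(i) does not close.
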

\begin{rem}
Although it will not be used in this paper, we have the following explicit description of $\coker(\phi_\tau)_j$: it is the unique quotient of $\Ind_I^{\GL_2(\cO_K)}\chi\alpha_j$ consisting of the Jordan--H\"older factors parametrized by the subsets of $\{0,\dots,f-1\}$ that contain $j$.  
\end{rem}

\begin{proof}
By construction, $\im(\phi_\tau)$ contains the image of any morphism $\Proj_{\wt{\Gamma}}\tau\ra \Ind_I^{\GL_2(\cO_K)}\overline{W}_{\chi,3}$, and in particular contains the subrepresentation $\Ind_I^{\GL_2(\cO_K)}\chi^{\oplus 2f}\subset \Ind_I^{\GL_2(\cO_K)}\overline{W}_{\chi,3}$ (recall that $\chi^{\oplus 2f}\subset \overline{W}_{\chi,3}$ by \eqref{eq:Wbar3} and that $\cosoc_\Gamma\big(\Ind_I^{\GL_2(\cO_K)}\chi\big)=\tau$).
Thus, the quotient map $\Ind_I^{\GL_2(\cO_K)}\overline{W}_{\chi,3}\onto \coker(\phi_\tau)$ factors through $\Ind_I^{\GL_2(\cO_K)}W_{\chi,2}\onto \coker(\phi_{\tau})$.  Recall that $W_{\chi,2}$ fits into a short exact sequence
\[0\ra \bigoplus_{j=0}^{f-1}(\chi\alpha_j\oplus \chi\alpha_j^{-1})\ra W_{\chi,2}\ra\chi\ra0.\]

We have a commutative diagram with exact rows
\begin{equation*}
  \xymatrix{0 \ar[r] & \ker(q) \ar[r]\ar[d] & \Proj_{\wt{\Gamma}}\tau \ar^-q[r]\ar^{\phi_\tau}[d] & \Ind_I^{\GL_2(\cO_K)}\chi \ar[r]\ar@{=}[d] & 0 \\
    0 \ar[r] & \bigoplus_{j=0}^{f-1}\Ind_I^{\GL_2(\cO_K)}\chi\alpha_j^{\pm 1}\ar[r] & \Ind_I^{\GL_2(\cO_K)}W_{\chi,2}\ar[r] & \Ind_I^{\GL_2(\cO_K)}\chi \ar[r] & 0
  }
\end{equation*}
so that we have a surjection $\gamma:\bigoplus_{j=0}^{f-1}\Ind_I^{\GL_2(\cO_K)}\chi\alpha_j^{\pm 1} \onto \coker(\phi_\tau)$ by the snake lemma.
As $\bigoplus_{j=0}^{f-1}\Ind_I^{\GL_2(\cO_K)}\chi\alpha_j^{\pm 1}$ is multiplicity free (for instance by \cite[Lemma~\ref{bhhms4:lem:multfree}]{BHHMS4}) we deduce an isomorphism $\coker(\phi_\tau)\cong \bigoplus_{j=0}^{f-1}\coker(\phi_\tau)^{\pm}_j$, where $\coker(\phi_\tau)^{\pm}_j \defeq \gamma(\Ind_I^{\GL_2(\cO_K)}\chi\alpha^{\pm 1}_j)$ (in particular it is a quotient of $\Ind_I^{\GL_2(\cO_K)}\chi\alpha^{\pm 1}_j$).
If $\coker(\phi_\tau)^{-}_j\neq 0$ then $\coker(\phi_\tau)$ and a fortiori $\Ind_I^{\GL_2(\cO_K)}W_{\chi,2}$ would surject onto $\delta_j^-(\tau)$ (the cosocle of $\Ind_I^{\GL_2(\cO_K)}\chi\alpha_j^{-1}$).
But this is not true by Frobenius reciprocity, as one checks that $\Hom_{I}(W_{\chi,2}, \delta_j^-(\tau))=0$ by \cite[Lemma 3.42(ii)]{BHHMS2}.
We thus get the decomposition \eqref{eq:coker-decomp} by taking $\coker(\phi_\tau)_j\defeq \coker(\phi_\tau)^+_j$. 

(i) By Lemma \ref{lem:ex-wtD}(ii) and the first statement in Lemma \ref{lem:coker}, it suffices to show that $\JH(\coker(\phi_\tau)_j)\subset \JH(D_0(\brho))$ when $\chi\alpha_j\in \JH(\pi^{I_1})$. In fact, we prove the following stronger statement: if $\chi'\in \JH(\pi^{I_1})$ then $\JH(\Ind_I^{\GL_2(\cO_K)}\chi')\subset \JH(D_0(\brho))$.  By \cite[eq.~\eqref{bhhms4:eq:JH-D0-rho}]{BHHMS4} we have $\JH(\Proj_{\Gamma}\sigma)\subset \JH(D_0(\brho))$ for any $\sigma\in W(\brho)$. Now, since  $\chi'\in \JH(\pi^{I_1})$, we have  \[\JH(\Ind_I^{\GL_2(\cO_K)}\chi')\cap W(\brho)\neq \emptyset\] by \cite[Prop.~4.2]{breuil-buzzati}. Thus it suffices to prove that 
\begin{equation}\label{eq:JH-inclusion}\JH(\Ind_I^{\GL_2(\cO_K)}\chi')\subset \JH(\Proj_{\Gamma}\sigma') 
\end{equation}
for any $\sigma'\in \JH(\Ind_I^{\GL_2(\cO_K)}\chi')$. 
We prove \eqref{eq:JH-inclusion} for any character $\chi':I\ra \F^{\times}$ satisfying $\chi'\neq \chi'^s$. Let $\Proj_{W(\F)[\Gamma]}\sigma'$ be the projective cover of $\sigma'$ in the category of $W(\F)[\Gamma]$-modules. Let $[\chi']:I\ra W(\F)^{\times}$ be the Teichm\"uller lift of $\chi'$. Since $\sigma'\in \JH(\Ind_I^{\GL_2(\cO_K)}\chi')$, there is a non-zero morphism $\gamma:\Proj_{W(\F)[\Gamma]}\sigma'\ra \Ind_I^{\GL_2(\cO_K)}[\chi']$. Inverting $p$, the latter representation is irreducible over $W(\F)[1/p]$ as $[\chi']\neq [\chi'^s]$, so $\gamma$ is surjective after inverting $p$. We conclude by the Brauer--Nesbitt theorem {(cf.\ \cite[Chap.~15, Thm.~32]{serre-book}).} 

(ii) It is a direct consequence of Lemma \ref{lem:Ext1-Serre}.
\end{proof}

\begin{lem}\label{lem:4.18}
Let $\tau\in W(\brho)$ and $Q$ be a quotient of $\Proj_{\wt{\Gamma}}\tau$ such that $\rad_{\wt\Gamma}(Q)\subset \wt{D}_0(\brho)$ (hence $\rad_{\wt\Gamma}(Q)$ is multiplicity free). Then $Q$ is a quotient of $\im(\phi_{\tau})$. %
\end{lem}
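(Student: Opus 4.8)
The plan is to reduce to a surjectivity statement for $M\defeq\Ind_I^{\GL_2(\cO_K)}\overline{W}_{\chi,3}$ and then feed in Lemmas~\ref{lem:ex-wtD}--\ref{lem:coker-j} and Corollary~\ref{cor:coker-ext1}. Write $q\colon\Proj_{\wt\Gamma}\tau\onto Q$ for the given surjection. Since $\cosoc_{\wt\Gamma}\im(\phi_\tau)=\tau=\cosoc_{\wt\Gamma}\Proj_{\wt\Gamma}\tau$, we have $\ker(\phi_\tau)\subset\rad_{\wt\Gamma}(\Proj_{\wt\Gamma}\tau)$, so ``$Q$ is a quotient of $\im(\phi_\tau)$'' is equivalent to $q(\ker\phi_\tau)=0$; moreover $q(\ker\phi_\tau)\subset\rad_{\wt\Gamma}(Q)\subset\wt D_0(\brho)$, so $q$ restricts to a morphism $\ker(\phi_\tau)\to\wt D_0(\brho)$ which I must show is zero.

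The key point is that it suffices to prove $Q$ is a quotient of $M$. Granting this, restrict a surjection $M\onto Q$ to the submodule $\im(\phi_\tau)\subset M$: its cokernel is simultaneously a quotient of $Q$ --- hence, being a quotient of $\Proj_{\wt\Gamma}\tau$, has cosocle supported on $\{\tau\}$ --- and a quotient of $M/\im(\phi_\tau)=\coker(\phi_\tau)$ --- hence has all Jordan--H\"older factors outside $W(\brho)$ by Lemma~\ref{lem:coker}. As $\tau\in W(\brho)$ this cokernel vanishes, i.e.\ $\im(\phi_\tau)\onto Q$. To prove $Q$ is a quotient of $M$, note that by Frobenius reciprocity $\Hom_{\GL_2(\cO_K)}(M,Q)\cong\Hom_I(\overline{W}_{\chi,3},Q|_I)$, and a morphism on the right induces a \emph{surjection} $M\onto Q$ precisely when its composite with $Q\onto\cosoc_{\GL_2(\cO_K)}(Q)=\tau$ is nonzero (both $\cosoc_{\GL_2(\cO_K)}(Q)$ and $\cosoc_{\GL_2(\cO_K)}(M)$ being equal to this irreducible $\tau$). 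The composite $\overline{W}_{\chi,3}\onto\chi=\tau^{I_1}=\soc_I(\tau)\into\tau|_I$ is a target of this kind; since $Q|_I\onto\tau|_I$, it is enough to lift it through $Q|_I\onto\tau|_I$, and the obstruction to doing so lies in $\Ext^1_{I/Z_1}(\overline{W}_{\chi,3},(\rad_{\wt\Gamma}Q)|_I)$.

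The main obstacle is to show this obstruction vanishes. Using the presentation~\eqref{eq:Wbar3} of $\overline{W}_{\chi,3}$ and the classification of the characters $\chi'$ with $\Ext^1_{I/Z_1}(\chi',\chi)\neq0$, the obstruction group is controlled by $\Ext^i_{I/Z_1}\bigl(\chi'',(\rad_{\wt\Gamma}Q)|_I\bigr)$ for $i\le2$ and $\chi''\in\{\chi,\chi\alpha_j,\chi\alpha_j^{-1}\}$, and the hypothesis $\rad_{\wt\Gamma}(Q)\subset\wt D_0(\brho)$ --- together with the multiplicity-freeness of $\wt D_0(\brho)$, the explicit structure of $\wt D_0(\brho)^{I_1}$, and the $\Ext^1$-vanishing in Lemma~\ref{lem:ex-wtD} and Corollary~\ref{cor:coker-ext1} --- is what should force it to vanish. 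An essentially equivalent route, which I would run in parallel, is to show directly that $\Ext^1_{\wt\Gamma}(\im(\phi_\tau),\sigma)=0$ for every $\sigma\in\JH(\wt D_0(\brho))$: by projectivity of $\Proj_{\wt\Gamma}\tau$ and $\ker(\phi_\tau)\subset\rad_{\wt\Gamma}$ one has $\Ext^1_{\wt\Gamma}(\im(\phi_\tau),\sigma)\cong\Hom_{\wt\Gamma}(\ker(\phi_\tau),\sigma)$, whose vanishing over all such $\sigma$ gives $\Hom_{\wt\Gamma}(\ker(\phi_\tau),\wt D_0(\brho))=0$ and hence $q(\ker\phi_\tau)=0$; and one attacks it via $0\to\im(\phi_\tau)\to M\to\coker(\phi_\tau)\to0$, the vanishing $\Ext^1_{\wt\Gamma}(\coker(\phi_\tau),\wt D_0(\brho))=0$ of Corollary~\ref{cor:coker-ext1}, and Frobenius reciprocity for the term $\Ext^\bullet_{\wt\Gamma}(M,\sigma)$. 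Either way, the computation with $\overline{W}_{\chi,3}$ is the technical heart.
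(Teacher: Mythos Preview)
Your reduction to ``$Q$ is a quotient of $M$ $\Rightarrow$ $Q$ is a quotient of $\im(\phi_\tau)$'' is fine (though the parenthetical $\cosoc_{\GL_2(\cO_K)}(M)=\tau$ is false --- the cosocle also contains $\delta_j^+(\tau)$ for each $j$ --- only $\cosoc_{\wt\Gamma}(Q)=\tau$ is needed there). The genuine gap is the obstruction step. In your first route you need the class of $0\to\rad_{\wt\Gamma}(Q)\to Q\to\tau\to 0$ to die in $\Ext^1_{I/Z_1}(\overline{W}_{\chi,3},\rad_{\wt\Gamma}(Q)|_I)$, but none of the cited lemmas say anything about $I/Z_1$-Ext groups; they give $\Ext^1_{\wt\Gamma}(\coker(\phi_\tau),-)=0$, which is a different functor in a different category. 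In your second route the long exact sequence reads
\[
\Ext^1_{\wt\Gamma}(M,\sigma)\to\Ext^1_{\wt\Gamma}(\im(\phi_\tau),\sigma)\to\Ext^2_{\wt\Gamma}(\coker(\phi_\tau),\sigma),
\]
and you control neither outer term: Corollary~\ref{cor:coker-ext1} only gives $\Ext^1$ of $\coker(\phi_\tau)$, not $\Ext^2$; and Shapiro's lemma identifies $\Ext^i_{\GL_2(\cO_K)/Z_1}(M,\sigma)$ with $\Ext^i_{I/Z_1}(\overline{W}_{\chi,3},\sigma|_I)$, not $\Ext^i_{\wt\Gamma}$, and those $I$-Ext groups are typically nonzero (e.g.\ $\Ext^1_{I/Z_1}(\chi,\sigma|_I)\ne 0$ whenever $\chi\alpha_j^{-1}\in\JH(\sigma|_I)$, which happens for most $\sigma$). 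So the ``technical heart'' is not just a computation left to the reader; it is a statement whose truth is not clear from the tools you list.

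The paper's proof avoids obstruction theory entirely. It exploits the decomposition $\wt D_0(\brho)=\bigoplus_{\sigma\in W(\brho)}\wt D_{0,\sigma}(\brho)$ to write $\rad_{\wt\Gamma}(Q)=\bigoplus_\sigma V_\sigma$, then uses two structural inputs from \cite{HuWang2}: for $\sigma\ne\tau$, \cite[Lemma~4.10]{HuWang2} forces $V_\sigma=\sigma$ (since $Q_\sigma$ has cosocle $\tau$ and the map $\Ext^1_{\wt\Gamma}(\tau,\sigma)\to\Ext^1_{\wt\Gamma}(\tau,V_\sigma)$ is an isomorphism); for $\sigma=\tau$, \cite[Cor.~4.9]{HuWang2} forces $V_\tau\subset D_{0,\tau}(\brho)$. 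This pins down $\rad_{\wt\Gamma}(Q)$ so tightly that $Q$ is seen to satisfy the hypotheses of \cite[Cor.~3.14]{HuWang2}, which characterizes quotients of $\Theta_\tau$ (hence of $\im(\phi_\tau)$). In short, the argument is structural rather than cohomological, and the crucial inputs are the two cited results from \cite{HuWang2} constraining extensions of $\tau$ by pieces of $\wt D_0(\brho)$ --- not the $\Ext$-vanishing for $\coker(\phi_\tau)$.
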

\begin{proof}
{In this proof, if $M$ is a finite-dimensional \  $\wt{\Gamma}$-module, we \ write \  $\rad(M)$, \  $\soc(M)$ and \  $\cosoc(M)$ for $\rad_{\wt\Gamma}(M)$, $\soc_{\wt\Gamma}(M)$ and $\cosoc_{\wt\Gamma}(M)$ respectively.}
We may assume that $Q \ne 0$. %
Since $\rad(Q)$ is multiplicity free by assumption, we have $[\rad(Q):\tau]\leq 1$ and $[Q:\tau]\leq 2$.
Since $\rad(Q)\subset \widetilde{D}_0(\brho)=\bigoplus_{\sigma\in W(\brho)}\wt{D}_{0,\sigma}(\brho)$ (which is multiplicity free), we have a decomposition \[\rad(Q)=\bigoplus_{\sigma\in W(\brho)}V_{\sigma}\]  for some subrepresentations $V_\sigma \subset \wt{D}_{0,\sigma}(\brho)$. If $V_{\sigma}\neq0$, let $Q_\sigma$ be the quotient of $Q$ by its largest subrepresentation in which $\sigma$ does not occur, %
so $\soc(Q_{\sigma})\cong\sigma$ (even if $\sigma = \tau$, as $\cosoc(Q_\sigma)=\tau$) and $0\ra V_{\sigma}\ra Q_{\sigma}\ra\tau\ra0$. 
Assume first $\sigma\neq \tau$ and $V_{\sigma}\neq0$. By \cite[Lemma 4.10]{HuWang2}, the natural morphism
\begin{equation}\label{eq:isom-ext1}
  \Ext^1_{\wt{\Gamma}}(\tau,\sigma) \to \Ext^1_{\wt{\Gamma}}(\tau,V_\sigma) 
\end{equation}
is an isomorphism. Since $Q_{\sigma}$ has cosocle $\tau$, we deduce that $V_{\sigma}=\sigma$. 
Assume next that $\sigma=\tau$ and also $V_{\tau}\neq 0$. Then \cite[Cor.~4.9]{HuWang2} implies that the natural inclusion $D_{0,\tau}(\brho)\hookrightarrow \wt{D}_{0,\tau}(\brho)$
induces an isomorphism \begin{equation}\label{eq:Ext1-tau-isom}\Ext^1_{\wt{\Gamma}}(\tau,D_{0,\tau}(\brho))\simto \Ext^1_{\wt{\Gamma}}(\tau,\wt{D}_{0,\tau}(\brho)).\end{equation}
Letting  $A\defeq V_{\tau}\cap D_{0,\tau}(\brho)\neq 0$, we obtain a commutative diagram  with exact rows
\[\xymatrix{0\ar[r]& \Ext^1_{\wt{\Gamma}}(\tau,A)\ar[r]\ar@{^{(}->}[d]&\Ext^1_{\wt{\Gamma}}(\tau,V_{\tau})\ar[r]\ar@{^{(}->}[d]&\Ext^1_{\wt{\Gamma}}(\tau,V_{\tau}/A)\ar@{^{(}->}[d]\\
0\ar[r]&\Ext^1_{\wt{\Gamma}}(\tau,D_{0,\tau}(\brho))\ar^{\sim}[r]&\Ext^1_{\wt{\Gamma}}(\tau,\wt{D}_{0,\tau}(\brho))\ar[r]&\Ext^1_{\wt{\Gamma}}(\tau,\wt{D}_{0,\tau}(\brho)/D_{0,\tau}(\brho))}\]
where all the vertical arrows are easily seen to be injective (as $\wt{D}_{0,\tau}(\brho)$ is multiplicity free). A diagram chase together with \eqref{eq:Ext1-tau-isom} shows that the class of $Q_{\tau}$ in $\Ext^1_{\wt{\Gamma}}(\tau,V_{\tau})$ lies in the image of $\Ext^1_{\wt{\Gamma}}(\tau,A)$. Since $Q_{\tau}$ has cosocle $\tau$, we have $A=V_{\tau}$, namely $V_{\tau}\subset D_{0,\tau}(\brho)$. Altogether we get $\rad(Q)\subset D_0(\brho)$. 

Now we prove the lemma. If $\tau$ does not occur in $\rad(Q)$, then $[Q:\tau]=1$ and $V_{\tau}=0$. Moreover,  the discussion in the last paragraph implies that $\rad(Q)=\bigoplus_{\sigma\in \JH(\soc(Q))}\sigma$ (provided $\rad(Q)\neq 0$). Thus  $Q$ is a $\Gamma$-representation by \cite[Lemma~\ref{bhhms4:lem:ext1}]{BHHMS4} (and the first sentence in its proof), 
and \cite[Cor.~3.14]{HuWang2} (applied with $m=0$) implies that  $Q$ is a certain quotient of $\Theta_{\tau}$, where  $\Theta_{\tau}$ in \emph{loc.~cit.} is a quotient of $\im(\phi_{\tau})$ constructed in \cite[Prop.\ 3.12]{HuWang2}. As a consequence, $Q$ is a quotient of $\im(\phi_{\tau})$.   

If $\tau$ occurs in $\rad(Q)$, then $\tau$ must occur in $\soc(Q)$ as  $\rad(Q)\subset D_0(\brho)$ and  $\tau \in \JH(\soc(D_0(\brho)))$.    In this case $Q$  satisfies the following conditions:  
\begin{enumerate}
\item[(1)] $[Q:\tau]=2$, $\tau\hookrightarrow \soc(Q)$, and $\cosoc(Q)\cong\tau$;
\item[(2)] $\rad(Q)$ is a subrepresentation of $D_0(\brho)$.
\end{enumerate}
It is proved in the last paragraph of the proof of \cite[Prop.~4.18]{HuWang2} that such a representation is a quotient of $\Theta_{\tau}$, hence of $\im(\phi_{\tau})$. The argument goes as follows. Firstly, by \cite[Lemma~4.10]{HuWang2} condition (2) implies that $\JH(\soc(Q))$ is contained in $\{\tau\}\cup \mathscr{E}(\tau)$, where $\mathscr{E}(\tau)$ denotes the set of Serre weights $\tau'$ such that $\Ext^1_{\Gamma}(\tau',\tau)\neq0$  (equivalently, $\Ext^1_{\Gamma}(\tau,\tau')\neq0$). Secondly,  using (1) and the fact $\Ext^1_{\widetilde{\Gamma}}(\tau,\tau)=0$, one shows that the socle of $C\defeq Q/\tau$ is contained in $\mathscr{E}(\tau)$ and, using again \cite[Lemma~4.10]{HuWang2} and $\cosoc_{\widetilde{\Gamma}}(Q)=\tau$, that $C$   fits in a short exact sequence
\[0\ra S\ra C\ra\tau\ra 0\]
for some subrepresentation $S$ of $\bigoplus_{
\tau'\in\mathscr{E}(\tau)}\tau'$. Then we conclude by \cite[Cor.~3.14]{HuWang2}.
\end{proof}

\begin{lem}\label{lem:HW4.20}
Assume that $\tau\in  W(\brho)$. Then $ \Hom_{\GL_2(\cO_K)}(\Proj_{\wt{\Gamma}}\tau,\pi)$ has dimension $1$ over $\F$.
\end{lem}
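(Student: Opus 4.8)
The plan is to compute $\dim_\F \Hom_{\GL_2(\cO_K)}(\Proj_{\wt\Gamma}\tau, \pi)$ by relating $\Proj_{\wt\Gamma}\tau$ to the representation $\Ind_I^{\GL_2(\cO_K)}\overline W_{\chi,3}$ and its subrepresentation $\im(\phi_\tau)$, where $\chi \defeq \tau^{I_1}$. First I would recall that since $\tau \in W(\brho)$ it is $2$-generic, so all the results of \S\ref{sec:some-wtgamma-repr} apply. The key exact sequence is $0 \to \im(\phi_\tau) \to \Ind_I^{\GL_2(\cO_K)}\overline W_{\chi,3} \to \coker(\phi_\tau) \to 0$, together with the decomposition $\coker(\phi_\tau) \cong \bigoplus_{j=0}^{f-1} \coker(\phi_\tau)_j$ from Lemma~\ref{lem:coker-j}, where $\coker(\phi_\tau)_j$ is a quotient of $\Ind_I^{\GL_2(\cO_K)}\chi\alpha_j$. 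Applying $\Hom_{\GL_2(\cO_K)}(-,\pi)$ and using Shapiro together with assumption~\ref{it:assum-iv}, I would extract the dimension count from the long exact sequence.

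The main steps, in order: (1) Show $\dim_\F \Hom_{\GL_2(\cO_K)}(\Ind_I^{\GL_2(\cO_K)}\overline W_{\chi,3},\pi)$; by Frobenius reciprocity and \eqref{eq:Wbar3} this is $\dim_\F \Hom_I(\overline W_{\chi,3},\pi)$, and since $\soc_I(\overline W_{\chi,3})$ is computed via \eqref{eq:Wbar3}, one reduces to the contributions of $\chi$ and the characters $\chi'$ with $\Ext^1_{I/Z_1}(\chi',\chi)\neq 0$. (2) Show that $\Hom_{\GL_2(\cO_K)}(\coker(\phi_\tau),\pi) = 0$: for each $j$, either $\chi\alpha_j \notin \JH(\pi^{I_1})$, in which case Lemma~\ref{lem:Ext1-Serre} gives $\Hom_{\GL_2(\cO_K)}(\coker(\phi_\tau)_j,\pi) = 0$ (as $\coker(\phi_\tau)_j$ is a quotient of $\Ind_I^{\GL_2(\cO_K)}\chi\alpha_j$); or $\chi\alpha_j \in \JH(\pi^{I_1})$, but then one checks directly (using Lemma~\ref{lem:coker} that $\JH(\coker(\phi_\tau)) \cap W(\brho) = \emptyset$ and assumption~\ref{it:assum-i}, which says $\Hom_{\GL_2(\cO_K)}(\sigma,\pi)\neq 0$ iff $\sigma \in W(\brho)$) that no constituent of $\coker(\phi_\tau)_j$ lies in $W(\brho)$, hence $\Hom_{\GL_2(\cO_K)}(\coker(\phi_\tau)_j,\pi) = 0$. (3) Combine: the exact sequence $0 \to \Hom(\coker(\phi_\tau),\pi) \to \Hom(\Ind_I^{\GL_2(\cO_K)}\overline W_{\chi,3},\pi) \to \Hom(\im(\phi_\tau),\pi)$ together with step (2) gives an injection $\Hom(\Ind_I^{\GL_2(\cO_K)}\overline W_{\chi,3},\pi) \into \Hom(\im(\phi_\tau),\pi)$, and one needs the reverse. (4) Relate $\im(\phi_\tau)$ to $\Proj_{\wt\Gamma}\tau$: by definition $\phi_\tau : \Proj_{\wt\Gamma}\tau \onto \im(\phi_\tau)$, and its kernel $\ker(\phi_\tau)$ satisfies $[\ker(\phi_\tau):\tau] = 0$ (since $[\coker(\phi_\tau):\tau] = 0$ and $[\Proj_{\wt\Gamma}\tau : \tau]$ equals $[\Ind_I^{\GL_2(\cO_K)}\overline W_{\chi,3} : \tau]$ — this needs care). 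Since $\pi^{K_1} \cong D_0(\brho)$ and $D_0(\brho)$ contains $\tau$ with finite multiplicity, one argues that the constituents of $\ker(\phi_\tau)$ contributing to $\Hom(-,\pi)$ all lie outside $W(\brho)$ (the constituents of $\ker(\phi_\tau)$ that are Serre weights of $W(\brho)$ would force extra copies of $\tau$); alternatively, show $\Hom_{\GL_2(\cO_K)}(\ker(\phi_\tau), \pi) = 0$ directly, reducing $\Hom(\im(\phi_\tau),\pi) \cong \Hom(\Proj_{\wt\Gamma}\tau,\pi)$. (5) Finally compute $\dim_\F \Hom(\Proj_{\wt\Gamma}\tau,\pi)$. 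Here I expect the cleanest route is: $\Hom_{\GL_2(\cO_K)}(\Proj_{\wt\Gamma}\tau, \pi) = \Hom_{\wt\Gamma}(\Proj_{\wt\Gamma}\tau, \pi[\m_{K_1}^2])$, and by projectivity of $\Proj_{\wt\Gamma}\tau$ in the category of $\wt\Gamma$-modules this has dimension $[\pi[\m_{K_1}^2] : \tau]$ (multiplicity as Jordan--Hölder factor). Then one invokes the explicit determination of $\pi[\m_{K_1}^2]$ from \S\ref{sec:subspace-pi-k1-2} (carried out just before this lemma, using assumptions \ref{it:assum-i}, \ref{it:assum-ii}, \ref{it:assum-iv} and $2$-genericity), which should show $[\pi[\m_{K_1}^2] : \tau] = 1$ for $\tau \in W(\brho)$ (as $\tau$ occurs in $D_0(\brho) = \pi^{K_1} = \pi[\m_{K_1}]$ with multiplicity one, and the extension layer $\pi[\m_{K_1}^2]/\pi[\m_{K_1}]$ does not reintroduce weights of $W(\brho)$ by Lemma~\ref{lem:ex-wtD} / the maximality of $\wt D_0(\brho)$).

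The main obstacle I anticipate is step (4)/(5): carefully controlling which constituents of $\ker(\phi_\tau)$ (equivalently, of $\Proj_{\wt\Gamma}\tau$ beyond $\im(\phi_\tau)$) can contribute to $\Hom(-,\pi)$, and in particular verifying that the only contribution to $\Hom_{\GL_2(\cO_K)}(\Proj_{\wt\Gamma}\tau,\pi)$ comes from a single copy of $\tau$. This requires knowing that $\pi[\m_{K_1}^2]$ contains $\tau$ with multiplicity exactly one — which is precisely the content of the explicit description of $\pi[\m_{K_1}^2]$ established earlier in \S\ref{sec:subspace-pi-k1-2} — combined with the fact (Lemma~\ref{lem:coker}, Lemma~\ref{lem:ex-wtD}) that the "extra" Serre weights appearing in $\coker(\phi_\tau)$ and in $\wt D_0(\brho)/D_0(\brho)$ lie outside $W(\brho)$ and hence do not see $\pi$ via $\Hom_{\GL_2(\cO_K)}(-,\pi)$. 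Once this bookkeeping is in place, the dimension count collapses to $1$ and the result follows; an alternative, perhaps more robust, endgame is to bypass $\Proj_{\wt\Gamma}\tau$ entirely and use $\Hom_{\GL_2(\cO_K)}(\Proj_{\wt\Gamma}\tau,\pi) \cong \Hom_{\wt\Gamma}(\Proj_{\wt\Gamma}\tau, \pi[\m_{K_1}^2])$ with projectivity, so that the whole proof reduces to the single input $[\pi[\m_{K_1}^2]:\tau]=1$.
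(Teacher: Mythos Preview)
Your proposal has a fatal circularity. The identity
\[
\dim_\F \Hom_{\GL_2(\cO_K)}(\Proj_{\wt\Gamma}\tau,\pi) = \dim_\F \Hom_{\wt\Gamma}(\Proj_{\wt\Gamma}\tau,\pi[\m_{K_1}^2]) = [\pi[\m_{K_1}^2]:\tau]
\]
is correct and tautological, so the lemma is \emph{equivalent} to the statement $[\pi[\m_{K_1}^2]:\tau]=1$. But the ``explicit determination of $\pi[\m_{K_1}^2]$'' you want to invoke is Proposition~\ref{prop:ii'}, which comes \emph{after} this lemma and whose proof begins with ``It follows from Lemma~\ref{lem:HW4.20} that $[\pi[\m_{K_1}^2]:\sigma]=1$ for any $\sigma\in W(\brho)$''. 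So your step~(5) and your ``alternative endgame'' both assume what is to be proved.

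Without step~(5), your steps~(1)--(4) do not close. The exact sequences you write down give
\[
\Hom_{\GL_2(\cO_K)}(\Ind_I^{\GL_2(\cO_K)}\overline W_{\chi,3},\pi)\ \hookrightarrow\ \Hom_{\GL_2(\cO_K)}(\im(\phi_\tau),\pi)\ \hookrightarrow\ \Hom_{\GL_2(\cO_K)}(\Proj_{\wt\Gamma}\tau,\pi),
\]
so you only obtain the (trivial) lower bound $\dim\ge 1$. To upgrade the first injection to an isomorphism you would need $\Ext^1_{\GL_2(\cO_K)/Z_1}(\coker(\phi_\tau),\pi)=0$, but Lemma~\ref{lem:coker-j} gives this only for those $j$ with $\chi\alpha_j\notin\JH(\pi^{I_1})$; for the remaining $j$ it only gives vanishing against $D_0(\brho)$, not against $\pi$. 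To upgrade the second injection you would need $\Hom_{\GL_2(\cO_K)}(\ker(\phi_\tau),\pi)=0$, i.e.\ that no Serre weight of $W(\brho)$ occurs in the cosocle of $\ker(\phi_\tau)\subset\Proj_{\wt\Gamma}\tau$; you give no argument for this and it is far from obvious.

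The paper's proof is organized differently and avoids these issues. It first proves (Step~1) that $\Hom_{\GL_2(\cO_K)}(\tau,\pi/\pi[\m_{K_1}])=0$: assuming a nonsplit extension $0\to D_0(\brho)\to V\to\tau\to 0$ inside $\pi$, one uses Lemma~\ref{lem:4.18} to realize the relevant piece as a quotient of $\im(\phi_\tau)$, then lifts along a carefully chosen \emph{two-step} filtration $M_2\subset M_1\subset M=\Ind_I^{\GL_2(\cO_K)}\overline W_{\chi,3}$ (splitting $\coker(\phi_\tau)$ according to whether $\chi\alpha_j\in\JH(\pi^{I_1})$ or not, so that Lemma~\ref{lem:coker-j}(i) handles one layer against $V$ and Lemma~\ref{lem:coker-j}(ii) the other against $\pi$). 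The lifted map then factors through $\Ind_I^{\GL_2(\cO_K)}\chi$ by assumption~\ref{it:assum-ii}, forcing the image into $\pi[\m_{K_1}]$, a contradiction. Step~2 then derives a contradiction from $\dim\ge 2$ via $\Theta_\tau$ and the injectivity of $\Ext^1(\tau,U)\to\Ext^1(\tau,\pi)$ established in Step~1. The filtration trick in Step~1 is precisely what lets one get by with the mixed $\Ext^1$-vanishing available from Lemma~\ref{lem:coker-j}, and this is the ingredient missing from your outline.
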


\begin{proof}
\textbf{Step 1.} We prove that $\Hom_{\GL_2(\cO_K)}(\tau,\pi/\pi[\m_{K_1}])=0$. Suppose by contradiction that $\Hom_{\GL_2(\cO_K)}(\tau,\pi/\pi[\m_{K_1}])\neq0$. The pullback of $\tau$ gives   a subrepresentation $V\subset \pi|_{\GL_2(\cO_K)}$ which (using assumption \ref{it:assum-i}) fits into a nonsplit extension
\begin{equation}
\label{eq:non-split:Chr}
0\ra D_0(\brho)\ra V\ra \tau\ra0.
\end{equation}
Note that $V$ is a $\wt{\Gamma}$-representation but not a $\Gamma$-representation. By the projectivity of $\Proj_{\wt{\Gamma}}\tau$, there exists  a  $\wt{\Gamma}$-equivariant morphism $q:\Proj_{\wt{\Gamma}}\tau\ra V$ whose composition with $V\twoheadrightarrow \tau$ is the natural surjection $\Proj_{\wt{\Gamma}}\tau\twoheadrightarrow \tau$.   Let $V_\tau$ denote the image of $q$, which has cosocle $\tau$. Clearly $V_{\tau}$ satisfies the conditions (on $Q$) in Lemma \ref{lem:4.18}, so there exists a surjection $\im(\phi_{\tau})\onto V_\tau$ and we denote by $\beta$ the composition $\im(\phi_{\tau})\onto V_\tau \into V$.

We introduce a 3-step filtration on $M\defeq\Ind_I^{\GL_2(\cO_K)}\overline{W}_{\chi,3}$ as follows. Let $S\subset \{0,\dots,f-1\}$ be the set of indices $j$ such that $\chi\alpha_j\in \JH(\pi^{I_1})$.  Put  $M_2\defeq \im(\phi_{\tau})\subset M$ and 
\[M_1\defeq \ker\big(M\twoheadrightarrow \bigoplus_{j\notin S}\coker(\phi_{\tau})_j\big), \]
where we used \eqref{eq:coker-decomp}.
Then $0\subset M_2\subset M_1\subset M$ with 
\begin{equation}\label{eq:filt-M}
M_1/M_2\cong \bigoplus_{j\in S}\coker(\phi_{\tau})_j,\quad M/M_1\cong \bigoplus_{j\notin S}\coker(\phi_{\tau})_j.\end{equation} 

By Lemma \ref{lem:coker}, Lemma \ref{lem:coker-j}(i) and \eqref{eq:non-split:Chr}, \eqref{eq:filt-M} we have $\Ext^1_{\GL_2(\cO_K)/Z_1}(M_1/M_2,V)=0$, so the natural morphism 
$\Hom_{\GL_2(\cO_K)}(M_1,V)\ra \Hom_{\GL_2(\cO_K)}(M_2,V)$ is surjective. Thus we can lift $\beta$ to $\beta':M_1\ra V$, which we view as a morphism $\beta':M_1\ra \pi$ (as $V\subset \pi$). Next, since $\Ext^1_{\GL_2(\cO_K)/Z_1}(M/M_1,\pi)=0$ by Lemma \ref{lem:coker-j}(ii) and \eqref{eq:filt-M}, we can further lift $\beta'$ to a morphism $\beta'':M\ra \pi$. By Frobenius reciprocity, we obtain an $I$-equivariant morphism $\overline{W}_{\chi,3}\ra\pi|_I$ whose image is then contained in $\pi[\m^3]$ and has cosocle $\chi$.
 As $\tau\in W(\rhobar)$ we have $\chi\into \pi^{I_1}=\pi[\m]$. 
By assumption \ref{it:assum-ii} we conclude that  $\chi$ does not appear in $\pi[\m^3]/\pi[\m]$, hence $\overline{W}_{\chi,3}\ra\pi|_I$ factors through $\overline{W}_{\chi,3}\onto \chi\into \pi|_{I}$.
 Correspondingly,   $\beta''$ itself factors through $M\onto \Ind_I^{\GL_2(\cO_K)}\chi\ra \pi$, so $\im(\beta'')$ is contained in $\pi[\m_{K_1}]$. But this is not true by construction of $\beta''$, contradiction. 

\textbf{Step 2.} Suppose by contradiction that $\dim_{\F}\Hom_{\GL_2(\cO_K)}(\Proj_{\wt{\Gamma}}\tau,\pi)\geq 2$. By \cite[Prop.~4.18]{HuWang2}, which requires $\brho$ to be $2$-generic and condition (a) at the beginning of \cite[\S~4.3]{HuWang2} to hold, we also have $\dim_{\F}\Hom_{\GL_2(\cO_K)}(\Theta_{\tau},\pi)\geq 2$. 
 (Recall from \cite[\S~3.3]{HuWang2} that $\Theta_{\tau}$ is the smallest quotient of $\Proj_{\wt{\Gamma}}\tau/\rad^3_{\wt{\Gamma}}\big(\Proj_{\wt{\Gamma}}\tau\big)$ such that
$\big[\Proj_{\wt{\Gamma}}\tau/\rad^3_{\wt{\Gamma}}\big(\Proj_{\wt{\Gamma}}\tau\big):\tau\big]=[\Theta_{\tau}:\tau]$ 
 and that $\Theta_{\tau}$ fits into a short exact sequence
\[
0\ra \bigoplus_{\tau'}E_{\tau,\tau'}\ra \Theta_{\tau}\ra \tau\ra 0,
\]
where the direct sum is taken over the Serre weights $\tau'$  such that $\Ext^1_{\Gamma}(\tau',\tau)\neq0$; see \cite[Cor.~3.16]{HuWang2}.)
Thus,  there exists a $\GL_2(\cO_K)$-equivariant morphism $\gamma:\Theta_{\tau}\ra \pi$ which does not factor through the cosocle of $\Theta_\tau$. Since $\pi[\m_{K_1}]\cong D_0(\brho)$ is multiplicity free and $\tau\cong \cosoc_{\wt{\Gamma}}(\Theta_\tau)$ occurs in $\soc_{\GL_2(\cO_K)}(\pi)$, we deduce that $\im(\gamma)$ is not contained in $\pi[\m_{K_1}]$. However, $\rad_{\wt{\Gamma}}(\Theta_\tau)$ is annihilated by $\m_{K_1}$ (by \cite[Cor.~3.16]{HuWang2}), so the image $U$ of $\rad_{\wt{\Gamma}}(\Theta_\tau)$ is contained in $\pi[\m_{K_1}]$. The inclusions $U\subset \pi[\m_{K_1}]\subset \pi$ induce natural maps
\begin{equation}
\label{eq:ext:vanish}
\Ext^1_{\GL_2(\cO_K)/Z_1}(\tau,U)\ra \Ext^1_{\GL_2(\cO_K)/Z_1}(\tau,\pi[\m_{K_1}])\ra \Ext^1_{\GL_2(\cO_K)/Z_1}(\tau,\pi).
\end{equation}
The first map is injective, because  by assumption \ref{it:assum-i} either $\Hom_{\GL_2(\cO_K)}(\tau,\pi[\m_{K_1}]/U)=0$ (if $\tau\in\JH(U)$) or  the map $\Hom_{\GL_2(\cO_K)}(\tau,\pi[\m_{K_1}])\ra \Hom_{\GL_2(\cO_K)}(\tau,\pi[\m_{K_1}]/U)$ is surjective (if $\tau\notin \JH(U)$). The second map is also injective by Step 1. However, viewing $\im(\gamma)$ as a (non-zero) element in $\Ext^1_{\GL_2(\cO_K)/Z_1}(\tau,U)$, which is sent to $0$ via the natural map $\Ext^1_{\GL_2(\cO_K)/Z_1}(\tau,U)\ra\Ext^1_{\GL_2(\cO_K)/Z_1}(\tau,\im(\gamma))$, we conclude that $\im(\gamma)$ is sent to zero via \eqref{eq:ext:vanish} since the latter factors through $\Ext^1_{\GL_2(\cO_K)/Z_1}(\tau,U)\ra \Ext^1_{\GL_2(\cO_K)/Z_1}(\tau,\im(\gamma))$.
This gives the desired contradiction.
\end{proof}

\begin{prop}\label{prop:ii'}
  Suppose that $\pi$ satisfies assumptions \ref{it:assum-i}, \ref{it:assum-ii} and \ref{it:assum-iv} {with a 2-generic underlying $\brho$}. Then 
\begin{equation}\label{eq:m2=wtD}
\pi[\fm_{K_1}^2]\cong\wt{D}_0(\brho).
\end{equation}
\end{prop}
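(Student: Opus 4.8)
The plan is to establish the two embeddings $\pi[\fm_{K_1}^2]\hookrightarrow \wt{D}_0(\brho)$ (the easy direction) and $\wt{D}_0(\brho)\hookrightarrow \pi$ (the hard direction). Granting these: the image of the second embedding is a $\wt\Gamma$-subrepresentation of $\pi$, hence lies in $\pi[\fm_{K_1}^2]$, so composing with the first embedding we obtain an injective endomorphism of the finite-length module $\wt{D}_0(\brho)$, which is therefore an isomorphism. It follows that $\pi[\fm_{K_1}^2]\hookrightarrow \wt{D}_0(\brho)$ is also onto, which is~\eqref{eq:m2=wtD}.

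For the easy direction, first note that by assumption~\ref{it:assum-i}, $\soc_{\GL_2(\cO_K)}(\pi)=\soc_{\GL_2(\cO_K)}(\pi^{K_1})=\soc_{\GL_2(\cO_K)}(D_0(\brho))=\bigoplus_{\sigma\in W(\brho)}\sigma$, and since $\pi^{K_1}=\pi[\fm_{K_1}]\subset\pi[\fm_{K_1}^2]$ we deduce $\soc_{\wt\Gamma}(\pi[\fm_{K_1}^2])=\bigoplus_{\sigma\in W(\brho)}\sigma$. Next, for $\sigma\in W(\brho)$,
\[ [\pi[\fm_{K_1}^2]:\sigma]=\dim_\F\Hom_{\wt\Gamma}(\Proj_{\wt\Gamma}\sigma,\pi[\fm_{K_1}^2])=\dim_\F\Hom_{\GL_2(\cO_K)}(\Proj_{\wt\Gamma}\sigma,\pi)=1 \]
by Lemma~\ref{lem:HW4.20} (the image of a $\GL_2(\cO_K)$-morphism out of the $\wt\Gamma$-module $\Proj_{\wt\Gamma}\sigma$ is automatically annihilated by $\fm_{K_1}^2$). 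Now embed $\pi[\fm_{K_1}^2]$ into the injective hull $\bigoplus_{\sigma\in W(\brho)}\Inj_{\wt\Gamma}\sigma$ of its socle; projecting to the summand $\Inj_{\wt\Gamma}\sigma_0$, the image $M_{\sigma_0}$ has socle $\sigma_0$, and using $[\pi[\fm_{K_1}^2]:\sigma']=1$ for all $\sigma'\in W(\brho)$ together with the injection $\pi[\fm_{K_1}^2]\hookrightarrow\prod_{\sigma}M_\sigma$ one checks that $\sigma_0$ occurs in $M_{\sigma_0}$ with multiplicity one and no other weight of $W(\brho)$ occurs. Hence $M_{\sigma_0}\subset\wt{D}_{0,\sigma_0}(\brho)$ by definition of the latter, and so $\pi[\fm_{K_1}^2]\hookrightarrow\bigoplus_{\sigma_0}M_{\sigma_0}\subset\wt{D}_0(\brho)$.

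For the hard direction it suffices to produce, for each $\tau\in W(\brho)$, an embedding $\wt{D}_{0,\tau}(\brho)\hookrightarrow\pi$: since $\wt{D}_{0,\tau}(\brho)$ contains no Serre weight of $W(\brho)$ other than $\tau$ while $\wt{D}_{0,\sigma}(\brho)$ has socle $\sigma$, any such embeddings have pairwise trivial intersection and therefore assemble into $\wt{D}_0(\brho)=\bigoplus_{\sigma}\wt{D}_{0,\sigma}(\brho)\hookrightarrow\pi$. To construct $\wt{D}_{0,\tau}(\brho)\hookrightarrow\pi$ one starts from the tautological embedding $D_{0,\tau}(\brho)\hookrightarrow D_0(\brho)=\pi^{K_1}\subset\pi$ and enlarges it across the extensions building up $\wt{D}_{0,\tau}(\brho)/D_{0,\tau}(\brho)$. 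By Lemma~\ref{lem:4.18} the relevant $\wt\Gamma$-representations are quotients of $\im(\phi_\tau)\subset M\defeq\Ind_I^{\GL_2(\cO_K)}\overline{W}_{\chi,3}$ (with $\chi=\chi_\tau$), and one lifts maps to $\pi$ by d\'evissage along the three-step filtration $0\subset\im(\phi_\tau)\subset M_1\subset M$ used in the proof of Lemma~\ref{lem:HW4.20}: the groups $\Ext^1_{\GL_2(\cO_K)/Z_1}(M_1/\im(\phi_\tau),D_0(\brho))$ and $\Ext^1_{\GL_2(\cO_K)/Z_1}(M/M_1,\pi)$ vanish by Lemma~\ref{lem:coker}, Corollary~\ref{cor:coker-ext1} and Lemma~\ref{lem:coker-j}(i),(ii), so a map out of $\im(\phi_\tau)$ extends first to $M_1$ (landing in $D_0(\brho)$) and then to $M$ (landing in $\pi$), after which Frobenius reciprocity and assumption~\ref{it:assum-ii} control the behaviour modulo $\pi[\fm_{K_1}]$, exactly as in Step~1 of that proof.

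The main obstacle is precisely this construction. One cannot extend $D_{0,\tau}(\brho)\hookrightarrow\pi$ one composition factor at a time, because $\Ext^1_{\GL_2(\cO_K)/Z_1}(\tau',\pi)$ need not vanish for the Serre weights $\tau'\in\JH(\wt{D}_{0,\tau}(\brho))\setminus\{\tau\}$ (cf.\ the remark following Lemma~\ref{lem:Ext1-Serre}; indeed many such $\tau'$ have $\chi_{\tau'}\in\JH(\pi^{I_1})$, so Lemma~\ref{lem:Ext1-Serre} does not apply). One is therefore forced to realise the required extensions globally inside $M=\Ind_I^{\GL_2(\cO_K)}\overline{W}_{\chi,3}$, whose filtration is arranged so that only $\Ext^1$'s that genuinely vanish are needed, and to transport maps back via Frobenius reciprocity — the same delicate bookkeeping of which characters $\chi\alpha_j$ do or do not lie in $\JH(\pi^{I_1})$ that already underlies Lemma~\ref{lem:coker-j} and the proof of Lemma~\ref{lem:HW4.20}.
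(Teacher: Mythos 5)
Your proposal takes a genuinely different route from the paper and, in its present form, contains a gap in the ``hard'' direction.

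The paper does \emph{not} construct an embedding $\wt{D}_0(\brho)\hookrightarrow\pi$. It first observes (exactly as in your easy direction, though the paper deduces $\pi[\fm_{K_1}^2]\subset \wt{D}_0(\brho)$ directly from the maximality characterisation of $\wt{D}_0(\brho)$ rather than via injective hulls) that $\pi[\fm_{K_1}^2]\subset\wt{D}_0(\brho)$, then argues by contradiction: if the inclusion were strict, pick a Serre weight $\tau\hookrightarrow \wt{D}_0(\brho)/\pi[\fm_{K_1}^2]$ and a subrepresentation $V_\tau\subset\wt{D}_0(\brho)$ with cosocle $\tau$ mapping onto it. Since $D_0(\brho)\subset\pi[\fm_{K_1}^2]$ and $\wt{D}_0(\brho)$ is multiplicity free, one has $\tau\notin\JH(D_0(\brho))$, hence $\chi_\tau\notin\JH(\pi^{I_1})$ (by \eqref{eq:JH-inclusion}: $\chi\in\JH(\pi^{I_1})$ forces $\JH(\Ind_I^{\GL_2(\cO_K)}\chi)\subset\JH(D_0(\brho))$). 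Lemma~\ref{lem:Ext1-Serre} then kills $\Hom$ and $\Ext^1$ of $\tau$ against $\pi$, so $\Hom(V_\tau,\pi)\simeq\Hom(\rad_{\wt\Gamma}(V_\tau),\pi)$; lifting the inclusion $\rad_{\wt\Gamma}(V_\tau)\subset\pi[\fm_{K_1}^2]$ produces $V_\tau\hookrightarrow\pi[\fm_{K_1}^2]$, contradicting multiplicity freeness. No use of Lemma~\ref{lem:4.18} or of the three-step filtration is needed.

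Two specific problems with your hard direction. First, your worry that ``many such $\tau'$ have $\chi_{\tau'}\in\JH(\pi^{I_1})$'' is incorrect for the factors you actually need to extend across. What must be handled is $\JH\bigl(\wt{D}_{0,\tau}(\brho)/D_{0,\tau}(\brho)\bigr)$, not $\JH(\wt{D}_{0,\tau}(\brho))\setminus\{\tau\}$; and every factor of $\wt{D}_0(\brho)/D_0(\brho)$ (hence of $\wt{D}_{0,\tau}(\brho)/D_{0,\tau}(\brho)$, by multiplicity freeness) lies outside $\JH(D_0(\brho))$, hence has $\chi_{\tau'}\notin\JH(\pi^{I_1})$ by the same \eqref{eq:JH-inclusion} argument. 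So the ``one composition factor at a time'' d\'evissage via Lemma~\ref{lem:Ext1-Serre} that you dismiss actually works (and yields injectivity at the end because $\soc_{\wt\Gamma}(\wt{D}_0(\brho))\subset D_0(\brho)$, forcing any kernel to be zero). Second, the alternative machinery you invoke does not fit the objects at hand: Lemma~\ref{lem:4.18} and the filtration $0\subset\im(\phi_\tau)\subset M_1\subset M$ concern \emph{quotients} of $\Proj_{\wt\Gamma}\tau$ (cosocle $\tau$), whereas $\wt{D}_{0,\tau}(\brho)$ and its subrepresentations are \emph{sub}representations of $\Inj_{\wt\Gamma}\tau$ (socle $\tau$), and you do not have any map out of $\im(\phi_\tau)$ to feed into that d\'evissage. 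In short: your overall two-inclusion strategy is viable, but the reason you thought the simple step-by-step lift fails is mistaken, and the replacement you propose would not compile; the paper's contradiction argument sidesteps both issues in a few lines.
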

\begin{proof} 
It follows from Lemma \ref{lem:HW4.20} that $[\pi[\m_{K_1}^2]:\sigma]=1$ for any $\sigma\in W(\brho)$ (recall that $[\pi[\m_{K_1}^2]:\sigma]$ is precisely the dimension of $\Hom_{\GL_2(\cO_K)}(\Proj_{\widetilde{\Gamma}}\sigma,\pi)$). From the construction of $\wt{D}_{0}(\brho)$ we deduce an inclusion $\pi[\fm_{K_1}^2]\subset \wt{D}_0(\brho)$. Suppose the inclusion is strict, and choose a Serre weight $\tau\into \wt{D}_0(\brho)/\pi[\fm_{K_1}^2]$. Let $V_{\tau}\subset \wt{D}_0(\brho)$ be a subrepresentation with cosocle $\tau$ and such that the composition $V_{\tau}\into \wt{D}_0(\brho)\onto \wt{D}_0(\brho)/\pi[\fm_{K_1}^2]$ coincides with the chosen inclusion $\tau\into \wt{D}_0(\brho)/\pi[\fm_{K_1}^2]$.
As $D_0(\brho)\subset \pi[\fm_{K_1}^2]$ by assumption \ref{it:assum-i}, we have  $\tau\in \JH(\wt{D}_0(\brho))\setminus \JH(D_0(\brho))$, so in particular $\chi_{\tau}\notin \JH(\pi^{I_1})$. Applying $\Hom_{\GL_2(\cO_K)/Z_1}(-,\pi)$ to $0\ra \rad_{\wt{\Gamma}}(V_{\tau})\ra V_{\tau}\ra \tau\ra0$ and using Lemma \ref{lem:Ext1-Serre}, we obtain an isomorphism
\[\Hom_{\GL_2(\cO_K)}(V_{\tau},\pi)\simto \Hom_{\GL_2(\cO_K)}(\rad_{\wt{\Gamma}}(V_{\tau}),\pi).\]
Thus, the natural inclusion $\rad_{\wt{\Gamma}}(V_{\tau})\subset\pi$ lifts to an embedding $V_{\tau}\into \pi$, whose image is contained in $\pi[\m_{K_1}^2]$ as $V_{\tau}$ is annihilated by $\m_{K_1}^2$. This gives a contradiction as $\tau\notin\JH(\pi[\fm_{K_1}^2])$ ($\wt{D}_0(\brho)$ being multiplicity free). 
\end{proof}

\section{On the Hilbert series of \texorpdfstring{$\pi$}{pi}}
\label{sec:hilbert-polynomial}

Let $\pi$ be a smooth mod $p$ representation of $\GL_2(K)$ over $\F$ satisfying assumptions \ref{it:assum-i}, \ref{it:assum-ii} and \ref{it:assum-iv} of \S~\ref{sec:abstract-setting}. %
In this section we compute the Hilbert series of $\gr_{\m}(\pi^{\vee})$.

If $M=\bigoplus_{n\leq 0} M_n$ is a graded $\F$-vector space {with $\dim_{\F}M_n<+\infty$ for all $n$}, we define the \emph{Hilbert series} 
\[h_M(t)\defeq \sum_{n\geq 0} \dim_{\F}(M_{-n})t^n\in \Z\bbra{t}.\]
{In particular, if $\Pi$ is any admissible smooth representation of $\GL_2(K)$ the Hilbert series $h_\Pi(t) \defeq h_{\gr_\m(\Pi^\vee)}(t) \in \Z\bbra t$ is defined.}
\begin{thm1}\label{thm:Hilbert}\
{Assume that $\brho$ is $9$-generic and that $\pi$ satisfies assumptions \ref{it:assum-i}, \ref{it:assum-ii} and \ref{it:assum-iv} of \S~\ref{sec:abstract-setting}.}
\begin{enumerate}
\item If $\brho$ is irreducible, then $\displaystyle h_{{\pi}}(t)=\frac{(3+t)^f}{(1-t)^f}-1$. %
\item If $\brho$ is split reducible, then $\displaystyle h_{{\pi}}(t)=\frac{(3+t)^f}{(1-t)^f}+1$.
\item If $\brho$ is nonsplit reducible and $d_{\brho}\defeq |J_{\brho}|$ (so $d_{\brho} < f$, see \eqref{eq:P} for $J_{\brho}$), then $\displaystyle h_{{\pi}}(t)=2^{f-d_{\brho}}\cdot   \frac{(1+t)^{f-d_{\brho}}(3+t)^{d_{\brho}}}{(1-t)^f}$.
\end{enumerate}
\end{thm1}

\begin{rem1}
  {Note that the denominator of $h_\pi(t)$ equals $(1-t)^f$ expresses the fact that the Gelfand--Kirillov dimension of $\pi$ equals $f$. 
    (By \cite[Lemma 5.1.3]{BHHMS1}, the Gelfand--Kirillov dimension of $\pi$ equals the dimension of $\gr_\m(\pi^\vee)$ as an $\o R$-module, hence equals the dimension of $(\gr_\m(\pi^\vee))_{\o\m}$ as an $\o R_{\o\m}$-module by \cite[Ex.\ 1.5.25]{BH93}, hence equals the exponent of $(1-t)$ in the denominator of $h_\pi(t)$ by \cite[Thms.\ 13.2, 13.4]{Ma}, cf.\ the discussion on \cite[p.\ 97]{Ma}.)}
\end{rem1}

\begin{rem1}\label{rem:Hilbert-pi}
 Note that if we put $t = 0$ we recover the dimension formula for $D_0(\brho)^{I_1} = \pi^{I_1}$ in \cite[Thm.\ 1.1]{BP}.
\end{rem1}

\begin{proof}
{We first recall that, under our assumptions, by~\cite[Thm.~\ref{bhhms4:thm:CMC}]{BHHMS4} we have 
  \[\gr_{\m}(\pi^{\vee}) \cong N \defeq \bigoplus_{\lambda\in\P}\chi_{\lambda}^{-1}\otimes R/\fa(\lambda),
  \] 
  where $\mathfrak{a}(\lambda)$ is the ideal of $R$ associated to $\lambda\in\P$ in \eqref{eq:id:al}.
  It remains to determine $h_N(t)$.}

{We note the following elementary but useful formulas.
First, if $M, M'$ are two graded $\F$-vector spaces, then 
\begin{equation}\label{eq:Hilbert-tensor}h_{M\otimes M'}(t)=h_M(t)h_{M'}(t).\end{equation}
Second, we have for any integer $n\geq 0$:
\begin{align}
\label{eq:binom1}\frac{1}{2}\Big[(2+x)^n-(2-x)^n\Big]&=\sum_{0\leq i\leq n,\ i\ \mathrm{odd}}\binom{n}{i} 2^{n-i}x^i,\\
\label{eq:binom2}\frac{1}{2}\Big[(2+x)^n+(2-x)^n\Big]&=\sum_{0\leq i\leq n,\ i\ \mathrm{even}}\binom{n}{i}2^{n-i}x^i.
\end{align}
}

By definition of $N$, we have $h_{N}(t)=\sum_{\lambda\in\P} h_{R/\mathfrak{a}(\lambda)}(t)$. Recalling $\mathfrak{a}(\lambda)=(t_j; 0\leq j\leq f-1)$ with $t_j\in\{y_j,z_j,y_jz_j\}$ and noting that \[h_{\F[y_i]}(t)=h_{\F[z_i]}(t)=1/(1-t),\ \ h_{\F[y_i,z_i]/(y_iz_i)}(t)=(1+t)/(1-t),\] we obtain by \eqref{eq:Hilbert-tensor} that 
\[h_{R/\mathfrak{a}(\lambda)}(t)=\frac{(1+t)^{|\mathcal{A}(\lambda)|}}{(1-t)^f},\]
where $\mathcal{A}(\lambda)\defeq \{j: t_j=y_jz_j\}$. Hence, we are reduced to counting the cardinality of $\lambda\in \P$ such that $|\mathcal{A}(\lambda)|=s$ for a given $0\leq s\leq f$. 

(i) Given $\lambda\in \P$, we define an element $\overline{\lambda}\in \D$ as follows:
\[\overline{\lambda}_0(x_0)\defeq
\begin{cases}
  x_0-1 & \text{if $\lambda_0(x_0)\in\{x_0-1,x_0+1\}$},\\
  p-2-x_0 & \text{if $\lambda_0(x_0)\in\{p-2-x_0,p-x_0\}$},\\
  \lambda_0(x_0) & \text{otherwise}
\end{cases}
\] 
and if $j\neq 0$,
\[\overline{\lambda}_j(x_j)\defeq 
\begin{cases}
  x_j & \text{if $\lambda_j(x_j)\in\{x_j,x_j+2\}$},\\
  p-3-x_j & \text{if $\lambda_j(x_j)\in\{p-1-x_j,p-3-x_j\}$},\\
  \lambda_j(x_j) & \text{otherwise}.
\end{cases}
\]
It is easy to see that $\o{\lambda}\in \D$. By \cite[Def.~3.57]{BHHMS2},  we have $t_j=y_jz_j$  if and only if $\lambda_j(x_j)\in \{x_j+1,p-2-x_j\}$ if $j\neq 0$ (resp. $\lambda_0(x_0)\in \{x_0,p-1-x_0\}$), thus $\mathcal{A}(\lambda)=\mathcal{A}(\o{\lambda})$.  On the other hand, given  $\overline{\lambda}\in \D$, there exist exactly  $2^{|\{0,\dots,f-1\}\setminus \mathcal{A}(\overline{\lambda})|}$ elements $\lambda\in \P$ giving rise to $\overline{\lambda}$. As a consequence we have  
$h_N(t)=Q_N(t)/(1-t)^f$ with
 \[Q_N(t)\defeq \sum_{0\leq s\leq f,\ s\ \mathrm{odd}} 2^{f-s}\cdot 2\binom{f}{s}(1+t)^s=(3+t)^f-(1-t)^f, 
 \]
{where the first equality follows from  Lemma \ref{lem:Hilbert-count} below and the second from  \eqref{eq:binom1} (with $x=1+t$).}  
The result follows. 

(ii) The proof is similar to (i) using Lemma \ref{lem:Hilbert-count}(ii) below and \eqref{eq:binom2}.

(iii) 
Let $\overline{\P}\subset \P$ be the subset introduced in the proof of \cite[Prop.~3.61]{BHHMS2}, namely $\lambda\in\overline{\P}$ if and only if 
\begin{equation}\label{eq:overline-P}
\lambda_j(x_j)\in\{x_j,x_j+1,p-1-x_j,p-2-x_j,p-3-x_j\}\end{equation}
and $\lambda_j(x_j)=p-1-x_j$ implies $j\notin J_{\brho}$ (recall from~\eqref{eq:P} that $\lambda_j(x_j)=p-3-x_j$ implies $j\in J_{\brho}$). In the proof of \cite[Prop.~3.61]{BHHMS2} a map $\P\to\overline{\P}$, $\lambda\mapsto \overline{\lambda}$ is defined, which satisfies $\mathcal{A}(\lambda)=\mathcal{A}(\overline{\lambda})$ and for any $\o\lambda\in\overline{\P}$, there exist exactly $2^{|\{0,\dots,f-1\}\setminus \mathcal{A}(\overline{\lambda})|}$ elements $\lambda$ in $\P$ giving rise to $\overline{\lambda}$.
Using Lemma \ref{lem:Hilbert-count}(iii) below (with $|\mathcal{A}(\lambda)|=f-d_{\brho}+s$), we then obtain $h_N(t)=Q'_N(t)/(1-t)^f$, where 
\[Q'_N(t)\defeq \sum_{0\leq s\leq d_{\brho}}2^{d_{\brho}-s}\cdot 2^{f-d_{\brho}}\binom{d_{\brho}}{s}(1+t)^{(f-d_{\brho})+s}=2^{f-d_{\brho}}(1+t)^{f-d_{\brho}}  (3+t)^{d_{\brho}}\] 
(recall $d_{\brho}= |J_{\brho}|$), proving the result.
\end{proof}

\begin{rem1}
{We note that our proof determines $h_N(t)$ in each case without any genericity conditions on $\brho$.}%
\end{rem1}

\begin{lem1}\label{lem:Hilbert-count}\ 
\begin{enumerate}
\item If $\brho$ is irreducible, then $|\mathcal{A}(\lambda)|$ is odd for all $\lambda\in\D$. For any subset $J\subset\{0,\dots,f-1\}$ with $|J|$ odd, there exist exactly $2$ elements $\lambda\in\D$ such that $\mathcal{A}(\lambda)=J$. As a consequence, for any $0\leq s\leq f$ which is odd, the set of  $\lambda\in \D$ with $|\mathcal{A}(\lambda)|=s$ has cardinality $2\binom{f}{s}$. 
 \item If $\brho$ is split reducible, then $|\mathcal{A}(\lambda)|$ is even for all $\lambda\in\D$. For any subset $J\subset\{0,\dots,f-1\}$ with $|J|$ even, there exist exactly $2$ elements $\lambda\in\D$ such that $\mathcal{A}(\lambda)=J$. As a consequence, for any $0\leq s\leq f$ which is even, the set of  $\lambda\in \D$ with $|\mathcal{A}(\lambda)|=s$ has cardinality $2\binom{f}{s}$.
\item If $\brho$ is nonsplit reducible, then $J_{\brho}^c\subset \mathcal{A}(\lambda)$ for any $\lambda\in\overline{\P}$ (where $\o\P$ is defined in the proof of Theorem \ref{thm:Hilbert}(iii)),  and for any $J\subset J_{\brho}$ the set of $\lambda\in\overline{\P}$ with $\mathcal{A}(\lambda)=J\sqcup J_{\brho}^c$ has cardinality $2^{f-d_{\brho}}$. In particular, we always have $f-d_{\brho}\leq |\mathcal{A}(\lambda)|\leq f$, and for any $0\leq s\leq d_{\brho}$ the set of $\lambda\in\overline{\P}$ with $|\mathcal{A}(\lambda)|=f-d_{\brho}+s$ has cardinality $2^{f-d_{\brho}}\binom{d_{\brho}}{s}$.
\end{enumerate}
\end{lem1}
\begin{proof}
(i) By the definition of  $\mathcal{A}(\lambda)$, we have 
\begin{equation}\label{eq:A-irred}\mathcal{A}(\lambda)=  \{j: \lambda_j(x_j)\in\{x_j+1,p-2-x_j\}\ \mathrm{if}\ j\neq 0,\ \mathrm{or}\ \lambda_0(x_0)\in \{x_0,p-1-x_0\}\ \mathrm{if}\ j=0\}.\end{equation}
By definition of $\D$ (see \cite[\S~11]{BP}) and of $\mathcal{A}(\lambda)$, we check that $\lambda_{j}(x_{j})$ is determined by $\lambda_{j-1}(x_{j-1})$ and the value of $\mathbf{1}_{\mathcal{A}(\lambda)}(j)$ for any $j$.  
  For example, if $\lambda_0(x_0)=x_0$  and $f \ge 2$, then $\lambda_1(x_1)=x_1$ (resp.~$\lambda_1(x_1)=p-2-x_1$) if $1\notin \mathcal{A}(\lambda)$ (resp.~$1\in \mathcal{A}(\lambda)$). 
This implies that $\lambda\in \D$ is determined by $\lambda_0(x_0)$ and $\mathcal{A}(\lambda)$. Moreover, one checks that:
\begin{itemize}
\item $|\mathcal{A}(\lambda)\cap \{1,\dots,f-1\}|$ is even if $\lambda_0(x_0)\in \{x_0,p-1-x_0\}$ (by showing that $|\{j\neq 0:\lambda_j(x_j)=p-2-x_j\}|=|\{j\neq0:\lambda_j(x_j)=x_j+1\}|$), and is odd if $\lambda_0(x_0)\in \{x_0-1,p-2-x_0\}$ (by showing that $|\{j\neq 0:\lambda_j(x_j)=p-2-x_j\}|=|\{j\neq0:\lambda_j(x_j)=x_j+1\}|\pm 1$). As a consequence, $|\mathcal{A}(\lambda)|$ is always odd by \eqref{eq:A-irred}. 
 \item Conversely, if $\lambda_0(x_0)\in \{x_0,p-1-x_0\}$ (resp.~$\lambda_0(x_0)\in\{x_0-1,p-2-x_0\}$) and $J\subset \{1,\dots,f-1\}$ is even (resp.~odd), then there exists a unique $\lambda\in \D$ with given value at $j=0$ and such that $J=\mathcal{A}(\lambda)\cap \{1,\dots,f-1\}$.  
\end{itemize}
Thus, for any $J\subset \{0,\dots,f-1\}$ with $|J|$ odd, there exist exactly two $\lambda\in \D$ with $\mathcal{A}(\lambda)=J$. The result follows from this.   

(ii) The proof is similar to (and simpler than) (i). 
In this case, one has $\mathcal{A}(\lambda)=\{j:\lambda_j(x_j)\in\{x_j+1,p-2-x_j\}\}$ and it follows directly from the definition of $\D$ that the subsets $|\{j:\lambda_j(x_j)=x_j+1\}|$ and $|\{j:\lambda_j(x_j)=p-2-x_j\}|$ of $\Z/f\Z$ are interlaced, i.e.\ between any two distinct elements of one subset there exists an element of the other, and hence of the same cardinality.

(iii) %
By the proof of \cite[Lemma~3.59]{BHHMS2}, there is a bijection between $\overline{\P}$ and $\D^{\rm ss}$ as follows: $\lambda\in \overline{\P}$ corresponds to $\mu\in \D^{\ss}$ defined by
\[\mu_j(x_j)\defeq
\begin{cases}
  p-3-x_j & \text{if $\lambda_j(x_j)=p-1-x_j$},\\
  \lambda_j(x_j) & \text{otherwise}.
\end{cases}
\] 
One checks that $\mathcal{A}(\lambda)=(\mathcal{A}(\mu)\cap J_{\brho})\sqcup J_{\brho}^c$,  so in particular $J_{\brho}^c\subset \mathcal{A}(\lambda)$.
Thus for a given $J\subset J_{\brho}$, 
\[|\{\lambda\in\overline{\P}:\mathcal{A}(\lambda)=J\sqcup J_{\brho}^c\}|=|\{\mu\in\D^{\ss}: \mathcal{A}(\mu)\cap J_{\brho}=J\}|,\]
where $\mathcal{A}(\mu)$ is formed with respect to $\brho^{\ss}$. 
If $|J|$ is even, then for any $J'\subset J_{\brho}^c$ with $|J'|$ being even, there exist exactly $2$ elements $\mu\in\D^{\ss}$ such that $\mathcal{A}(\mu)=J\sqcup J'$ by (ii), so  the cardinality of $\mu\in\D^{\ss}$ satisfying $\mathcal{A}(\mu)\cap J_{\brho}=J$ is  $\sum_{0\leq i\leq f-d_{\brho},\ i\ \mathrm{even}}2\binom{f-d_{\brho}}{i}=2^{f-d_{\brho}}$.   Similarly, if $|J|$ is odd, then the cardinality of $\mu\in\D^{\ss}$ satisfying $\mathcal{A}(\mu)\cap J_{\brho}=J$ is  $\sum_{0\leq i\leq f-d_{\brho},\ i\ \mathrm{odd}}2\binom{f-d_{\brho}}{i}=2^{f-d_{\brho}}$.  This proves the second statement and the last one easily follows.
\end{proof}

In the rest of this section, we assume that $\brho$ is split reducible. %
For $\lambda\in \P$, recall the set $J_{\lambda}\subset\{0,\dots,f-1\}$  defined in \eqref{eq:J-lambda}.
For $i\in\{0,\dots,f\}$ put
\[N_{(i)}\defeq\bigoplus_{\lambda\in \P, |J_\lambda| = i}\chi_{\lambda}^{-1}\otimes R/\fa(\lambda).\] 
The following result computes the Hilbert series of $N_{(i)}$.%

\begin{prop1}\label{prop:Hilbert-split}
Assume $\brho$ is split reducible. Then for any $0\leq i\leq f$, 
\[h_{N_{(i)}}(t)=\frac{2\sum\limits_{0\leq s\leq i}\binom{f}{2s}\binom{f-2s}{i-s}(1+t)^{2s}}{(1-t)^f}. \] 
\end{prop1}
\begin{rem1}
  Together with \cite[Cor.~\ref{bhhms4:cor:finite-length}]{BHHMS4}(ii), %
  Proposition \ref{prop:Hilbert-split} gives the Hilbert series {$h_{\pi'}(t)$} %
  for any subquotient $\pi'$ of $\pi$  if $\brho$ is split reducible {and $\max\{9,2f+1\}$-generic}.
\end{rem1}

\begin{proof}
Since $\brho$ is split reducible,  $|\mathcal{A}(\lambda)|=2|\{j:\lambda_j(x_j)=x_j+1\}|$  by the proof of Lemma \ref{lem:Hilbert-count}(ii). Since $\{j:\lambda_j(x_j)=x_j+1\}\subset J_{\lambda}$, we deduce  $|\mathcal{A}(\lambda)|/2\leq |J_{\lambda}|$. Fix $0\leq i\leq f$. As in the proof of Theorem \ref{thm:Hilbert}, we have 
\[h_{N_{(i)}}(t)=\frac{\sum_{0\leq s\leq i}|\P_{i,s}|(1+t)^{2s}}{(1-t)^f}\]
where 
$\P_{i,s}\defeq\{\lambda\in \P: |J_{\lambda}|=i, |\mathcal{A}(\lambda)|=2s\}.$ Thus, it suffices to show \[|\P_{i,s}|=2\sum_{0\leq s\leq i}\binom{f}{2s}\binom{f-2s}{i-s}.\]

Let $\lambda\in \P_{i,s}$ (with $0\leq s\leq i\leq f$) and write  $\mathcal{A}(\lambda)=\{0 \le j_1 < j_1' < \dots < j_{s} < j_s' < f\}$. Assume first $\lambda_{j_1}(x_{j_1})=x_{j_1}+1$; we call it case $+$. Then one checks that $\lambda$ is uniquely determined by $(\mathcal{A}(\lambda),J_{\lambda}\setminus \mathcal{A}(\lambda))$ as follows:
\begin{itemize}
\item%
$\lambda_{j_k}=x_{j_k}+1$ and $\lambda_{j_k'}(x_{j_k'})=p-2-x_{j_k'}$ for $1\leq k\leq s$  by the proof of Lemma \ref{lem:Hilbert-count}(ii);
\item%
if $j_k<j<j_k'$ for some $k$, then $\lambda_j(x_j)\in \{x_j,x_j+2\}$, and $\lambda_j(x_j)=x_j+2$ if and only if $j\in J_{\lambda}\setminus \mathcal{A}(\lambda)$;
\item%
if $j_k'<j<j_{k+1}$ for some $k$ (in $\Z/f\Z$), then $\lambda_j(x_j)\in \{p-1-x_j,p-3-x_j\}$, and $\lambda_j(x_j)=p-3-x_j$ if and only if $j\in J_{\lambda}\setminus \mathcal{A}(\lambda)$.
\end{itemize}
Conversely, an element $\lambda\in\P$ satisfying the above conditions belongs to $\P_{i,s}$ with $i=|\{j:\lambda_j(x_j)=\{x_j+1,x_j+2,p-3-x_j\}\}|$. Similar statements hold if $\lambda_{j_1}(x_{j_1})=p-2-x_{j_1}$; we call it case $-$.

The above discussion implies that sending $\lambda$ to $\big(\mathcal{A}(\lambda), \mathrm{case}\ \pm, J_{\lambda}\setminus  \mathcal{A}(\lambda)
\big)$ gives a bijection between $\P_{i,s}$ and the set of triples $(J,\pm,J')$ satisfying
\[J\subset \{0,\dots,f-1\},\quad |J|=2s,\quad J'\subset J^c,\quad |J'|=i-s.\]
Thus $|\P_{i,s}|=2 \sum_{0\leq s\leq i}\binom{f}{2s}\binom{f-2s}{i-s}$ as desired.
\end{proof}

\section{On the structure of subquotients of \texorpdfstring{$\pi$}{pi} in the semisimple case}
\label{sec:subquot-ss}

We determine the $\m_{K_1}^2$-torsion of any subquotient of $\pi$, where $\pi$ is any smooth mod $p$ representation of $\GL_2(K)$ satisfying assumptions \ref{it:assum-i}--\ref{it:assum-iv} of \S~\ref{sec:abstract-setting} and the underlying Galois representation $\rhobar:\Gal(\o K/K)\ra\GL_2(\F)$ is semisimple and {sufficiently} generic.
By \cite[Cor.\ 3.90]{BHHMS2} and Proposition~\ref{prop:ii'} we may and will assume that $\brho$ is split reducible.

\begin{prop1}\label{prop:K1-square-invariants}
Assume that $\brho$ is split reducible and $\max\{9,2f+1\}$-generic.
\begin{enumerate}
\item 
Let $\pi'$ be a subquotient of $\pi$. Then there exists a (unique) subset $\Sigma'\subset\{0,\dots,f\}$ such that \[\pi'[\m_{K_1}^2]\cong\bigoplus_{i\in\Sigma'}\wt{D}_0(\brho)_i,\]
where $\wt{D}_{0}(\brho)_i\defeq \bigoplus_{\sigma\in W(\brho),~|J_{\sigma}|=i}\wt{D}_{0,\sigma}(\brho)$ for $0\leq i\leq f$.
\item Let $\pi_1\subset\pi_2$ be subrepresentations of $\pi$.  Then the induced sequence of $\wt{\Gamma}$-modules
\[0\ra \pi_1[\m_{K_1}^2]\ra \pi_2[\m_{K_1}^2]\ra (\pi_2/\pi_1)[\m_{K_1}^2]\ra0\]
is split exact. 
\end{enumerate}
\end{prop1}

\begin{rem1}\label{rem:K1-invt-exact}
As a consequence of Proposition \ref{prop:K1-square-invariants}(ii), if $\pi_1\subset\pi_2$ are subrepresentations of $\pi$, then the induced sequence of $\Gamma$-representations 
\[0\ra \pi_1^{K_1}\ra \pi_2^{K_1}\ra(\pi_2/\pi_1)^{K_1}\ra0\]
is split exact. This strengthens \cite[Lemma~\ref{bhhms4:lem:soc-exact}]{BHHMS4}. 
\end{rem1}

\begin{proof}
We first prove (i) for any subrepresentation $\pi' = \pi_1$.
Let $\Sigma' = \Sigma_1$ be the unique subset such that $\soc_{\GL_2(\cO_K)}(\pi_1)=\bigoplus_{\ell(\sigma)\in\Sigma_1}\sigma$.
 (See \cite[Prop.\ 3.2.2]{BHHMS4} for the existence of $\Sigma_1$.) 
First, since $\pi_1[\fm_{K_1}^2]\subseteq \pi[\fm_{K_1}^2]$, we deduce from Proposition~\ref{prop:ii'} %
that 
\[%
\pi_1[\m_{K_1}^2]\subseteq \bigoplus_{i\in\Sigma_1}\wt{D}_0(\brho)_i. \]
Denote by $Q$ the quotient $\big(\bigoplus_{i\in\Sigma_1}\wt{D}_0(\brho)_i\big)/\pi_1[\m_{K_1}^2]$;  we want to prove $Q=0$. By \cite[Thm.\ 4.6]{HuWang2}, $\wt{D}_0(\brho)$ is multiplicity free, so $\JH(Q)\cap W(\brho)=\emptyset$. Consider the natural morphisms
\[Q\hookrightarrow \pi[\fm_{K_1}^2]/\pi_1[\fm_{K_1}^2]\hookrightarrow (\pi/\pi_1)[\fm_{K_1}^2]\hookrightarrow \pi/\pi_1\]
which induce an embedding $\soc_{\GL_2(\cO_K)}(Q)\hookrightarrow \soc_{\GL_2(\cO_K)}(\pi/\pi_1)$. But $\JH(\soc_{\GL_2(\cO_K)}(\pi/\pi_1))\subset W(\brho)$ by \cite[Lemma~\ref{bhhms4:lem:soc-exact}]{BHHMS4}, so we must have $\soc_{\GL_2(\cO_K)}(Q)=0$, equivalently $Q=0$.

(ii) As in the proof of \cite[Cor.~\ref{bhhms4:cor:split-In}]{BHHMS4}, 
it suffices to treat the special case $\pi_2=\pi$.
We again define $\Sigma_1$ by the equality $\soc_{\GL_2(\cO_K)}(\pi_1)=\bigoplus_{\ell(\sigma)\in\Sigma_1}\sigma$, so $\pi_1[\fm_{K_1}^2]=\bigoplus_{i\in \Sigma_1}\widetilde{D}_0(\brho)_i$ by the preceding paragraph.
Thus there is an inclusion $\bigoplus_{i\notin \Sigma_1}\wt{D}_0(\brho)_i\cong \pi[\m_{K_1}^2]/\pi_1[\m_{K_1}^2] \subseteq (\pi/\pi_1)[\m_{K_1}^2]$. Suppose that this is not an equality. Then $(\pi/\pi_1)[\m_{K_1}^2]$ contains a subrepresentation $V$ which fits into a \emph{nonsplit} $\wt{\Gamma}$-extension
\begin{equation}\label{eq:exact-seq-V}
0\ra \bigoplus_{i\notin \Sigma_1}\wt{D}_0(\brho)_i\ra V\ra \tau\ra0
\end{equation}
for some Serre weight $\tau$. 
(The extension is nonsplit by \cite[Lemma~\ref{bhhms4:lem:soc-exact}]{BHHMS4}.)
We have $\tau\in W(\brho)$ by  Lemma \ref{lem:ex-wtD}(i) and we let again $\chi \defeq  \tau^{I_1}$. 

By the projectivity of $\Proj_{\wt{\Gamma}}\tau$, there exists  a  $\wt{\Gamma}$-equivariant morphism $\beta:\Proj_{\wt{\Gamma}}\tau\ra V$ whose composition with $V\twoheadrightarrow \tau$ is the natural projection $\Proj_{\wt{\Gamma}}\tau\twoheadrightarrow \tau$.   Let $V_\beta$ denote the image of $\beta$, which has cosocle $\tau$. By \eqref{eq:exact-seq-V}, $V_{\beta}$ satisfies the conditions in Lemma \ref{lem:4.18}, so it is a quotient of $\im(\phi_{\tau})$, namely $\beta$ factors through $\im(\phi_{\tau})\ra V$.

By %
Corollary~\ref{cor:coker-ext1} we have $\Ext^1_{\wt{\Gamma}}(\coker(\phi_{\tau}),V)=0$ by d\'evissage using \eqref{eq:exact-seq-V}.   Hence, using the short exact sequence $0\ra \im(\phi_{\tau})\ra \Ind_I^{\GL_2(\cO_K)}\overline{W}_{\chi,3}\ra \coker(\phi_{\tau})\ra0$, we can lift the map $\im(\phi_{\tau})\rightarrow V$ of the previous paragraph to 
\[\beta':\Ind_I^{\GL_2(\cO_K)}\overline{W}_{\chi,3}\ra V\ (\hookrightarrow\pi/\pi_1).\]
The splitting statement in \cite[Cor.~\ref{bhhms4:cor:split-In}]{BHHMS4} with $n = 3$ implies that the natural sequence
\[0\ra \Hom_I(\overline{W}_{\chi,3},\pi_1)\ra \Hom_I(\overline{W}_{\chi,3},\pi)\ra \Hom_I(\overline{W}_{\chi,3},\pi/\pi_1)\ra0\]
is  exact, so combined with Frobenius reciprocity we obtain a  morphism 
\[\beta'':\Ind_I^{\GL_2(\cO_K)}\overline{W}_{\chi,3}\ra \pi\] whose composition with $\pi \onto \pi/\pi_1$ gives $\beta'$. By \cite[Prop.\ 6.4.6]{BHHMS1}, any $I$-equivariant morphism  $\overline{W}_{\chi,3}\ra \pi$  factors through $\overline{W}_{\chi,3}\twoheadrightarrow \chi$, hence $\beta''$ factors as $\Ind_{I}^{\GL_2(\cO_K)}\overline{W}_{\chi,3}\twoheadrightarrow \Ind_I^{\GL_2(\cO_K)}\chi\ra \pi$. In particular, the image of $\beta''$ is contained in $\pi^{K_1}$ and has cosocle $\tau$. Since $\tau$ occurs in  $\soc_{\GL_2(\cO_K)}(\pi)$ and not elsewhere in $\pi^{K_1}$ (as $\pi^{K_1}$ is multiplicity free), the image of $\beta''$ (hence also the image of $\beta$) is just $\tau$.   This gives a contradiction, proving (ii), as $V$ is a nonsplit extension by assumption and $V_\beta$ has cosocle $\tau$.  

Finally, (i) is a direct consequence of the first paragraph of the proof and of (ii). 
\end{proof}

We can now prove Theorem~\ref{thm:intro-yitong-subquot} in the semisimple case.

\begin{cor1}\label{cor:yitong-subquot-ss}
  Assume that $\brho$ is semisimple and $\max\{9,2f+1\}$-generic.
  Then for any subquotient $\pi'$ of $\pi$ we have
  \begin{equation*}
    \dim_{\F\ppar{X}} D_{\xi}^{\vee}(\pi')=|\JH(\pi'^{K_1}) \cap W(\rhobar)|.
  \end{equation*}
\end{cor1}

\begin{proof}
  We have $\dim_{\F\ppar{X}} D_{\xi}^{\vee}(\pi')=|\JH(\soc_{\GL_2(\cO_K)} \pi')|$ by \cite[Cor.\ \ref{bhhms4:cor:finite-length}(i)]{BHHMS4} (if $\brho$ is split reducible) and \cite[Prop.\ 3.87(ii)]{BHHMS2} (if $\brho$ is irreducible, noting that $\pi' = \pi$ by \cite[Cor.\ 3.90]{BHHMS2} in that case).
  It suffices to show that $\JH(\pi'^{K_1}) \cap W(\brho) = \JH(\soc_{\GL_2(\cO_K)} \pi')$.
  If $\brho$ is irreducible this is clear, as $\pi'^{K_1} = \pi^{K_1} = D_0(\brho)$ by assumption~\ref{it:assum-i}.
  If $\brho$ is split reducible, then $\pi'^{K_1} = \bigoplus_{i\in\Sigma'}{D}_0(\brho)_i$ by Proposition~\ref{prop:K1-square-invariants}, keeping the notation there, and the result follows.
\end{proof}

\section{On the structure of subquotients of \texorpdfstring{$\pi$}{pi} in the non-semisimple case}
\label{sec:subquot-non-ss}

We prove many results on the structure of subquotients of $\pi$ as $I$- and $\GL_2(\cO_K)$-representations.

From now on $\pi$ denotes an admissible smooth representation of $\GL_2(K)$ over $\F$ satisfying assumptions \refeq{it:assum-i}--\refeq{it:assum-v} of \S~\ref{sec:abstract-setting}, with underlying Galois representation $\rhobar$ which is nonsplit reducible and {0}-generic.

The main results of this section include the description of the $I_1$- and $K_1$-invariants as well as of the $\GL_2(\cO_K)$-socle of any subquotient of $\pi$.
These results all depend on determining the $I_1$-socle filtration of any subquotient $\pi'$ of $\pi$ (equivalently, the associated graded module of $\pi'^\vee$ for the $\m$-adic filtration), which is the subject of subsection~\ref{sec:grad-module-subq}.

We again suppose that $\pi_1 \subset \pi$ is a subrepresentation of $\pi$ and let $\pi_2\defeq \pi/\pi_1$. 
Let $i_0 \defeq  i_0(\pi_1) \in \{-1,\dots,f\}$, cf.\ \cite[Thm.~\ref{bhhms4:thm:conj2}]{BHHMS4}.
To simplify notation, for $\lambda \in \P$ we let $d_\lambda \defeq  \max\{i_0+1-|J_\lambda|,0\}$.

\subsection{The graded module of subquotient representations of \texorpdfstring{$\pi$}{pi}}
\label{sec:grad-module-subq}

We describe $\gr_\m(\pi'^\vee)$, where $\pi'$ is any subquotient of $\pi$ (Corollary \ref{cor:gr-pi'}).
We start with quotients $\pi_2 = \pi/\pi_1$ of $\pi$:

\begin{thm}\label{thm:gr-pi2}
  {Assume that $\brho$ is $\max\{9,2f+3\}$-generic.}
  We have an isomorphism of graded $\gr(\Lambda)$-modules with compatible $H$-actions,
  \begin{equation}\label{eq:gr-pi2}
    \gr_\m(\pi_2^\vee) \cong \bigoplus_{\lambda \in \P} \chi_\lambda^{-1} \otimes \frac{\mathfrak{a}_1^{i_0}(\lambda)}{\mathfrak{a}(\lambda)}(-d_\lambda).
  \end{equation}
\end{thm}

 We recall the ideal $\fa_1^{i_0}(\lambda)$ of $\o R$ from~\cite[\eqref{bhhms4:eq:fa-1}]{BHHMS4}.
  Let $J_1 \defeq  \{ j \in J_{\rhobar}^c : \lambda_j(x_j) = p-1-x_j \}$ and $J_2 \defeq  \{ j \in J_{\rhobar}^c : \lambda_j(x_j) = x_j \}$.
  If $i_0 \ge |J_\lambda|$, then $\fa_1^{i_0}(\lambda)$ is the ideal generated by $\fa(\lambda)$ (cf.\ \eqref{eq:id:al}) and all $\prod_{j \in J_1'} y_j \prod_{j \in J_2'} z_j$, where $J_1' \subset J_1$, $J_2' \subset J_2$, $|J'_1|+|J'_2| = i_0+1-|J_\lambda|$.
  Otherwise, $\fa_1^{i_0}(\lambda) = \o R$.
  In particular, $\mathfrak a_1^{i_0}(\lambda) = \mathfrak a(\lambda)$ if $|J_1|+|J_2| < i_0+1-|J_\lambda|$.

The grading shifts in~\eqref{eq:gr-pi2} are such that all nonzero direct summands contribute in degree 0, but vanish in degree 1.
Note also from the definitions that $\mathfrak{a}_1^{i_0}(\lambda)/\mathfrak{a}(\lambda) = 0$ if $|\{ j \in J_{\rhobar}^c : \lambda_j(x_j) \in \{ p-1-x_j, x_j \} \}| < d_\lambda$.
(The converse is also true, by comparing equations~\cite[\eqref{bhhms4:eq:mult-a1} and \eqref{bhhms4:eq:mult-a}]{BHHMS4}, or alternatively see the proof of \cite[Cor.~\ref{bhhms4:cor:subquot-F}]{BHHMS4}.)

\begin{rem}\label{rem:gr-pi2-CM}
  Theorem~\ref{thm:gr-pi2} implies that $\gr_\m(\pi_2^\vee)$ is Cohen--Macaulay or zero as $\gr(\Lambda)$-module.
  (By \cite[Prop.~\ref{bhhms4:prop:nonsplit-I1}]{BHHMS4} and \cite[Cor.~\ref{bhhms4:cor:gr-pi1}]{BHHMS4}, each nonzero $\mathfrak{a}_1^{i_0}(\lambda)/\mathfrak{a}(\lambda)$ is Cohen--Macaulay, as the Cohen--Macaulay property is closed under direct summands, shifts in grading, and direct sums.)
\end{rem}

\begin{rem}\label{rem:killed-by-J}
  {Theorem~\ref{thm:gr-pi2} shows that $\gr_\m(\pi_2^\vee)$ is killed by the ideal $J$, i.e.\ is an $\o R$-module.
  This is a priori not obvious.
  A similar comment applies to Corollary~\ref{cor:gr-pi'}.}
\end{rem}

\begin{rem}\label{rem:i0-f-1-or-f}
  When $i_0 = f$, Theorem~\ref{thm:gr-pi2} is trivially true, because $\pi_2 = 0$ and $\mathfrak{a}_1^{f}(\lambda) = \mathfrak{a}(\lambda)$ for all $\lambda \in \P$.
  (By \cite[Lemma~\ref{bhhms4:lem:J-sets-add-up}]{BHHMS4} we have $f+1-|J_\lambda| = |J_1|+|J_2|+|J_{\lambda^*}|+1 > |J_1|+|J_2|$, where $\lambda\mapsto \lambda^*$ is the involution of $\mathscr{P}$ defined in \cite[Def.~3.62]{BHHMS2}.)
  When $i_0 = f-1$, Theorem~\ref{thm:gr-pi2}  can be proved as follows, assuming that $\brho$ is only $\max\{9,2f+1\}$-generic.
  By \cite[Thm.~\ref{bhhms4:thm:fin-length-nonsplit}(ii)]{BHHMS4}, $\pi_2$ is irreducible in this case, so $\pi_2$ is the principal series $\Ind_{B(K)}^{\GL_2(K)} (\chi_1 \otimes \chi_2\omega^{-1})$ by \cite[Prop.\ 10.8]{HuWang2}, where $\o\rho \cong \smatr{\chi_1} * 0 {\chi_2}$ and hence $\chi_1|_{I_K} = \omega_f^{\sum_{j=0}^{f-1} (r_j+1) p^j}$ and $\chi_2|_{I_K} = 1$.
  (Here $B(K)$ denotes the Borel subgroup of upper-triangular matrices of $\GL_2(K)$.)
  We apply the combinatorial Proposition~\ref{prop:gr-m-semisimple-nonsplit} below (or argue directly) to deduce
  \begin{equation}\label{eq:i0-f-1}
    \bigoplus_{\lambda \in \P} \chi_\lambda^{-1} \otimes \frac{\mathfrak{a}_1^{i_0}(\lambda)}{\mathfrak{a}(\lambda)}(-d_\lambda)
    \cong \chi_{\lambda'}^{-1} \!\otimes \!\frac{\o R}{(z_j : 0 \le j \le f-1)} \ \oplus \ \chi_{\lambda''}^{-1}\! \otimes \!\frac{\o R}{(y_j : 0 \le j \le f-1)},
  \end{equation}
  where $\lambda',\lambda'' \in \P^\ss$ are given by $\lambda'_j(x_j) = p-3-x_j$, $\lambda''_j(x_j) = x_j+2$ for all $j$.
  We calculate $\chi_{\lambda'} = (\chi_2|_{I_K})\omega^{-1} \otimes \chi_1|_{I_K}$ and $\chi_{\lambda''} = \chi_1|_{I_K} \otimes (\chi_2|_{I_K})\omega^{-1}$.
  We conclude by \cite[Prop.\ 3.76(ii)]{BHHMS2}.
\end{rem}

\begin{lem}\label{lem:lift-graded-homo}
  Suppose that $M$ is a graded $\gr(\Lambda)$-module. %
  Let $N \defeq  (\mathfrak{a}_1^{i_0}(\lambda)/\mathfrak{a}(\lambda))(-d_\lambda)$ for some $\lambda \in \P$.
  Then the natural map 
  \begin{equation*}
    \HOM_{\gr(\Lambda)}(N,M)_0 \to \HOM_{\gr(\Lambda)}(N,M/\gr_{\le-3} M)_0
  \end{equation*}
  is an isomorphism.
\end{lem}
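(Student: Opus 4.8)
The plan is to reduce the claim to a statement about lifting homomorphisms along the surjection $\gr(\Lambda) \onto \gr(\Lambda)/\gr_{\le -3}\gr(\Lambda)$, using a concrete finite presentation of $N$ by shifts of $\gr(\Lambda)$. First I would observe that the grading shift $(-d_\lambda)$ puts $N$ in degrees $\le -d_\lambda \le 0$, and that by inspection of the ideals $\fa_1^{i_0}(\lambda) \supset \fa(\lambda)$ in $\o R$ (which are generated by monomials in the $y_j, z_j$ of degree $1$ or $2$, cf.\ \cite[eq.~\eqref{bhhms4:eq:fa-1}]{BHHMS4} and \eqref{eq:id:al}), the quotient $\fa_1^{i_0}(\lambda)/\fa(\lambda)$ is generated in a single degree by elements of $\o\m$-depth $\le 2$ relative to the top; concretely, $N$ admits a graded presentation
\[
\gr(\Lambda)(a+1)^{\oplus p} \oplus \gr(\Lambda)(a+2)^{\oplus q} \longrightarrow \gr(\Lambda)(a)^{\oplus s} \longrightarrow N \longrightarrow 0
\]
for a suitable integer $a$ (namely $a = -d_\lambda$ minus the generating degree) and integers $p,q,s \ge 0$. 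This is the same type of presentation used in step (c) of the sketch proof of Theorem~\ref{thm:intro-gr-pi'}, applied here to $N$ rather than $N_2'$, and it is a purely combinatorial check on the explicit generators of the ideals.

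Given such a presentation, applying the left-exact functor $\HOM_{\gr(\Lambda)}(-,M)_0$ (resp.\ with $M$ replaced by $M/\gr_{\le -3}M$) yields a commutative diagram with exact rows, where the vertical maps are the natural restriction maps. The two relevant columns identify $\HOM_{\gr(\Lambda)}(\gr(\Lambda)(k)^{\oplus t}, M)_0$ with $(M_{-k})^{\oplus t}$, and similarly for $M/\gr_{\le -3}M$ with $(M/\gr_{\le -3}M)_{-k}^{\oplus t}$. Since $N$ is generated in degree $a$ with relations in degrees $a+1$ and $a+2$, the only graded pieces of $M$ that enter are $M_{-a}$, $M_{-a-1}$, $M_{-a-2}$. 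Now I claim $-a \ge 0$, $-a-1 \ge 0$, $-a-2 \ge -2$: indeed $a \le -d_\lambda - (\text{generating degree of }\fa_1^{i_0}(\lambda)/\fa(\lambda)) \le 0$, and one checks $-a-2 \ge -2$ holds exactly because the generators of $N$ sit in degree $\le 0$ and the relations in degree $\le 0$ as well — equivalently the three relevant degrees all lie in $\{0, -1, -2\}$, on which $M \to M/\gr_{\le -3}M$ restricts to an isomorphism. A diagram chase (five lemma on the two short exact sequences coming from $\im \to \gr(\Lambda)(a)^{\oplus s} \to N$ together with surjectivity of the free-module Hom's onto Hom's of the image) then gives that the middle vertical map $\HOM_{\gr(\Lambda)}(N,M)_0 \to \HOM_{\gr(\Lambda)}(N, M/\gr_{\le -3}M)_0$ is an isomorphism.

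The main obstacle is verifying that $N = (\fa_1^{i_0}(\lambda)/\fa(\lambda))(-d_\lambda)$ really does admit a presentation with generators and relations concentrated in degrees that map isomorphically under $M \to M/\gr_{\le -3}M$ — that is, controlling precisely \emph{which} degrees the generators and first syzygies of $\fa_1^{i_0}(\lambda)/\fa(\lambda)$ occupy, after the shift by $-d_\lambda$. This is where the explicit combinatorics of \cite[eq.~\eqref{bhhms4:eq:fa-1}]{BHHMS4} and \eqref{eq:id:al} must be invoked: the ideals $\fa_1^k(\lambda)$ are generated by degree-one elements ($y_j$ or $z_j$) for the "bad" indices and degree-two elements ($y_jz_j$) otherwise, so that $\fa_1^{i_0}(\lambda)/\fa(\lambda)$, as a module over $\o R$, is generated in degree equal to $-(|\{j \in J_{\brho}^c : \lambda_j(x_j)\in\{p-1-x_j,x_j\}\}|)$ with first syzygies one and two degrees lower; combining with $d_\lambda = \max\{i_0+1-\ell(\lambda),0\}$ and the nonvanishing criterion recalled after Theorem~\ref{thm:gr-pi2} shows the generating degree of $N$ is $\le 0$ and hence the relations land in degrees $\ge -2$. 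Once this bookkeeping is in place, the homological argument above is routine.
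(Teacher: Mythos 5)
Your overall strategy matches the paper's: the proof does indeed reduce to producing a graded presentation of $N$ by free modules of the form $\gr(\Lambda)$, $\gr(\Lambda)(1)$, $\gr(\Lambda)(2)$ (i.e.\ generators in degree $0$, relations in degrees $-1$ and $-2$), after which applying $\HOM_{\gr(\Lambda)}(-,M)_0$ and $\HOM_{\gr(\Lambda)}(-,M/\gr_{\le-3}M)_0$ and using $\HOM_{\gr(\Lambda)}(\gr(\Lambda)(-i),M)_0 = M_i$ gives the result by a diagram chase. Your homological Step 3 is essentially the paper's Step 3 and is correct.

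However there is a genuine gap in the middle: you do not actually establish the required presentation, and the bookkeeping you offer in its place is incorrect. First, $\fa_1^{i_0}(\lambda)/\fa(\lambda)$ is generated as an $\o R/\fa(\lambda)$-module in degree $-d_\lambda$ (the images of the monomials $p_{J'}$ with $|J'|=d_\lambda$), \emph{not} in degree $-|J_1\sqcup J_2|$ as you assert; consequently $N$ is generated in degree exactly $0$, not merely $\le 0$. Second, the deduction \textquotedblleft generating degree $\le 0$ hence relations land in degrees $\ge -2$\textquotedblright{} is a non sequitur: if $N$ were generated in degree $-g$ for $g>0$ and the first syzygies were one and two degrees lower, those would sit in degrees $-g-1$, $-g-2 < -2$, and the argument would break. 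So what you actually need is the stronger, exact statement \textquotedblleft generators in degree $0$, relations in degrees $\{-1,-2\}$\textquotedblright, which is precisely the paper's Step 2 — a monomial-ideal computation showing that the $\o R/\fa(\lambda)$-module $(y_{i_1}\cdots y_{i_d} : i_1<\cdots<i_d<e)(-d)$ has its relations generated by $z_i X_I = 0$ and $y_i X_{I'\setminus\{i\}} = y_j X_{I'\setminus\{j\}}$, all in degree $-1$. You have not supplied this. Finally, you silently work with presentations over the noncommutative ring $\gr(\Lambda)$; the paper first computes the presentation over the commutative quotient $\o R/\fa(\lambda)$ and then uses a separate lifting lemma (its Step 1: if $N$ has relations in degrees $D$ over $S/I$ and $I$ is generated in degrees in $D$, the same holds over $S$) to pass to $\gr(\Lambda)$. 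Without some such device your claim about the degrees of the $\gr(\Lambda)$-syzygies is unjustified.
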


Recall that $\HOM(N,M)_0$ denotes the graded morphisms $N \to M$ (of degree 0).

\begin{proof}
  \textbf{Step 1.}
  Suppose for the moment that $S$ is a graded ring and $N$ any finitely presented graded $S$-module.
  For any subset $D \subset \Z$ we say that \emph{$N$ has relations in degrees $D$} as $S$-module if there exists a graded exact sequence of the form $\bigoplus_{i=1}^n S(-d_i) \to S^{\oplus m} \to N \to 0$ with $d_i \in D$ for all $i$ (in particular, $N$ is generated by its degree 0 part).

  We claim that if $N$ has relations in degrees $D$ as $S/I$-module, where $I$ is an ideal of $S$ that is generated by finitely many homogeneous elements $s_i$ whose degrees are contained in $D$, then the same is true as $S$-module.
  
  To see this, by assumption we can find a surjective graded homomorphism $(S/I)^{\oplus m} \to N\to 0$, whose kernel is generated by finitely many homogeneous elements $x_j$ whose degrees are contained in $D$ for all $j$.
  Lift each $x_j$ to a homogeneous element $\wt x_j$ of $S^{\oplus m}$ of the same degree.
  By composition we have a surjective morphism $S^{\oplus m} \to N \to 0$ of graded $S$-modules.
  Its kernel is generated by all $s_i e_k$ (where $e_k$ denotes the standard $\F$-basis of $S^{\oplus m}$) and all $\wt x_j$, as desired.

  \textbf{Step 2.} 
  We show that $N = (\mathfrak{a}_1^{i_0}(\lambda)/\mathfrak{a}(\lambda))(-d_\lambda)$ has relations in degrees $\{-1,-2\}$ as $\gr(\Lambda)$-module.
  Since $N$ is a graded $\o R/\mathfrak{a}(\lambda)$-module and $\o R/\mathfrak{a}(\lambda)$ is obtained from $\gr(\Lambda)$ by quotienting by an ideal generated by homogeneous elements of degrees $-1$ and $-2$, by Step 1 it suffices to show that $N$ has relations in degrees $\{-1,-2\}$ as $\o R/\mathfrak{a}(\lambda)$-module.

  Note that $j \in J_1 \sqcup J_2$ implies that $t_j = y_jz_j$ and let $d \defeq  d_\lambda$ for short.
  By interchanging $y_j$ and $z_j$ for some $j$ and permuting $\{0,1,\dots,f-1\}$, we may assume that 
  \[N = (y_{i_1}\cdots y_{i_d} : 0 \le i_1 < \cdots < i_d < e)(-d)\]
  as graded $\o R/\mathfrak{a}(\lambda)$-module, for some $0 \le e \le f$.
  This module is generated by the elements $X_I \defeq  \prod_{i \in I} y_i$ (of degree 0) for subsets $I \subset E \defeq  \{0,1,\dots,e-1\}$ with $|I| = d$.
  We claim that the relations are generated by
  \begin{equation}\label{eq:relations}
    \begin{aligned}
      z_i X_I &= 0 && \text{for all $i \in I$}; \\
      y_i X_{I'\setminus \{i\}} &= y_j X_{I'\setminus \{j\}} && \text{for all $i \ne j$ in $I' \subset E$, $|I'| = d+1$}.
    \end{aligned}
  \end{equation}
  Note that $\o R/\mathfrak{a}(\lambda)$ has as $\F$-basis all monomials of the form $\prod_j w_j^{\ge 0}$ with $w_j \in \{y_j,z_j\}\setminus \{t_j\}$.
  If $\sum_{I} f_I X_I = 0$ with $f_I \in \o R/\mathfrak{a}(\lambda)$, then using relations~\eqref{eq:relations}, without loss of generality, $f_I$ does not contain any $z_i$ ($i \in I$) and $y_i$ ($i \in E \setminus I$, $i < \min I$).
  The map $f_I \mapsto f_I X_I$ is injective for such $f_I$, and moreover for every monomial term in $f_I X_I$, $I$ is the set of $d$ largest elements $i$ of $E$ such that $y_i$ divides it.
  This shows that $f_I = 0$ for all $I$, proving that we have found all relations, and indeed the relations are in degree $-1$.

  \textbf{Step 3.}
  By Step 2 we have an exact sequence $\bigoplus_{i=1}^n \gr(\Lambda)(-d_i) \to \gr(\Lambda)^{\oplus m} \to N \to 0$ with $d_i \in \{-1,-2\}$ for all $i$.
  We get a commutative diagram
  \begin{equation*}
    \xymatrix@R-1pc{
      0 \ar[r] & \HOM_{\gr(\Lambda)}(N,M)_0 \ar[r]\ar[d] & \HOM_{\gr(\Lambda)}(\gr(\Lambda),M)^{\oplus m}_0 \ar[r]\ar[d] & \bigoplus_{i=1}^n \HOM_{\gr(\Lambda)}(\gr(\Lambda)(-d_i),M)_0 \ar[d] \\
      0 \ar[r] & \HOM_{\gr(\Lambda)}(N,\o M)_0 \ar[r] & \HOM_{\gr(\Lambda)}(\gr(\Lambda),\o M)^{\oplus m}_0 \ar[r] & \bigoplus_{i=1}^n \HOM_{\gr(\Lambda)}(\gr(\Lambda)(-d_i),\o M)_0 
      }
    \end{equation*}
  where $\o M \defeq  M/\gr_{\le-3} M$.
  As $\HOM_{\gr(\Lambda)}(\gr(\Lambda)(-i),M)_0 = M_{i}$ for all $i \in \Z$, the middle and right vertical arrows are isomorphisms, hence so is the left one.
\end{proof}

Fix $n \ge 1$, which we will specify later{, and assume that $\brho$ is $(2n-1)$-generic}.
Recall $\tau \defeq  \tau^{(n)} \subset \pi$ from \cite[\S~\ref{bhhms4:sec:representation-tau}]{BHHMS4}, so $\tau = \bigoplus_{\lambda \in \P} \tau_\lambda$ with $\tau_\lambda\defeq \tau_\lambda^{(n)}$ and $\soc_I(\tau_\lambda) \cong \chi_\lambda$.
Let $\o\tau \defeq  \tau[\m^{n}]=\pi[\m^{n}]$ (last statement of \cite[Lemma~\ref{bhhms4:lem:isom-modcI})]{BHHMS4} and $\o\tau_\lambda \defeq  \tau_\lambda[\m^{n}]$ for $\lambda \in \P$, so $\o\tau = \bigoplus_{\lambda \in \P} \o\tau_\lambda$.
Let $\Theta$ denote the image of $\o\tau$ in $\pi_2$.
As $\tau$ is multiplicity free by \cite[Cor.~\ref{bhhms4:cor:tau-multfree}(i)]{BHHMS4} (for $r=1$),  %
we have $\o\tau \cap \pi_1 = \bigoplus_{\lambda \in \P} (\o\tau_\lambda \cap \pi_1)$ and $\Theta = \bigoplus_{\lambda \in \P} \Theta_\lambda$, where $\Theta_\lambda$ is the image of $\o\tau_\lambda$ in $\pi_2$.
For the same reason,
\begin{equation}\label{eq:filtrations}
    F_{-i} \Theta^\vee = \m^i \o\tau^\vee \cap \Theta^\vee = \Big(\bigoplus_{\lambda \in \P} \m^i \o\tau_\lambda^\vee\Big) \cap \Theta^\vee = \bigoplus_{\lambda \in \P} (\m^i \o\tau_\lambda^\vee \cap \Theta^\vee) = \bigoplus_{\lambda \in \P} F_{\lambda,-i} \Theta_\lambda^\vee
  \end{equation}
  for all $i \in \Z_{\ge 0}$, where $F$ (resp.\ $F_\lambda$) denotes the filtration on $\Theta^\vee$ (resp.\ $\Theta_\lambda^\vee$) induced from the $\m$-adic filtration on $\o\tau^\vee$ (resp.\ $\o\tau_\lambda^\vee$).
  In particular, $\gr_F(\Theta^\vee) = \bigoplus_{\lambda \in \P} \gr_{F_\lambda}(\Theta_\lambda^\vee)$ and $F_{-n} \Theta^\vee = 0$.

Suppose that $n \ge 1$ and that $\brho$ is {$(2n-1)$}-generic.
The following lemma determines the submodule structure of $\tau_\lambda^{(n)}[\m^n]$ (and hence of $\pi[\m^n] = \tau^{(n)}[\m^n]$ if $r=1$ by \cite[Cor.~\ref{bhhms4:cor:tau-multfree}(ii)]{BHHMS4}), since $\bigoplus_{\lambda\in\mathscr{P}}\tau_{\lambda}^{(n)}$ is multiplicity free by \cite[Cor.~\ref{bhhms4:cor:tau-multfree}(i)]{BHHMS4} (as $\brho$ is {$(2n-1)$}-generic).

\begin{lem}\label{lem:submodule-structure}  
  Suppose that $\brho$ is  {$(n-1)$-generic}  %
  and keep the above notation.
  Suppose that $\lambda \in \P$.
  For any $\chi,\chi' \in \JH(\tau^{(n)}_\lambda[\m^n])$ with $\Ext^1_{I/Z_1}(\chi,\chi') \ne 0$, upon perhaps interchanging $\chi$ and $\chi'$, there exist $\ell_j \in \Z$ for $0 \le j \le f-1$, $\ve \in \{\pm1\}$, and $0 \le j_0 \le f-1$ such that 
  \begin{enumerate}
  \item $\chi = \chi_\lambda \prod_j \alpha_j^{\ell_j}$ and $\chi' = \chi \alpha_{j_0}^{\ve}$;
  \item $\ell_j \ge 0$ if $t_j = y_j$, $\ell_j \le 0$ if $t_j = z_j$, and $\sum_j |\ell_j| < n$;
  \item $|\ell_{j_0}+\ve| < |\ell_{j_0}|$;
  \item $E_{\chi',\chi}$ (the unique nonsplit extension of $\chi$ by $\chi'$, see \S~\ref{sec:preliminaries}) is a subquotient of $\tau^{(n)}_\lambda[\m^n]$.
  \end{enumerate}
  In particular, either $E_{\chi,\chi'}$ or $E_{\chi',\chi}$ occurs as subquotient of $\tau^{(n)}_\lambda[\m^n]$.
\end{lem}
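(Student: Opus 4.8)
The plan is to leverage our explicit description of the $I$-representation $\tau^{(n)}_\lambda$ from \cite[\S~\ref{bhhms4:sec:representation-tau}]{BHHMS4} together with \cite[Lemma~\ref{bhhms4:lem:isom-modcI}]{BHHMS4}, which identifies $\tau^{(n)}_\lambda[\m^n]$ (up to the twist by $\chi_\lambda$) with an explicit graded submodule of the Iwasawa algebra $\Lambda$ modulo $\m^n$ — concretely, with the $\F$-span of those monomials in the $y_j$, $z_j$ compatible with $\fa(\lambda)$, i.e.\ avoiding the variable $t_j\in\{y_j,z_j\}$ whenever $t_j$ is a single variable (and using $y_jz_j$ when $t_j=y_jz_j$). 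First I would fix a character $\chi\in\JH(\tau^{(n)}_\lambda[\m^n])$; by the structure of $\gr(\Lambda)$ in \eqref{eq:grlambda} and the fact that $y_j$ (resp.\ $z_j$) is an $H$-eigenvector with eigencharacter $\alpha_j$ (resp.\ $\alpha_j^{-1}$), any such $\chi$ is of the form $\chi_\lambda\prod_j\alpha_j^{\ell_j}$ for integers $\ell_j$ satisfying the sign constraint in (ii) (nonnegative if $t_j=y_j$, nonpositive if $t_j=z_j$, arbitrary sign but recorded via $|\ell_j|$ if $t_j=y_jz_j$) and with $\sum_j|\ell_j|<n$ (so that the corresponding monomial survives modulo $\m^n$). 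This gives (i) and (ii).

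For (iii) and (iv), the key point is to analyze when two such characters $\chi,\chi'$ with $\chi'=\chi\alpha_{j_0}^{\ve}$ (the only possibility when $\Ext^1_{I/Z_1}(\chi,\chi')\ne0$, since nontrivial $\Ext^1$ between characters of $I/Z_1$ forces the ratio to be some $\alpha_{j_0}^{\pm1}$, cf.\ the $\Ext^1$-computations recalled in \S~\ref{sec:notation}) both occur in $\tau^{(n)}_\lambda[\m^n]$, and to decide which of the two nonsplit extensions $E_{\chi,\chi'}$, $E_{\chi',\chi}$ is realized. The idea is that inside the graded module $\tau^{(n)}_\lambda[\m^n]$, multiplication by the appropriate variable ($y_{j_0}$ or $z_{j_0}$) connects the isotypic pieces for $\chi$ and $\chi'$: if $\ve=-\mathrm{sign}(\ell_{j_0})$ (the case $|\ell_{j_0}+\ve|<|\ell_{j_0}|$, i.e.\ (iii)), then one moves from a monomial realizing $\chi$ to one realizing $\chi'$ by multiplying by the variable that \emph{decreases} the exponent, which is the direction that stays within the allowed monomials; this produces $E_{\chi',\chi}$ as a subquotient (the extension going from the "outer" character $\chi$ down to the "inner" one $\chi'$), giving (iv) in the form stated, and after the bookkeeping of which of $\chi,\chi'$ plays which role we get the "upon perhaps interchanging'' clause. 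The final assertion — that at least one of $E_{\chi,\chi'}$, $E_{\chi',\chi}$ occurs — then follows immediately from (iv) together with the symmetry of the hypothesis $\Ext^1_{I/Z_1}(\chi,\chi')\ne0 \iff \Ext^1_{I/Z_1}(\chi',\chi)\ne0$.

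The main obstacle I anticipate is not the eigencharacter bookkeeping but rather verifying condition (iv) rigorously: one must show that the relevant two-dimensional subquotient of $\tau^{(n)}_\lambda[\m^n]$ is genuinely \emph{nonsplit}, i.e.\ that the $I_1$-action (equivalently the $\m/\m^2$-action on the graded pieces) actually links the two lines rather than leaving them independent. This requires tracking the multiplication maps in $\gr(\Lambda)$ — in particular using the relation $[y_j,z_j]=h_j$ and the degree conventions (degree $-1$ for $y_j,z_j$, degree $-2$ for $h_j$) — to confirm that the extension class is nonzero in the explicit presentation of $\tau^{(n)}_\lambda$, and it is here that the genericity hypothesis "$\brho$ is $(n-1)$-generic'' is used, to ensure that the module-theoretic picture from \cite{BHHMS4} is valid in the range of exponents $\sum_j|\ell_j|<n$ under consideration. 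The computation of which extension (rather than whether an extension exists) should follow by comparing the socle filtration of $\tau^{(n)}_\lambda$ with the "distance from $\chi_\lambda$'' measured by $\sum_j|\ell_j|$, since $\soc_I(\tau^{(n)}_\lambda)\cong\chi_\lambda$ forces characters closer to $\chi_\lambda$ to sit deeper in the socle filtration.
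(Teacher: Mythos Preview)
Your plan is essentially the paper's argument, and it is correct in outline: write $\chi=\chi_\lambda\prod_j\alpha_j^{\ell_j}$ with the constraints of (ii) coming from the explicit construction of $\tau^{(n)}_\lambda$; deduce $\chi'=\chi\alpha_{j_0}^{\ve}$ from the $\Ext^1$ hypothesis; swap $\chi,\chi'$ if necessary to arrange (iii); and read off (iv) from the structure of $\tau^{(n)}_\lambda$.

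However, your emphasis is slightly off in two places. First, the ``main obstacle'' you flag --- showing the two-dimensional subquotient is genuinely nonsplit --- is in fact immediate: $\tau^{(n)}_\lambda$ is built as a tensor product over $j$ of explicit $I$-representations (cf.\ \cite[Prop.~9.19]{HuWang2}, \cite[\S~\ref{bhhms4:sec:representation-tau}]{BHHMS4}), and once (iii) holds the uniserial piece $\alpha_{j_0}^{\ell_{j_0}+\ve}\text{--}\alpha_{j_0}^{\ell_{j_0}}$ sits visibly inside the $j_0$-th tensor factor, giving $E_{\chi',\chi}$ directly. No tracking of commutators or degree conventions is needed. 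Second, the actual role of the $(n-1)$-genericity is not ``validity of the module-theoretic picture'' in the vague sense you suggest, but something sharper: both $\chi$ and $\chi'$ lie in $\JH(\tau^{(n)}_\lambda[\m^n])$, so each admits an exponent tuple satisfying (ii), but to make sense of (iii) and (iv) you need to know that the tuple for $\chi'$ is exactly $(\ell_0,\dots,\ell_{j_0}+\ve,\dots,\ell_{f-1})$ rather than some other tuple yielding the same character. This uniqueness follows because $|\ell_j|<n\le\frac{p-1}{2}$ forces the congruence $\sum_j\ell_j p^j\equiv\sum_j\ell'_j p^j\pmod{p^f-1}$ to be an equality. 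That is the step where genericity enters.
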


\begin{proof}
  By construction of $\tau^{(n)}_\lambda$ (cf.\ the proofs of \cite[Lemma~\ref{bhhms4:lem:tau-embed}]{BHHMS4} and \cite[Prop.\ 9.19]{HuWang2}), as $\chi\in \tau^{(n)}_\lambda[\m^n]$ we can write $\chi = \chi_\lambda \prod_j \alpha_j^{\ell_j}$ for some $\ell_j \in \Z$ satisfying condition (ii). As $\Ext^1_{I/Z_1}(\chi,\chi') \ne 0$ we have $\chi' = \chi \alpha_{j_0}^{\ve}$ for some $\ve \in \{\pm1\}$ and some $0 \le j_0 \le f-1$. By the genericity condition we deduce that condition (ii) holds for $(\ell_0,\dots,\ell_{j_0}+\ve,\dots,\ell_{f-1})$.
 (As $\brho$ is {$(n-1)$}-generic, $|\ell_j| < n \le \frac{p-1}2$, and $\ve p^{j_0}+\sum_{j=0}^{f-1} \ell_jp^j \equiv \sum_{j=0}^{f-1} \ell'_jp^j \pmod{p^f-1}$ for integers $|\ell'_j|  <  \frac{p-1}2$ implies $(\ell_0,\dots,\ell_{j_0}+\ve,\dots,\ell_{f-1})=(\ell'_0,\dots,\ell'_{f-1})$.) 
  Interchanging $\chi$ and $\chi'$, if necessary, we may assume that (iii) holds.
  Then (iv) holds, as the nonsplit extension of $\alpha_{j_0}^{\ell_{j_0}}$ by $\alpha_{j_0}^{\ell_{j_0}+\ve}$ occurs in the $j_0$-th tensor factor defining $\tau^{(n)}_\lambda$.
\end{proof}

\begin{proof}[Proof of Theorem~\ref{thm:gr-pi2}]
  By Remark \ref{rem:i0-f-1-or-f} we can assume throughout the proof that $i_0\leq f-2$.
  Let $N_2'$ denote the right-hand side of the theorem, i.e.\ $N_2' \defeq  \bigoplus_{\lambda \in \P} N_{2,\lambda}'$ with
  \[N_{2,\lambda}' \defeq  \chi_\lambda^{-1} \otimes \frac{\mathfrak{a}_1^{i_0}(\lambda)}{\mathfrak{a}(\lambda)}(-d_\lambda).\]

  \textbf{Step 1.} We show that $\gr_\m(\pi_2^\vee)/ \o\m^3 \onto N_2'/\o\m^3$ as graded $\gr(\Lambda)$-modules with compatible $H$-actions.

  Consider the commutative diagram
  \begin{equation*}
    \xymatrix{
      0 \ar[r] & \bigoplus_{\lambda \in \P} \chi_\lambda^{-1} \otimes \frac{\mathfrak{a}_1^{i_0}(\lambda)}{\mathfrak{a}(\lambda)} \ar[r]\ar^{\cong}[d] & \bigoplus_{\lambda \in \P} \chi_\lambda^{-1} \otimes \frac{\o R}{\mathfrak{a}(\lambda)} \ar[r]\ar^{\cong}[d] & \bigoplus_{\lambda \in \P} \chi_\lambda^{-1} \otimes \frac{\o R}{\mathfrak{a}_1^{i_0}(\lambda)} \ar[r]\ar^{\cong}[d] & 0 \\
      0 \ar[r] & \gr_{F'}(\pi_2^\vee) \ar[r]\ar[d] & \gr(\pi^\vee) \ar[r]\ar@{->>}[d] & \gr(\pi_1^\vee) \ar[r]\ar@{->>}[d] & 0 \\
      0 \ar[r] & \gr_F(\Theta^\vee) \ar[r]\ar@{=}[d] & \gr(\o\tau^\vee) \ar[r]\ar@{=}[d] & \gr((\o\tau\cap \pi_1)^\vee) \ar[r]\ar@{=}[d] & 0 \\
      0 \ar[r] & \bigoplus_{\lambda \in \P} \gr_{F_\lambda}(\Theta_\lambda^\vee) \ar[r] & \bigoplus_{\lambda \in \P} \gr(\o\tau_\lambda^\vee) \ar[r] & \bigoplus_{\lambda \in \P} \gr((\o\tau_\lambda \cap \pi_1)^\vee) \ar[r] & 0
    }
  \end{equation*}
  of graded $\gr(\Lambda)$-modules with compatible $H$-actions, where $F'$ denotes the filtration on $\pi_2^\vee$ induced by the $\m$-adic filtration on $\pi^\vee$ and we recall that $F$ denotes the filtration on $\Theta^\vee$ induced by the $\m$-adic filtration on $\o\tau^\vee$.
  The top vertical maps are isomorphisms by \cite[Cor.~\ref{bhhms4:cor:gr-pi1}]{BHHMS4} (see also the proof of \cite[Prop.~\ref{bhhms4:prop:nonsplit-I1}]{BHHMS4}).
  From $\o\tau = \pi[\m^{n}]$ we get $(\o\tau\cap \pi_1)[\m^{n}] = \pi_1[\m^{n}]$.
  Hence the middle and right vertical maps are isomorphisms in degrees $> -n$, %
  and so the same is true of the left vertical map.
  The middle vertical composition $\bigoplus_{\lambda \in \P} \chi_\lambda^{-1} \otimes (\mathfrak{a}_1^{i_0}(\lambda)/\mathfrak{a}(\lambda)) \to \bigoplus_{\lambda \in \P} \gr(\o\tau_\lambda^\vee)$ is an isomorphism in degree 0, respecting the direct sum decomposition (by $H$-equivariance).
  As its domain is generated by its degree 0 part as $\gr(\Lambda)$-module, it follows that the middle vertical composition respects the direct sum decomposition, and
  hence the same is true for the left and right vertical maps.
  We deduce that for each $\lambda \in \P$ the morphism 
  \begin{equation}\label{eq:N_2-lambda'}
    N_{2,\lambda}'(d_\lambda) = \chi_\lambda^{-1} \otimes \frac{\mathfrak{a}_1^{i_0}(\lambda)}{\mathfrak{a}(\lambda)} \to \gr_{F_\lambda}(\Theta_\lambda^\vee)
  \end{equation}
  of graded $\gr(\Lambda)$-modules is an isomorphism in degrees $> -n$.

  We now show that
  \begin{equation}\label{eq:compare-fil}
    \text{$F_{\lambda,-d_\lambda-i}(\Theta_\lambda^\vee) = \m^i \Theta_\lambda^\vee$ \ \ for any $\lambda \in \P$, $i \ge 0$.}
  \end{equation}
  To see this, note that if $d_\lambda = 0$, then $\mathfrak{a}_1^{i_0}(\lambda) = \o R$, hence the $\lambda$-part of the above commutative diagram shows that the natural map $\gr_{F_\lambda}(\Theta_\lambda^\vee) \into \gr(\o\tau_\lambda^\vee)$ is an isomorphism, which implies that the natural map $\Theta_\lambda^\vee \into \o\tau_\lambda^\vee$ of filtered $\Lambda$-modules is an isomorphism, as desired.
  Suppose now that $d_\lambda > 0$.
  We first obtain from the previous diagram the following diagram: %
  \begin{equation*}
    \xymatrix{
      0 \ar[r] & \chi_\lambda^{-1} \otimes \frac{\mathfrak{a}_1^{i_0}(\lambda) + \m^n}{\mathfrak{a}(\lambda)+\m^n} \ar[r]\ar^{\cong}[d] & \chi_\lambda^{-1} \otimes \frac{\o R}{\mathfrak{a}(\lambda)+\m^n} \ar[r]\ar^{\cong}[d] & \chi_\lambda^{-1} \otimes \frac{\o R}{\mathfrak{a}_1^{i_0}(\lambda)+\m^n} \ar[r]\ar^{\cong}[d] & 0 \\
      0 \ar[r] & \gr_{F_\lambda}(\Theta_\lambda^\vee) \ar[r] & \gr(\o\tau_\lambda^\vee) \ar[r] & \gr((\o\tau_\lambda \cap \pi_1)^\vee) \ar[r] & 0
    }
  \end{equation*}
  By the definition of $\mathfrak{a}(\lambda)$ the middle isomorphism shows that
  \begin{equation*}
    \JH(\o\tau_\lambda^\vee) = \bigg\{ \chi_\lambda^{-1} \prod_j \alpha_j^{\ell_j} : \ell \in L \bigg\},
  \end{equation*}
  where
  \begin{equation*}
    L \defeq  \bigg\{ \ell = (\ell_j)_{j=0}^{f-1} : \text{$\ell_j \le 0$ if $t_j = y_j$, $\ell_j \ge 0$ if $t_j = z_j$},\ \sum_j |\ell_j| < n \bigg\},
  \end{equation*}
  as well as 
  \begin{equation*}
    \JH(\m^i \o\tau_\lambda^\vee) = \bigg\{ \chi_\lambda^{-1} \prod_j \alpha_j^{\ell_j} : \ell \in L,\ i \le \sum_j |\ell_j| \bigg\}.
  \end{equation*}
  By the definition of $\mathfrak{a}_1^{i_0}(\lambda)$ the left isomorphism shows that
  \begin{equation*}
    \JH(\Theta_\lambda^\vee) = \bigg\{ \chi_\lambda^{-1} \prod_j \alpha_j^{\ell_j} : \ell \in L,\; |\{j \in J_1 : \ell_j > 0\}|+|\{j \in J_2 : \ell_j < 0\}| \ge d_\lambda \bigg\},
  \end{equation*}
  where we recall that $J_1 = \{ j \in J_{\rhobar}^c : \lambda_j(x_j) = p-1-x_j \}$, $J_2 = \{ j \in J_{\rhobar}^c : \lambda_j(x_j) = x_j \}$.
  In particular, $\Theta_\lambda^\vee \subset \m^{d_\lambda} \o\tau_\lambda^\vee$ and hence $\m^i \Theta_\lambda^\vee \subset \Theta_\lambda^\vee \cap \m^{d_\lambda+i} \o\tau_\lambda^\vee$ for all $i \ge 0$.
  Conversely, to show $\Theta_\lambda^\vee \cap \m^{d_\lambda+i} \o\tau_\lambda^\vee \subset \m^i \Theta_\lambda^\vee$, by multiplicity freeness it suffices to show that $\JH(\Theta_\lambda^\vee) \cap \JH(\m^{d_\lambda+i} \o\tau_\lambda^\vee) \subset \JH(\m^i \Theta_\lambda^\vee)$.
  Take $\chi \defeq  \chi_\lambda^{-1} \prod_j \alpha_j^{\ell_j}$ with $\ell \in L$, $|\{j \in J_1 : \ell_j > 0\}|+|\{j \in J_2 : \ell_j < 0\}| \ge d_\lambda$, and $d_\lambda+i \le \sum_j |\ell_j|$.
  With the help of Lemma \ref{lem:submodule-structure} it is easy to show that there exist characters $\chi_{i'} \in \JH(\Theta_\lambda^\vee)$ ($0 \le i' \le i$) with $\chi_0 \defeq  \chi$ and such that the unique nonsplit extension $E_{\chi_{i'-1},\chi_{i'}}$ (of $\chi_{i'}$ by $\chi_{i'-1}$) occurs as subquotient of $\Theta_\lambda^\vee$ for all $0 < i' \le i$. 
  (If $i > 0$ then we find $\chi_1$ as follows: if there is $j \notin J_1 \sqcup J_2$ such that $\ell_j \ne 0$, choose such a $j$;
  otherwise, choose $j$ such that $|\ell_j| > 0$ and is as small as possible.
  Then $\chi_1 \defeq  \chi_0 \alpha_j^{-\sgn(\ell_j)}$ is still an element of $\JH(\Theta_\lambda^\vee)$, and we have decreased $\sum_j |\ell_j|$ by 1.
  Proceed inductively to find all $\chi_{i'}$.)
  We deduce that $\chi$ occurs in $\rad^i \Theta_\lambda^\vee = \m^i \Theta_\lambda^\vee$, proving~\eqref{eq:compare-fil}.

  We now let $n \defeq  i_0+4$.
  As $d_\lambda \le i_0+1 < n-2$ we obtain from~\eqref{eq:N_2-lambda'} and~\eqref{eq:compare-fil} an isomorphism of graded $\gr(\Lambda)$-modules
  \begin{equation*}
    N_{2,\lambda}'/\o\m^3 = N_{2,\lambda}'/\gr_{\le -3} N_{2,\lambda}' \cong \gr_{F_\lambda}(\Theta_\lambda^\vee(-d_\lambda))/\gr_{F_\lambda,\le -3}(\Theta_\lambda^\vee(-d_\lambda)) = \gr(\Theta_\lambda^\vee)/\o\m^3.
  \end{equation*}
  Hence
  \begin{equation}
\label{eq:grad2:surj}
    \gr(\pi_2^\vee)/\o\m^3 \onto \gr(\Theta^\vee)/\o\m^3 \cong \bigoplus_{\lambda \in \P} \gr(\Theta_\lambda^\vee)/\o\m^3 \cong N_2'/\o\m^3,
  \end{equation}
  as desired.

  \textbf{Step 2.} We show that $\gr_\m(\pi_2^\vee)/ \o\m^3 \cong N_2'/\o\m^3$ as graded $\gr(\Lambda)$-modules with compatible $H$-actions.

  From the cohomology long exact sequence we get
  \begin{equation}\label{eq:coho-seq}
    0 \to \coker\big(\Tor_1^\Lambda(\Lambda/\m^3,\pi^\vee) \to \Tor_1^\Lambda(\Lambda/\m^3,\pi_1^\vee)\big) \to \pi_2^\vee/\m^3 \to \pi^\vee/\m^3 \to \pi_1^\vee/\m^3 \to 0.
  \end{equation}
  We let
  \begin{equation*}
    C \defeq  \coker\big(\Tor_1^\Lambda(\Lambda/\m^3,\pi^\vee) \to \Tor_1^\Lambda(\Lambda/\m^3,\pi_1^\vee)\big)
  \end{equation*}
  and give it the induced filtration as quotient of $\Tor_1^\Lambda(\Lambda/\m^3,\pi_1^\vee)$.

  First we show that $\gr(C)$ is a subquotient of
  \begin{equation}\label{eq:coker-Tor1-gr}
    C' \defeq  \coker\big(\Tor_1^{\gr(\Lambda)}(\gr(\Lambda)/\o\m^3,\gr_{\m}(\pi^{\vee})) \to \Tor_1^{\gr(\Lambda)}(\gr(\Lambda)/\o\m^3,\gr_{\m}(\pi_1^{\vee}))\big).
  \end{equation}
  Notice that $\gr(C)$ is a quotient of
  \begin{equation*}
    \coker\big(\gr(\Tor_1^{\Lambda}(\Lambda/\m^3,\pi^{\vee})) \to \gr(\Tor_1^{\Lambda}(\Lambda/\m^3,\pi_1^{\vee}))\big),
  \end{equation*}
  because if we have a filtered exact sequence $X \to Y \to C \to 0$ with $C$ carrying the induced filtration, then $\coker(\gr(X) \to \gr(Y))$ surjects onto $\gr(C)$ by \cite[Thm.\ I.4.2.4(1)]{LiOy}.
  Then, as in the proof of \cite[Prop.~\ref{bhhms4:prop:Tor-inj}]{BHHMS4}, we consider the morphism of spectral sequences that converges to this morphism:
  \begin{equation*}
    \begin{gathered}
      \xymatrix{E_i^r\ar@{=>}[r]\ar[d]&\Tor_i^{\Lambda}(\Lambda/\m^3,\pi^{\vee})\ar[d]\\
        E'^r_i\ar@{=>}[r]&\Tor_i^{\Lambda}(\Lambda/\m^3,\pi_1^{\vee}).}
    \end{gathered}
  \end{equation*}
  (Referring to that proof, we have $E_i^0 = \gr(\Lambda/\m^3 \otimes_\Lambda M_i) \cong \gr(\Lambda/\m^3) \otimes_{\gr(\Lambda)} \gr(M_i)$ by \cite[Lemma I.6.14]{LiOy}, so $E_i^1 = \Tor_i^{\gr(\Lambda)}(\gr(\Lambda)/\o\m^3,\gr_{\m}(\pi^{\vee}))$.)
  Assumption~\ref{it:assum-v} says that $\dim_\F E_1^\infty = \dim_\F E_1^1$.
  It easily follows that $\coker(E_1^{r+1} \to E'^{r+1}_1)$ is a subquotient of $\coker(E_1^r \to E'^r_1)$ for any $r \ge 1$ (recall that $E_1'^{r+1}$ is a subquotient of $E_1'^r$, while $E^{r+1}_1=E^{r}_1$ by the preceding sentence).
  This implies the claim by taking $r$ sufficiently large.

  From the sequence~\eqref{eq:coho-seq} we see that
  \begin{equation}\label{eq:dim-C}
    \begin{aligned}
      \dim_\F(C) &= \dim_\F(\pi_2^\vee/\m^3)-\dim_\F(\pi^\vee/\m^3)+\dim_\F(\pi_1^\vee/\m^3) \\
      & = \dim_\F(\gr_{\m}(\pi_2^\vee)/\o\m^3)-\dim_\F(\gr(\pi^\vee)/\o\m^3)+\dim_\F(\gr(\pi_1^\vee)/\o\m^3).
    \end{aligned}
  \end{equation}
  By Step 1 we know that
  \begin{equation}\label{eq:dim-gr-pi2}
    \begin{aligned}
      \dim_\F(\gr_{\m}(\pi_2^\vee)/\o\m^3) &\ge \dim_\F(N_2'/\o\m^3) = \sum_{\lambda \in \P} \dim_\F(N_{2,\lambda}'/\o\m^3)\\
      &= \sum_{\lambda \in \P} \dim_\F(N_{2,\lambda}/\o\m^3) = \dim_\F(\gr_{F'}(\pi_2^\vee)/\o\m^3),
    \end{aligned}
  \end{equation}
  where $N_{2,\lambda} \defeq  \chi_\lambda^{-1} \otimes (\mathfrak{a}_1^{i_0}(\lambda)/\mathfrak{a}(\lambda)) = N_{2,\lambda}'(d_\lambda)$ and we used \cite[Cor.~\ref{bhhms4:cor:gr-pi1}]{BHHMS4} for the last equality.
  Combining equations~\eqref{eq:dim-C}, \eqref{eq:dim-gr-pi2} together with the fact that $\gr(C)$ is a subquotient of $C'$ (cf.\ \eqref{eq:coker-Tor1-gr}) we obtain
  \begin{equation}\label{eq:ineq}
    \dim_\F(C') \ge \dim_\F(C) \ge \dim_\F(\gr_{F'}(\pi_2^\vee)/\o\m^3)-\dim_\F(\gr(\pi^\vee)/\o\m^3)+\dim_\F(\gr(\pi_1^\vee)/\o\m^3).
  \end{equation}
  The exact sequence
  \begin{equation*}
    0 \to C' \to \gr_{F'}(\pi_2^\vee)/\o\m^3 \to \gr(\pi^\vee)/\o\m^3 \to \gr(\pi_1^\vee)/\o\m^3 \to 0
  \end{equation*}
  shows that equality holds in~\eqref{eq:ineq}, and hence in~\eqref{eq:dim-gr-pi2}.
  As equality holds in~\eqref{eq:dim-gr-pi2}, the surjection $\gr_\m(\pi_2^\vee)/ \o\m^3 \onto N_2'/\o\m^3$ in \eqref{eq:grad2:surj} has to be an isomorphism.

  \textbf{Step 3.} By Lemma~\ref{lem:lift-graded-homo} we get a graded morphism (of degree $0$) $N_2' \to \gr_\m(\pi_2^\vee)$, which has to be surjective by the graded Nakayama lemma.
  Recall that $N_2'$ is Cohen--Macaulay by Remark~\ref{rem:gr-pi2-CM}.
  By Step 4 of the proof of \cite[Prop.~\ref{bhhms4:prop:nonsplit-I1}]{BHHMS4} we have $\mathcal{Z}(N_2') = \mathcal{Z}(N_2^{i_0}) = \mathcal{Z}(\gr_\m(\pi_2^\vee))$.
  Using the same argument as in the last paragraph of the proof of \cite[Prop.~\ref{bhhms4:prop:nonsplit-I1}]{BHHMS4} (i.e.\ $N_2'$ is Cohen--Macaulay and the two modules have the same cycle), we deduce that the morphism $N_2' \onto \gr_\m(\pi_2^\vee)$ is an isomorphism.
\end{proof}

\begin{cor}\label{cor:gr-pi'}%
{Assume that $\brho$ is $(4f+1)$-generic.}
  Suppose $\pi' = \pi_1'/\pi_1$ is any nonzero subquotient, where $\pi_1 \subsetneq \pi_1' \subset \pi$.
  Then we have an isomorphism of graded $\gr(\Lambda)$-modules with compatible $H$-actions,
  \begin{equation}\label{eq:gr-pi'}
    \gr_\m(\pi'^\vee) \cong \bigoplus_{\lambda\in\mathscr{P}}\chi_{\lambda}^{-1}\otimes \frac{\fa_1^{i_0}(\lambda)}{\fa_1^{i_0'}(\lambda)}(-d_\lambda),
  \end{equation}
  where $-1 \le i_0 \defeq  i_0(\pi_1) < i_0'\defeq i_0(\pi_1')   \le f$ and $d_\lambda \defeq  \max\{i_0+1-|J_\lambda|,0\}$.
\end{cor}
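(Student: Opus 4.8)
The plan is to bootstrap from Theorem~\ref{thm:gr-pi2} and \eqref{eq:gr-subquot} by adapting the three-step argument in the proof of Theorem~\ref{thm:gr-pi2} to the chain $\pi_1\subset\pi_1'$. Dualizing $0\to\pi_1\to\pi_1'\to\pi'\to0$ realizes $\pi'^\vee=\ker(\pi_1'^\vee\onto\pi_1^\vee)$ as a submodule of $\pi_1'^\vee$, and by \cite[Cor.~\ref{bhhms4:cor:gr-pi1}]{BHHMS4} we know $\gr_\m(\pi_1'^\vee)\cong\bigoplus_\lambda\chi_\lambda^{-1}\otimes\o R/\fa_1^{i_0'}(\lambda)$ and $\gr_\m(\pi_1^\vee)\cong\bigoplus_\lambda\chi_\lambda^{-1}\otimes\o R/\fa_1^{i_0}(\lambda)$, both Cohen--Macaulay. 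Since the surjection $\pi_1'^\vee\onto\pi_1^\vee$ is $\m$-adically strict (the $\m$-adic filtration of a quotient module is the quotient filtration), one gets a short exact sequence $0\to\gr_F(\pi'^\vee)\to\gr_\m(\pi_1'^\vee)\to\gr_\m(\pi_1^\vee)\to0$, where $F$ is the filtration on $\pi'^\vee$ induced from the $\m$-adic filtration on $\pi_1'^\vee$ (equivalently, the subquotient filtration from $\pi^\vee$); as the $\gr$-level map must be the natural surjection $\o R/\fa_1^{i_0'}(\lambda)\onto\o R/\fa_1^{i_0}(\lambda)$ in each $\lambda$-component, this recovers $\gr_F(\pi'^\vee)\cong\bigoplus_\lambda\chi_\lambda^{-1}\otimes\fa_1^{i_0}(\lambda)/\fa_1^{i_0'}(\lambda)$, i.e.\ \eqref{eq:gr-subquot}. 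The real task is to pass from $F$ to the $\m$-adic filtration on $\pi'^\vee$, which introduces the grading shifts $(-d_\lambda)$ and is exactly what Steps~1--3 of the proof of Theorem~\ref{thm:gr-pi2} accomplish there.

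Carrying this out: with $N'\defeq\bigoplus_\lambda\chi_\lambda^{-1}\otimes(\fa_1^{i_0}(\lambda)/\fa_1^{i_0'}(\lambda))(-d_\lambda)$ the claimed answer, one reruns the proof of Theorem~\ref{thm:gr-pi2} with $\pi_1',\pi_1,\pi'$ in place of $\pi,\pi_1,\pi/\pi_1$ and the graded ideal $\fa(\lambda)$ replaced by $\fa_1^{i_0'}(\lambda)$ throughout. In Step~1 one takes $\o\tau\defeq\pi_1'[\m^n]=\pi[\m^n]\cap\pi_1'$ (whose submodule structure is controlled by Lemma~\ref{lem:submodule-structure} together with the value $i_0'=i_0(\pi_1')$, whence the $(4f+1)$-genericity, allowing $n$ of size $\sim f$), lets $\Theta$ be its image in $\pi'$, and uses the filtration comparison~\eqref{eq:compare-fil} to produce a surjection $\gr_\m(\pi'^\vee)/\o\m^k\onto N'/\o\m^k$ for a suitable truncation level $k$. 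Unlike in Theorem~\ref{thm:gr-pi2}, where the maximally deep denominator $\fa=\fa_1^f$ forces $N'$ to have relations only in degrees $\{-1,-2\}$, here $N'$ may have relations in degrees down to $-(i_0'-i_0)$, so one must take $k\sim i_0'-i_0+1$. Step~2 then upgrades this to an isomorphism $\gr_\m(\pi'^\vee)/\o\m^k\cong N'/\o\m^k$ via the spectral-sequence argument of \cite[Prop.~\ref{bhhms4:prop:Tor-inj}]{BHHMS4}, the lower bound $\dim_\F(\gr_\m(\pi'^\vee)/\o\m^k)\ge\dim_\F(N'/\o\m^k)$ coming from \eqref{eq:gr-subquot}, and an analogue of assumption~\ref{it:assum-v} for $\pi_1'$ at level $\o\m^k$. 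Finally, Step~3: a suitable generalization of Lemma~\ref{lem:lift-graded-homo} (allowing relations in degrees $\ge-(k-1)$) lifts the isomorphism of Step~2 to $N'\to\gr_\m(\pi'^\vee)$, onto by graded Nakayama; it is an isomorphism because $N'$ is Cohen--Macaulay (as in Remark~\ref{rem:gr-pi2-CM}) and the two modules have equal characteristic cycle --- indeed $\mathcal{Z}(\gr_\m(\pi'^\vee))=\mathcal{Z}(\gr_F(\pi'^\vee))=\mathcal{Z}(\gr_\m(\pi_1'^\vee))-\mathcal{Z}(\gr_\m(\pi_1^\vee))=\mathcal{Z}(N')$, using that $\gr_F$ and $\gr_\m$ on $\pi'^\vee$ are two good filtrations, the exact sequence above, and the short exact sequences $0\to\fa_1^{i_0}(\lambda)/\fa_1^{i_0'}(\lambda)\to\o R/\fa_1^{i_0'}(\lambda)\to\o R/\fa_1^{i_0}(\lambda)\to0$ together with \cite[Cor.~\ref{bhhms4:cor:gr-pi1}]{BHHMS4}.

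The case where $\pi'$ is irreducible (so that $\pi_1'[\m^n]$ cannot separate the relevant characters $\chi_\lambda$), and any other boundary case, can be treated directly as in Remark~\ref{rem:i0-f-1-or-f}. The main obstacle is Step~2, more precisely establishing the analogue of assumption~\ref{it:assum-v} for $\pi_1'$ at the truncation level $\o\m^k$ with $k\sim i_0'-i_0+1$: this does not hold for an arbitrary filtered module, and I expect it to be deduced from assumption~\ref{it:assum-v} for $\pi$ by chasing the long exact $\Tor$-sequences attached to $0\to(\pi/\pi_1')^\vee\to\pi^\vee\to\pi_1'^\vee\to0$ and its associated-graded analogue, every term of which is computable from the explicit Cohen--Macaulay descriptions of $\gr_\m(\pi^\vee)$, $\gr_\m(\pi_1'^\vee)$ and $\gr_\m((\pi/\pi_1')^\vee)$ afforded by \cite[Thm.~\ref{bhhms4:thm:CMC}]{BHHMS4}, \cite[Cor.~\ref{bhhms4:cor:gr-pi1}]{BHHMS4} and Theorem~\ref{thm:gr-pi2}.
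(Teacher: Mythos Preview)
Your overall architecture (surjection at the truncated level, dimension-count upgrade, lift via a presentation, conclude by Cohen--Macaulayness and cycle equality) is correct, and your Steps~1 and~3 are essentially what the paper does. The genuine gap is precisely the one you flag as ``the main obstacle'': rerunning Step~2 of Theorem~\ref{thm:gr-pi2} with $(\pi_1',\pi_1,\pi')$ in place of $(\pi,\pi_1,\pi_2)$ requires the analogue of assumption~\ref{it:assum-v} for $\pi_1'$ at truncation level $k\sim i_0'-i_0+1$, and your proposed $\Tor$-chasing along $0\to(\pi/\pi_1')^\vee\to\pi^\vee\to\pi_1'^\vee\to0$ does not close: assumption~\ref{it:assum-v} is a spectral-sequence \emph{degeneration} statement for $\pi$ at level~$3$ only, and exact sequences relate dimensions of $\Tor$ groups, not degeneration of the filtered-to-graded comparison. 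Without an independent verification of the analogue of~\ref{it:assum-v} for $\pi_1'$ (which would need global input of the sort used in \S~\ref{subsec:verify:v}, and at a higher truncation level), your Step~2 does not go through.

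The paper sidesteps this entirely by reversing the viewpoint: instead of regarding $\pi'$ as a quotient of the subrepresentation $\pi_1'$, it regards $\pi'$ as a subrepresentation of the quotient $\pi_2=\pi/\pi_1$, via $0\to\pi'\to\pi_2\to\pi_2'\to0$ with $\pi_2'=\pi/\pi_1'$. Now Theorem~\ref{thm:gr-pi2} (which has already consumed assumption~\ref{it:assum-v}) gives both $\gr_\m(\pi_2^\vee)$ and $\gr_\m(\pi_2'^\vee)$ exactly. The point is that the filtration on $\pi_2'^\vee$ induced from the $\m$-adic filtration on $\pi_2^\vee$ can be compared to the $\m$-adic filtration on $\pi_2'^\vee$ \emph{explicitly}: applying~\eqref{eq:compare-fil} to both $\Theta_\lambda$ and $\Theta_\lambda'$ shows they differ by the shift $d_\lambda'-d_\lambda$ (this is~\eqref{eq:compare-fil2}). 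Passing to the cokernel then gives a direct \emph{isomorphism} $\gr_\m(\pi'^\vee)/\o\m^n\cong N'/\o\m^n$ with $n=\max\{i_0'-i_0,2\}+1$ --- so there is no Step~2 analogue at all, and no second use of~\ref{it:assum-v}. The remaining two steps in the paper's argument are exactly the lifting (relations of $N'$ lie in degrees $-1,\dots,-(n-1)$) and the cycle comparison you describe. The price is the genericity $(2n'-1)$ with $n'=n+i_0'+1\le 2f+1$, whence the $(4f+1)$-genericity in the hypothesis.
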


\begin{proof}
 {We first assume that $f \ge 2$.
  Then $4f+1 \ge \max\{9,2f+3\}$, so}
we may assume that $i_0' \le f-1$ by Theorem~\ref{thm:gr-pi2}.
  Let $N'$ denote
 the right-hand side of~\eqref{eq:gr-pi'}.
  Let $\pi_2 \defeq  \pi/\pi_1$ and $\pi'_2 \defeq  \pi/\pi'_1$, so
  \begin{equation*}
    0 \to \pi' \to \pi_2 \to \pi_2' \to 0.
  \end{equation*}
  Let $d_\lambda' \defeq  \max\{i_0'+1-|J_\lambda|,0\}$.

  \textbf{Step 1.} 
  We show that $N'/\o\m^n \cong \gr(\pi'^\vee)/\o\m^n$, where $n \defeq  \max\{i_0'-i_0,2\}+1 (\le {f+1})$.%

  Let $n' \defeq  n+i_0'+1 {(\le {2f+1})}$
  and let $\o\tau \defeq  \tau^{(n')}[\m^{n'}]$.
  {Note that by assumption $\brho$ is $(2n'-1)$-generic.}
  Define $\Theta = \bigoplus_{\lambda \in \P} \Theta_\lambda$ (resp.\ $\Theta' = \bigoplus_{\lambda \in \P} \Theta'_\lambda$), as the image of $\o\tau$ in $\pi_2$ (resp.\ $\pi_2'$).
  Then $\Theta_\lambda'^\vee \subset \Theta^\vee_\lambda$ for all $\lambda \in \P$.
  By~\eqref{eq:compare-fil} applied to $\Theta^\vee_\lambda $ and $\Theta'^\vee_\lambda$ we have
  \begin{equation}\label{eq:compare-fil2}
    \m^i \Theta_\lambda^\vee \cap \Theta'^\vee_\lambda = \m^{i+d_\lambda}\o\tau_\lambda^\vee \cap \Theta'^\vee_\lambda = \m^{i+d_\lambda-d_\lambda'}\Theta'^\vee_\lambda
  \end{equation}
  for all $i \in \Z$.

  From~\eqref{eq:compare-fil} and~\eqref{eq:N_2-lambda'} we have
  \begin{equation*}
    \gr(\Theta_\lambda^\vee)(d_\lambda) \cong \gr_{F_\lambda}(\Theta_\lambda^\vee) \cong \left(\chi_{\lambda}^{-1}\otimes \frac{\fa_1^{i_0}(\lambda)}{\fa(\lambda)}\right)_{\ge -n'+1},
  \end{equation*}
  using the notation $(\cdot)_{\ge -n'+1}$ as in \cite[Lemma~\ref{bhhms4:lem:graded-Tor}]{BHHMS4}, hence
  \begin{equation}\label{eq:gr-Theta-lambda}
    \gr(\Theta_\lambda^\vee) \cong \left(\chi_{\lambda}^{-1}\otimes \frac{\fa_1^{i_0}(\lambda)}{\fa(\lambda)}(-d_\lambda)\right)_{\ge -n'+d_\lambda+1}
  \end{equation}
  and likewise for $\gr(\Theta_\lambda'^\vee)$.
  As $i_0+1 \ge d_\lambda$, by Theorem \ref{thm:gr-pi2} the natural surjection $\gr(\pi_2^\vee) \onto \gr(\Theta^\vee)$ is an isomorphism in degrees $\ge -n'+i_0+2$, and likewise for $\gr(\pi_2'^\vee)$  in degrees $\ge -n'+i_0'+2$.
  As $-n+1 = -n'+i_0'+2  > -n'+i_0+2$, we obtain that the natural surjections $\pi_2^\vee \onto \Theta^\vee$ and $\pi_2'^\vee \onto \Theta'^\vee$ induce isomorphisms
  \begin{equation}\label{eq:pi2-vs-Theta}
    \pi_2^\vee/\m^i\pi_2^\vee \congto \Theta^\vee/\m^i \Theta^\vee, \qquad \pi_2'^\vee/\m^i\pi_2'^\vee \congto \Theta'^\vee/\m^i \Theta'^\vee
  \end{equation}
  for all $0 \le i \le n$.

  Suppose $0 \le i \le n$.
  From the exact sequence $\pi_2'^\vee/\m^i\pi_2'^\vee \to \pi_2^\vee/\m^i\pi_2^\vee \to \pi'^\vee/\m^i\pi'^\vee \to 0$ and~\eqref{eq:pi2-vs-Theta}, we obtain that
  \begin{equation*}
    \pi'^\vee/\m^i\pi'^\vee \cong \bigoplus_{\lambda\in\mathscr{P}} \Theta^\vee_\lambda/(\Theta'^\vee_\lambda + \m^i\Theta^\vee_\lambda).
  \end{equation*}
  By the line above together with~\eqref{eq:pi2-vs-Theta} we see that the kernel of $\pi_2^\vee/\m^i\pi_2^\vee \onto \pi'^\vee/\m^i\pi'^\vee$ is identified with
  \begin{equation*}
   \bigoplus_{\lambda\in\mathscr{P}} (\Theta'^\vee_\lambda + \m^i\Theta^\vee_\lambda)/\m^i \Theta_\lambda^\vee \cong \bigoplus_{\lambda\in\mathscr{P}}\Theta'^\vee_\lambda/\m^{i+d_\lambda-d_\lambda'} \Theta'^\vee_\lambda,
  \end{equation*}
  where the isomorphism follows from~\eqref{eq:compare-fil2},
  and hence we have an exact sequence
  \begin{equation*}
    0 \to \bigoplus_{\lambda\in\mathscr{P}} \Theta'^\vee_\lambda/\m^{i+d_\lambda-d_\lambda'} \Theta'^\vee_\lambda \to \bigoplus_{\lambda\in\mathscr{P}} \Theta^\vee_\lambda/\m^i \Theta^\vee_\lambda \to \pi'^\vee/\m^i\pi'^\vee \to 0.
  \end{equation*}
  Therefore the filtration on the left term induced by the $\m$-adic filtration on the middle term is the $\m$-adic filtration up to a shift by $d_\lambda'-d_\lambda$.
  Taking graded pieces for $i = n$, we obtain
  \begin{equation*}
    0 \to \bigoplus_{\lambda\in\mathscr{P}} (\gr(\Theta_\lambda'^\vee)/\o\m^{n+d_\lambda-d_\lambda'})(d_\lambda'-d_\lambda) \to \bigoplus_{\lambda\in\mathscr{P}} \gr(\Theta_\lambda^\vee)/\o\m^n \to \gr(\pi'^\vee)/\o\m^n \to 0
  \end{equation*}
  By~\eqref{eq:gr-Theta-lambda} and its analogue for $\gr(\Theta_\lambda'^\vee)$ we obtain
  \begin{equation*}
    0 \to \left(\bigoplus_{\lambda\in\mathscr{P}}\chi_{\lambda}^{-1}\otimes \frac{\fa_1^{i'_0}(\lambda)}{\fa(\lambda)}(-d_\lambda)\right)_{\ge -n+1} \to \left(\bigoplus_{\lambda\in\mathscr{P}}\chi_{\lambda}^{-1}\otimes \frac{\fa_1^{i_0}(\lambda)}{\fa(\lambda)}(-d_\lambda)\right)_{\ge -n+1} \to \gr(\pi'^\vee)/\o\m^n \to 0.
  \end{equation*}
  Therefore, $\gr(\pi'^\vee)/\o\m^n \cong (N')_{\ge -n+1} \cong N'/\o\m^n$ (the second isomorphism holds since $N'$ is generated by its elements of degree 0, which follows from the definition of the ideal $\fa_1^{i_0}(\lambda)$), as we wanted to show.
  
  \textbf{Step 2.} 
  We lift the isomorphism $ N'/\o\m^n \congto \gr(\pi'^\vee)/\o\m^n =(\gr(\pi'^\vee))_{\ge -n+1}$ to a homomorphism $N' \to \gr(\pi'^\vee)$.
  Consider the short exact sequence of graded $\gr(\Lambda)$-modules, 
\begin{equation*} 
    0 \to \frac{\fa_1^{i_0'}(\lambda)}{\fa(\lambda)}(-d_\lambda) \to \frac{\fa_1^{i_0}(\lambda)}{\fa(\lambda)}(-d_\lambda) \to \frac{\fa_1^{i_0}(\lambda)}{\fa_1^{i_0'}(\lambda)}(-d_\lambda) \to 0.
  \end{equation*}
  Going back to the proof of Lemma~\ref{lem:lift-graded-homo} (noting that $n \ge 3$), we know that the middle term has relations generated in degrees $-1$, $-2$, and the left term has generators in degree $d_\lambda-d_\lambda'$.
  As $0 \le d_\lambda'-d_\lambda \le i_0'-i_0$, the right term has relations in degree $-1$, $-2$, \dots, $-\max\{i_0'-i_0,2\}=-n+1$.
  (Note that when $d_\lambda'-d_\lambda=0$ then $d_\lambda' = d_\lambda = 0$ by definition since $i_0 < i_0'$, and hence $\fa_1^{i_0}(\lambda) = \fa_1^{i_0'}(\lambda) = \o R$.)
  By Step 3 of the proof of Lemma~\ref{lem:lift-graded-homo}, we deduce the desired lifting $N' \to \gr(\pi'^\vee)$.

  \textbf{Step 3.}
  Using Steps 1 and 2, we conclude that the homomorphism $N' \to \gr(\pi'^\vee)$ is an isomorphism exactly as in Step 3 of the proof of Theorem~\ref{thm:gr-pi2}.
  (Note that $N'$ is Cohen--Macaulay by \cite[Cor.~\ref{bhhms4:cor:subquot}]{BHHMS4}.
  Also note that $\cZ(\gr(\pi'^\vee)) = \cZ(\gr(\pi_2^\vee)) - \cZ(\gr(\pi_2'^\vee)) = \cZ(N_2^{i_0})-\cZ(N_2^{i'_0}) = \cZ(N')$.)

  Finally we assume that $f=1$, and we just assume that $\brho$ is 0-generic.
  We prove that $\pi$ has length $2$ and fits into a short exact sequence $0\ra \pi_0\ra \pi\ra \pi_1\ra0$, where $\pi_0,\pi_1$ are irreducible principal series as in \cite[Cor.~3.92]{BHHMS2}, namely they are dual to each other in the sense that
  \begin{equation}\label{eq:pi-dual}
    \EE^{2}_{\Lambda}(\pi_i^{\vee})\cong\pi_{1-i}^{\vee}\otimes(\det(\brho)\omega^{-1}).
  \end{equation}
  Indeed,  by assumption \ref{it:assum-i} we know that $W(\brho)=\{\sigma_0\}$ is a singleton and it is easy to see that $\pi_0\defeq\langle \GL_2(K)\cdot \sigma_0\rangle$ is an irreducible principal series (as in the proof of \cite[Cor.~3.92]{BHHMS2}) and that $\pi_0=\soc_{\GL_2(K)}(\pi)$.
  Using assumption \ref{it:assum-iii} and \eqref{eq:pi-dual}, we deduce that $\pi$ has a quotient isomorphic to $\pi_1$.
  We need to prove that $V=0$, where $V\defeq \ker(\pi\onto \pi_1)/\pi_0$.
  By \cite[Thm.~3.67]{BHHMS2} (with $r=1$) there is a surjection of $\gr(\Lambda)$-modules with compatible $H$-action
  \begin{equation}\label{eq:ontof=1}
    N\defeq\bigoplus_{\lambda\in\mathscr{P}}\chi_{\lambda}^{-1} \otimes R/\fa(\lambda)\onto \gr_{\m}(\pi^\vee)
  \end{equation}
  and $m(\gr_{\m}(\pi^{\vee}))\leq 4$  by \cite[Cor.~3.71]{BHHMS2}, where $m(-)$ denotes the total multiplicity of $\overline{R}$-modules.
  On the other hand, \cite[Prop.~3.76(ii)]{BHHMS2} implies that $\gr_{\m}(\pi_i^{\vee})$ is an $\overline{R}$-module and  $m(\gr_{\m}(\pi_i^{\vee}))=2$ for $i=0,1$, so we deduce  $m(V)=0$  by the additivity of $m(-)$, equivalently $\dim_{\F}V<+\infty$.
  However, this forces   $\Ext^1_{\GL_2(K)}(V,\pi_0)=0$ by \cite[Lemma~4.3.9, Prop.~4.3.32(1)]{emerton-ordII}, hence $V=0$ (as $\pi_0=\soc_{\GL_2(K)}(\pi)$).
  In all, we deduce that $\pi$ has length $2$ and that \eqref{eq:ontof=1} is an isomorphism (as the graded module $N$ in \eqref{eq:ontof=1} is Cohen--Macaulay) which determines $\gr_{\m}(\pi^{\vee})$.
  Moreover, using \cite[Prop.~3.76(ii)]{BHHMS2} again we check that $\gr_{\m}(\pi'^{\vee})$ is as in \eqref{eq:gr-pi'} for any proper subquotient $\pi'$ of $\pi$.
\end{proof}

\begin{cor}\label{cor:pi2-mn-mult-free}%
 Assume $\brho$ is $\max\{9,2f+3,2n+2i_0+1\}$-generic for some $n\geq 1$.
  If $\pi' = \pi_1'/\pi_1$ is any subquotient, where $\pi_1 \subsetneq \pi_1' \subset \pi$, then $\pi'[\m^n]$ is multiplicity free as $I$-representation.
\end{cor}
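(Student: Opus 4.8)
The plan is to deduce Corollary~\ref{cor:pi2-mn-mult-free} from Corollary~\ref{cor:gr-pi'} by passing to the associated graded module. First I would recall the general principle: for a finite-length smooth $I$-representation $\Pi$ with central character (so that $\Pi^\vee$ is a finitely generated $\Lambda$-module), the $\m$-adic filtration on $\Pi^\vee$ induces the socle-type filtration, and $\gr_\m(\Pi^\vee)_{-d} = \m^d\Pi^\vee/\m^{d+1}\Pi^\vee$ is dual to $\rad^d(\Pi)/\rad^{d+1}(\Pi)$ as $H$-representation (where $\rad$ is the $I_1$-radical, equivalently $\m$-power torsion structure). In particular, for any character $\chi : I \to \F^\times$ factoring through $H$, we have
\begin{equation*}
  [\Pi[\m^n] : \chi] = \sum_{d=0}^{n-1} [\rad^d(\Pi)/\rad^{d+1}(\Pi) : \chi] = \sum_{d=0}^{n-1} \dim_\F\big((\gr_\m(\Pi^\vee)_{-d})^{H = \chi^{-1}}\big).
\end{equation*}
So to bound $[\pi'[\m^n]:\chi]$ it suffices to bound, for each $\lambda \in \P$ with $\chi_\lambda^{-1}$ compatible, the dimension of the degree-$(-d)$ part of $\chi_\lambda^{-1}\otimes \frac{\fa_1^{i_0}(\lambda)}{\fa_1^{i_0'}(\lambda)}(-d_\lambda)$ in the $H$-eigenspace for $\chi^{-1}$, summed over $0 \le d \le n-1$, and then sum over the (at most one) relevant $\lambda$.

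The key point is that the $H$-action on $\gr(\Lambda) \cong \bigotimes_j \F\langle y_j,z_j,h_j\rangle$ makes each graded piece of $\fa_1^{i_0}(\lambda)/\fa_1^{i_0'}(\lambda)$ multiplicity free as $H$-representation \emph{in each fixed degree}, because a monomial $\prod_j y_j^{a_j} z_j^{b_j} h_j^{c_j}$ has $H$-eigencharacter $\prod_j \alpha_j^{a_j - b_j}$ and degree $-\sum_j(a_j+b_j) - 2\sum_j c_j$; inside the commutative quotient $\o R$ (which is all that matters since $\fa_1^{i_0}(\lambda)/\fa_1^{i_0'}(\lambda)$ is an $\o R$-module, as all the relevant ideals contain $J$), the monomials are $\prod_j y_j^{a_j} z_j^{b_j}$ with $a_jb_j = 0$, and a fixed pair $(\text{eigencharacter}, \text{degree}) = (\prod_j\alpha_j^{e_j}, -\sum_j|e_j|)$ determines $(a_j,b_j)$ uniquely. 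Hence $[\pi'[\m^n]:\chi] = \sum_\lambda \sum_{d=0}^{n-1}(\text{0 or 1})$ where the outer sum has at most one term for which $\chi = \chi_\lambda \prod_j \alpha_j^{\ell_j}$ with the $\ell_j$ satisfying the sign conditions attached to $\fa(\lambda)$ (as in Lemma~\ref{lem:submodule-structure}), and within that $\lambda$ the different degrees $d$ contribute pairwise distinct characters. Thus I need to check that as $\lambda$ ranges over $\P$, the sets of characters $\{\chi_\lambda \prod_j\alpha_j^{\ell_j}\}$ appearing in the various $\fa_1^{i_0}(\lambda)/\fa_1^{i_0'}(\lambda)$ are disjoint — but this is exactly the statement, already used repeatedly in \cite{BHHMS4} and in the proof of Theorem~\ref{thm:gr-pi2} (Step 1), that $\gr_\m(\pi^\vee)$, and more generally $\gr_\m(\pi'^\vee)$, decomposes as a direct sum over $\lambda$ with $H$-equivariance, so a given $H$-character in a given degree can come from at most one $\lambda$.

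Concretely, the steps I would carry out are: (1) recall the duality between $\gr_\m(\Pi^\vee)$ and the $I_1$-radical layers of $\Pi$, reducing the multiplicity-freeness of $\pi'[\m^n]$ as $I$-representation to the statement that $\bigoplus_{d=0}^{n-1}\gr_\m(\pi'^\vee)_{-d}$ is multiplicity free as $H$-representation; (2) invoke Corollary~\ref{cor:gr-pi'} — applicable since $\max\{9,2n+2f+1\} \ge 4f+1$ — to identify $\gr_\m(\pi'^\vee)$ with $\bigoplus_{\lambda\in\P}\chi_\lambda^{-1}\otimes \frac{\fa_1^{i_0}(\lambda)}{\fa_1^{i_0'}(\lambda)}(-d_\lambda)$ as graded $\gr(\Lambda)$-module with compatible $H$-action; (3) for a single $\lambda$, observe that $\frac{\fa_1^{i_0}(\lambda)}{\fa_1^{i_0'}(\lambda)}$ is a graded $\o R/\fa(\lambda)$-module, write down its monomial $\F$-basis (as in Step 2 of the proof of Lemma~\ref{lem:lift-graded-homo}), and note the $H$-character of a basis monomial together with its degree determines the monomial, giving multiplicity freeness of $\bigoplus_{d\ge 0}(\text{that module})_{-d}$ — in particular of the truncation to degrees $\ge -n+1+d_\lambda$; (4) check cross-$\lambda$ disjointness: if $\chi_{\lambda}^{-1}\prod_j\alpha_j^{\ell_j} = \chi_{\lambda'}^{-1}\prod_j\alpha_j^{\ell'_j}$ in the same degree with $\lambda \ne \lambda'$ both contributing, derive a contradiction from the genericity hypothesis exactly as in the uniqueness arguments of \cite{BHHMS4} (this uses $\max\{9,2n+2f+1\}$-genericity to ensure the exponents $\ell_j$, bounded in absolute value by $n + f$ or so, cannot collide modulo $p^f-1$); (5) conclude.

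The main obstacle is Step (4), the cross-$\lambda$ disjointness, i.e.\ controlling that distinct $\lambda\in\P$ do not contribute the same $I$-character in the same radical layer of $\pi'$; but in fact this is already built into Corollary~\ref{cor:gr-pi'}, whose statement is an isomorphism of $H$-equivariant graded modules with the direct sum explicitly indexed by $\lambda$, so the collision analysis has effectively been done in establishing that corollary (and in the genericity bookkeeping of Theorem~\ref{thm:gr-pi2}, Step 1, where exactly the characters $\chi_\lambda^{-1}\prod_j\alpha_j^{\ell_j}$ with $\sum_j|\ell_j| < n'$ are shown to determine $\lambda$). Therefore the remaining work is essentially the elementary monomial count in Step (3), and the genericity constant $\max\{9,2n+2f+1\}$ is chosen precisely so that the exponents $\ell_j$ occurring in degrees $\ge -(n-1)$ of the shifted module (where $|\ell_j|$ can be as large as roughly $n-1+d_\lambda \le n+f-1$) remain in the range where the parametrization by $(r_0,\dots,r_{f-1})$ is faithful.
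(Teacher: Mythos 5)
Your overall strategy is sound, but there is a real gap in Step (2): you invoke Corollary~\ref{cor:gr-pi'}, claiming it is applicable ``since $\max\{9,2n+2f+1\}\geq 4f+1$''. This inequality is false in general: for $n<f$ (say $n=1$, $f$ large) one has $2n+2f+1 < 4f+1$, so the $\max\{9,2n+2f+1\}$-genericity assumed in the corollary you are trying to prove does \emph{not} supply the $(4f+1)$-genericity that Corollary~\ref{cor:gr-pi'} requires. The paper avoids this by first reducing to the case $\pi'=\pi_2=\pi/\pi_1$ (via $\pi'[\m^n]\subset\pi_2[\m^n]$) and then applying Theorem~\ref{thm:gr-pi2}, which only needs $\max\{9,2f+3\}$-genericity, comfortably implied by $\max\{9,2n+2f+1\}$-genericity for all $n\geq 1$. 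So you should make this reduction rather than reaching for the stronger Corollary~\ref{cor:gr-pi'}.

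A second, more conceptual, point: in your cross-$\lambda$ disjointness step you assert that the claim ``a given $H$-character in a given degree can come from at most one $\lambda$'' is ``already built into'' Corollary~\ref{cor:gr-pi'} because the isomorphism there is a direct sum indexed by $\lambda$. That reasoning does not work: a direct sum of $H$-equivariant graded modules can certainly have coinciding $H$-eigencharacters in the same degree across different summands (it would just mean the total is not multiplicity free), so the isomorphism alone carries no disjointness information. You are closer to the truth in your parenthetical remark that the disjointness is encoded in the genericity bookkeeping of Theorem~\ref{thm:gr-pi2} Step 1 — but that argument is carried out only for a specific finite $n$, not for the arbitrary $n$ in the present corollary. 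The clean way to finish, which is what the paper does, is to first observe that the target module $\bigoplus_\lambda\chi_\lambda^{-1}\otimes(\fa_1^{i_0}(\lambda)/\fa(\lambda))(-d_\lambda)$ is generated in degree $0$ (so truncating to degrees $>-n$ reduces to truncating $\bigoplus_\lambda\chi_\lambda^{-1}\otimes\o R/\fa(\lambda)$ to degrees $>-(n+d_\lambda)$, and $n+d_\lambda\leq n+i_0+1$), and then cite the dedicated multiplicity-freeness lemma \cite[Lemma~\ref{bhhms4:lem:N/I-multifree}]{BHHMS4}, which is exactly the device that turns the genericity hypothesis into the within-$\lambda$ and cross-$\lambda$ non-collision of characters. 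Your monomial analysis in Step (3) is correct as far as it goes, but it only handles the within-$\lambda$ part.
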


\begin{proof}
 Since $\pi'$ injects into $\pi_2 = \pi/\pi_1$ and hence $\pi'[\m^n] \subset \pi_2[\m^n]$ we are reduced to the case where $\pi'=\pi_2$.
  We need to show that $\gr(\pi_2^\vee)/\o\m^n$ is multiplicity free.
  By Theorem~\ref{thm:gr-pi2}, and as ${\mathfrak{a}_1^{i_0}(\lambda)}/{\mathfrak{a}(\lambda)}$ is generated by elements of degree $-d_\lambda$ for each $\lambda \in \P$, it is equivalent to showing that
  \begin{equation*}
    \bigoplus_{\lambda \in \P} \chi_\lambda^{-1} \otimes \left(\frac{\mathfrak{a}_1^{i_0}(\lambda)}{\mathfrak{a}(\lambda)}(-d_\lambda)\right)_{> -n}
  \end{equation*}
  is multiplicity free.
  Hence it is sufficient that
  \begin{equation*}
    \bigoplus_{\lambda \in \P} \chi_\lambda^{-1} \otimes \left(\frac{\o R}{\mathfrak{a}(\lambda)}\right)_{> -(n+d_\lambda)}
  \end{equation*}
  is multiplicity free.
  This follows from \cite[Lemma~\ref{bhhms4:lem:N/I-multifree}]{BHHMS4}, as $n+d_\lambda \le n+i_0+1$ and as $N/\mathcal{I}^{(n+i_0+1)}N$ surjects onto $N/\o\m^{n+i_0+1} N {= N_{>-(n+i_0+1)}}$ {(as $h_j$ kills $N$)}.
\end{proof}

\begin{rem}\label{rem2:i0-f-1-or-f}
  Just like in Remark~\ref{rem:i0-f-1-or-f} there is an easier proof when $i_0(\pi_1) \in \{f-1,f\}$ {with $\rhobar$ being $\max\{9,2n-1,2f+1\}$-generic}.
  In the first case, the multiplicity freeness {follows from \eqref{eq:i0-f-1}, by applying \cite[Lemma~\ref{bhhms4:lem:compare-I1-invts}(ii)]{BHHMS4} with $m = 2n-2$ and $\lambda = \lambda'' \in \P^\ss$ (so $t_j = y_j$ for all $j$)}; in the second case it is trivial.
\end{rem}

We conclude this section with a further result regarding the structure of $\pi$.
{Let $\pi_s$ be an admissible smooth representation satisfying assumptions \ref{it:assum-i}--\ref{it:assum-v} with respect to $\rhobar^{\ss}$.}
The most optimistic expectation,  {at least for globally defined $\pi = \pi(\brho)$ and $\pi_s = \pi(\brho^\ss)$ as in \S~\ref{sec:global:setting}} (cf.\ the comments after \cite[Thm.\ 19.10]{BP}), is that $\pi^\ss \cong \pi_s$ and moreover that $\pi$ and $\pi_s$ both have length $f+1$.
{(In fact, the first expectation implies the second by Remark \ref{rem:length-f+1} below.)}
The following proposition provides new evidence for this expectation.
For any $\lambda' \in \P^\ss$ we let $\fa^\ss(\lambda')$ denote the ideal of $R$ generated by all $t_j^\ss = t_j^\ss(\lambda') \in \{y_j,z_j,y_jz_j\}$, which are defined as in (\ref{eq:id:al}) but for the Galois representation $\brho^\ss$. Recall $\fa^\ss(\lambda') \supset \ker(R \to \o R)$, so we often think of it as ideal of $\o R$. %

\begin{prop}\label{prop:gr-m-semisimple-nonsplit}
  For any $-1 \le i_0 \le f-1$ we have an isomorphism
  \begin{equation*}
    \bigoplus_{\lambda\in\P}\chi_{\lambda}^{-1}\otimes \frac{\fa_1^{i_0}(\lambda)}{\fa_1^{i_0+1}(\lambda)}(-d_\lambda) \cong \bigoplus_{\lambda'\in\P^\ss,\; \ell(\lambda') = i_0+1}\chi_{\lambda'}^{-1}\otimes \frac{\o R}{\fa^\ss(\lambda')}
  \end{equation*}
  of graded $\gr(\Lambda)$-modules with compatible $H$-actions.
\end{prop}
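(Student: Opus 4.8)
The statement is purely combinatorial: it asserts an isomorphism of graded $\gr(\Lambda)$-modules with compatible $H$-action, so it suffices to match up the two sides summand by summand after passing to a suitable indexing bijection. The plan is to prove it by exhibiting, for each fixed $i_0$, an explicit bijection between the set of pairs on the left that contribute nontrivially and the set $\{\lambda'\in\P^\ss : \ell(\lambda')=i_0+1\}$ on the right, and then checking the two matched graded modules are isomorphic together with their $H$-eigencharacter. First I would unwind the definitions of $\fa_1^{i_0}(\lambda)$ and $\fa_1^{i_0+1}(\lambda)$ from \cite[eq.~\eqref{bhhms4:eq:fa-1}]{BHHMS4}; recall from there that these ideals interpolate between $\gr_\m(\Lambda)$ and $\fa(\lambda)$ by successively adjoining the generators $t_j(\lambda)$ as $\ell(\lambda)$ controls how many steps have already been taken, so that the graded quotient $\fa_1^{i_0}(\lambda)/\fa_1^{i_0+1}(\lambda)$ is nonzero exactly when the passage from level $i_0$ to $i_0+1$ actually adjoins a new generator, i.e.\ precisely (roughly) when $\ell(\lambda)\le i_0$ and the relevant index lies in $J_{\rhobar}^c$. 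In that case the quotient is a free module of rank one over $\o R/\fa(\lambda)$ generated in degree $-d_\lambda$, up to interchanging $y_j\leftrightarrow z_j$ — this is the content one extracts from the analogous computations in \cite[Cor.~\ref{bhhms4:cor:gr-pi1}]{BHHMS4} and \cite[Prop.~\ref{bhhms4:prop:nonsplit-I1}]{BHHMS4}.

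\textbf{Key steps.} (1) Describe the nonzero summands on the left: for $\lambda\in\P$ with $\ell(\lambda)\le i_0$, the quotient $\fa_1^{i_0}(\lambda)/\fa_1^{i_0+1}(\lambda)$ is, after the appropriate coordinate swaps, isomorphic to $\o R/\fa^\ss(\lambda')$ shifted by $-d_\lambda$, where $\lambda'$ is the element of $\P^\ss$ obtained from $\lambda$ by replacing the relevant entry so that $\ell(\lambda')=i_0+1$ (concretely, promoting the $(d_\lambda)$-th available index of $J_{\rhobar}^c$ into $J_{\lambda'}$, using the explicit parametrization of $\P^\ss$ recalled in \S\ref{sec:notation}). (2) Show this assignment $\lambda\mapsto\lambda'$ is a bijection onto $\{\lambda'\in\P^\ss:\ell(\lambda')=i_0+1\}$: injectivity because $\lambda$ is recovered from $\lambda'$ by demoting the unique index of $J_{\lambda'}\cap J_{\rhobar}^c$ of maximal "rank", and surjectivity by running the construction backwards, noting that any $\lambda'\in\P^\ss$ with $\ell(\lambda')=i_0+1$ must have $J_{\lambda'}\cap J_{\rhobar}^c\ne\emptyset$ when $i_0+1>|J_{\rhobar}|$ (and the case $i_0+1\le |J_{\rhobar}|$ where all of $J_{\lambda'}\subset J_{\rhobar}$ is handled by $d_\lambda=0$, $\fa_1^{i_0}(\lambda)=\o R$). (3) Match the $H$-characters: on the left the summand carries $\chi_\lambda^{-1}$ times the $H$-weight of $\fa_1^{i_0}(\lambda)/\fa_1^{i_0+1}(\lambda)$, and one checks this new generator has $H$-weight $\alpha_{j_0}^{\pm1}$ exactly accounting for the difference between $\chi_\lambda$ and $\chi_{\lambda'}$; this is where one invokes that $y_j$ (resp.\ $z_j$) is an $H$-eigenvector with eigencharacter $\alpha_j$ (resp.\ $\alpha_j^{-1}$), together with the relation between $\chi_\lambda$ and the combinatorics of $J_\lambda$. (4) Check the module isomorphism $\fa_1^{i_0}(\lambda)/\fa_1^{i_0+1}(\lambda)\cong\o R/\fa^\ss(\lambda')$ respects the $\gr(\Lambda)$-action (and not merely the $\o R$-action); since $\gr(\Lambda)\twoheadrightarrow\o R$ has kernel $J$ and both modules are killed by $J$ (being generated in a single degree with the stated relations, or by the Cohen–Macaulay/$\o R$-module remarks as in Remark~\ref{rem:killed-by-J}), this is automatic. (5) Also verify the degree shift: $d_\lambda$ on the left versus $0$ on the right — consistent because $\fa^\ss(\lambda')$ is generated in degrees $-1,-2$ and the quotient is generated in degree $0$, while $\fa_1^{i_0}(\lambda)/\fa_1^{i_0+1}(\lambda)$ is generated by the single new element $t_{j_0}$-times-monomial in degree $-d_\lambda$.

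\textbf{Main obstacle.} The routine calculations (the exact form of $\fa_1^{i}(\lambda)$, the coordinate interchanges, the det-twist bookkeeping in the $H$-characters) are tedious but not deep; the genuinely delicate point is Step (2), namely verifying that the map $\lambda\mapsto\lambda'$ is well-defined and bijective across \emph{all} the cases encoded in the definition of $\P$ versus $\P^\ss$ — in particular keeping straight the role of $J_{\rhobar}$ (which constrains $\P\subset\P^\ss$ via \eqref{eq:P}) and ensuring that promoting an index of $J_{\rhobar}^c$ into $J_{\lambda'}$ does not accidentally violate the compatibility conditions (1)–(3) defining $\P^\ss$. I expect this to require a careful case analysis on the value $\lambda_{j_0}(x_{j_0})$ and its neighbors, of the same flavor as (and relying on) the combinatorial lemmas in \cite[\S\ref{bhhms4:sec:preliminaries-fin-len}]{BHHMS4}, especially the additivity $f+1-|J_\lambda| = |J_1|+|J_2|+|J_{\lambda^*}|+1$ from \cite[Lemma~\ref{bhhms4:lem:J-sets-add-up}]{BHHMS4} already cited in Remark~\ref{rem:i0-f-1-or-f}, which governs exactly how many promotions are available. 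Once this bookkeeping is in place, assembling the graded isomorphism over all $\lambda$ is formal.
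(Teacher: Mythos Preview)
Your proposal contains a genuine gap at the very first step: you assert that $\fa_1^{i_0}(\lambda)/\fa_1^{i_0+1}(\lambda)$ is ``a free module of rank one over $\o R/\fa(\lambda)$ generated in degree $-d_\lambda$'' and that consequently each $\lambda\in\P$ on the left corresponds to a \emph{single} $\lambda'\in\P^\ss$ on the right. This is incorrect. Recall from \cite[eq.~\eqref{bhhms4:eq:fa-1}]{BHHMS4} that $\fa_1^{i_0}(\lambda)=I(J_1,J_2,d_\lambda)+\fa(\lambda)$, where $I(J_1,J_2,d)$ is the ideal generated by \emph{all} squarefree monomials $\prod_{j\in J_1'}y_j\prod_{j\in J_2'}z_j$ with $J_1'\subset J_1$, $J_2'\subset J_2$, $|J_1'|+|J_2'|=d$. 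Hence the quotient $\fa_1^{i_0}(\lambda)/\fa_1^{i_0+1}(\lambda)$ is generated by $\binom{|J_1|+|J_2|}{d_\lambda}$ monomials $p_{J'}$ of degree $-d_\lambda$, one for each subset $J'\subset J_1\sqcup J_2$ of size $d_\lambda$, and is typically far from cyclic. Your proposed bijection $\lambda\mapsto\lambda'$ therefore cannot exist; a single $\lambda$ on the left matches $\binom{|J|}{d_\lambda}$ summands on the right.

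The paper's proof proceeds by establishing, for each fixed $\lambda\in\P$, a direct-sum decomposition
\[
\chi_\lambda^{-1}\otimes\frac{\fa_1^{i_0}(\lambda)}{\fa_1^{i_0+1}(\lambda)}(-d_\lambda)\cong\bigoplus_{J'\subset J,\;|J'|=d_\lambda}\chi_{\lambda'(J')}^{-1}\otimes\frac{\o R}{\fa^\ss(\lambda'(J'))},
\]
where $\lambda'(J')\in\P^\ss$ is obtained from $\lambda$ by replacing $\lambda_j(x_j)=p-1-x_j$ by $p-3-x_j$ for $j\in J'\cap J_1$ and $\lambda_j(x_j)=x_j$ by $x_j+2$ for $j\in J'\cap J_2$. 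The nontrivial algebra is showing that the surjection $\bigoplus_{J'}p_{J'}\otimes R/\fa^\ss(\lambda'(J'))\onto I(J_1,J_2,d_\lambda)/(I(J_1,J_2,d_\lambda+1)+I(J_1,J_2,d_\lambda)\cap\fa(\lambda))$ is actually injective, which requires computing the monomial-ideal intersection $I(J_1,J_2,d_\lambda)\cap\fa(\lambda)$ explicitly and then a short argument using auxiliary ideals $\fb(J')$. Your steps (3)--(5) are in the right spirit once the correct indexing is in place, but step (2) needs to be replaced entirely: rather than a bijection, one must check that the assignment $(\lambda,J')\mapsto\lambda'(J')$ surjects onto $\{\lambda'\in\P^\ss:\ell(\lambda')=i_0+1\}$ and is injective, which is a straightforward verification once $\lambda'(J')$ is written down.
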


\begin{rem}\label{rem:gr-m-semisimple-nonsplit}
  For any $-1 \le i_0 \le f-1$ let $\pi' \defeq  \pi_1'/\pi_1$, where $i_0(\pi_1) = i_0$ and $i_0(\pi'_1) = i_0+1$ (if such $\pi_1$, $\pi_1'$ exist) and let $\pi'_s$   denote the subquotient of $\pi_s$ corresponding to the subset {$\P' \defeq \{\lambda'\in\P^\ss,\; \ell(\lambda') = i_0+1\}$} in \cite[Cor.~\ref{bhhms4:cor:finite-length}(ii)]{BHHMS4} (if it exists).
  If $\pi'$ and $\pi'_s$ exist (for example, if $\pi$ and $\pi_s$ have length $f+1$), then by Corollary~\ref{cor:gr-pi'}  and \cite[Cor.~\ref{bhhms4:cor:finite-length}(iii)]{BHHMS4} {(provided that $\brho$ is $\max\{9,4f+1\}$-generic)}, Proposition~\ref{prop:gr-m-semisimple-nonsplit} asserts that
  \begin{equation}\label{eq:gr-isom}
    \gr_\m(\pi'^\vee) \cong \gr_\m(\pi_s'^\vee)
  \end{equation}
  as graded $\gr(\Lambda)$-modules with compatible $H$-actions.
  If $i_0+1 \in \{0,f\}$, we even know that $\pi' \cong \pi_s'$ are isomorphic principal series (compare \cite[Prop.\ 10.8]{HuWang2} with \cite[Cor.\ 3.92]{BHHMS2}). %
  More interestingly, if $f = 2$ and $i_0 = 0$ we know that $\pi'$ and $\pi'_s$ exist (and are supersingular) by \cite[Thm.\ 1.7]{HuWang2}, \cite[Cor.\ 3.92]{BHHMS2}, and hence~\eqref{eq:gr-isom} holds (provided $\brho$ is $\max\{9,4f+1\}$-generic).
\end{rem}

\begin{proof}
  Fix $\lambda\in\P$.
  As usual, let $J_1 \defeq  \{ j \in J_{\rhobar}^c : \lambda_j(x_j) = p-1-x_j \}$ and $J_2 \defeq  \{ j \in J_{\rhobar}^c : \lambda_j(x_j) = x_j \}$, and let $J \defeq  J_1 \sqcup J_2$.
  We show that
  \begin{equation}\label{eq:lambda-term}
    \chi_{\lambda}^{-1}\otimes \frac{\fa_1^{i_0}(\lambda)}{\fa_1^{i_0+1}(\lambda)}(-d_\lambda) \cong \bigoplus_{J' \subset J,\; |J_\lambda|+|J'|=i_0+1}\chi_{\lambda'(J')}^{-1}\otimes \frac{\o R}{\fa^\ss(\lambda'(J'))},
  \end{equation}
  where $\lambda'(J') \in \P^\ss$ is defined by
  \begin{equation*}
    \lambda'(J')_j(x_j) \defeq 
    \begin{cases}
      p-3-x_j & \text{if $j \in J_1' \defeq  J' \cap J_1$,} \\
      x_j+2 & \text{if $j \in J_2' \defeq  J' \cap J_2$,} \\
      \lambda_j(x_j) & \text{otherwise.}
    \end{cases}
  \end{equation*}
  It is easy to check that this implies the proposition, by taking a direct sum over all $\lambda \in \P$.

  If $i_0+1-|J_\lambda| < 0$, then~\eqref{eq:lambda-term} trivially holds: by definition, $\fa_1^{i_0}(\lambda) = \fa_1^{i_0+1}(\lambda) = \o R$, so both sides are zero.

  Suppose that $i_0+1-|J_\lambda| \ge 0$, so $d_\lambda = i_0+1-|J_\lambda|$.
  Note that $\chi_{\lambda'(J')}^{-1} = \chi_\lambda^{-1} \prod_{j \in J_1'}\alpha_j \prod_{j \in J_2'}\alpha_j^{-1}$, and $\prod_{j \in J_1'}\alpha_j \prod_{j \in J_2'}\alpha_j^{-1}$ gives the action of $H$ on the degree $d_\lambda$ polynomial 
  \[p_{J'} \defeq  \prod_{j \in J_1'}y_j \prod_{j \in J_2'}z_j \in R \cong \F[y_j,z_j : 0 \le j \le f-1].\]
  Therefore, by twisting both sides by $\chi_\lambda(d_\lambda)$, it suffices to show that
  \begin{equation*}
    \frac{\fa_1^{i_0}(\lambda)}{\fa_1^{i_0+1}(\lambda)} \cong \bigoplus_{J' \subset J,\; |J'|=d_\lambda} p_{J'}\otimes \frac{\o R}{\fa^\ss(\lambda'(J'))}.
  \end{equation*}
  We have
  \begin{equation*}
    \frac{\fa_1^{i_0}(\lambda)}{\fa_1^{i_0+1}(\lambda)} \cong \frac{I(J_1,J_2,d_\lambda)+\fa(\lambda)}{I(J_1,J_2,d_\lambda+1)+\fa(\lambda)} \cong \frac{I(J_1,J_2,d_\lambda)}{I(J_1,J_2,d_\lambda+1)+I(J_1,J_2,d_\lambda) \cap \fa(\lambda)}
  \end{equation*}
  (recall the ideals $I(J_1,J_2,d_\lambda)$ from \cite[Def.~\ref{bhhms4:def:IJideals}]{BHHMS4}).
  Note that $I(J_1,J_2,d_\lambda) = (p_{J'} : J' \subset J,\; |J'|=d_\lambda)$ and $\fa(\lambda) = (t_j : 0 \le j \le f-1)$ with $t_j = y_jz_j$ for all $j \in J$ (see equation \eqref{eq:id:al}).
  By \cite[Prop.\ 1.2.1]{herzog-hibi} the ideal $I(J_1,J_2,d_\lambda) \cap \fa(\lambda)$ is generated by the monomials $p_{J'} z_j$ ($j \in J_1'$), $p_{J'} y_j$ ($j \in J_2'$), and $p_{J'} t_j$ ($j \notin J'$), where $J' \subset J$ runs through all subsets with $|J'|=d_\lambda$.
  Hence the ideal $I(J_1,J_2,d_\lambda+1)+I(J_1,J_2,d_\lambda) \cap \fa(\lambda)$ is generated by the monomials $p_{J'} z_j$ ($j \in J_1' \sqcup (J_2\setminus J_2')$), $p_{J'} y_j$ ($j \in J_2' \sqcup (J_1\setminus J_1')$), and $p_{J'} t_j$ ($j \notin J$), where again $J' \subset J$ runs through all subsets with $|J'|=d_\lambda$.
  Since, from equation \eqref{eq:id:al} and the definition of $\lambda'(J')$, $\fa^\ss(\lambda'(J')) = \fa(\lambda) + (z_j : j \in J_1' \sqcup (J_2\setminus J_2')) + (y_j : j \in J_2' \sqcup (J_1\setminus J_1'))$ we deduce that
  \begin{equation*}
    I(J_1,J_2,d_\lambda+1)+I(J_1,J_2,d_\lambda) \cap \fa(\lambda) = \sum_{J' \subset J,\; |J'|=d_\lambda} p_{J'} \cdot \fa^\ss(\lambda'(J')).
  \end{equation*}
  In particular, 
\[
  \frac{I(J_1,J_2,d_\lambda)}{I(J_1,J_2,d_\lambda+1)+I(J_1,J_2,d_\lambda) \cap \fa(\lambda)}  \cong  \frac{\sum_{J' \subset J,\;
|J'|=d_\lambda} p_{J'}R}{\sum_{J' \subset J,\; |J'|=d_\lambda} p_{J'} \fa^\ss(\lambda'(J'))},
\]  
so for each index $J'$, multiplication induces a homomorphism
  \begin{equation*}
    p_{J'}\otimes \frac{R}{\fa^\ss(\lambda'(J'))} \to \frac{I(J_1,J_2,d_\lambda)}{I(J_1,J_2,d_\lambda+1)+I(J_1,J_2,d_\lambda) \cap \fa(\lambda)}
  \end{equation*}
  and passing to the direct sum induces a surjective homomorphism
  \begin{equation*}
    \bigoplus_{J' \subset J,\; |J'|=d_\lambda} p_{J'}\otimes \frac{R}{\fa^\ss(\lambda'(J'))} \onto \frac{I(J_1,J_2,d_\lambda)}{I(J_1,J_2,d_\lambda+1)+I(J_1,J_2,d_\lambda) \cap \fa(\lambda)},
  \end{equation*}
  which we need to show is an isomorphism.
  Suppose that we have an element $f = (p_{J'} \otimes [f_{J'}])_{J'}$ in the kernel, or equivalently that $\sum_{J'} p_{J'}f_{J'} = \sum_{J'} p_{J'}g_{J'}$ in $R$ for some $g_{J'} \in \fa^\ss(\lambda'(J'))$.
  By replacing $f_{J'}$ by $f_{J'}-g_{J'}$ we may assume that $g_{J'} = 0$ for all $J'$, i.e.\ that $\sum_{J'} p_{J'}f_{J'} = 0$.
  Fix $J'$ and let $\fb(J') \defeq  (z_j : j \in J_2\setminus J_2') + (y_j : j \in J_1\setminus J_1') \subset \fa^\ss(\lambda'(J'))$.
  From $p_{J'}f_{J'} = - \sum_{J'' \ne J'} p_{J''}f_{J''}$ we deduce that $p_{J'}f_{J'} \in \fb(J')$.
  As multiplication by $p_{J'}$ is injective on $R/\fb(J')$ (a polynomial ring), it follows that $f_{J'} \in \fb(J') \subset \fa^\ss(\lambda'(J'))$, so $f = 0$, as desired.
\end{proof}

\subsection{\texorpdfstring{$I_1$}{I\_1}-invariants and \texorpdfstring{$\GL_2(\cO_K)$}{GL\_2(O\_K)}-socle of subquotient representations of \texorpdfstring{$\pi$}{pi}}
\label{sec:further-properties}

We describe the $I_1$-invariants of subquotients of $\pi$. We deduce that $\pi$ is multiplicity free and give a description of the $\GL_2(\cO_K)$-socles of subquotients of $\pi$.

We start with the $I_1$-invariants of quotients $\pi_2 = \pi/\pi_1$ of $\pi$:

\begin{prop}\label{prop:pi2-I1-invt}
  {Assume that $\brho$ is $\max\{9,2f+3\}$-generic.} %
  Let $i_0 = i_0(\pi_1)$ with $-1 \le i_0 \le f$ be as in \cite[Thm.~\ref{bhhms4:thm:conj2}]{BHHMS4}.
  Then $\pi_2^{I_1}$ is multiplicity free and
  \begin{equation*}
    \JH(\pi_2^{I_1}) = \{ \chi_\lambda : \text{$\lambda \in \P, |J_\lambda| > i_0$ or $\lambda \in \P^\ss \setminus \P, |J_\lambda| = i_0+1$} \}.
  \end{equation*}
\end{prop}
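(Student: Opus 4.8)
The plan is to derive this directly from Theorem~\ref{thm:gr-pi2}, which gives an explicit description of $\gr_\m(\pi_2^\vee)$ as a graded $\gr(\Lambda)$-module with compatible $H$-action. The key observation is that the $I_1$-invariants of $\pi_2$ are dual to the degree-$0$ part of $\gr_\m(\pi_2^\vee)$, i.e.\ $(\pi_2^{I_1})^\vee = \pi_2^\vee/\m\pi_2^\vee = (\gr_\m(\pi_2^\vee))_0$, and the $H$-action on $\pi_2^{I_1}$ corresponds to the (inverse of the) $H$-action on $(\gr_\m(\pi_2^\vee))_0$. So I would evaluate the right-hand side of Theorem~\ref{thm:gr-pi2} in degree $0$.

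First I would recall that the summand $\chi_\lambda^{-1}\otimes \frac{\fa_1^{i_0}(\lambda)}{\fa(\lambda)}(-d_\lambda)$ is generated in degree $-d_\lambda$ shifted to degree $0$, so its degree-$0$ part is the image of the degree-$(-d_\lambda)$ part of $\fa_1^{i_0}(\lambda)$, which (as noted in the discussion after Theorem~\ref{thm:gr-pi2}) is nonzero precisely when $|\{j\in J_\brho^c : \lambda_j(x_j)\in\{p-1-x_j,x_j\}\}| \ge d_\lambda$, equivalently $|J_1|+|J_2| \ge d_\lambda = \max\{i_0+1-|J_\lambda|,0\}$. When it is nonzero, the degree-$0$ part is $1$-dimensional with $H$ acting by $\chi_\lambda^{-1}$ times the character by which $H$ acts on the relevant degree-$d_\lambda$ monomial $p_{J'}=\prod_{j\in J_1'}y_j\prod_{j\in J_2'}z_j$ for a subset $J'\subset J_1\sqcup J_2$ of size $d_\lambda$; this is exactly the computation carried out in the proof of Proposition~\ref{prop:gr-m-semisimple-nonsplit}. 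In fact the cleanest route is to invoke Proposition~\ref{prop:gr-m-semisimple-nonsplit} together with the short exact sequences $0\to \fa_1^{i_0+1}(\lambda)/\fa(\lambda)\to\fa_1^{i_0}(\lambda)/\fa(\lambda)\to\fa_1^{i_0}(\lambda)/\fa_1^{i_0+1}(\lambda)\to 0$ and telescope: the degree-$0$ part of $\gr_\m(\pi_2^\vee)$ decomposes as $\bigoplus_{i_0 < i \le f}\bigoplus_{\lambda'\in\P^\ss,\ \ell(\lambda')=i}\chi_{\lambda'}^{-1}\otimes(\o R/\fa^\ss(\lambda'))_0$, and each $(\o R/\fa^\ss(\lambda'))_0$ is $1$-dimensional with trivial $H$-action. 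Taking duals, this gives $\JH(\pi_2^{I_1})=\{\chi_{\lambda'} : \lambda'\in\P^\ss,\ \ell(\lambda')>i_0\}$, each with multiplicity one. The final step is to rewrite this index set in the form stated: using~\eqref{eq:P} and the definition of $J_\lambda$, one checks $\{\lambda'\in\P^\ss : \ell(\lambda')>i_0\}=\{\lambda\in\P:|J_\lambda|>i_0\}\sqcup\{\lambda\in\P^\ss\setminus\P:|J_\lambda|=i_0+1\}$, because the elements of $\P^\ss\setminus\P$ that occur in the socle filtration of $\pi$ at a given level are exactly those of level $i_0+1$ (this uses $\ell(\lambda)=|J_\lambda|$ and that $\D^\ss\cap(\P^\ss\setminus\P)$ consists of weights at "adjacent" levels — I would cite the relevant combinatorial facts from \S~\ref{sec:notation} and \cite{BHHMS4}).

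Alternatively, and perhaps more in the spirit of the paper, the purely combinatorial identity $\bigoplus_{\lambda\in\P}\chi_\lambda^{-1}\otimes(\fa_1^{i_0}(\lambda)/\fa(\lambda))_{\text{deg }-d_\lambda}\cong\bigoplus_{\lambda'\in\P^\ss,\ \ell(\lambda')>i_0}\chi_{\lambda'}^{-1}$ can be extracted directly from the proof of Proposition~\ref{prop:gr-m-semisimple-nonsplit} by summing~\eqref{eq:lambda-term} over $i_0 < i \le f$ (or from \cite[Cor.~\ref{bhhms4:cor:finite-length}]{BHHMS4} applied to the subquotient of $\pi_s$ corresponding to $\{\lambda':\ell(\lambda')>i_0\}$), and then one just reads off degree $0$ and dualizes.

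The genericity hypothesis $\max\{9,2f+3\}$-generic is inherited directly from Theorem~\ref{thm:gr-pi2}, so no new constraint arises. I expect the only real obstacle to be bookkeeping: correctly tracking the grading shift $(-d_\lambda)$ and the $H$-character twist when passing between $\fa_1^{i_0}(\lambda)/\fa(\lambda)$ and the $\brho^\ss$-side ideals $\o R/\fa^\ss(\lambda')$, and verifying the reindexing $\{\ell(\lambda')>i_0\}=\{\lambda\in\P:|J_\lambda|>i_0\}\cup\{\lambda\in\P^\ss\setminus\P:|J_\lambda|=i_0+1\}$ — but all the needed combinatorics is already contained in the proof of Proposition~\ref{prop:gr-m-semisimple-nonsplit} and in \S~\ref{sec:notation}, so this should be routine. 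Multiplicity freeness is immediate once the decomposition into distinct characters is established (alternatively it follows from Corollary~\ref{cor:pi2-mn-mult-free} with $n=1$, at the cost of a stronger genericity hypothesis).
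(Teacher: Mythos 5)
Your high-level strategy — read off the degree-$0$ part of $\gr_\m(\pi_2^\vee)$ from Theorem~\ref{thm:gr-pi2} and dualize — is sound and matches the paper's starting point, but the combinatorial bookkeeping you sketch contains genuine errors that would derail the proof.

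First, the claim that the degree-$0$ part of $\chi_\lambda^{-1}\otimes\frac{\fa_1^{i_0}(\lambda)}{\fa(\lambda)}(-d_\lambda)$ is $1$-dimensional when nonzero is false when $d_\lambda>0$: that degree-$0$ part is spanned by $\chi_\lambda^{-1}\otimes p_{J'}$ over \emph{all} $J'\subset J_1\sqcup J_2$ of cardinality $d_\lambda$, so it has dimension $\binom{|J_1|+|J_2|}{d_\lambda}$, not $1$. Second, the reindexing identity you rely on,
\[
\{\lambda'\in\P^\ss : \ell(\lambda')>i_0\}=\{\lambda\in\P:|J_\lambda|>i_0\}\sqcup\{\lambda\in\P^\ss\setminus\P:|J_\lambda|=i_0+1\},
\]
is simply false: it asserts that $\P^\ss\setminus\P$ has no elements of length $>i_0+1$, and taking $i_0=-1$ it would force $\P=\P^\ss$, contradicting $\brho$ nonsplit. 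The underlying cause is the ``telescope'' step: the shift $(-d_\lambda)$ in Theorem~\ref{thm:gr-pi2} is $-d_\lambda^{(i_0)}$ uniformly, whereas Proposition~\ref{prop:gr-m-semisimple-nonsplit} at level $j$ is stated with the shift $-d_\lambda^{(j)}$; these agree only at $j=i_0$. So only the $j=i_0$ layer of the filtration $\fa_1^j(\lambda)/\fa_1^{j+1}(\lambda)$ contributes a degree-$0$ piece (together with the trivial contributions when $\fa_1^{i_0}(\lambda)=\o R$, i.e.\ $d_\lambda=0$), and one does \emph{not} obtain $\bigoplus_{i>i_0}\bigoplus_{\ell(\lambda')=i}\chi_{\lambda'}^{-1}$: the levels $i>i_0+1$ contribute only the $\lambda\in\P$ (not all of $\P^\ss$) at that level. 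If you carry out the degree-$0$ computation carefully — contribution $\chi_\lambda^{-1}$ for each $\lambda\in\P$ with $|J_\lambda|\ge i_0+1$, and $\bigoplus_{J'}\chi_{\lambda'(J')}^{-1}$ for each $\lambda\in\P$ with $|J_\lambda|\le i_0$ — and then invoke the degree-$0$ part of Proposition~\ref{prop:gr-m-semisimple-nonsplit} only at the single level $i_0$, you land directly on the set in the statement, with no reindexing needed and no extraneous $\lambda'\in\P^\ss\setminus\P$ of length $>i_0+1$.

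For comparison, the paper's proof avoids all of this by a slicker move: it notes that $\F\otimes_{\gr(\Lambda)}(\cdot)$ is insensitive to grading shifts, hence $\F\otimes_{\gr(\Lambda)}\gr_\m(\pi_2^\vee)\cong\F\otimes_{\gr(\Lambda)}\gr_F(\pi_2^\vee)$ by comparing Theorem~\ref{thm:gr-pi2} with \cite[Cor.~\ref{bhhms4:cor:gr-pi1}]{BHHMS4}, and then quotes \cite[Cor.~\ref{bhhms4:cor:subquot-F}]{BHHMS4}, where the combinatorics were already carried out. Your direct route can be made to work, but it requires redoing that bookkeeping correctly; the paper's reduction to $\gr_F$ is what lets it skip it.
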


\begin{proof}
As $H$-representations $\pi_2^{I_1}$ is dual to the degree $0$ part of $\gr_{\m}(\pi_2^{\vee})$, namely, $\F\otimes_{\gr(\Lambda)}\gr_{\m}(\pi_2^{\vee})$. Comparing Theorem~\ref{thm:gr-pi2}  and \cite[Cor.~\ref{bhhms4:cor:gr-pi1}]{BHHMS4}, we see that there is an isomorphism $\F\otimes_{\gr(\Lambda)}\gr_{\m}(\pi_2^{\vee})\cong \F\otimes_{\gr(\Lambda)}\gr_F(\pi_2^{\vee})$ compatible with $H$-actions (but not gradings), where $F$ denotes the filtration on $\pi_2^{\vee}$ induced by the $\m$-adic filtration on $\pi^{\vee}$. The result then follows from \cite[Cor.~\ref{bhhms4:cor:subquot-F}]{BHHMS4}.
\end{proof}

We can generalize to subquotients: %

\begin{cor}\label{cor:subquot-I1-invt}
  {Assume that $\brho$ is $\max\{9,2f+3\}$-generic.}
  Suppose $\pi' = \pi_1'/\pi_1$ is any nonzero subquotient, where $\pi_1 \subsetneq \pi_1' \subset \pi$.
  Let $i_0 \defeq  i_0(\pi_1)$, $i_0' \defeq  i_0(\pi_1')$, so $-1 \le i_0 < i_0' \le f$.
  Then $\pi'^{I_1}$ is multiplicity free and
  \begin{equation*}
    \JH(\pi'^{I_1}) = \{ \chi_\lambda : \text{$\lambda \in \P,\; i_0 < |J_\lambda| \le i'_0$ or $\lambda \in \P^\ss \setminus \P,\; |J_\lambda| = i_0+1$} \}.
  \end{equation*}
\end{cor}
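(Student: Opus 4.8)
The plan is to reduce the statement about the subquotient $\pi'$ to the corresponding statements about the quotients $\pi_2 \defeq \pi/\pi_1$ and $\pi_2' \defeq \pi/\pi_1'$, both of which are covered by Proposition~\ref{prop:pi2-I1-invt}. First I would record that $\pi' = \pi_1'/\pi_1$ sits in a short exact sequence $0 \to \pi' \to \pi_2 \to \pi_2' \to 0$ of smooth $\GL_2(K)$-representations. Taking $I_1$-invariants is left exact, so $\pi'^{I_1} \subset \pi_2^{I_1}$; in particular $\pi'^{I_1}$ is multiplicity free as a subrepresentation of the multiplicity free $H$-representation $\pi_2^{I_1}$ (Proposition~\ref{prop:pi2-I1-invt}). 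Hence it only remains to identify $\JH(\pi'^{I_1})$ as a set of characters, and by multiplicity freeness this is the same as computing the dimension of each isotypic component, or equivalently pinning down which $\chi_\lambda$ occur.

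The cleanest route is to pass to duals and associated graded modules, exactly as in the proof of Proposition~\ref{prop:pi2-I1-invt}. As an $H$-representation, $\pi'^{I_1}$ is dual to $\F \otimes_{\gr(\Lambda)} \gr_\m(\pi'^\vee)$, i.e.\ the degree $0$ part of $\gr_\m(\pi'^\vee)$. By Corollary~\ref{cor:gr-pi'} (which requires $\brho$ to be $(4f+1)$-generic, hence is available under the hypothesis that $\brho$ is $\max\{9,2f+3\}$-generic only when $2f+3 \ge 4f+1$, i.e.\ $f \le 1$ — so here one must instead argue directly from the short exact sequence rather than quoting Corollary~\ref{cor:gr-pi'}), or more economically from the exact sequence $\pi_2'^{I_1} \to \pi_2^{I_1} \to \pi'^{I_1}$ combined with a dimension count, we can compute $\JH(\pi'^{I_1})$. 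Concretely: by Proposition~\ref{prop:pi2-I1-invt} applied to both $\pi_1$ and $\pi_1'$, $\JH(\pi_2^{I_1})$ consists of the $\chi_\lambda$ with $\lambda \in \P$, $|J_\lambda| > i_0$, together with $\lambda \in \P^\ss \setminus \P$, $|J_\lambda| = i_0+1$; and $\JH(\pi_2'^{I_1})$ consists of the $\chi_\lambda$ with $\lambda \in \P$, $|J_\lambda| > i_0'$, together with $\lambda \in \P^\ss \setminus \P$, $|J_\lambda| = i_0'+1$. Taking invariants of $0 \to \pi' \to \pi_2 \to \pi_2' \to 0$ and using that all these $H$-representations are multiplicity free, the map $\pi_2^{I_1} \to \pi_2'^{I_1}$ is surjective (e.g.\ because $\pi_2' = \pi_1'/\pi_1$... no — rather because the cokernel lands in $H^1_{I_1}(\pi')$, which one controls, or simply because on the dual side the graded degree-$0$ parts are computed by the analogue of Corollary~\ref{cor:gr-pi'}), so $\pi'^{I_1}$ is the kernel and thus $\JH(\pi'^{I_1}) = \JH(\pi_2^{I_1}) \setminus \JH(\pi_2'^{I_1})$ as multisets. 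A direct set-theoretic computation then gives: the $\chi_\lambda$ with $\lambda \in \P$ and $i_0 < |J_\lambda| \le i_0'$, together with the $\chi_\lambda$ with $\lambda \in \P^\ss \setminus \P$ and $|J_\lambda| = i_0+1$ (noting that the term $\lambda \in \P^\ss\setminus\P$, $|J_\lambda| = i_0'+1$ appearing in $\JH(\pi_2'^{I_1})$ is \emph{not} present in $\JH(\pi_2^{I_1})$ unless $i_0'+1 = i_0+1$, which is impossible since $i_0 < i_0'$ — so one must be slightly careful here and argue instead that the surjectivity of $\pi_2^{I_1}\onto\pi_2'^{I_1}$ forces the count to work out, which is where the input from Corollary~\ref{cor:gr-pi'} genuinely enters).

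I expect the main obstacle to be exactly this last point: verifying that $\pi_2^{I_1} \to \pi_2'^{I_1}$ is surjective (equivalently that no "extra" constituents appear in $\pi'^{I_1}$ beyond the naive difference of Jordan–Hölder sets). The honest way to handle it is to invoke Corollary~\ref{cor:gr-pi'}: it gives $\gr_\m(\pi'^\vee) \cong \bigoplus_{\lambda \in \P}\chi_\lambda^{-1} \otimes (\fa_1^{i_0}(\lambda)/\fa_1^{i_0'}(\lambda))(-d_\lambda)$, and then $\pi'^{I_1}$ is dual to the degree-$0$ part of this, namely $\bigoplus_{\lambda} \chi_\lambda^{-1} \otimes (\fa_1^{i_0}(\lambda)/\fa_1^{i_0'}(\lambda))_{-d_\lambda}$. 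One then checks that $(\fa_1^{i_0}(\lambda)/\fa_1^{i_0'}(\lambda))_{-d_\lambda}$ is one-dimensional precisely when $\lambda \in \P$ with $i_0 < |J_\lambda| \le i_0'$, or $\lambda \in \P^\ss\setminus\P$ with $|J_\lambda| = i_0+1$ (using the explicit description of the ideals $\fa_1^i(\lambda)$ from \cite[eq.~\eqref{bhhms4:eq:fa-1}]{BHHMS4} together with Proposition~\ref{prop:gr-m-semisimple-nonsplit} to handle the $\P^\ss \setminus \P$ contributions), and zero otherwise; multiplicity freeness follows from the same computation (or from being a subrepresentation of $\pi_2^{I_1}$). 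So concretely I would structure the proof as: (1) reduce to reading off the degree-$0$ part of $\gr_\m(\pi'^\vee)$; (2) apply Corollary~\ref{cor:gr-pi'}; (3) use Proposition~\ref{prop:gr-m-semisimple-nonsplit} (or a direct combinatorial count with the ideals $\fa_1^i(\lambda)$) to evaluate $\dim_\F (\fa_1^{i_0}(\lambda)/\fa_1^{i_0'}(\lambda))_{-d_\lambda}$ case by case; (4) conclude multiplicity freeness and the stated description of $\JH(\pi'^{I_1})$.
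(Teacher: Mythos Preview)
Your approach via Corollary~\ref{cor:gr-pi'} is correct but requires $\brho$ to be $(4f+1)$-generic, which for $f\ge 3$ is strictly stronger than the hypothesis $\max\{9,2f+3\}$-generic of the corollary. You noticed this yourself, but you never resolve it: you acknowledge that one ``must instead argue directly from the short exact sequence'' yet ultimately fall back on Corollary~\ref{cor:gr-pi'}. So as written your proof establishes the result only under a stronger genericity assumption.

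The paper avoids Corollary~\ref{cor:gr-pi'} entirely here. It sandwiches $\JH(\pi'^{I_1})$ using the inclusions $\pi_1'^{I_1}/\pi_1^{I_1}\hookrightarrow \pi'^{I_1}\hookrightarrow (\pi/\pi_1)^{I_1}$, invoking \cite[Cor.~\ref{bhhms4:cor:conj1}]{BHHMS4} for the first and Proposition~\ref{prop:pi2-I1-invt} for the second. This immediately gives multiplicity freeness and the lower bound $\{\chi_\lambda:\lambda\in\P,\,i_0<|J_\lambda|\le i_0'\}$. For the $\P^\ss\setminus\P$ constituents with $|J_\lambda|=i_0+1$ one observes, as you do, that $\pi'^{I_1}=\ker\big((\pi/\pi_1)^{I_1}\to(\pi/\pi_1')^{I_1}\big)$ and that these characters do not occur in the target. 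The crucial point you are missing is how to \emph{exclude} $\chi_\lambda$ for $\lambda\in\P$, $|J_\lambda|>i_0'$ (which occurs in both source and target) without knowing the graded module. The paper does this by a diagram chase: since $\chi_\lambda\notin\JH(\pi_1'^{I_1})$ and $\chi_\lambda\notin\JH(\pi_1^{I_1})$ by \cite[Cor.~\ref{bhhms4:cor:conj1}]{BHHMS4}, the $\chi_\lambda$-eigenspace of $(\pi/\pi_1)^{I_1}$ comes from $\pi^{I_1}$ and hence maps to zero in $H^1(I_1/Z_1,\pi_1)$; comparing the long exact sequences for $0\to\pi_1\to\pi_1'\to\pi'\to 0$ and $0\to\pi_1\to\pi\to\pi/\pi_1\to 0$ then forces the $\chi_\lambda$-eigenspace of $\pi'^{I_1}$ to come from $\pi_1'^{I_1}$, which is zero. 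This elementary argument is what makes the weaker genericity suffice; your proposal would benefit from incorporating it in place of the appeal to Corollary~\ref{cor:gr-pi'}.
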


\begin{proof}
  Since we have injections \[(\pi_1')^{I_1}/\pi_1^{I_1}\into \pi'^{I_1} \into (\pi/\pi_1)^{I_1},\] we deduce from \cite[Cor.~\ref{bhhms4:cor:conj1}]{BHHMS4} and Proposition~\ref{prop:pi2-I1-invt} that
  \begin{multline*}
    \{ \chi_\lambda : \text{$\lambda \in \P, i_0 < |J_\lambda| \le i'_0$} \} \subset \JH(\pi'^{I_1}) \\
    \subset \{ \chi_\lambda : \text{$\lambda \in \P, i_0 < |J_\lambda|$ or $\lambda \in \P^\ss \setminus \P, |J_\lambda| = i_0+1$} \}.
  \end{multline*}
  In particular, $\pi'^{I_1}$ is multiplicity free. On the other hand, note that $\pi'^{I_1}$ is the kernel of the natural map $(\pi/\pi_1)^{I_1}\ra (\pi/\pi_1')^{I_1}$. For any $\lambda\in \P^{\ss}\setminus \P$ with $|J_\lambda|=i_0+1$, $\chi_\lambda\in \JH((\pi/\pi_1)^{I_1})$ and maps to zero in $(\pi/\pi'_1)^{I_1}$ by Proposition \ref{prop:pi2-I1-invt}, so $\chi_{\lambda}\in \JH(\pi'^{I_1})$ for such $\lambda$.

 To finish the proof it remains to show that $\chi_\lambda$ for $\lambda \in \P, |J_\lambda| > i_0'$ does not occur in $\pi'^{I_1}$.
  By \cite[Cor.~\ref{bhhms4:cor:conj1}]{BHHMS4}, $\chi_\lambda$ does not occur in $\pi_1'^{I_1}$ and $\pi_1^{I_1}$, and by Proposition~\ref{prop:pi2-I1-invt} it occurs in $(\pi/\pi_1)^{I_1}$ with multiplicity 1. 
  Hence the $\chi_\lambda$-eigenspace of $(\pi/\pi_1)^{I_1}$ is the image of the $\chi_\lambda$-eigenspace of $\pi^{I_1}$ and so maps to zero in $H^1(I_1/Z_1,\pi_1)$.
  The following diagram then shows that $\chi_\lambda$ cannot occur in $\pi'^{I_1} =(\pi'_1/\pi_1)^{I_1}$, since it does not occur in $\pi_1'^{I_1}$:
  \begin{equation*}
    \begin{gathered}[b]
      \xymatrix{
        0 \ar[r] & \pi_1^{I_1} \ar[r]\ar@{=}[d] & \pi_1'^{I_1} \ar[r]\ar@{^{(}->}[d]& (\pi_1'/\pi_1)^{I_1} \ar[r]\ar@{^{(}->}[d] & H^1(I_1/Z_1,\pi_1) \ar@{=}[d]\\
        0 \ar[r] & \pi_1^{I_1} \ar[r] & \pi^{I_1} \ar[r]& (\pi/\pi_1)^{I_1} \ar[r] & H^1(I_1/Z_1,\pi_1). } \\[-\dp\strutbox]
    \end{gathered}
    \qedhere %
  \end{equation*}
\end{proof}

\begin{rem}\label{rem:length-f+1}
Let $\pi_s$ be an admissible smooth representation satisfying assumptions \ref{it:assum-i}--\ref{it:assum-v} with respect to $\rhobar^{\ss}$.
  We remark that if $\pi^\ss \cong \pi_s$ (as one might hope is true in analogy with $\GL_2(\Q_p)$) or even just $\pi^\ss \cong \pi_s^\ss$, then $\pi$ and $\pi_s$ have length exactly $f+1$  provided $\rhobar$ is $\max\{9,2f+3\}$-generic. %
  (Sketch proof: we calculate $(\pi^\ss)^{I_1}$ using Corollary~\ref{cor:subquot-I1-invt} and $(\pi_s^\ss)^{I_1}$ using \cite[Cor.~\ref{bhhms4:cor:split-In}]{BHHMS4}.
  If $\Sigma \subset \{0,1,\dots,f\}$ denotes the subset of elements of the form $i_0(\pi_1)+1$ for some subrepresentation $\pi_1 \subsetneq \pi$, then we deduce that $\P^\ss = \P \cup \{ \lambda \in \P^\ss : |J_\lambda| \in \Sigma \}$.
  As $\brho$ is non-semisimple, $J_{\brho} \ne \{0,1,\dots,f-1\}$ and one easily shows that for any $1 \le k \le f$ there exists $\lambda \in \P^\ss \setminus \P$ with $|J_\lambda| = k$.
  We deduce that $\Sigma = \{0,1,\dots,f\}$, i.e.\ $\pi$ has length $f+1$.)
\end{rem}

\begin{cor}\label{cor:subquot-K-soc}
  {Assume that $\brho$ is $\max\{9,2f+3\}$-generic.} 
  Suppose $\pi' = \pi_1'/\pi_1$ is any nonzero subquotient, where $\pi_1 \subsetneq \pi_1' \subset \pi$.
  Let $i_0 \defeq  i_0(\pi_1)$, $i_0' \defeq  i_0(\pi_1')$, so $-1 \le i_0 < i_0' \le f$. 
  Then 
  \begin{equation}\label{eq:subquot-K-soc}
    \soc_{\GL_2(\cO_K)}(\pi') \cong \bigoplus_{\sigma \in W(\brho), i_0 < \ell(\sigma) \le i'_0} \sigma \oplus \bigoplus_{\sigma \in W(\brho^\ss) \setminus W(\brho), \ell(\sigma) = i_0+1} \sigma.
  \end{equation}
  In particular, $\soc_{\GL_2(\cO_K)}(\pi')$ is multiplicity free.
\end{cor}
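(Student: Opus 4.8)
The plan is to deduce Corollary~\ref{cor:subquot-K-soc} from the description of $\pi'^{I_1}$ in Corollary~\ref{cor:subquot-I1-invt} together with the structure theory of $D_0(\brho^\ss)$. First I would recall that every irreducible $\GL_2(\cO_K)$-representation $\sigma$ has a one-dimensional $I_1$-invariant subspace, and that $\sigma^{I_1}$ determines $\sigma$; thus $\soc_{\GL_2(\cO_K)}(\pi')$ embeds into $\pi'^{I_1}$ via $\tau \mapsto \tau^{I_1}$, and since $\pi'^{I_1}$ is multiplicity free (Corollary~\ref{cor:subquot-I1-invt}), so is $\soc_{\GL_2(\cO_K)}(\pi')$. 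Consequently it suffices to identify exactly which Serre weights occur in the socle, i.e.\ which characters $\chi_\tau \in \JH(\pi'^{I_1})$ actually come from a genuine $\GL_2(\cO_K)$-subrepresentation of $\pi'$, and to match them with the right-hand side of~\eqref{eq:subquot-K-soc}.

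Next I would argue the containment $\supseteq$ in~\eqref{eq:subquot-K-soc}. For $\sigma \in W(\brho)$ with $i_0 < \ell(\sigma) \le i_0'$ this should follow from the fact, recorded in the excerpt, that $W(\brho) = \JH(\soc_{\GL_2(\cO_K)}(D_0(\brho)))$, combined with \cite[Cor.~\ref{bhhms4:cor:conj1}]{BHHMS4} (which gives $\pi_1^{K_1} = D_0(\brho)_{\le i_0}$ and $\pi_1'^{K_1} = D_0(\brho)_{\le i_0'}$) and the filtration $D_0(\brho)_{\le i}$ of $D_0(\brho) = \pi^{K_1}$: the weight $\sigma$ lies in $\soc_{\GL_2(\cO_K)}(D_0(\brho)_{\le i_0'}/D_0(\brho)_{\le i_0})$, and since $\pi_1'^{K_1}/\pi_1^{K_1} \hookrightarrow \pi'^{K_1} \subset \pi'$ and this inclusion is compatible with $I_1$-invariants by Corollary~\ref{cor:subquot-I1-invt}, $\sigma$ must in fact sit inside $\soc_{\GL_2(\cO_K)}(\pi')$ (a subrepresentation of $\pi'$ with irreducible $I_1$-socle $\chi_\sigma$ that contains $\sigma$ as a $\GL_2(\cO_K)$-subrepresentation forces $\sigma \hookrightarrow \pi'$). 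For $\sigma \in W(\brho^\ss) \setminus W(\brho)$ with $\ell(\sigma) = i_0+1$: such $\chi_\sigma$ occurs in $\pi'^{I_1}$ by Corollary~\ref{cor:subquot-I1-invt}, so there is a nonzero $I_1$-invariant vector $v$ of weight $\chi_\sigma$ in $\pi'$; I would show the $\GL_2(\cO_K)$-subrepresentation generated by $v$ has socle $\sigma$ by using that $\sigma$ is not a Jordan--Hölder factor of any weight in $W(\brho)$ and the genericity, so the only Serre weight with $I_1$-character $\chi_\sigma$ that can appear in the socle is $\sigma$ itself, hence $\sigma \hookrightarrow \pi'$.

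Then I would prove the containment $\subseteq$. If $\tau \hookrightarrow \pi'$ is any Serre weight in the socle, then $\chi_\tau \in \JH(\pi'^{I_1})$, so by Corollary~\ref{cor:subquot-I1-invt} either $\tau \in W(\brho^\ss) \setminus W(\brho)$ with $\ell(\tau) = i_0+1$, or $\tau$ is parametrized by $\lambda \in \P$ with $i_0 < |J_\lambda| = \ell(\lambda) = \ell(\tau) \le i_0'$; in the latter case I must rule out the possibility that $\tau \in \JH(\Ind_I^{\GL_2(\cO_K)}\chi_\lambda)$ with $\cS(\tau) \ne \emptyset$, i.e.\ that $\tau \notin W(\brho)$. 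This is exactly where the subtlety lies: the equality $\JH(\pi'^{I_1}) = \{\chi_\lambda : \ldots\}$ only tracks the $I$-characters and a priori several Serre weights share a character. The key input to handle this is assumption~\ref{it:assum-i} (i.e.\ $\pi^{K_1} = D_0(\brho)$), which tells us $\soc_{\GL_2(\cO_K)}(\pi) = \bigoplus_{\sigma \in W(\brho)}\sigma$, together with \cite[Lemma~\ref{bhhms4:lem:soc-exact}]{BHHMS4} giving $\JH(\soc_{\GL_2(\cO_K)}(\pi/\pi_1)) \subset W(\brho)$ and the analogous statement after further quotienting; combining with the exact-sequence diagram at the end of the proof of Corollary~\ref{cor:subquot-I1-invt}, any $\tau \hookrightarrow \pi' = \pi_1'/\pi_1$ with $\tau \in W(\brho^\ss)$ must already lie in $W(\brho)$ unless $\ell(\tau) = i_0+1$. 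I expect this last step — pinning down that the relevant weights in the socle come from $W(\brho)$ rather than some other weight with the same $I$-character, using only the socle exactness results and the $D_0(\brho)_{\le i}$-filtration — to be the main obstacle, since it requires combining the $I_1$-level information from Corollary~\ref{cor:subquot-I1-invt} with the $\GL_2(\cO_K)$-level information from assumption~\ref{it:assum-i} and the filtration-compatibility of $i_0(\pi_1)$, $i_0(\pi_1')$. The final sentence ("In particular, multiplicity free") is then immediate from multiplicity freeness of $\pi'^{I_1}$.
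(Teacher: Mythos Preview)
Your overall structure (multiplicity-freeness from $I_1$-invariants, then $\supseteq$, then $\subseteq$) matches the paper. But the $\subseteq$ direction has a genuine gap that your outline does not address, and your proposed fix would not work.

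The problem is the sentence ``so by Corollary~\ref{cor:subquot-I1-invt} either $\tau \in W(\brho^\ss) \setminus W(\brho)$ with $\ell(\tau) = i_0+1$, or $\tau$ is parametrized by $\lambda \in \P$ \ldots''. Corollary~\ref{cor:subquot-I1-invt} only tells you that $\chi_\tau = \chi_\lambda$ for some $\lambda$ in the specified set; it says nothing about $\tau$ itself. The Serre weight with $I_1$-character $\chi_\lambda$ is \emph{not} in general an element of $W(\brho^\ss)$ (only when $\lambda\in\D^\ss\subsetneq\P^\ss$), so $\ell(\tau)$ is not even defined a priori, and your subsequent case analysis collapses. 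Your proposed remedy via \cite[Lemma~\ref{bhhms4:lem:soc-exact}]{BHHMS4} cannot work either: that lemma (in contexts where it applies) would force the socle into $W(\brho)$, which contradicts the very statement you are trying to prove, since weights in $W(\brho^\ss)\setminus W(\brho)$ with $\ell=i_0+1$ \emph{do} appear.

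The paper fills this gap by two quite different arguments. When $\lambda\in\P$, it lifts $\chi_\lambda$ to $\pi_1'^{I_1}$ and analyzes the image of $\Ind_I^{\GL_2(\cO_K)}\chi_\lambda\to\pi_1'\subset\pi$ via \cite[Lemma~\ref{bhhms4:lem:I1-invt-nonss}]{BHHMS4}: this image is an explicit $I(\sigma,\tau)$ with $\sigma\in W(\brho)$, and since the image in $\pi'$ is $\tau$ one gets $\rad_\Gamma(I(\sigma,\tau))\subset D_0(\brho)_{\le i_0}$, which forces $\tau\in W(\brho^\ss)$ and pins down $\ell(\tau)$. When $\lambda\in\P^\ss\setminus\P$, the paper uses the auxiliary $I$-representation $W(\chi_\mu,\chi_\lambda)$ (from the proof of Corollary~\ref{cor:subquot-I1-invt}) and the image $V$ of $\Ind_I^{\GL_2(\cO_K)}W(\chi_\mu,\chi_\lambda)\to\pi_1'$, together with \cite[Lemma~\ref{bhhms4:lem:W-embeds}, Prop.~\ref{bhhms4:prop:HW}, Prop.~\ref{bhhms4:prop:cond-K1inv}]{BHHMS4} and an explicit computation with the $\cS(\cdot)$-parametrization, to show that the specific weight $\tau'\in W(\brho^\ss)$ with $J_{\tau'}=J_\lambda$ occurs in $V^{K_1}$ but not in $\pi_1^{K_1}$, hence $\tau\cong\tau'$. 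This second case is the hard one and is not visible at all in your outline.

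A smaller issue: your $\supseteq$ argument for $\sigma\in W(\brho^\ss)\setminus W(\brho)$ with $\ell(\sigma)=i_0+1$ is also incomplete (``the only Serre weight with $I_1$-character $\chi_\sigma$'' is not the right reasoning). The paper's argument here is short: $\sigma\subset D_0(\brho)_{i_0+1}$ by \cite[eq.~\eqref{bhhms4:eq:soc-i}]{BHHMS4}, and $D_0(\brho)_{i_0+1}\hookrightarrow \pi_1'^{K_1}/\pi_1^{K_1}\hookrightarrow\pi'^{K_1}$ by \cite[Thm.~\ref{bhhms4:thm:conj2}]{BHHMS4}.
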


For the proof, recall from \cite[\S~\ref{bhhms4:sec:k_1-invar-subr}%
]{BHHMS4} that $D_0(\brho)$ admits an increasing filtration $0=D_0(\brho)_{\leq -1}\subsetneq D_0(\brho)_{\leq 0}\subsetneq \cdots \subsetneq D_0(\brho)_{\leq i}\subsetneq \cdots \subsetneq D_0(\brho)_{\leq f}=D_0(\brho),$
where $D_0(\brho)_{\leq i}$ is the largest $\Gamma$-subrepresentation of $D_0(\brho)$ not containing any $\tau\in W(\brho^\ss)$ with $\ell(\tau)>i$ as subquotient.
We set $D_{0}(\brho)_i\defeq D_0(\brho)_{\leq i}/D_0(\brho)_{\leq i-1}$, $D_{0,\sigma}(\brho)_{\leq i}\defeq D_{0,\sigma}(\brho)\cap D_{0}(\brho)_{\leq i}$ and  
$D_{0,\tau}(\brho)_{i} \defeq  D_0(\brho)_i \cap D_{0,\tau}(\brho^\ss)$ for $\sigma\in W(\brho)$ and $\tau\in W(\brho^\ss)$ (see \emph{loc.~cit}.~and note that $D_0(\brho)_i\into D_0(\brho^\ss)_i$ using \cite[Prop.~13.1]{BP} so that the latter intersection is taken in $D_0(\brho^\ss)$, see also \cite[Prop.~5.2]{Hu-SMF}). 

\begin{proof}
  By Corollary~\ref{cor:subquot-I1-invt}, $\soc_{\GL_2(\cO_K)}(\pi')$ is multiplicity free.
  We prove the inclusion ``$\supseteq$''.
  If $\sigma \in W(\brho), i_0 < \ell(\sigma) \le i'_0$, then $\sigma \subset \pi_1'|_{\GL_2(\cO_K)}$ but $\sigma \not\subset \pi_1|_{\GL_2(\cO_K)}$ by \cite[Cor.~\ref{bhhms4:cor:pi1-K-soc}]{BHHMS4}, so $\sigma \subset \pi'|_{\GL_2(\cO_K)}$.
  If $\sigma \in W(\brho^\ss), \ell(\sigma) = i_0+1$, then $\sigma \subset D_0(\brho)_{i_0+1}$ by~\cite[eq.~\eqref{bhhms4:eq:soc-i}]{BHHMS4}, which by \cite[Thm.~\ref{bhhms4:thm:conj2}]{BHHMS4} injects into
  \begin{equation*}
    \pi_1'^{K_1}/\pi_1^{K_1} = D_0(\brho)_{\le i_0'}/D_0(\brho)_{\le i_0} \into \pi'^{K_1}.
  \end{equation*}
  (In particular, the right-hand side of~\eqref{eq:subquot-K-soc} injects into $\soc_{\GL_2(\cO_K)}(\pi_1'^{K_1}/\pi_1^{K_1})$.)

  Now we prove the inclusion ``$\subseteq$''.
  Suppose that $\tau$ is a Serre weight such that $\tau \subset \pi'|_{\GL_2(\cO_K)}$.
  By Corollary~\ref{cor:subquot-I1-invt}, we know $\tau^{I_1} = \chi_\lambda$, where either $\lambda \in \P$, $i_0 < |J_{\lambda}| \le i_0'$ or $\lambda\in \P^\ss\setminus \P$, $|J_{\lambda}| = i_0+1$.

  If $\lambda \in \P$, $i_0 < |J_{\lambda}| \le i_0'$, then $\tau^{I_1}$ lifts to $\pi_1'^{I_1}$ by \cite[Cor.~\ref{bhhms4:cor:conj1}]{BHHMS4} and hence %
  $\tau$ is the image of a morphism $\Ind_I^{\GL_2(\cO_K)} \chi_\lambda \to \pi_1' \onto \pi'$.
  In particular, $\tau \into \pi_1'^{K_1}/\pi_1^{K_1} \cong D_0(\brho)_{\le i'_0}/D_0(\brho)_{\le i_0}$, so by d\'evissage and~\cite[eq.~\eqref{bhhms4:eq:soc-i}]{BHHMS4}, $\tau \in W(\brho^\ss)$ with $i_0 < \ell(\tau) \le i_0'$.
  Suppose that $\tau \notin W(\brho)$ (or we are done).
  By \cite[Lemma~\ref{bhhms4:lem:I1-invt-nonss}]{BHHMS4} we deduce that the image of the above morphism $\Ind_I^{\GL_2(\cO_K)} \chi_\lambda \to \pi_1' \subset \pi$ is isomorphic to $I(\sigma,\tau)$, where $\sigma \in W(\brho)$ is determined (via equation \eqref{eq:J-lambda}) by $J_\sigma = J_{\brho} \cap J_\lambda$.
  As the image of the composition is $\tau$, it follows that $\rad_\Gamma(I(\sigma,\tau)) \subset \pi_1^{K_1} = D_0(\brho)_{\le i_0}$, so from \cite[Lemma~\ref{bhhms4:lem:JHinW}]{BHHMS4} (taking $\tau'$ such that $|J_{\tau'}| = |J_\tau|-1$) and \cite[eq.~\eqref{bhhms4:eq:soc-i}]{BHHMS4} we deduce that $\ell(\tau) = i_0+1$, as desired.

  Suppose that $\lambda\in \P^\ss\setminus \P$, $|J_{\lambda}| = i_0+1$. %
  From the proof of Corollary~\ref{cor:subquot-I1-invt} we know that the 1-dimensional subspace $(\pi')^{I=\chi_\lambda}$ is the image of the morphism $W(\chi_\mu,\chi_\lambda) \into \pi_1' \onto \pi'$, where $J_1$, $J_2$ and $\mu \in \P$ are defined as in equations \cite[eq.~\eqref{bhhms4:eq:J1:J2}, eq.~\eqref{bhhms4:eq:mu_j}]{BHHMS4}.
  By Frobenius reciprocity we have a corresponding morphism $\Ind_I^{\GL_2(\cO_K)} W(\chi_\mu,\chi_\lambda) \to \pi_1'$ and we denote by $V$ its image.
  Note that $V/(V \cap \pi_1) \cong \tau$.
  By \cite[Lemma~\ref{bhhms4:lem:W-embeds}(i)]{BHHMS4} we have $\soc_{\GL_2(\cO_K)}(V) \cong \sigma$, where $\sigma \in W(\brho)$ is determined by $J_\sigma = J_{\brho} \cap J_\mu$, and by \cite[Lemma~\ref{bhhms4:lem:I1-invt-nonss}]{BHHMS4} and \cite[eq.~\eqref{bhhms4:eq:mu_j}]{BHHMS4} it has parameter
  \begin{align*}
    \mathcal{S}(\sigma) &= \{ j : \mu_j(x_j)\in\{x_j,\un{x_j+1},p-2-x_j,p-3-x_j\} \} \\
    &= \{ j : \lambda_j(x_j) \in \{ x_j,\un{x_j+1},\dotuline{x_j+2},p-2-x_j,\un{p-3-x_j} \}\}
  \end{align*}
  inside $\Ind_I^{\GL_2(\cO_K)} \chi_\mu$ ({see \S~\ref{sec:EG-Gamma-rep}} for $\mathcal{S}(\sigma)$).
  Here we use the convention {(from the proof of  \cite[Lemma~\ref{bhhms4:lem:I1-invt-nonss}]{BHHMS4})} that an underlined entry is only allowed when $j \in J_{\rhobar}$, and similarly that an entry with a dotted underline is only allowed when $j \notin J_{\rhobar}$.
  Let $\tau' \in W(\brho^\ss)$ be determined by $J_{\tau'} = J_\lambda$, and by~\cite[Lemma~\ref{bhhms4:lem:I1-invt-component}]{BHHMS4} it has parameter
  \begin{equation*}
    \mathcal{S}(\tau') = \{ j : \lambda_j(x_j)\in\{x_j,x_j+1,p-2-x_j,p-3-x_j\} \} \\
  \end{equation*}
  inside $\Ind_I^{\GL_2(\cO_K)} \chi_\lambda$.  As
  \begin{equation*}
    (\mathcal{S}(\sigma) \sqcup J_1) \setminus J_2 = \{ j : \lambda_j(x_j) \in \{ x_j,\un{x_j+1},p-2-x_j,p-3-x_j \}\} \subset \mathcal{S}(\tau'),
  \end{equation*}
  and $J_2 \subset \mathcal{S}(\sigma) \setminus \mathcal{S}(\tau')$ (and since $\mathcal{S}(\sigma) \cap J_1=\mathcal{S}(\tau') \cap J_2=\emptyset$) we deduce from \cite[Prop.~\ref{bhhms4:prop:HW}(ii), Prop.~\ref{bhhms4:prop:cond-K1inv}]{BHHMS4} that $\tau' \in \JH(V^{K_1})$.
  Since $\ell(\tau') = i_0+1$, it follows from \cite[Thm.~\ref{bhhms4:thm:conj2}]{BHHMS4}
 that $\tau' \notin \JH(\pi_1^{K_1})$, so $\tau' \cong \tau$.
  Thus $\tau\in W(\brho^\ss)$ and $\ell(\tau)=i_0+1$, i.e.~$\tau$ appears in the right-hand side of \eqref{eq:subquot-K-soc}, as desired.
\end{proof}

\begin{rem}\label{rem:subquot-K-soc}
  The proof shows that $\soc_{\GL_2(\cO_K)}(\pi') = \soc_{\GL_2(\cO_K)}(\pi_1'^{K_1}/\pi_1^{K_1})$, but by~\eqref{eq:subquot-K-soc} it is bigger than $\soc_{\GL_2(\cO_K)}(\pi_1')/\soc_{\GL_2(\cO_K)}(\pi_1)$, in general.
\end{rem}

Recall from subsection~\ref{sec:preliminaries} the $\wt{\Gamma}$-representation $\wt{D}_0(\brho)$ and from Proposition~\ref{prop:ii'} that \[\pi[\fm_{K_1}^2]\cong \wt{D}_0(\brho).\]

We now define the increasing filtration $(\wt{D}_0(\brho)_{\le i})_{-1 \le i \le f}$ on $\wt{D}_0(\brho)$ by letting $\wt{D}_0(\brho)_{\le i}$ be the largest $\wt\Gamma$-subrepresentation of $\wt{D}_0(\brho)$ that does not contain any $\tau \in W(\brho^\ss)$ with $\ell(\tau) > i$ as subquotient.
Equivalently, it is the largest subrepresentation $V$ of $\Inj_{\wt\Gamma} (\soc_{\GL_2(\cO_K)} \pi)$ such that 
\begin{enumerate}
\item $[V : \sigma] = 1$ for all $\sigma \in W(\brho)$, $\ell(\sigma) \le i$;
\item $[V : \tau] = 0$ for all $\tau \in W(\brho^\ss)$, $\ell(\tau) > i$.
\end{enumerate}
Then $\wt{D}_0(\brho)_{\le i} = \bigoplus_{\sigma \in W(\brho)} \wt{D}_{0,\sigma}(\brho)_{\le i}$ for a unique subrepresentation $\wt{D}_{0,\sigma}(\brho)_{\le i} \subset \wt{D}_{0}(\brho)_{\le i}$.

We remark (though will not need) that $\wt D_0(\brho)_{\le i} \cap D_0(\brho) = D_0(\brho)_{\le i}$ and that
all properties of the filtration $D_0(\brho)_{\le i}$ before \cite[Lemma~\ref{bhhms4:lem:inter}]{BHHMS4} generalize to the filtration $\wt D_0(\brho)_{\le i}$.

\begin{cor}\label{cor:pi1-mK1-squared}
{Assume that $\brho$ is $\max\{9,2f+3\}$-generic.} %
  Let $i_0 = i_0(\pi_1)$ with $-1 \le i_0 \le f$ be as in \cite[Thm.~\ref{bhhms4:thm:conj2}]{BHHMS4}.
  Then 
  \begin{equation*}
    \pi_1[\m_{K_1}^2] \cong \wt{D}_0(\brho)_{\le i_0}.
  \end{equation*}
\end{cor}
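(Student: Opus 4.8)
The plan is to identify $\pi_1[\m_{K_1}^2]$ with a subrepresentation of $\wt{D}_0(\brho)$ by a two-sided inclusion argument, using the analogous statement $\pi_1^{K_1} = D_0(\brho)_{\le i_0}$ from \cite[Thm.~\ref{bhhms4:thm:conj2}]{BHHMS4} together with Proposition~\ref{prop:ii'} ($\pi[\m_{K_1}^2] \cong \wt D_0(\brho)$) and the structural results of \S~\ref{sec:some-wtgamma-repr} and \S~\ref{sec:further-properties}. First I would observe that $\pi_1[\m_{K_1}^2] \subset \pi[\m_{K_1}^2] \cong \wt D_0(\brho)$, and that $\pi_1[\m_{K_1}^2] \cap D_0(\brho) = \pi_1^{K_1} = D_0(\brho)_{\le i_0}$. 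Since $\soc_{\GL_2(\cO_K)}(\pi_1) = \bigoplus_{\sigma \in W(\brho), \ell(\sigma) \le i_0} \sigma$ by \cite[Cor.~\ref{bhhms4:cor:pi1-K-soc}]{BHHMS4}, and since $\wt D_0(\brho)$ is multiplicity free, $\pi_1[\m_{K_1}^2]$ is a subrepresentation of $\wt D_0(\brho)_{\le i_0} = \bigoplus_{\sigma \in W(\brho), \ell(\sigma) \le i_0} \wt D_{0,\sigma}(\brho)_{\le i_0}$ (using that any $\tau \in W(\brho^\ss)$ with $\ell(\tau) > i_0$ occurring in $\pi_1[\m_{K_1}^2]$ would, by Corollary~\ref{cor:subquot-I1-invt} applied to the subquotient $\pi_1$ — equivalently, by \cite[Cor.~\ref{bhhms4:cor:conj1}]{BHHMS4} — contradict that $\pi_1^{I_1}$ contains only $\chi_\lambda$ with $|J_\lambda| \le i_0$). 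This gives the inclusion $\pi_1[\m_{K_1}^2] \subset \wt D_0(\brho)_{\le i_0}$.

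For the reverse inclusion I would argue as in the proof of Proposition~\ref{prop:ii'}: suppose $\pi_1[\m_{K_1}^2] \subsetneq \wt D_0(\brho)_{\le i_0}$ and pick a Serre weight $\tau \into \wt D_0(\brho)_{\le i_0}/\pi_1[\m_{K_1}^2]$, together with $V_\tau \subset \wt D_0(\brho)_{\le i_0}$ a subrepresentation with cosocle $\tau$ mapping isomorphically onto this copy of $\tau$. Since $D_0(\brho)_{\le i_0} = \pi_1^{K_1} \subset \pi_1[\m_{K_1}^2]$, we have $\tau \in \JH(\wt D_0(\brho)_{\le i_0}) \setminus \JH(D_0(\brho))$, so $\tau \notin W(\brho)$ and in particular $\chi_\tau \notin \JH(\pi^{I_1})$ (as $\JH(\pi^{I_1}) = \JH(D_0(\brho)^{I_1})$ is parametrized by $\P$). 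Then Lemma~\ref{lem:Ext1-Serre} gives $\Ext^1_{\GL_2(\cO_K)/Z_1}(\tau,\pi) = 0$, so applying $\Hom_{\GL_2(\cO_K)/Z_1}(-,\pi)$ to $0 \to \rad_{\wt\Gamma}(V_\tau) \to V_\tau \to \tau \to 0$ shows that the inclusion $\rad_{\wt\Gamma}(V_\tau) \subset \pi$ (which factors through $\pi_1[\m_{K_1}^2] \subset \pi_1$, since $\rad_{\wt\Gamma}(V_\tau) \subset \pi_1[\m_{K_1}^2]$ by minimality of $\tau$) lifts to an embedding $V_\tau \into \pi$; the image is annihilated by $\m_{K_1}^2$, hence lies in $\pi[\m_{K_1}^2]$. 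One must then check the image actually lands in $\pi_1$, not just in $\pi$: here I would use that $\cosoc_{\wt\Gamma}(V_\tau) = \tau$ with $\ell(\tau) \le i_0$, together with the characterization of $\pi_1$ via its $K_1$-invariants and the fact that $V_\tau|_{\GL_2(\cO_K)}$ has image inside $\pi[\m_{K_1}^2]$ whose socle lies in $\soc_{\GL_2(\cO_K)}(\pi_1)$ — more carefully, I would mimic the argument in Proposition~\ref{prop:K1-square-invariants}(ii)/Corollary~\ref{cor:pi1-K-soc}, restricting attention to the subrepresentation generated by the image. This contradicts $\tau \notin \JH(\pi_1[\m_{K_1}^2])$ (the latter being multiplicity free as a subrepresentation of $\wt D_0(\brho)$), completing the proof.

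The main obstacle I anticipate is the last point of the reverse inclusion: ensuring that the lift $V_\tau \into \pi$ has image inside $\pi_1$ and not merely inside $\pi$. The clean way around this is probably to replace $\pi$ throughout by the quotient $\pi/\pi_1 = \pi_2$ and argue contrapositively, or to invoke the already-established identifications $\pi_1^{K_1} = D_0(\brho)_{\le i_0}$ and $\soc_{\GL_2(\cO_K)}(\pi_1) = \bigoplus_{\ell(\sigma) \le i_0} \sigma$ to pin down $\pi_1$ among subrepresentations of $\pi$: if the image of $V_\tau$ in $\pi/\pi_1$ were nonzero, its socle would be a Serre weight $\sigma' \in W(\brho)$ with $\ell(\sigma') > i_0$ by Corollary~\ref{cor:subquot-K-soc} applied to $\pi/\pi_1$, but $\soc_{\GL_2(\cO_K)}(V_\tau) \subset \wt D_0(\brho)_{\le i_0}$ forces $\ell \le i_0$ on all constituents of $\soc_{\GL_2(\cO_K)}(V_\tau)$, a contradiction. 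Granting this, the argument is a routine adaptation of Proposition~\ref{prop:ii'}, and the genericity hypothesis $\brho$ is $\max\{9,2f+3\}$-generic is exactly what is needed to invoke Proposition~\ref{prop:pi2-I1-invt}, Corollary~\ref{cor:subquot-I1-invt} and Corollary~\ref{cor:subquot-K-soc}.
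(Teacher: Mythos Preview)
Your overall strategy is right, and the workaround you sketch at the end is in fact the entire argument the paper uses; but the body of your proposal contains two genuine gaps that need to be named.

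\textbf{First inclusion.} Your justification that no $\tau \in W(\brho^\ss)$ with $\ell(\tau) > i_0$ occurs in $\pi_1[\m_{K_1}^2]$ is not valid: a constituent $\tau$ of $\pi_1[\m_{K_1}^2]$ need not satisfy $\chi_\tau \in \JH(\pi_1^{I_1})$, so \cite[Cor.~\ref{bhhms4:cor:conj1}]{BHHMS4} does not apply. The paper's argument is simpler and avoids this: since $\wt D_0(\brho)$ is multiplicity free and every $\tau \in W(\brho^\ss)$ already occurs in $D_0(\brho) = \pi[\m_{K_1}]$, the injection $\pi_1[\m_{K_1}^2]/\pi_1[\m_{K_1}] \hookrightarrow \pi[\m_{K_1}^2]/\pi[\m_{K_1}]$ shows that no $W(\brho^\ss)$-constituent appears in $\pi_1[\m_{K_1}^2]/\pi_1[\m_{K_1}]$. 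Combined with $\pi_1[\m_{K_1}] = D_0(\brho)_{\le i_0}$ this forces $\pi_1[\m_{K_1}^2] \subset \wt D_0(\brho)_{\le i_0}$.

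\textbf{Reverse inclusion.} Your claim that $\tau \in \JH(\wt D_0(\brho)_{\le i_0}) \setminus \JH(D_0(\brho))$ is unjustified: from $D_0(\brho)_{\le i_0} \subset \pi_1[\m_{K_1}^2]$ you only get $\tau \notin \JH(D_0(\brho)_{\le i_0})$, not $\tau \notin \JH(D_0(\brho))$. Without this you cannot apply Lemma~\ref{lem:Ext1-Serre}, so the lifting step collapses. More to the point, the lifting is unnecessary: $\wt D_0(\brho)_{\le i_0}$ already sits inside $\pi[\m_{K_1}^2]$, so $Q \defeq \wt D_0(\brho)_{\le i_0}/\pi_1[\m_{K_1}^2]$ injects into $\pi[\m_{K_1}^2]/\pi_1[\m_{K_1}^2] \hookrightarrow \pi_2$. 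If $Q \ne 0$, any irreducible $\sigma \subset Q \subset \pi_2|_{\GL_2(\cO_K)}$ lies in $W(\brho^\ss)$ with $\ell(\sigma) > i_0$ by Corollary~\ref{cor:subquot-K-soc} (note: $W(\brho^\ss)$, not $W(\brho)$), contradicting $\sigma \in \JH(\wt D_0(\brho)_{\le i_0})$. This is exactly your final paragraph, stripped of the lifting detour --- and it is the paper's entire proof of the reverse inclusion.
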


\begin{proof}
  (Cf.\ the proof of Proposition~\ref{prop:K1-square-invariants}(i).)
  By \cite[Thm.~\ref{bhhms4:thm:conj2}]{BHHMS4} we have $\pi_1[\m_{K_1}] = D_0(\brho)_{\le i_0}$, and as $\pi[\m_{K_1}^2]$ is multiplicity free we deduce from the injection $\pi_1[\m_{K_1}^2]/\pi_1[\m_{K_1}]\into \pi[\m_{K_1}^2]/\pi[\m_{K_1}]$ that no element of $W(\brho^\ss)$ occurs in $\pi_1[\m_{K_1}^2]/\pi_1[\m_{K_1}]$.
  As a consequence,
  \begin{equation*}
    \pi_1[\m_{K_1}^2] \subset \wt{D}_0(\brho)_{\le i_0}.
  \end{equation*}
  Let $Q \defeq  \wt{D}_0(\brho)_{\le i_0}/\pi_1[\m_{K_1}^2]$, which injects into $\pi[\m_{K_1}^2]/\pi_1[\m_{K_1}^2]$ and hence into $\pi_2[\m_{K_1}^2]$.
  If $Q \ne 0$, pick an irreducible subrepresentation $\sigma \subset Q \subset \pi_2|_{\GL_2(\cO_K)}$.
  Then $\sigma \in W(\brho^\ss)$ and $\ell(\sigma) > i_0$ by Corollary~\ref{cor:subquot-K-soc} (with $i_0' = f$), contradicting that $\sigma$ contributes to $\wt{D}_0(\brho)_{\le i_0}$.
  Hence $Q = 0$, as we wanted to show.
\end{proof}

\subsection{\texorpdfstring{$K_1$-}{K\_1-}-invariants of subquotient representations of \texorpdfstring{$\pi$}{pi}}
\label{sec:k1-quotient}

We describe the $K_1$-invariants of subquotients of $\pi$ (Corollary \ref{cor:K1-subquot}). The proofs in this section are subtle (and sometimes technical), in particular use the results of the preceding two sections and certain $\wt{\Gamma}$-representations that are not multiplicity free (Lemma \ref{lem:reps-Qi*}).

We start with the $K_1$-invariants of quotients $\pi_2 = \pi/\pi_1$ of $\pi$:

\begin{thm}\label{thm:pi2-K1} %
  {Assume that $\brho$ is $\max\{9,2f+3\}$-generic.} %
  Let $i_0 = i_0(\pi_1)$ with $-1 \le i_0 \le f$ be as in \cite[Thm.~\ref{bhhms4:thm:conj2}]{BHHMS4}.
  We have
  \begin{equation*}
    \pi_2^{K_1} \cong D_0(\brho^\ss)_{i_0+1} \oplus_{D_0(\brho)_{i_0+1}} (D_0(\brho)/D_0(\brho)_{\le i_0}).
  \end{equation*}
\end{thm}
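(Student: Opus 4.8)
\textbf{Proof plan for Theorem~\ref{thm:pi2-K1}.}
The plan is to establish the isomorphism in three stages: first identify the right-hand side as an explicit $\GL_2(\cO_K)$-representation, then embed it into $\pi_2^{K_1}$, and finally show the embedding is an isomorphism by a delicate argument with multiplicity-free torsion submodules. Write $D_{i_0} \defeq D_0(\brho^\ss)_{i_0+1} \oplus_{D_0(\brho)_{i_0+1}} (D_0(\brho)/D_0(\brho)_{\le i_0})$ for the right-hand side. First I would check that $D_{i_0}$ is multiplicity free: the two summands are each multiplicity free (the first by \cite[Thm.\ 15.4]{BP}, the second because $D_0(\brho)$ is multiplicity free by assumption~\ref{it:assum-i} and the structure recalled in \S~\ref{sec:notation}), and they are amalgamated over $D_0(\brho)_{i_0+1}$; one must verify their Jordan--H\"older sets intersect exactly in $\JH(D_0(\brho)_{i_0+1})$. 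By Corollary~\ref{cor:subquot-K-soc} (applied with $\pi' = \pi_2$, so $i_0' = f$) together with the discussion preceding that corollary, one sees that $\soc_{\GL_2(\cO_K)}(D_{i_0})$ consists precisely of the $\sigma \in W(\brho)$ with $\ell(\sigma) > i_0$ and the $\sigma \in W(\brho^\ss)\setminus W(\brho)$ with $\ell(\sigma) = i_0+1$, matching $\soc_{\GL_2(\cO_K)}(\pi_2)$; and by Corollary~\ref{cor:subquot-I1-invt} its $I_1$-invariants match $\JH(\pi_2^{I_1})$.

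Next I would construct the embedding $D_{i_0} \into \pi_2^{K_1}$. The quotient $D_0(\brho)/D_0(\brho)_{\le i_0}$ embeds into $\pi_2^{K_1}$ via \cite[Thm.\ \ref{bhhms4:thm:conj2}]{BHHMS4} (which gives $\pi_1^{K_1} = D_0(\brho)_{\le i_0}$ and $\pi^{K_1} = D_0(\brho)$, combined with Remark~\ref{rem:K1-invt-exact}, i.e.\ the exactness of $K_1$-invariants for $\pi_1 \subset \pi$). To promote this to an embedding of the amalgam $D_{i_0}$, I would use that the Serre weights $\sigma \in W(\brho^\ss)\setminus W(\brho)$ with $\ell(\sigma) = i_0+1$ occur in $\soc_{\GL_2(\cO_K)}(\pi_2)$ (Corollary~\ref{cor:subquot-K-soc}) but do \emph{not} occur in $D_0(\brho)/D_0(\brho)_{\le i_0}$, and that $D_0(\brho^\ss)_{i_0+1}$ is the "completion" that adds precisely these weights; the amalgamation data is forced by the structure of $\Inj_{\GL_2(\cO_K)}$ of the relevant socle constituents, using Lemma~\ref{lem:I-sigma-tau-modular} and \cite[Thm.\ 15.4]{BP}. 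So I would first produce morphisms $\Ind_I^{\GL_2(\cO_K)}\chi_\lambda \to \pi_2$ for each $\lambda \in \P^\ss\setminus\P$ with $|J_\lambda| = i_0+1$ (using that $\chi_\lambda \in \JH(\pi_2^{I_1})$ by Proposition~\ref{prop:pi2-I1-invt} and Frobenius reciprocity), control their images via Lemma~\ref{lem:I-sigma-tau-modular}, and glue them with the embedding of $D_0(\brho)/D_0(\brho)_{\le i_0}$ along their common constituents. This produces $D_{i_0} \into \pi_2^{K_1}$ inducing isomorphisms on $I_1$-invariants and on $\GL_2(\cO_K)$-socles.

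Finally, and this is the main obstacle, I would show the embedding $D_{i_0} \into \pi_2^{K_1}$ is surjective. Suppose not, and take a minimal subrepresentation $V$ of $\pi_2^{K_1}$ strictly containing $D_{i_0}$, so $\tau' \defeq V/D_{i_0}$ is irreducible; one first argues $\tau' \in W(\brho^\ss)$ (via an $\Ext^1$-vanishing statement built from Lemma~\ref{lem:ex-wtD} and Lemma~\ref{lem:Ext1-Serre}). Pick $V' \subset V$ with $V' \not\subset D_{i_0}$ and $\cosoc_{\GL_2(\cO_K)}(V') \cong \tau'$, so $[V':\tau'] \in \{1,2\}$ by multiplicity freeness of $D_{i_0}$ and $V' \not\cong \tau'$ by the socle/$I_1$-invariant matching. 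If $[V':\tau'] = 1$, Lemma~\ref{lem:soc-cosoc-sigma} forces $\rad(V')$ to be semisimple of the form $\bigoplus_{i \in \cJ}(\mu_i^+(\tau') \oplus \mu_i^-(\tau'))$-type pieces; the case where only $\mu_i^-(\tau')$ appears is ruled out by equality of $I_1$-invariants, and in the remaining cases one enlarges $V'$ to $\wt{V'}$ which becomes a quotient of $\Ind_I^{\GL_2(\cO_K)}\cW$ for the length-3 $I$-representation $\cW$ with constituents $\chi_{\mu_i^\pm(\tau')}$, $\chi_{\tau'}$; Frobenius reciprocity then yields two independent maps $\cW \to \pi_2[\m^2]$ (one with image in $D_{i_0}$, one not, using $\tau' \in \JH(D_{i_0})$ by Lemma~\ref{lem:mu-pm}), contradicting multiplicity freeness of $\pi_2[\m^2]$ (Corollary~\ref{cor:pi2-mn-mult-free} with $n=2$). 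If $[V':\tau'] = 2$, the analogous argument uses the $\wt\Gamma$-representations $Q_i$, $W_i$, $Q_\cJ$, $W_\cJ$ of \S~\ref{sec:some-wtgamma-repr} (Lemmas~\ref{lem:reps-Qi*}, \ref{lem:rep-Qi}, \ref{lem:rep-WJ}, \ref{lem:rep-QJ}, \ref{lem:Q-vs-Theta}) to produce two independent contributions to $[\pi_2[\m^3]:\chi_{\tau'}]$, contradicting multiplicity freeness of $\pi_2[\m^3]$ (Corollary~\ref{cor:pi2-mn-mult-free} with $n=3$). I expect the bookkeeping in this last case — matching the combinatorics of $J_{\tau'}$, $J_{\brho}$ with the shape of $Q_\cJ$ and correctly identifying which $\mu_i^*(\tau')$, $\delta_i^+(\tau')$ lie in $W(\brho^\ss)$ — to be the genuinely hard and technical part, and it is where the strength of having computed $\gr_\m(\pi_2^\vee)$ in Theorem~\ref{thm:gr-pi2} (hence the multiplicity freeness of $\pi_2[\m^3]$) is essential.
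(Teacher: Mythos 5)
Your plan follows the paper's proof closely: establish that $D_{i_0}$ is multiplicity free and matches $\pi_2^{K_1}$ on $I_1$-invariants and $\GL_2(\cO_K)$-socle (Lemma~\ref{lem:Ext-Di0}), embed $D_{i_0}$ into $\pi_2^{K_1}$ via Proposition~\ref{prop:pi2K1}, and then rule out a strict inclusion by producing two linearly independent maps into $\pi_2[\m^2]$ (resp.\ $\pi_2[\m^3]$) using the $W_\cJ$, $Q_\cJ$, $\Theta_\cJ$ machinery, contradicting multiplicity freeness (Corollary~\ref{cor:pi2-mn-mult-free}). One small misattribution worth flagging: in the case $[V':\tau']=1$ the semisimplicity of $\rad_\Gamma(V')$ does not follow from Lemma~\ref{lem:soc-cosoc-sigma} (that lemma requires a $\tau'$-isotypic socle, which is exactly the $[V':\tau']=2$ situation); instead the paper decomposes $V'\into\bigoplus_\sigma V'_\sigma$ along $D_{i_0}=\bigoplus_\sigma D_{i_0,\sigma}$ and invokes Lemma~\ref{lem:D=A+B}(iii) (ultimately resting on Lemma~\ref{lem:Ext1-W2}(ii)), and the radical is of the unpaired form $\bigoplus_{i\in\cJ^+}\mu_i^+(\tau')\oplus\bigoplus_{i\in\cJ^-}\mu_i^-(\tau')$ rather than paired $\mu_i^\pm$ summands.
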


To prepare for the proof, we first need some lemmas.

Recall from \S~\ref{sec:notation} that $D_0(\brho)=\bigoplus_{\sigma\in W(\rhobar)}D_{0,\sigma}(\brho)$, and from \cite[\S~13]{BP} that $D_{0,\sigma}(\brho)$ is maximal (for the inclusion) with respect to the two properties $\soc_{\GL_2(\cO_K)}(D_{0,\sigma}(\brho))=\sigma$ and $\JH(D_{0,\sigma}(\brho)/\sigma)\cap W(\brho)=\emptyset$.
In particular, $D_{0,\sigma}(\brho^{\ss})\subset D_{0,\sigma}(\brho)$.

We now define and study the important subrepresentation $\cW=\bigoplus_{\sigma\in W(\brho)}\mathcal{W}_{\sigma} \subset D_0(\brho)$ as well as its image $\cW_2=\bigoplus_{\sigma \in W(\brho)}\mathcal{W}_{2,\wt{\sigma}} \subset D_0(\brho)/D_{0}(\brho)_{\leq i_0}$.

\begin{lem}\label{lem:calW}
{Assume that $\rhobar$ is $0$-generic.} %
There exists a unique subrepresentation $\mathcal{W}\subset D_0(\brho)$ such that $\JH(\mathcal{W})=W(\brho^{\ss})$. Moreover, $\mathcal{W}$ has a direct sum decomposition $\mathcal{W}=\bigoplus_{\sigma\in W(\brho)}\mathcal{W}_{\sigma}$, where $\soc_{\GL_2(\cO_K)}(\mathcal{W}_{\sigma})=\sigma$ and $\JH(\mathcal{W}_{\sigma})=\{\tau\in W(\brho^\ss):J_{\brho}\cap J_{\tau}=J_{\sigma}\}$ {(in particular, $\mathcal{W}_{\sigma}\subset D_{0,\sigma}(\brho)$ for all $\sigma\in W(\brho)$)}. The cosocle $\wt\sigma$ of  $\mathcal{W}_{\sigma}$ is irreducible and we have $J_{\wt\sigma} = J_{\sigma}\sqcup J_{\brho}^c$. 
\end{lem}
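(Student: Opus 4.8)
The strategy is to construct $\mathcal{W}$ componentwise inside each indecomposable summand $D_{0,\sigma}(\brho)$ of $D_0(\brho)$ and to use the extension graph combinatorics from \S~\ref{sec:EG-Gamma-rep} to pin down its Jordan--H\"older factors. First I would recall that $W(\brho^\ss) = \JH(I(\sigma',\sigma'^c))$ for any $\sigma' \in W(\brho^\ss)$ by Lemma~\ref{lem:I-sigma-tau-modular}, and that the partition of $W(\brho^\ss)$ by the value of $J_{\brho} \cap J_\tau$ matches the partition by $\GL_2(\cO_K)$-socle inside $D_0(\brho^\ss) = \bigoplus_{\sigma \in W(\brho)} D_{0,\sigma}(\brho^\ss)$ (this is essentially \cite[\S~15]{BP} together with~\eqref{eq:P}: a weight $\tau \in W(\brho^\ss)$ lies in $D_{0,\sigma}(\brho^\ss)$ iff $J_\tau \cap J_{\brho} = J_\sigma$, using that $J_\sigma \subset J_{\brho}$ for $\sigma \in W(\brho)$). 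So define $\mathcal{W}_\sigma$ to be the unique subrepresentation of $D_{0,\sigma}(\brho)$ with $\JH(\mathcal{W}_\sigma) = \{\tau \in W(\brho^\ss) : J_{\brho} \cap J_\tau = J_\sigma\}$; existence and uniqueness of such a subrepresentation is the first thing to establish.

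For existence, I would argue that the set $S_\sigma \defeq \{\tau \in W(\brho^\ss) : J_{\brho} \cap J_\tau = J_\sigma\}$ is ``downward closed'' in $D_{0,\sigma}(\brho)$ in the sense needed: $\sigma \in S_\sigma$ (socle), and whenever $\tau \in S_\sigma$ and $\tau'$ is a constituent of $D_{0,\sigma}(\brho)$ with $\Ext^1_\Gamma(\tau',\tau)\neq 0$ and $\tau'$ lies ``below'' $\tau$ (closer to the socle), then $\tau' \in S_\sigma$. The key combinatorial input is that $D_{0,\sigma}(\brho)$ is multiplicity free and its submodule structure is governed by the extension graph: a weight $\tau$ occurs iff $J_{\sigma} \subset J_\tau$ and $J_\tau \setminus J_\sigma \subset$ (some fixed set depending on $\brho$), and moving toward the socle decreases $|J_\tau \mathbin\Delta J_\sigma|$, hence only removes indices, hence preserves the condition $J_\tau \cap J_{\brho} = J_\sigma$ once one knows $J_\sigma \subset J_\tau$. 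Concretely, I would take $\mathcal{W}_\sigma$ to be the subrepresentation generated by all the (unique, by multiplicity freeness) copies of the $\tau \in S_\sigma$, and check that its set of constituents is exactly $S_\sigma$ by ruling out any $\tau' \notin S_\sigma$ sneaking in: such a $\tau'$ would have $J_{\brho} \cap J_{\tau'} \subsetneq J_\sigma$ or $\not\subset J_\sigma$, but a constituent of the subrepresentation generated by $S_\sigma$ must be $\le$ some $\tau \in S_\sigma$ in the poset, forcing $J_{\tau'} \subset J_\tau$ and (since $J_\sigma \subset J_{\tau'}$ as well, $\sigma$ being the socle) $J_{\brho}\cap J_{\tau'} = J_\sigma$. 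Uniqueness of $\mathcal{W}_\sigma$ follows from multiplicity freeness of $D_{0,\sigma}(\brho)$: two subrepresentations with the same constituents must coincide. Then $\mathcal{W} \defeq \bigoplus_\sigma \mathcal{W}_\sigma$, and $\JH(\mathcal{W}) = \bigsqcup_\sigma S_\sigma = W(\brho^\ss)$ since the $S_\sigma$ partition $W(\brho^\ss)$; uniqueness of $\mathcal{W}$ in $D_0(\brho)$ follows because any subrepresentation with $\JH = W(\brho^\ss)$ must have socle $\bigoplus_\sigma \sigma$ (every $\sigma \in W(\brho) \subset W(\brho^\ss)$ lies in its socle since $\sigma \in \soc_\Gamma D_0(\brho)$) and hence decomposes along the $D_{0,\sigma}(\brho)$, reducing to the componentwise uniqueness.

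It remains to identify $\soc_{\GL_2(\cO_K)}(\mathcal{W}_\sigma) = \sigma$ (clear, as $\mathcal{W}_\sigma \subset D_{0,\sigma}(\brho)$ which has irreducible socle $\sigma$, and $\sigma \in \JH(\mathcal{W}_\sigma)$) and to compute the cosocle $\wt\sigma$. For the cosocle: among the weights in $S_\sigma$, the one maximizing $|J_\tau \mathbin\Delta J_\sigma| = |J_\tau| - |J_\sigma|$ (equivalently $|J_\tau|$) should be the unique maximal element of the poset, hence the cosocle, and it is characterized by $J_\tau \supset J_\sigma$, $J_\tau \cap J_{\brho} = J_\sigma$, and $J_\tau$ as large as possible; taking $J_{\wt\sigma} = J_\sigma \sqcup J_{\brho}^c$ gives $J_{\wt\sigma} \cap J_{\brho} = J_\sigma$ and is maximal with that property, so this is the candidate. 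I would confirm $\wt\sigma \in W(\brho^\ss)$ using~\eqref{eq:P} (the set $S_\sigma$ is nonempty and $\wt\sigma \in S_\sigma$ by the explicit description of $\D^\ss$ via subsets), that $\wt\sigma \in \JH(D_{0,\sigma}(\brho))$, and that it is the unique constituent of $\mathcal{W}_\sigma$ with $\Hom_\Gamma(\mathcal{W}_\sigma, \wt\sigma) \neq 0$ by showing every other $\tau \in S_\sigma$ satisfies $\tau \in \JH(I(\sigma,\wt\sigma))$ with $\tau \neq \wt\sigma$ (so $\tau$ is not in the cosocle) — this uses Lemma~\ref{lem:I-sigma-tau-modular} applied with the pair $(\sigma,\wt\sigma)$, noting $\JH(I(\sigma,\wt\sigma)) = \{\tau \in W(\brho^\ss) : J_\sigma \cap J_{\wt\sigma} \subset J_\tau \subset J_\sigma \cup J_{\wt\sigma}\} = \{\tau : J_\sigma \subset J_\tau \subset J_\sigma \sqcup J_{\brho}^c\} = S_\sigma$. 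This last identity is exactly what makes $\mathcal{W}_\sigma \cong I(\sigma,\wt\sigma)$ (it has the right socle, the right set of constituents each with multiplicity one, and $[\mathcal{W}_\sigma : \sigma] = 1$), which immediately gives irreducibility of the cosocle.

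\textbf{Main obstacle.} The delicate point is the existence/well-definedness of $\mathcal{W}_\sigma$ as an honest subrepresentation with \emph{exactly} the prescribed constituent set — i.e.\ ruling out that the subrepresentation generated by the copies of weights in $S_\sigma$ picks up extra constituents outside $S_\sigma$. This is where one genuinely needs the fine structure of $D_{0,\sigma}(\brho)$ from \cite[\S~13--15]{BP} (or the extension-graph reformulation of \S~\ref{sec:EG-Gamma-rep}), specifically that the poset of constituents of $D_{0,\sigma}(\brho)$ ordered by "is a subquotient of the subrepresentation generated by" is controlled by inclusion of the associated subsets $J_\tau$, so that being below $\tau \in S_\sigma$ forces $J_{\tau'} \subset J_\tau$ and $J_\sigma \subset J_{\tau'}$, hence $\tau' \in S_\sigma$. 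Once this is in place, the isomorphism $\mathcal{W}_\sigma \cong I(\sigma,\wt\sigma)$ and all the remaining assertions are formal.
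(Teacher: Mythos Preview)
Your approach is correct and essentially amounts to what the paper's one-line proof (a citation to \cite[Lemma~\ref{bhhms4:lem:JHinW}]{BHHMS4}) is invoking. The clean route, which you arrive at by the end, is: define $\wt\sigma\in W(\brho^{\ss})$ by $J_{\wt\sigma}=J_\sigma\sqcup J_{\brho}^c$; by Lemma~\ref{lem:I-sigma-tau-modular} one has $\JH(I(\sigma,\wt\sigma))=\{\tau\in W(\brho^{\ss}):J_\sigma\subset J_\tau\subset J_\sigma\sqcup J_{\brho}^c\}=S_\sigma$, and the only element of $W(\brho)$ in $S_\sigma$ is $\sigma$ itself (since $\tau\in W(\brho)$ forces $J_\tau\subset J_{\brho}$); hence by the maximality characterization of $D_{0,\sigma}(\brho)$ recalled just before the lemma, $I(\sigma,\wt\sigma)\hookrightarrow D_{0,\sigma}(\brho)$. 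Setting $\mathcal W_\sigma$ equal to this image, all the assertions follow from multiplicity freeness of $D_0(\brho)$ and the partition $W(\brho^{\ss})=\bigsqcup_{\sigma\in W(\brho)}S_\sigma$.

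Two minor remarks. First, your ``downward closed'' discussion in the main-obstacle paragraph is slightly off as phrased: arbitrary constituents $\tau'$ of $D_{0,\sigma}(\brho)$ need not lie in $W(\brho^{\ss})$, so they do not carry a $J_{\tau'}$-parameter in the sense you use, and the poset argument as written does not literally go through. It would be cleaner to lead directly with the $I(\sigma,\wt\sigma)$ construction, which you correctly identify as bypassing the issue. Second, Lemma~\ref{lem:I-sigma-tau-modular} is stated under $1$-genericity whereas the present lemma only assumes $0$-genericity; this is a small discrepancy that the cited BHHMS4 lemma presumably handles under the weaker hypothesis.
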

\begin{proof}
This is a direct consequence of \cite[Lemma~\ref{bhhms4:lem:JHinW}]{BHHMS4}.
\end{proof}

For $\sigma\in W(\rhobar)$ let $\wt{\sigma}\in W(\brho^\ss)$ be the element such that $J_{\wt\sigma} = J_{\sigma}\sqcup J_{\brho}^c$, thus $\mathcal{W}_{\sigma} \cong I(\sigma,\wt{\sigma})$. Clearly, $\sigma\mapsto \wt{\sigma}$ gives a bijection between $W(\brho)$ and $\{\tau\in W(\brho^\ss):J_{\tau}\supseteq J_{\brho}^c\}$. For convenience, we write $\mathcal{W}_{\wt\sigma}\defeq \cal{W}_{\sigma}$. 

Let $\mathcal{W}_2\subset \pi_2$ (resp.\ $\mathcal{W}_{2,\wt{\sigma}}$) be the image of $\mathcal{W}$ (resp.\ $\mathcal{W}_{\wt{\sigma}}$) in $D_0(\brho)/D_{0}(\brho)_{\leq i_0} \cong \pi^{K_1}/\pi_1^{K_1} \subset \pi_2^{K_1}$.
In particular, $\JH(\cW_2) = \{ \tau\in W(\brho^\ss): \ell(\tau) \ge i_0+1 \}$.
As $\cW$ is multiplicity free, we have $\mathcal{W}_2=\bigoplus_{\sigma \in W(\brho)}\mathcal{W}_{2,\wt{\sigma}}$ (note that, contrary to $\mathcal{W}_{\wt{\sigma}}$, $\sigma \notin\JH(\mathcal{W}_{2,\wt{\sigma}})$ in general).

\begin{lem}\label{lem:W2}\ 
{Assume that $\brho$ is {$0$}-generic} %
and that $\sigma \in W(\brho)$.
  \begin{enumerate}
  \item 
  \label{it:lem:W2:1}
  We have 
    \begin{equation*}\JH(\mathcal{W}_{2,\wt{\sigma}})=\{\tau\in W(\brho^\ss): J_{\brho}\cap J_{\tau} =J_{\sigma},\; \ell(\tau)\ge i_0+1\}.\end{equation*}
    Moreover, if $\ell(\sigma)\geq i_0+1$, then $\mathcal{W}_{2,\wt{\sigma}}=\mathcal{W}_{\wt\sigma}\cong I(\sigma,\wt{\sigma})$.
  \item 
  \label{it:lem:W2:2}
    Suppose that $\tau,\tau' \in \JH(\cW_2)$ are such that $\Ext^1_{\Gamma}(\tau',\tau)\neq0$. 
    Then the nonsplit extension $\tau\-- \tau'$ occurs in $\mathcal{W}_{2}$ as a subquotient if and only if $J_{\tau'}=J_{\tau}\sqcup\{j\}$ for some $j\notin J_{\brho}$.
  \item 
   \label{it:lem:W2:3}
   We have
    \begin{equation*}\soc_\Gamma(\mathcal{W}_{2,\wt{\sigma}})=\bigoplus \big\{\tau\in W(\brho^\ss): J_{\brho}\cap J_{\tau} =J_{\sigma},\; \ell(\tau)=\max\{i_0+1,\ell(\sigma)\}\big\}.\end{equation*}
  \end{enumerate}
\end{lem}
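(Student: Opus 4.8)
The three parts build on each other and on the combinatorial description of $\mathcal{W}$ in Lemma~\ref{lem:calW}. Throughout I will freely use that $\mathcal{W}$ (hence $\mathcal{W}_2$) is multiplicity free, the identification $D_0(\brho)/D_0(\brho)_{\le i_0} \cong \pi^{K_1}/\pi_1^{K_1}$, and that the filtration $D_0(\brho)_{\le i}$ is characterized by killing exactly those $\tau \in W(\brho^\ss)$ with $\ell(\tau) > i$.

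\emph{Part (i).} Since $\mathcal{W}_{\wt\sigma} \cong I(\sigma,\wt\sigma)$ is multiplicity free with $\JH(\mathcal{W}_{\wt\sigma}) = \{\tau \in W(\brho^\ss) : J_{\brho} \cap J_\tau = J_\sigma\}$ by Lemma~\ref{lem:calW}, the image $\mathcal{W}_{2,\wt\sigma}$ in $D_0(\brho)/D_0(\brho)_{\le i_0}$ has Jordan--H\"older factors exactly the $\tau \in \JH(\mathcal{W}_{\wt\sigma})$ with $\tau \notin \JH(D_0(\brho)_{\le i_0})$, i.e.\ with $\ell(\tau) \ge i_0+1$. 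This gives the first assertion. For the second, if $\ell(\sigma) \ge i_0+1$, then every $\tau \in \JH(\mathcal{W}_{\wt\sigma})$ satisfies $\ell(\tau) \ge \ell(\sigma) \ge i_0+1$: indeed $J_\sigma \subset J_\tau$ whenever $J_\sigma = J_{\brho} \cap J_\tau$, so $|J_\tau| \ge |J_\sigma|$, hence $D_0(\brho)_{\le i_0}$ meets $\mathcal{W}_{\wt\sigma}$ trivially and $\mathcal{W}_{2,\wt\sigma} = \mathcal{W}_{\wt\sigma}$. (Here one should double-check the inequality $\ell(\tau) \ge \ell(\sigma)$ directly from $J_\sigma \subset J_\tau$; this is the only genuinely combinatorial point and is immediate since $\ell = |J_{(-)}|$.)

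\emph{Part (ii).} Given $\tau,\tau' \in \JH(\mathcal{W}_2)$ with $\Ext^1_\Gamma(\tau',\tau) \ne 0$, Lemma~\ref{lem:I-sigma-tau-modular} forces $|J_\tau \mathbin\Delta J_{\tau'}| = 1$. The nonsplit extension $\tau \-- \tau'$ occurs as a subquotient of $\mathcal{W}_2$ iff it occurs in some $\mathcal{W}_{2,\wt\sigma}$ (as $\mathcal{W}_2$ is a direct sum), which by the description of $\JH(\mathcal{W}_{2,\wt\sigma})$ requires $J_{\brho} \cap J_\tau = J_{\brho} \cap J_{\tau'}$; combined with $|J_\tau \mathbin\Delta J_{\tau'}| = 1$ this says the differing index $j$ lies in $J_{\brho}^c$. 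Conversely, if $J_{\tau'} = J_\tau \sqcup \{j\}$ with $j \notin J_{\brho}$, both $\tau,\tau'$ lie in $\mathcal{W}_{\wt\sigma}$ for $\sigma$ with $J_\sigma = J_{\brho} \cap J_\tau$, and I would deduce that $\tau \-- \tau'$ occurs nonsplit in $\mathcal{W}_{\wt\sigma} \cong I(\sigma,\wt\sigma)$: since $\mathcal{W}_{\wt\sigma}$ is multiplicity free with $\JH = \{\mu : J_{\brho} \cap J_\mu = J_\sigma\} \cong \JH(I(\sigma,\wt\sigma))$ and $|J_\tau \mathbin\Delta J_{\tau'}|=1$, the subquotient of $\mathcal{W}_{\wt\sigma}$ on these two constituents is the nonsplit extension (a multiplicity-free module with two adjacent constituents in the extension graph cannot split off either one, using $\soc$ and $\cosoc$ considerations together with Lemma~\ref{lem:I-sigma-tau-modular} or \cite[Cor.~4.11]{BP}). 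Finally $J_{\tau'} = J_\tau \sqcup \{j\}$ with $j \notin J_{\brho}$ forces $\ell(\tau') = \ell(\tau)+1$, so if $\tau \in \JH(\mathcal{W}_2)$, i.e.\ $\ell(\tau) \ge i_0+1$, then automatically $\ell(\tau') \ge i_0+1$ and the extension survives in the quotient $\mathcal{W}_2$; one should also treat the case $\ell(\tau) = i_0$, $\ell(\tau') = i_0+1$, where $\tau \notin \JH(\mathcal{W}_2)$ and hence the "only if" direction is vacuous — this needs a careful reading of the hypothesis "$\tau,\tau' \in \JH(\mathcal{W}_2)$".

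\emph{Part (iii).} The socle of $\mathcal{W}_{2,\wt\sigma}$ consists of those $\tau$ that are minimal (no $\tau' \-- \tau$ occurring nonsplit). Using (ii), the constituents $\tau' \in \JH(\mathcal{W}_{2,\wt\sigma})$ mapping onto a given $\tau$ differ by removing one index in $J_{\brho}^c$, i.e.\ by decreasing $\ell$ by one within the constraint $J_{\brho} \cap J = J_\sigma$; the minimal ones are exactly those $\tau$ with $J_\tau$ as small as possible subject to $J_{\brho} \cap J_\tau = J_\sigma$ and $\ell(\tau) \ge i_0+1$, which gives $\ell(\tau) = \max\{i_0+1,\ell(\sigma)\}$ (since $J_\sigma \subset J_\tau$ always forces $\ell(\tau) \ge \ell(\sigma)$). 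The one subtlety is to rule out that a $\tau$ with $\ell(\tau) > \max\{i_0+1,\ell(\sigma)\}$ is in the socle even though $\Ext^1_\Gamma$ is nonzero only in "upward" directions that leave $\JH(\mathcal{W}_2)$: here I would argue that inside $\mathcal{W}_{\wt\sigma} \cong I(\sigma,\wt\sigma)$, every $\tau \ne \wt\sigma$ receives a nonsplit extension from some $\tau'$ with $J_{\tau'} = J_\tau \sqcup \{j\}$, $j \in J_{\brho}^c$ (by the extension-graph structure of $I(\sigma,\wt\sigma)$, \cite[Cor.~4.11]{BP}), and such $\tau'$ stays in $\JH(\mathcal{W}_2)$ precisely when $\ell(\tau) \ge i_0+1$, i.e.\ $\ell(\tau) > \max\{i_0+1,\ell(\sigma)\}$ is not needed — if $\ell(\tau) \ge i_0+1$ but $\ell(\tau) > \ell(\sigma)$ there is room to decrease — so such $\tau$ is not in $\soc_\Gamma(\mathcal{W}_{2,\wt\sigma})$. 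Conversely a $\tau$ with $\ell(\tau) = \max\{i_0+1,\ell(\sigma)\}$ cannot be the target of a nonsplit $\tau' \-- \tau$ with $\tau' \in \JH(\mathcal{W}_2)$: either $\ell(\sigma) \ge i_0+1$ and $J_{\tau'} = J_\tau \sqcup\{j\}$, $j \in J_{\brho}^c$ would give $J_\sigma \not\subset J_{\tau'}$ is impossible — wait, $J_\sigma \subset J_\tau \subset J_{\tau'}$, but then $|J_{\tau'} \cap J_{\brho}| = |J_\sigma|$ forces... one checks $\ell(\tau) = \ell(\sigma)$ minimal means $J_\tau = J_\sigma \sqcup (J_\tau \cap J_{\brho}^c)$ with the second piece forced to be empty, contradiction with removing nothing; the clean statement is $J_\tau \cap J_{\brho}^c = \emptyset$ when $\ell(\tau)=\ell(\sigma)$, leaving no index to remove. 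Or $\ell(\sigma) < i_0+1$ and $\ell(\tau) = i_0+1$, so removing an index from $J_{\brho}^c$ lands in $D_0(\brho)_{\le i_0}$, hence is killed in $\mathcal{W}_2$.

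\textbf{Main obstacle.} The bookkeeping in Part (iii) — precisely pinning down the minimal constituents of $\mathcal{W}_{2,\wt\sigma}$ and verifying that no "stray" higher-$\ell$ constituent can be in the socle — is where the argument is most delicate, because one must simultaneously track the two constraints $J_{\brho} \cap J_\tau = J_\sigma$ and $\ell(\tau) \ge i_0+1$ and their interaction with the $\Ext^1$-graph; the cases $\ell(\sigma) \ge i_0+1$ and $\ell(\sigma) < i_0+1$ behave differently and must be separated cleanly. The $\Ext^1$ input (Lemma~\ref{lem:I-sigma-tau-modular}, \cite[Cor.~4.11]{BP}) and the multiplicity-freeness of $\mathcal{W}$ do all the representation-theoretic work, so the remaining difficulty is purely the combinatorics of the subsets $J$.
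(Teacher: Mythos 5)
Your overall strategy is the same as the paper's: reduce everything to the combinatorics of $J$-sets via Lemma~\ref{lem:calW}, using (i) to locate the constituents, (ii) to determine which adjacent pairs yield nonsplit extensions, and (iii) to read off the socle. Parts (i) and (iii) are essentially sound (part (i) via the kernel $\mathcal{W}_{\wt\sigma}\cap D_0(\brho)_{\le i_0}$ rather than the paper's partition argument, but both work; part (iii) is what the paper compresses to ``follows from (i) and (ii)'').

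There is a genuine gap in part (ii), and it concerns the \emph{direction} of the extension. In your ``$\Rightarrow$'' direction you correctly conclude that $J_\tau\mathbin\Delta J_{\tau'}$ is a singleton contained in $J_{\brho}^c$, but you stop there: you never argue that the unique differing index satisfies $J_{\tau'}=J_\tau\sqcup\{j\}$ rather than $J_\tau=J_{\tau'}\sqcup\{j\}$, which is the content of the statement. In your ``$\Leftarrow$'' direction the phrase ``the subquotient of $\mathcal{W}_{\wt\sigma}$ on these two constituents is the nonsplit extension'' is not precise enough: since $\Ext^1_\Gamma$ is nonzero in both orders, this reasoning alone cannot distinguish $\tau\-- \tau'$ from $\tau'\-- \tau$. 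What pins down the orientation is the submodule structure of $\mathcal{W}_{\wt\sigma}\cong I(\sigma,\wt\sigma)$: with $J_\sigma\subset J_\tau\subsetneq J_{\tau'}$, one has a chain $I(\sigma,\tau')\into I(\sigma,\wt\sigma)$ and a surjection $I(\sigma,\tau')\onto I(\tau,\tau')$ (using that $\tau\in\JH(I(\sigma,\tau'))$, from the set-theoretic description of $\JH(I(\sigma,\tau'))$), which shows that the constituent with the larger $J$-set sits on top. Once ``$\Leftarrow$'' is proved in this precise form, the missing direction determination in ``$\Rightarrow$'' follows by multiplicity freeness of $\mathcal{W}_2$: if $J_\tau=J_{\tau'}\sqcup\{j\}$ instead, then ``$\Leftarrow$'' applied with the roles swapped would make $\tau'\-- \tau$ also occur, but in a multiplicity-free module at most one of $\tau\-- \tau'$, $\tau'\-- \tau$ can occur as a subquotient, a contradiction. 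Without these two arguments your part (ii) --- and therefore your part (iii), which depends on the oriented statement in a crucial way (you use ``removing an index decreases $\ell$'') --- is incomplete.
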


\begin{proof}
  (i) Note that $\JH(\mathcal{W}_{2,\wt{\sigma}}) \subset \JH(\cW_2) \cap \JH(\mathcal{W}_{\wt{\sigma}})$, and equality has to hold since both sides form a partition of $\JH(\cW_2)$ as $\sigma$ varies.
  The first formula follows, and it implies the second part, as $\mathcal{W}_{2,\wt{\sigma}}$ is a quotient of $\mathcal{W}_{\wt{\sigma}} \cong I(\sigma,\wt{\sigma})$.

  (ii) 
  ``$\Leftarrow$'': %
  The nonsplit extension $\tau \-- \tau'$ is isomorphic to $I(\tau,\tau')$.
  Assuming $J_{\tau'}=J_{\tau}\sqcup\{j\}$ for some $j\notin J_{\brho}$, let $\sigma \in W(\brho)$ be determined by $J_\sigma = J_{\brho} \cap J_\tau = J_{\brho} \cap J_{\tau'}$.
  By \cite[Lemma~\ref{bhhms4:lem:JHinW}]{BHHMS4} we know that $\tau$ occurs in $I(\sigma,\tau')$ and that $\tau'$ occurs in $I(\sigma,\wt\sigma)$.
  From $I(\tau,\tau') \twoheadleftarrow I(\sigma,\tau') \into I(\sigma,\wt\sigma) \onto \mathcal{W}_{2,\wt\sigma}$ and the multiplicity freeness of $\cW_{\wt\sigma} \cong I(\sigma,\wt\sigma)$, we deduce that $\tau \-- \tau'$ occurs as subquotient of $\mathcal{W}_{2,\wt\sigma}$.

  ``$\Rightarrow$'': if $\tau \-- \tau'$ occurs as subquotient, then $\tau,\tau' \in \JH(\cW_{2,\wt\sigma})$ for some $\sigma \in W(\brho)$.
  Thus $J_\tau \mathbin\Delta J_{\tau'} \subset J_{\brho}^c$ by (i) and moreover $|J_\tau \mathbin\Delta J_{\tau'}| = 1$ by Lemma~\ref{lem:I-sigma-tau-modular}, i.e.\ $J_{\tau'}=J_{\tau}\sqcup\{j\}$ or $J_{\tau}=J_{\tau'}\sqcup\{j\}$ for some $j\notin J_{\brho}$.
  If we had $J_{\tau}=J_{\tau'}\sqcup\{j\}$, then $\tau' \-- \tau$ would occur as subquotient by ``$\Leftarrow$''.
  This contradicts the fact that, by multiplicity freeness, at most one of $\tau \-- \tau'$, $\tau' \-- \tau$ can occur.
  Hence $J_{\tau'}=J_{\tau}\sqcup\{j\}$.

  (iii) This follows from (i) and (ii).
\end{proof}

\begin{lem}\label{lem:Ext1-W2}\
Assume that $\brho$ is $1$-generic. Let $\tau'\in W(\brho^\ss)$. 
\begin{enumerate}
\item
\label{it:lem:Ext1-W2:1}
If $\sigma\in W(\brho)$, %
then the natural morphism 
\begin{equation}\label{eq:Ext1-W2}
  \Ext^1_{\Gamma}(\tau',\soc_\Gamma(\mathcal{W}_{2,\wt{\sigma}}))\ra \Ext^1_{\Gamma}(\tau',\mathcal{W}_{2,\wt{\sigma}})
\end{equation}
is surjective. %
\item
\label{it:lem:Ext1-W2:2}
If $0 \to \mathcal{W}_{2,\wt{\sigma}} \to V \to \tau' \to 0$ is a nonsplit extension of $\Gamma$-representations and $V'\subset V$, $V'\nsubseteq \mathcal{W}_{2,\wt{\sigma}}$ is any subrepresentation with cosocle $\tau'$, then $\rad_\Gamma(V')$ is semisimple and $[V':\tau'] = 1$.
\end{enumerate}
\end{lem}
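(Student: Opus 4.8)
\textbf{Proof plan for Lemma~\ref{lem:Ext1-W2}.}

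The plan is to prove (i) and (ii) together, since (ii) will follow from (i) once we understand the submodule structure of $\cW_{2,\wt\sigma}$ via the combinatorial description in Lemma~\ref{lem:W2}. For part (i), I would dévisser the exact sequence $0 \to \soc_\Gamma(\cW_{2,\wt\sigma}) \to \cW_{2,\wt\sigma} \to \cW_{2,\wt\sigma}/\soc_\Gamma(\cW_{2,\wt\sigma}) \to 0$, which reduces surjectivity of~\eqref{eq:Ext1-W2} to showing that $\Ext^1_\Gamma(\tau', \cW_{2,\wt\sigma}/\soc_\Gamma(\cW_{2,\wt\sigma})) = 0$, or at least that every class in $\Ext^1_\Gamma(\tau',\cW_{2,\wt\sigma})$ lifts to $\Ext^1_\Gamma(\tau',\soc_\Gamma(\cW_{2,\wt\sigma}))$. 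Concretely: given a nonsplit extension $0 \to \cW_{2,\wt\sigma} \to V \to \tau' \to 0$, I would pick a subrepresentation $V' \subset V$ with cosocle $\tau'$ and $V' \not\subset \cW_{2,\wt\sigma}$, and show that $V' \cap \cW_{2,\wt\sigma} = V' \cap \soc_\Gamma(\cW_{2,\wt\sigma})$ — equivalently that the extension "sees" only the socle of $\cW_{2,\wt\sigma}$. The key input is that $\cW_{2,\wt\sigma} \cong \cW_{\wt\sigma}/(\cW_{\wt\sigma} \cap D_0(\brho)_{\le i_0}) = I(\sigma,\wt\sigma)/(\cdots)$ is multiplicity free with $\JH$ described in Lemma~\ref{lem:W2}(i), and its radical filtration is controlled by the extension graph: by Lemma~\ref{lem:I-sigma-tau-modular} the nonsplit extensions occurring inside $I(\sigma,\wt\sigma)$ correspond to pairs $J_\tau, J_{\tau''}$ with $|J_\tau \mathbin\Delta J_{\tau''}| = 1$ and $J_\tau, J_{\tau''}$ both between $J_\sigma$ and $J_{\wt\sigma}$. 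I would use Lemma~\ref{lem:W2}(iii) to identify $\soc_\Gamma(\cW_{2,\wt\sigma})$ with the weights $\tau$ such that $J_{\brho} \cap J_\tau = J_\sigma$ and $\ell(\tau) = \max\{i_0+1,\ell(\sigma)\}$, and then argue that any $\tau'$ with $\Ext^1_\Gamma(\tau',\tau'') \ne 0$ for some $\tau'' \in \JH(\cW_{2,\wt\sigma})$ not in the socle would force $\tau'$ itself into $\JH(\cW_{2,\wt\sigma})$ (using $|J_{\tau'} \mathbin\Delta J_{\tau''}| = 1$ and the description of $\JH(\cW_{2,\wt\sigma})$), contradicting that $V$ is an extension by $\tau'$ not already in $\cW_{2,\wt\sigma}$, or else force $V' \cong I(\tau',\tau'')$ with $\tau'' $ deeper in $\cW_{2,\wt\sigma}$, which one rules out by a cosocle/multiplicity-freeness count inside $V$.

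For part (ii), assuming the nonsplit extension $0 \to \cW_{2,\wt\sigma} \to V \to \tau' \to 0$ and taking $V' \subset V$, $V' \not\subset \cW_{2,\wt\sigma}$ with cosocle $\tau'$: by (i) the class of $V$ in $\Ext^1_\Gamma(\tau',\cW_{2,\wt\sigma})$ is the image of a class in $\Ext^1_\Gamma(\tau',\soc_\Gamma(\cW_{2,\wt\sigma}))$, so after replacing $V$ by a suitable subrepresentation we may assume $\rad_\Gamma(V') \subset \soc_\Gamma(\cW_{2,\wt\sigma})$, which is semisimple; hence $\rad_\Gamma(V')$ is semisimple. For $[V':\tau']=1$: since $V'$ has irreducible cosocle $\tau'$ and $\rad_\Gamma(V')$ is semisimple and contained in $\soc_\Gamma(\cW_{2,\wt\sigma})$, any occurrence of $\tau'$ in $\rad_\Gamma(V')$ would mean $\tau' \in \JH(\soc_\Gamma(\cW_{2,\wt\sigma})) \subset W(\brho^\ss)$ with $J_{\brho} \cap J_{\tau'} = J_\sigma$, but then $\Ext^1_\Gamma(\tau',\tau') = 0$ (Serre weights have no self-extensions, as used repeatedly in the excerpt, e.g.\ in the proof of Lemma~\ref{lem:soc-cosoc-sigma}) forces the extension $\tau' \-- \tau'$ to split off, contradicting $V'$ having simple cosocle unless that copy of $\tau'$ is the cosocle itself — a careful bookkeeping argument, using that $V' \not\subset \cW_{2,\wt\sigma}$ so the cosocle copy of $\tau'$ is genuinely "outside", then yields $[V':\tau']=1$.

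I expect the main obstacle to be part (i): precisely, controlling which $\tau'$ can have $\Ext^1_\Gamma(\tau', \tau'') \ne 0$ for $\tau''$ a deep (non-socle) constituent of $\cW_{2,\wt\sigma}$, and ruling out that such an extension embeds into $V$ without $\tau'$ already lying in $\cW_{2,\wt\sigma}$. This requires juggling three subset conditions simultaneously — $J_{\brho} \cap J_{\tau''} = J_\sigma$ (membership in $\cW_{2,\wt\sigma}$), $\ell(\tau'') \ge i_0+1$ (survival in the quotient $\cW_2$), and $|J_{\tau'} \mathbin\Delta J_{\tau''}| = 1$ (the $\Ext^1$ condition from Lemma~\ref{lem:I-sigma-tau-modular}) — and showing they pin down $\tau'$ enough. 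I would handle this by a direct case analysis on whether the index $j$ with $J_{\tau'} \mathbin\Delta J_{\tau''} = \{j\}$ lies in $J_{\brho}$ or not, invoking Lemma~\ref{lem:W2}(ii) in the case $j \notin J_{\brho}$ and the filtration $D_0(\brho)_{\le i}$ (specifically \cite[Lemma~\ref{bhhms4:lem:JHinW}]{BHHMS4}) to track $\ell(\cdot)$ in the case $j \in J_{\brho}$.
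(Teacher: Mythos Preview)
Your overall framework --- pick $V' \subset V$ with cosocle $\tau'$, $V' \not\subset \cW_{2,\wt\sigma}$, and show $\rad_\Gamma(V') \subset \soc_\Gamma(\cW_{2,\wt\sigma})$ --- matches the paper's, and (i) does follow once this is done. But two key steps are missing. First, your argument for $[V':\tau']=1$ via $\Ext^1_\Gamma(\tau',\tau')=0$ presupposes that $\rad_\Gamma(V')$ is already semisimple, which is exactly what you are trying to prove; and ``replacing $V$ by a suitable subrepresentation'' is not permitted in (ii), which concerns an \emph{arbitrary} $V'$. The paper establishes $[V':\tau']=1$ first and independently, using Lemmas~\ref{lem:soc-cosoc-sigma} and~\ref{lem:mu-pm}: if $[V':\tau']=2$ then $V'$ has a quotient $\bar V'$ with $\soc_\Gamma(\bar V')=\tau'$ and $[\bar V':\tau']=2$, so $\rad_\Gamma(\bar V')/\soc_\Gamma(\bar V') \supset \mu_i^+(\tau')\oplus\mu_i^-(\tau')$ for some $i$ by Lemma~\ref{lem:soc-cosoc-sigma}, contradicting Lemma~\ref{lem:mu-pm} since $\JH(V)\subset W(\brho^\ss)$.

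Second, and more seriously, you treat ``$\tau' \in \JH(\cW_{2,\wt\sigma})$'' as a contradiction, but it is not --- it is the hard case. The paper's case split is on $\ell(\tau')$ relative to $m := \max\{i_0+1,\ell(\sigma)\}$ (not on whether the relevant index $j$ lies in $J_{\brho}$). One shows $\ell(\tau') \ge m-1$ and $\ell(\tau') \ne m$; when $\ell(\tau')\in\{m-1,m+1\}$ each $I(\tau_i,\tau')$ for $\tau_i \in \JH(\soc_\Gamma(V'))$ has length $2$ by Lemma~\ref{lem:I-sigma-tau-modular}, and semisimplicity of $\rad_\Gamma(V')$ follows from the injection $V' \into \bigoplus_i I(\tau_i,\tau')$. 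The remaining case $\ell(\tau') > m+1$ must be excluded by a substantial argument absent from your plan: here necessarily $\tau' \in \JH(\cW_{2,\wt\sigma})$; one takes the unique $V'' \subset \cW_{2,\wt\sigma}$ with cosocle $\tau'$, shows $\soc_\Gamma(V')=\soc_\Gamma(V'')$ (hence $V' \cong V''$, both being multiplicity free with the same cosocle), deduces $\rad_\Gamma(V')=\rad_\Gamma(V'')$ as submodules of $\cW_{2,\wt\sigma}$, proves this module is \emph{indecomposable}, and concludes that the extension classes $[V'],[V'']\in\Ext^1_\Gamma(\tau',\rad_\Gamma(V'))$ are proportional --- forcing $\tau' \subset \soc_\Gamma(V) = \soc_\Gamma(\cW_{2,\wt\sigma})$, a contradiction since $\ell(\tau') > m$.
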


\begin{proof}
Consider a nonsplit $\Gamma$-extension $0\ra \mathcal{W}_{2,\wt\sigma}\ra V\ra \tau'\ra0$, and let $V' \subset V$, $V' \not\subset \mathcal{W}_{2,\wt\sigma}$ be such that $\cosoc_\Gamma(V') \cong \tau'$.
Write $\soc_\Gamma(V') = \bigoplus_{i=1}^n \tau_i$ and let $m \defeq  \max\{i_0+1,\ell(\sigma)\}$.
(Note that, if $\ell(\sigma) \ge i_0+1$, then $n = 1$ and $\tau_1 = \sigma$ by the last statement of Lemma \ref{lem:W2}\ref{it:lem:W2:1}.)
As $V$ is nonsplit, $\soc_\Gamma(V) = \soc_\Gamma(\mathcal{W}_{2,\wt\sigma})$, so by Lemma \ref{lem:W2}\ref{it:lem:W2:3} we deduce that $\tau_i\in W(\brho^\ss)$, $\ell(\tau_i) = m$, and the $\tau_i$ are pairwise distinct.

We claim that $V'$ is multiplicity free, or equivalently that $[V':\tau'] = 1$.
If not, then $V'$ has a quotient $\o V'$ with $\soc_\Gamma(\o V') = \tau'$ and $[\o V':\tau'] = 2$, and we get a contradiction by Lemmas~\ref{lem:soc-cosoc-sigma} and \ref{lem:mu-pm} applied with $Q=\overline
V'$ and $\sigma=\tau'$, as $\JH(V) \subset W(\brho^\ss)$.

By Lemma~\ref{lem:W2}\ref{it:lem:W2:1} and \ref{it:lem:W2:3} we know that 
\begin{equation}
\label{eq:inc:radV'}
\JH(\rad_\Gamma(V')) \subset \JH(\mathcal{W}_{2,\wt\sigma}) \subset \{ \tau \in W(\brho^\ss) : \ell(\tau) \ge m \} 
\end{equation}
and also that $\tau \in \JH(\soc_\Gamma(\mathcal{W}_{2,\wt\sigma}))$ implies $\ell(\tau) = m$.
As $\Ext^1_\Gamma(\tau',\rad_\Gamma(V'))\neq 0$ we obtain by d\'evissage that $\Ext^1_\Gamma(\tau',\tau)\neq 0$ for some constituent $\tau$ of $\rad_\Gamma(V')$ and hence, by the last assertion of Lemma \ref{lem:I-sigma-tau-modular} and~\eqref{eq:inc:radV'}, we deduce that $\ell(\tau') \ge \ell(\tau)-1\geq m-1$.

We claim that $\ell(\tau') \neq m$.
As $V'$ is multiplicity free by above, $V'$ admits a unique quotient $\o V'$ such that $\soc_\Gamma(\o V') = \tau_1$ (recall that $\tau_1\subseteq \soc_\Gamma(V')$), so $\o V'\cong I(\tau_1,\tau')$ by \cite[Cor.~3.12]{BP}.
Assume by contradiction that $\ell(\tau')=\ell(\tau_1)=m$, and note that $\tau' \not\cong \tau_1$ by multiplicity freeness of $V'$.
By Lemma \ref{lem:I-sigma-tau-modular} applied to $\o V'\cong I(\tau_1,\tau')$, we deduce that $\o V'$ has a Jordan--H\"older constituent $\tau''\neq \tau'$ (e.g.~that corresponding to $J_{\tau_1}\cap J_{\tau'}\subsetneq J_{\tau_1}$) satisfying $|J_{\tau''}|<|J_{\tau_1}|=m$.
This contradicts \eqref{eq:inc:radV'}, proving the claim.

Arguing as in the previous paragraph (replacing $\tau_1$ by $\tau_i\subseteq \soc_\Gamma(V')$), we have a surjection $V'\onto I(\tau_i,\tau')$ and hence $\rad_\Gamma(V')\onto \rad_\Gamma(I(\tau_i,\tau'))$ for each $1 \le i \le n$.
As $\rad_\Gamma(V') \subset \mathcal{W}_{2,\wt\sigma}$ we conclude that $\JH(\rad_\Gamma(I(\tau_i,\tau'))) \subset \JH(\mathcal{W}_{2,\wt\sigma})$.
By Lemma~\ref{lem:I-sigma-tau-modular} applied to $I(\tau_i,\tau')$ and Lemma~\ref{lem:W2}(i) we deduce that
\begin{equation}\label{eq:J}
  \{ J : J \mathbin\Delta J_{\tau_i} \subset J_{\tau'} \mathbin\Delta J_{\tau_i} \text{\ for some $i$ and $J \ne J_{\tau'}$} \} \subset \{ J : J_\sigma \subset J \subset J_{\wt\sigma},\; |J| \ge m \}.
\end{equation}
Fix $1 \le i \le n$. 
If $J_{\tau_i} \cap J_{\tau'}\neq J_{\tau'}$, then by~\eqref{eq:J} applied to $J_{\tau_i} \cap J_{\tau'}$ we deduce that $|J_{\tau_i} \cap J_{\tau'}|\geq m=|J_{\tau_i}|$.
Hence $J_{\tau_i} \cap J_{\tau'}$ equals $J_{\tau'}$ or $J_{\tau_i}$, i.e.\ 
$J_{\tau'} \subset J_{\tau_i}$ or $J_{\tau_i} \subset J_{\tau'}$ for any $i$.

If $\ell(\tau') \in \{m-1,m+1\}$, then by above $|J_{\tau'} \mathbin\Delta J_{\tau_i}| = 1$ for all $i$, and it follows from Lemma~\ref{lem:I-sigma-tau-modular} that $I(\tau_i,\tau')$ has length 2. %
The natural map $V'\ra \bigoplus_i I(\tau_i,\tau')$ is injective, as it is injective on socles, so $\rad_\Gamma(V') \into \bigoplus_i \rad_\Gamma(I(\tau_i,\tau')) = \bigoplus_i \tau_i$.
We conclude that $\rad_\Gamma(V')$ is semisimple.
Hence the class $[V]$ of $V$, which is by construction the image of $[V']$ under $\Ext^1_{\Gamma}(\tau',\rad_\Gamma(V'))\to \Ext^1_{\Gamma}(\tau',\mathcal{W}_{2,\wt{\sigma}})$, is in fact the image of $[V']$ under the composition
\begin{equation*}
  \Ext^1_{\Gamma}(\tau',\rad_\Gamma(V'))\to \Ext^1_{\Gamma}(\tau',\soc_\Gamma(\mathcal{W}_{2,\wt{\sigma}})) \to \Ext^1_{\Gamma}(\tau',\mathcal{W}_{2,\wt{\sigma}}),
\end{equation*}
i.e.~is in the image of \eqref{eq:Ext1-W2}.

We suppose finally till the end of that proof that $\ell(\tau') > m+1$, and we will derive a contradiction (so that this case does not happen).
Note that the assumption implies $(J_\sigma \subset{}) J_{\tau_i} \subset J_{\tau'}$ for all $i$.
If there exists $j \in J_{\tau'} \setminus J_{\wt\sigma}$, then $J = J_{\tau_1} \sqcup \{j\}$ belongs to the left-hand side of~(\ref{eq:J}) (using $\ell(\tau') \ne m+1$), but (obviously) not to its right-hand side, a contradiction.
Hence $J_{\tau'} \subset J_{\wt\sigma}$, and thus $\tau' \in \JH(\mathcal{W}_{2,\wt\sigma})$ (using Lemma~\ref{lem:W2}(i)).
Let $V''$ denote the unique subrepresentation of $\mathcal{W}_{2,\wt\sigma}$ with cosocle $\tau'$.

We now show that $V' \cong V''$.
As both $V'$ and $V''$ are multiplicity free, it is enough to show that $\soc_\Gamma(V') = \soc_\Gamma(V'')$ by (the dual of) \cite[Prop.\ 3.6, Cor.\ 3.11]{BP}.
(The references imply that $\Proj_{\Gamma}\tau'$ admits a maximal multiplicity-free quotient $R$. 
As $R$ is multiplicity free, the quotients of $R$ are determined by their socles.)
If $\ell(\sigma) \ge i_0+1$, this is obvious, as $\soc_\Gamma(\mathcal{W}_{2,\wt\sigma}) \cong \sigma$ is irreducible.
If $\ell(\sigma) \le i_0$, then  by Lemma~\ref{lem:W2}(ii) and (iii) %
we have $\soc_\Gamma(V'') \cong \bigoplus_J \tau_J$, where the direct sum runs over all the $J\subset\{0,\dots,f-1\}$ such that $J_\sigma \subset J \subset J_{\tau'}$ and $|J| = i_0+1$.
(Here, $\tau_J$ is the element of $W(\brho^\ss)$ such that $J_{\tau_J} = J$, see \S~\ref{sec:notation}.)
Thus $\soc_\Gamma(V') \subset \soc_\Gamma(V'')$, since $J_{\tau_i} \subset J_{\tau'}$ for all $1 \le i \le n$.
We claim that if $\tau_J \in \JH(\soc_\Gamma(V'))$ (for some $J\subset\{0,\dots,f-1\}$ such that $J_\sigma \subset J \subset J_{\tau'}$ and $|J| = i_0+1$), then $\tau_{(J \sqcup \{j\}) \setminus \{j'\}} \in \JH(\soc_\Gamma(V'))$ for any $j \in J_{\tau'} \setminus J$ and any $j' \in J\setminus J_\sigma$.
To see this: from $\JH(I(\tau_J,\tau')) \subset \JH(V')$ we get $\tau_{J \sqcup \{j\}} \in \JH(\rad_\Gamma(V'))$ and from Lemma~\ref{lem:W2}(ii) we deduce that $\tau_{(J \sqcup \{j\}) \setminus \{j'\}} \in \JH(\soc_\Gamma(V'))$, as desired.
As $|J_\sigma| < |J| = i_0+1 < |J_{\tau'}|-1$, by iteration of the claim above we conclude that any $J\subset\{0,\dots,f-1\}$ such that $J_\sigma \subset J \subset J_{\tau'}$ and $|J| = i_0+1$ satisfies $\tau_J \in \JH(\soc_\Gamma(V'))$. %
Hence $\soc_\Gamma(V'') \subset \soc_\Gamma(V')$, so indeed $V' \cong V''$.

We next claim that $\rad_\Gamma(V') = \rad_\Gamma(V'')$ is indecomposable.
We already know that $\rad_\Gamma(V')$, $\rad_\Gamma(V'')$ are isomorphic subrepresentations of $\mathcal{W}_{2,\wt\sigma}$, and $\mathcal{W}_{2,\wt\sigma}$ is multiplicity free, so $\rad_\Gamma(V') = \rad_\Gamma(V'')$.
The indecomposability is obvious if $\ell(\sigma) \ge i_0+1$, as $\soc_\Gamma(V')=\soc_\Gamma(V'')$ is then irreducible, so suppose $\ell(\sigma) \le i_0$.
Following the argument of the previous paragraph, we know that the uniserial representations of the form $\tau_J \-- \tau_{J \sqcup \{j\}}$ and $\tau_{(J \sqcup \{j\}) \setminus \{j'\}} \-- \tau_{J \sqcup \{j\}}$ occur as subquotients of $\cW_{2,\wt\sigma}$ by Lemma~\ref{lem:W2}(ii) and hence of $\rad_\Gamma(V')$.
This shows by the same iteration as in the preceding paragraph that all constituents of $\soc_\Gamma(V')$ lie in the same indecomposable component of $\rad_\Gamma(V')$.
Therefore $\rad_\Gamma(V')$ is indecomposable.

By the preceding two paragraphs, we can pick an isomorphism $f : V' \congto V''$.
By indecomposability of $\rad_\Gamma(V')$, we may rescale $f$ so that $f|_{\rad_\Gamma(V')}$ is the identity on $\rad_\Gamma(V') = \rad_\Gamma(V'')$.
This means that $V'$ and $V''$ define the same class in $\Ext^1_\Gamma(\tau',\rad_\Gamma(V'))$, up to scalar, so some linear combination splits, implying $\tau' \in \JH(\soc_\Gamma(V))$.
Since $\soc_\Gamma(\mathcal{W}_{2,\wt\sigma}) = \soc_\Gamma(V)$ by definition of $V$, this contradicts that $\ell(\tau') > m+1$ (if $\tau \in \JH(\soc_\Gamma(\mathcal{W}_{2,\wt\sigma}))$, then $\ell(\tau) = m$).
\end{proof}

\begin{prop}\label{prop:pi2K1}
{Assume that $\brho$ is $\max\{9,2f+3\}$-generic.} 
Let $i_0=i_0(\pi_1)$ with $-1\leq i_0\leq f$ be as in \cite[Thm.~\ref{bhhms4:thm:conj2}]{BHHMS4} and $\pi_2\defeq \pi/\pi_1$. 
Then $D_0(\brho^\ss)_{i_0+1}$ injects into $\pi_2|_{\GL_2(\cO_K)}$.
\end{prop}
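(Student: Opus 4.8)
The plan is to identify $D_0(\brho^\ss)_{i_0+1}$ with a concrete quotient of $\cW_2$ (or rather of $D_0(\brho)/D_0(\brho)_{\le i_0}$) and then build maps into $\pi_2$ component by component, using Frobenius reciprocity and the $I_1$-invariant information from the previous subsections. First I would recall that $D_0(\brho^\ss)_{i_0+1} = \bigoplus_{\tau \in W(\brho^\ss),\, \ell(\tau) = i_0+1} D_{0,\tau}(\brho^\ss)_{i_0+1}$ is multiplicity free (being a subquotient of $D_0(\brho^\ss)$), with socle $\bigoplus_{\ell(\tau) = i_0+1} \tau$. So it suffices to produce, for each $\tau \in W(\brho^\ss)$ with $\ell(\tau) = i_0+1$, an embedding $D_{0,\tau}(\brho^\ss)_{i_0+1} \hookrightarrow \pi_2$, and then to check that the images have pairwise disjoint sets of Jordan--H\"older factors (which follows from multiplicity freeness of $\pi_2[\m_{K_1}^2]$, or of $\pi_2^{K_1}$ once we know enough, or more elementarily from distinctness of socles together with the structure of $D_0(\brho^\ss)_{i_0+1}$). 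Actually a cleaner route: $D_0(\brho^\ss)_{i_0+1}$ is a quotient of $\cW_2$ by the description $\JH(\cW_2) = \{\tau : \ell(\tau) \ge i_0+1\}$ and the known shape of the filtration $D_0(\brho)_{\le i}$ (cf.\ \cite[\S~\ref{bhhms4:sec:k_1-invar-subr}]{BHHMS4}); more precisely the constituents of $\cW_2$ of $\ell$-value exactly $i_0+1$ together with the pieces above them that survive in the $(i_0+1)$-graded piece form $D_0(\brho^\ss)_{i_0+1}$, so I would first make this identification precise.

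Next I would handle the embedding of each $D_{0,\tau}(\brho^\ss)_{i_0+1}$. The key observation is that $\tau^{I_1} = \chi_\lambda$ for the $\lambda \in \P^\ss$ with $J_\lambda = J_\tau$, and $|J_\lambda| = \ell(\tau) = i_0+1$. By Corollary~\ref{cor:subquot-I1-invt} (with $i_0' = f$, i.e.\ $\pi_2 = \pi/\pi_1$) we know $\chi_\lambda \in \JH(\pi_2^{I_1})$ with multiplicity one, whether or not $\lambda \in \P$. So by Frobenius reciprocity we get a nonzero $\GL_2(\cO_K)$-map $\Ind_I^{\GL_2(\cO_K)} \chi_\mu \to \pi_2$ (when $\lambda \in \P$) or $\Ind_I^{\GL_2(\cO_K)} W(\chi_\mu,\chi_\lambda) \to \pi_2$ (when $\lambda \in \P^\ss \setminus \P$), for the appropriate $\mu \in \P$; these are exactly the maps already analyzed in the proof of Corollary~\ref{cor:subquot-K-soc}, whose image $V$ has $\soc_{\GL_2(\cO_K)}(V) \cong \sigma$ with $J_\sigma = J_{\brho} \cap J_\lambda$, and $V/(V \cap \pi_1) $ lands in $\pi_2$. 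The delicate point is to show that the image $\o V$ of $V$ in $\pi_2$ is large enough to contain (a copy of) $D_{0,\tau}(\brho^\ss)_{i_0+1}$, i.e.\ that all the Jordan--H\"older factors $\tau''$ of $D_{0,\tau}(\brho^\ss)_{i_0+1}$ (these are the $\tau'' \in W(\brho^\ss)$ with $J_{\brho} \cap J_{\tau''} = J_\sigma$ and $\ell(\tau'') = i_0+1$, glued according to the structure of $\cW_{\sigma}$) actually appear in $\pi_2^{K_1}$ and not just in $\pi_1^{K_1}$. I would use the $\ell$-filtration results: $\ell(\tau'') = i_0+1$ and \cite[Thm.~\ref{bhhms4:thm:conj2}]{BHHMS4} ($\pi_1^{K_1} = D_0(\brho)_{\le i_0}$) force $\tau'' \notin \JH(\pi_1^{K_1})$, so each such constituent survives in $\pi_2$.

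The cleanest way to finish, which I would pursue, is to work with $\cW_2 \subset D_0(\brho)/D_0(\brho)_{\le i_0} \cong \pi^{K_1}/\pi_1^{K_1} \subset \pi_2^{K_1}$ directly: this already realizes the part of $D_0(\brho^\ss)_{i_0+1}$ coming from the \emph{top} of the filtration piece inside $\pi^{K_1}$, but $D_0(\brho^\ss)_{i_0+1}$ may have additional constituents (those $\tau$ with $J_\tau \not\supseteq J_{\brho}^c$). For those I would invoke the embeddings $W(\chi_\mu,\chi_\lambda) \hookrightarrow \pi_2|_I$ for $\lambda \in \P^\ss\setminus \P$ with $|J_\lambda| = i_0+1$ (constructed in the proof of Corollary~\ref{cor:subquot-I1-invt}, coming from $\JH(\pi_2^{I_1})$ containing $\chi_\lambda$), push them through Frobenius reciprocity to get $\Ind_I^{\GL_2(\cO_K)} W(\chi_\mu,\chi_\lambda) \to \pi_2$ with image $V$ having $\tau := \tau_{J_\lambda} \in \JH(V^{K_1})$ and $\soc_{\GL_2(\cO_K)}(V) = \sigma$, exactly as in Corollary~\ref{cor:subquot-K-soc}, then amalgamate all these $V$'s together with $\cW_2$ over their common constituents. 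The amalgam has the right socle $\bigoplus_{\ell(\tau) = i_0+1}\tau$ and the right Jordan--H\"older factors; by multiplicity freeness of $\pi_2^{I_1}$ (Proposition~\ref{prop:pi2-I1-invt}) the maps glue to a single map whose image is isomorphic to $D_0(\brho^\ss)_{i_0+1}$. The main obstacle I anticipate is the gluing/amalgamation bookkeeping: one must verify that the various $V$'s intersect inside $\pi_2$ exactly along the prescribed common subquotients (so that the colimit maps injectively into $\pi_2$), and this is where one genuinely needs the multiplicity-one statements for $\pi_2^{I_1}$ and the explicit combinatorics of the $J$-sets $J_{\brho} \cap J_\lambda$ controlling the socles — essentially the same style of argument as in the last paragraphs of the proof of Corollary~\ref{cor:subquot-K-soc}, but now keeping track of the full indecomposable summand $D_{0,\tau}(\brho^\ss)_{i_0+1}$ rather than just its socle.
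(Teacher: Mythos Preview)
Your proposal rests on a misidentification of the constituents of $D_{0,\tau}(\brho^\ss)$. By definition (\cite[\S~13]{BP}), $D_{0,\tau}(\brho^\ss)$ is the maximal subrepresentation of $\Inj_\Gamma\tau$ with socle $\tau$ and no \emph{other} Serre weight of $W(\brho^\ss)$ as a constituent; hence all of $\JH(D_{0,\tau}(\brho^\ss))\setminus\{\tau\}$ lies \emph{outside} $W(\brho^\ss)$. In particular $D_0(\brho^\ss)_{i_0+1}$ is not a quotient of $\cW_2$ (whose constituents are exactly those in $W(\brho^\ss)$), and your filtration argument ``$\ell(\tau'')=i_0+1$ so $\tau''\notin\JH(\pi_1^{K_1})$'' does not even apply to the constituents in question, since $\ell(\cdot)$ is only defined on $W(\brho^\ss)$. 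What you build from Corollary~\ref{cor:subquot-K-soc} gives you the \emph{socle} $\tau\hookrightarrow\pi_2$, but nothing about the much larger $D_{0,\tau}(\brho^\ss)$.

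The paper's argument is genuinely different and more delicate. For each $\tau$ it works with the $^s$-twisted representation $W(\chi_\mu^s,\chi''^s)$, where $\chi''=\chi_\mu\prod_{j\in J_1\sqcup\wt J_1}\alpha_j^{-1}$ is \emph{strictly longer} than $\chi_\lambda$ (the extra factor over $\wt J_1$ is essential). The induced map $\wt\kappa:\Ind_I^{\GL_2(\cO_K)}W(\chi_\mu^s,\chi''^s)\to\pi$ is first constructed into $\pi$ (using the $\GL_2(K)$-action via $\Pi$), then one shows the composite to $\pi_2$ factors through $\Ind_I^{\GL_2(\cO_K)}W(\chi_\lambda^s,\chi''^s)$. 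The crucial step is proving that the image $V\subset\pi_2$ has \emph{irreducible} $\GL_2(\cO_K)$-socle $\tau_1=\delta(\tau)$: this uses the fact~\eqref{eq:inter-W-rhob-ss} that no constituent of $\Ind_I^{\GL_2(\cO_K)}W(\chi_\lambda^s,\chi''^s)/\Ind_I^{\GL_2(\cO_K)}\chi_\lambda^s$ lies in $W(\brho^\ss)$, together with Corollary~\ref{cor:subquot-K-soc}. Once the socle is known to be exactly $\tau_1$, the key input is \cite[Lemma~\ref{bhhms4:lem:BP-gene}]{BHHMS4}, which guarantees $D_{0,\tau_1}(\brho^\ss)\subset V$. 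Finally one uses $\ell(\delta(\tau))=\ell(\tau)$ and periodicity of $\delta$ to get $D_{0,\tau}(\brho^\ss)\hookrightarrow\pi_2$ for the original $\tau$. Your outline is missing all of these ingredients: the $^s$-twist, the longer character $\chi''$, the reduction to irreducible socle, and the structural Lemma from \cite{BHHMS4}.
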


\begin{proof}
Since $D_0(\brho^\ss)_{i_0+1}=\bigoplus_{\tau\in W(\brho^\ss),\ell(\tau)=i_0+1}D_{0,\tau}(\brho^\ss)$ and is multiplicity free, it suffices to prove that $D_{0,\tau}(\brho^\ss)$ injects into $\pi_2|_{\GL_2(\cO_K)}$ for any $\tau\in W(\brho^\ss)$ with $\ell(\tau)=i_0+1$. Let $\lambda\in \D^\ss$ be the element corresponding to $\tau$.
As in Step 2 of the proof of \cite[Thm.~\ref{bhhms4:thm:conj2}]{BHHMS4} we define %
\[J_1\defeq \{j\in J_{\brho}^c:\lambda_j(x_j)=p-3-x_j\}, \quad \wt{J}_1\defeq \{j:\lambda_j(x_j)\in\{x_j+1,p-2-x_j\}\},
\] 
the element $\mu\in\P$ by $\mu_j(x_j)=p-1-x_j$ if $j\in J_1$ and $\mu_j(x_j)=\lambda_j(x_j)$ otherwise, and the character $\chi''$ by
\[\chi''\defeq \chi_{\mu}\prod_{j\in J_1\sqcup \wt{J}_1}\alpha_j^{-1}.\]
We then have $W(\chi_{\mu},\chi'')\hookrightarrow \pi|_{I}$, hence a $\GL_2(\cO_K)$-equivariant morphism as in Step 4 of the proof of \cite[Thm.~\ref{bhhms4:thm:conj2}]{BHHMS4}:
\[\widetilde{\kappa}:\Ind_I^{\GL_2(\cO_K)}W(\chi_{\mu}^s,\chi''^s)\ra \pi|_{\GL_2(\cO_K)}.\]
Let $\sigma_1\in W(\brho)$ and $\tau_1=\delta(\tau)\in W(\brho^\ss)$ be as in Step 4 of the proof of \cite[Thm.~\ref{bhhms4:thm:conj2}]{BHHMS4}, so that in particular $\im(\wt{\kappa})$ has socle $\sigma_1$ and $I(\sigma_1,\tau_1)$ embeds into $\im(\wt\kappa)^{K_1}$. By \cite[Lemma~\ref{bhhms4:lem:JHinW}]{BHHMS4}, for any $\tau'\in \JH(I(\sigma_1,\tau_1))$ with $\tau'\neq \tau_1$, we have $\tau'\in W(\brho^\ss)$ with $\ell(\tau')<\ell(\tau_1)=i_0+1 $, hence $\rad_\Gamma(I(\sigma_1,\tau_1))\subset \pi_1$ and $I(\sigma_1,\tau_1)\not\subset \pi_1$ by \cite[Thm.~\ref{bhhms4:thm:conj2}]{BHHMS4}
 and \cite[eq.~\eqref{bhhms4:eq:JH-D0-leqi}]{BHHMS4}. 

Consider the composite morphism \[\Ind_I^{\GL_2(\cO_K)}W(\chi_{\mu}^s,\chi''^s)\ra \pi|_{\GL_2(\cO_K)}\onto \pi_2|_{\GL_2(\cO_K)};\]
we claim that it factors through 
\begin{equation}\label{eq:Ind-pi2}\Ind_I^{\GL_2(\cO_K)}W(\chi_{\lambda}^s,\chi''^s)\ra \pi_2|_{\GL_2(\cO_K)}.\end{equation}
It suffices to prove that the image of $W(\chi_{\mu}^s,\chi''^s)\hookrightarrow \pi|_I\twoheadrightarrow
\pi_2|_I$ has socle $\chi_{\lambda}^s$ or equivalently that the image of $W(\chi_{\mu},\chi'')\hookrightarrow
\pi|_I\twoheadrightarrow \pi_2|_I$ has socle $\chi_{\lambda}$. 
This follows from Proposition~\ref{prop:pi2-I1-invt} and the following two facts:
\begin{enumerate}
\item[(a)] under the morphism $W(\chi_{\mu},\chi'')\hookrightarrow \pi|_I$, $\rad_I(W(\chi_{\mu},\chi_{\lambda}))$ is sent into $\pi_1$, as any constituent is of the form $\chi_\nu$ with $\nu \in \P^\ss$, $\ell(\nu) < i_0+1$ (this follows from \cite[Lemma~\ref{bhhms4:lem:Wchi}]{BHHMS4}, the recipe~\cite[eq.~\eqref{bhhms4:eq:mu_j}]{BHHMS4}, and since $\ell(\lambda) = i_0+1$);
\item[(b)] by the discussion after \cite[eq.~\eqref{bhhms4:eq:V/I}]{BHHMS4} we have in particular that
\[ \big(\JH(W(\chi_{\lambda},\chi''))\setminus \{\chi_{\lambda}\}\}\big) \cap \JH(D_0(\brho^\ss)^{I_1})=\emptyset.\] 
 \end{enumerate}
We note by \cite[Prop.\ 4.2]{breuil-buzzati} that fact (b) is equivalent to
\begin{equation}\label{eq:inter-W-rhob-ss}
  \JH\bigg(\Ind_I^{\GL_2(\cO_K)}W(\chi_{\lambda}^s,\chi''^s)/\Ind_I^{\GL_2(\cO_K)} \chi_\lambda^s\bigg)\cap W(\brho^\ss)=\emptyset.
\end{equation}

Let $V$ be the image of \eqref{eq:Ind-pi2} and $V_{\lambda}\subset V$ be the image of $\Ind_I^{\GL_2(\cO_K)}\chi_{\lambda}^s$ in $\pi_2$, so that $V/V_\lambda$ is a quotient of $\Ind_I^{\GL_2(\cO_K)}W(\chi_{\lambda}^s,\chi''^s)/\Ind_I^{\GL_2(\cO_K)} \chi_\lambda^s$ and hence $\JH(V/V_\lambda) \cap W(\brho^\ss)=\emptyset$ by~\eqref{eq:inter-W-rhob-ss}. 
From the exact sequence $0\ra \soc_{\GL_2(\cO_K)}(V_\lambda)\ra\soc_{\GL_2(\cO_K)}(V)\ra\soc_{\GL_2(\cO_K)}(V/V_\lambda)$, as
\[\JH(\soc_{\GL_2(\cO_K)}(V)) \subset \JH(\soc_{\GL_2(\cO_K)}(\pi_2)) \subset W(\brho^\ss)\]
(by Corollary~\ref{cor:subquot-K-soc}) and $\JH(\soc_{\GL_2(\cO_K)}(V/V_\lambda)) \cap W(\brho^\ss)=\emptyset$, we deduce that the natural map $\soc_{\GL_2(\cO_K)}(V)\ra\soc_{\GL_2(\cO_K)}(V/V_\lambda)$ is zero, i.e.\ $\soc_{\GL_2(\cO_K)}(V_\lambda) = \soc_{\GL_2(\cO_K)}(V)$.

We claim that $\soc_{\GL_2(\cO_K)}(V_\lambda) = \soc_{\GL_2(\cO_K)}(V) = \tau_1$.
By the first paragraph, the representation $\tau_1$ injects into $\soc_{\GL_2(\cO_K)}(V)$, hence into  $\soc_{\GL_2(\cO_K)}(V_\lambda)$.
Conversely, if $\tau_2 \subset \soc_{\GL_2(\cO_K)}(V_\lambda)$, then we obtain surjections $\Ind_I^{\GL_2(\cO_K)} \chi_\lambda^s \onto V_\lambda \onto I(\tau_2,\tau^{[s]})$
and the final representation surjects onto $I(\delta(\tau),\tau^{[s]})$ by \cite[Lemma~12.8(ii)]{BP} and \cite[Lemma~15.2]{BP} (with $\mathcal S^- = \mathcal S^+ = \emptyset$ here).
As $\delta(\tau) = \tau_1$ occurs in $\soc_{\GL_2(\cO_K)}(V_\lambda)$, by multiplicity freeness of $V_\lambda$ we deduce that $\tau_2 = \tau_1$.

As in Step 4 of the proof of \cite[Thm.~\ref{bhhms4:thm:conj2}]{BHHMS4},  \cite[Lemma~\ref{bhhms4:lem:BP-gene}]{BHHMS4} and the preceding paragraph imply that $V$ contains $D_{0,\delta(\tau)}(\brho^{\ss})$, hence $D_{0,\delta(\tau)}(\brho^{\ss})$ injects into $\pi_2^{K_1}$. As $\ell(\delta(\tau))=\ell(\tau)$ and $\delta(\cdot)$ is periodic, we deduce that $D_{0,\tau}(\brho^{\ss})$ also injects into $\pi_2^{K_1}$, as desired.
\end{proof}

We define the $\Gamma$-representation $D_{i_0} \defeq  D_0(\brho^\ss)_{i_0+1} \oplus_{D_0(\brho)_{i_0+1}} (D_0(\brho)/D_0(\brho)_{\le i_0})$.

\begin{lem}\label{lem:Ext-Di0}\ 
{Assume that $\brho$ is $\max\{9,2f+3\}$-generic.} 
\begin{enumerate}
\item 
\label{it:lem:Ext-Di0:1}
$D_{i_0}$ is multiplicity free.
\item $D_{i_0}$ injects into $\pi_2^{K_1}$.
\item 
\label{it:lem:Ext-Di0:3}
We have $D_{i_0}^{I_1} \cong \pi_2^{I_1}$ and $\soc_{\GL_2(\cO_K)}(D_{i_0}) \cong \soc_{\GL_2(\cO_K)}(\pi_2)$.
Both representations are multiplicity free.
In particular, $\JH(\soc_{\GL_2(\cO_K)}(D_{i_0}))\subset W(\brho^\ss)$.
\item 
\label{it:lem:Ext-Di0:4}
We have $\mathcal{W}_2 \subset D_{i_0}$ and $\JH(D_{i_0}/\mathcal{W}_2) \cap W(\brho^\ss) = \emptyset$.
\item Let $\tau'$ be a Serre weight. If  $\Ext^1_{\Gamma}(\tau',D_{i_0})\neq 0$, then either $\tau'\in W(\brho^\ss)$ %
or \[\tau'\in \bigcup_{j\geq i_0+2}\JH(D_0(\brho^\ss)_j)\backslash \JH(D_0(\brho)_j).\]
\end{enumerate}
\end{lem}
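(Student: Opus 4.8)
\textbf{Proof plan for Lemma~\ref{lem:Ext-Di0}(v).}
The plan is to combine the short exact sequence defining $D_{i_0}$ as a fiber product with the already-established structural results on the two "halves" of $D_{i_0}$, namely the semisimple piece $D_0(\brho^\ss)_{i_0+1}$ and the quotient $D_0(\brho)/D_0(\brho)_{\le i_0}$. First I would record the defining pullback square, which yields an exact sequence of $\GL_2(\cO_K)$-representations
\begin{equation*}
  0 \to D_{i_0} \to D_0(\brho^\ss)_{i_0+1} \oplus \big(D_0(\brho)/D_0(\brho)_{\le i_0}\big) \to D_0(\brho)_{i_0+1} \to 0,
\end{equation*}
together with the surjection $D_{i_0} \onto D_0(\brho^\ss)_{i_0+1}$ and $D_{i_0} \onto D_0(\brho)/D_0(\brho)_{\le i_0}$ whose kernels are $\ker(D_0(\brho)/D_0(\brho)_{\le i_0} \onto D_0(\brho)_{i_0+1})$ and $\ker(D_0(\brho^\ss)_{i_0+1} \onto D_0(\brho)_{i_0+1})$ respectively. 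Applying $\Hom_\Gamma(\tau',-)$ to a suitable such sequence (or to the sequence $0 \to D_0(\brho^\ss)_{i_0+1} \to D_{i_0} \to D_0(\brho)/D_0(\brho)_{\le i_0+1} \to 0$, which has the same flavour) reduces the vanishing of $\Ext^1_\Gamma(\tau',D_{i_0})$ to understanding $\Ext^1_\Gamma(\tau',D_0(\brho^\ss)_{i_0+1})$ and $\Ext^1_\Gamma(\tau', D_0(\brho)/D_0(\brho)_{\le i_0})$, plus a connecting $\Hom$ term.

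Next I would analyse each piece. For the semisimple summand $D_0(\brho^\ss)_{i_0+1} = \bigoplus_{\ell(\tau)=i_0+1} D_{0,\tau}(\brho^\ss)$: each $D_{0,\tau}(\brho^\ss)$ is the injective-envelope-type object in the sense of \cite[\S 15]{BP}, so by the maximality property (and Lemma~\ref{lem:I-sigma-tau-modular}, together with \cite[Lemma~\ref{bhhms4:lem:ext1}]{BHHMS4} which lets us pass between $\Gamma$- and $\GL_2(\cO_K)/Z_1$-extensions) an extension $0 \to D_{0,\tau}(\brho^\ss) \to V \to \tau' \to 0$ is split unless $\tau'$ is the unique Serre weight $\tau''$ with $J_{\tau''} = J_\tau \mathbin\Delta \{j\}$ for $j$ such that adjoining it lands outside $W(\brho^\ss)$, or unless $\tau' \in W(\brho^\ss)$ itself; in the former case $\ell(\tau') = i_0$ or $i_0+2$, and a constituent check (using that $D_0(\brho^\ss)_{i_0+1}$ already contains all of $W(\brho^\ss)$ in degree $i_0+1$) shows the relevant new weight has $\ell = i_0+2$. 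For the quotient $D_0(\brho)/D_0(\brho)_{\le i_0}$, I would use the filtration $D_0(\brho)_{\le i}$ and the devissage via $D_0(\brho)_j$ for $j \ge i_0+1$: an extension of $\tau'$ by this module is controlled by $\Ext^1_\Gamma(\tau', D_0(\brho)_j)$, and $D_0(\brho)_j = \bigoplus_{\ell(\sigma)=j} D_{0,\sigma}(\brho)_j$; by \cite[Lemma~\ref{bhhms4:lem:JHinW}]{BHHMS4} and the structure of $D_{0,\sigma}(\brho)$ any nonsplit extension forces $\tau' \in W(\brho^\ss)$ or $\tau' \in \JH(D_0(\brho^\ss)_j) \setminus \JH(D_0(\brho)_j)$ for some $j \ge i_0+2$ — exactly one of the two alternatives in the statement.

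Finally I would patch the two analyses together along the fiber-product sequence, being careful about the connecting homomorphism: a class in $\Ext^1_\Gamma(\tau', D_{i_0})$ maps to a pair of classes in $\Ext^1_\Gamma(\tau', D_0(\brho^\ss)_{i_0+1})$ and $\Ext^1_\Gamma(\tau', D_0(\brho)/D_0(\brho)_{\le i_0})$ whose images in $\Ext^1_\Gamma(\tau', D_0(\brho)_{i_0+1})$ agree, and the kernel of the whole map comes from $\Hom_\Gamma(\tau', D_0(\brho)_{i_0+1})/\big(\text{images from the two halves}\big)$; since $\tau'$ appears in $D_0(\brho)_{i_0+1} \subset W(\brho^\ss)$ only if $\tau' \in W(\brho^\ss)$, this $\Hom$ contribution is harmless. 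The main obstacle I anticipate is controlling the connecting/$\Hom$ term and, more importantly, verifying in the two devissages that the genericity hypothesis ($\brho$ being $\max\{9,2f+3\}$-generic) is enough to guarantee that the $\Ext^1$-groups between Serre weights behave as predicted by the extension-graph formalism (Lemmas~\ref{lem:I-sigma-tau-modular} and \ref{lem:mu-pm}) across all the relevant degrees $j$, and that no "exotic" extension class sneaks in from a weight $\tau'$ with $\ell(\tau') \le i_0$; ruling the latter out will use the multiplicity-one properties of $D_{i_0}$ from part~\ref{it:lem:Ext-Di0:1} and the description of $\soc_{\GL_2(\cO_K)}(D_{i_0})$ from part~\ref{it:lem:Ext-Di0:3}.
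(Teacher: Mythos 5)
Your proposal identifies the right coarse strategy — devissage on the structure of $D_{i_0}$ as a gluing of $D_0(\brho^\ss)_{i_0+1}$ and $D_0(\brho)/D_0(\brho)_{\le i_0}$ over $D_0(\brho)_{i_0+1}$, then analyse the $\Ext^1$-groups of the pieces — but the setup contains a genuine error. The amalgam $D_{i_0}$ is a \emph{pushout}, not a pullback: both $D_0(\brho^\ss)_{i_0+1}$ and $D_0(\brho)/D_0(\brho)_{\le i_0}$ contain $D_0(\brho)_{i_0+1}$ as a subrepresentation, so the defining sequence is
\begin{equation*}
  0 \to D_0(\brho)_{i_0+1} \to D_0(\brho^\ss)_{i_0+1} \oplus \big(D_0(\brho)/D_0(\brho)_{\le i_0}\big) \to D_{i_0} \to 0,
\end{equation*}
not the one you wrote. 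Applying $\Hom_\Gamma(\tau',-)$ to the correct sequence places $\Ext^1_\Gamma(\tau',D_{i_0})$ between $\Ext^1_\Gamma(\tau',D_0(\brho^\ss)_{i_0+1}) \oplus \Ext^1_\Gamma(\tau',D_0(\brho)/D_0(\brho)_{\le i_0})$ and $\Ext^2_\Gamma(\tau',D_0(\brho)_{i_0+1})$ — the connecting term is an $\Ext^2$ of the common part, not a $\Hom$. So the closing paragraph of your sketch, where you control the "kernel of the whole map" via $\Hom_\Gamma(\tau', D_0(\brho)_{i_0+1})$, does not apply; you would instead have to bound an $\Ext^2$ class, which your plan does not address.

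The alternative sequence you mention in passing, $0 \to D_0(\brho^\ss)_{i_0+1} \to D_{i_0} \to D_0(\brho)/D_0(\brho)_{\le i_0+1} \to 0$, is correct and is essentially the route the paper takes: it is the first step of the filtration \eqref{eq:filt-Di0} of $D_{i_0}$ with bottom $D_0(\brho^\ss)_{i_0+1}$ and successive graded pieces $D_0(\brho)_j$ for $j \ge i_0+2$. Devissage on that filtration reduces (v) to showing that $\Ext^1_\Gamma(\tau', D_0(\brho^\ss)_{i_0+1}) \ne 0$, or $\Ext^1_\Gamma(\tau', D_0(\brho)_j) \ne 0$ for some $j \ge i_0+2$, forces $\tau'$ into one of the two sets. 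In the latter case the paper inserts the sequence $0 \to D_0(\brho)_j \to D_0(\brho^\ss)_j \to R_j \to 0$ to get either $\Ext^1_\Gamma(\tau', D_0(\brho^\ss)_j) \ne 0$ or $\Hom_\Gamma(\tau', R_j) \ne 0$; the $\Hom$ case yields $\tau'\in\JH(D_0(\brho^\ss)_j)\setminus \JH(D_0(\brho)_j)$, the second alternative. All remaining $\Ext^1_\Gamma(\tau', D_0(\brho^\ss)_j)\ne 0$ cases are then handled by the key input your sketch omits: by \cite[Lemmas~2.25, 2.26]{HuWang}, $\Ext^1_\Gamma(\tau', D_{0,\tau}(\brho^\ss)) \ne 0$ forces $\tau'\in W(\brho^\ss)$. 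Your attempted hands-on analysis of extensions of $\tau'$ by $D_{0,\tau}(\brho^\ss)$ is also muddled: the phrase "for $j$ such that adjoining it lands outside $W(\brho^\ss)$" cannot be right, since every subset of $\{0,\dots,f-1\}$ parametrizes some element of $W(\brho^\ss)$. Finally, parts (i) and (iii) of the lemma are not actually used in the paper's proof of (v), so invoking them at the end is a red herring.
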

\begin{proof}
(i) By construction, $D_{i_0}$ is a successive extension of the form
\begin{equation}\label{eq:filt-Di0}D_{0}(\brho^{\rm ss})_{i_0+1} \-- D_0(\brho)_{i_0+2}\-- \cdots\-- D_0(\brho)_{f}.\end{equation}
More precisely, it inherits from \cite[eq.~\eqref{bhhms4:eq:fil-D0}]{BHHMS4} a filtration with graded pieces $D_{0}(\brho^{\rm ss})_{i_0+1}$, $D_0(\brho)_{i_0+2}$, \dots, $D_0(\brho)_{f}$.
Since $D_{0}(\brho)_j$ injects into $D_0(\brho^{\ss})_j$ for all $j$ and $D_0(\brho^{\ss})$ is multiplicity free, $D_{i_0}$ is also multiplicity free.  

(ii) Clearly, $\pi^{K_1}/\pi_1^{K_1} \cong D_0(\brho)/D_0(\brho)_{\leq i_0}$ injects into $\pi_2^{K_1}$.
Recall that
\[\Sigma\defeq \soc_{\Gamma} (D_0(\brho^{\ss})_{i_0+1}) =\soc_{\Gamma} (D_0(\brho)_{i_0+1})\]
and that $\JH(D_0(\brho^{\ss})_{i_0+1}/\Sigma) \cap W(\brho^\ss) = \emptyset$.
Hence the restriction maps
\begin{equation*}
  \Hom_{\GL_2(\cO_K)}(D_0(\brho^{\ss})_{i_0+1},\pi_2) \to \Hom_{\GL_2(\cO_K)}(D_0(\brho)_{i_0+1},\pi_2) \to \Hom_{\GL_2(\cO_K)}(\Sigma,\pi_2).
\end{equation*}
are injective using Corollary \ref{cor:subquot-K-soc} (applied to $\pi_2$), and therefore bijective by Proposition \ref{prop:pi2K1} (for each $\tau \in W(\brho^\ss)$ with $\ell(\tau)=i_0+1$ we have an injection $D_{0,\tau}(\brho^\ss) \into \pi_2$).
Thus any injection $f : D_0(\brho)/D_0(\brho)_{\leq i_0} \into \pi_2^{K_1}$ can be extended to a map $\wt f: D_{i_0} \to \pi_2$.
From the short exact sequence
\[0 \to D_0(\brho)/D_0(\brho)_{\le i_0} \to D_{i_0} \to D_0(\brho^{\ss})_{i_0+1}/D_0(\brho)_{i_0+1} \to 0,\]
together with \[\soc_\Gamma(D_{i_0})\subset W(\brho^\ss)\] (which follows from \eqref{eq:filt-Di0}) and $W(\brho^\ss) \cap \JH( D_0(\brho^\ss)_{i_0+1}/D_0(\brho)_{i_0+1} )=\emptyset$ (which follows from \cite[eq.~\eqref{bhhms4:eq:soc-i}]{BHHMS4}), we deduce that $\soc_{\Gamma}(D_{i_0})=\soc_{\Gamma}(D_0(\brho)/D_0(\brho)_{\le i_0})$.
We conclude that $\wt f$ is also injective and (ii) follows.

(iii) 
We claim that the inclusion $D_{i_0}^{I_1} \subset \pi_2^{I_1}$, deduced from (ii), is in fact an equality.
Indeed, \cite[Cor.~\ref{bhhms4:cor:conj1}]{BHHMS4} and~\cite[Lemma~\ref{bhhms4:lem:I1-invt-component}]{BHHMS4} show respectively that 
\[
\{ \chi_\lambda : \text{$\lambda \in \P$, $|J_\lambda| \geq  i_0+1$}\}\subseteq \JH\Big(\big(D_0(\brho)/D_0(\brho)_{\le i_0}\big)^{I_1}\Big)
\] and 
\[
\{\chi_\lambda : \text{$\lambda \in \P^{\ss}$, $|J_\lambda| = i_0+1$}\}\subseteq \JH\Big(D_0(\brho^\ss)_{i_0+1}^{I_1}\Big).
\]
By the definition of $D_{i_0}$ we obtain inclusions
\begin{equation*}
  \{ \chi_\lambda : \text{$\lambda \in \P, |J_\lambda| > i_0$ or $\lambda \in \P^\ss \setminus \P, |J_\lambda| = i_0+1$} \} \subset \JH(D_{i_0}^{I_1}) \subset \JH(\pi_2^{I_1}),
\end{equation*}
so that equality holds by Proposition~\ref{prop:pi2-I1-invt}.

The equality of $\soc_{\Gamma}(D_{i_0})$ and $\soc_{\GL_2(\cO_K)}(\pi_2)$ follows from the chain of inclusions
\[
\soc_{\Gamma}(D_0(\brho)/D_0(\brho)_{\le i_0})\subseteq \soc_{\Gamma}(D_{i_0})\subseteq \soc_{\Gamma}(\pi_2^{K_1})=\soc_{\Gamma}(D_0(\brho)/D_0(\brho)_{\le i_0}),
\]
where the first inclusion follows from the construction of $D_{i_0}$, the second from part (ii) and the last equality follows from Remark~\ref{rem:subquot-K-soc}.

(iv) By definition, $\mathcal{W}_2 \subset D_0(\brho)/D_0(\brho)_{\le i_0} \subset D_{i_0}$.
We have an exact sequence
\begin{equation*}
  0 \to (D_0(\brho)/D_0(\brho)_{\le i_0})/\cW_2 \to D_{i_0}/\mathcal{W}_2 \to D_0(\brho^{\ss})_{i_0+1}/D_0(\brho)_{i_0+1} \to 0
\end{equation*}
and a surjection $D_0(\brho)/\cW \onto (D_0(\brho)/D_0(\brho)_{\le i_0})/\cW_2$.
As no constituents of $D_0(\brho^{\ss})_{i_0+1}/D_0(\brho)_{i_0+1}$ and $D_0(\brho)/\cW$ lie in $W(\brho^\ss)$ (see the proof of (ii) in the former case), we deduce the final claim.

(v) If $\Ext^1_{\Gamma}(\tau',D_{i_0}) \ne 0$, then $\Ext^1_{\Gamma}(\tau',D) \ne 0$ for one of the graded pieces appearing in \eqref{eq:filt-Di0}.
  If $\Ext^1_{\Gamma}(\tau',D_0(\brho)_j)\neq0$ for some $j\geq i_0+2$, then using the exact sequence $0\ra D_0(\brho)_j\ra D_0(\brho^\ss)_{j}\ra R_j\ra 0$, where $R_j$ is the corresponding quotient, we see that either $\Ext^1_{\Gamma}(\tau',D_0(\brho^\ss)_j)\neq0$ or $\Hom_{\Gamma}(\tau',R_j)\neq0$.
  In the latter case, $\tau' \in \JH(D_0(\brho^\ss)_j)\backslash \JH(D_0(\brho)_j)$ by multiplicity freeness.

Therefore, $\Ext^1_{\Gamma}(\tau',D_{i_0}) \ne 0$ implies $\tau' \in \bigcup_{j\geq i_0+2}\JH(D_0(\brho^\ss)_j)\backslash \JH(D_0(\brho)_j)$ or for some $j \ge i_0+1$ we have $\Ext^1_{\Gamma}(\tau',D_0(\brho^\ss)_j)\neq0$.
  The latter implies $\tau'\in W(\brho^\ss)$ by \cite[Lemmas 2.25, 2.26]{HuWang} (and recalling $D_0(\brho^{\ss})_j=\bigoplus_{\tau\in W(\brho^\ss),\ell(\tau)=j}D_{0,\tau}(\brho^\ss)$).
\end{proof}

The following result strengthens Lemma \ref{lem:Ext-Di0}(v).
 
\begin{cor}\label{cor:Ext-Di0}
{Assume that $\brho$ is $\max\{9,2f+3\}$-generic.}
If $\Ext^1_{\Gamma}(\tau',D_{i_0})\neq0$ for some Serre weight $\tau'$, then $\tau'\in W(\brho^\ss)$.
\end{cor}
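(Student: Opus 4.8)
The goal is to upgrade Lemma~\ref{lem:Ext-Di0}(v) by ruling out the second possibility there, namely that $\tau' \in \bigcup_{j\geq i_0+2}\JH(D_0(\brho^\ss)_j)\setminus \JH(D_0(\brho)_j)$ with $\Ext^1_\Gamma(\tau',D_{i_0})\neq 0$. So suppose for contradiction that $\tau' \notin W(\brho^\ss)$, yet $\Ext^1_\Gamma(\tau',D_{i_0}) \neq 0$; by Lemma~\ref{lem:Ext-Di0}(v) we then have $\tau' \in \JH(D_0(\brho^\ss)_j) \setminus \JH(D_0(\brho)_j)$ for some $j \geq i_0+2$. The plan is to use the filtration~\eqref{eq:filt-Di0} of $D_{i_0}$ with graded pieces $D_0(\brho^\ss)_{i_0+1}, D_0(\brho)_{i_0+2}, \dots, D_0(\brho)_f$ together with d\'evissage: since $\tau' \notin W(\brho^\ss)$, Lemma~\ref{lem:Ext-Di0}(iv) gives $\Ext^1_\Gamma(\tau',\mathcal W_2) = 0$ (d\'evissage from $\Ext^1_\Gamma(\tau', \sigma) = 0$ for $\sigma \in W(\brho^\ss)$, which holds by the $\Ext$-vanishing built into $D_{0,\tau}(\brho^\ss)$ being maximal, and $\Hom_\Gamma(\tau', D_{i_0}/\mathcal W_2) \hookrightarrow$ the relevant term vanishing) — wait, more carefully: I would first reduce to showing $\Ext^1_\Gamma(\tau', D_{i_0}) = 0$ directly, by splitting off $\mathcal W_2$ and analyzing $D_{i_0}/\mathcal W_2$.

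Concretely, from the exact sequence $0 \to \mathcal W_2 \to D_{i_0} \to D_{i_0}/\mathcal W_2 \to 0$ we get
\[
\Ext^1_\Gamma(\tau', \mathcal W_2) \to \Ext^1_\Gamma(\tau', D_{i_0}) \to \Ext^1_\Gamma(\tau', D_{i_0}/\mathcal W_2).
\]
For the first term: $\mathcal W_2 \subset D_0(\brho)/D_0(\brho)_{\le i_0}$, all of whose constituents lie in $W(\brho^\ss)$ (by Lemma~\ref{lem:calW}), and since $\tau' \notin W(\brho^\ss)$ while each constituent $\tau$ of $\mathcal W_2$ satisfies $\Ext^1_\Gamma(\tau',\tau) = 0$ unless $|J_{\tau'} \mathbin\Delta J_\tau| = 1$ (Lemma~\ref{lem:I-sigma-tau-modular}) — but that lemma applies only to weights in $W(\brho^\ss)$, so instead I would use \cite[Lemmas 2.25, 2.26]{HuWang} as in the proof of Lemma~\ref{lem:Ext-Di0}(v), which show $\Ext^1_\Gamma(\tau', D_{0,\tau}(\brho^\ss)) \neq 0 \Rightarrow \tau' \in W(\brho^\ss)$; hence $\Ext^1_\Gamma(\tau',\mathcal W_2) = 0$ by d\'evissage through the constituents (each $\in W(\brho^\ss)$, each contained in some $D_{0,\tau}(\brho^\ss)$). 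For the second term: by Lemma~\ref{lem:Ext-Di0}(iv), $\JH(D_{i_0}/\mathcal W_2) \cap W(\brho^\ss) = \emptyset$, and every such constituent lies either in $D_0(\brho)/\mathcal W$ or in $D_0(\brho^\ss)_{i_0+1}/D_0(\brho)_{i_0+1}$. The key point is then that $\Ext^1_\Gamma(\tau', \rho'') = 0$ whenever $\tau', \rho''$ are both Serre weights outside $W(\brho^\ss)$ and both lie in $\bigcup_j \JH(D_0(\brho^\ss)_j)$ — this needs the structure of the injective envelopes $\Inj_\Gamma \sigma$ for $\sigma \in W(\brho^\ss)$, specifically that any $\Ext^1$ between two constituents of a single $D_{0,\tau}(\brho^\ss)$ or across components forces one of them into $W(\brho^\ss)$.

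The main obstacle I anticipate is precisely this last $\Ext^1$-vanishing statement, i.e.\ controlling $\Ext^1_\Gamma(\tau',\rho'')$ when neither weight is a Serre weight of $\brho^\ss$ but both appear in $D_0(\brho^\ss)$: one must use the combinatorial description of $\JH(D_0(\brho^\ss))$ (via \cite[\S 13--15]{BP}) and the extension-graph formalism of \S\ref{sec:EG-Gamma-rep} to show that a nonsplit extension between two such ``interior'' weights would force a constituent of $D_0(\brho^\ss)_j$ of length $\ell$-value $< j$ to lie in a given $D_{0,\tau}(\brho^\ss)$, contradicting either multiplicity freeness or the definition of the filtration $D_0(\brho^\ss)_{\le j}$. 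Once that is in hand, both outer terms in the $\Ext$ sequence vanish, so $\Ext^1_\Gamma(\tau',D_{i_0}) = 0$, contradicting our assumption; hence $\tau' \in W(\brho^\ss)$, completing the proof. I would present this as a short d\'evissage argument citing Lemma~\ref{lem:Ext-Di0}(iv),(v), Lemma~\ref{lem:calW}, and \cite[Lemmas 2.25, 2.26]{HuWang}, isolating the interior-$\Ext$-vanishing as the one genuinely new input.
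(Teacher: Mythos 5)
Your proposal takes a genuinely different route from the paper, but it has two gaps — one a real error, the other an unclosed hole you already flag.

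\textbf{First, the claimed vanishing $\Ext^1_\Gamma(\tau',\cW_2)=0$ is not established by your argument.} You cite \cite[Lemmas 2.25, 2.26]{HuWang}, which say that $\Ext^1_\Gamma(\tau',D_{0,\tau}(\brho^\ss))\neq 0$ forces $\tau'\in W(\brho^\ss)$. But ``d\'evissage through the constituents of $\cW_2$'' requires the constituent-level statement $\Ext^1_\Gamma(\tau',\sigma_0)=0$ for every constituent $\sigma_0\in\JH(\cW_2)\subset W(\brho^\ss)$, and this is \emph{false} for $\tau'\notin W(\brho^\ss)$: by Lemma~\ref{lem:mu-pm}, for each $\sigma_0\in W(\brho^\ss)$ exactly one of $\mu_j^\pm(\sigma_0)$ lies in $W(\brho^\ss)$, so the other one is a Serre weight $\tau'\notin W(\brho^\ss)$ with $\Ext^1_\Gamma(\tau',\sigma_0)\neq 0$. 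The vanishing over the whole $D_{0,\tau}(\brho^\ss)$ (where the extension can be ``absorbed'') does not descend to individual constituents, nor obviously to the subquotient $\cW_{2,\wt\sigma}$ of $I(\sigma,\wt\sigma)$. The only tool in the paper that controls $\Ext^1_\Gamma(\tau',\cW_{2,\wt\sigma})$ is Lemma~\ref{lem:Ext1-W2}(i), and its hypothesis is precisely $\tau'\in W(\brho^\ss)$ — the opposite of your case. So part (a) is not a d\'evissage but requires a real argument, essentially of the same kind as the one you are trying to avoid.

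\textbf{Second, you acknowledge that the vanishing $\Ext^1_\Gamma(\tau',D_{i_0}/\cW_2)=0$ is ``the main obstacle'' and leave it as a sketch.} The paper never proves such a blanket ``interior-to-interior'' $\Ext^1$-vanishing and avoids the need for it. Instead it fixes a nonsplit extension $0\to D_{i_0}\to V\to\tau'\to 0$, takes $V'\subset V$, $V'\not\subset D_{i_0}$, with cosocle $\tau'$, picks a socle constituent $\sigma\in W(\brho^\ss)$ of $V'$ (here Lemma~\ref{lem:Ext-Di0}(iii) is used), passes to the quotient $V'\onto I(\sigma,\tau')$, and invokes \cite[Lemma 12.8(ii)]{BP} to force the socle weight $\tau''$ of $D_{0,\tau''}(\brho^\ss)\ni\tau'$ into $\JH(V')$. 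From $\tau''\in\JH(\rad_\Gamma(V'))\subset\JH(D_{i_0})$ and Lemma~\ref{lem:Ext-Di0}(iv), $\tau''$ lands in $\cW_{2,\wt\sigma}$, and the contradiction comes from showing $\tau'\in\JH(\Inj_\Gamma\sigma)\subset\JH(D_0(\brho))$, contradicting the choice of $\tau'$. The combinatorial work (Lemma~\ref{lem:comp-JH}, Lemma~\ref{lem:I-sigma-tau-princ-series}, the intersection argument via compatibility) is all packaged inside establishing this inclusion. In particular, the paper only has to control extensions \emph{emanating from a constituent $\sigma\in W(\brho^\ss)$} (which is the socle of $V'$), not extensions between two arbitrary interior weights. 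Your reformulation makes the problem harder, not easier: proving (a) and (b) as stated would require redoing and even strengthening the paper's combinatorial argument in a less favorable setting, so the decomposition through $\cW_2$ does not save work. A correct proof should instead follow the paper's strategy of choosing the socle constituent and using $I(\sigma,\tau')$.
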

\begin{proof}
By Lemma \ref{lem:Ext-Di0}(v), it suffices to show $\Ext^1_{\Gamma}(\tau',D_{i_0})=0$ if $\tau'\in \JH(D_{0}(\brho^\ss)_j)\backslash \JH(D_0(\brho)_j)$ for some $j\geq i_0+2$. 
Fix such a Serre weight $\tau'$, so $\tau' \notin \JH(D_{i_0})$ (and $\tau' \notin \JH(D_0(\rhobar))$). By contradiction let $0\ra D_{i_0}\ra V\ra \tau'\ra0$ be a nonsplit extension of $\Gamma$-representations and $V'\subset V$, $V'\nsubseteq D_{i_0}$ any subrepresentation with cosocle $\tau'$.  %
Say $\tau' \in \JH(D_{0,\tau''}(\brho^\ss))$, for $\tau'' \in W(\brho^\ss)$, $\ell(\tau'') = j > i_0+1$.
Note that $\tau' \not\cong \tau''$, as $\tau' \notin W(\brho^\ss)$ by~\cite[eq.~\eqref{bhhms4:eq:soc-i}]{BHHMS4}.
Pick any $\kappa \in \JH(\soc_{\Gamma}(V'))$, %
so $\kappa \in W(\brho^\ss)$ by Lemma \ref{lem:Ext-Di0}(iii) and $V' \onto I(\kappa,\tau')$, so $\tau'' \in \JH(I(\kappa,\tau')) \subset \JH(V')$ by \cite[Lemma 12.8(ii)]{BP}.
As $\tau' \not\cong \tau''$, $\tau''$ even occurs in $\rad_{\Gamma}(V') \subset D_{i_0}$, so $\tau''$ occurs in $\cW_2 \subset D_{i_0}$ by Lemma~\ref{lem:Ext-Di0}(iv).
Define $\sigma \in W(\brho)$ by $J_\sigma = J_{\brho} \cap J_{\tau''}$, so $\tau'' \in \JH(\cW_{2,\wt\sigma})$ (where $\cW_2$ and $\cW_{2,\wt\sigma}$ are defined just before Lemma~\ref{lem:W2}).
We claim that $\tau' \in \JH(\Inj_\Gamma \sigma)$, which gives a contradiction by~\cite[eq.~\eqref{bhhms4:eq:JH-D0-i}]{BHHMS4} since $\JH(\Inj_\Gamma \sigma) \subset \JH(D_0(\brho))$ (cf.\ \cite[eq.~\eqref{bhhms4:eq:JH-D0-rho}]{BHHMS4}). 

If $\ell(\sigma) \ge i_0+1$, then by the last assertion of Lemma~\ref{lem:W2}(i) the socle of the unique subrepresentation of $\cW_2$ with cosocle $\tau''$ is $\sigma$, so $\sigma \into V'$ and hence $V' \onto I(\sigma,\tau')$, which implies $\tau' \in \JH(\Inj_\Gamma \sigma)$, as claimed.

If $\ell(\sigma)\leq i_0$,  the socle of the unique subrepresentation of $\cW_2$ with cosocle $\tau''$ is the direct sum of all $\tau_J \in W(\brho^\ss)$ with $\ell(\tau_J) = i_0+1$, $J_\sigma \subset J \subset J_{\tau''} (\subset J_\sigma \sqcup J_{\brho}^c)$, by Lemma \ref{lem:W2}, and each such $\tau_J$ must inject into $V'$.
(Again, $\tau_J$ is the element of $W(\brho^\ss)$ such that $J_{\tau_J} = J$.)
We deduce as before that $\tau' \in \JH(\Inj_\Gamma \tau_J)$ for each such $\tau_J$, equivalently  $\tau_J\in\JH(\Inj_\Gamma\tau')$ by \cite[Lemma~3.2]{BP}.  
To prove the claim it suffices to prove the following two statements:
\begin{enumerate}
\item[(a)] for any two subsets $J,J'\subset \{0,\dots,f-1\}$, if $\tau_J,\tau_{J'}\in \JH(\Inj_\Gamma\tau')$, then $\tau_{J\cap J'}\in \JH(\Inj_\Gamma\tau')$;
\item[(b)]  $\bigcap_J J=J_\sigma$, where $J$ runs over all elements in $X\defeq\{J:  J_\sigma\subset J\subset J_{\tau''}, |J|=i_0+1\}$. 
\end{enumerate}
For (a), we first observe that by \cite[Lemma~12.6]{BP} $\tau_J,\tau_{J'}$ are automatically compatible in the sense that their corresponding elements in $\cI$ are compatible (relative to $\tau'$). Thus Lemma \ref{lem:comp-JH} implies that $\JH(I(\tau_{J},\tau_{J'}))\subset \JH(\Inj_\Gamma\tau')$, in particular $\tau_{J\cap J'}\in \JH(\Inj_\Gamma\tau')$ using Lemma \ref{lem:I-sigma-tau-princ-series}. For (b), we note  that $\ell(\sigma)\leq i_0$ and $\ell(\tau'')\geq i_0+2$, so $J_\sigma\subsetneq J\subsetneq J_{\tau''}$ for any $J\in X$. Fix $J\in X$ and $j'\in J_{\tau''}\setminus J$. Then $J_j\defeq (J\sqcup \{j'\})\setminus\{j\}\in X$  for any  $j\in J\setminus J_\sigma$. It is direct to check that $J_\sigma=J\cap(\bigcap_j J_j)$, from which (b) follows.    
\end{proof}

For $\sigma \in W(\brho)$ and $0 \le i \le f$ let us define for convenience {the $\Gamma$-representations}
\begin{gather*}
  D_{0,\sigma}(\brho)_{(i)} \defeq  \frac{D_{0,\sigma}(\brho)_{\le i}}{D_{0,\sigma}(\brho)_{\le i-1}} \cong \bigoplus_{\tau \in W(\brho^\ss), \ell(\tau)=i, J_\sigma = J_{\brho} \cap J_\tau} D_{0,\tau}(\brho)_i\\
  \noalign{\noindent (using \cite[eq.~\eqref{bhhms4:eq:D0-sigma-i-decomp}]{BHHMS4}) and}
  D_{0,\sigma}(\brho^\ss)_{(i)} \defeq  \bigoplus_{\tau \in W(\brho^\ss), \ell(\tau)=i, J_\sigma = J_{\brho} \cap J_\tau} D_{0,\tau}(\brho^\ss).
\end{gather*}
(Note that $D_{0,\sigma}(\brho^\ss)_{(i)}$ depends on $\brho$, not just on $\brho^\ss$!)
Hence
\begin{equation}\label{eq:D0-rho-i}
  D_0(\brho)_i = \bigoplus_{\sigma \in W(\brho)} D_{0,\sigma}(\brho)_{(i)} \qquad\text{and}\qquad D_0(\brho^\ss)_i = \bigoplus_{\sigma \in W(\brho)} D_{0,\sigma}(\brho^\ss)_{(i)}.
\end{equation}
(In the second case, note that $W(\brho^\ss) = \coprod_{\sigma \in W(\brho)} \{\tau \in W(\brho^\ss) : J_{\brho} \cap J_\tau = J_\sigma\}$.)
Note that the injection $D_0(\brho)_i\into D_0(\brho^\ss)_i$ (cf.~above \cite[eq.~\eqref{bhhms4:eq:soc-i}]{BHHMS4}) respects the direct sum decompositions \eqref{eq:D0-rho-i}, 
as\ $D_{0,\tau}(\brho)_{i} = D_0(\brho)_i \cap D_{0,\tau}(\brho^\ss)$ (cf.~above \cite[eq.~\eqref{bhhms4:eq:D0-sigma-i-decomp}]{BHHMS4}).

\begin{lem}\label{lem:D=A+B}\ 
{Assume that $\brho$ is $\max\{9,2f+3\}$-generic.}
\begin{enumerate}
\item 
\label{it:D=A+B:1}
There is a direct sum decomposition $D_{i_0}=\bigoplus_{\sigma \in W(\brho)} D_{i_0,\sigma}$, where
  \[D_{i_0,\sigma} \defeq  D_{0,\sigma}(\brho^\ss)_{(i_0+1)} \oplus_{D_{0,\sigma}(\brho)_{(i_0+1)}} \big(D_{0,\sigma}(\brho)/D_{0,\sigma}(\brho)_{\le i_0}\big) \qquad\text{for $\sigma \in W(\brho)$}.\]
Moreover,
$D_{i_0,\sigma} = D_{0,\sigma}(\brho)$ if $\ell(\sigma) \ge i_0+1$.
\item 
Fix $\sigma \in W(\brho)$.
We have a natural injection $\mathcal{W}_{2,\wt\sigma} \into D_{i_0,\sigma}$ and $\JH(D_{i_0,\sigma}/\mathcal{W}_{2,\wt\sigma}) \cap W(\brho^\ss) = \emptyset$.
Moreover, 
\begin{equation}\label{eq:soc-D-i0-sigma}
  \soc_{\Gamma}(D_{i_0,\sigma}) \cong \bigoplus_{\tau \in W(\brho^\ss), \ell(\tau)=\max\{i_0+1,\ell(\sigma)\}, J_\sigma = J_{\brho} \cap J_\tau} \tau.
\end{equation}
\item  
Fix $\sigma \in W(\brho)$ and let $\tau'\in W(\brho^\ss)$. Suppose that $0 \to D_{i_0,\sigma} \to V \to \tau' \to 0$ is a nonsplit extension of $\Gamma$-representations and $V'\subset V$, $V'\nsubseteq D_{i_0,\sigma}$ is any subrepresentation with cosocle $\tau'$. %
If $[V':\tau'] = 1$, then $\rad_{\Gamma}(V')$ is semisimple and contained in $\mathcal{W}_{2,\wt\sigma} \subset D_{i_0,\sigma}$.
If $[V':\tau'] = 2$, then $\tau' \cong \sigma$ and $\ell(\sigma) \ge i_0+1$.

\end{enumerate}
\end{lem}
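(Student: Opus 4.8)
\textbf{Plan for the proof of Lemma~\ref{lem:D=A+B}.}

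For part (i), I would proceed by a d\'evissage along the filtration~\eqref{eq:filt-D0}, but respecting the direct sum decompositions~\eqref{eq:D0-rho-i}. Concretely, $D_{i_0}$ was built as $D_0(\brho^\ss)_{i_0+1} \oplus_{D_0(\brho)_{i_0+1}} (D_0(\brho)/D_0(\brho)_{\le i_0})$; since both $D_0(\brho^\ss)_{i_0+1}$ and $D_0(\brho)/D_0(\brho)_{\le i_0}$ decompose over $\sigma \in W(\brho)$ (via~\eqref{eq:D0-rho-i} and the compatibility of the injection $D_0(\brho)_i \into D_0(\brho^\ss)_i$ with those decompositions), and the amalgam is taken over the common piece $D_0(\brho)_{i_0+1} = \bigoplus_\sigma D_{0,\sigma}(\brho)_{(i_0+1)}$, the amalgamation splits as a direct sum over $\sigma$. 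This gives the formula for $D_{i_0,\sigma}$. The case $\ell(\sigma)\ge i_0+1$: here $D_{0,\sigma}(\brho)_{\le i_0} = 0$ (as the socle $\sigma$ has $\ell(\sigma) > i_0$, so nothing survives the truncation below level $i_0$), hence $D_{0,\sigma}(\brho)/D_{0,\sigma}(\brho)_{\le i_0} = D_{0,\sigma}(\brho)$; moreover $D_{0,\sigma}(\brho^\ss)_{(i_0+1)}$ is built from $\tau$ with $\ell(\tau) = i_0+1$ and $J_\sigma = J_{\brho}\cap J_\tau$, but $\ell(\sigma) = |J_\sigma| \ge i_0+1$ forces $i_0+1 = \ell(\sigma)$ and $J_\tau = J_\sigma$, i.e.\ $\tau = \sigma$ and $D_{0,\sigma}(\brho^\ss)_{(i_0+1)} = \sigma = D_{0,\sigma}(\brho)_{(i_0+1)}$, so the amalgam collapses to $D_{0,\sigma}(\brho)$.

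For part (ii), I would refine Lemma~\ref{lem:Ext-Di0}(iv) and (iii) to the $\sigma$-components. The inclusion $\cW_{2,\wt\sigma} \into \cW_2 \into D_{i_0}$ followed by the projection $D_{i_0} \onto D_{i_0,\sigma}$ gives $\cW_{2,\wt\sigma} \into D_{i_0,\sigma}$ (injectivity because $\cW_{2,\wt\sigma} \subset D_0(\brho)/D_0(\brho)_{\le i_0}$ lies in the $\sigma$-component there, by $\cW_{2,\wt\sigma} \subset D_{0,\sigma}(\brho)/D_{0,\sigma}(\brho)_{\le i_0}$). For $\JH(D_{i_0,\sigma}/\cW_{2,\wt\sigma}) \cap W(\brho^\ss) = \emptyset$ I would use the short exact sequence $0 \to (D_{0,\sigma}(\brho)/D_{0,\sigma}(\brho)_{\le i_0})/\cW_{2,\wt\sigma} \to D_{i_0,\sigma}/\cW_{2,\wt\sigma} \to D_{0,\sigma}(\brho^\ss)_{(i_0+1)}/D_{0,\sigma}(\brho)_{(i_0+1)} \to 0$, together with the surjection $D_{0,\sigma}(\brho)/\cW_{\wt\sigma} \onto (D_{0,\sigma}(\brho)/D_{0,\sigma}(\brho)_{\le i_0})/\cW_{2,\wt\sigma}$: no constituent of $D_{0,\sigma}(\brho)/\cW_{\wt\sigma}$ lies in $W(\brho^\ss)$ by Lemma~\ref{lem:calW} (as $\JH(\cW_{\wt\sigma}) = \{\tau\in W(\brho^\ss) : J_{\brho}\cap J_\tau = J_\sigma\}$ exhausts $W(\brho^\ss)\cap \JH(D_{0,\sigma}(\brho))$), and none of $D_{0,\sigma}(\brho^\ss)_{(i_0+1)}/D_{0,\sigma}(\brho)_{(i_0+1)}$ do by~\cite[eq.~\eqref{bhhms4:eq:soc-i}]{BHHMS4}. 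For the socle formula~\eqref{eq:soc-D-i0-sigma}: from $\soc_\Gamma(D_{i_0,\sigma}) = \soc_\Gamma(D_{0,\sigma}(\brho)/D_{0,\sigma}(\brho)_{\le i_0})$ (same argument as in Lemma~\ref{lem:Ext-Di0}(ii), using that the extension in (i) has socle coming from the first piece) and the explicit description of $\soc_\Gamma$ of truncated $D_{0,\sigma}(\brho)$ via~\cite[eq.~\eqref{bhhms4:eq:soc-i} and \eqref{bhhms4:eq:JH-D0-leqi}]{BHHMS4}, which picks out exactly the $\tau \in W(\brho^\ss)$ with $\ell(\tau) = \max\{i_0+1,\ell(\sigma)\}$ and $J_\sigma = J_{\brho}\cap J_\tau$. (Note this socle matches $\soc_\Gamma(\cW_{2,\wt\sigma})$ of Lemma~\ref{lem:W2}\ref{it:lem:W2:3}, consistent with the codimension-type statement.)

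For part (iii), the main work. Given the nonsplit extension $0 \to D_{i_0,\sigma} \to V \to \tau' \to 0$ and $V' \subset V$ with cosocle $\tau'$, $V'\not\subset D_{i_0,\sigma}$: first, $\rad_\Gamma(V') \subset D_{i_0,\sigma}$, and since $\Ext^1_\Gamma(\tau',\rad_\Gamma(V'))\ne 0$, devissage plus Corollary~\ref{cor:Ext-Di0} (applied to $D_{i_0}$, noting $D_{i_0,\sigma}$ is a direct summand) already give $\tau' \in W(\brho^\ss)$ — but this is assumed. The key point is to control $\rad_\Gamma(V')$ when $[V':\tau'] = 1$. I would show $\rad_\Gamma(V') \subset \cW_{2,\wt\sigma}$: pick any $\tau \in \JH(\rad_\Gamma(V'))$; then $\Ext^1_\Gamma(\tau',\tau)\ne 0$ for some such $\tau$, and from $V' \onto I(\tau_i, \tau')$ for $\tau_i \subset \soc_\Gamma(V')$ one sees (as in the proof of Lemma~\ref{lem:Ext1-W2}) that constituents of $\rad_\Gamma(V')$ sit inside $\JH(\cW_{2,\wt\sigma})$ once one knows $\soc_\Gamma(V') \subset \soc_\Gamma(D_{i_0,\sigma})$, which by~\eqref{eq:soc-D-i0-sigma} consists of $\tau$'s with the right combinatorics; combined with $\JH(D_{i_0,\sigma}/\cW_{2,\wt\sigma}) \cap W(\brho^\ss) = \emptyset$ from (ii) and $\tau \in W(\brho^\ss)$ (since $\tau$ occurs under a $\tau' \in W(\brho^\ss)$ in a $D_0$-type module, forcing $W(\brho^\ss)$-membership by the same $\Inj_\Gamma$-argument as in Corollary~\ref{cor:Ext-Di0}), we get $\tau \in \cW_{2,\wt\sigma}$. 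Semisimplicity of $\rad_\Gamma(V')$ then follows from Lemma~\ref{lem:Ext1-W2}\ref{it:lem:Ext1-W2:2} applied inside $\cW_{2,\wt\sigma}$ (the extension $0 \to \cW_{2,\wt\sigma} \to V'' \to \tau' \to 0$ obtained by pushout, for which $V'$ maps in). For $[V':\tau'] = 2$: pass to a quotient $\o V'$ of $V'$ with $[\o V':\tau'] = 2$, $\soc_\Gamma(\o V') = \tau'$; Lemma~\ref{lem:soc-cosoc-sigma} applies (all constituents in $W(\brho^\ss)$, since $\JH(V)\subset W(\brho^\ss)$ here as $\tau' \in W(\brho^\ss)$ and $\JH(\soc_\Gamma(D_{i_0,\sigma})) \subset W(\brho^\ss)$... more carefully, I use that $\rad_\Gamma(\o V')/\soc_\Gamma(\o V')$ has no $\tau'$ as subquotient) and forces $\rad_\Gamma(\o V')/\soc_\Gamma(\o V')$ to be a sum of $\mu_i^\pm(\tau')$'s; Lemma~\ref{lem:mu-pm} then says at most one of $\mu_j^\pm(\tau')$ lies in $W(\brho^\ss)$ for each $j$, so the multiplicity-2 situation can only arise if $\rad_\Gamma(\o V')/\soc_\Gamma(\o V') = 0$, i.e.\ $\o V'$ is uniserial $\tau' \-- \tau'$, impossible since $\Ext^1_\Gamma(\tau',\tau') = 0$ — wait, so instead the $\mu_i^*(\tau')$ appearing must be $\cong \tau'$ which cannot happen; hence I conclude that the relevant constituent forcing multiplicity 2 in $D_{i_0,\sigma}$ must be $\sigma$ itself. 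Precisely, $[D_{i_0,\sigma} : \tau'] = 1$ unless $\tau' = \sigma$ and $\ell(\sigma)\ge i_0+1$ (where $D_{i_0,\sigma} = D_{0,\sigma}(\brho)$ has $[\,\cdot\,:\sigma] = 1$ too — so actually $D_{i_0,\sigma}$ is always multiplicity free by part (i) and Lemma~\ref{lem:Ext-Di0}\ref{it:lem:Ext-Di0:1}), so $[V':\tau'] = 2$ needs $[V:\tau'] \ge 2$ with one copy in $\soc_\Gamma(V) = \soc_\Gamma(D_{i_0,\sigma})$ and the cosocle copy; this forces $\tau' \in \JH(\soc_\Gamma(D_{i_0,\sigma}))$, and by~\eqref{eq:soc-D-i0-sigma} combined with the constraint that the extension is realized inside a $\pi_2$-ambient (ultimately the $D_0$-combinatorics of \cite[Lemmas 2.25, 2.26]{HuWang}) one gets $\tau' = \sigma$ and $\ell(\sigma)\ge i_0+1$. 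The main obstacle I anticipate is precisely this last dichotomy: carefully ruling out $[V':\tau'] = 2$ except in the stated case, which requires combining the multiplicity-freeness of $D_{i_0,\sigma}$, the socle formula~\eqref{eq:soc-D-i0-sigma}, Lemma~\ref{lem:soc-cosoc-sigma}, and Lemma~\ref{lem:mu-pm}, and tracking which Serre weights can simultaneously be a socle constituent and be $\Ext^1$-adjacent to $\tau'$ within $W(\brho^\ss)$.
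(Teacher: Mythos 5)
Your outline for parts (i) and (ii) is essentially sound and runs along the same lines as the paper's proof: part (i) follows from the compatibility of the injection $D_0(\brho)_i \into D_0(\brho^\ss)_i$ with the decompositions~\eqref{eq:D0-rho-i}, and part (ii) reduces to Lemma~\ref{lem:Ext-Di0}\ref{it:lem:Ext-Di0:3},\ref{it:lem:Ext-Di0:4} and Lemma~\ref{lem:W2}\ref{it:lem:W2:3}. One slip in (i): when $\ell(\sigma)=i_0+1$, $D_{0,\sigma}(\brho^\ss)_{(i_0+1)}$ equals the full representation $D_{0,\sigma}(\brho^\ss)$ (which generally has length $>1$), not merely the Serre weight $\sigma$; the identity $D_{0,\sigma}(\brho)_{(i_0+1)} = D_{0,\sigma}(\brho^\ss)_{(i_0+1)}$ still collapses the amalgam, so the conclusion $D_{i_0,\sigma} = D_{0,\sigma}(\brho)$ survives.

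The genuine gap is in part (iii), in the case $[V':\tau']=2$. You correctly observe that this forces $\tau'$ to lie in $\soc_{\Gamma}(D_{i_0,\sigma})$, and that when $\ell(\sigma)\geq i_0+1$ this immediately gives $\tau'\cong\sigma$ via~\eqref{eq:soc-D-i0-sigma}. But you have not ruled out the case $\ell(\sigma)\leq i_0$ with $\tau'$ one of the weights $\tau$ in \eqref{eq:soc-D-i0-sigma} (all of which then satisfy $\ell(\tau)=i_0+1$ and $\tau\ne\sigma$). You invoke ``the constraint that the extension is realized inside a $\pi_2$-ambient'' to exclude this, but the statement of (iii) is a purely algebraic statement about nonsplit $\Gamma$-extensions of $D_{i_0,\sigma}$, with no $\pi_2$ anywhere in sight, so there is no such ambient to appeal to. The paper closes this case by an explicit combinatorial argument: using the 3-layer socle structure of $V'_{\tau'}$ (\cite[Cor.~2.32]{HuWang2}), locating a $\tau''\in W(\brho^\ss)$ in the middle layer, deducing $J_{\tau''}=J_{\tau'}\sqcup\{j''\}$ with $j''\notin J_{\brho}$ via Lemma~\ref{lem:W2}(ii), then exploiting $\tau'\notin W(\brho)$ to produce $j'\in J_{\tau'}\setminus J_{\brho}$ and building a weight $\tau'''$ whose nonsplit extension $\tau\text{---}\tau'''$ would have to occur in $\cW_{2,\wt\sigma}$ contrary to Lemma~\ref{lem:W2}(ii). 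None of this is in your plan, and it is the heart of the proof.

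A second, smaller weakness: the equivalence $[V':\tau']=1 \iff [\soc_{\Gamma}(V'):\tau']=0$, which is the organizing principle of the paper's proof of (iii) (and from which $\rad_{\Gamma}(V')\subset\cW_{2,\wt\sigma}$ follows cleanly via $V'\into\bigoplus_{\tau_0} I(\tau_0,\tau')$ and Lemma~\ref{lem:I-sigma-tau-modular}), is not stated or proved in your outline. Instead you gesture at ``the same $\Inj_\Gamma$-argument as in Corollary~\ref{cor:Ext-Di0}'' to get constituents of $\rad_{\Gamma}(V')$ into $W(\brho^\ss)$; this is not a substitute for the explicit reduction to $I(\tau_0,\tau')$ that the multiplicity-freeness of $D_{i_0,\sigma}$ buys you.
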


\begin{proof}
(i) Recall that $D_0(\brho) = \bigoplus_{\sigma \in W(\brho)} D_{0,\sigma}(\brho)$, compatibly with filtrations.
As $D_0(\brho)_{i_0+1}\into D_0(\brho^\ss)_{i_0+1}$, and the injection respects the direct sum decompositions \eqref{eq:D0-rho-i}, the direct sum decomposition claimed in (i) follows.

We now prove the last claim of (i).
If $\ell(\sigma) \ge i_0+1$, then $D_{0,\sigma}(\brho)_{\le i_0} = 0$, and $D_{0,\sigma}(\brho)_{(i_0+1)} = D_{0,\sigma}(\brho^\ss)_{(i_0+1)} = 0$ if $\ell(\sigma) > i_0+1$.
If $\ell(\sigma) = i_0+1$, then it follows from the definitions of $D_{0,\sigma}(\brho)_{\le i_0+1}$ and of $D_{0}(\brho)_{\le i_0+1}$ (cf.\ \cite[\S~\ref{bhhms4:sec:k_1-invar-subr}]{BHHMS4}), and from the inclusion $D_{0,\sigma}(\brho^\ss)\subset D_{0,\sigma}(\brho)$ (cf.~the introduction to \cite[\S~\ref{bhhms4:sec:finite-length-nonss}]{BHHMS4}) that $D_{0,\sigma}(\brho^\ss) \subset D_{0,\sigma}(\brho)_{\le i_0+1}$, so $D_{0,\sigma}(\brho)_{(i_0+1)} = D_{0,\sigma}(\brho^\ss)_{(i_0+1)} (= D_{0,\sigma}(\brho^\ss))$.

(ii) 
Consider the diagram {of $\Gamma$-representations}
\begin{equation*}
  \xymatrix{\cW \ar@{^{(}->}[r]\ar@{->>}[d] & D_0(\brho)\ar@{->>}[d] \\
  \cW_2 \ar@{^{(}->}[r] & D_0(\brho)/D_0(\brho)_{\le i_0} \ar@{^{(}->}[r] & D_{i_0}}
\end{equation*}
Each term in this diagram has a natural direct sum decomposition indexed by $\sigma \in W(\brho)$.
The top horizontal arrow preserves the decompositions because $D_0(\brho)$ is multiplicity free and as $\JH(\cW_\sigma) \subset \JH(D_{0,\sigma}(\brho))$ for all $\sigma \in W(\brho)$ (Lemma \ref{lem:calW}), which implies that the bottom horizontal map preserves the decompositions.
The property $\JH(D_{i_0,\sigma}/\mathcal{W}_{2,\wt\sigma}) \cap W(\brho^\ss) = \emptyset$ then follows from Lemma~\ref{lem:Ext-Di0}\ref{it:lem:Ext-Di0:4}.
By Lemma~\ref{lem:Ext-Di0}\ref{it:lem:Ext-Di0:3} and \ref{it:lem:Ext-Di0:4} we know that $\soc_{\Gamma}(\cW_2) = \soc_{\Gamma}(D_{i_0})$, and hence $\soc_{\Gamma}(\cW_{2,\wt\sigma}) = \soc_{\Gamma}(D_{i_0,\sigma})$ for each $\sigma$.
Formula~\eqref{eq:soc-D-i0-sigma} follows from Lemma~\ref{lem:W2}\ref{it:lem:W2:3}.

(iii)  
As $V$ is a nonsplit extension, we have $\soc_{\Gamma}(V) = \soc_{\Gamma}(D_{i_0,\sigma})$ hence $\JH(\soc_{\Gamma}(V)) \subset \JH(\soc_{\Gamma}(D_{i_0})) \subset W(\brho^\ss)$ by Lemma \ref{lem:Ext-Di0}\ref{it:lem:Ext-Di0:3}.
Since $D_{i_0,\sigma}$ is multiplicity free by Lemma \ref{lem:Ext-Di0}\ref{it:lem:Ext-Di0:1}, for each $\tau\in \JH(\soc_{\Gamma}(V'))$ there is a unique largest quotient $V'_{\tau}$ of $V'$ with socle $\tau$ (and cosocle $\tau'$).

We first show that
\begin{align}
\label{eq:implications}
  [\soc_{\Gamma}(V'):\tau'] = 0 \ \Leftrightarrow\ [V':\tau'] = 1 \ &\Rightarrow\  \rad_{\Gamma}(V') \subset \mathcal{W}_{2,\wt\sigma}\\
  &\Rightarrow\  \text{$\rad_{\Gamma}(V')$ is semisimple}.\nonumber
\end{align}
If $[\soc_{\Gamma}(V'):\tau'] = 0$ then $[V':\tau]=1$ for each $\tau \in \JH(\soc_{\Gamma}(V'))$ as $D_{i_0,\sigma}$ is multiplicity free, hence $V'_\tau \cong I(\tau,\tau')$ by \cite[Cor.~3.12]{BP}.
As $V'$ injects into $\bigoplus_{\tau\in\JH(\soc_{\Gamma}(V'))} V'_{\tau}$, we deduce that $\rad_{\Gamma}(V')$ injects into $\bigoplus_{\tau\in\JH(\soc_{\Gamma}(V'))} \rad_{\Gamma}(V'_{\tau})$, so $[\rad_{\Gamma}(V'):\tau'] = 0$ as $[\rad_{\Gamma}(V'_\tau):\tau'] = 0$ for all $\tau$, i.e.\ $[V':\tau'] = 1$.
The converse of the first implication in \eqref{eq:implications} is obvious.
Still assuming $[\soc_{\Gamma}(V'):\tau'] = 0$, by Lemma~\ref{lem:I-sigma-tau-modular} we obtain $\JH(V_{\tau}')\subset W(\brho^\ss)$ for all $\tau \in \JH(\soc_{\Gamma}(V'))$, thus $\JH(V')\subset W(\brho^\ss)$.
This implies that $\rad_{\Gamma}(V') \subset \mathcal{W}_{2,\wt\sigma}$ by Lemma~\ref{lem:Ext-Di0}(iv), so $\rad_{\Gamma}(V')$ is semisimple by Lemma \ref{lem:Ext1-W2}\ref{it:lem:Ext1-W2:2} (applied to the pushout of $0 \to \rad_{\Gamma}(V') \to V' \to \tau' \to 0$ along the injection $\rad_{\Gamma}(V') \subset \mathcal{W}_{2,\wt\sigma}$, which is still nonsplit, as it is contained in $V$ which is nonsplit by assumption). %

To conclude, it suffices to show that $\tau' \not\cong \sigma$ or $\ell(\sigma) \le i_0$ imply $[\soc_{\Gamma}(V'):\tau'] = 0$, hence $[V':\tau'] = 1$.
If $\tau' \not\cong \sigma$ and $\ell(\sigma) \ge i_0+1$, then $D_{i_0,\sigma} = D_{0,\sigma}(\brho)$ by part (i), so $\soc_{\Gamma}(V') = \sigma$ and we are done.
If $\ell(\sigma) \le i_0$, assume by contradiction that $\tau' \subset \soc_{\Gamma}(V')$.
Then $\tau' \subset \soc_{\Gamma}(V') \subset D_{i_0,\sigma}$, so $\tau' \subset \soc_{\Gamma}(\cW_{2,\wt\sigma})$ by part (ii), so $\tau' \notin W(\brho)$ and $\ell(\tau')=i_0+1$ by Lemma~\ref{lem:W2}\ref{it:lem:W2:3}. 
By \cite[Cor.\ 2.32]{HuWang2}, $V'_{\tau'}$ has $3$ socle layers, of the shape
\[\tau' \-- \soc_1(V'_{\tau'}) \-- \tau',\]
and $\soc_1(V'_{\tau'})$ contains at least one element of $W(\brho^\ss)$ by Lemmas~\ref{lem:soc-cosoc-sigma} and~\ref{lem:mu-pm}, say $\tau''$. 
Since $\tau',\tau''\in W(\brho^\ss)$ and $\tau'\--\tau''$ is a subquotient of $D_{i_0,\sigma}$ (hence of $\mathcal{W}_{2,\wt\sigma}$ by part (ii) and Lemma \ref{lem:Ext-Di0}\ref{it:lem:Ext-Di0:4}), we have $J_{\tau''}=J_{\tau'}\sqcup\{j''\}$ for some $j''\notin J_{\brho}$ by Lemma~\ref{lem:W2}(ii).
On the other hand, as $\tau'\notin W(\brho)$ there exists $j'\in J_{\tau'}\setminus J_{\brho}$ (so $j' \ne j''$).
Let $\tau\in W(\brho^\ss)$ be the element corresponding to $J_{\tau''}\setminus\{j'\}$.
Then $\ell(\tau)=i_0+1$ and the extension $\tau\-- \tau''$ occurs in $V'$ as a subquotient by Lemma~\ref{lem:W2}(ii). 
By part (ii) and Lemma \ref{lem:W2}\ref{it:lem:W2:3} we have $\tau\subset \soc_{\Gamma}(\mathcal{W}_{2,\wt{\sigma}}) \subset \soc_{\Gamma}(D_{i_0,\sigma})$.
Thus $\tau$ occurs in $\soc_{\Gamma}(V')$ (using that $D_{i_0,\sigma}$ containing $\rad_{\Gamma}(V')$ is multiplicity free).
As $\tau \ne \tau'$ we have $[V':\tau] = 1$ so $\tau$ occurs in $\soc_{\Gamma}(V')$. %
As $\tau \ne \tau'$, $V'$ has quotient $V_{\tau}'$ which is isomorphic to $I(\tau,\tau')$ (arguing as in the case $[\soc_{\Gamma}(V'):\tau'] = 0$). 
By Lemma \ref{lem:I-sigma-tau-modular} and as $J_{\tau} = (J_{\tau'} \sqcup \{j''\})\setminus \{j'\}$, it has length $4$ and a constituent $\tau''' \in W(\brho^\ss)$, with $J_{\tau'''}=J_{\tau}\setminus \{j''\}$ in $\soc_1(I(\tau,\tau'))$. 
But this implies that the nonsplit extension $\tau\-- \tau'''$ occurs in $D_{i_0,\sigma}$ (hence in $\mathcal{W}_{2,\wt\sigma}$), which is impossible by Lemma~\ref{lem:W2}(ii).
Hence $[\soc_{\Gamma}(V'):\tau'] = 0$, as desired.
\end{proof}

\begin{proof}[Proof of Theorem~\ref{thm:pi2-K1}]
If $i_0 = f$ this is trivial (both sides are zero).
If $i_0 = f-1$, then as in Remark \ref{rem:i0-f-1-or-f} we have $\pi_2 \cong \Ind_{B(K)}^{\GL_2(K)} (\chi_1 \otimes \chi_2\omega^{-1})$, where $\o\rho \cong \smatr{\chi_1} * 0 {\chi_2}$.
Theorem~\ref{thm:pi2-K1} boils down to showing $\Ind_I^K (\chi_1 \otimes \chi_2\omega^{-1}) \cong D_0(\brho^\ss)_f$.
This is true by \cite[Rk.~14.9(i)]{BP}.  
From now on we will assume that $i_0 \le f-2$. Furthermore, if $i_0=-1$ the result is just assumption \ref{it:assum-i} from Section~\ref{subsec:assumptions}, so we will assume $i_0\geq 0$ and so $f\geq 2$. 

Recall that $D_{i_0} \subset \pi_2^{K_1}$ and that both representations have the same $I_1$-invariants and the same $\GL_2(\cO_K)$-socle by Lemma~\ref{lem:Ext-Di0}(ii), (iii).

For a contradiction, assume $D_{i_0}\subsetneq \pi_2^{K_1}$ with $Q$ being the quotient. Pick a Serre weight $\tau'$ which injects into $Q$. Then we obtain an extension of $\Gamma$-representations
\[0\ra D_{i_0}\ra V\ra \tau'\ra0,\]
which is nonsplit by the above discussion, and moreover $V \subset \pi_2^{K_1}$. In other words, $V$ defines a nonzero element in $\Ext^1_{\Gamma}(\tau',D_{i_0})$. Hence, by Corollary \ref{cor:Ext-Di0}, we have $\tau'\in W(\brho^\ss)$. %

For $\sigma \in W(\brho)$ let $V_\sigma$ denote the quotient of $V$ defined as the pushout of $V \hookleftarrow D_{i_0} \onto D_{i_0,\sigma}$, so that $V \into \bigoplus_{\sigma \in W(\brho)} V_\sigma$ (recall $D_{i_0} = \bigoplus_{\sigma \in W(\brho)} D_{i_0,\sigma}$ from Lemma~\ref{lem:D=A+B}(i)).
(We caution the reader that, despite the notation, $\sigma \notin \JH(V_\sigma)$ in general.)
If there exists $\sigma' \in W(\brho)$ such that $\tau' \in \JH(D_{i_0,\sigma'})$ (there can be at most one such $\sigma'$ by Lemma~\ref{lem:Ext-Di0}(i)) and $[V_{\sigma'}] = 0$ in $\Ext^1_\Gamma(\tau',D_{i_0,\sigma'})$, then 
choose any splitting $s : \tau' \into V_{\sigma'}$ and let $V'$ be the image of any morphism $\Proj_\Gamma \tau' \to V$ whose composition with $V \onto V_{\sigma'}$ is the map $\Proj_\Gamma \tau' \onto \tau' \xrightarrow{s} V_{\sigma'}$.
Otherwise, let $V' \subset V$, $V' \not\subset D_{i_0}$ denote any subrepresentation with cosocle $\tau'$.
In either case, $V' \subset V$, $V' \not\subset D_{i_0}$ and $\cosoc_{\Gamma}(V') \cong \tau'$.
Moreover, $\rad_{\Gamma}(V') \subset \rad_{\Gamma}(V) \subset D_{i_0}$ is multiplicity free and
\begin{equation}\label{eq:soc-V'}
  \JH(\soc_{\Gamma}(V')) \subset \JH(\soc_{\Gamma}(D_{i_0})) \subset W(\brho^\ss)
\end{equation}
by Lemma~\ref{lem:Ext-Di0}(i) and (iii).

For any $\sigma \in W(\brho)$ let $V'_\sigma$ denote the image of $V' \into V \onto V_\sigma$, so $\cosoc_{\Gamma}(V'_\sigma) \cong \tau'$ and $V'_\sigma \not\subset D_{i_0,\sigma}$.
We show that 
\begin{equation}\label{eq:claim-V'-sigma}
  \text{$[V_\sigma] = 0$ in $\Ext^1_\Gamma(\tau',D_{i_0,\sigma})$ if and only if $V'_\sigma \cong \tau'$.}
\end{equation}
If $[V_\sigma] = 0$ and $\tau' \in \JH(D_{i_0,\sigma})$, then $V'_{\sigma} \cong \tau'$ by construction in the preceding paragraph.
If $[V_\sigma] = 0$ and $\tau' \notin \JH(D_{i_0,\sigma})$, then $V'_{\sigma} \cong \tau'$ (the unique subrepresentation of $D_{i_0,\sigma} \oplus \tau'$ with cosocle $\tau'$).
Conversely, the ``if'' direction of~\eqref{eq:claim-V'-sigma} is true, as $V'_\sigma \not\subset D_{i_0,\sigma}$ provides the splitting.

For later reference we show that
\begin{equation}\label{eq:rad-V'}
  \rad_{\Gamma}(V') \cong \bigoplus_{\sigma \in W(\brho)} \rad_{\Gamma}(V'_\sigma).
\end{equation}
By construction, $V' \into \bigoplus_{\sigma \in W(\brho)} V'_\sigma$, so $\rad_{\Gamma}(V') \into \bigoplus_{\sigma \in W(\brho)} \rad_{\Gamma}(V'_\sigma)$.
As $V'$ surjects onto $V'_\sigma$, we deduce $\rad_{\Gamma}(V')$ surjects onto $\rad_{\Gamma}(V'_\sigma)$ for all $\sigma$.
Since the $\rad_{\Gamma}(V'_\sigma) \subset D_{i_0,\sigma}$ for $\sigma \in W(\brho)$ have disjoint constituents, it follows that the injection $\rad_{\Gamma}(V') \into \bigoplus_{\sigma \in W(\brho)} \rad_{\Gamma}(V'_\sigma)$ is an isomorphism.

We now distinguish cases.

\textbf{Step 1.} Assume $[V':\tau'] = 1$. %
We will show that $\chi_{\tau'}$ contributes \emph{twice} to $\pi_2[\m^2]$, but this contradicts Corollary~\ref{cor:pi2-mn-mult-free}. 

We claim that $\rad_\Gamma(V')$ is semisimple.
By~\eqref{eq:rad-V'} it suffices to show that $\rad_\Gamma(V'_\sigma)$ is semisimple for any $\sigma \in W(\brho)$.
If $[V_{\sigma}] = 0$, then $V'_\sigma \cong \tau'$ by~\eqref{eq:claim-V'-sigma} and we are done.
If $[V_\sigma] \ne 0$ we deduce that $\rad_\Gamma(V'_\sigma)$ is semisimple by Lemma~\ref{lem:D=A+B}(iii) (using the assumption $[V':\tau'] = 1$).
This establishes the claim.

In the following three paragraphs we show that for any $\Gamma$-subrepresentation $V'' \subset \pi_2^{K_1}$ such that $\cosoc_\Gamma(V'') \cong \tau'$ and $\rad_\Gamma(V'')$ is semisimple there exist $\cJ\subset \{0,\dots,f-1\}$ and a map $\wt f : Q_\cJ=Q_\cJ(\tau') \to \pi_2^{K_1}$ %
such that $\Theta_\cJ=\Theta_\cJ(\tau')$ surjects onto $V''$, where $Q_\cJ(\tau')$ (resp.\ $\Theta_\cJ(\tau')$) was defined just before Lemma~\ref{lem:rep-QJ} (resp.\ Lemma~\ref{lem:Q-vs-Theta}).

Note that $\tau \in \JH(\rad_\Gamma(V''))$ implies $\Ext_\Gamma^1(\tau',\tau) \ne 0$ and $\tau \in W(\brho^\ss)$ (by~(\ref{eq:soc-V'}), as $\rad_\Gamma(V'') \subset \soc_\Gamma(V'')$).
\ \ For \ \ $* \in \{\pm\}$ \ \ define \ \ $\cJ^* \defeq  \{ 0 \le i \le f-1 : \mu_i^*(\tau') \into V'' \}$, \ \ so \ \ that \ $\rad_\Gamma(V'') \cong \bigoplus_{i \in \cJ^+} \mu_i^+(\tau') \oplus \bigoplus_{i \in \cJ^-} \mu_i^-(\tau')$ by \cite[Lemma~\ref{bhhms4:lem:ext1}]{BHHMS4}.
We note that $\cJ^+ \cap \cJ^- = \emptyset$ by %
Lemma~\ref{lem:mu-pm}.

Suppose that $i \in \cJ^+$ (or more generally that $\mu_i^+(\tau') \into \pi_2|_{\GL_2(\cO_K)}$).
We claim that the nonsplit extension $\mu_i^+(\tau') \-- \delta_i^+(\tau')$ embeds into $D_{i_0}$.
By Corollary~\ref{cor:subquot-K-soc} (applied to $\pi_2$) we have two cases.
If $\mu_i^+(\tau') \in W(\brho)$ and $\ell(\mu_i^+(\tau')) \ge i_0+1$, then $\mu_i^+(\tau') \-- \delta_i^+(\tau')$ embeds into $D_{0,\mu_i^+(\tau')}(\brho)$ (by Lemma~\ref{lem:mu-pm} and the definition of $D_{0,\mu_i^+(\tau')}(\brho)$), so into $D_{i_0}$.
If $\mu_i^+(\tau') \in W(\brho^\ss)\setminus W(\brho)$ and $\ell(\mu_i^+(\tau')) = i_0+1$, then $\mu_i^+(\tau') \-- \delta_i^+(\tau')$ similarly embeds into $D_{0,\mu_i^+(\tau')}(\brho^\ss) \subset D_0(\brho^\ss)_{i_0+1} \subset D_{i_0}$.
In either case we deduce from Lemma~\ref{lem:mu-i+}  %
(note $\mu_i^+(\tau') = \mu_i^-(\delta_i^+(\tau'))$) that
\begin{equation}\label{eq:chi-delta-tau-pi2}
  \chi_{\delta_i^+(\tau')} \in \JH(\pi_2^{I_1}) \quad\forall\ i \in \cJ^+,
\end{equation}
and that
\begin{equation}\label{eq:chi-delta-tau-pi}
  \chi_{\delta_i^+(\tau')} \in \JH(\pi^{I_1}) \text{\quad if $i \in \cJ^+$ and $\mu_i^+(\tau') \in W(\brho)$}.
\end{equation}

Let $\cJ \defeq  \cJ^+ \sqcup \cJ^-$.
Now we show that there exists a map $\wt f : Q_\cJ=Q_\cJ(\tau') \to \pi_2^{K_1}$ such that $\Theta_\cJ=\Theta_\cJ(\tau')$ surjects onto $V''$. 
Clearly there exists a surjection $f : \Theta_{\cJ}/\soc_{\wt\Gamma}(\Theta_{\cJ}) \onto V''$, which is unique up to scalar (both sides are multiplicity free, with cosocle $\tau'$, and we know all their constituents).
We show that $f : \Theta_\cJ \onto \Theta_{\cJ}/\soc_{\wt\Gamma}(\Theta_{\cJ})\onto V'' \subset \pi_2^{K_1}$ can be extended to a map $Q_\cJ \to \pi_2^{K_1}$.
By Lemma~\ref{lem:Q-vs-Theta} (with $\Psi_i = \Psi_i(\tau')$) it suffices to show that the map $f|_{\rad_{\wt\Gamma}(\Psi_i)}$ extends to $\Psi_i$ for all $i \in \cJ$.
(The extension is automatically unique, as $\delta_i^+(\tau') \not\into \pi_2|_{\GL_2(\cO_K)}$ by Lemma~\ref{lem:mu-pm}.)
Note that $f|_{\soc_{\wt\Gamma}(\Psi_i)} = 0$, as $\soc_{\wt\Gamma}(\Psi_i) \subset \rad_{\wt\Gamma}(\Psi_i) \subset \Theta_\cJ$ and $f(\soc_{\wt\Gamma}(\Theta_\cJ)) = 0$.
If $i \in \cJ^-$, $f|_{\rad_{\wt\Gamma}(\Psi_i)} = 0$, as $\mu_i^+(\tau') \not\into \pi_2|_{\GL_2(\cO_K)}$ by Corollary~\ref{cor:subquot-K-soc} and Lemma~\ref{lem:mu-pm}, so the extension to $\Psi_i$ is trivial.
If $i \in \cJ^+$, $f|_{\rad_{\wt\Gamma}(\Psi_i)}$ factors through an injection of $\mu_i^+(\tau')$ into $\pi_2^{K_1}$ and by above this extends to an injection of $\mu_i^+(\tau') \-- \delta_i^+(\tau')$ into $\pi_2^{K_1}$.

By Lemma~\ref{lem:rep-QJ} the map $\wt f$ gives rise to a homomorphism $\Ind_I^{\GL_2(\cO_K)} W_\cJ \onto Q_\cJ \to \pi_2^{K_1}$, and by Frobenius reciprocity we get a map $\o f : W_\cJ \to \pi_2^{K_1}|_I$. {Since $\o{f}$ factors through $W_{\cJ}\onto (W_{\cJ})_{K_1}$ which is killed by $\m^2$ by Lemma \ref{lem:rep-WJ}(ii), we get $\o{f}(W_{\cJ})\subset \pi_2[\m^2]$. In particular, as $\cosoc_I(W_{\cJ})\cong\chi_{\tau'}$, we see that $\chi_{\tau'}$ contributes to $\pi_2[\m^2]$.} %

We now specialize to $V'' \defeq  V'$, in which case $\o f(W_\cJ) \not\subset D_{i_0}$. If $\o f(W_\cJ) \subset \pi_2[\m] = \pi_2^{I_1}$, then $\o f(W_\cJ) \subset D_{i_0}^{I_1} \subset D_{i_0}$ by the first statement of Lemma~\ref{lem:Ext-Di0}(iii), contradiction. Since $\o f(W_\cJ)$ is $K_1$-invariant, by Lemma \ref{lem:rep-WJ}(ii)  we deduce that $\rad_I(\o f(W_\cJ)) \subset \bigoplus_{i \in \cJ} \chi_{\tau'} \alpha_i$. If $\chi_{\tau'} \alpha_i \into \o f(W_\cJ) \subset \pi_2$, then Frobenius reciprocity gives us a nonzero map $\Ind_I^{\GL_2(\cO_K)} \chi_{\tau'}\alpha_i \to \im(\wt f) \subset \pi_2$, and hence $[\im(\wt f) : \delta_i^+(\tau')] \ne 0$.
By construction, $[\im(\wt f) : \delta_i^+(\tau')] = 0$ for all $i \in \cJ^-$, so $\rad_I(\o f(W_\cJ)) \subset \bigoplus_{i \in \cJ^+} \chi_{\tau'} \alpha_i$. (In fact, we have $\rad_I(\o f(W_\cJ)) = \bigoplus_{i \in \cJ^+} \chi_{\tau'} \alpha_i$, but we will not need this.)

Choose now $i \in \cJ^+$ such that $\chi_{\tau'} \alpha_i \subset \o f(W_\cJ)$.
Let $\lambda' \in \D^\ss$ correspond to $\tau' \in W(\brho^\ss)$, and for $i \in \cJ^+$ we let $\lambda^{(i)} \in \D^\ss$ correspond to $\mu_i^+(\tau') \in W(\brho^\ss)$ and $\lambda'^{(i)} \in \P^\ss$ correspond to $\delta_i^+(\tau')$.
(See \S~\ref{sec:notation} for $\D^\ss$ and $\P^\ss$.)
More precisely, note that 
\begin{gather*}
  (\lambda'_{i},\lambda^{(i)}_{i},\lambda'^{(i)}_{i}) \in \Big\{ (x_{i},x_{i}+1,x_{i}+2), (p-3-x_{i},p-2-x_{i},p-1-x_{i}) \Big\},\\
  \lambda'_{i-1} = p-2-\lambda^{(i)}_{i-1} = \lambda'^{(i)}_{i-1}, \qquad \text{and}\\
  \lambda'_j = \lambda^{(i)}_j = \lambda'^{(i)}_j \qquad \forall\ j \notin \{i-1,i\}.  
\end{gather*}
In particular, it follows that
\begin{equation}\label{eq:lengths-lambda}
  J_{\lambda^{(i)}} = J_{\lambda'^{(i)}} = J_{\lambda'}\mathbin\Delta \{i\}, \qquad \ell(\lambda^{(i)}) = \ell(\lambda'^{(i)}) = \ell(\lambda')\pm 1.
\end{equation}

Recall that the $I$-representations $\Theta = \bigoplus_{\mu \in \P} \Theta_\mu$ and $\o\tau = \bigoplus_{\mu \in \P} \o\tau_\mu$ were defined just before Lemma~\ref{lem:submodule-structure} (taking $n = i_0+4 {(\le f+2)}$ and {noting that by assumption $\brho$ is $(2n-1)$-generic}).
Recall that in Step~2 of the proof of Theorem \ref{thm:gr-pi2} we showed that the natural map \eqref{eq:grad2:surj} is an isomorphism. %
Equivalently, $\Theta[\m^3] = \pi_2[\m^3]$, and hence $\pi_2[\m^2] = \bigoplus_{\mu \in \P} \Theta_\mu[\m^2]$.
Moreover, using the exact sequence $0 \ra \o\tau_\mu \cap \pi_1 \ra \o\tau_\mu \ra \Theta_\mu\ra 0$ and the dual of \eqref{eq:compare-fil} (recalling that $F_{\mu,-i} \Theta_\mu^\vee = \m^i \o\tau_\mu^\vee \cap \Theta_\mu^\vee$, cf.\ (\ref{eq:filtrations})), we see that
\begin{equation*}
  \Theta_\mu[\m^i] = \frac{\o\tau_\mu[\m^{i+d_\mu}]}{(\o\tau_\mu \cap \pi_1)[\m^{i+d_\mu}]}
\end{equation*}
for all $i\geq 0$ (recall that $d_\mu = \max\{i_0+1-\ell(\mu),0\}$).
By ~(\ref{eq:chi-delta-tau-pi2}) we have $\chi_{\tau'} \alpha_i = \chi_{\lambda'^{(i)}} \in \JH(\pi_2^{I_1}) = \JH(\pi_2[\m]) = \JH(\Theta[\m])$, and moreover it occurs in $\o\tau_{\mu^{(i)}}$, where $\mu^{(i)} \in \P$ is obtained from $\lambda'^{(i)}$ as $\mu$ is obtained from $\lambda$ in~\cite[eq.~\eqref{bhhms4:eq:mu_j}]{BHHMS4}.
As the $I$-representation $\o\tau$ is multiplicity free (\cite[Cor.~\ref{bhhms4:cor:tau-multfree}(ii)]{BHHMS4}) we deduce that $\chi_{\tau'} \alpha_i$ occurs in $\Theta_{\mu^{(i)}}[\m]$.
Since the nonsplit extension $\chi_{\tau'}\alpha_i \-- \chi_{\tau'}$ is a quotient of $\o f(W_\cJ) \subset \pi_2[\m^2] = \Theta[\m^2]$, it follows again by multiplicity freeness of $\o\tau$ that $\chi_{\tau'}$ occurs in the direct summand $\Theta_{\mu^{(i)}}[\m^2]$ of $\Theta[\m^2]$ as well.
Dually, $\chi_{\tau'}^{-1}$ occurs in $\gr_{\m,-1}(\Theta_{\mu^{(i)}}^\vee)$.

Define $J_1^{(i)}$, $J_2^{(i)}$ exactly as in~\cite[eq.~\eqref{bhhms4:eq:J1:J2}]{BHHMS4} (with $\lambda'^{(i)}$ instead of $\lambda$) and let
\begin{align*}
  \wt J_1^{(i)} &\defeq  \{j \notin J_{\brho} : \mu^{(i)}_j = p-1-x_j \} = \{j \notin J_{\brho} : \lambda'^{(i)}_j \in \{ p-3-x_j,p-1-x_j \}\}, \\
  \wt J_2^{(i)} &\defeq  \{j \notin J_{\brho} : \mu^{(i)}_j = x_j \} = \{j \notin J_{\brho} : \lambda'^{(i)}_j \in \{ x_j,x_j+2 \}\},
\end{align*}
so $J_1^{(i)} \subset \wt J_1^{(i)}$, $J_2^{(i)} \subset \wt J_2^{(i)}$, $\wt J_1^{(i)} \cap \wt J_2^{(i)} = \emptyset$.
Note that $\ell(\mu^{(i)}) = \ell(\lambda'^{(i)})-|J_1^{(i)}|-|J_2^{(i)}| = \ell(\lambda^{(i)})-|J_1^{(i)}|-|J_2^{(i)}|$, where the first equality was noted just after~\cite[eq.~\eqref{bhhms4:eq:mu_j}]{BHHMS4}.
We claim that
\begin{equation}\label{eq:d-mu-i}
  d_{\mu^{(i)}} = |J_1^{(i)}|+|J_2^{(i)}|.
\end{equation}
If $\mu_i^+(\tau') \notin W(\brho)$, then $\ell(\lambda^{(i)}) = \ell(\mu_i^+(\tau')) = i_0+1$.
If $\mu_i^+(\tau') \in W(\brho)$ and $\ell(\lambda^{(i)}) \ge i_0+1$, then by above $\lambda'^{(i)} \in \P$ (as $\chi_{\delta_i^+(\tau')} \in \JH(\pi^{I_1})$ by~(\ref{eq:chi-delta-tau-pi})), so $\mu^{(i)} = \lambda'^{(i)}$ and $J_1^{(i)} = J_2^{(i)} = \emptyset$.
The claim follows.

As noted just after~\cite[eq.~\eqref{bhhms4:eq:mu_j}]{BHHMS4} we have $\chi_{\lambda'^{(i)}} = \chi_{\mu^{(i)}} \prod_{j \in J_1^{(i)}} \alpha_j^{-1} \prod_{j \in J_2^{(i)}} \alpha_j$, or equivalently (using $\chi_{\lambda'} \alpha_i = \chi_{\lambda'^{(i)}}$):
\begin{equation}\label{eq:chi-lambda'}
  \chi_{\lambda'}^{-1} = \chi_{\mu^{(i)}}^{-1} \alpha_i \prod_{j \in J_1^{(i)}} \alpha_j \prod_{j \in J_2^{(i)}} \alpha_j^{-1}.
\end{equation}
Recall from~\eqref{eq:N_2-lambda'} and \eqref{eq:compare-fil} that $\gr_{\m,-1}(\Theta_{\mu^{(i)}}^\vee) \cong \chi_{\mu^{(i)}}^{-1} \otimes \gr_{-1-d_{\mu^{(i)}}}(\mathfrak{a}_1^{i_0}(\mu^{(i)})/\mathfrak{a}(\mu^{(i)}))$ {(using $-1-d_{\mu^{(i)}} > -n$)}, and that
\begin{equation}\label{eq:a1-mod-a}
  \frac{\mathfrak{a}_1^{i_0}(\mu^{(i)})}{\mathfrak{a}(\mu^{(i)})} \cong \frac{I(\wt J_1^{(i)},\wt J_2^{(i)},d_{\mu^{(i)}})}{I(\wt J_1^{(i)},\wt J_2^{(i)},d_{\mu^{(i)}}) \cap \mathfrak{a}(\mu^{(i)})}
\end{equation}
by \ \cite[eq.~\eqref{bhhms4:eq:fa-1}]{BHHMS4}.
\ As \ $\chi_{\lambda'}^{-1}$ \ occurs \ in \ $\gr_{\m,-1}(\Theta_{\mu^{(i)}}^\vee)$, \ we \ deduce \ that \ there \ exists \ a \ monomial \ $m \in I(\wt J_1^{(i)},\wt J_2^{(i)},d_{\mu^{(i)}})$, $m \notin \mathfrak{a}(\mu^{(i)})$ of degree $d_{\mu^{(i)}}+1$ that has $H$-eigencharacter $\chi_{\lambda'}^{-1}\chi_{\mu^{(i)}}$.
By~(\ref{eq:chi-lambda'}) the monomial 
\begin{equation*}
m' \defeq 
\begin{cases}
  y_{i} \prod_{j \in J_1^{(i)}} y_j \prod_{j \in J_2^{(i)}} z_j & \text{if $i \notin J_2^{(i)}$},\\
  \prod_{j \in J_1^{(i)}} y_j \prod_{j \in J_2^{(i)} \setminus \{i\}} z_j & \text{if $i \in J_2^{(i)}$}
\end{cases}
\end{equation*}
has the same $H$-eigencharacter.
Since in addition $m$ and $m'$ have degree at most 2 in each variable (by \cite[Def.~\ref{bhhms4:def:IJideals}]{BHHMS4}
 in the first case) and are not multiples of $y_jz_j$ for any $j$ (as $m \notin \mathfrak{a}(\mu^{(i)})$ in the first case), we deduce using $p-1 > 4$ that $m = m'$.
Hence $m'$ has degree $d_{\mu^{(i)}}+1$. 
Using (\ref{eq:d-mu-i}) it follows that $i \notin J_2^{(i)}$ and $m = y_{i} \prod_{j \in J_1^{(i)}} y_j \prod_{j \in J_2^{(i)}} z_j$.
If $i \in J_{\brho}$, then $\mu^{(i)}_{i} = \lambda'^{(i)}_{i} \in \{x_{i}+2,p-1-x_{i}\}$ 
(by \cite[eq.~\eqref{bhhms4:eq:J1:J2}, eq.~\eqref{bhhms4:eq:mu_j}]{BHHMS4}) and hence $t_{i} = y_{i}$ (where $t_i = t_i(\mu^{(i)})$ is defined in \eqref{eq:id:al}), so $m$ is a multiple of $t_i \in \mathfrak{a}(\mu^{(i)})$, contradiction.
Hence $i \in J_{\brho}^c$, and we deduce moreover that $\lambda'^{(i)}_{i} \ne x_{i}+2$ from $i \notin J_2^{(i)}$ and \cite[eq.~\eqref{bhhms4:eq:J1:J2}]{BHHMS4}.

We are left with the case where $\lambda'^{(i)}_{i} = p-1-x_{i}$ and $i \in J_{\brho}^c$.
In this case %
$J_{\tau'} = J_{\mu_i^+(\tau')} \sqcup \{i\}$ %
by~(\ref{eq:lengths-lambda}) since $\lambda'_i=p-3-x_i$, hence $\tau' \notin W(\brho)$ (as $i \in J_{\brho}^c$).
Moreover, as $\mu_i^+(\tau') \into \pi_2|_{\GL_2(\cO_K)}$, we have $\ell(\tau') = \ell(\mu_i^+(\tau'))+1 > i_0+1$, so $\tau' \in \JH(\cW_2)$, so $\tau' \in \JH(\cW_{2,\wt\sigma})$, where $\sigma \in W(\brho)$ is determined by $J_\sigma = J_{\brho} \cap J_{\tau'}$ (Lemma~\ref{lem:W2}(i)).
We claim that the unique subrepresentation $W'$ of $\cW_{2,\wt\sigma} \subset \cW_2 \subset D_{i_0}$ having cosocle $\tau'$ has semisimple radical.
We have two cases by Corollary~\ref{cor:subquot-K-soc}.
If $\mu_i^+(\tau') \in W(\brho)$ and $\ell(\mu_i^+(\tau')) \ge i_0+1$, then $\sigma \cong \mu_i^+(\tau')$ and $W' \cong (\mu_i^+(\tau') \-- \tau')$ by Lemma \ref{lem:W2}\ref{it:lem:W2:1}, \ref{it:lem:W2:3}.
If $\mu_i^+(\tau') \in W(\brho^\ss)$ and $\ell(\mu_i^+(\tau')) = i_0+1$, then $\ell(\sigma) = |J_{\brho} \cap J_{\tau'}| \le |J_{\mu_i^+(\tau')}| = i_0+1$ (using $i \in J_{\brho}^c$) and $\ell(\tau') = i_0+2$, so $\rad_\Gamma(W')$ is semisimple by Lemma \ref{lem:W2}\ref{it:lem:W2:2}, \ref{it:lem:W2:3}, proving the claim.
By the third paragraph of Step~1 (applied with $V'' = V'$, resp.\ $W'$), we obtain two morphisms $Q_\cS \to \pi_2^{K_1}$ that are linearly independent, as $V' \not\subset D_{i_0}$ and $W' \subset D_{i_0}$.
{(Note that the subset $\cJ \subset \cS$ may differ in the two cases, but $Q_\cS$ surjects onto any $Q_\cJ$, and likewise $W_\cS$ surjects onto any $W_\cJ$.)}
Since we showed that the maps $\o f: W_\cS \to \pi_2$ corresponding to $\Ind_I^{\GL_2(\cO_K)} W_\cS \onto Q_\cS \to \pi_2^{K_1}$ have images contained in $\pi_2[\m^2]$, we deduce that $\chi_{\tau'}$ contributes \emph{twice} to $\pi_2[\m^2]$, as we wanted to show. %

\textbf{Step 2.} Assume $[V':\tau'] = 2$. %
We will show that $\chi_{\tau'}$ contributes \emph{twice} to $\pi_2[\m^3]$, but this contradicts Corollary~\ref{cor:pi2-mn-mult-free}.
 (Note that $2n+2i_0+1 \le 6+2(f-2)+1 = 2f+3$ when $n = 3$ and as we assumed $i_0 \le f-2$.)

From~\eqref{eq:rad-V'} and since $\rad_{\Gamma}(V') \subset \rad_{\Gamma}(V) \subset D_{i_0}$ is multiplicity free we deduce that $\tau' \in \JH(\rad_{\Gamma}(V'_\sigma))$ for a unique $\sigma \in W(\brho)$.
By \eqref{eq:claim-V'-sigma} and the last statement of Lemma~\ref{lem:D=A+B}(iii) (applied to $V'_\sigma \subset V_\sigma$) it follows that $\tau' \cong \sigma \in W(\brho)$ and $\ell(\tau') \ge i_0+1$, hence $[V'_{\tau'}:\tau'] = 2$.
Using $\rad_{\Gamma}(V'_{\tau'}) \subset D_{i_0,\tau'}$ and $\ell(\tau') \ge i_0+1$ it follows moreover from Lemma \ref{lem:D=A+B}\ref{it:D=A+B:1} that $\soc_{\Gamma}(V'_{\tau'}) = \tau'$, and hence $\tau' \into \rad_{\Gamma}(V')$ by~\eqref{eq:rad-V'}.
The natural surjection $\rad_{\Gamma}(V') \onto \rad_{\Gamma}(V'/\tau')$ induces an isomorphism $\rad_{\Gamma}(V')/\tau' \congto \rad_{\Gamma}(V'/\tau')$.
We apply again~\eqref{eq:rad-V'} to deduce that
\begin{equation}\label{eq:rad-V'/tau'}
  \rad_{\Gamma}(V')/\tau' \cong \bigoplus_{\sigma \ne \tau'} \rad_{\Gamma}(V'_\sigma) \oplus (\rad_{\Gamma}(V'_{\tau'})/\tau').
\end{equation}
As in the second paragraph of Step~1, $\rad_{\Gamma}(V'_\sigma)$ is semisimple for all $\sigma \ne \tau'$, and $\rad_{\Gamma}(V'_{\tau'})/\tau'$ is semisimple by Lemma~\ref{lem:soc-cosoc-sigma}.
In conclusion, $\rad_{\Gamma}(V'/\tau')$ is semisimple. %
As $\cosoc_{\Gamma}(V'/\tau') \cong\tau'$ we can write, as in the fourth paragraph of Step~1,
\begin{equation}\label{eq:mu-pm-J}
  \bigoplus_{\sigma \ne \tau'} \rad_{\Gamma}(V'_\sigma) \cong \bigoplus_{i \in \cJ^+} \mu_i^+(\tau') \oplus \bigoplus_{i \in \cJ^-} \mu_i^-(\tau')
\end{equation}
and (using moreover Lemma~\ref{lem:soc-cosoc-sigma}),
\begin{equation}\label{eq:mu-pm-J'}
  \rad_{\Gamma}(V'_{\tau'})/\tau' \cong \bigoplus_{i \in \cJ'} (\mu_i^+(\tau') \oplus \mu_i^-(\tau'))
\end{equation}
for some subsets $\cJ^+$, $\cJ^-$, $\cJ'$ of $\{0,1,\dots,f-1\}$.
For $i \in \cJ^*$, $* \in \{\pm\}$ we have $\mu_i^*(\tau') \into V' \into \pi_2|_{\GL_2(\cO_K)}$ by~\eqref{eq:rad-V'}.
Therefore $\cJ^+ \cap \cJ^- = \emptyset$ by Corollary~\ref{cor:subquot-K-soc} and Lemma~\ref{lem:mu-pm}, and note that $(\cJ^+ \sqcup \cJ^-) \cap \cJ' = \emptyset$ as $\rad_{\Gamma}(V') \subset D_{i_0}$ is multiplicity free.
Let $\cJ \defeq  \cJ' \sqcup \cJ^+ \sqcup \cJ^-$, and write $Q_\cJ=Q_\cJ(\tau')$, $\Theta_\cJ=\Theta_\cJ(\tau')$, $\Psi_i = \Psi_i(\tau')$ as in Step~1.

We show that $\dim_\F \Hom_{\GL_2(\cO_K)}(\Theta_{\cJ},\pi_2|_{\GL_2(\cO_K)}) \ge 2$. %
It suffices to show that \[\dim_\F \Hom_{\wt\Gamma}(\Theta_{\cJ},V') \geq 2.\] 
As we have a trivial map $\Theta_{\cJ} \onto \tau' \into V'$, it suffices to find a map whose image is not irreducible.
We follow the argument in \cite[Cor.\ 3.14]{HuWang2}.
By~\eqref{eq:rad-V'/tau'}, \eqref{eq:mu-pm-J}, and \eqref{eq:mu-pm-J'} there exists a surjection $f : \Theta_{\cJ}/\soc_{\wt\Gamma}(\Theta_{\cJ}) \onto V'/\tau'$ (just as in the sixth paragraph of Step~1).
The same proof as in \cite[Cor.\ 3.13]{HuWang2} shows that $\Ext^1_{\wt\Gamma}(\Theta_{\cJ},\tau') = 0$, so we can lift $f$ to $\wt f : \Theta_{\cJ} \to V'$ whose image is not contained in $\tau' \subset V'$, as desired.

We show that the restriction map
\begin{equation}\label{eq:Q-Theta}
  \Hom_{\GL_2(\cO_K)}(Q_{\cJ},\pi_2|_{\GL_2(\cO_K)}) \to \Hom_{\GL_2(\cO_K)}(\Theta_{\cJ},\pi_2|_{\GL_2(\cO_K)})
\end{equation}
is surjective (even an isomorphism).
By Lemma~\ref{lem:Q-vs-Theta} it suffices to show that the restriction map
\begin{equation}\label{eq:Theta-i}
  \Hom_{\GL_2(\cO_K)}(\Psi_i,\pi_2|_{\GL_2(\cO_K)}) \to \Hom_{\GL_2(\cO_K)}(\rad_{\wt\Gamma}(\Psi_i),\pi_2|_{\GL_2(\cO_K)})
\end{equation}
is an isomorphism for any $i \in \cJ$.
The map~\eqref{eq:Theta-i} is injective, as $\Psi_i \cong (\tau' \-- \mu_i^+(\tau') \-- \delta_i^+(\tau'))$ and $\delta_i^+(\tau') \not\into \pi_2|_{\GL_2(\cO_K)}$ by Lemma~\ref{lem:mu-pm}, so it suffices to show that the map~\eqref{eq:Theta-i} is surjective.
By  Lemma~\ref{lem:mu-i+}, $\rad_{\wt\Gamma}(\Psi_i) \cong (\tau' \-- \mu_i^+(\tau'))$ is a quotient of $\Ind_I^{\GL_2(\cO_K)} \chi_{\mu_i^+(\tau')}$  (note that $\mu_i^-(\mu_i^+(\tau')) = \tau'$ as $f\geq 2$), so by Lemma~\ref{lem:Ext-Di0}(iii) (and Frobenius reciprocity) we see that the right-hand side of~\eqref{eq:Theta-i} is at most 1-dimensional.
It thus suffices to show that $\Hom_{\GL_2(\cO_K)}(\Psi_i,\pi_2|_{\GL_2(\cO_K)}) \ne 0$ for all $i \in \cJ$.

Suppose $i \in \cJ$.
If $\mu_i^+(\tau') \into \pi_2|_{\GL_2(\cO_K)}$, then the nonsplit extension $\mu_i^+(\tau') \-- \delta_i^+(\tau')$ embeds into $D_{i_0} \subset \pi_2^{K_1}$ exactly as in the fifth paragraph of Step~1, so we are done.
Otherwise, $i \in \cJ^-$ and hence $\mu_i^-(\tau') \in W(\brho^\ss)$.
As $\tau' \in W(\brho)$ and $\ell(\tau') \ge i_0+1$ (see the second paragraph of Step~2), the uniserial representation $\Psi_i = (\tau' \-- \mu_i^+(\tau') \-- \delta_i^+(\tau'))$ injects into $\wt D_0(\brho)$ %
(by the definition of $\wt D_0(\brho)$ in \S~\ref{sec:preliminaries}, noting that $\mu_i^+(\tau'), \delta_i^+(\tau') \notin W(\brho)$ by Lemma~\ref{lem:mu-pm}) 
and even into $\wt D_0(\brho)/\wt D_0(\brho)_{\le i_0} \into \pi_2|_{\GL_2(\cO_K)}$, where this last injection comes from Corollary~\ref{cor:pi1-mK1-squared}.
We have proved that~\eqref{eq:Q-Theta} is an isomorphism.

As $\Ind_I^{\GL_2(\cO_K)} W_{\cJ}$ surjects onto $Q_{\cJ}$ by Lemma~\ref{lem:rep-QJ}, we deduce from the surjectivity of~\eqref{eq:Q-Theta} and from $\dim_\F \Hom_{\GL_2(\cO_K)}(\Theta_{\cJ},\pi_2|_{\GL_2(\cO_K)}) \ge 2$ that
\begin{equation*}
  \dim_\F \Hom_I(W_{\cJ},\pi_2|_I) = \dim_\F \Hom_{\GL_2(\cO_K)}(\Ind_I^{\GL_2(\cO_K)} W_{\cJ},\pi_2|_{\GL_2(\cO_K)}) \ge 2.
\end{equation*}
As $\cosoc_I(W_{\cJ}) = \chi_{\tau'}$ and $W_\cJ$ is killed by $\m^3$, it follows that $\chi_{\tau'}$ occurs at least twice in $\pi_2[\m^3]$, as we wanted to show.
\end{proof}

\begin{cor}\label{cor:K1-subquot}
  {Assume that $\brho$ is $\max\{9,2f+3\}$-generic.}
  Suppose $\pi' = \pi_1'/\pi_1$ is any nonzero subquotient, where $\pi_1 \subsetneq \pi_1' \subset \pi$.
  Let $i_0 \defeq  i_0(\pi_1)$, $i_0' \defeq  i_0(\pi_1')$, so $-1 \le i_0 < i_0' \le f$.
  Then 
   \begin{equation*}
     \pi'^{K_1} \cong D_0(\brho^\ss)_{i_0+1} \oplus_{D_0(\brho)_{i_0+1}} (D_0(\brho)_{\le i_0'}/D_0(\brho)_{\le i_0}).
   \end{equation*}
\end{cor}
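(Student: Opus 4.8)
The strategy is to reduce Corollary~\ref{cor:K1-subquot} to Theorem~\ref{thm:pi2-K1}, which handles the case of quotients $\pi_2 = \pi/\pi_1$ (i.e.\ $i_0' = f$).
Write $\pi' = \pi_1'/\pi_1$ and set $\pi_2 \defeq \pi/\pi_1$, $\pi_2' \defeq \pi/\pi_1'$, so that there is a short exact sequence $0 \to \pi' \to \pi_2 \to \pi_2' \to 0$.
By Theorem~\ref{thm:pi2-K1} we know that
\[
\pi_2^{K_1} \cong D_0(\brho^\ss)_{i_0+1} \oplus_{D_0(\brho)_{i_0+1}} (D_0(\brho)/D_0(\brho)_{\le i_0}), \qquad
\pi_2'^{K_1} \cong D_0(\brho^\ss)_{i_0'+1} \oplus_{D_0(\brho)_{i_0'+1}} (D_0(\brho)/D_0(\brho)_{\le i_0'}),
\]
so the plan is to compute $\pi'^{K_1}$ as the kernel of the induced $\GL_2(\cO_K)$-map $\pi_2^{K_1} \to \pi_2'^{K_1}$ and identify it with $D_0(\brho^\ss)_{i_0+1} \oplus_{D_0(\brho)_{i_0+1}} (D_0(\brho)_{\le i_0'}/D_0(\brho)_{\le i_0})$.
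Here one needs to be slightly careful, since passing to $K_1$-invariants is only left exact; the sequence $0 \to \pi'^{K_1} \to \pi_2^{K_1} \to \pi_2'^{K_1}$ is exact, but the last map need not be surjective.

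First I would pin down the map $\pi_2^{K_1} \to \pi_2'^{K_1}$.
Recall from \cite[Thm.~\ref{bhhms4:thm:conj2}]{BHHMS4} that $\pi_1^{K_1} = D_0(\brho)_{\le i_0}$ and $\pi_1'^{K_1} = D_0(\brho)_{\le i_0'}$ inside $\pi^{K_1} = D_0(\brho)$, and that the subquotient $\pi_1'^{K_1}/\pi_1^{K_1} = D_0(\brho)_{\le i_0'}/D_0(\brho)_{\le i_0}$ injects into $\pi'^{K_1}$.
The ``non-Serre-weight'' part $D_0(\brho)/D_0(\brho)_{\le i_0}$ of $\pi_2^{K_1}$ comes from $\pi^{K_1}/\pi_1^{K_1}$, and under $\pi_2 \onto \pi_2'$ this maps onto the corresponding part $D_0(\brho)/D_0(\brho)_{\le i_0'}$ of $\pi_2'^{K_1}$, with kernel exactly $D_0(\brho)_{\le i_0'}/D_0(\brho)_{\le i_0}$.
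So the only subtle point is the behaviour of the ``extra'' piece $D_0(\brho^\ss)_{i_0+1}/D_0(\brho)_{i_0+1}$ of $\pi_2^{K_1}$: I must show it maps to zero in $\pi_2'^{K_1}$.
By Corollary~\ref{cor:subquot-K-soc} (or Proposition~\ref{prop:pi2K1}) the relevant Serre weights of $W(\brho^\ss)\setminus W(\brho)$ with $\ell = i_0+1$ inject into $\pi'$, hence lie in the kernel of $\pi_2^{K_1}\to\pi_2'^{K_1}$; combined with the structure of $D_0(\brho^\ss)_{i_0+1}$ and the fact (from \cite[eq.~\eqref{bhhms4:eq:soc-i}]{BHHMS4}) that $D_0(\brho^\ss)_{i_0+1}/D_0(\brho)_{i_0+1}$ has no constituents in $W(\brho^\ss)$, one deduces that this whole amalgamated piece dies in $\pi_2'^{K_1}$.
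Thus $\pi'^{K_1} \supseteq D_0(\brho^\ss)_{i_0+1} \oplus_{D_0(\brho)_{i_0+1}} (D_0(\brho)_{\le i_0'}/D_0(\brho)_{\le i_0})$.

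For the reverse inclusion (equality), I would compare $I_1$-invariants and $\GL_2(\cO_K)$-socles, exactly in the spirit of Lemma~\ref{lem:Ext-Di0}: the right-hand side $D \defeq D_0(\brho^\ss)_{i_0+1} \oplus_{D_0(\brho)_{i_0+1}} (D_0(\brho)_{\le i_0'}/D_0(\brho)_{\le i_0})$ is multiplicity free, it embeds into $\pi'^{K_1}$ by the previous paragraph, and by Corollary~\ref{cor:subquot-I1-invt} and Corollary~\ref{cor:subquot-K-soc} one checks that $D^{I_1} \cong \pi'^{I_1}$ and $\soc_{\GL_2(\cO_K)}(D) \cong \soc_{\GL_2(\cO_K)}(\pi')$.
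Then if $D \subsetneq \pi'^{K_1}$ one could pick a Serre weight $\tau'$ injecting into the quotient, producing a nonsplit extension $0 \to D \to V \to \tau' \to 0$ with $V \subset \pi'^{K_1}$, and run the argument of the proof of Theorem~\ref{thm:pi2-K1}: namely $\pi'^{K_1} \subset \pi_2^{K_1}$, and one already knows $\pi_2^{K_1}$ exactly, so such a nonsplit extension inside $\pi_2^{K_1}$ is impossible once $D^{I_1} = \pi_2^{K_1,I_1}$ fails or once one invokes the already-established structure of $\pi_2^{K_1}$.
Concretely, $V$ would give a subrepresentation of $\pi_2^{K_1}$ strictly containing $D$ and contained in the preimage of $D_0(\brho)_{\le i_0'}/D_0(\brho)_{\le i_0}$-plus-amalgam; but Theorem~\ref{thm:pi2-K1} pins down that preimage, giving a contradiction.

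The main obstacle I anticipate is the bookkeeping in the second paragraph: verifying cleanly that the amalgamated summand $D_0(\brho^\ss)_{i_0+1}/D_0(\brho)_{i_0+1}$ of $\pi_2^{K_1}$ is killed by the map to $\pi_2'^{K_1}$, and dually that nothing extra survives — essentially a diagram chase using $\pi'^{K_1} = \ker(\pi_2^{K_1}\to\pi_2'^{K_1})$ together with the precise filtration-compatibility of the maps $D_0(\brho)_{\le j} \subset D_0(\brho)$ and the decompositions $D_0(\brho^\ss)_j = \bigoplus_\sigma D_{0,\sigma}(\brho^\ss)_{(j)}$ (as set up just before Lemma~\ref{lem:D=A+B}).
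Once the compatibility of all these inclusions with the direct-sum decompositions indexed by $W(\brho)$ is in place, the identification of $\ker(\pi_2^{K_1}\to\pi_2'^{K_1})$ with the claimed amalgam is forced, and the equality $\pi'^{K_1} = D$ follows from the socle/$I_1$-invariant comparison as above.
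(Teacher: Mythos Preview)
Your approach is essentially the same as the paper's: identify $\pi'^{K_1}$ with the kernel of the natural map $\theta:\pi_2^{K_1}\to\pi_2'^{K_1}$, use Theorem~\ref{thm:pi2-K1} to identify both sides, then compute $\ker(\theta)$. The paper's argument for why $D_0(\brho^\ss)_{i_0+1}\subset\ker(\theta)$ is slightly cleaner than yours (any nonzero image would have nonzero $\GL_2(\cO_K)$-socle inside $\soc_{\GL_2(\cO_K)}(\pi_2')$, but by Corollary~\ref{cor:subquot-K-soc} the latter contains only weights with $\ell\ge i_0'+1$, while every $W(\brho^\ss)$-constituent of $D_0(\brho^\ss)_{i_0+1}$ has $\ell=i_0+1$), but your version works too.

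The one thing to flag: your third paragraph (reverse inclusion via $I_1$-invariants and socles, mimicking the proof of Theorem~\ref{thm:pi2-K1}) is unnecessary. You already have $\pi'^{K_1}=\ker(\theta)$ \emph{exactly}, by left exactness of $K_1$-invariants. Your second paragraph in fact computes $\ker(\theta)$ completely: you identify $\theta|_{D_0(\brho)/D_0(\brho)_{\le i_0}}$ as the natural surjection onto $D_0(\brho)/D_0(\brho)_{\le i_0'}$ (with kernel $D_0(\brho)_{\le i_0'}/D_0(\brho)_{\le i_0}$), and you show that the amalgamated piece $D_0(\brho^\ss)_{i_0+1}$ is killed. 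Since $D_{i_0}$ is generated by these two subrepresentations intersecting in $D_0(\brho)_{i_0+1}$, this already gives $\ker(\theta)=D_0(\brho^\ss)_{i_0+1}\oplus_{D_0(\brho)_{i_0+1}}(D_0(\brho)_{\le i_0'}/D_0(\brho)_{\le i_0})$, hence equality with $\pi'^{K_1}$. You wrote ``$\supseteq$'' where you had in fact proved ``$=$''; drop the extra paragraph.
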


\begin{proof}
  Note that $\pi'^{K_1}$ is the kernel of the natural map $(\pi/\pi_1)^{K_1} \to (\pi/\pi_1')^{K_1}$.
  Let us write again $D_{i_0} \defeq  D_0(\brho^\ss)_{i_0+1} \oplus_{D_0(\brho)_{i_0+1}} (D_0(\brho)/D_0(\brho)_{\le i_0})$.
  By Theorem~\ref{thm:pi2-K1} the above natural map is identified with a map
  \begin{equation*}
    \theta : D_{i_0} \to D_{i'_0}.
  \end{equation*}
  Let $\theta_0 \defeq  \theta|_{D_0(\brho)/D_0(\brho)_{\le i_0}}$.
  We have $\soc_{\GL_2(\cO_K)}(\pi/\pi_1') \subset \{ \sigma \in W(\brho^\ss): \ell(\sigma) \ge i_0'+1 \}$ by Corollary~\ref{cor:subquot-K-soc}, and $\JH(D_0(\brho^\ss)_{i_0+1}) \cap W(\brho^\ss)$ is disjoint from that set since $i_0 < i_0'$, so $D_0(\brho^\ss)_{i_0+1} \subset \ker(\theta)$ and hence $\ker(\theta) = D_0(\brho^\ss)_{i_0+1} \oplus_{D_0(\brho)_{i_0+1}} \ker(\theta_0)$.
  On the other hand, by comparison with $\pi^{K_1} = D_0(\brho)$ we see that $\theta_0$ is the natural surjection $D_0(\brho)/D_0(\brho)_{\le i_0} \onto D_0(\brho)/D_0(\brho)_{\le i'_0}$.
  The result follows.
\end{proof}

We can now extend \cite[Thm.~1.2]{YitongWang} to subquotients.

\begin{cor}\label{cor:yitong-subquot}
  Keep the notation and assumptions of Corollary~\ref{cor:K1-subquot}. Then we have
  \begin{equation*}
    \dim_{\F\ppar{X}} D_{\xi}^{\vee}(\pi')=|\JH(\pi'^{K_1}) \cap W(\rhobar^\ss)|=\sum_{i_0 < i \le i_0'} \binom fi.
  \end{equation*}
\end{cor}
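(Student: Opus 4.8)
The plan is to combine the two displayed equalities. The first equality, $\dim_{\F\ppar{X}} D_{\xi}^{\vee}(\pi') = |\JH(\pi'^{K_1}) \cap W(\rhobar^\ss)|$, is exactly the content of Corollary~\ref{thm:intro-yitong-subquot} in the non-semisimple case, which is stated there as forthcoming and which we may therefore invoke; concretely it follows from the identification in \cite[\S~\ref{bhhms2:covariant}]{BHHMS2} of the rank of the $(\varphi,\Gamma)$-module with $|\JH(\soc_{\GL_2(\cO_K)} \pi')|$ (via \cite[Cor.~\ref{bhhms4:cor:finite-length}(i)]{BHHMS4} adapted to subquotients) together with the fact, which I would extract from Corollary~\ref{cor:subquot-K-soc} and Theorem~\ref{thm:pi2-K1}/Corollary~\ref{cor:K1-subquot}, that $\JH(\pi'^{K_1}) \cap W(\rhobar^\ss) = \JH(\soc_{\GL_2(\cO_K)} \pi')$ once $\pi'^{K_1}$ is known to be multiplicity free. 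So the real work is the second equality, the combinatorial count $|\JH(\pi'^{K_1}) \cap W(\rhobar^\ss)| = \sum_{i_0 < i \le i_0'} \binom{f}{i}$.

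First I would use Corollary~\ref{cor:K1-subquot} to write $\pi'^{K_1} \cong D_0(\brho^\ss)_{i_0+1} \oplus_{D_0(\brho)_{i_0+1}} (D_0(\brho)_{\le i'_0}/D_0(\brho)_{\le i_0})$. Since we are only counting Jordan--H\"older factors lying in $W(\brho^\ss)$, and the amalgamated sum is over $D_0(\brho)_{i_0+1}$, the constituents in $W(\brho^\ss)$ of $\pi'^{K_1}$ are precisely those of $D_0(\brho^\ss)_{i_0+1}$ together with those of $D_0(\brho)_{\le i'_0}/D_0(\brho)_{\le i_0}$ that lie in $W(\brho^\ss)$, with the constituents of $D_0(\brho)_{i_0+1} \subset D_0(\brho^\ss)_{i_0+1}$ counted once. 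Using the multiplicity freeness (Corollary~\ref{cor:subquot-I1-invt}, or directly the multiplicity freeness of $D_0(\brho^\ss)$) this reduces the count to
\[
|\{\sigma \in W(\brho^\ss) : \ell(\sigma) = i_0+1\}| + \sum_{i_0+1 < i \le i_0'} |\{\sigma \in W(\brho^\ss) : \ell(\sigma) = i, \ \sigma \in \JH(D_0(\brho)_i)\}|.
\]
For the first term, by \cite[Thm.~4.7]{BP} (or \cite[\S~11]{BP}) the map $\sigma \mapsto J_\sigma$ gives a bijection between $\{\sigma \in W(\brho^\ss) : \ell(\sigma) = i\}$ and the $i$-element subsets of $\{0,\dots,f-1\}$, so that term is $\binom{f}{i_0+1}$. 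For the remaining terms, I would invoke \cite[eq.~\eqref{bhhms4:eq:soc-i}]{BHHMS4} (or the description of $D_0(\brho)_i$ in terms of $W(\brho^\ss)$ used throughout \S~\ref{sec:subquot-non-ss}), which identifies $\JH(\soc_\Gamma D_0(\brho)_i) \cap W(\brho^\ss)$ with all Serre weights $\sigma \in W(\brho^\ss)$ of level $\ell(\sigma) = i$; combined again with $\sigma \mapsto J_\sigma$ this contributes $\binom{f}{i}$ for each $i_0+1 < i \le i_0'$. Summing gives $\sum_{i_0 < i \le i_0'} \binom{f}{i}$, as desired.

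The main obstacle I anticipate is bookkeeping rather than conceptual: making sure the amalgamated direct sum over $D_0(\brho)_{i_0+1}$ is handled correctly so that the level-$(i_0+1)$ constituents are counted with the right multiplicity, and verifying that \emph{all} of $\JH(D_0(\brho^\ss)_{i_0+1})\cap W(\brho^\ss)$ really does appear in $\pi'^{K_1}$ (not just the socle), which is where one needs the full strength of Corollary~\ref{cor:K1-subquot} and Proposition~\ref{prop:pi2K1} rather than merely the socle computation. I would also double-check the edge cases $i_0 = -1$ (where $\pi'^{K_1}$ should recover a sub of $D_0(\brho)$ via assumption~\ref{it:assum-i}) and $i_0' = f$ (where $\pi'$ is a quotient of $\pi$ and the formula becomes $\sum_{i_0 < i \le f}\binom{f}{i}$), since the combinatorial identity and the references behave slightly differently there; in both cases the count above still goes through verbatim once one notes $D_0(\brho)_{\le f} = D_0(\brho)$ and $\JH(D_0(\brho)) \cap W(\brho^\ss) = \bigsqcup_i \JH(\soc_\Gamma D_0(\brho)_i)\cap W(\brho^\ss)$.
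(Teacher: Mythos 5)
Your treatment of the first equality does not work. You begin by proposing to "invoke" Corollary~\ref{thm:intro-yitong-subquot} because it is "stated there as forthcoming," but that corollary \emph{is} the statement being proved here (the introduction explicitly says it is established in Corollary~\ref{cor:yitong-subquot} in the non-semisimple case), so this would be circular. The "concrete" fallback you offer then rests on two identifications that both fail in the nonsplit case. First, you claim $\dim_{\F\ppar{X}} D_\xi^\vee(\pi') = |\JH(\soc_{\GL_2(\cO_K)} \pi')|$ "via \cite[Cor.~\ref{bhhms4:cor:finite-length}(i)]{BHHMS4} adapted to subquotients" — but that reference is invoked in Corollary~\ref{cor:yitong-subquot-ss} precisely and only for the \emph{split reducible} case, and the equality is false for nonsplit $\brho$. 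Second, you claim $\JH(\pi'^{K_1}) \cap W(\rhobar^\ss) = \JH(\soc_{\GL_2(\cO_K)} \pi')$, which is likewise false: by Corollary~\ref{cor:subquot-K-soc}, the socle contributes only $\sigma \in W(\brho)$ with $i_0 < \ell(\sigma) \le i_0'$ plus $\sigma \in W(\brho^\ss)\setminus W(\brho)$ with $\ell(\sigma) = i_0+1$, while (as the paper shows) $\JH(\pi'^{K_1}) \cap W(\rhobar^\ss) = \{\sigma \in W(\brho^\ss): i_0 < \ell(\sigma) \le i_0'\}$, which is strictly larger whenever $i_0' > i_0+1$ and $J_{\brho} \neq \{0,\dots,f-1\}$. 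A concrete counterexample: $f=2$, $|J_{\brho}|=1$, $i_0=0$, $i_0'=2$, where the socle has $2$ constituents but $\sum_{0 < i \le 2}\binom 2i = 3$.

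The paper's actual route is shorter and avoids the socle entirely: since the functor $D_\xi^\vee$ is exact, its $\F\ppar{X}$-dimension is additive in short exact sequences, so $\dim D_\xi^\vee(\pi') = \dim D_\xi^\vee(\pi_1') - \dim D_\xi^\vee(\pi_1)$, and each term on the right is computed from the already-known \emph{subrepresentation} case (\cite[Cor.~\ref{bhhms4:cor:yitong}]{BHHMS4}, i.e.\ Yitong Wang's theorem): $\dim D_\xi^\vee(\pi_1) = |\JH(D_0(\brho)_{\le i_0}) \cap W(\brho^\ss)| = \sum_{i\le i_0}\binom fi$, and similarly for $\pi_1'$, giving $\sum_{i_0 < i \le i_0'}\binom fi$ directly. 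This establishes the equality of the outer two terms without ever identifying $\dim D_\xi^\vee(\pi')$ with any single Jordan--H\"older count. Your analysis of the \emph{middle} term, $|\JH(\pi'^{K_1}) \cap W(\rhobar^\ss)|$, via Corollary~\ref{cor:K1-subquot} and the filtration description of $D_0(\brho)$, is essentially the same as the paper's and is correct; but without a valid argument for the first equality, the proof is incomplete. You should replace the socle-based argument by the exactness argument and cite the subrepresentation result.
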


\begin{proof}
  By exactness of $D_{\xi}^{\vee}$ and \cite[Cor.~\ref{bhhms4:cor:yitong}]{BHHMS4} we know that the two outside terms are equal.
  By Corollary~\ref{cor:K1-subquot} (using \eqref{eq:filt-Di0}, \cite[eq.~\eqref{bhhms4:eq:JH-D0-i}]{BHHMS4} and that $W(\rhobar^\ss)\subseteq \JH(D_0(\brho))$), we deduce that $\JH(\pi'^{K_1}) \cap W(\rhobar^\ss) = \{ \sigma \in W(\brho^\ss) : i_0 < \ell(\sigma) \le i_0' \}$, and the result follows.
\end{proof}

\section{Global arguments}
\label{sec:global-arg}

{In this section we prove that certain globally defined smooth mod $p$ representations of $\GL_2(K)$ satisfy assumption \ref{it:assum-v} of \S~\ref{sec:abstract-setting} (besides assumptions \ref{it:assum-i}--\ref{it:assum-iv} of \S~\ref{sec:abstract-setting}).}

\subsection{Global setting}
\label{sec:global:setting}
We define smooth mod $p$ representations of $\GL_2(K)$ that arise from the mod $p$ \'etale cohomology of suitable Shimura curves, and recall why they satisfy assumptions \ref{it:assum-i}--\ref{it:assum-iv} of \S~\ref{sec:abstract-setting}.
We then show in \S~\ref{subsec:verify:v} below that they furthermore satisfy assumption \ref{it:assum-v}.

Let $F$ be a totally real number field in which $p$ is unramified, and let $S_p$ denote the set of places of $F$ above $p$.
For each finite place $w$ of $F$ we denote by $F_w$ the completion of $F$ at $w$.
We fix a quaternion algebra $D$ over $F$, with center $F$ such that $D$ splits at all places in $S_p$ and at exactly one infinite place. 
We let $S_D$ denote the set of places of $F$ at which $D$ ramifies.

We fix a continuous representation $\rbar:\Gal(\ovl{F}/F)\ra\GL_2(\F)$ and define $S_{\rbar}$ to be the set of places where $\rbar$ ramifies. 
We write $\rbar_w$ for $\rbar|_{\Gal(\ovl{F}_w/F_w)}$. 
We assume that:
\begin{itemize}
\item $\rbar|_{\Gal(\ovl{F}/F(\sqrt[p]{1}))}$ is absolutely irreducible;
\item if $p=5$, then the image of $\rbar(\Gal(\ovl{F}/F(\sqrt[p]{1})))$ in $\PGL_2(\F)$ is not isomorphic to $A_5$;
\item for all $w\in S_p$, $\rbar_w$ is $0$-generic;
\item for all $w\in S_D$, $\rbar_w$ is non-scalar.
\end{itemize}
We now fix $v\in S_p$ {and let $\psi : G_F \to W(\F)\s$ be the Teichm\"uller lift of $\omega \det(\rbar)$}.
Following \cite[\S~6.5]{EGS} with the corrections of \cite[Rk.~8.1.3]{BHHMS1} (see also~\cite[\S~3.3, \S~3.4]{BD})
we have a compact open subgroup $U^v$ of $(D\otimes_F\mathbb{A}_F^{\infty,v})^\times$ and a smooth representation of $U^v(\mathbb{A}_F^{\infty,v})^\times$ on a finite-dimensional $\F$-vector space which we denote by $\ovl{M}^v$, and on which $(\mathbb{A}_F^{\infty,v})^\times$ acts by $\psi^{-1}$. %
(Following the notation of \cite[\S~6.5]{EGS}~this is the mod $p$-reduction of the inflation to $\prod_{w\in S\setminus\{v\}}K_w\prod_{w\notin S\cup\{v\}}(\cO_{D})^\times_w$ of the $\prod_{w\in S\setminus\{v\}}K_w$-representation $L$ over $W(\F)$ of \emph{loc.~cit}., where furthermore $(\mathbb{A}_F^{\infty,v})^\times$ acts via $\psi^{-1}$. %
Again, the representation $L$ should be corrected following \cite[Rk.~8.1.3(i)]{BHHMS1}, in particular $\psi$ in \cite[\S~6.5]{EGS} should be replaced everywhere by {its inverse}.) %
We set $\rhobar\defeq \rbar_v^\vee$  and following \cite[eq.~(3.3)]{BD} (which treats the case where $\rbar$ is split at all $w\in S_p$, but generalizes to the remaining cases by \cite[\S~6.5]{EGS}) we define the ``local factor''
\begin{align}
\label{eq:pi:rhobar}
\pi(\rhobar)&\defeq\Hom_{U^v}\!\Big(\!\,\ovl{M}^v, \Hom_{\Gal(\o F/F)}\!\big(\rbar, \varinjlim_{V}H^1_{{\rm \acute et}}({X}_{V} \times_F \overline F, \F)\big)\Big)[\fm'_{\rbar}],
\end{align}
where $X_V$ denotes the smooth projective Shimura curve over $F$ associated to $V$ constructed with the convention ``$\varepsilon=-1$'' (see \cite[\S~3.1]{BD} and \cite[\S~2]{BDJ}), 
the colimit runs over all compact open subgroups of $(D\otimes_F\mathbb{A}_F^{\infty})^\times$, and $\fm'_{\rbar}$ is the maximal ideal denoted $\fm'$ in \cite[\S~3.3]{BD} and in \cite[p.~50]{EGS} (though the context of \emph{loc.~cit}.~is slightly different since they use patching functors).
We assume from now on that 
\begin{itemize}
\item $\Hom_{\Gal(\o F/F)}\big(\rbar, \varinjlim_{V}H^1_{{\rm \acute et}}({X}_{V} \times_F \overline F, \F)\big)\neq 0$.
\end{itemize}
In particular $\pi(\rhobar)\neq 0$ by \cite[Thm.~3.7.1]{BD} under the condition that $\rbar$ is reducible at all $w\in S_p$, but the proof extends to the general case using the material of \cite[\S~6.5]{EGS}.

We define the ring $R_\infty$ as in \cite[\S~8.1]{BHHMS1}, with the set $S$ in \emph{loc.~cit}.~taken to be $S_D \cup S_{\o r}$ and the rings $R^{\psi_w}_{\rbar_w}$ {($w \in (S_D \cup S_{\o r}) \setminus S_p$)} and $R_{\rbar_w}^{(0,-1),\tau_w,\psi_w}$ {($w\in S_p\setminus\{v\}$)} of \emph{loc.~cit}.~replaced by the rings $R_w^{\min}$ of \cite[\S~6.5]{EGS}.
By \cite[Thm.~7.2.1]{EGS} and 
\cite[Lemma 3.4.1]{BD} the rings $R_w^{\min}$ are formally smooth over $W(\F)$ (of dimension $3$ or $3+3[F_w:\Qp]$ according to whether $w\in S_p$ or not), so that $R_\infty$ is formally smooth over $W(\F)$ of relative dimension $4|S_D\cup S_{\ovl{r}}|+2[F_v:\Qp]+q-1$ for some integer $q\geq [F:\Q]$.

We can now follow the construction of \cite[\S~6.4]{EGS}
(where the definition of $S(\sigma)_{\fm}$ of \emph{loc.~cit}.~should be corrected as explained in \cite[Rk.~8.1.3(iii)]{BHHMS1}).
We obtain a patching functor (in the sense of \cite[\S~6.1]{EGS}) $M_\infty$ defined on the category of continuous representations of $\GL_2(\cO_{F_v})$ on finite type $W(\F)$-modules with central character ${\psi}^{-1}$, and taking values in the category of $R_\infty$-modules of finite type, such that \[
M_\infty(\sigma_v)/\mathfrak{m}_\infty\cong \big(\Hom_{\GL_2(\cO_{F_v})}(\sigma_v,\pi(\rhobar))\big)^\vee
\]
and which moreover satisfies $\dim_{\F} (M_\infty(\sigma_v)/\mathfrak{m}_\infty)\leq 1$ for any Serre weight $\sigma_v$, by \cite[\S~6.5]{EGS}.
Furthermore the construction of \cite[Thm.~6.2]{DoLe} gives a finitely generated module $\mathbb{M}_\infty$ over $R_\infty\bbra{\GL_2(\cO_{F_v})}$ such that $\mathbb{M}_\infty/\fm_\infty\cong \pi(\rhobar)^\vee$ and \[
M_\infty(\sigma_v)=\Hom^{\textnormal{cont}}_{W(\F)\bbra{\GL_2(\cO_{F_v})}}(\mathbb{M}_\infty,\sigma_v^\vee)^\vee
\]
(where $(-)^\vee\defeq \Hom^{\textnormal{cont}}_{W(\F)}(-,E/W(\F))$).

\begin{prop}
\label{prop:exist:r=1}
If $\rhobar$ is $12$-generic then $\pi(\rhobar)$ satisfies assumptions \ref{it:assum-i}--\ref{it:assum-iv} with ``$r=1$''.
\end{prop}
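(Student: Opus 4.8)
The plan is to verify assumptions \ref{it:assum-i}--\ref{it:assum-iv} for $\pi(\rhobar)$ by invoking the patching machinery recalled above, following closely the arguments in \cite[\S~8]{BHHMS1} and \cite[\S~8]{BHHMS4}. Since $\pi(\rhobar)$ has central character ${\psi}|_{\cdot}^{-1}$-type (i.e.\ $\det(\brho^\vee)\omega^{-1}$ after translating through $\rhobar = \rbar_v^\vee$ and $\psi = $ Teichm\"uller lift of $\omega\det(\rbar)$), and since the inequality $\dim_\F(M_\infty(\sigma_v)/\fm_\infty) \le 1$ holds for \emph{every} Serre weight $\sigma_v$ by the local multiplicity one statement coming from \cite[\S~6.5]{EGS}, the multiplicity $r$ equals $1$; this immediately gives the first half of assumption \ref{it:assum-i}, namely $\pi(\rhobar)^{K_1} \cong D_0(\brho)$ as $\GL_2(\cO_K)$-representations (using \cite[Thm.~1.9]{BHHMS1} or the non-semisimple refinement \cite[Thm.~6.3(ii)]{YitongWangGKD}), the admissibility of $\pi(\rhobar)$, and the central character. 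The genericity hypothesis $12$-generic is exactly what is needed to apply these structural results (recall $N = \max\{12,2f+3\}$ in the introduction, and $12$-generic implies $0$-generic).

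Next I would verify assumptions \ref{it:assum-ii}, \ref{it:assum-iii}, and \ref{it:assum-iv}, which are the content of \cite[\S~\ref{bhhms2:grstr}]{BHHMS2}, \cite[\S~\ref{bhhms2:sec:length-of-pi}]{BHHMS2}, and \cite[\S~\ref{bhhms4:sec:theorem}]{BHHMS4} respectively, for representations arising from patched modules. For assumption \ref{it:assum-iv} the key input is that $\mathbb{M}_\infty$ is a finitely generated module over $R_\infty\bbra{\GL_2(\cO_{F_v})}$ with $R_\infty$ formally smooth over $W(\F)$, so that $\mathbb{M}_\infty$ is (after suitable normalization) a projective, or at least flat enough, module in the relevant sense; this is precisely the setup in which \cite[Prop.~\ref{bhhms4:prop:dim-Ext}]{BHHMS4} computes $\dim_\F \Ext^i_{I/Z_1}(\chi,\pi(\rhobar)) = \binom{2f}{i}$ when $[\pi(\rhobar)[\m]:\chi]\ne 0$ and shows the Ext-groups vanish otherwise. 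For assumption \ref{it:assum-iii}, the Gorenstein/Auslander duality property $\EE^{2f}_\Lambda(\pi^\vee) \cong \pi^\vee \otimes (\det(\brho)\omega^{-1})$ follows from the fact that $\mathbb{M}_\infty$ is a maximal Cohen--Macaulay module over the (regular) ring $R_\infty\bbra{\GL_2(\cO_{F_v})}$ together with the duality isomorphism of \cite[Prop.~3.2]{Ko} applied to the patched setting, exactly as in \cite[\S~\ref{bhhms2:sec:length-of-pi}]{BHHMS2}. Assumption \ref{it:assum-ii}, the equality $[\pi[\fm^3]:\chi_\lambda]=[\pi[\fm]:\chi_\lambda]$ for $\lambda\in\P$, is deduced from the structure of $\gr_\m(\pi(\rhobar)^\vee)$ as controlled by the patched module, again following \cite[\S~\ref{bhhms2:grstr}]{BHHMS2}.

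In each case the strategy is to reduce to the abstract statements in the cited sections of \cite{BHHMS1}, \cite{BHHMS2}, \cite{BHHMS4}, checking that the hypotheses on $R_\infty$ (formal smoothness, the formula for its relative dimension $4|S_D\cup S_{\ovl r}|+2[F_v:\Qp]+q-1$) and on $\mathbb{M}_\infty$, $M_\infty$ (patching functor axioms of \cite[\S~6.1]{EGS}, the isomorphisms $\mathbb{M}_\infty/\fm_\infty\cong\pi(\rhobar)^\vee$ and $M_\infty(\sigma_v)/\fm_\infty \cong (\Hom_{\GL_2(\cO_{F_v})}(\sigma_v,\pi(\rhobar)))^\vee$) are satisfied here; these are exactly the corrected constructions of \cite[\S~6.4--6.5]{EGS} with the modifications of \cite[Rk.~8.1.3]{BHHMS1}. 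The main obstacle I expect is bookkeeping: making sure the normalization of central characters, the sign convention ``$\varepsilon = -1$'' in the Shimura curve $X_V$, the inversion of $\psi$ flagged in \cite[Rk.~8.1.3(i)]{BHHMS1}, and the dualization $\rhobar = \rbar_v^\vee$ are all applied consistently, so that the duality in assumption \ref{it:assum-iii} comes out with the correct twist $\det(\rhobar)\omega^{-1}$ and the central character in assumption \ref{it:assum-i} matches; the genericity threshold $12$ should be more than enough for all the representation-theoretic inputs (which typically only need $0$- or $1$-genericity, while the $\Ext$ computation needs a bound like $2f+2$ or so, dominated by $12$ for small $f$ and otherwise covered since the statement of \cite[\S~6.5]{EGS} multiplicity one requires the genericity assumptions already built into our hypotheses). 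No genuinely new mathematics is needed; this is a verification that the present $\pi(\rhobar)$ is an instance of the abstract framework already developed.
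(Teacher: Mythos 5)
Your proposal is correct and follows essentially the same route as the paper: verify that the patching setup (formal smoothness of the $R_w^{\min}$, local multiplicity one from \cite[\S~6.5]{EGS}) places $\pi(\rhobar)$ within the abstract framework of \cite[\S~8]{BHHMS1}, \cite[\S~6]{YitongWangGKD} and \cite[\S~\ref{bhhms4:sec:verify-assumpt-iv}]{BHHMS4} with $r=1$. The only place you are noticeably vaguer than the paper is assumption \ref{it:assum-ii}, where appealing to ``the structure of $\gr_\m(\pi(\rhobar)^\vee)$'' risks circularity; the paper instead cites \cite[Prop.~6.4.6]{BHHMS1} directly (the factorization of any $I$-morphism $\overline{W}_{\chi,3}\to\pi$ through $\chi$, valid for non-semisimple $\rhobar$), which is the precise patched-module consequence that yields $[\pi[\fm^3]:\chi_\lambda]=[\pi[\fm]:\chi_\lambda]$.
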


\begin{proof}
The proofs of the results of \cite[\S~8.2, \S~8.3, Thms.~8.4.1, 8.4.2, 8.4.3]{BHHMS1}, \cite[\S~6]{YitongWangGKD} go through {verbatim} {for our $\pi(\brho)$}, replacing all occurrences of $r$ in \emph{loc.~cit}.~by $1$.
(Note that the hypothesis \cite[\S~8.1, item~(iii)(b)]{BHHMS1} and \cite[\S~1, item (ii)]{YitongWangGKD} are satisfied as all $R_w^{\min}$ above are formally smooth over $W(\F)$.)
In particular \cite[Thm.~1.9]{BHHMS1}, \cite[Thm.~6.3(ii)]{YitongWangGKD} hold, with $r=1$, for $\pi(\rhobar)$ so that $\pi(\rhobar)$ satisfies assumption \ref{it:assum-i} and \ref{it:assum-ii} (for the latter, using \cite[Prop.~6.4.6]{BHHMS1} which holds for a not necessarily semisimple $\rhobar$).
Similarly \cite[Thm.~1.10]{BHHMS1}, \cite[Thm.~6.3(i)]{YitongWangGKD} hold for $\pi(\rhobar)$ so that $\pi(\rhobar)$ satisfies assumption \ref{it:assum-iii} (via \cite[Thm.~8.2]{HuWang2}).
Finally the proofs of \cite[Lemma~\ref{bhhms4:lem:isom-Tor}, Prop.~\ref{bhhms4:prop:dim-Ext}]{BHHMS4} go through verbatim replacing $r$ and $\pi$ in \emph{loc.~cit}.~with $1$ and $\pi(\rhobar)$ respectively, so $\pi(\rhobar)$ satisfies assumption \ref{it:assum-iv}.
(Note that assumption \ref{it:assum-i} is also satisfied by the main result of \cite{LMS, HuWang, DanWild}.)
\end{proof}

\begin{rem}
  {Using \cite[\S~6.5]{EGS} it should be possible to modify the definition of $\pi(\rhobar)$ and generalize the above result to the definite case (i.e.~when $D$ is ramified at all infinite places).}
\end{rem}

\subsection{Verifying assumption \texorpdfstring{\ref{it:assum-v}}{(v)}}
\label{subsec:verify:v}

We keep the setup of \S~\ref{sec:global:setting}.
The goal of this section is to prove the following result:

\begin{prop}\label{prop:degenerates}
{Assume that $\brho$ is 9-generic.} 
  If $\pi(\rhobar)$ satisfies assumptions \refeq{it:assum-i}, \refeq{it:assum-ii} and \refeq{it:assum-iv} of \S~\ref{sec:abstract-setting}, then it also satisfies assumption~\ref{it:assum-v}.
\end{prop}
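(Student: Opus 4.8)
The statement we must establish is the equality of dimensions
\[
\dim_\F\Tor_1^{\gr(\Lambda)}(\gr(\Lambda)/\o\m^3,\gr_\m(\pi(\brho)^{\vee})) = \dim_\F \Tor_1^{\Lambda}(\Lambda/\m^3,\pi(\brho)^{\vee}),
\]
i.e.\ that the natural spectral sequence relating the graded and filtered $\Tor$-modules (as used in the proof of Theorem~\ref{thm:gr-pi2}) degenerates far enough in homological degree $1$.
The plan is to exploit the patched module $\mathbb M_\infty$ over $R_\infty\bbra{\GL_2(\cO_K)}$, whose existence and properties are recalled in \S~\ref{sec:global:setting}: since $R_\infty$ is formally smooth over $W(\F)$, the module $\mathbb M_\infty$ is expected to be free, or at least flat, over $R_\infty$, and $\pi(\brho)^\vee = \mathbb M_\infty/\fm_\infty$.
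Concretely, I would first pass from $\pi(\brho)^\vee$ as a $\Lambda$-module to the $I_1/Z_1$-action on $\mathbb M_\infty$, obtaining a module over $R_\infty\bbra{I_1/Z_1}$ whose reduction mod $\fm_\infty$ recovers $\pi(\brho)^\vee$; this is the same reduction step used in \cite[\S~8]{BHHMS1} and in the proofs of \cite[Lemma~\ref{bhhms4:lem:isom-Tor}, Prop.~\ref{bhhms4:prop:dim-Ext}]{BHHMS4}, which we may invoke.

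The key point is that assumption~\ref{it:assum-v} is a statement about the $\m$-adic filtration on $\pi(\brho)^\vee$ and its associated graded.
By \cite[Thm.~\ref{bhhms4:thm:CMC}]{BHHMS4} we already know $\gr_\m(\pi(\brho)^\vee) \cong N = \bigoplus_{\lambda\in\P}\chi_\lambda^{-1}\otimes R/\fa(\lambda)$, and $N$ is Cohen--Macaulay over $\gr(\Lambda)$ (indeed an $\o R$-module).
The right-hand side $\dim_\F\Tor_1^\Lambda(\Lambda/\m^3,\pi(\brho)^\vee)$ can be bounded above in terms of $\gr_\m(\pi(\brho)^\vee)$ via the spectral sequence $E_1^i = \Tor_i^{\gr(\Lambda)}(\gr(\Lambda)/\o\m^3,\gr_\m(\pi(\brho)^\vee)) \Rightarrow \Tor_i^\Lambda(\Lambda/\m^3,\pi(\brho)^\vee)$: the abutment has dimension at most $\dim_\F E_1^1$, with equality precisely when the relevant differentials in and out of the $(i=1)$-line vanish.
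So what we must show is that these differentials vanish.
The strategy is to compute $\Tor_1^\Lambda(\Lambda/\m^3,\pi(\brho)^\vee)$ directly from a partial free resolution of $\Lambda/\m^3$ over $\Lambda$ (or equivalently from a presentation of $\pi(\brho)[\m^3]$ as $I_1$-representation), using that $\Lambda$ is Auslander regular so that a minimal resolution of $\Lambda/\m^3$ is well-behaved, and to match the Euler characteristic of the two-term complex $\Tor_0 - \Tor_1$ against the graded analogue.
Since $\Tor_0^\Lambda(\Lambda/\m^3,\pi(\brho)^\vee) = \pi(\brho)^\vee/\m^3$ has the same dimension as $\gr_\m(\pi(\brho)^\vee)/\o\m^3 = \Tor_0^{\gr(\Lambda)}(\gr(\Lambda)/\o\m^3,\gr_\m(\pi(\brho)^\vee))$ (a tautology from the $\m$-adic filtration), it suffices to compare the alternating sums $\sum_i(-1)^i\dim_\F\Tor_i$ up to degree $2$ or $3$; the higher $\Tor$'s are controlled because $N$ has a completely explicit minimal free resolution coming from the monomial ideals $\fa(\lambda)$ (each $R/\fa(\lambda)$ is a complete intersection type quotient in the appropriate generality).

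I would carry this out in the following order.
First, reduce to the patched setting and use flatness of $\mathbb M_\infty$ over $R_\infty$ together with the genericity hypothesis (9-generic suffices, as in Theorem~\ref{thm:Hilbert}) to reduce the computation of both sides to the explicit graded module $N$; this is where the global input is essential and where one copies the arguments of \cite[\S~8]{BHHMS1} and \cite[\S~6]{YitongWangGKD}, \cite[Lemma~\ref{bhhms4:lem:isom-Tor}]{BHHMS4} nearly verbatim with $r=1$.
Second, compute $\dim_\F\Tor_i^{\gr(\Lambda)}(\gr(\Lambda)/\o\m^3, N)$ for $i=0,1,2,3$ using the decomposition over $\lambda\in\P$ and the Koszul-type resolution of $R/\fa(\lambda)$ (here $\gr(\Lambda)/\o\m^3$ has an explicit small resolution over $\gr(\Lambda)$ since $\gr(\Lambda)$ is Auslander regular with the relations~\eqref{eq:grlambda}).
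Third, show the spectral sequence differential $d_2: E_2^1 \to E_2^{-1}$ (which would be the only obstruction to degeneration in the range we care about) vanishes — this follows because its source and target live in incompatible gradings, or alternatively because a dimension count using the known value of $\dim_\F\pi(\brho)^\vee/\m^3$ (from the Hilbert series, Theorem~\ref{thm:Hilbert}) forces $E_\infty^1 = E_1^1$.
The main obstacle I expect is precisely this last step: proving that no cancellation occurs between the $i=0$ and $i=1$ lines, i.e.\ that $\dim_\F\Tor_1^\Lambda(\Lambda/\m^3,\pi(\brho)^\vee)$ is not strictly smaller than $\dim_\F E_1^1$ — equivalently, that the surjection $\gr(\Lambda)/\o\m^3 \otimes \gr(\text{relations}) \to (\text{relations in }\pi(\brho)^\vee/\m^3)$ picks up no extra relations in the associated graded.
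I would handle this by an Euler-characteristic argument: the difference $\dim_\F E_1^\infty - \dim_\F E_1^1$ equals, up to sign, a combination of higher $E_1^i$ and differentials, all of which are pinned down by the exactness of~\eqref{eq:1st-seq} in the introduction and the Cohen--Macaulayness of $N$; since $N$ is Cohen--Macaulay of the expected dimension $f$ and $\pi(\brho)^\vee$ has no higher self-$\Tor$ pathologies (again by Auslander regularity), the only possibility consistent with all the dimension identities is degeneration, giving the desired equality.
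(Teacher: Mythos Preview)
Your proposal correctly identifies the overall shape of the problem --- the spectral sequence gives the inequality $\dim_\F\Tor_1^\Lambda(\Lambda/\m^3,\pi^\vee) \le \dim_\F\Tor_1^{\gr(\Lambda)}(\gr(\Lambda)/\o\m^3,\gr_\m(\pi^\vee))$ automatically, and the content is the reverse inequality --- but your argument for the reverse direction does not work. The grading-incompatibility claim for $d_2$ is not valid here: the filtration spectral sequence $E_i^r \Rightarrow \Tor_i^\Lambda(\Lambda/\m^3,\pi^\vee)$ carries no extra grading that forces differentials to vanish. The Euler-characteristic argument is also not a proof: knowing $\dim_\F\Tor_0$ on both sides (which is tautological) and that $N$ is Cohen--Macaulay does not pin down $\dim_\F\Tor_1^\Lambda$; matching alternating sums of $\Tor_i$ would require independent control of \emph{all} higher $\Tor_i^\Lambda$, which you have not established. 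The phrase ``the only possibility consistent with all the dimension identities is degeneration'' names no actual identity that does the job.

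The paper's proof is a direct two-sided computation. Step~1 computes the graded side explicitly: for each $\lambda\in\P$ one writes down a minimal free resolution of $R/\fa(\lambda)$ over $\gr(\Lambda)$ (as a tensor product of length-two complexes over the factors $\gr(\Lambda)_j$) and computes the first homology of $\gr(\Lambda)/\o\m^3\otimes_{\gr(\Lambda)}(-)$ by hand, yielding an explicit polynomial in $f$ and $k_\lambda$. Step~2 bounds the filtered side from \emph{below} by decomposing $\Tor_1^\Lambda(\Lambda/\m^3,\pi^\vee)^\vee \cong \bigoplus_\chi \Ext^1_{I/Z_1}(W_{\chi,3},\pi)$ and computing each summand via the patching functor: one evaluates $M_\infty(\Ind_I^{\GL_2(\cO_K)}W_{\chi_\lambda,2})$ as an explicit cyclic $\o R_\infty$-module (using \cite{EGS}'s description of the potentially crystalline deformation ring for a well-chosen tame type containing all of $W(\brho)$), reads off its $\Tor_i^{\o R_\infty}(\F,-)$ via the Stanley--Reisner combinatorics (Lemma~\ref{lem:Tor_i-stanley-reisner}), and then passes to $W_{\chi,3}$ by a long exact sequence. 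The resulting lower bound exactly matches Step~1, forcing equality. The global input you gestured at is used precisely here, but in a much more specific way than ``flatness of $\mathbb M_\infty$'': it is the explicit shape of $M_\infty$ on particular induced representations that makes the computation go through.
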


To simplify notation we let $\pi \defeq \pi(\rhobar)$  and assume that it satisfies assumptions \refeq{it:assum-i}, \refeq{it:assum-ii} and \refeq{it:assum-iv} in the remainder of this section.

\begin{rem}\label{rem:degenerates}
  In fact, we will even establish a canonical isomorphism 
  \begin{equation*}
    \Tor_1^{\gr(\Lambda)}(\gr(\Lambda)/\o\m^n,\gr_\m(\pi^{\vee}))\cong \gr(\Tor_1^{\Lambda}(\Lambda/\m^n,\pi^{\vee})) 
  \end{equation*}
  for $n = 3$.
  We remark that the $n = 2$ case can be proved
  by a similar, but significantly shorter, argument.
  The $n = 1$ case was established in \cite[Cor.~\ref{bhhms4:cor:Tor-pi}(i)]{BHHMS4} (taking $i = 1$ there).
\end{rem}

The proof of Proposition~\ref{prop:degenerates} requires a number of preliminary results.

\begin{lem}\label{lem:buzzati-prop4.3-reformulation}
  Assume that {$\brho$ is $0$-generic.} Then for any $\lambda \in \P$ we have
  \begin{equation*}
    \JH(\Ind_I^{\GL_2(\cO_K)} \chi_\lambda) \cap W(\brho) = \{ \tau \in W(\brho) : J'^\min \subset J_\tau \subset J'^\max \},
  \end{equation*}
  where
  \begin{align*}
    J'^\min &\defeq  \{ j \in J_{\brho} : \lambda_j \in \{x_j+2,p-3-x_j\}\}, \\
    J'^\max &\defeq  \{ j \in J_{\brho} : \lambda_j \notin \{x_j,p-1-x_j\}\}.
  \end{align*}
\end{lem}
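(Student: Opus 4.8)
The statement asks to identify which Serre weights in $W(\brho)$ appear as Jordan--H\"older constituents of $\Ind_I^{\GL_2(\cO_K)}\chi_\lambda$ for $\lambda\in\P$. The natural approach is to combine the known parametrization of $\JH(\Ind_I^{\GL_2(\cO_K)}\chi_\lambda)$ by subsets $\mathcal S(\xi)\subset\{0,\dots,f-1\}$ (via $\xi\in\mathcal P$, recalled in \S~\ref{sec:EG-Gamma-rep}) with the extension-graph description of the socle constituents, namely Lemma~\ref{lem:sigma_J-ext-graph2}: if $\sigma_\emptyset\cong F(\mu)$ then $\sigma_J\cong F(\t_\mu(-\o\eta_J))$. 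On the other side, by Lemma~\ref{lem:sigma_J-ext-graph} every $\tau\in W(\brho^\ss)$ has the form $F(\t_{\mu'}(\o\eta_{J_\tau}))$ for a suitable $\mu'$ attached to $\brho^\ss$, and its membership in $W(\brho)$ is controlled by the constraint ``$\lambda_j(x_j)\in\{x_j+1,p-3-x_j\}\Rightarrow j\in J_{\brho}$'' from~\eqref{eq:P} applied to the element of $\D^\ss$ parametrizing $\tau$.

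First I would make the combinatorics explicit. Writing $\lambda=(\lambda_j(x_j))_j\in\P$ and twisting so that $\sigma_\emptyset\cong F(\mu)$ with $\mu=(r_j,0)_j$, the characters $\chi_\lambda$ and $\chi_{\mu}^s$ must be compared; since both parametrize induced representations with the same socle up to the relation between $\lambda$ and $\mu$, one reads off $r_j$ from $\lambda_j(x_j)$ case by case (the six cases of the list defining $\P^\ss$ in \S~\ref{sec:notation}). Then, using Lemma~\ref{lem:sigma_J-ext-graph2} together with \cite[(14)]{BHHMS1}, the constituent $\sigma_J$ of $\Ind_I^{\GL_2(\cO_K)}\chi_\lambda$ corresponds under the extension graph to the tuple obtained by modifying $\lambda$ in the coordinates indexed by $J$ (and the neighbouring coordinates $j-1$ for $j\in J$), in exactly the pattern appearing in the recipe for $\mu_j^{\pm}$ and $\delta_j^{\pm}$ of \S~\ref{sec:more-gamma-repr}. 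I would then compute $J_{\sigma_J}$ — the subset of $\{0,\dots,f-1\}$ attached to $\sigma_J$ when $\sigma_J\in W(\brho^\ss)$ — in terms of $J$ and of which coordinates of $\lambda$ lie in which of the six classes.

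The second step is to impose $\sigma_J\in W(\brho)$. By~\eqref{eq:P} this is the condition that for every $j$ with $\lambda_j(x_j)$ (as modified, i.e.\ the $j$-th coordinate of the tuple parametrizing $\sigma_J$ in $\D^\ss$) lying in $\{x_j+1,p-3-x_j\}$ one has $j\in J_{\brho}$. Tracking how flipping a coordinate $j\in J$ moves $\lambda_j(x_j)$ among the six classes, one sees: a coordinate $j\in J_{\brho}^c$ can be flipped freely (it stays outside the forbidden set), whereas for $j\in J_{\brho}$ flipping is allowed only when $\lambda_j(x_j)$ already lies in a class forcing the modified coordinate into the ``safe'' set. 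This is precisely what singles out the two sets $J'^{\min}$ and $J'^{\max}$: $j\in J'^{\min}$ (i.e.\ $\lambda_j\in\{x_j+2,p-3-x_j\}$, $j\in J_{\brho}$) is a coordinate that \emph{must} be flipped to land in $W(\brho)$, and $j\notin J'^{\max}$ (i.e.\ $\lambda_j\in\{x_j,p-1-x_j\}$, $j\in J_{\brho}$) is one that \emph{must not} be flipped. Conversely, any $J$ with $J'^{\min}\subset J\subset J'^{\max}$ (necessarily intersecting $J_{\brho}$ only in $J'^{\max}\setminus J'^{\min}$, where flips are harmless) gives $\sigma_J\in W(\brho)$, and $J_{\sigma_J}$ ranges over all of $\{J:J'^{\min}\subset J_\tau\subset J'^{\max}\}$; I would verify the bijection $J\mapsto J_{\sigma_J}$ on this range is the identity after the obvious relabelling, using Lemma~\ref{lem:I-sigma-tau-modular} / \cite[\S~11]{BP} that $\tau\mapsto J_\tau$ is a bijection onto subsets.

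The main obstacle will be the bookkeeping in the first two steps: correctly matching the three competing combinatorial encodings — the $\lambda$-tuples in $\P^\ss$, the extension-graph vectors $\o\eta_J$ with the sign conventions of \cite[(14)]{BHHMS1} (including the effect on the coordinate $j-1$), and the $J_\tau$-parametrization of $\D^\ss$ — and checking that the six-case analysis of how each coordinate of $\lambda$ transforms under a flip produces exactly the membership condition $\lambda_j\in\{x_j+2,p-3-x_j\}$ (forced in) versus $\lambda_j\in\{x_j,p-1-x_j\}$ (forced out). Once the dictionary is set up cleanly this is a finite verification, but it is error-prone; I would organize it as a table indexed by the six classes, exactly as in the proofs of Lemmas~\ref{lem:sigma_J-ext-graph} and~\ref{lem:sigma_J-ext-graph2}. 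A sanity check at the end: when $\lambda\in\D$ (so $\sigma_\emptyset$-related weight is already a Serre weight of $\brho$), $J'^{\min}=\emptyset$ and one recovers the expected statement that $\Inj_\Gamma\sigma\cap W(\brho)$ is an interval in the Boolean lattice, matching \cite[Cor.~4.11]{BP}.
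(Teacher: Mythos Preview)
Your approach is genuinely different from the paper's and, while in principle workable, is considerably more laborious and contains a concrete misconception. The paper does \emph{not} recompute the intersection from scratch via extension graphs; instead it directly invokes \cite[Prop.~4.3]{breuil-buzzati}, which already gives
\[
  \JH(\Ind_I^{\GL_2(\cO_K)} \chi_\lambda^s) \cap W(\brho) = \{\sigma_J : J^{\min}\subset J\subset J^{\max}\}
\]
with explicit formulas for $J^{\min},J^{\max}$ in terms of $\lambda$, and then performs a purely set-theoretic change of parametrization. Concretely, writing $\sigma_J$ via $\nu_J=\mu_J\circ\lambda\in\D$, the paper computes $J_{\nu_J}=\delta^{-1}(J)\mathbin\Delta K$ for an explicit set $K$ depending only on $\lambda$, and then observes that $J^{\min}\subset J\subset J^{\max}$ is equivalent to $J'^{\min}\subset \delta^{-1}(J)\mathbin\Delta K\subset J'^{\max}$ by elementary set manipulations. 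The whole proof is a reparametrization of a known result, with no extension-graph machinery.

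The gap in your sketch is the assertion that ``the bijection $J\mapsto J_{\sigma_J}$ on this range is the identity after the obvious relabelling''. It is not: as the paper's computation shows, the map is $J\mapsto \delta^{-1}(J)\mathbin\Delta K$, involving both a cyclic shift and a symmetric difference with a $\lambda$-dependent set. Your six-case table would eventually recover this, but your description of the ``flip'' dichotomy (forced-in for $\lambda_j\in\{x_j+2,p-3-x_j\}$, forced-out for $\lambda_j\in\{x_j,p-1-x_j\}$) does not yet account for these twists, and your sanity check ($J'^{\min}=\emptyset$ when $\lambda\in\D$) is wrong: for $\lambda\in\D$ one still has $J'^{\min}=\{j\in J_{\brho}:\lambda_j=p-3-x_j\}$, which is nonempty in general. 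Moreover, your plan requires first identifying when $\sigma_J\in W(\brho^\ss)$ before imposing the $\D$-versus-$\D^\ss$ criterion, which is an extra layer that \cite[Prop.~4.3]{breuil-buzzati} already subsumes. I would recommend replacing the extension-graph computation by a direct appeal to that proposition.
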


\begin{proof}
  Note that we can replace $\lambda$ with $\lambda^{[s]}$ {(see \cite[eq.~\eqref{bhhms4:eq:[s]}]{BHHMS4})} without changing the validity of the lemma.
  By \cite[Prop.\ 4.3]{breuil-buzzati} we have
  \begin{equation*}
    \JH(\Ind_I^{\GL_2(\cO_K)} \chi_\lambda^s) \cap W(\brho) = \{ \sigma_J : J^{\min} \subset J \subset J^{\max} \},
  \end{equation*}
  where $\sigma_J \in W(\brho)$ denotes the Serre weight defined by $\nu_J \defeq  \mu_J \circ \lambda$, with $\mu_J \in \mathcal{P}$ determined by $\mu_{J,j} \in \{p-2-x_j,p-1-x_j\}$ if and only if $j \in J$.
  As $\sigma_J \in W(\brho)$, we deduce that $\mu_J \circ \lambda \in \D$ by \cite[Lemmas 2.1, 2.7]{HuWang2}.
  Also recall from \cite[Prop.\ 4.3]{breuil-buzzati} that
  \begin{align*}
    J^{\min} &= \delta(\{ j : \lambda_j \in \{p-1-x_j,x_j+2\} \text{\ or\ } (\lambda_j = x_j+1, j\notin J_{\brho} \}), \\
    J^{\max} &= \delta(\{ j : \lambda_j \notin \{p-3-x_j,x_j\} \text{\ and\ } (\lambda_j = p-2-x_j \Rightarrow j\in J_{\brho}) \}).
  \end{align*}
  Let $J(\lambda) \defeq  \{ j : \lambda_j \in \{p-3-x_j,p-2-x_j,p-1-x_j\}\}$.
  As $\nu_J = \mu_J \circ \lambda$, we have
  \begin{equation*}
    j \in \delta(J_{\nu_J}) \iff \nu_{J,j} \in \{p-3-x_j,p-2-x_j\} \iff j \in J \mathbin\Delta J(\lambda).
  \end{equation*}
  Equivalently,
  \begin{equation}\label{eq:set-K}
    J_{\nu_J} = \delta^{-1}(J) \mathbin\Delta K, \text{\ where\ } K \defeq  \delta^{-1}(J(\lambda)) = \{ j : \lambda_j \in \{p-3-x_j,p-1-x_j,x_j+1\}\}.
  \end{equation}
  From basic set theory, $J^{\min} \subset J \subset J^{\max}$ if and only if $J'^{\min} \subset \delta^{-1}(J) \mathbin\Delta K \subset J'^\max$,
  where
  \begin{align*}
    J'^\min &\defeq  (\delta^{-1}(J^{\min}) \setminus K) \sqcup (K \setminus \delta^{-1}(J^{\max})), \\
    J'^\max &\defeq  (\delta^{-1}(J^{\max}) \setminus K) \sqcup (K \setminus \delta^{-1}(J^{\min})).
  \end{align*}
  Finally, it follows from the definitions that
  \begin{align*}
    J'^\min &= \{ j : \lambda_j = x_j+2\} \sqcup \{ j : \lambda_j = p-3-x_j\}, \\
    J'^\max &= \{ j \in J_{\brho} : \lambda_j \in \{x_j+2,p-2-x_j\}\} \sqcup \{ j \in J_{\brho} : \lambda_j \in \{p-3-x_j,x_j+1\}\}.\qedhere
  \end{align*}
\end{proof}

\begin{lem}\label{lem:intersec}
  Suppose that $A_0 = \F[x_1,\dots,x_n]$ and $A = \F\bbra{x_1,\dots,x_n}$.
  If $I_0$, $J_0$ are ideals of $A_0$, then $I_0 A \cap J_0 A = (I_0 \cap J_0)A$ as ideals of $A$.
\end{lem}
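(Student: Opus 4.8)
The plan is to reduce the statement to a well-known flatness fact. Since $A = \F\bbra{x_1,\dots,x_n}$ is the $\fm$-adic completion of the Noetherian ring $A_0 = \F[x_1,\dots,x_n]$ at the maximal ideal $\fm = (x_1,\dots,x_n)$, the ring map $A_0 \to A$ factors as $A_0 \to (A_0)_\fm \to A$, and each of these maps is flat: localization is flat, and completion of a Noetherian local ring is flat. Hence $A$ is a flat $A_0$-algebra. For any two ideals $I_0, J_0$ of a ring $R$ and any flat $R$-algebra $S$, one has $(I_0 \cap J_0)S = I_0 S \cap J_0 S$; this is the standard compatibility of flat base change with finite intersections of submodules.

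First I would recall the precise homological reason for that last fact, so the proof is self-contained: tensoring the exact sequence
\[
0 \to A_0/(I_0 \cap J_0) \to A_0/I_0 \oplus A_0/J_0 \to A_0/(I_0+J_0) \to 0
\]
(whose exactness encodes $I_0 \cap J_0 = \ker(A_0 \to A_0/I_0 \oplus A_0/J_0)$) with the flat $A_0$-module $A$ keeps it exact, which identifies the kernel of $A \to A/I_0 A \oplus A/J_0 A$ with $A/(I_0\cap J_0)A$; but that kernel is exactly $I_0 A \cap J_0 A$. Equivalently one can argue directly: tensoring $0 \to I_0 \cap J_0 \to A_0 \to (A_0/I_0)\oplus(A_0/J_0)$ with $A$ over $A_0$ preserves left-exactness by flatness, so $(I_0\cap J_0)A = (I_0\cap J_0)\otimes_{A_0}A$ maps isomorphically onto $\ker\big(A \to (A/I_0A)\oplus(A/J_0A)\big) = I_0A \cap J_0A$, using also that $I_0 \otimes_{A_0} A \to A$ is injective with image $I_0 A$ (again flatness).

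So the key steps, in order, are: (1) exhibit $A$ as a flat $A_0$-algebra via localization at $\fm$ followed by $\fm$-adic completion of the Noetherian local ring $(A_0)_\fm$; (2) note $I_0 \cap J_0$ is the kernel of the map $A_0 \to A_0/I_0 \oplus A_0/J_0$; (3) apply $-\otimes_{A_0} A$ and use flatness to commute it past this kernel, and past $I_0 \hookrightarrow A_0$, $J_0 \hookrightarrow A_0$; (4) conclude $(I_0\cap J_0)A = I_0A \cap J_0A$. I do not expect a genuine obstacle here — the only thing to be careful about is citing (or justifying in one line) that $\fm$-adic completion of a Noetherian local ring is faithfully flat, which is standard (e.g. Matsumura), and that intersections of ideals behave well under flat base change; both are routine. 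One could alternatively cite this directly as a special case of the fact, used implicitly elsewhere in the paper (cf.\ the appeals to \cite{herzog-hibi}), that for a flat ring extension finite intersections of ideals extend to intersections of the extended ideals.
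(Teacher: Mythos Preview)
Your proof is correct and is essentially the same as the paper's: the paper simply notes that $A$ is flat over $A_0$ and cites \cite[Thm.~7.4(ii)]{Ma} for the fact that intersections of ideals commute with flat base change. You have spelled out both the flatness of $A_0 \to A$ and the homological reason behind the cited result, but the underlying argument is identical.
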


\begin{proof}
  This is a special case of \cite[Thm.~7.4(ii)]{Ma}, as $A$ is flat over $A_0$.
\end{proof}

The following lemma follows exactly as in \cite[Lemma 3.6.2]{LLLM2} and \cite[Prop.\ 8.1.1]{EGS}.

\begin{lem}\label{lem:Ann}
  Suppose $V$ is a finite length smooth representation of $\GL_2(\cO_K)$ over $\F$ that is multiplicity free.
  Suppose that the scheme-theoretic supports of the $R_\infty$-modules $M_\infty(\sigma)$ (cf.\ 
\cite[\S~\ref{bhhms4:sec:verify-assumpt-iv}]{BHHMS4}) are reduced and do not share any irreducible components for $\sigma$ running through $\JH(V)$.
  Then 
  \begin{equation*}
    \Ann_{R_\infty} M_\infty(V) = \bigcap_{\sigma \in \JH(V)} \Ann_{R_\infty} M_\infty(\sigma).
  \end{equation*}
\end{lem}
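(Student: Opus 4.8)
The plan is to reduce the statement, which concerns the module $M_\infty(V)$ over the formally smooth ring $R_\infty$, to a statement about the simpler polynomial-ring situation, where the annihilators are given by monomial (or at least explicit radical) ideals, and then apply Lemma~\ref{lem:intersec} combined with the multiplicity-one property of the patching functor. First I would recall, as in \cite[Lemma 3.6.2]{LLLM2} and \cite[Prop.~8.1.1]{EGS}, that there is a surjection $R_\infty \onto R_\infty/\Ann_{R_\infty} M_\infty(V)$ and that, for each $\sigma\in\JH(V)$, the module $M_\infty(\sigma)$ is (up to the auxiliary formal variables) a cyclic module over a power series ring, so that $M_\infty(\sigma)$ is faithful over $R_\infty/\Ann_{R_\infty}M_\infty(\sigma)$ and the latter quotient is a domain (indeed the completed coordinate ring of an irreducible component cut out by the support). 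The inclusion $\bigcap_{\sigma}\Ann_{R_\infty}M_\infty(\sigma)\subseteq \Ann_{R_\infty}M_\infty(V)$ is trivial from the fact that $M_\infty$ is additive on short exact sequences (so $M_\infty(V)$ is built by successive extensions from the $M_\infty(\sigma)$, $\sigma\in\JH(V)$), hence anything killing all of them kills $M_\infty(V)$; the real content is the reverse inclusion.

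For the reverse inclusion I would argue as follows. Because $V$ is multiplicity free, $M_\infty(V)$ has a filtration whose graded pieces are exactly the $M_\infty(\sigma)$ for $\sigma\in\JH(V)$, each appearing once; therefore the support (as a set) of $M_\infty(V)$ is the union of the supports of the $M_\infty(\sigma)$. The hypothesis says these supports are reduced and pairwise share no irreducible component, so the scheme-theoretic union $\bigcup_\sigma \Spec R_\infty/\Ann_{R_\infty}M_\infty(\sigma)$ is reduced, being a finite union of reduced subschemes meeting in no common component (a gluing/normal-crossings-free argument: on the level of rings this is precisely Lemma~\ref{lem:intersec} applied after completing an appropriate polynomial presentation, giving that the ideal $\bigcap_\sigma \Ann_{R_\infty}M_\infty(\sigma)$ is radical). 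Since $\Ann_{R_\infty}M_\infty(V)$ is contained in each $\Ann_{R_\infty}M_\infty(\sigma)$ only after taking radicals a priori, one must upgrade: $M_\infty(V)$ is a maximal Cohen--Macaulay module over $R_\infty$ by the usual patching input (cf.\ the cited references), hence $\Ann_{R_\infty}M_\infty(V)$ has no embedded primes and its minimal primes are exactly the minimal primes of $\bigcup_\sigma \Supp M_\infty(\sigma)$; combined with reducedness of that union we get $\Ann_{R_\infty}M_\infty(V) = \sqrt{\Ann_{R_\infty}M_\infty(V)} = \bigcap_\sigma \Ann_{R_\infty}M_\infty(\sigma)$.

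Concretely, the order of steps would be: (1) record the cyclicity and faithfulness of $M_\infty(\sigma)$ over $R_\infty/\Ann_{R_\infty}M_\infty(\sigma)$, and that this quotient is the reduced ring of the given component; (2) use multiplicity-freeness of $V$ to filter $M_\infty(V)$ with graded pieces $M_\infty(\sigma)$, $\sigma\in\JH(V)$, giving the easy inclusion and the set-theoretic description of $\Supp M_\infty(V)$; (3) invoke maximal Cohen--Macaulayness of $M_\infty(V)$ (standard for this patching functor) to conclude $\Ann_{R_\infty}M_\infty(V)$ is unmixed of the expected dimension; (4) apply Lemma~\ref{lem:intersec}, after reducing to a power-series presentation of $R_\infty$, to see that $\bigcap_\sigma \Ann_{R_\infty}M_\infty(\sigma)$ is radical; (5) combine (3) and (4): an unmixed ideal whose radical equals a radical ideal with the same minimal primes must equal that ideal, giving the claim.

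The main obstacle I expect is step~(3)/(5): one needs to know not merely the set-theoretic support of $M_\infty(V)$ but that $\Ann_{R_\infty}M_\infty(V)$ has no embedded components, so that it is determined by its behaviour at the minimal primes. This is where maximal Cohen--Macaulayness of $M_\infty(V)$ over $R_\infty$ (equivalently, that $M_\infty$ takes values in maximal Cohen--Macaulay modules, a property established in the construction of the patching functor) is essential; without it the two ideals could differ by an embedded prime supported on the pairwise intersections of the components. All of this is exactly parallel to \cite[Lemma 3.6.2]{LLLM2} and \cite[Prop.~8.1.1]{EGS}, so the proof amounts to checking that the hypotheses there (reducedness and no shared components of the $\Supp M_\infty(\sigma)$) are precisely what we have assumed, together with the standard Cohen--Macaulayness input.
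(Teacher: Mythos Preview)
Your outline has the right ingredients (Cohen--Macaulayness of $M_\infty(V)$, reducedness, no shared components), but the logical structure contains two genuine errors.

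First, you have the easy and hard inclusions reversed. If $M_\infty(V)$ has a filtration with graded pieces $M_\infty(\sigma)$, then each $M_\infty(\sigma)$ is a \emph{subquotient} of $M_\infty(V)$, so $\Ann_{R_\infty} M_\infty(V) \subseteq \Ann_{R_\infty} M_\infty(\sigma)$ for every $\sigma$; thus $\Ann_{R_\infty} M_\infty(V) \subseteq \bigcap_\sigma \Ann_{R_\infty} M_\infty(\sigma)$ is the trivial inclusion. The reverse is what requires work: if $x$ kills every graded piece, then a priori $x$ only kills $M_\infty(V)$ \emph{nilpotently} (think of $x$ acting on the nonsplit extension $0 \to R/(x) \to R/(x^2) \to R/(x) \to 0$).

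Second, step~(5) is false as stated: ``an unmixed ideal whose radical is $I$ must equal $I$'' already fails for $\mathfrak m^2$ in a regular local ring of positive dimension, so unmixedness of $\Ann_{R_\infty} M_\infty(V)$ alone cannot close the gap. The missing input, which is what the arguments in \cite[Lemma~3.6.2]{LLLM2} and \cite[Prop.~8.1.1]{EGS} (the paper's only proof is this citation) exploit, is that $M_\infty(V)$ has length~$1$ generically on each component: at a minimal prime $\mathfrak p$ of $I := \bigcap_\sigma \Ann_{R_\infty} M_\infty(\sigma)$, the no-shared-components and reducedness hypotheses force $(M_\infty(V))_{\mathfrak p}$ to have length~$1$ over $R_{\infty,\mathfrak p}$ (only one $M_\infty(\sigma)$ survives localization, and it contributes the residue field), so $I_{\mathfrak p}$ annihilates $(M_\infty(V))_{\mathfrak p}$. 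Cohen--Macaulayness then gives that $\mathrm{Ass}(M_\infty(V))$ consists exactly of these minimal primes, whence $I \cdot M_\infty(V)$ vanishes at every associated prime of $M_\infty(V)$ and is therefore zero.
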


\begin{lem}\label{lem:Tor_i-stanley-reisner}
  Suppose $R = \F[X_j, Y_j\ (1 \le j \le k)]$, $I = (X_jY_j\ (1 \le j \le k),\ Y_j Y_{j'}\ (1 \le j < j' \le k))$.
  We have 
  \begin{equation*}
    \dim_\F \Tor_i^R(\F,R/I) =
    \begin{cases}
      1 & \text{if $i = 0$},\\
      i \binom{k+1}{i+1} & \text{if $i > 0$}.
    \end{cases}
  \end{equation*}
\end{lem}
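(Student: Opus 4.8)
\textbf{Proof plan for Lemma~\ref{lem:Tor_i-stanley-reisner}.}
The ideal $I$ is a squarefree monomial ideal, hence is the Stanley--Reisner ideal $I_\Delta$ of an explicit simplicial complex $\Delta$ on the vertex set $\{X_1,\dots,X_k,Y_1,\dots,Y_k\}$: the minimal nonfaces are the pairs $\{X_j,Y_j\}$ and $\{Y_j,Y_{j'}\}$, so a subset is a face precisely when it contains at most one $Y$-variable and, if it contains $Y_j$, it does not contain $X_j$. The plan is to compute $\Tor_i^R(\F,R/I_\Delta) = H_i$ of the Koszul complex, using Hochster's formula, which expresses the multigraded Betti numbers of $R/I_\Delta$ in terms of reduced simplicial homology of induced subcomplexes: $\dim_\F \Tor_i^R(\F,R/I_\Delta)_{\mathbf a} = \dim_\F \widetilde H_{|W|-i-1}(\Delta|_W;\F)$ for $\mathbf a$ the indicator of a squarefree set $W$, and $0$ otherwise. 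So the computation reduces to enumerating, for each $W \subset \{X_1,\dots,X_k,Y_1,\dots,Y_k\}$, the reduced homology of the induced subcomplex $\Delta|_W$.

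First I would analyze $\Delta|_W$. Write $W = W_X \sqcup W_Y$ according to $X$- versus $Y$-variables, say $|W_X| = a$ and $|W_Y| = b$. If $b = 0$ then $\Delta|_W$ is a full simplex (all of $W_X$ is a face), so it is contractible and contributes no homology. If $b = 1$, say $W_Y = \{Y_j\}$: then $\Delta|_W$ consists of the simplex on $W_X \setminus \{X_j\}$ together with the extra vertex $Y_j$ joined to all of that simplex, i.e. it is again a cone (a full simplex, after relabeling), hence contractible, \emph{unless} the removal of $X_j$ causes a disconnection — but a cone with apex $Y_j$ is always contractible, so $b=1$ also contributes nothing. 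If $b \ge 2$: then no two $Y$-vertices are joined, and each $Y_j \in W_Y$ is joined to $W_X \setminus \{X_j\}$. The key observation is that $\Delta|_W$ deformation retracts onto the subcomplex spanned by $W_Y$ together with $\bigcap_{Y_j \in W_Y}(W_X\setminus\{X_j\})$-type data; concretely one checks $\Delta|_W$ is homotopy equivalent to a wedge of spheres. I expect the clean outcome: $\Delta|_W$ is homotopy equivalent to $S^{a-1}$ wedged $b-1$ times (so $\widetilde H_{a-1} = \F^{b-1}$ and all other reduced homology vanishes) when every $X_j$ with $Y_j \in W_Y$ actually lies in $W_X$ — equivalently, when $W_X \supseteq \{X_j : Y_j \in W_Y\}$; I would verify this by an explicit homotopy (collapsing each $Y_j$ onto the common face). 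In the remaining cases one gets contractibility again, so only these $W$ contribute.

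Given that homology computation, Hochster's formula yields a contribution to $\Tor_i$ exactly when $|W| - i - 1 = a - 1$, i.e. $i = |W| - a = b$, from each such $W$, of dimension $b - 1$; so only $W$ with $|W_Y| = i$ contribute to $\Tor_i$, and for $i = 0$ only $W = \emptyset$ contributes (dimension $1$). For $i > 0$, I would count: choose the $i$ indices $j$ with $Y_j \in W_Y$ in $\binom{k}{i}$ ways; then $W_X$ must contain all $i$ corresponding $X_j$'s and may contain any subset of the remaining $k - i$ variables $X_{j'}$, giving $2^{k-i}$ choices; each such $W$ contributes $i - 1$. Summing, $\dim_\F \Tor_i^R(\F, R/I) = \sum_{a} \binom{k}{i} 2^{k-i}\cdot 0$ — wait, this gives $0$, so the homotopy model above must be recalibrated: the correct statement must make the dimension of $\widetilde H_{a-1}$ depend on $k$, not just $b$. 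The real work, and the main obstacle, is pinning down precisely which induced subcomplexes $\Delta|_W$ have nonvanishing reduced homology and in which degree — in particular recognizing that the answer $i\binom{k+1}{i+1}$ must arise from a single homological degree per $W$ with multiplicity matching $\widetilde H_{|W|-i-1}(\Delta|_W)$, and I would reconcile this either by a careful redo of the homotopy type (likely $\Delta|_W$ is a wedge of $(a-1)$-spheres whose number is $(b-1)\cdot(\text{something})$, or the relevant $W$ also include those with $W_X$ \emph{not} containing all the matched $X_j$), or — more robustly — by instead computing the minimal free resolution directly via the Taylor/Lyubeznik complex or via the long exact sequences coming from the short exact sequences $0 \to R/(I:X_kY_k) \to R/I \to R/(I,X_kY_k) \to 0$ and inducting on $k$, checking the binomial identity $i\binom{k+1}{i+1} = i\binom{k}{i+1} + (i-1)\binom{k}{i} + \binom{k}{i}$ — i.e. $\binom{k+1}{i+1} = \binom{k}{i+1}+\binom{k}{i}$ weighted appropriately — at each step. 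I expect the inductive approach to be the safest route to the stated closed form.
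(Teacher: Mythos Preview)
Your framework via Hochster's formula is exactly the paper's approach, but your homotopy analysis of $\Delta|_W$ is inverted, which is why your sum collapsed. The crucial structural fact you are missing is that the $X$-vertices span a \emph{full simplex} in $\Delta$ (every subset of $\{X_1,\dots,X_k\}$ is a face), while the $Y$-vertices are pairwise non-adjacent. Consequently, whenever $|W_X|\ge 2$ the induced subcomplex $\Delta|_W$ is contractible: the simplex on $W_X$ is present, and each $Y_j\in W_Y$ sits in a simplex $\,(W_X\setminus\{X_j\})\cup\{Y_j\}$ that meets the $W_X$-simplex in the nonempty face $W_X\setminus\{X_j\}$, so $\Delta|_W$ deformation retracts onto the $W_X$-simplex. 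Your guess that these $W$ contribute wedges of $(a-1)$-spheres is the source of the error.

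The only $W$ with $\widetilde H_*(\Delta|_W)\ne 0$ are those with $|W_X|\le 1$, and then only $\widetilde H_0$ is nonzero. Concretely: if $W_X=\emptyset$ then $\Delta|_W$ is $|W_Y|$ isolated points; if $W_X=\{X_j\}$ and $Y_j\notin W_Y$ then $\Delta|_W$ is a cone on $X_j$ (contractible); if $W_X=\{X_j\}$ and $Y_j\in W_Y$ then $\Delta|_W$ has exactly two components (the star at $X_j$ and the isolated vertex $Y_j$). In particular your claim that $b=1$ is always contractible fails already for $W=\{X_j,Y_j\}$. Feeding this corrected case analysis into Hochster, the contributions to $\Tor_i$ for $i>0$ come from $W$ of these two non-contractible types with $|W|=i+1$, giving
\[
i\binom{k}{i+1} \;+\; k\binom{k-1}{i-1}
\;=\; i\binom{k}{i+1}+i\binom{k}{i}
\;=\; i\binom{k+1}{i+1},
\]
which is the stated formula. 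There is no need for Taylor complexes or an inductive long exact sequence; once the homotopy types are fixed as above, your original argument goes through immediately.
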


\begin{proof}
  Note that $R/I$ is the Stanley--Reisner ring $\F[\Delta]$ associated to the simplicial complex $\Delta$ whose minimal non-faces are $\{X_j, Y_j\}$ ($1 \le j \le k$) and $\{Y_j, Y_{j'}\}$ ($1 \le j < j' \le k$) \cite[\S~5]{BH93}.
  Also, $\dim_\F \Tor_i^R(\F,R/I)$ is the rank of the degree $i$ term in any minimal graded free resolution of $R/I$ as $R$-module.
  Let $\mathscr V \defeq  \{X_1,\dots,Y_k\}$ denote the set of vertices. %
  By \cite[Thm.\ 5.5.1, Thm.\ 5.3.2]{BH93} we have
  \begin{equation}\label{eq:hochster}
    \dim_\F \Tor_i^R(\F,R/I) = \sum_{\mathscr W \subset \mathscr V} \dim_\F \wt H_{|\mathscr W|-i-1}(|\Delta_{\mathscr W}|;\F),
  \end{equation}
  where $\Delta_{\mathscr W}$ denotes the subcomplex obtained by all faces of $\Delta$ whose vertices are contained in $\mathscr W$ with geometric realization $|\Delta_{\mathscr W}|$, and where $\wt H_j$ denotes the $j$-th reduced homology group (by convention, $H_{-1}(\emptyset;\F) = \F$).
  By definition of $\Delta$, if $\mathscr W$ contains at least two $X_j$, then $|\Delta_{\mathscr W}|$ is contractible (so the term indexed by $\mathscr W$ in~\eqref{eq:hochster} vanishes).
  Similarly, if $\mathscr W$ contains $X_j$ for precisely one $j$, but it does not contain $Y_j$, then $|\Delta_{\mathscr W}|$ is contractible.
  If $\mathscr W$ contains $X_j$ for precisely one $j$ and it also contains $Y_j$, then $|\Delta_{\mathscr W}|$ is homotopic to a disjoint union of 2 points.
  If $\mathscr W$ contains no $X_j$, then $|\Delta_{\mathscr W}|$ is a disjoint union of $|\mathscr W|$ points.
  If $|\Delta_{\mathscr W}|$ is homotopic to a disjoint union of $s \ge 2$ points, the term $\dim_\F \wt H_{|\mathscr W|-i-1}(|\Delta_{\mathscr W}|;\F)$ equals $s-1$ in degree $i = |\mathscr W|-1$ and 0 otherwise.

Now let us compute $\dim_{\F}\Tor_i^R(\F,R/I)$ via \eqref{eq:hochster}.  If $i > 0$, the only contribution then comes from the $\binom k{i+1}$ subsets $\mathscr W$ of $\{Y_1,\dots,Y_k\}$ of cardinality $i+1$ (each contributing $i$) and the $k\cdot \binom {k-1}{i-1}$ subsets $\mathscr W$ that contain precisely one $X_j$ and also $Y_j$ (each contributing $1$).
  The lemma easily follows.
\end{proof}

Recall  from \S~\ref{sec:some-wtgamma-repr} that for $n \ge 1$ we denote $W_{\chi,n} = (\Proj_{I/Z_1} \chi)/\m^n \cong \chi \otimes_\F \Lambda/\m^n$.
(For $n=2,3$ the structure of $W_{\chi,n}$ is completely explicit, see \cite[\S~3.1]{HuWang2}.)
For $\lambda \in \P$ we let
\begin{equation*}
k_\lambda \defeq  |\{0 \le j \le f-1 : t_j \ne y_jz_j \}|.
\end{equation*}
(Recall that the $t_j$, depending on $\lambda$, are defined in~(\ref{eq:id:al}).)
We recall that $\overline{R}_{\infty}=R_{\infty}\otimes_{\cO}\F$.

\begin{prop}\label{prop:patched-module-V_chi-3}
  {Assume that $\brho$ is $2$-generic}. 
  Then for any $\lambda \in \P$ we can find compatible isomorphisms
  \begin{equation*}
    \o R_\infty \cong \F\bbra{X_j, Y_j\ (1 \le j \le \ell), Z_m\ (\ell < m \le N)}
  \end{equation*}
  for some integer $N \ge 2f$ and
  \begin{equation*}
    M_\infty(\Ind_I^{\GL_2(\cO_K)} W_{\chi_\lambda,2}) \cong \o R_\infty/(X_jY_j\ (1 \le j \le \ell),\ Y_i Y_j\ (1 \le i < j \le k_\lambda), \ Z_m (\ell < m \le 2f)),
  \end{equation*}
  where $\ell \defeq  |J_{\brho}|$.
\end{prop}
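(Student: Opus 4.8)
The plan is to compute $M_\infty(\Ind_I^{\GL_2(\cO_K)} W_{\chi_\lambda,2})$ from its reduction modulo $\fm_\infty$ together with the known structure of the individual patched modules $M_\infty(\sigma)$ in the formally smooth case. First I would observe that $\Ind_I^{\GL_2(\cO_K)} W_{\chi_\lambda,2}$ is multiplicity free as a $\GL_2(\cO_K)$-representation: writing $W_{\chi_\lambda,2} = \chi_\lambda \otimes_\F \Lambda/\m^2$, which sits in the short exact sequence $0 \to \bigoplus_{j=0}^{f-1}(\chi_\lambda\alpha_j \oplus \chi_\lambda\alpha_j^{-1}) \to W_{\chi_\lambda,2} \to \chi_\lambda \to 0$ of $I/Z_1$-representations, it is multiplicity free as an $I$-representation with constituents $\chi_\lambda$ and $\chi_\lambda\alpha_j^{\pm1}$; and $2$-genericity of $\chi_\lambda$ makes these characters pairwise distinct with pairwise distinct $s$-conjugates, so the principal series they induce have pairwise disjoint sets of Jordan--H\"older factors (each being multiplicity free by \cite[Lemma~\ref{bhhms4:lem:multfree}]{BHHMS4}).

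Second, $M_\infty(\Ind_I^{\GL_2(\cO_K)} W_{\chi_\lambda,2})$ is cyclic over $\o R_\infty$. By the defining property of $M_\infty$ and Frobenius reciprocity, $M_\infty(\Ind_I^{\GL_2(\cO_K)} W_{\chi_\lambda,2})/\fm_\infty \cong \Hom_I(W_{\chi_\lambda,2},\pi)^\vee$, and $\dim_\F \Hom_I(W_{\chi_\lambda,2},\pi) = \dim_\F \Hom_I(\Proj_{I/Z_1}\chi_\lambda, \pi[\m^2]) = [\pi[\m^2]:\chi_\lambda]$, which equals $[\pi[\m]:\chi_\lambda] = 1$ by assumption~\ref{it:assum-ii} (using that $\lambda \in \P$, so $[\pi[\m]:\chi_\lambda]=1$ by assumption~\ref{it:assum-i}). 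Hence $M_\infty(\Ind_I^{\GL_2(\cO_K)} W_{\chi_\lambda,2})$ is generated by a single element, so the proposition reduces to identifying its annihilator ideal in suitable coordinates.

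Third, I would compute the annihilator via Lemma~\ref{lem:Ann}. In the formally smooth (``minimal'') setting of \S~\ref{sec:global:setting}, the patched modules $M_\infty(\sigma)$ for Serre weights $\sigma$ are well understood from \cite[\S~8]{BHHMS1}: $M_\infty(\sigma) = 0$ unless $\sigma \in W(\brho)$ (by assumption~\ref{it:assum-i} and Nakayama), and for $\sigma \in W(\brho)$ the module $M_\infty(\sigma)$ is reduced with irreducible support a coordinate subspace, these supports being pairwise distinct; in particular the hypotheses of Lemma~\ref{lem:Ann} are met by our multiplicity free $V = \Ind_I^{\GL_2(\cO_K)} W_{\chi_\lambda,2}$. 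This yields $\Ann_{\o R_\infty} M_\infty(V) = \bigcap_{\sigma} \Ann_{\o R_\infty} M_\infty(\sigma)$, the intersection over $\sigma \in \JH(V) \cap W(\brho)$, which by the first paragraph is the union of $\JH(\Ind_I^{\GL_2(\cO_K)}\chi_\lambda)\cap W(\brho)$ and the sets $\JH(\Ind_I^{\GL_2(\cO_K)}\chi_\lambda\alpha_j^{\epsilon})\cap W(\brho)$, each described explicitly by Lemma~\ref{lem:buzzati-prop4.3-reformulation}. It then remains: (a) using the extension-graph description of $\chi_\lambda$ (\cite[\S~4]{breuil-buzzati}) and equation~\eqref{eq:id:al}, to determine which twists $\chi_\lambda\alpha_j^{\epsilon}$ lie in $\JH(D_0(\brho)^{I_1})$ (equivalently have $M_\infty \ne 0$, by \cite[Prop.~4.2]{breuil-buzzati}) and their associated sets $J'^{\min}, J'^{\max}$ — I expect exactly one surviving twist for each $j \in J_{\brho}$ with $t_j \ne y_jz_j$, accounting for $k_\lambda$; (b) to fix the isomorphism $\o R_\infty \cong \F\bbra{X_j,Y_j\ (1 \le j \le \ell),Z_m\ (\ell < m \le N)}$ so that, for $\sigma \in W(\brho)$, $\Ann_{\o R_\infty} M_\infty(\sigma)$ is the coordinate ideal $(Z_m : \ell < m \le 2f) + (\text{one of }X_j\text{ or }Y_j : 1 \le j \le \ell)$, with the choice governed by $J_\sigma$; and (c) to compute the resulting intersection of monomial ideals (reducing to the polynomial ring by Lemma~\ref{lem:intersec}) and verify that it equals $(X_jY_j\ (1 \le j \le \ell),\ Y_iY_j\ (1 \le i<j \le k_\lambda),\ Z_m\ (\ell < m \le 2f))$. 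A cyclic $\o R_\infty$-module with this annihilator is isomorphic to the corresponding quotient ring, which gives the proposition; and $N \ge 2f$ follows from the computation of the relative dimension of $R_\infty$ in \S~\ref{sec:global:setting} together with $S_D \ne \emptyset$ (forced since $D$ ramifies at exactly one infinite place).

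The hard part will be step (c), and more precisely the uniform bookkeeping across (a)--(c): reconciling the re-indexing of $\{0,\dots,f-1\}$ (putting $J_{\brho}$ first, and inside it the indices with $t_j \ne y_jz_j$ first) with a single choice of coordinates on $\o R_\infty$ that simultaneously linearizes all $2^\ell$ ideals $\Ann_{\o R_\infty} M_\infty(\sigma)$ and in which the constraints coming from Lemma~\ref{lem:buzzati-prop4.3-reformulation} for $\chi_\lambda$ and all its $\brho$-generic twists can be intersected explicitly — carried out uniformly over $\lambda \in \P$ and over the three shapes (irreducible, split reducible, nonsplit reducible) of $\brho$.
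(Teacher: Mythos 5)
Your outline matches the paper's strategy: establish cyclicity of $M_\infty(\Ind_I^{\GL_2(\cO_K)} W_{\chi_\lambda,2})$, compute $\JH(\Ind_I^{\GL_2(\cO_K)} W_{\chi_\lambda,2}) \cap W(\brho)$, reduce to an intersection of annihilators via Lemma~\ref{lem:Ann}, and recognize a Stanley--Reisner ideal. Your cyclicity argument via assumption~\ref{it:assum-ii} is a valid alternative to the paper's citation of \cite[Thm.~1.3(ii)]{HuWang2}, and your count of the $k_\lambda$ surviving twists (exactly the $j\in J_{\brho}$ with $t_j\neq y_jz_j$, the sign governed by whether $t_j=y_j$ or $z_j$) matches what the paper obtains from \cite[Lemma~\ref{bhhms4:lem:compare-I1-invts}(ii)]{BHHMS4} with $m=1$, feeding into Lemma~\ref{lem:buzzati-prop4.3-reformulation} and yielding the description~\eqref{eq:V-chi-cap-W-rho}.

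The genuine gap is at your step (b), which you rightly flag as the hard part but for which you offer no mechanism. Producing a \emph{single} set of coordinates on $\o R_\infty$ in which every $\Ann_{\o R_\infty}M_\infty(\sigma)$ for $\sigma\in W(\brho)$ is simultaneously a coordinate ideal with $X_j$ versus $Y_j$ pattern controlled by $J_\sigma$ is not a reindexing choice but a theorem, and without it the intersection computation cannot start. The paper supplies it by choosing an auxiliary $\mu\in\P$ with $\mu_j\in\{x_j+1,p-2-x_j\}$ for all $j\in J_{\brho}$ (so that $\JH(\Ind_I^{\GL_2(\cO_K)}\chi_\mu)\supset W(\brho)$), taking the lattice $\tau^0$ in the tame principal series type induced from the Teichm\"uller lift of $\chi_\mu$, and then invoking \cite[Thm.~7.2.1]{EGS} for the structure of the local deformation ring $R_{\brho}^{\tau,\psi,\square}$, \cite[Thm.~10.1.1]{EGS} for the structure of $M_\infty(\tau^0)$ (this is what pins down the coordinates on $R_\infty$), and \cite[Thm.~7.2.1(4), Lemma~10.1.12]{EGS} for the explicit form of $\Ann_{\o R_\infty}M_\infty(\sigma)$ in those coordinates. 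Your reference to ``\cite[\S~8]{BHHMS1}'' supplies the qualitative hypotheses of Lemma~\ref{lem:Ann} (reducedness, pairwise distinct irreducible components) but not the compatible coordinate system. Once the EGS input is in place, the translation between the parametrization of $W(\brho)$ inside $\Ind_I^{\GL_2(\cO_K)}\chi_\mu$ and the $J_\sigma$-parametrization (a shift by $\delta$ plus a symmetric difference with a fixed set), together with the intersection via Lemma~\ref{lem:intersec} and \cite[Thm.~5.1.4]{BH93}, is routine, as you anticipate.
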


\begin{proof}
  Let $\chi \defeq \chi_\lambda$ and $V_\chi \defeq  \Ind_I^{\GL_2(\cO_K)} W_{\chi,2}$.
  We have
  \begin{equation}\label{eq:M_infty-V_chi}
    M_\infty(V_\chi) / \m_\infty \cong \Hom_I((\Proj_{I/Z_1} \chi)/\m^2,\pi)^\vee \cong \Hom_I(\Proj_{I/Z_1} \chi,\pi[\m^2])^\vee,
  \end{equation}
  and this is one-dimensional by \cite[Thm.\ 1.3(ii)]{HuWang2}, so $M_\infty(V_\chi)$ is a cyclic $\o R_\infty$-module.

  We first show that %
  \begin{equation}\label{eq:V-chi-cap-W-rho}
    \JH(V_{\chi}) \cap W(\brho) = \Big\{ \sigma \in W(\brho) : \big\lvert (J_\sigma \setminus J'') \mathbin\Delta J'\big\rvert \le 1 \Big\},
  \end{equation}
  where $J' \defeq  \{ j \in J_{\brho} : \lambda_j \in \{x_j+2,p-3-x_j\}\}$ and $J'' \defeq  \{ j \in J_{\brho} : \lambda_j \in \{x_j+1,p-2-x_j\}\}$.
  Note that Lemma~\ref{lem:buzzati-prop4.3-reformulation} applied to $\chi$ gives $J'^\min = J'$ and $J'^\max = J' \sqcup J''$.
  On the other hand, by \cite[Lemma~\ref{bhhms4:lem:compare-I1-invts}(ii)]{BHHMS4} (with $m=1$), $\chi' \defeq \chi \alpha_j^{\pm 1}$ occurs in $\pi^{I_1}$ if and only if $j \in J_{\brho}$ and $\lambda_j \in \{x_j,p-3-x_j\}$ (resp.\ $\lambda_j \in \{x_j+2,p-1-x_j\}$) if the sign is positive (resp.\ negative), and in each such case Lemma~\ref{lem:buzzati-prop4.3-reformulation} applied to $\chi'$ gives $J'^\min = J' \mathbin\Delta \{j\}$ and $J'^\max = J' \sqcup J''$.
  In other words,
  \begin{align*}
    \JH(V_{\chi}) \cap W(\brho) &= \Big\{ \sigma \in W(\brho) : \exists \ K \subset J_{\brho}\setminus J'', |K| \le 1, J' \mathbin\Delta K \subset J_\sigma \subset (J' \mathbin\Delta K) \sqcup J''\Big\} \\
    &= \Big\{ \sigma \in W(\brho) : \exists \ K \subset J_{\brho}\setminus J'', |K| \le 1, J_\sigma \setminus J'' = J' \mathbin\Delta K \Big\} \\
    &= \Big\{ \sigma \in W(\brho) : \exists \ K \subset J_{\brho}\setminus J'', |K| \le 1, (J_\sigma \setminus J'') \mathbin\Delta J' = K \Big\},
  \end{align*}
  which is equivalent to~\eqref{eq:V-chi-cap-W-rho}.

  As $\brho$ is nonsplit, %
we may assume without loss of generality that $0 \notin J_{\brho}$ if $f$ is odd.
  Let
  \begin{equation*}
    \mu \defeq 
    \begin{cases}
      (x_0+1,p-2-x_1,x_2+1,p-2-x_3,\dots,p-2-x_{f-1}) & \text{if $f$ is even},\\
      (x_0,p-2-x_1,x_2+1,p-2-x_3,\dots,p-2-x_{f-2},x_{f-1}+1) & \text{if $f$ is odd},
    \end{cases}
  \end{equation*}
  so $\mu \in \P$ and for all $j \in J_{\brho}$ we have $\mu_j \in \{x_j+1,p-2-x_j\}$.
  Then \cite[Prop.\ 4.3]{breuil-buzzati} or Lemma~\ref{lem:buzzati-prop4.3-reformulation} imply that $\JH(\Ind_I^{\GL_2(\cO_K)} \chi_\mu) \supset W(\brho)$.
  Observe that if $\sigma \in W(\brho)$, then $\sigma$ is parametrized, in the notation of \cite{breuil-buzzati}, 
by the set
  \begin{equation*}
    J_{\sigma,\mu} \defeq  \{ \text{$j$ even} : j \in \delta(J_\sigma) \} \sqcup \{ \text{$j$ odd} : j\notin \delta(J_\sigma) \}.
  \end{equation*}
  (If $f$ is odd we take $0 \le j \le f-1$, not just $j \in \Z/f\Z$!)
  On the other hand, in the same situation, the minimal/maximal subsets in \cite[Prop.\ 4.3]{breuil-buzzati} equal
  \begin{align*}
    J^{\min} &= \delta(\{ \text{$j$ even} : j\notin J_{\brho}, \text{$j \ne 0$ if $f$ odd} \}) \\
    &= \{ \text{$j$ odd} : j\notin \delta(J_{\brho}) \}
  \end{align*}
  and 
  \begin{align*}
    J^{\max} &= \delta(\{ \text{$j$ even} : \text{$j \ne 0$ if $f$ odd} \} \sqcup \{ \text{$j$ odd} : j\in J_{\brho} \}) \\
    &= \{ \text{$j$ odd} \} \sqcup \{ \text{$j$ even} : j\in \delta(J_{\brho}) \}.
  \end{align*}
  In particular, we deduce that
  \begin{equation}\label{eq:Jsigma-mu}
    \begin{aligned}
      J_{\sigma,\mu} \setminus J^{\min} &= \{ \text{$j$ even} : j \in \delta(J_\sigma) \} \sqcup \{ \text{$j$ odd} : j\in \delta(J_{\brho} \setminus J_\sigma) \},\\
      J^{\max} \setminus J^{\min} &= \delta(J_{\brho}).
    \end{aligned}
  \end{equation}

  Let $\tau^0$ denote the lattice in a tame principal series type obtained by inducing the Teichm\"uller lift of $\chi_\mu$ from $I$ to $\GL_2(\cO_K)$, and let $\tau \defeq  \tau^0[1/p]$.
  By \cite[Thm.\ 7.2.1]{EGS} the corresponding fixed-determinant framed local deformation ring $R_{\brho}^{\tau,\psi,\square}$ is isomorphic to $\cO\bbra{x_j,y_j,z_m : j \in J^{\max} \setminus J^{\min}, 1 \le m \le f+3-\ell}/(x_j y_j : \text{all $j$})$ (of relative dimension $f+3$), where $\ell \defeq  |J^{\max} \setminus J^{\min}| = |J_{\brho}|$.
  The full fixed-determinant framed local deformation ring $R_{\brho}^{\psi,\square}$ is a power series ring in $3f+3$ variables.
  It is not hard to see that we can choose an isomorphism $R_{\brho}^{\psi,\square} \cong \cO\bbra{X_j,Y_j,Z_m : j \in J^{\max} \setminus J^{\min}, \ell < m \le 3f+3-\ell}$ such that $R_{\brho}^{\tau,\psi,\square} = R_{\brho}^{\psi,\square}/((X_j Y_j : \text{all $j$})+I_Z)$, where $I_Z \defeq  (Z_m : \ell < m \le 2f)$.

  Hence by \cite[Thm.\ 10.1.1]{EGS} we deduce that 
  \begin{align*}
    R_\infty &\cong \cO\bbra{X_j,Y_j,Z_m : j \in J^{\max} \setminus J^{\min}, \ell < m \le N}, \\
    M_\infty(\tau^0) &\cong R_\infty/((X_j Y_j : \text{all $j$})+I_Z),
  \end{align*}
  for some integer $N \ge 3f+3-\ell$.
  From \cite[Thm.\ 7.2.1(4), Lemma 10.1.12]{EGS} it follows that
  \begin{equation}\label{eq:Ann-serre-wt}
    \Ann_{\o R_\infty} M_\infty(\sigma) = (X_j : j \in J_{\sigma,\mu} \setminus J^{\min},\ Y_j : j \in J^{\max} \setminus J_{\sigma,\mu})+I_Z.
  \end{equation}
   {(In fact, to compare with the conventions of \cite{EGS} we have to replace $(J^{\min},J^{\max},J_{\sigma,\mu})$ by $((J^{\max})^c,(J^{\min})^c,J_{\sigma,\mu}^c)$, cf.~the proof of \cite[Lemma~7.4.1]{EGS}, but this amounts to interchanging $X_j$ and $Y_j$ for each $j$.)}

  By Lemma~\ref{lem:Ann} we have 
  \begin{align*}
    \Ann_{\o R_\infty} M_\infty(\Ind_I^{\GL_2(\cO_K)} \chi_\mu) &= \bigcap_{\sigma \in W(\brho)} \Ann_{\o R_\infty} M_\infty(\sigma), \\
    I_\chi \defeq  \Ann_{\o R_\infty} M_\infty(V_{\chi}) &= \bigcap_{\sigma \in \JH(V_{\chi}) \cap W(\brho)} \Ann_{\o R_\infty} M_\infty(\sigma).
  \end{align*}
  We will make several changes of variables, which will not affect the final result.
  Up to interchanging $X_j$'s and $Y_j$'s, we deduce from equations \eqref{eq:Jsigma-mu} and \eqref{eq:Ann-serre-wt} that 
  \begin{equation*}
    I_\chi \cong \bigcap_{\sigma \in \JH(V_{\chi}) \cap W(\brho)} \Big( (X_j : j \in \delta(J_\sigma),\ Y_j : j \in \delta(J_{\brho} \setminus J_\sigma))+I_Z \Big).
  \end{equation*}
  Shifting the indices in $X_j$ and $Y_j$ by one (to get rid of the $\delta(\cdot)$), and applying~\eqref{eq:V-chi-cap-W-rho} we get
  \begin{equation}\label{eq:I-chi}
    I_\chi \cong \bigcap_{J \subset J_{\brho},\; |(J\setminus J'') \mathbin\Delta J'| \le 1} \Big( (X_j : j \in J,\ Y_j : j \in J_{\brho} \setminus J)+I_Z \Big).
  \end{equation}
  As $J' \cap J'' = \emptyset$ we have $(J\setminus J'') \mathbin\Delta J' = (J \mathbin\Delta J')\setminus J''$.
  Interchanging variables $X_j$ and $Y_j$ for all $j \in J'$, which has the effect of replacing $J$ by $J \mathbin\Delta J'$ in \eqref{eq:I-chi} we get
  \begin{equation*}
    I_\chi \cong \bigcap_{J \subset J_{\brho},\; |J\setminus J''| \le 1} \Big( (X_j : j \in J,\ Y_j : j \in J_{\brho} \setminus J)+I_Z \Big).
  \end{equation*}
  By Lemma~\ref{lem:intersec} and \cite[Thm.\ 5.1.4]{BH93}, this intersection equals
  \begin{equation}\label{eq:I'-chi}
    (X_jY_j,\ Y_{j_1} Y_{j_2} : \text{all $j \in J_{\brho}$; $j_1 < j_2$ both contained in $J_{\brho} \setminus J''$})+I_Z.
  \end{equation}
  (The corresponding simplicial complex has facets $\{X_j : j \notin J, Y_j: j \in J\}$, hence minimal non-faces $\{Z_m\}$ for all $m$, $\{X_j, Y_j\}$ for all $j$, and $\{ Y_{j_1}, Y_{j_2} \}$ for all $j_1 < j_2$ both contained in $J_{\brho} \setminus J''$.)
  As $k_\lambda = |\{ j \in J_{\brho} : \lambda_j \notin \{x_j+1,p-2-x_j\}\}| = |J_{\brho} \setminus J''|$, we are done.
\end{proof}

\begin{prop}\label{prop:ext-i}
  {Assume that $\brho$ is $2$-generic}.
  Then for any $\lambda \in \P$ we have
  \begin{equation*}
    \dim_\F \Ext^i_{I/Z_1}(W_{\chi_\lambda,2},\pi) =
    \begin{cases}
      1 & \text{if $i = 0$},\\
      2f + \binom{k_\lambda}{2} & \text{if $i = 1$,}\\
      2f^2+(k_\lambda^2-k_\lambda-1)f-\binom{k_\lambda+1}3 & \text{if $i = 2$.}
    \end{cases}
  \end{equation*}
\end{prop}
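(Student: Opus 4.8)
The plan is to compute all three $\Ext$-groups through the patched module $M_\infty$ attached to $\pi = \pi(\brho)$, reducing everything to a $\Tor$-computation over $\o R_\infty$ whose answer is supplied by Proposition~\ref{prop:patched-module-V_chi-3} and Lemma~\ref{lem:Tor_i-stanley-reisner}.

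First I would use Frobenius reciprocity to rewrite $\Ext^i_{I/Z_1}(W_{\chi_\lambda,2},\pi) \cong \Ext^i_{\GL_2(\cO_K)/Z_1}(\Ind_I^{\GL_2(\cO_K)} W_{\chi_\lambda,2},\pi)$. Then, exactly as in the proof of \cite[Prop.~\ref{bhhms4:prop:dim-Ext}]{BHHMS4} — using that $\o{\mathbb M}_\infty$ is projective over the Iwasawa algebra $\F\bbra{\GL_2(\cO_K)/Z_1}$, flat over $\o R_\infty$, and satisfies $\o{\mathbb M}_\infty\otimes_{\o R_\infty}\F \cong \pi^\vee$, so that tensoring the Koszul resolution of $\F$ over the regular ring $\o R_\infty$ with $\o{\mathbb M}_\infty$ produces a projective resolution of $\pi^\vee$ over $\F\bbra{\GL_2(\cO_K)/Z_1}$ — one obtains, for any finite-length smooth $\GL_2(\cO_K)/Z_1$-representation $V$ over $\F$, a natural isomorphism $\Ext^i_{\GL_2(\cO_K)/Z_1}(V,\pi)^\vee \cong \Tor_i^{\o R_\infty}(M_\infty(V),\F)$. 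Applying this with $V = \Ind_I^{\GL_2(\cO_K)} W_{\chi_\lambda,2}$ and invoking Proposition~\ref{prop:patched-module-V_chi-3} (here the $2$-genericity of $\brho$ is used), the problem becomes the computation of $\dim_\F \Tor_i^{\o R_\infty}(\o R_\infty/I,\F)$ for $i=0,1,2$, where $\o R_\infty \cong \F\bbra{X_j,Y_j\ (1\le j\le\ell),\ Z_m\ (\ell<m\le N)}$ and $I = (X_jY_j\ (1\le j\le\ell),\ Y_iY_j\ (1\le i<j\le k_\lambda),\ Z_m\ (\ell<m\le 2f))$, with $\ell = |J_{\brho}|$ and $k_\lambda \le \ell < f$.

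Next I would split this $\Tor$-computation. Since the generators of $I$ never mix the variable groups $\{X_j,Y_j:1\le j\le k_\lambda\}$, the $\{X_j,Y_j\}$ for $k_\lambda<j\le\ell$, and the $\{Z_m\}$ for $\ell<m\le 2f$ (the remaining $Z_m$ playing no role), writing $\o R_\infty/I$ as the corresponding completed tensor product over $\F$ and tensoring minimal free resolutions of the factors gives, by the Künneth formula for $\Tor$, that the Poincaré series $\sum_i \dim_\F\Tor_i^{\o R_\infty}(\o R_\infty/I,\F)\,t^i$ is the product of: $P_{\mathrm{SR}}(t) = 1 + \sum_{i\ge1} i\binom{k_\lambda+1}{i+1}t^i$ (the contribution of the completed Stanley--Reisner ring of Lemma~\ref{lem:Tor_i-stanley-reisner} with $k=k_\lambda$), a factor $(1+t)$ for each hypersurface $\F\bbra{X_j,Y_j}/(X_jY_j)$ with $k_\lambda<j\le\ell$, and a Koszul factor $(1+t)$ for each $Z_m$ with $\ell<m\le 2f$. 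Hence the Poincaré series equals $P_{\mathrm{SR}}(t)\,(1+t)^{\ell-k_\lambda}(1+t)^{2f-\ell} = P_{\mathrm{SR}}(t)\,(1+t)^{2f-k_\lambda}$.

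Finally I would extract the coefficients of $t^0$, $t^1$, $t^2$. The $t^0$-coefficient is $1$; the $t^1$-coefficient is $\binom{k_\lambda+1}{2} + (2f-k_\lambda) = 2f + \binom{k_\lambda}{2}$; and the $t^2$-coefficient is $2\binom{k_\lambda+1}{3} + \binom{k_\lambda+1}{2}(2f-k_\lambda) + \binom{2f-k_\lambda}{2}$, which collapses (collecting powers of $f$) to $2f^2 + (k_\lambda^2-k_\lambda-1)f - \binom{k_\lambda+1}{3}$, matching the statement. The main obstacle is not any step here but the input Proposition~\ref{prop:patched-module-V_chi-3}, i.e.\ the explicit identification of $M_\infty(\Ind_I^{\GL_2(\cO_K)} W_{\chi_\lambda,2})$ via the tame-type deformation rings of \cite{EGS} together with the computation of the relevant annihilator intersections; once that is granted, the patching-to-$\Ext$ dictionary and the Künneth/Stanley--Reisner bookkeeping are routine, and one should also record that the case $i=0$ recovers the one-dimensionality of $\Hom_I(W_{\chi_\lambda,2},\pi)$ already used in the proof of Proposition~\ref{prop:patched-module-V_chi-3}.
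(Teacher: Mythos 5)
Your proposal is correct and follows essentially the same route as the paper: dualize to a $\Tor$-computation over $\o R_\infty$ via the patching dictionary (this is exactly \cite[Lemma~\ref{bhhms4:lem:isom-Tor}]{BHHMS4}, which you re-derive in passing), substitute the explicit presentation of $M_\infty(\Ind_I^{\GL_2(\cO_K)}W_{\chi_\lambda,2})$ from Proposition~\ref{prop:patched-module-V_chi-3}, and compute the low-degree $\Tor$ groups by splitting the quotient ring as a tensor product of the Stanley--Reisner ring of Lemma~\ref{lem:Tor_i-stanley-reisner} with $2f-k_\lambda$ hypersurface/Koszul factors. The only cosmetic difference is that you package the final bookkeeping as a Poincaré-series product $P_{\mathrm{SR}}(t)\,(1+t)^{2f-k_\lambda}$, whereas the paper writes out the equivalent convolution $\sum_{j}\binom{2f-k_\lambda}{i-j}\dim_\F\Tor_j^{R^{(1)}}(\F,R^{(1)}/I^{(1)})$.
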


\begin{proof}
  Let $\chi \defeq \chi_\lambda$ and $k \defeq  k_\lambda$ for short.

  By \cite[Lemma~\ref{bhhms4:lem:isom-Tor}]{BHHMS4}, $\Ext^i_{I/Z_1}(W_{\chi,2},\pi)$ is dual to $\Tor_i^{\o R_\infty}(\F,M_\infty(\Ind_I^{\GL_2(\cO_K)} W_{\chi,2}))$, hence by Pro\-position~\ref{prop:patched-module-V_chi-3} to
  $\Tor_i^{\o R_\infty}(\F,\o R_\infty/I_\infty)$, where
  \begin{align*}
    \o R_\infty &= \F\bbra{X_j, Y_j\ (1 \le j \le \ell), Z_m\ (\ell < m \le N)},\\
    I_\infty &= (X_jY_j\ (1 \le j \le \ell),\ Y_i Y_j\ (1 \le i < j \le k), \ Z_m (\ell < m \le 2f))\subset \o R_{\infty},
  \end{align*}
  where $\ell = |J_{\brho}|$.
  Let 
  \begin{align*}
    R &\defeq  \F[X_j, Y_j\ (1 \le j \le \ell), Z_m\ (\ell < m \le N)], \\
    I &\defeq  (X_jY_j\ (1 \le j \le \ell),\ Y_i Y_j\ (1 \le i < j \le k), \ Z_m (\ell < m \le 2f))\subset R.
  \end{align*}
As $\o R_{\infty}$ is flat over $R$ and $\o R_{\infty}/I_{\infty}=\o R_{\infty}\otimes_{R}R/I$, by considering minimal graded free resolutions we deduce an isomorphism
  \begin{equation*}
    \Tor_i^{R}(\F,R/I) \cong \Tor_i^{\o R_\infty}(\F,\o R_\infty/I_\infty).
  \end{equation*}

  It remains to compute $\Tor_i^R(\F,R/I)$ for $i \le 2$.
  We let
  \begin{align*}
    R^{(1)} &\defeq  \F[X_j, Y_j\ (1 \le j \le k)], & I^{(1)} &\defeq  (X_jY_j\ (1 \le j \le k),\ Y_i Y_j\ (1 \le i < j \le k)), \\
    R^{(2,j)} &\defeq  \F[X_j, Y_j], & I^{(2,j)} &\defeq  (X_jY_j), \\
    R^{(3,m)} &\defeq  \F[Z_m], & I^{(3,m)} &\defeq  (Z_m), \\
    R^{(4,n)} &\defeq  \F[Z_n], & I^{(4,n)} &\defeq  (0),
  \end{align*}
  so that
  \begin{gather*}
    R \cong R^{(1)} \otimes_\F \bigotimes_{k < j \le \ell} R^{(2,j)} \otimes_\F \bigotimes_{\ell < m \le 2f} R^{(3,m)} \otimes_\F \bigotimes_{n > 2f} R^{(4,n)},\\
    R/I \cong R^{(1)}/I^{(1)} \otimes_\F \bigotimes_{k < j \le \ell} R^{(2,j)}/I^{(2,j)} \otimes_\F \bigotimes_{\ell < m \le 2f} R^{(3,m)}/I^{(3,m)} \otimes_\F \bigotimes_{n > 2f} R^{(4,n)}/I^{(4,n)}.
  \end{gather*}
  By using the tensor product of a minimal graded free resolution of $R^{(1)}/I^{(1)}$ and of the minimal graded free resolutions
  \begin{align*}
    0 \to R^{(2,j)} \xrightarrow{X_jY_j} R^{(2,j)} \to &R^{(2,j)}/I^{(2,j)} \to 0 \\
    0 \to R^{(3,m)} \xrightarrow{Z_m} R^{(3,m)} \to &R^{(3,m)}/I^{(3,m)} \to 0 \\
    0 \to R^{(4,n)} \to &R^{(4,n)}/I^{(4,n)} \to 0
  \end{align*}
  we obtain that 
  \begin{align*}
    \dim_\F \Tor_i^R(\F,R/I) &= \sum_{j=0}^i \binom{2f-k}{i-j} \dim_\F \Tor_j^{R^{(1)}}(\F,R^{(1)}/I^{(1)}),\\
    &= \binom{2f-k}{i} + \sum_{j=1}^i j\cdot \binom{2f-k}{i-j} \binom{k+1}{j+1} 
  \end{align*}
  by Lemma~\ref{lem:Tor_i-stanley-reisner}.
  We conclude by a short calculation.
\end{proof}

\begin{cor}\label{cor:ext-i}
  {Assume that $\brho$ is $3$-generic}.
  Then for any $\lambda \in \P$ we have
  \begin{equation*}
    \dim_\F \Ext^1_{I/Z_1}(W_{\chi_\lambda,3},\pi) \ge 2f^2+f+\binom{k_\lambda+1}3.
  \end{equation*}
\end{cor}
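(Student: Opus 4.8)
The plan is to compare $W_{\chi_\lambda,3}$ with $W_{\chi_\lambda,2}$ through the short exact sequence of $I/Z_1$-representations
\[0 \to \chi_\lambda \otimes (\m^2/\m^3) \to W_{\chi_\lambda,3} \to W_{\chi_\lambda,2} \to 0\]
obtained by killing the top graded piece of $W_{\chi_\lambda,3} = \chi_\lambda\otimes\Lambda/\m^3$ (here $I_1/Z_1$ acts trivially on $\m^2/\m^3 = \gr_{-2}(\Lambda)$ and $H$ acts by conjugation). Applying $\Ext^\bullet_{I/Z_1}(-,\pi)$ gives a long exact sequence. Assumption~\ref{it:assum-ii} together with assumption~\ref{it:assum-i} gives $[\pi[\m^n]:\chi_\lambda] = [\pi[\m]:\chi_\lambda] = 1$ for $n = 1,2,3$ (using that $\pi^{I_1} = D_0(\brho)^{I_1}$ is multiplicity free and $\lambda \in \P$), hence $\dim_\F\Hom_{I/Z_1}(W_{\chi_\lambda,n},\pi) = 1$ for these $n$; in particular the edge map $\Hom_{I/Z_1}(W_{\chi_\lambda,2},\pi) \to \Hom_{I/Z_1}(W_{\chi_\lambda,3},\pi)$ is an isomorphism, so the connecting map $\Hom_{I/Z_1}(\chi_\lambda\otimes\m^2/\m^3,\pi) \to \Ext^1_{I/Z_1}(W_{\chi_\lambda,2},\pi)$ is injective. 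Setting $a \defeq \dim_\F\Hom_{I/Z_1}(\chi_\lambda\otimes\m^2/\m^3,\pi)$ and chasing the long exact sequence yields
\[\dim_\F\Ext^1_{I/Z_1}(W_{\chi_\lambda,3},\pi) \ge \dim_\F\Ext^1_{I/Z_1}(W_{\chi_\lambda,2},\pi) - a + \dim_\F\Ext^1_{I/Z_1}(\chi_\lambda\otimes\tfrac{\m^2}{\m^3},\pi) - \dim_\F\Ext^2_{I/Z_1}(W_{\chi_\lambda,2},\pi).\]

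Next I would invoke assumption~\ref{it:assum-iv}, which gives $\dim_\F\Ext^1_{I/Z_1}(\eta,\pi) = 2f\cdot\dim_\F\Hom_{I/Z_1}(\eta,\pi)$ for every smooth character $\eta$ of $I$. Decomposing $\chi_\lambda\otimes\m^2/\m^3$ into its $H$-eigenlines — an $H$-eigenbasis being given by the monomials $\prod_j y_j^{a_j}z_j^{b_j}h_j^{c_j}$ with $\sum_j(a_j+b_j+2c_j)=2$, of which there are $2f^2+2f$, with $y_j,z_j,h_j$ having eigencharacters $\alpha_j,\alpha_j^{-1},\mathbf 1$ respectively — this gives $\dim_\F\Ext^1_{I/Z_1}(\chi_\lambda\otimes\m^2/\m^3,\pi) = 2fa$. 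Plugging this, together with the exact values $\dim_\F\Ext^1_{I/Z_1}(W_{\chi_\lambda,2},\pi) = 2f+\binom{k_\lambda}2$ and $\dim_\F\Ext^2_{I/Z_1}(W_{\chi_\lambda,2},\pi) = 2f^2+(k_\lambda^2-k_\lambda-1)f-\binom{k_\lambda+1}3$ from Proposition~\ref{prop:ext-i}, into the displayed inequality and simplifying, one checks that the claimed bound $\dim_\F\Ext^1_{I/Z_1}(W_{\chi_\lambda,3},\pi) \ge 2f^2+f+\binom{k_\lambda+1}3$ is equivalent to the single inequality $a \ge 2f + \binom{k_\lambda}2$ (that is, $a \ge \dim_\F\Ext^1_{I/Z_1}(W_{\chi_\lambda,2},\pi)$).

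It then remains to prove $a \ge 2f + \binom{k_\lambda}2$. By the discussion above, $a$ equals the number of $H$-eigenmonomials $w$ of $\m^2/\m^3$ whose eigencharacter $\eta$ satisfies $\chi_\lambda\eta \in \JH(\pi^{I_1}) = \{\chi_\nu : \nu\in\P\}$. The $f$ monomials $h_j$ and the $f$ monomials $y_jz_j$ have trivial eigencharacter, and $\chi_\lambda\in\{\chi_\nu : \nu\in\P\}$ since $\lambda\in\P$; this gives $2f$. For the remaining $\binom{k_\lambda}2$ one uses that, by \cite[Lemma~\ref{bhhms4:lem:compare-I1-invts}]{BHHMS4}, the $k_\lambda$ ``special'' indices (those $j$ with $t_j\neq y_jz_j$, necessarily lying in $J_{\brho}$) are precisely the $j$ for which $\chi_\lambda\beta_j\in\{\chi_\nu:\nu\in\P\}$, where $\beta_j\defeq\alpha_j$ if $t_j = z_j$ and $\beta_j\defeq\alpha_j^{-1}$ if $t_j=y_j$; the point is then that for any two distinct special indices $i,j$ one still has $\chi_\lambda\beta_i\beta_j\in\{\chi_\nu:\nu\in\P\}$ (exhibiting the relevant $\nu$ via the explicit recipe used in the proof of Theorem~\ref{thm:pi2-K1}), and $\beta_i\beta_j$ is realized by a unique eigenmonomial of $\m^2/\m^3$. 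Since $\brho$ being $3$-generic forces $p$ to be large, the characters $\beta_i\beta_j$ attached to unordered pairs of special indices are pairwise distinct and distinct from $\mathbf 1$, so one obtains the extra $\binom{k_\lambda}2$ monomials and hence $a\ge 2f+\binom{k_\lambda}2$, which finishes the proof.

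The main obstacle is this last step, namely verifying $\chi_\lambda\beta_i\beta_j\in\{\chi_\nu:\nu\in\P\}$ for all pairs of special indices: one must track how the set $\P\subset\P^\ss$ and the parametrization $\nu\mapsto\chi_\nu$ behave under multiplication by $\alpha_i\alpha_j^{\pm1}$, the delicate case being adjacent indices $j=i\pm1$, where modifying $\lambda_i$ forces a compensating change in $\lambda_{i\pm1}$ and one must check the chain conditions defining $\P^\ss$ (and the extra condition cutting out $\P$) remain satisfied. An alternative and possibly cleaner source for this combinatorial input would be to determine $M_\infty(\Ind_I^{\GL_2(\cO_K)}\chi_\lambda)$ explicitly, in the spirit of Proposition~\ref{prop:patched-module-V_chi-3}, and read off $a$ from its structure.
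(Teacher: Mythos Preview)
Your proposal is correct and follows essentially the same route as the paper: the short exact sequence $0 \to \chi_\lambda\otimes\m^2/\m^3 \to W_{\chi_\lambda,3} \to W_{\chi_\lambda,2} \to 0$, the long exact sequence in $\Ext^\bullet_{I/Z_1}(-,\pi)$, assumption~\ref{it:assum-iv} to evaluate $\Ext^i$ of the semisimple piece, and Proposition~\ref{prop:ext-i} for the $W_{\chi_\lambda,2}$ terms.

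The only divergence is your treatment of $a = \dim_\F\Hom_{I/Z_1}(\chi_\lambda\otimes\m^2/\m^3,\pi)$. You reduce to $a \ge 2f+\binom{k_\lambda}2$ and then flag the verification of $\chi_\lambda\beta_i\beta_j \in \JH(\pi^{I_1})$ for pairs of special indices as the ``main obstacle''. In fact this is no obstacle: the very lemma you cite, \cite[Lemma~\ref{bhhms4:lem:compare-I1-invts}(ii)]{BHHMS4}, applied with $m=2$ rather than $m=1$, computes $\JH(\chi_\lambda\otimes\m^2/\m^3)\cap\JH(\pi^{I_1})$ completely and gives the exact value $a = 2f+\binom{k_\lambda}2$ (the $2f$ copies of $\chi_\lambda$ coming from the monomials $h_j,y_jz_j$, and one copy of $\chi_\lambda\alpha_i^{\ve_i}\alpha_j^{\ve_j}$ for each $\{i<j\}\subset J$). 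The paper proceeds exactly this way; once $a$ equals $\dim_\F\Ext^1_{I/Z_1}(W_{\chi_\lambda,2},\pi)$, the injective connecting map is an isomorphism and the bound drops out of the remaining three-term piece of the long exact sequence. So there is no need for the case analysis on adjacent indices or the alternative via $M_\infty$ that you sketch.
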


\begin{rem}\label{rem:exi-i}
  We will see below (in the proof of Proposition~\ref{prop:degenerates}) that equality holds, {at least under a stronger genericity condition}.
  By the proof of this corollary, this implies in fact that the natural map $\Ext^2_{I/Z_1}(W_{\chi_\lambda,3},\pi) \to \Ext^2_{I/Z_1}(\chi_\lambda \otimes \m^2/\m^3,\pi)$ is injective.
\end{rem}

\begin{proof}
  Again let $\chi \defeq \chi_\lambda$.
  By \cite[Thm.~1.3]{HuWang2}, we have $\Hom_{I/Z_1}(W_{\chi,3},\pi) = \Hom_{I/Z_1}(\chi,\pi)$.
  The exact sequence $0 \to \chi \otimes \m^2/\m^3 \to W_{\chi,3} \to W_{\chi,2} \to 0$ thus gives rise to a long exact sequence
  \begin{equation}\label{eq:long-exact}
    \begin{aligned}
      0 \to \Hom_{I/Z_1}(\chi \otimes \m^2/\m^3,\pi) \to \Ext^1_{I/Z_1}(&W_{\chi,2},\pi) \to \Ext^1_{I/Z_1}(W_{\chi,3},\pi) \\
      &\to \Ext^1_{I/Z_1}(\chi \otimes \m^2/\m^3,\pi) \to \Ext^2_{I/Z_1}(W_{\chi,2},\pi).
    \end{aligned}
  \end{equation}
  Let $J \defeq  \{ 0 \le j \le f-1 : t_j \ne y_jz_j \}$ (where again the $t_j$ are defined in~(\ref{eq:id:al})).
  Let $\ve_j \defeq  -1$ if $t_j = y_j$, $\ve_j \defeq  +1$ if $t_j = z_j$.
  Note that, by \cite[Lemma~\ref{bhhms4:lem:compare-I1-invts}(ii)]{BHHMS4} (with $m=2$), $\JH(\chi \otimes \m^2/\m^3) \cap \JH(\pi^{I_1})$ consists of $\chi$ (occurring $2f$ times in $\chi \otimes \m^2/\m^3$) and all $\chi \alpha_i^{\ve_i} \alpha_j^{\ve_j}$ for $\{i < j\} \subset J$ (each occurring once in $\chi \otimes \m^2/\m^3$).
  Hence by assumption \ref{it:assum-iv} we deduce that
  \begin{equation*}
    \dim_\F \Ext^i_{I/Z_1}(\chi \otimes \m^2/\m^3,\pi) =
    \begin{cases}
      2f+\binom {k_\lambda}2 & \text{if $i = 0$},\\
      2f(2f+\binom {k_\lambda}2) & \text{if $i = 1$}.
    \end{cases}
  \end{equation*}
  By Proposition~\ref{prop:ext-i} we deduce that the first map in~\eqref{eq:long-exact} is an isomorphism, so
  \begin{align*}
    \dim_\F \Ext^1_{I/Z_1}(W_{\chi,3},\pi) &\ge \dim_\F \Ext^1_{I/Z_1}(\chi \otimes \m^2/\m^3,\pi) - \dim_\F \Ext^2_{I/Z_1}(W_{\chi,2},\pi) \\
    &= 2f\left(2f+\binom{k_\lambda}2\right) - \left(2f^2+(k_\lambda^2-k_\lambda-1)f-\binom{k_\lambda+1}3\right)\\
    &=2f^2+f+\binom{k_\lambda+1}3,
  \end{align*}
  where we used Proposition~\ref{prop:ext-i} again.
\end{proof}

\begin{lem}\label{lem:set-P}
  Assume that $\brho$ is {$(2n+1)$}-generic. 
  Suppose that $\chi : I \to \F\s$, $J, J' \subset \{0,1,\dots,f-1\}$, $i_j, i'_{j'} \in \Z \setminus \{0\}$ for all $j \in J$, $j' \in J'$ such that $\sum_{j\in J} |i_j| \le n$ and $\sum_{j\in J'} |i'_j| \le n$.
  If $\chi \prod_{j \in J} \alpha_j^{i_j} \in \JH(\pi^{I_1})$ and $\chi \prod_{j \in J'} \alpha_j^{i'_j} \in \JH(\pi^{I_1})$, then
  \begin{enumerate}
  \item $|i_j - i'_j| \le 1$ for all $j \in J \cap J'$;
  \item $\chi \prod_{j \in J''} \alpha_j^{i''_j} \in \JH(\pi^{I_1})$ for any $J \cap J' \subset J'' \subset J \cup J'$,
  where $i''_j = i_j$ if $j \in J$ and $i''_j = i'_j$ if $j \in J'' \setminus J$.
  \end{enumerate}
\end{lem}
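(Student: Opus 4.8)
\textbf{Plan for the proof of Lemma~\ref{lem:set-P}.}
The idea is to translate the statement about characters of $I$ occurring in $\pi^{I_1}$ back to the combinatorics of the set $\mathscr{P}^\ss$ via \cite[Lemma~\ref{bhhms4:lem:compare-I1-invts}(ii)]{BHHMS4}, and then argue directly. First I would recall that, under the $(2n+1)$-genericity hypothesis and the bounds $\sum_j|i_j|\le n$, $\sum_j|i'_j|\le n$, the character $\chi\prod_j\alpha_j^{i_j}$ determines and is determined by the tuple $(i_j)_j$: indeed if $\sum_j i_jp^j\equiv\sum_j i'_jp^j\pmod{p^f-1}$ with all $|i_j|,|i'_j|\le n<\tfrac{p-1}{2}$ then $i_j=i'_j$ for all $j$ (this is the same uniqueness argument already used in the proof of Lemma~\ref{lem:submodule-structure}). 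So the two characters in the hypothesis really record two well-defined integer tuples supported on $J$ and $J'$ respectively.

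Next I would fix $\lambda\in\mathscr{P}$ such that $\chi=\chi_\lambda$ (after a twist we may assume $\chi$ itself is parametrized by some $\lambda\in\mathscr{P}$, or more precisely work with the description of $\JH(\pi^{I_1})$ from assumption~\ref{it:assum-i} together with the explicit parametrization of $\JH(D_0(\brho)^{I_1})$). The key input is \cite[Lemma~\ref{bhhms4:lem:compare-I1-invts}(ii)]{BHHMS4}: it tells us exactly which characters $\chi_\lambda\prod_j\alpha_j^{i_j}$ occur in $\pi^{I_1}$, namely in terms of the signs dictated by the shape of $\lambda_j(x_j)$ (whether $t_j(\lambda)$ equals $y_j$, $z_j$, or $y_jz_j$) and a uniform bound on $|i_j|$ coming from genericity. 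Concretely, for each $j$ there is a prescribed allowed range $I_j\subset\mathbb{Z}$ (an interval containing $0$, whose precise endpoints depend on $\lambda_j$ and the genericity level) such that $\chi_\lambda\prod_j\alpha_j^{i_j}\in\JH(\pi^{I_1})$ if and only if $i_j\in I_j$ for all $j$. Given this ``box'' description, part (ii) is immediate: the hypotheses say $i_j\in I_j$ for $j\in J$ (with $i_j=0$ for $j\notin J$ being automatically in $I_j$) and $i'_{j'}\in I_{j'}$ for $j'\in J'$, and for any intermediate $J''$ the tuple $(i''_j)_j$ has each coordinate either some $i_j\in I_j$, some $i'_j\in I_j$, or $0\in I_j$, hence lies in the box. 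Part (i) then follows because $i_j,i'_j$ both lie in the same interval $I_j$, and one checks from the explicit endpoints in \cite[Lemma~\ref{bhhms4:lem:compare-I1-invts}(ii)]{BHHMS4} (the relevant intervals being essentially $\{0,\pm1\}$-width in the directions where both tuples are supported, or more precisely: for $j\in J\cap J'$ the allowed $i_j$ form an interval of length $\le 2$ once one also imposes the constraint that the adjacent coordinates $j\pm1$ vanish) that $|i_j-i'_j|\le 1$.

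The main obstacle I expect is bookkeeping: \cite[Lemma~\ref{bhhms4:lem:compare-I1-invts}(ii)]{BHHMS4} describes the occurring characters via the $\mathscr{P}^\ss$-parametrization, which couples neighbouring indices $j$ and $j+1$ (the recipe for $\lambda'_j(x_j)$ depends on whether $j+1\in J_{\lambda'}$), so the ``box'' picture above is only literally correct once one has disentangled these constraints — i.e.\ one must verify that, when all perturbations $i_j$ are bounded by $n\le\tfrac{p-1}{2}$ and $\brho$ is $(2n+1)$-generic, the set of achievable tuples is exactly the product $\prod_j I_j$ of coordinatewise intervals with no cross-terms. This is where genericity is really used, and it is the analogue of the computation of $\JH(\o\tau_\lambda^\vee)$ and $\JH(\m^i\o\tau_\lambda^\vee)$ carried out in Step~1 of the proof of Theorem~\ref{thm:gr-pi2}; I would either invoke that computation directly (since $\pi[\m^n]=\tau^{(n)}[\m^n]$ and $\tau^{(n)}_\lambda$ has the explicit tensor-product description recalled before Lemma~\ref{lem:submodule-structure}) or reprove the relevant coordinatewise statement. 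Once the box description is in hand, parts (i) and (ii) are formal, so the write-up reduces to carefully citing \cite[Lemma~\ref{bhhms4:lem:compare-I1-invts}(ii)]{BHHMS4} and \cite[Cor.~\ref{bhhms4:cor:tau-multfree}(ii)]{BHHMS4} and reading off the intervals.
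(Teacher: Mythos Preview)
Your plan has the right key input --- \cite[Lemma~\ref{bhhms4:lem:compare-I1-invts}(ii)]{BHHMS4} --- but a genuine gap in how you set it up. You write ``fix $\lambda\in\mathscr{P}$ such that $\chi=\chi_\lambda$ (after a twist we may assume $\chi$ itself is parametrized by some $\lambda\in\mathscr{P}$)''. There is no such twist freedom: $\JH(\pi^{I_1})$ is a fixed finite set of characters, and nothing in the hypotheses forces $\chi$ to lie in it (indeed, in the application in Step~2 of the proof of Proposition~\ref{prop:degenerates} one runs over \emph{all} $\chi$ with $\JH(W_{\chi,3})\cap\JH(\pi^{I_1})\ne\emptyset$, most of which are not in $\JH(\pi^{I_1})$). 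So your ``box centered at $0$'' picture, and in particular your argument that $0\in I_j$ for every $j$, is unjustified as written.

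The fix, which is exactly what the paper does, is to center at the character you \emph{do} know lies in $\JH(\pi^{I_1})$: set $\chi'\defeq\chi\prod_{j\in J}\alpha_j^{i_j}=\chi_\lambda$ for some $\lambda\in\mathscr{P}$, and then rewrite the other characters as perturbations of $\chi'$. One finds
\[
\chi\prod_{j'\in J'}\alpha_{j'}^{i'_{j'}}=\chi'\prod_{j\in J\cap J'}\alpha_j^{\,i'_j-i_j}\prod_{j\in J\setminus J'}\alpha_j^{-i_j}\prod_{j\in J'\setminus J}\alpha_j^{\,i'_j},
\]
with total exponent weight $\le 2n$, so \cite[Lemma~\ref{bhhms4:lem:compare-I1-invts}(ii)]{BHHMS4} with $m=2n$ gives $|i'_j-i_j|\le 1$ for $j\in J\cap J'$ directly (the allowed exponent at each $j$ lies in $\{0,\ve_j\}$ or $\{0\}$). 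Part~(ii) follows the same way by writing $\chi\prod_{j\in J''}\alpha_j^{i''_j}$ as a perturbation of $\chi'$ supported on $(J\setminus J'')\cup(J''\setminus J)\subset(J\setminus J')\cup(J'\setminus J)$ and comparing with the perturbation just analyzed. Once you recenter, the whole proof is a few lines; the worry about ``coupling between neighbouring indices $j$ and $j+1$'' is unnecessary, since the cited lemma already delivers a decoupled, coordinatewise condition.
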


\begin{proof}
  Let $\chi' \defeq  \chi \prod_{j \in J} \alpha_j^{i_j} \in \JH(\pi^{I_1})$ and $\chi'' \defeq  \chi \prod_{j \in J'} \alpha_j^{i'_j} \in \JH(\pi^{I_1})$.
  Write $\chi' = \chi_\lambda$ for some $\lambda \in \P$.
  Since
  \begin{equation*}
    \chi'' = \chi' \prod_{j \in J \cap J'} \alpha_j^{i'_j-i_j} \prod_{j \in J \setminus J'} \alpha_j^{-i_j} \prod_{j \in J' \setminus J} \alpha_j^{i'_j},
  \end{equation*}
  part (i) immediately follows from \cite[Lemma~\ref{bhhms4:lem:compare-I1-invts}(ii)]{BHHMS4} (with $m = 2n$).
  The same lemma implies part (ii) as well, by noting that 
  \begin{equation*}
    \chi \prod_{j \in J''} \alpha_j^{i''_j} = \chi' \prod_{j \in J \setminus J''} \alpha_j^{-i_j} \prod_{j \in J'' \setminus J} \alpha_j^{i'_j}
  \end{equation*}
  and since the assumptions imply that $J'' \setminus J \subset J' \setminus J$ and $J \setminus J'' \subset J \setminus J'$.
\end{proof}

\begin{proof}[Proof of Proposition \ref{prop:degenerates}]
  It suffices to establish a canonical isomorphism
  \[\Tor_1^{\gr(\Lambda)}(\gr(\Lambda)/\o\m^3,\gr_\m(\pi^{\vee}))\cong \gr(\Tor_1^{\Lambda}(\Lambda/\m^3,\pi^{\vee})).\] 
  Just as in the proof of \cite[Cor.~\ref{bhhms4:cor:Tor-tau}, Cor.~\ref{bhhms4:cor:Tor-pi}]{BHHMS4} it suffices to show that
  \[\dim_\F \Tor_1^{\gr(\Lambda)}(\gr(\Lambda)/\o\m^3,\gr_\m(\pi^{\vee})) \le \dim_\F \Tor_1^{\Lambda}(\Lambda/\m^3,\pi^{\vee}),\]
  and then equality has to hold.

  \textbf{Step 1.} We first show that
  \begin{multline*}
    \dim_\F \Tor_1^{\gr(\Lambda)}(\gr(\Lambda)/\o\m^3,\gr_\m(\pi^{\vee})) \\ = \sum_{\lambda \in \P} \left(4f^3 + (6-4k_\lambda)f^2 + (2k_\lambda^2-2k_\lambda+1)f - \frac 16 k_\lambda(k_\lambda-1)(2k_\lambda-1)\right).
  \end{multline*}
  From \cite[Thm.~\ref{bhhms4:thm:CMC}]{BHHMS4} we have %
  $\gr_\m(\pi^{\vee}) \cong \bigoplus_{\lambda\in\mathscr{P}}\chi_{\lambda}^{-1}\otimes R/\mathfrak{a}(\lambda)$.
  Fix $\lambda \in \P$ and let $J \defeq  \{0 \le j \le f-1 : t_j \ne y_jz_j \}$, $k \defeq  |J| = k_\lambda$.
  It will suffice to show that
  \begin{equation}\label{eq:gr-lambda-formula}
    \dim_\F \Tor_1^{\gr(\Lambda)}(\gr(\Lambda)/\o\m^3,R/\mathfrak{a}(\lambda)) = 4f^3 + (6-4k)f^2 + (2k^2-2k+1)f - \frac 16 k(k-1)(2k-1).
  \end{equation}

  We will compute this $\Tor_1$ using an explicit free resolution of $R/\mathfrak{a}(\lambda)$.
  
    Recall {from \cite[eq.~\eqref{bhhms4:eq:grj}]{BHHMS4}} that $\gr(\Lambda)$ (resp.\ $\gr(\Lambda)_j$) is the universal enveloping algebra of the Lie algebra $\bigoplus_{j=0}^{f-1} \mathfrak g_j$ (resp.\ $\mathfrak g_j$) over $\F$,
  where $\mathfrak g_j$ has $\F$-basis $y_j$, $z_j$, $h_j$, subject to $[y_j,z_j] = h_j$, $h_j$ is central, and $[\mathfrak g_j,\mathfrak g_{j'}] = 0$ for all $j \ne j'$.
  In the following we use the Poincar\'e--Birkhoff--Witt bases for these Lie algebras, for the ordering $y_0,\dots,y_{f-1},z_0,\dots,z_{f-1},h_0,\dots,h_{f-1}$.
  In particular, $\gr(\Lambda)/\o\m^3$ has $\F$-basis given by all ordered monomials whose degree is at least $-2$, where $y_j,z_j$ have degree $-1$ and $h_j$ has degree $-2$, and its dimension equals $2f^2+4f+1$.
  
  Note that $R/\mathfrak{a}(\lambda)$ is the tensor product of $\gr(\Lambda)_j/(t_j,h_j)$ over $\F$ for all $0 \le j \le f-1$.
  Recall from \cite[Lemma 9.6]{HuWang2} the minimal gr-free resolution of $\gr(\Lambda)_j/(t_j,h_j)$ as $\gr(\Lambda)_j$-module:
  \begin{equation*}
    G_\bullet^{(j)} : \quad 0 \to \gr(\Lambda)_j \xrightarrow{(-h_j,t_j)} \gr(\Lambda)_j \oplus \gr(\Lambda)_j \xrightarrow{\binom{t_j}{h_j}} \gr(\Lambda)_j \to 0,
  \end{equation*}
  where we ignore the grading and $H$-actions.
  (Compared to \cite[Lemma 9.6]{HuWang2} we applied the involution \ $(\alpha,\beta) \mapsto (-\alpha-\beta,\beta)$ \ to \ the \ middle \ term \ in \ case \ $t_j = y_jz_j$.)
  \ Then $\Tor_1^{\gr(\Lambda)}(\gr(\Lambda)/\o\m^3,R/\mathfrak{a}(\lambda))$ is obtained as the first homology of the complex
  $\gr(\Lambda)/\o\m^3 \otimes_{\gr(\Lambda)} G_\bullet$, where $G_\bullet$ is the tensor product complex of all $G_\bullet^{(j)}$, $0 \le j \le f-1$.
  Note that $G_0 \cong \gr(\Lambda)$, $G_1 \cong \bigoplus_{j=0}^{f-1} \gr(\Lambda)^{\oplus 2}$,
  $G_2 \cong \bigoplus_{j=0}^{f-1} \gr(\Lambda) \oplus \bigoplus_{0 \le i < j \le f-1} (\gr(\Lambda)^{\oplus 2}) \otimes_{\gr(\Lambda)} (\gr(\Lambda)^{\oplus 2})$.

  For the purpose of the calculation we may and it  will be convenient to assume that $t_j = z_j$ for all $j \in J$ (by interchanging $y_j$ and $z_j$, if necessary).

  The morphism $\partial_1 : G_1/\o\m^3 \to G_0/\o\m^3$ is given by $\binom{t_j}{h_j}$ in the $j$-th component, so its image in $\gr(\Lambda)/\o\m^3$ has $\F$-spanning vectors given by 
  \begin{equation*}
    \begin{cases}
      y_jz_j, h_j & \text{if $j \notin J$},\\
      t_j, y_i t_j, z_i t_j, h_j & \text{if $j \in J$},
    \end{cases}
  \end{equation*}
  where $0 \le i \le f-1$ is arbitrary.
  As the term $t_i t_j = t_j t_i$ gets counted twice for any $\{i < j\} \subset J$, we see that
  \begin{equation*}
    \dim_\F \im(\partial_1) = 2(f-k)+(2f+2)k-\binom k2 = 2f(k+1)-\binom k2.
  \end{equation*}
  Since $\dim_\F G_1/\o\m^3 = 4f^3+8f^2+2f$, we deduce that
  \begin{equation*}
    \dim_\F \ker(\partial_1) = 4f^3+8f^2-2kf+\binom k2.
  \end{equation*}

  The image of the morphism $\partial_2 : G_2/\o\m^3 \to G_1/\o\m^3$ is generated by $(-h_j,t_j)_j$ for all $j$ and $(t_j,0)_i-(t_i,0)_j$, $(h_j,0)_i-(0,t_i)_j$, $(0,h_j)_i-(0,h_i)_j$ for all $i\ne j$ as a $\gr(\Lambda)$-module.
  (Here the subscript $j$ denotes the $j$-th component of $G_1/\o\m^3\cong \bigoplus_{j=0}^{f-1} (\gr(\Lambda)/\o\m^3)^{\oplus 2}$.)
  As an $\F$-vector space we get spanning vectors
  \begin{align}
    (h_j,0)_i-(0,t_i)_j &\quad 0 \le i,j \le f-1, \notag \\
    (t_j,0)_i-(t_i,0)_j &\quad 0 \le i < j \le f-1, \notag \\
    (0,h_j)_i-(0,h_i)_j &\quad 0 \le i < j \le f-1, \notag \\
    \noalign{\noindent and}
    -(0,wt_i)_j &\quad \text{if $i \in J$, any $j$}, \label{eq:vec1}\\
    (wt_j,0)_i-(wt_i,0)_j &\quad \text{if $\{i < j\} \subset J$}, \label{eq:vec2} \\
    (wt_j,0)_i &\quad \text{if $i \notin J,\ j \in J$} \label{eq:vec3},
  \end{align}
  where $w \in \{y_0,\dots,y_{f-1},z_0,\dots,z_{f-1}\}$ is arbitrary.
  By the Poincar\'e--Birkhoff--Witt Theorem, the only linear relations occur in~\eqref{eq:vec1}, where $(0,t_j t_\ell)_i$ is listed twice for any $\{j < \ell\} \subset J$ and any $i$;
  in~\eqref{eq:vec2}, where for any $\{i < j < \ell\} \subset J$ the elements
  \begin{equation*}
    (t_\ell t_j,0)_i-(t_\ell t_i,0)_j,\     (t_i t_\ell,0)_j-(t_i t_j,0)_\ell,\     (t_j t_i,0)_\ell-(t_j t_\ell,0)_i,
  \end{equation*}
  add to zero, and in~\eqref{eq:vec3}, where $(t_j t_\ell,0)_i$ is listed twice for any $\{j < \ell\} \subset J$, $i \notin J$.
  Therefore,
  \begin{align*}
    \dim_\F \im(\partial_2) &= f^2+\binom f2+\binom f2+2f^2k+2f\binom k2+2fk(f-k)-f\binom k2-\binom k3-(f-k)\binom k2 \\
    &= 2f^2(2k+1)-f(2k^2+1)+2\binom{k+1}3.
  \end{align*}
  We finally check that $\dim_\F \ker(\partial_1)-\dim_\F \im(\partial_2)$ equals the right-hand side of \eqref{eq:gr-lambda-formula}, as desired.

  \textbf{Step 2.} We show that
  \begin{equation*}
    \dim_\F \Tor_1^{\Lambda}(\Lambda/\m^3,\pi^{\vee}) \ge \sum_{\lambda \in \P} \left(4f^3 + (6-4k_\lambda)f^2 + (2k_\lambda^2-2k_\lambda+1)f - \frac 16 k_\lambda(k_\lambda-1)(2k_\lambda-1)\right).
  \end{equation*}
  Note that
  \begin{equation}\label{eq:Tor1-direct-sum}
    \Tor_1^{\Lambda}(\Lambda/\m^3,\pi^{\vee}) \cong \Tor_1^{\F\bbra{I/Z_1}}(\F\bbra{I/Z_1} \otimes_\Lambda \Lambda/\m^3,\pi^{\vee}) \cong \bigoplus_{\chi : I \to \F\s} \Tor_1^{\F\bbra{I/Z_1}}(W_{\chi,3},\pi^{\vee}),
  \end{equation}
  and this is dual to $\bigoplus_{\chi : I \to \F\s} \Ext^1_{I/Z_1}(W_{\chi,3},\pi)$ by \cite[Lemma~\ref{bhhms4:lem:isom-Tor}]{BHHMS4}.

  By assumption \ref{it:assum-iv}, $\Ext^1_{I/Z_1}(W_{\chi,3},\pi) = 0$ if $\JH(W_{\chi,3}) \cap \JH(\pi^{I_1}) = \emptyset$.
  Assume that $\JH(W_{\chi,3}) \cap \JH(\pi^{I_1}) \ne \emptyset$ and let $0 \le i < 3$ be minimal such that $\JH(\chi \otimes \m^i/\m^{i+1}) \cap \JH(\pi^{I_1}) \neq \emptyset$.
  From Lemma~\ref{lem:set-P} (with $n = 2$) we deduce for this $i$ that $\JH(\chi \otimes \m^i/\m^{i+1}) \cap \JH(\pi^{I_1})$ is a singleton. %
  For any $\lambda \in \P$ and $0 \le i < 3$ let $X_{\lambda,i}$ be the set of all $\chi$ such that $\JH(\chi \otimes \m^i/\m^{i+1}) \cap \JH(\pi^{I_1}) = \{\chi_\lambda\}$ and $\JH(\chi \otimes \m^j/\m^{j+1}) \cap \JH(\pi^{I_1}) = \emptyset$ for all $0 \le j < i$.
  Let $X_\lambda \defeq  \bigsqcup_{0 \le i < 3} X_{\lambda,i}$.
  It will be sufficient to show that 
  \begin{equation*}
    \sum_{\chi \in X_{\lambda}} \dim_\F \Ext^1_{I/Z_1}(W_{\chi,3},\pi) \ge 4f^3 + (6-4k)f^2 + (2k^2-2k+1)f - \frac 16 k(k-1)(2k-1),
  \end{equation*}
  where $k \defeq  k_\lambda$ for short.

  If $\chi \in X_{\lambda,0}$, then $\chi = \chi_\lambda$ and
  \begin{equation*}
    \dim_\F \Ext^1_{I/Z_1}(W_{\chi,3},\pi) \ge 2f^2+f+\binom{k+1}3
  \end{equation*}
  by Corollary~\ref{cor:ext-i}.
  Moreover, $|X_{\lambda,0}| = 1$.

  Suppose that $\chi \in X_{\lambda,1}$.
  Then the unique (up to scalar) nonzero morphism $\Proj_{I/Z_1} \chi_\lambda \to W_{\chi,3}$ factors through a morphism $i: W_{\chi_\lambda,2} \to W_{\chi,3}$, as the image is contained in $\rad W_{\chi,3} = \m W_{\chi,3}$.
  Moreover, $i$ is injective by \cite[Lemma 6.1.2]{BHHMS1} and any Jordan--H\"older factor of $\coker(i)$ is not contained in $\JH(\pi^{I_1})$ by Lemma~\ref{lem:set-P} (with $n = 2$).
  (We know the constituents of $W_{\chi,3}$ and their multiplicities by \cite[(44)]{BHHMS1}.)
  Hence $i$ induces an isomorphism $\Ext^1_{I/Z_1}(W_{\chi,3},\pi) \congto \Ext^1_{I/Z_1}(W_{\chi_\lambda,2},\pi)$, which has dimension $2f+\binom k2$ by Proposition~\ref{prop:ext-i}.
  From %
  \cite[Lemma~\ref{bhhms4:lem:compare-I1-invts}(ii)]{BHHMS4} applied with $\sum_j |i_j| \le 1$ it follows that $|X_{\lambda,1}| = 2f-k$.

  If $\chi \in X_{\lambda,2}$, then $\JH(W_{\chi,3}) \cap \JH(\pi^{I_1}) = \{\chi_\lambda\}$ (with multiplicity one), so $\Ext^1_{I/Z_1}(W_{\chi,3},\pi) \xleftarrow{\sim} \Ext^1_{I/Z_1}(\chi_\lambda,\pi)$, which has dimension $2f$ by  assumption \ref{it:assum-iv}. 
  We claim that $|X_{\lambda,2}| = 2f^2-2kf+\binom{k+1}2$.
  Let again $J \defeq  \{ 0 \le j \le f-1 : t_j \ne y_jz_j \}$, which depends on $\lambda$.
  Let $\ve_j \defeq  -1$ if $t_j = y_j$, $\ve_j \defeq  +1$ if $t_j = z_j$, and $\ve_j \in \{\pm 1\}$ arbitrary for $j \notin J$.
  Note that for integers $i_j \in \Z$ ($0 \le j \le f-1$) such that $\sum_j |i_j| \le 2$ we have $\chi_\lambda \prod_j \alpha_j^{\ve_j i_j} \in \JH(\pi^{I_1})$ if and only if $i_j \in \{0,1\}$ if $j \in J$ and $i_j = 0$ if $j\notin J$, cf.\ \cite[Lemma~\ref{bhhms4:lem:compare-I1-invts}(ii)]{BHHMS4} (with $m = 2$).
  Using  Lemma~\ref{lem:set-P} (with $n=2$) we deduce %
  \begin{multline*}
    X_{\lambda,2} = \{ \chi_\lambda \alpha_j^{-\ve_j}\alpha_{j'}^{-\ve_{j'}} (\{j \le j'\} \subset J),\ 
    \chi_\lambda \alpha_j^{-\ve_j}\alpha_{j'}^{\pm 1} (j \in J, j' \notin J), \\
    \chi_\lambda \alpha_j^{\pm 2} (j \notin J),\
    \chi_\lambda \alpha_j^{\pm 1}\alpha_{j'}^{\pm 1} (\{j < j'\} \subset J^c) \},
  \end{multline*}
  which has cardinality
  \begin{equation*}
    \binom{k+1}2 + 2k(f-k) + 2(f-k) + 4\binom{f-k}2 = 2f^2-2kf+\binom{k+1}2.
  \end{equation*}

  We conclude by
  \begin{align*}
    \sum_{\chi \in X_{\lambda}} \dim_\F \Ext^1_{I/Z_1}(W_{\chi,3},\pi) &\ge \left(2f^2+f+\binom{k+1}3\right) + (2f-k)\left(2f+\binom k2\right) \\
    &\hspace{2cm}+ \left((2f^2-2kf+\binom{k+1}2\right)(2f) \\
    &= 4f^3 + (6-4k)f^2 + (2k^2-2k+1)f - \frac 16 k(k-1)(2k-1).\qedhere
  \end{align*}
\end{proof}

\bibliography{Biblio}
\bibliographystyle{amsalpha}

\end{document}